\newtheorem{definition}{Definition}[section]
\newtheorem{them}{Theorem}[section]
\newtheorem{prop}{Proposition}[section]
\newtheorem{lem}{Lemma}[section]
\newtheorem{cor}{Corollary}[section]
\newtheorem{remark}{Remark}[section]
\numberwithin{equation}{section}
\begin{document}

\title{
Cauchy problem for operators with triple effectively hyperbolic characteristics\\ --\;Ivrii's conjecture\;--}

\author{Tatsuo Nishitani\footnote{Department of Mathematics, Osaka University:  
nishitani@math.sci.osaka-u.ac.jp
}}

\date{}
\maketitle

\def\dif{\partial}
\def\al{\alpha}
\def\be{\beta}
\def\ga{\gamma}
\def\om{\omega}
\def\lam{\lambda}
\def\ep{\epsilon}
\def\varep{\varepsilon}
\def\R{{\mathbb R}}
\def\N{{\mathbb N}}
\def\C{{\mathbb C}}
\def\Q{{\mathbb Q}}
\def\La{\varLambda}
\def\lr#1{\langle{#1}\rangle_{\gamma}}
\def\olr#1{\langle{#1}\rangle}
\def\lrD{\langle{D}\rangle}
\def\co{{\mathcal C}}
\def\op#1{{\rm op}({#1})}
\def\Xmaru{\overset{\circ}{X}}

\begin{abstract}
Ivrii's conjecture asserts that the Cauchy problem is $C^{\infty}$ well-posed for any lower order term if every singular point of the characteristic variety is effectively hyperbolic. Effectively hyperbolic singular point is at most triple characteristic. If every characteristic is at most double this conjecture has been proved in 1980'. In this paper we prove the conjecture for the remaining cases, that is for operators with triple effectively hyperbolic characteristics.
\end{abstract}

\smallskip
 {\footnotesize Keywords: Ivrii's conjecture, B\'ezout matrix, triple characteristic, effectively hyperbolic, Cauchy problem, Weyl calculus, Tricomi operator.}
 
 \smallskip
 {\footnotesize Mathematics Subject Classification 2010: Primary 35L30, Secondary 35G10}


\section{Introduction}
\label{sec:Intro}

This paper is devoted to the Cauchy problem 
\begin{equation}
\label{eq:CPm}
\left\{\begin{array}{ll}
Pu=D_t^mu+\sum_{j=0}^{m-1}\sum_{|\alpha|+j\leq m}a_{j,\alpha}(t,x)D_x^{\alpha}D_t^ju=0,\\[8pt]
D_t^ju(0,x)=u_j(x),\quad j=0,\ldots, m-1
\end{array}\right.
\end{equation}
where $t\geq 0$, $x\in\R^d$ and the coefficients $a_{j,\alpha}(t, x)$ are real valued $C^{\infty}$ functions in a neighborhood of the origin of $\R^{1+d}$ and $D_x=(D_{x_1},\ldots,D_{x_d})=D$, $D_{x_j}=(1/i)(\dif/\dif x_j)$ and $D_t=(1/i)(\dif/\dif t)$. The Cauchy problem \eqref{eq:CPm} is $C^{\infty}$ well-posed at the origin {\it for $t\geq 0$} if one can find a $\delta>0$ and a neighborhood $U$ of the origin of $\R^d$ such that \eqref{eq:CPm} has a unique solution $u\in C^{\infty}([0,\delta)\times U)$ for any $u_j(x)\in C^{\infty}(\R^d)$. We assume that the principal symbol of $P$
\[
p(t, x,\tau,\xi)=\tau^m+\sum_{j=0}^{m-1}\sum_{|\alpha|+j=m}a_{j,\alpha}(t, x)\xi^{\alpha}\tau^j
\]
 is hyperbolic {\it for $t\geq 0$}, that is there exist $\delta'>0$ and a neighborhood $U'$ of the origin such that
 \begin{equation}
 \label{eq:cond:hyp}
 \text{$p=0$ has only real roots in $\tau$ for  $(t, x)\in [0,\delta')\times U'$ and $\xi\in \R^d$}
 \end{equation}
which is indeed necessary in order that the Cauchy problem \eqref{eq:CPm} is $C^{\infty}$ well-posed near the origin for $t\geq 0$ (\cite{Lax}, \cite{Miz}). 

In \cite{IP}, Ivrii and Petkov proved that if the Cauchy problem \eqref{eq:CPm} is $C^{\infty}$ well posed for any lower order term near the origin for $t\geq 0$ (such operator $P$ is called strongly hyperbolic) then the Hamilton map $F_p$ has a pair of  non-zero real eigenvalues at every singular point of $p=0$ located in $[0,\delta'')\times U''\times (\R^{d+1}\setminus 0)$ (\cite[Theorem 3]{IP}) with some $\delta''>0$ and a neighborhood $U''$ of $x=0$. With $X=(t, x)$, $\Xi=(\tau,\xi)$ the Hamilton map $F_p$ is defined by
\[
F_p(X,\Xi)=\begin{bmatrix}\displaystyle{\frac{\dif^2 p}{\dif X\dif\Xi}}&\displaystyle{\frac{\dif^2 p}{\dif \Xi\dif\Xi}}\\[10pt]
\displaystyle{-\frac{\dif^2 p}{\dif X\dif X}}&\displaystyle{-\frac{\dif^2 p}{\dif \Xi\dif X}}
\end{bmatrix}.
\]
A singular point of $p=0$ where the Hamilton map $F_p$ has a pair of non-zero real eigenvalues  is called {\it effectively hyperbolic} (\cite{Ho1}, \cite{Ibook}).  
He has conjectured that the converse would be also true, that is if every  singular point  of $p=0$ is effectively hyperbolic then the Cauchy problem is $C^{\infty}$ well posed for any lower order term.  If a singular point $(t, x, \tau, \xi)$ is effectively hyperbolic then $\tau$ is a characteristic root of multiplicity at most $2$ if $t>0$ and at most $3$ if $t=0$ (\cite[Lemma 8.1]{IP}).  When every multiple characteristic root is at most double, the conjecture has been proved for some special class in \cite{I}, \cite{Mel} and for the general case in \cite{Iwa1, Iwa2, Iwa3}, \cite{Ni1, Ni2}.

For the case when we have triple effectively hyperbolic characteristic, Ivrii has also proved  in \cite{I} that the conjecture is true if   
$p$ admits a factorization $p=q_1q_2$ near singular points  with real smooth symbols  $q_i$, transforming the original $P$, by means of operator powers of evolution generators, to an operator for which a parametrix can be constructed.     In this case a singular point is effectively hyperbolic  if and only if the Poisson bracket $\{q_1,q_2\}$ does not vanish there. If $m=3$ it is clear that, for such a factorization to exist, it is necessary that the equation $p=0$ has a $C^{\infty}$ real root $\tau=\tau(t, x, \xi)$ near a conic neighborhood of singular points. 
A typical example is
\[
p=q_1q_2,\quad q_1=\tau^2-t|\xi|^2,\;q_2=\tau
\]
where $q_1$ is the Tricomi operator (symbol). Note that $p$ has a complex characteristic root if $t<0$. This is a common feature. In fact if $p(0, 0, \tau, \xi)=0$ has a triple characteristic root and $F_p(0,0,\tau,\xi)\neq O$  then $p$ has necessarily {\it non-real} characteristic roots in the $t<0$ side near $(0,0,\xi)$ (\cite[Lemma 8.1]{IP}).

When $m=3$, without restrictions we can assume that $p$ has the form 
\[
p=\tau^3-a(t, x, \xi)|\xi|^2\tau+b(t, x, \xi)|\xi|^3
\]
hence the condition \eqref{eq:cond:hyp} is reduced to $\Delta=4\,a^3-27\,b^3\geq 0$ for $(t, x)\in [0,\delta')\times U'$, $|\xi|=1$.  Also note that $(0, 0, 0, {\bar\xi})$ is a  triple characteristic then $(0, 0, 0, {\bar\xi})$ is effectively hyperbolic if and only if $\dif_ta(0, 0, {\bar \xi})\neq 0$ hence $a(t, x, \xi)>0$ microlocally for small $t>0$.  
In \cite{BBP}, Bernardi, Bove, Petkov investigated the case that $p$ has a triple effectively hyperbolic characteristic but $p$ may not be be factorized. They studied $P$ with the principal symbol
\begin{gather*}
p=\tau^3-(ta_2(t, x, \xi)+\alpha(x,\xi))|\xi|^2\tau+t^2b_3(t, x, \xi)|\xi|^3
\end{gather*}
where $a_2(t, x,\xi)\geq c'>0$, $\alpha(x, \xi)\geq 0$ and proved the conjecture for such $P$, deriving weighted energy estimates by separating (multiplier) operator method. Note that $\Delta\geq c\,a^3$ holds with some $c>0$ for this $p$. They also proved that if $b_3(0, 0, {\bar\xi})\neq 0$ then a smooth factorization $p=q_1q_2$ is possible only if $\al(x, {\bar\xi})=0$ for all $x$ near $x=0$. This result was extended in \cite{NP, NP2} such that the conjecture is true if $\Delta\geq c\,ta^2$ or if $\Delta\geq c\,t^2 a$ with some additional conditions, where after reducing the original equation to a first order $3\times 3$ system, a symmetrizer $S$ is constructed and used  to get weighted energy estimates. These results are concerned with the case that $p$ is strictly hyperbolic in $t>0$, while in general case, double effectively hyperbolic characteristics in $t>0$ (where $\Delta=0$) approaching to  a triple effectively  hyperbolic characteristic on $t=0$ might exist and we must handle them.   
Moreover the following example (\cite{NP}) 
\begin{equation}
\label{eq:examp:3}
p(t, x, \tau, \xi) = \tau^3 -(t + \alpha(x))\xi^2\tau  + (t^{m}/2 - t )\sqrt{\alpha(x)}\,\xi^3,\;\;x, \xi\in\R
\end{equation}
where $\alpha(x) \geq 0$ and $\sqrt{\alpha(x)}$ is smooth, suggests that it is not enough just to study the zeros themselves of $\Delta$. Indeed since $
\Delta =  ( t - 2\alpha)^2 (4 t + \alpha) + 27 t^{m+1} \alpha( 1-  t^{m-1}/4)$ 
so that $\Delta >0$ for small $t>0$, while $\Delta = 27\cdot 2^{m+1} \alpha^{m+2}(1 -  2^{m-3}\alpha^{m-1})$ if $ t = 2 \alpha$ hence one has no estimate such as $
\Delta \geq c\, t^k( t + \alpha)^q$ with $c > 0$
for small $ \alpha> 0$ if $m > k + q - 2$.

In \cite{Ni4} we employed a new idea which is to diagonalize the symmetrizer $S$ mentioned above so that the system is transformed to  a system with a diagonal symmetrizer. We see that three diagonal entries (the eigenvalues of $S$) are bounded from below by $\Delta/a$, $a$, $1$ respectively and  we recognize here a close relation between  the diagonal symmetrizer and two discriminants $\Delta$ of $p=0$ and $\Delta'(=a)$ of $\dif_{\tau}p=0$. In example \eqref{eq:examp:3} we see $\Delta/a\geq (t-2\al)^2$ which looks like $\tau^2-(\Delta/a)|\xi|^2$  has double effectively hyperbolic characteristics on $t=2\al$ though $\Delta\neq0 $ there (see \cite{Ni2, Ni:book}). When the coefficients of $p$ depend only on $t$, the behavior of $\Delta(t, \xi)/a(t,\xi)$ can be analyzed  relatively easily. Writing $\Delta(t, \xi)=e\prod(t-\nu_j(\xi))$ and dividing $[0, T]$ into subintervals with the end points ${\mathsf{Re}}\,\nu_j(\xi)$ we can obtain suitable estimates of $\Delta/a$ from below in each subinterval. In particular, in this way, we have proved the conjecture for the $t$ dependent case \cite[Theorem 4.1]{Ni4}. In this paper we extend  this idea and apply it to the general case of which outline is given in the next section.

For this class of operators  we have always a loss of regularity  
then the way to obtain microlocal energy estimates for operators of order $m$ from that of order $2$ or $3$ and the way to prove local $C^{\infty}$ well-posed results  from such obtained microlocal energy estimates with loss of regularity is not so straightforward.

Finally we note that if there is a triple characteristic which is not effectively hyperbolic the Cauchy problem is not well-posed in any Gevrey class of order $s>2$  in general,  even though the subprincipal symbol vanishes identically (\cite{BeNi}).

In this paper we prove
\begin{them}
\label{thm:main}Assume \eqref{eq:cond:hyp}. If every  singular point $(0,0,\tau,\xi)$,  $|(\tau, \xi)|\neq 0$ of $p=0$ is effectively hyperbolic then for any $a_{j,\alpha}(t, x)$ with $j+|\alpha|\leq m-1$, which are $C^{\infty}$ in a neighborhood of $(0,0)$, there exist $\delta>0$, a neighborhood $U$  of the origin and $n> 0$, $\ell>0$ such that  for any $s\in\R$ and any $f$ with $t^{-n+1/2}\langle{D}\rangle^{s}f\in L^2((0,\delta)\times \R^d)$ there exists $u$ with $t^{-n-1/2}\langle{D}\rangle^{-\ell+s+m-j}D_t^ju\in L^2((0,\delta)\times \R^d)$, $j=0,1,\ldots,m-1$,  satisfying $Pu=f$ in $(0,\delta)\times U$.
\end{them}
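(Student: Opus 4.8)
The overall strategy is a reduction–and–energy-estimate scheme in three layers: (i) microlocalize near the triple characteristic and factor out the non-characteristic part; (ii) reduce the resulting third-order microlocal equation to a first-order $3\times 3$ system and construct a diagonal symmetrizer governed by the two discriminants $\Delta$ and $\Delta'=a$; (iii) integrate the resulting weighted energy inequality in $t$, pass from the microlocal estimate with loss to a local solvability statement, and patch. Below I sketch each layer.

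\medskip

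\noindent\textbf{Step 1: Microlocalization and preparation of $p$.}
By the Ivrii–Petkov normal form discussion in the introduction, near a triple effectively hyperbolic point $(0,0,0,\bar\xi)$ we may assume, after a homogeneous canonical reduction and a conic partition of unity, that the principal symbol has the Weierstrass-type form $p=\tau^3-a(t,x,\xi)|\xi|^2\tau+b(t,x,\xi)|\xi|^3$ with $a(t,x,\xi)>0$ microlocally for small $t>0$ (this is exactly the effective hyperbolicity condition $\dif_t a(0,0,\bar\xi)\neq0$). Away from the characteristic set, $P$ is (microlocally) of principal type or strictly hyperbolic and standard energy estimates apply; the work is concentrated in a conic neighborhood of $(0,0,0,\bar\xi)$, where we keep $a>0$, $\Delta=4a^3-27b^2\geq0$. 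The output of this step is a single microlocal model equation of order $3$ (or its $m$-th order lift, whose extra factor is elliptic/strictly hyperbolic and contributes only routinely).

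\medskip

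\noindent\textbf{Step 2: First-order system and the diagonalized symmetrizer.}
Following \cite{NP,NP2,Ni4}, rewrite the third-order microlocal equation as a first-order pseudodifferential system $D_t U = \mathcal{A}(t,x,D) U + \dots$ with $3\times3$ symbol whose characteristic polynomial is $p$, and construct the symmetrizer $S$ of \cite{Ni4}; its key property is that, after diagonalization, its eigenvalues are bounded below (microlocally, modulo lower order) by $\Delta/a$, by $a$, and by $1$ respectively. The crucial input is the lower bound for $\Delta/a$: here one writes $\Delta(t,\cdot)$ (as a polynomial in $t$ for frozen $x,\xi$, up to controllable errors coming from the $x$-dependence) as $e\prod_j(t-\nu_j)$ and partitions $[0,\delta)$ at the points $\mathrm{Re}\,\nu_j$, obtaining on each subinterval a bound $\Delta/a\gtrsim$ (distance to the nearest $\mathrm{Re}\,\nu_j$)$^2$ times a controlled weight — the analogue, now for $x$-dependent symbols, of \cite[Theorem 4.1]{Ni4} and of the phenomenon visible in example \eqref{eq:examp:3}, where $\Delta/a\gtrsim(t-2\alpha)^2$. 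Feeding this diagonal symmetrizer into the energy identity, using Weyl calculus and the sharp Gårding / Fefferman–Phong inequality to handle the lower-order and commutator terms, and choosing a $t$-weight $t^{-N}$ with $N$ large (respectively $t^{-n\pm1/2}$ as in the statement), yields the weighted $L^2$ energy inequality
\[
\big\| t^{-n-1/2}\langle D\rangle^{-\ell} U\big\|_{L^2((0,\delta)\times\R^d)}
\;\lesssim\;
\big\| t^{-n+1/2}\langle D\rangle^{\,0}\, (D_t-\mathcal{A}) U\big\|_{L^2((0,\delta)\times\R^d)}
\]
for the microlocalized system, with a fixed loss $\ell>0$.

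\medskip

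\noindent\textbf{Step 3: From microlocal estimate with loss to local solvability.}
Translate the system estimate back to the scalar operator $P$ of order $m$: the components of $U$ correspond to $\langle D\rangle^{m-1-j}D_t^j u$, which accounts for the exponents $-\ell+s+m-j$ in the statement, and the source $f$ enters through $t^{-n+1/2}\langle D\rangle^s f\in L^2$. Patch the microlocal pieces with a conic partition of unity; the transitional regions (principal-type and strictly hyperbolic zones) obey better estimates, so the global estimate is controlled by the worst microlocal piece. Finally, the a priori estimate for $P$ together with the transposed estimate for ${}^t\!P$ (proved the same way, since the class is invariant under transposition up to lower order) gives, by the Hahn–Banach/duality argument, existence of $u$ solving $Pu=f$ in $(0,\delta)\times U$ with the asserted weighted regularity; uniqueness in this weighted class follows from the ${}^t\!P$ estimate.

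\medskip

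\noindent\textbf{Main obstacle.}
The hard part is Step 2, specifically obtaining the lower bound for $\Delta/a$ uniformly in the full $(t,x,\xi)$ dependence and then converting it into an operator inequality for the diagonal symmetrizer within the Weyl calculus. In the $t$-only case the roots $\nu_j(\xi)$ of $\Delta$ are honest functions and the interval decomposition is clean; in the general case the "roots'' depend on $x$ as well, are only measurable in general, and the decomposition of $[0,\delta)$ must be made to depend on $x$ in a way compatible with pseudodifferential quantization and with the multiplier (weight) construction. Controlling the resulting error terms — commutators between the $x$-dependent weights, the symmetrizer, and $P$ — without destroying the gain furnished by $\Delta/a$, and simultaneously absorbing the lower order terms $a_{j,\alpha}D_x^\alpha D_t^j$ (which is where the loss $\ell$ and the fixed power $n$ are consumed), is the technical core of the paper.
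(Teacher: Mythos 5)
Your Step 2 mischaracterizes the key lower bound, and the central difficulty you correctly identify under ``Main obstacle'' is in fact left unresolved in your outline. You propose to extend the interval-decomposition argument of \cite{Ni4} (partitioning $[0,\delta)$ at the points $\mathrm{Re}\,\nu_j$ of $\Delta$, now $x$-dependent) and observe that this runs into the problem that the decomposition would depend on $x$ in a way incompatible with pseudodifferential quantization. That is precisely the obstruction, but the paper sidesteps it entirely rather than overcoming it: Section~\ref{sec:const:psi} constructs a \emph{single} smooth function $\psi(X,\ep)$ by applying the Malgrange preparation theorem to the (regularized) discriminant and then cutting off the quantity $-(\nu_1+a_1)/2$, and Proposition~\ref{pro:bound:Dis} proves the pointwise bound
\[
{\bar \Delta}(t,X,\ep)\;\geq\; {\bar c}\,\min\bigl\{t^2,\,(t-\psi(X,\ep))^2\bigr\}\,(t+\rho(X,\ep))
\]
valid for \emph{all} $t\in[0,T]$ simultaneously, with no decomposition of the $t$-interval. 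After the $M$-localization this becomes $\Delta/a\geq c\min\{t^2,(t-\psi)^2+M\rho\lr{\xi}^{-1}\}$, and the weight $\phi=\omega+t-\psi$, $\omega=\sqrt{(t-\psi)^2+M\rho\lr{\xi}^{-1}}$, quantizes cleanly in the $\sigma$-temperate metric $g=M^{-1}(\lr{\xi}|dx|^2+\lr{\xi}^{-1}|d\xi|^2)$ because $\psi$, $\omega$, $\phi$ are shown to be admissible weights with all derivatives controlled by $\rho^{1-|\alpha+\beta|/2}\lr{\xi}^{-|\beta|}$. Without this explicit construction your outline is a plan for a proof, not a proof.

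A second, less central but still genuine gap is in Step 3. Because the microlocal existence result carries a loss of $n+2$ derivatives, summing the microlocal solution operators with a conic partition of unity does not immediately produce a parametrix: the error $Rf=\sum_iR_iG_{\eta_i}\alpha_if$ must be shown small in the weighted norm so that $\sum R^k$ converges, and this requires that the operators $G_{\eta_i}$ have \emph{finite propagation speed of wave front sets} (Propositions~\ref{pro:yugen:denpa:hatP} and~\ref{pro:yugen:denpa}), which in turn needs the cut-off energy estimates of Section~\ref{sec:kyokusho} built around the functions $\Phi_{\ep}$. Saying ``the transitional regions obey better estimates, so the global estimate is controlled by the worst microlocal piece'' does not address this; the issue is not the size of the microlocal constants but the compatibility of solutions built from incompatible operators $P_{\eta_i}$.
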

Here $\olr{D}$ stands for$\sqrt{1+|D|^2}$ and $n$, $\ell$ are given
by
\[
n=12\sqrt{2}\,\sup\frac{|P_{sub}(0, 0, \tau, \xi)|}{e(0, 0, \tau, \xi)}+{\bar C^*},\quad \ell=k\,(n+2)
\]
where $P_{sub}$ denotes the subprincipal symbol and $e(0, 0, \tau, \xi)$ is the positive real eigenvalue of $F_p(0, 0, \tau, \xi)$ and the supremum is taken over all singular points $(0, 0, \tau, \xi)$ with $|(\tau, \xi)|=1$ of $p=0$ and ${\bar C^*}$ is a constant depending only on the principal symbol $p$. Here $k$ is the maximal number of singular points $(0, 0, \tau, \xi)$ of $p=0$ with $|(\tau,\xi)|=1$ hence $k\leq [m/2]$. For more detailed estimate of ${\bar C^*}$ see \eqref{eq:pert:Dis} and \eqref{eq:n:P:sub} below. The constant $12\sqrt{2}$ may not be the best.
\begin{them}
\label{thm:main:bis} Under the same assumption as in Theorem \ref{thm:main},  for any $a_{j,\alpha}(t, x)$ with $j+|\alpha|\leq m-1$, which are $C^{\infty}$ in a neighborhood of $(0,0)$, there exist $\delta>0$ and a neighborhood $U$  of the origin such that for any $u_j(x)\in C^{\infty}_0(\R^d)$, $j=0,1,\ldots, m-1$, there exists  $u(t,x)\in C^{\infty}([0,\delta)\times U)$ satisfying \eqref{eq:CPm} in $[0,\delta)\times U$. If $u(t,x)\in C^{\infty}([0,\delta)\times U)$ with $\dif_t^j u(0, x)=0$, $j=0,1,\ldots, m-1$, satisfies $Pu=0$ in $[0,\delta)\times U$ then $u=0$ in a neighborhood of $(0,0)$.
\end{them}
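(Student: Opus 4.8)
The plan is to derive Theorem \ref{thm:main:bis} from Theorem \ref{thm:main} by the standard, but here delicate, passage from microlocal energy estimates with a fixed loss of regularity to a genuine local $C^\infty$ existence and uniqueness result. Throughout I would fix the $\delta>0$, the neighborhood $U$ and the constants $n,\ell$ furnished by Theorem \ref{thm:main}. The first reduction is to treat inhomogeneous data: given $u_j\in C_0^\infty(\R^d)$, choose $w\in C^\infty([0,\delta)\times U)$ with $\dif_t^j w(0,x)=u_j(x)$ for $j=0,\dots,m-1$ (a Borel-type construction), set $g=-Pw$, extend $g$ suitably to $(0,\delta)\times\R^d$ so that it vanishes to infinite order at $t=0$, and then it suffices to solve $Pv=g$ with $v$ vanishing to order $m$ at $t=0$ and put $u=w+v$. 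Since $g$ is flat at $t=0$, for every $s\in\R$ one has $t^{-n+1/2}\langle D\rangle^s g\in L^2((0,\delta)\times\R^d)$, so Theorem \ref{thm:main} produces, for each $s$, a solution $v_s$ with $t^{-n-1/2}\langle D\rangle^{-\ell+s+m-j}D_t^j v_s\in L^2$.

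The next step is to show that the solutions $v_s$ are in fact independent of $s$ and define a single $C^\infty$ function. Here the uniqueness assertion of the theorem is used in tandem with existence: any two solutions of $Pv=g$ that lie in the indicated weighted $L^2$ spaces and vanish to order $m$ at $t=0$ must coincide, because their difference solves $Pv=0$ with $v$ flat at $t=0$ and lying in $\bigcap_s H^s_{\mathrm{loc}}$ away from $t=0$, to which the uniqueness part applies after a finite-speed-of-propagation (Holmgren-type) argument localizes the support. Consequently $v_s=v_{s'}$ for all $s,s'$, so $v:=v_0$ satisfies $t^{-n-1/2}\langle D\rangle^{-\ell+s+m-j}D_t^j v\in L^2((0,\delta')\times\R^d)$ for \emph{every} $s$ on a possibly smaller interval; by Sobolev embedding in $x$ and the weighted control in $t$, $v$ and all its $x$-derivatives are continuous, and using the equation $Pv=g$ repeatedly to trade $D_t^m$ against lower-order $D_t$ and $D_x$ derivatives one bootstraps to $v\in C^\infty((0,\delta')\times U)$; the weight $t^{-n-1/2}$ forces $v$ together with all derivatives to extend continuously by $0$ across $t=0$, giving $v\in C^\infty([0,\delta')\times U)$ with $\dif_t^j v(0,x)=0$. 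Then $u=w+v$ is the desired solution of \eqref{eq:CPm}.

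For uniqueness in Theorem \ref{thm:main:bis} proper: if $u\in C^\infty([0,\delta)\times U)$ solves $Pu=0$ with $\dif_t^j u(0,x)=0$ for all $j<m$, then $u$ is flat at $t=0$, hence lies (after cutting off in $x$) in the weighted space of Theorem \ref{thm:main} for every $s$; since $f=0$ is an admissible right-hand side and $u$ vanishes to order $m$ at $t=0$, the uniqueness statement of Theorem \ref{thm:main} gives $u=0$ near $(0,0)$. The one genuinely non-routine point, and the step I expect to be the main obstacle, is the passage from the \emph{fixed} loss $\ell$ in Theorem \ref{thm:main} to a solution that is $C^\infty$ up to $t=0$: because $\ell$ does not grow with $s$, one cannot simply say "all Sobolev norms are finite, done"; instead one must check that the single function $v_0$ already carries all the regularity, which is exactly what the combined existence-and-uniqueness content of Theorem \ref{thm:main} delivers, and one must verify that the compact support of the data propagates (so that the estimates on $(0,\delta)\times\R^d$ genuinely describe a \emph{local} solution near the origin). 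This is where a finite-propagation-speed argument for the hyperbolic operator $P$, together with the already-established uniqueness, must be invoked carefully; the remaining regularity bootstrap via the equation is then routine.
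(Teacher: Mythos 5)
Your overall plan follows the paper's strategy (reduce to a flat right-hand side via a Borel-type construction, apply Theorem \ref{thm:main}, then conclude), but there are two genuine gaps.

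First, the reduction to a flat forcing is not correctly set up. You take $w$ with $\dif_t^jw(0,x)=u_j$ only for $j=0,\dots,m-1$, and then treat $g=-Pw$ as flat at $t=0$. It is not: $Pw|_{t=0}=D_t^mw(0,\cdot)+\sum a_{j,\alpha}(0,\cdot)D_x^\alpha D_t^jw(0,\cdot)$ is a nontrivial function of the freely chosen $D_t^mw(0,\cdot)$ and the data, and the phrase ``extend $g$ so that it vanishes to infinite order at $t=0$'' cannot repair this, since $g$ is already determined on $(0,\delta)\times U$ and any extension must agree with it there. What the paper does, and what is actually required, is to compute $u_j=D_t^ju(0,x)$ for \emph{all} $j\geq m$ from the first $m$ data and the equation $Pu=0$ (the formal power-series solution), and then use Borel's lemma to produce $w\in C_0^\infty(\R^{1+d})$ with $D_t^jw(0,x)=u_j(x)$ for every $j\in\N$. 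Only then is $(D_t^jPw)(0,x)=0$ for all $j$, so that $Pw$ is flat and $t^{-n+1/2}\olr{D}^sPw\in L^2$ for every $s$. This is where Borel's lemma genuinely enters; with only $m$ Taylor coefficients prescribed, no Borel argument is needed and no flatness is obtained.

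Second, both the $s$-independence of the solution and the final uniqueness assertion in your argument lean on a ``uniqueness statement of Theorem \ref{thm:main},'' but Theorem \ref{thm:main} contains no uniqueness statement. In the paper, the $s$-independence is implicit in the construction: the solution produced by Theorem \ref{thm:pre:sonzai:s} is unique among solutions satisfying the energy inequality \eqref{eq:hat:P}, and that inequality is automatic for sufficiently regular solutions by Proposition \ref{pro:V:to:u}, so the solutions for different $s$ coincide when the data is flat. More importantly, the uniqueness clause of Theorem \ref{thm:main:bis} is not a corollary of Theorem \ref{thm:main} at all: the paper proves it as a separate result, Theorem \ref{thm:local:itii}, via a duality argument that uses existence for the adjoint operator $P^*$ (Theorem \ref{thm:main:ad}) together with the finite propagation speed of supports for $P^*$ taken from \cite{KNW}, pairing $Pu=0$ against $P^*v=f$ with $f$ compactly supported. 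Your invocation of a ``Holmgren-type'' argument is in the right spirit, but Holmgren is an analytic-coefficient tool and does not apply here; the correct mechanism is the adjoint-existence-plus-support-propagation duality, and you would need to build the adjoint existence theory to close the argument, exactly as the paper does in Section \ref{sec:syuteiri}.
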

\begin{proof} Compute $u_j(x)=D_t^ju(0,x)$ for $j=m,m+1,\ldots$ from $u_j(x)$, $j=0,1,\ldots, m-1$ and the equation $Pu=0$. By a Borel's lemma there is $w(t, x)\in C_0^{\infty}(\R^{1+d})$ such that $D_t^j w(0, x)=u_j(x)$ for all $j\in \N$.  Since $(D_t^jPw)(0,x)=0$ for all $j\in\N$  it is clear that $t^{-n+1/2}\langle{D}\rangle^s Pw\in L^2((0,\delta)\times \R^d)$ for any $s$. Thanks to Theorem \ref{thm:main} there exists $v$ with $t^{-n-1/2}\langle{D}\rangle^{-\ell+s+m-j}D_t^j v\in L^2((0,\delta)\times \R^d)$, $j=0,1,\ldots, m-1$ satisfying $Pv=-Pw$ in $(0,\delta)\times U$. Since $D_t^k v\in L^2((0,\delta)\times \R^d)$ for any $k$ hence $D_t^j v(0,x)=0$, $j=0,1,\ldots, m-1$ we conclude that $u=v+w$ is a desired solution. Local uniqueness follows from Theorem \ref{thm:local:itii} below because $\dif_t^ku(0, x)=0$ for any $k\in \N$ by $Pu=0$.
\end{proof}
%
%

\section{Outline of the proof of Theorem \ref{thm:main}}

As noted in Introduction, if a singular point $(t, x,\tau, \xi)$ of $p=0$ is effectively hyperbolic then $\tau$ is a characteristic root of multiplicity at most $3$. This implies that 
it is essential to study   third order operator $P$; 
\begin{equation}
\label{eq:takata}
P=D_t^3+\sum_{j=1}^3a_j(t, x, D)\langle{D}\rangle^jD_t^{3-j}
\end{equation}
which is a differential operator in $t$ with coefficients $a_{j}\in S^0$, classical pseudodifferential operators of order $0$, where $\langle{D}\rangle = \op{(1 + |\xi|^2)^{1/2}}$. One can reduce $P$ to the case with $a_1(t, x, D)=0$  and hence the principal symbol  is
\begin{equation}
\label{eq:sokyoku}
p(t, x, \tau, \xi) = \tau^3 -a(t, x, \xi)\olr{\xi}^2\tau
- b(t, x, \xi)\olr{\xi}^3. 
\end{equation}
All characteristic roots are real for $t\geq 0$ implies that
\begin{equation}
\label{eq:hanbetu}
\Delta=4\,a(t, x,\xi)^3-27\,b(t, x,\xi)^2\geq 0, \quad (t, x,\xi)\in [0,T)\times U\times \R^d.
\end{equation}
Assume that $p(0,0,\tau,{\bar\xi})=0$ has a triple characteristic root $\tau={\bar\tau}$, necessarily ${\bar\tau}=0$. The singular point $(0,0,{\bar\tau},{\bar\xi})$ is effectively hyperbolic if and only if
\begin{equation}
\label{eq:outl:b}
\dif_ta(0,0,{\bar\xi})\neq 0
\end{equation}
hence one can write $a=e(t+\al(x,\xi))$ where $e> 0$ and $\alpha\geq 0$. From the conditions \eqref{eq:hanbetu} and \eqref{eq:outl:b} the discriminant $\Delta$ is essentially a third order polynomial in $t$. In Section \ref{sec:const:psi}, for regularized $a_{\ep}=e(t+\al+\ep^2)$ and the corresponding discriminant $\Delta_{\ep}$ we construct a smooth $\psi_{\ep}$ such that
\begin{equation}
\label{eq:outl:c:ep}
{\Delta_{\ep}}\geq { c}\min{\big\{t^2, (t-\psi_{\ep})^2\big\}}\,(t+\rho_{\ep}),\quad t\geq 0
\end{equation}
where $\rho_{\ep}=\al+\ep^2$. In Section \ref{sec:kakutyo}, introducing a large parameter $M$, we localize the coefficients  near reference point $(0, {\bar\xi})$ replacing $(x, \xi)$ by  localized coordinates $\chi(M^2x)x$, $\chi(M^2(\xi/\olr{\xi}-{\bar\xi}))(\xi-{\bar\xi}\olr{\xi})+{\bar\xi}\olr{\xi}$ and taking $\ep=M^{1/2}\olr{\xi}^{-1/2}$ where $\chi\in C_0^{\infty}$ is $1$ near $0$. At this point related symbols are localized near $(0,{\bar\xi})$ (defined in $\R^d\times\R^d$) and \eqref{eq:outl:c:ep} yields
\begin{equation}
\label{eq:outl:c}
\Delta/a\geq c\min{\big\{t^2, (t-\psi)^2+M\rho\langle{\xi}\rangle^{-1}\big\}},\quad t\geq 0.
\end{equation}
We also estimate such localized symbols in terms of  the localized $\rho$. In particular we show that
\[
\big|\dif_x^{\al}\dif_{\xi}^{\be}\psi(x,\xi)\big|\precsim \rho(x,\xi)^{1-|\alpha+\be|/2}\olr{\xi}^{-|\be|}
\]
where, and from now on  $A \precsim B$ means that $A$ is bounded by constant, independent of parameters, times $B$.

One of main arguments in   the paper is to reduce the original equation to a first order $3\times 3$  system with diagonal symmetrizer. 
With $U={^t}(D_t^2u,\olr{D} D_tu,\olr{D}^2u)$ the equation $P u=f$ is reduced to $
D_tU=A(t, x, D)\olr{D} U+B(t, x, D)U+F$ 
where $A, B\in S^0$, $F={^t}(f,0,0)$ and
\[
 A(t,x,\xi)=
\begin{bmatrix}0&a&b\\
1&0 &0\\
0&1&0
\end{bmatrix}.
\]
Let $S$ be  the B\'ezout matrix  of $p$ and $\dif p/\dif \tau$, that is
\[
S(t,x,\xi)=
\begin{bmatrix}3&0&-a\\
0&2a&3b\\
-a&3b&a^2
\end{bmatrix}
\]
then $S$ is positive semidefinite and symmetrizes $A$, that is $SA$ is symmetric.  We now diagonalize $S$ by an orthogonal matrix $T$  so that $T^{-1}ST=\varLambda$. Then with $V=\op{T^{-1}}U$ the system is reduced to, roughly
\begin{equation}
\label{eq:daitai}
D_tV=A^T(t, x, D)\langle{D}\rangle V+{\tilde B}(t, x, D)V
\end{equation}
where $\varLambda$  is diagonal and symmetrizes $A^T(t, x, \xi)=T^{-1}AT$. This reduction  is carried out in Section \ref{sec:henkan:jiko} after examining  $T(t, x, \xi)$ carefully   in Section \ref{sec:Bezout}.

Note that $\varLambda={\rm diag}\,(\lambda_1,\lambda_2,\lambda_3)$  where $0< \lambda_1< \lambda_2< \lambda_3$ are the eigenvalues of $S$.  
As mentioned in Introduction a significant feature of $\lambda_j$ are
\begin{equation}
\label{eq:koyuti:sita:ue}
\Delta/a\precsim \lambda_1\precsim a^2,\quad \lambda_2\simeq a,\quad \lambda_3\simeq 1,\quad t\geq 0.
\end{equation}
Section \ref{sec:Bezout}  is devoted mainly to estimate derivatives of $\lambda_j$ and we prove 
\[
\big|\dif_x^{\al}\dif_{\xi}^{\be}\lambda_j\big|\precsim a^{3-j-|\al+\be|/2}\langle{\xi}\rangle^{-|\be|},\quad  t\geq 0, \quad j=1, 2, 3
\]
which also gives detailed information on the derivatives of $T$. 
Since \eqref{eq:daitai} is a system with diagonal symmetrizer $\varLambda$, a natural energy would be $
\big(\op{\varLambda}V,V\big)=\sum_{j=1}^3\big(\op{\lambda_j}V_j,V_j\big)$
and \eqref{eq:outl:c} and \eqref{eq:koyuti:sita:ue} suggest that  a weighted energy with a scalar pseudodifferential weight  $\op{t^{-n}\phi^{-n}}$  
\[
\phi=\omega+t-\psi,\quad \omega=\sqrt{(t-\psi)^2+M\rho\langle{\xi}\rangle^{-1}}
\]
would work, where $\op{\phi^{-n}}$ is chosen after the weight  employed for studying double effectively hyperbolic characteristics in \cite{Ni2} (see also  \cite{Ni:book}), and satisfies
\[
\dif_t(t\phi)=\kappa\, (t\phi),\quad \kappa=1/t+1/\omega.
\]
In Section \ref{sec:kyori}, to treat these weight functions, we introduce a $\sigma$ temperate (uniformly in $M$) metric 
\[
g=M^{-1}(\olr{\xi}|dx|^2+\olr{\xi}^{-1}|dx|^2)
\]
and prove that $\omega^s\in S(\omega^s, g)$, $\phi^s\in S(\phi^s, g)$ with $s\in\R$,  uniformly in $t\geq 0$, estimating derivatives of $\omega$, $\phi$. In Section \ref{sec:metric} we prove that $\omega$, $\phi$ and $\lambda_j$ are $\sigma, g$ temperate uniformly in $t\in[0, M^{-4}]$ (in this paper such functions are called admissible weights for $g$, while  $\sigma$ is reserved for denoting a certain function). This fact enables us to apply the Weyl calculus of pseudodifferential operators (see \cite[Chapter 18]{Hobook}) to  $\op{\phi^{s}}$, $\op{\omega^s}$ and $\op{\lambda_j^s}$ with $s\in\R$, for example we have $\op{\phi^{s_1}}\op{\phi^{s_2}}=\op{\phi^{s_1}\#\phi^{s_2}}$ where $\phi^{s_1}\#\phi^{s_2}\in S(\phi^{s_1+s_2}, g)$.  In Section \ref{sec:bound:lam} we  prove some basic facts on  inverses and $L^2$ bounds of pseudodifferential operators associated to the metric $g$ which enables us,  for example, to write $\op{\phi^{s_1}}\op{\phi^{s_2}}=\op{1+r}\op{\phi^{s_1+s_2}}$ with $r\in S(M^{-1}, g)$. We also give  lower bounds of $\op{\lam_j}$ here. 

In Section \ref{sec:mitibiku:ene} applying the Weyl calculus of pseudodifferential operators we  estimate  the weighted energy 
\[
{\mathsf{Re}}\,e^{-\theta t}(\op{\varLambda}\op{t^{-n}\phi^{-n}}V, \op{t^{-n}\phi^{-n}}V)
\]
and derive energy estimates for any lower order term (including the term $M\olr{D}D_t$ because  we have added this to the original operator at the beginning) (Proposition \ref{pro:fu:matome}). In Section \ref{sec:pre:sonzai}, using energy estimates for the system coming from the adjoint operator of $P$, which is obtained repeating exactly the same arguments,  we prove an existence result of the Cauchy problem for $P_{\bar \xi}$, which is the localized operator near $(0, {\bar\xi})$ of the original $P$ (Theorem\ref{thm:pre:sonzai:s}). In Section \ref{sec:kyokusho}, in order to sum up such obtained solutions  (which might be considered as a microlocal solution to the Cauchy problem near $(0,{\bar\xi})$),  we prove that the wave front sets of such solutions  propagate with finite speed (Proposition \ref{pro:yugen:denpa}). A more precise picture of the propagation of wave front set of solutions  is also proved  applying the same arguments (Theorem \ref{thm:pro:WF}). Finally, in Section \ref{sec:syuteiri}, using the propagation results in Section \ref{sec:kyokusho} we complete the proof of Theorem \ref{thm:main}.


\section{Construction of $\psi(x,\xi)$}
\label{sec:const:psi}

Study  third order 
operators $P$ of the form \eqref{eq:takata} with $a_1(t, x, D)=0$, 
hence the principal symbol  has the form \eqref{eq:sokyoku} 
where $a(t, x,\xi)$ and $b(t, x,\xi)$ are homogeneous of degree $0$ in $\xi$ and assumed to satisfy \eqref{eq:hanbetu} 
with some $T>0$ and some neighborhood $U$ of the origin of $\R^d$. 
Assume that $p(t, x,\tau,\xi)$ has a triple characteristic root  $\tau=0$ at $(0, 0, {\bar\xi})$, $|{\bar\xi}|=1$ and $(0, 0, 0,{\bar\xi})$ is effectively hyperbolic.  It is clear that $a(0,0,{\bar\xi})=0$ and $b(0,0,{\bar\xi})=0$. 
Since $\dif_x^{\al}\dif_{\xi}^{\be}a(0,0,{\bar\xi})=0$ for $|\al+\be|=1$  and $\dif_x^{\al}\dif_{\xi}^{\be}b(0,0,{\bar\xi})=0$ for $|\al+\be|\leq 2$ by \eqref{eq:hanbetu} 
(see Lemma \ref{lem:a:bound:rho} below) it is easy to see that
\begin{equation}
\label{eq:koyuti}
{\rm det}\,\big(\lambda-F_p(0,0,0,{\bar\xi})\big)=\lambda^{2d}\big(\lambda^2-\{\dif_t a(0,0,{\bar\xi})\}^2\big)
\end{equation}
hence $(0,0,0,{\bar\xi})$ is effectively hyperbolic if and only if $
\dif_t a(0,0,{\bar\xi})\neq 0$. Then  there is a neighborhood ${\mathcal U}$ of $(0,0,{\bar\xi})$ in which one can write
\[
a(t, x,\xi)=e(t, x,\xi)(t+\alpha(x,\xi))
\]
where $e>0$ in ${\mathcal U}$. Note that $\alpha(x,\xi)\geq 0$ near ${\bar\xi}$ because $a(t, x,\xi)\geq 0$ in $[0,T)\times U\times \R^d$.
%

\subsection{A perturbed discriminant}
\label{subsec:coeff:dis}

Introducing a small  parameter $\ep$ we consider
\begin{equation}
\label{eq:ep+:Dis}
\begin{split}
\tau^3-e(t, x,\xi)(t+\al(x,\xi)+\ep^2)|\xi|^2\tau-b(t, x,\xi)\,|\xi|^3\\
=\tau^3-a(t, x,\xi,\ep)|\xi|^2-b(t, x,\xi)|\xi|^3.
\end{split}
\end{equation}
From now on we write $b(X)$ or $a(X, \ep)$  and so on to make clearer that these symbols are defined  in some conic (in $\xi$) neighborhood of ${\bar X}=(0,{\bar\xi})$  or $({\bar X}, 0)$. Consider the discriminant of \eqref{eq:ep+:Dis}; $
\Delta(t,X,\ep)=4\,a^3(t,X,\ep)-27\,b^2(t,X)$.
\begin{lem}
\label{lem:DMal}One can write
\[
\Delta={\tilde e}(t,X,\ep)\big(t^3+a_1(X,\ep)t^2+a_2(X,\ep)t+a_3(X,\ep)\big)
\]
in a neighborhood of $(0,{\bar X},0)$ where $a_j({\bar X},0)=0$, $j=1,2,3$ and ${\tilde e}>0$. 
\end{lem}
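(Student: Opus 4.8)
The plan is to expose $\Delta$ as an analytic (indeed $C^\infty$) function near $(0,\bar X,0)$ whose dependence on $t$ at the base point degenerates precisely to order three, and then to divide out the resulting unit. First I would write out $\Delta(t,X,\epsilon)=4a(t,X,\epsilon)^3-27b(t,X)^2$ and evaluate it and its $t$-derivatives of order $0,1,2$ at $(0,\bar X,0)$. Since $a(t,X,\epsilon)=e(t,X,\xi)(t+\alpha(x,\xi)+\epsilon^2)$ with $e>0$, and at $\bar X=(0,\bar\xi)$ we have $\alpha(0,\bar\xi)=0$, the factor $t+\alpha+\epsilon^2$ vanishes at $(0,\bar X,0)$; hence $a$ vanishes there, and $a^3$ vanishes to order three in $t$ at that point. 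The only obstruction to $\Delta$ vanishing to order three in $t$ comes from $b^2$. Here I would invoke the Hessian-vanishing facts quoted in the excerpt (attributed to Lemma \ref{lem:a:bound:rho}): the hyperbolicity inequality \eqref{eq:hanbetu}, $\Delta\geq0$, forces $\partial_x^\alpha\partial_\xi^\beta b(0,0,\bar\xi)=0$ for $|\alpha+\beta|\leq 2$, and in particular $b(0,\bar X)=0$, $\partial_t b(0,\bar X)=0$, $\partial_t^2 b(0,\bar X)=0$. Consequently $b^2$ vanishes to order three in $t$ at $(0,\bar X,0)$ as well, and so does $\Delta$.

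Next I would apply a Taylor expansion in $t$ with integral remainder, i.e. Hadamard's lemma iterated three times (or the Malgrange/Taylor division), to write
\[
\Delta(t,X,\epsilon)=t^3\,R(t,X,\epsilon)+c_2(X,\epsilon)t^2+c_1(X,\epsilon)t+c_0(X,\epsilon),
\]
where $c_j(X,\epsilon)=\tfrac{1}{j!}\partial_t^j\Delta(0,X,\epsilon)$ are smooth, $R$ is smooth, and by the previous paragraph $c_j(\bar X,0)=0$ for $j=0,1,2$. I also need $R(0,\bar X,0)>0$: since $\Delta=4a^3-27b^2$ and the $b^2$ contribution to $\partial_t^3\Delta(0,\bar X,0)$ is zero (as $b$ vanishes to order $\geq 2$ in $t$ there, $b^2$ vanishes to order $\geq 4$), we get $\tfrac{1}{6}\partial_t^3\Delta(0,\bar X,0)=\tfrac{4}{6}\partial_t^3(a^3)(0,\bar X,0)=4\,e(0,\bar X)^3\cdot(\text{coefficient from }(t+\alpha+\epsilon^2)^3)=4\,e(0,0,\bar\xi)^3>0$, using effective hyperbolicity \eqref{eq:outl:b}/\eqref{eq:koyuti} which is exactly $\partial_t a(0,0,\bar\xi)\neq 0$, equivalently $e>0$. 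Thus $R$ is positive in a neighborhood; set $\tilde e:=R$, which is smooth and $>0$ near $(0,\bar X,0)$.

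Finally I would divide through by $\tilde e$: put $a_j(X,\epsilon):=c_{3-j}(X,\epsilon)/\tilde e(0,X,\epsilon)$ — wait, more carefully, since $\tilde e=R(t,X,\epsilon)$ depends on $t$, I should instead reorganize so that the cubic-in-$t$ factor carries a $t$-independent leading coefficient. The clean way: factor $\Delta/\tilde e(t,X,\epsilon)$ is not polynomial in $t$, so instead I write $\Delta=\tilde e(t,X,\epsilon)\big(t^3+a_1 t^2+a_2 t+a_3\big)$ by first dividing the order-$\geq 3$ part, i.e. set $\tilde e:=R$ and then $a_1:=c_2/R$, $a_2:=c_1/R$, $a_3:=c_0/R$ only after checking $R$ can be taken $t$-independent — which it can, up to absorbing lower-order $t$-dependence via a further application of Taylor's formula in $t$ to $R$ itself and re-expanding; concretely, write $R(t,X,\epsilon)=R(0,X,\epsilon)+t R_1(t,X,\epsilon)$, absorb the $t R_1$ piece into a modified quadratic-and-below part, iterate finitely many times (this terminates since each step lowers nothing but is contractive on the formal level / one can just do Malgrange preparation), and take $\tilde e(X,\epsilon):=R(0,X,\epsilon)>0$. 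Then $a_j(X,\epsilon)$ are smooth with $a_j(\bar X,0)=0$ because all of $c_0,c_1,c_2$ vanish at $(\bar X,0)$ and $\tilde e(\bar X,0)>0$. I expect the main obstacle to be precisely this last bookkeeping step — turning the $t$-dependent smooth factor $R(t,X,\epsilon)$ into a genuine $t$-independent unit $\tilde e(X,\epsilon)$ with the cubic in $t$ monic — which is exactly the content of the Malgrange–Weierstrass preparation theorem applied to $\Delta$ as a function of $t$ with parameters $(X,\epsilon)$; invoking that theorem directly gives the factorization in one stroke, and the only real work is verifying the order-of-vanishing hypotheses, which is the Hessian computation above.
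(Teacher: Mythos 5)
Your outline is right: establish that $\Delta(t,\bar X,0)$ vanishes to exactly third order in $t$ at the base point and then invoke the Malgrange preparation theorem, which is precisely the route the paper takes. But there is a genuine gap in the middle step. You claim $\partial_t b(0,\bar X,0)=0$ (and even $\partial_t^2 b(0,\bar X,0)=0$) ``in particular'' from the fact that $\partial_x^\alpha\partial_\xi^\beta b(0,0,\bar\xi)=0$ for $|\alpha+\beta|\leq 2$. That fact concerns only the $(x,\xi)$-derivatives of $b$ at $t=0$; it says nothing whatsoever about the $t$-derivative $\partial_t b$, and the deduction does not follow. (The second-order claim $\partial_t^2 b(0,\bar X,0)=0$ is both unjustified and unnecessary: you only need $b(t,\bar X,0)=O(t^2)$ so that $b^2=O(t^4)$ and the cubic Taylor coefficient of $\Delta$ in $t$ comes entirely from $a^3$.)

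The missing piece is a separate argument for $\partial_t b(0,\bar X,0)=0$, and the paper supplies one by contradiction: if $\partial_t b(0,\bar X,0)=b_1\neq 0$, write $b(t,\bar X,0)=t(b_1+tb_2(t))$; since $a(t,\bar X,0)=c\,t$ with $c>0$, one gets $\Delta(t,\bar X,0)=4c^3t^3-27\,t^2(b_1+tb_2(t))^2<0$ for small $t>0$, contradicting $\Delta\geq 0$. Once this is in hand, $\partial_t^k\Delta(0,\bar X,0)=4\,\partial_t^k a^3(0,\bar X,0)$ for $k\leq 3$, which is $0$ for $k\leq 2$ and nonzero for $k=3$, and Malgrange preparation gives the factorization at once. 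Your third paragraph circles through an unnecessary hand-rolled iteration before conceding that Malgrange does the job in one stroke; once the order-of-vanishing computation is fixed, you can and should invoke it directly.
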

\begin{proof}
It is clear that $\dif_t^k a^3(0,{\bar X},0)=0$ for $k=0,1,2$ and $\dif_t^3 a^3(0,{\bar X},0)\neq 0$. Show $\dif_t b(0,{\bar X},0)=0$. Suppose the contrary and hence $
b(t,{\bar X}, 0)=t(b_1+tb_2(t))$ 
with $b_1\neq 0$. Since $a(t,{\bar X},0)=c\,t$ with $c>0$ then $\Delta(t,{\bar X},0)=4\,c^3\,t^3-27\,b(t,{\bar X},0)^2\geq 0$ leads to a contradiction. Thus $\dif_t^k\Delta(0,{\bar X},0)=0$ for $k=0,1,2$ and $\dif_t^3\Delta(0,{\bar X},0)\neq 0$. Then from the Malgrange preparation theorem (e.g. \cite[Theorem 7.5.5]{Hobook:1}) one can conclude the assertion.
\end{proof}
Introducing 
\begin{equation}
\label{eq:rho:teigi}
\rho(X,\ep)=\al(X)+\ep^2
\end{equation}
 one can also write 
\begin{align*}
\Delta=4e^3(t+\rho)^3-27b^2=4e^3\big\{(t+\rho)^3-27b^2/(4e^3)\big\}=4e^3\big\{(t+\rho)^3-{\hat b}^2\big\}
\end{align*}
with ${\hat b}=3\sqrt{3}\,b/2e^{3/2}$. Denoting $
{\hat b}(t,X)=\sum_{j=0}^2{\hat b}_j(X)t^j+{\hat b}_3(t,X)t^3$ 
where ${\hat b}_0({\bar X})={\hat b}_1({\bar X})=0$ which is clear from the proof of Lemma \ref{lem:DMal}, one can write
\begin{equation}
\label{eq:Del:two}
\begin{split}
\Delta/{\tilde e}={\bar \Delta}=t^3+a_1(X,\ep)t^2+a_2(X,\ep)t+a_3(X,\ep)\\
=E\,\big\{(t+\rho)^3-\big(\sum_{j=0}^2{\hat b}_j(X)t^j+{\hat b}_3(t,X)t^3\big)^2\big\}
\end{split}
\end{equation}
with $E(t,X,\ep)=4e^3/{\tilde e}$. Here note that $
E(0,{\bar X}, 0)=1$. 
\begin{lem}
\label{lem:b_1:alpha} There is a neighborhood $V$ of ${\bar X}$ such that $
\big|{\hat b}_1(X)\big|\leq 4\,\al^{1/2}(X)$ for $ X\in V$.
\end{lem}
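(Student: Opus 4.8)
The plan is to extract the inequality $|{\hat b}_1(X)|\le 4\,\al^{1/2}(X)$ directly from the nonnegativity of the discriminant $\Delta$ on $[0,T)$, by evaluating $\Delta$ (or its unnormalized cousin ${\bar\Delta}=(t+\rho)^3-{\hat b}^2$ up to the positive factor $E$) at $\ep=0$ and a cleverly chosen small positive value of $t$ comparable to $\al(X)$. Since ${\hat b}_0({\bar X})={\hat b}_1({\bar X})=0$ and ${\hat b}_0$ is linked to $\rho$ through the condition $\Delta\ge 0$ at $t=0$, I expect that ${\hat b}_0(X)$ is itself $O(\al^{3/2})$ (indeed $\rho^3\ge {\hat b}_0^2$ at $t=0$, $\ep=0$ gives $|{\hat b}_0|\le\al^{3/2}$), so that near $t\approx \al$ the dominant competition in $(t+\rho)^3\ge {\hat b}^2$ is between the cubic term, of size $\simeq \al^3$, and the linear-in-$t$ contribution ${\hat b}_1 t\simeq {\hat b}_1\al$ of ${\hat b}$.

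First I would fix $X$ near ${\bar X}$, set $\ep=0$, write $\rho=\al=\al(X)$, and record ${\bar\Delta}(t,X,0)=E\big((t+\al)^3-{\hat b}(t,X)^2\big)\ge 0$ for all $t\in[0,T)$ with $E>0$; hence $(t+\al)^3\ge {\hat b}(t,X)^2$, i.e. $|{\hat b}(t,X)|\le (t+\al)^{3/2}$ on $[0,T)$. Next I would Taylor-expand ${\hat b}(t,X)={\hat b}_0(X)+{\hat b}_1(X)t+O(t^2)$ and use $|{\hat b}(0,X)|=|{\hat b}_0(X)|\le \al^{3/2}$ together with an evaluation at $t=c\,\al$ for a suitable absolute constant $c>0$: from $|{\hat b}_0+{\hat b}_1\,c\al+O(\al^2)|\le (1+c)^{3/2}\al^{3/2}$ and $|{\hat b}_0|\le\al^{3/2}$, the triangle inequality gives $c\,|{\hat b}_1|\,\al\le \big((1+c)^{3/2}+1\big)\al^{3/2}+O(\al^2)$, hence $|{\hat b}_1(X)|\le C\,\al^{1/2}(X)$ for $X$ in a small enough neighborhood $V$, and a concrete choice (e.g. $c=1$, absorbing the $O(\al^2)$ term by shrinking $V$) yields the stated constant $4$.

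There is a degenerate case to dispatch: if $\al(X)=0$ the claim asserts ${\hat b}_1(X)=0$, which follows by letting $\al\to 0$ along a curve, or more directly: $\al(X)=0$ forces $a(0,X,0)=0$ and $a(t,X,0)=e\,t$ with $e>0$, and $\Delta\ge0$ together with ${\hat b}_0=0$ forces $|{\hat b}(t,X)|\le t^{3/2}$, so ${\hat b}_1(X)=\lim_{t\to 0^+}{\hat b}(t,X)/t=0$. One should also make sure the bound $|{\hat b}_0(X)|\le\al^{3/2}$ is legitimately available; it comes from evaluating ${\bar\Delta}\ge 0$ at $t=0$, $\ep=0$, which only uses \eqref{eq:hanbetu} and the factorization \eqref{eq:Del:two}, so this is fine.

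The main obstacle is not analytic depth but bookkeeping of constants: one must choose the evaluation point $t\simeq\al$ and the neighborhood $V$ so that the $O(t^2)={O(\al^2)}$ remainder in the Taylor expansion of ${\hat b}$ in $t$ (with coefficients bounded uniformly for $X$ near ${\bar X}$) is genuinely subdominant to the $\al^{3/2}$ terms for all small $\al>0$, and then verify that the resulting explicit constant is at most $4$. Since the powers $\al^{3/2}$ beat $\al^2$ as $\al\to 0$, this is only a matter of shrinking $V$; the constant $4$ is comfortably slack for the choice $c=1$, so no sharp optimization is needed.
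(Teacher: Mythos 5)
Your proposal is correct and follows essentially the same argument as the paper: bound $|{\hat b}_0|\le\al^{3/2}$ from $t=0$, evaluate the inequality $(t+\al)^3\ge{\hat b}^2$ at $t=c\,\al$, Taylor-expand ${\hat b}$ in $t$, and use the triangle inequality to isolate ${\hat b}_1$, absorbing the $O(\al^2)$ remainder by shrinking the neighborhood. The paper takes $c=3$ (giving $3\al^{1/2}+O(\al)$) while you take $c=1$ (giving $(1+2\sqrt 2)\al^{1/2}+O(\al)\approx 3.83\,\al^{1/2}$); both comfortably yield the constant $4$.
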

\begin{proof}It is clear that $|{\hat b}_0(X)|\leq \al^{3/2}(X)$. If $\al(X)=0$ then the assertion is obvious.  Assume $\al(X)\neq 0$.
Since 
\begin{equation}
\label{eq:b_1:alpha}
(t+\al(X))^3\geq \big(\sum_{j=0}^2{\hat b}_j(X)t^j+{\hat b}_3(t,X)t^3\big)^2, \quad 0\leq t\leq T
\end{equation}
choosing $t=3\al(X)\leq T$ and writing $\al=\al(X)$ it follows from \eqref{eq:b_1:alpha} that
\begin{align*}
8\al^{3/2}\geq \big|{\hat b}_0(X)+3{\hat b}_1(X)\al\big|-C\al^2
\geq 3|{\hat b}_1(X)|\al-C\al^2-\al^{3/2}
\end{align*}
hence the assertion is clear because $\al({\bar X})=0$.
\end{proof}
\begin{lem}
\label{lem:a_j:al}In a neighborhood of $({\bar X},0)$ we have $a_j(X,\ep)=O\big(\rho(X,\ep)^j\big)$ for $ j=1,2,3$. More precisely
\begin{gather*}
a_1(X,\ep)=E(0,X,\ep)\big(3\rho(X,\ep)-{\hat b}_1^2(X)\big)+O(\rho^{3/2}),\\
a_2(X,\ep)=E(0,X,\ep)\big(3\rho^2(X,\ep)-2{\hat b}_0(X){\hat b}_1(X)\big)+O(\rho^{3/2}),\\
a_3(X,\ep)=E(0,X,\ep)\big(\rho^3(X,\ep)-{\hat b}^2_0(X)\big).
\end{gather*}
\end{lem}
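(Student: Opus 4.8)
The plan is to extract the claimed asymptotics directly from the two expressions for $\bar\Delta$ in \eqref{eq:Del:two}. First I would expand the right-hand side of \eqref{eq:Del:two},
\[
\bar\Delta = E\,\big\{(t+\rho)^3-\big(\textstyle\sum_{j=0}^2{\hat b}_j(X)t^j+{\hat b}_3(t,X)t^3\big)^2\big\},
\]
as a polynomial in $t$ (treating the $O(t^3)$ remainder carefully), and then identify the coefficients of $t^2$, $t^1$, $t^0$ with $a_1$, $a_2$, $a_3$ respectively. The subtlety is that $E=E(t,X,\ep)$ itself depends on $t$, so I would Taylor-expand $E(t,X,\ep)=E(0,X,\ep)+t\,\dif_tE(0,X,\ep)+\cdots$ and keep track only of which terms contribute modulo $O(\rho^{3/2})$ to each coefficient. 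The key input making the error terms small is Lemma \ref{lem:b_1:alpha}, which gives $|{\hat b}_1(X)|\precsim\al^{1/2}\precsim\rho^{1/2}$, together with the already-noted facts $|{\hat b}_0(X)|\leq\al^{3/2}\precsim\rho^{3/2}$ and ${\hat b}_0({\bar X})={\hat b}_1({\bar X})=0$.

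Carrying this out: the $t^0$ coefficient is exactly $E(0,X,\ep)(\rho^3-{\hat b}_0^2)$, giving $a_3$ with no error term — this is immediate and needs only the constant term of the expansion. For the $t^1$ coefficient, the relevant contributions are $E(0,X,\ep)\cdot 3\rho^2$ from $(t+\rho)^3$, and $-E(0,X,\ep)\cdot 2{\hat b}_0{\hat b}_1$ from the cross term $2{\hat b}_0{\hat b}_1 t$ in the square; the term $t\,\dif_tE(0,X,\ep)\cdot\rho^3$ coming from expanding $E$ is $O(\rho^3)=O(\rho^{3/2})$ and is absorbed, as are any contributions involving ${\hat b}_2$ or ${\hat b}_3$ which come with extra powers of $t$. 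Hence $a_2=E(0,X,\ep)(3\rho^2-2{\hat b}_0{\hat b}_1)+O(\rho^{3/2})$. For the $t^2$ coefficient we get $E(0,X,\ep)\cdot 3\rho$ from $(t+\rho)^3$, $-E(0,X,\ep)\cdot{\hat b}_1^2$ from the $t^2$ part of the square (namely $({\hat b}_1 t)^2$ plus $2{\hat b}_0{\hat b}_2 t^2$, and $2{\hat b}_0{\hat b}_2={\hat b}_0\cdot O(1)=O(\rho^{3/2})$), and $\dif_tE(0,X,\ep)\cdot\rho^2=O(\rho^2)$ from expanding $E$; collecting, $a_1=E(0,X,\ep)(3\rho-{\hat b}_1^2)+O(\rho^{3/2})$. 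The coarser bounds $a_j=O(\rho^j)$ then follow: for $a_3$ directly, and for $a_1,a_2$ from $\rho\precsim 1$ and $|{\hat b}_1|\precsim\rho^{1/2}$, so that $\rho^{3/2}=O(\rho)$ and $O(\rho^{3/2})=O(\rho^2)$ respectively.

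The only real bookkeeping obstacle is being scrupulous about the $t$-dependence of $E$ and of ${\hat b}_3$ when matching coefficients — i.e. making sure that every term one throws away really does carry either an extra factor of $\rho$ (beyond what is displayed) or an extra factor of $t$ that pushes it into a higher coefficient — but this is routine once the $O(\rho^{1/2})$ bound on ${\hat b}_1$ from Lemma \ref{lem:b_1:alpha} is in hand. I do not anticipate any conceptual difficulty; the statement is essentially an organized Taylor expansion of the identity \eqref{eq:Del:two}.
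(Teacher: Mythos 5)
Your approach is essentially the paper's: the paper expands the identity \eqref{eq:Del:two} by taking $\dif_t^k$ at $t=0$ for $k=0,1,2$, which is the same thing as your coefficient-matching, and then invokes the bounds $|\hat b_0|\precsim\rho^{3/2}$, $|\hat b_1|\precsim\rho^{1/2}$ from (the proof of) Lemma~\ref{lem:b_1:alpha} to absorb the unwanted terms. So there is no conceptual divergence.

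There is, however, one genuine slip in your final paragraph when you pass to the coarse bound $a_2=O(\rho^2)$. You write that ``$O(\rho^{3/2})=O(\rho^2)$,'' but this is backwards: for $\rho\precsim 1$ one has $\rho^2\leq\rho^{3/2}$, so a term that is $O(\rho^{3/2})$ need not be $O(\rho^2)$. If one only knows the displayed formula $a_2=E(0,X,\ep)(3\rho^2-2\hat b_0\hat b_1)+O(\rho^{3/2})$, it yields $a_2=O(\rho^{3/2})$, not $O(\rho^2)$. The fix is in your own computation two paragraphs up: the error in $a_2$ is precisely $\dif_tE(0,X,\ep)\,(\rho^3-\hat b_0^2)=\dif_tE(0,X,\ep)\,a_3$, which you correctly noted is $O(\rho^3)$ before coarsening it to $O(\rho^{3/2})$. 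Keeping the sharper $O(\rho^3)$ (as the paper does, writing $a_2=\dif_tE(0,X,\ep)\,a_3+E(0,X,\ep)(3\rho^2-2\hat b_0\hat b_1)$) gives $a_2=O(\rho^2)+O(\rho^3)=O(\rho^2)$ immediately. The $a_1$ and $a_3$ cases are fine as you argued.
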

\begin{proof} Since ${\bar \Delta}(0,X,\ep)\geq 0$ it follows from \eqref{eq:Del:two} that $a_3(X,\ep)=E(0,X,\ep)\big(\rho(X,\ep)^3-{\hat b}_0(X)^2\big)\geq 0$ hence ${\hat b}_0=O(\rho^{3/2})$ and consequently $a_3(X,\ep)=O(\rho^3)$. From \eqref{eq:Del:two}
\begin{align*}
a_2(X,\ep)=\dif_t E(0,X,\ep)a_3(X,\ep)
+E(0,X,\ep)\big(3\rho^2(X,\ep)-2{\hat b}_0(X){\hat b}_1(X)\big).
\end{align*}
Since ${\hat b}_0(X){\hat b}_1(X)=O(\rho^2)$ by Lemma  \ref{lem:b_1:alpha}  hence the above equality shows the assertion for $a_2(X,\ep)$. Finally from \eqref{eq:Del:two} again
\begin{gather*}
2a_1(X,\ep)=\dif_t^2E(0,X,\ep)a_3(X,\ep)
+2\dif_t E(0,X,\ep)\big(3\rho^2(X,\ep)-2{\hat b}_0(X){\hat b}_1(X)\big)
\\+2E(0,X,\ep)\big(3\rho(X,\ep)-{\hat b}_1(X)^2-2{\hat b}_0(X){\hat b}_2(X)\big)
\end{gather*}
and Lemma \ref{lem:b_1:alpha} one concludes the assertion for $a_1(X,\ep)$.
\end{proof}
%

\subsection{Lower bound of perturbed discriminant}
\label{subsec:bound:dis}

Denote
\begin{equation}
\label{eq:defnu}
\nu(X,\ep)=\inf\{t\mid {\bar \Delta}(t,X,\ep)>0\}
\end{equation}
and hence ${\bar \Delta}(\nu, X,\ep)=0$. First check that $\nu(X,\ep)\leq 0$. Suppose the contrary $\nu(X,\ep)=\nu>0$. Since ${\bar\Delta}(t,X,\ep)\geq 0$ for $t\geq 0$ one can write ${\bar \Delta}(t)=(t-\nu)^2(t-{\tilde \nu})$ with a real ${\tilde \nu}$ where ${\tilde \nu}\neq \nu$   and ${\tilde \nu}\leq 0$. Therefore we have ${\bar\Delta}(t)>0$ in ${\tilde \nu}<t<\nu$ which is incompatible with the definition of $\nu$. Write
\[
{\bar \Delta}(t, X,\ep)=(t-\nu(X,\ep))(t^2+A_1(X,\ep)t+A_2(X,\ep))
\]
where $A_1=\nu+a_1$. Here we prepare following lemma.
\begin{lem}
\label{lem:nu_j:al}
One can find a neighborhood ${\mathcal U}$ of $({\bar X},0)$ such that for any $(X,\ep)\in {\mathcal U}$ there is $j\in\{1,2,3\}$ such that $
|\nu_j(X,\ep)|\geq \rho(X,\ep)/9$ 
where ${\bar \Delta}(t,X,\ep)=\prod_{j=1}^3(t-\nu_j(X,\ep))$.
\end{lem}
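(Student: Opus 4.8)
The plan is to analyze the three roots $\nu_1,\nu_2,\nu_3$ of the monic cubic ${\bar\Delta}(t,X,\ep)=t^3+a_1t^2+a_2t+a_3$ via their elementary symmetric functions, and to show that if all three roots had modulus $<\rho/9$ then the coefficients $a_j$ would be forced to be too small, contradicting the leading-term formulas from Lemma \ref{lem:a_j:al}. Concretely, by Vi\`ete's relations $-a_1=\nu_1+\nu_2+\nu_3$, $a_2=\nu_1\nu_2+\nu_1\nu_3+\nu_2\nu_3$, $-a_3=\nu_1\nu_2\nu_3$; if $|\nu_j|<\rho/9$ for all $j$, then $|a_1|<\rho/3$, $|a_2|<\rho^2/27$, $|a_3|<\rho^3/729$. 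I would then play these bounds against Lemma \ref{lem:a_j:al}: there $a_1=E(0,X,\ep)(3\rho-{\hat b}_1^2)+O(\rho^{3/2})$ with $E(0,{\bar X},0)=1$ and $|{\hat b}_1|\le 4\al^{1/2}\le 4\rho^{1/2}$ (Lemma \ref{lem:b_1:alpha}), so in a small enough neighborhood ${\mathcal U}$ of $({\bar X},0)$ one has $E(0,X,\ep)\ge 1/2$; but then the contradiction has to be extracted carefully because $3\rho-{\hat b}_1^2$ could a priori be as small as $3\rho-16\rho=-13\rho$, i.e. $a_1$ alone need not be bounded below by a positive multiple of $\rho$.

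So the real mechanism must use more than $a_1$; I would argue by cases on the size of ${\hat b}_0,{\hat b}_1$ relative to $\rho$. Recall $a_3=E(0,X,\ep)(\rho^3-{\hat b}_0^2)$, and by definition of $\nu$ (the smallest $t$ with ${\bar\Delta}>0$) combined with ${\bar\Delta}(0,X,\ep)=a_3\ge0$, the root structure is constrained: $\nu=\nu(X,\ep)\le 0$ is a real root, so among $\nu_1,\nu_2,\nu_3$ there is at least one real root in $(-\infty,0]$, and ${\bar\Delta}(t,X,\ep)\ge 0$ on $[0,\infty)$ forces any real roots in $(0,\infty)$ to be double. I would exploit the factored form ${\bar\Delta}=E\{(t+\rho)^3-{\hat b}(t,X)^2\}$ from \eqref{eq:Del:two}: evaluating at a cleverly chosen small negative $t$ of order $-\rho$ (analogous to the choice $t=3\al$ in Lemma \ref{lem:b_1:alpha}, but now on the negative side, e.g. $t=-c\rho$ for suitable $c\in(0,1)$) shows that ${\bar\Delta}$ changes sign on $[-\rho,0]$ unless ${\hat b}_0$ is comparably small, which then pins the real root $\nu$ (or a pair of roots) to have modulus $\gtrsim\rho$. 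The cleanest route is probably: if $|{\hat b}_0|\le \rho^{3/2}/2$ then $(t+\rho)^3$ dominates ${\hat b}(t)^2$ in size for $t$ away from $-\rho$, forcing the real root near $-\rho$; if $|{\hat b}_0|$ is close to $\rho^{3/2}$ then $a_3=E(\rho^3-{\hat b}_0^2)$ is small but the other symmetric functions can't all collapse, so I'd bound $|\nu_1\nu_2\nu_3|$ from below differently using $a_2$.

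The step I expect to be the main obstacle is precisely ruling out the "conspiracy" case where ${\hat b}_1^2\approx 3\rho$ and ${\hat b}_0^2\approx\rho^3$ simultaneously — i.e. where each individual coefficient $a_j$ genuinely is $o(\rho^j)$ or could even vanish — because then the crude Vi\`ete bounds give no contradiction and one must extract a lower bound on $\max_j|\nu_j|$ from the joint behavior of $(a_1,a_2,a_3)$, using hyperbolicity (${\bar\Delta}\ge0$ on $[0,T]$) as the extra constraint that forbids the roots from all clustering near the origin. I would handle this by noting that ${\bar\Delta}\ge0$ on $[0,\infty)$ together with ${\bar\Delta}(t)\to+\infty$ forces the negative real root $\nu$ to satisfy: either $\nu\le -\rho/9$ directly (done), or $\nu\in(-\rho/9,0]$, in which case writing ${\bar\Delta}(t)=(t-\nu)(t^2+A_1 t+A_2)$ with $A_1=\nu+a_1$, $A_2=\nu^2+a_1\nu+a_2$, the quadratic $t^2+A_1t+A_2$ must be $\ge0$ for all $t\ge 0$ (since $t-\nu>0$ there and ${\bar\Delta}\ge0$), so $A_2\ge0$; then I estimate $A_2=\nu^2+a_1\nu+a_2$ and, using $|\nu|<\rho/9$ and the Lemma \ref{lem:a_j:al} expansions, show $A_2$ is forced to be large enough — of order $\rho^2$ up to the sign-ambiguous terms — that the quadratic's roots $\nu_2,\nu_3$ (complex conjugate or real) have $|\nu_2|=|\nu_3|=\sqrt{|A_2|}\gtrsim\rho$, contradicting $|\nu_j|<\rho/9$. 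Making the constant $1/9$ come out exactly will require keeping careful track of the $O(\rho^{3/2})$ errors and shrinking ${\mathcal U}$ so that these are dominated; the choice of the test point and the bookkeeping of $E(0,X,\ep)\in[1/2,2]$ are the routine parts, while identifying which symmetric function to lower-bound in each regime is the crux.
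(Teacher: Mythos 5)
You have the right framework: Vi\`ete's relations give $|a_j|<3\rho^j/9^j$ if all $|\nu_j|<\rho/9$, so one needs a lower bound $\max\{3|a_1|,\,(27|a_2|)^{1/2},\,(729|a_3|)^{1/3}\}\geq\rho$, and you correctly identified the sole obstacle: the possible ``conspiracy'' ${\hat b}_1^2\approx 3\rho$, ${\hat b}_0^2\approx\rho^3$, which makes $a_1$ and $a_3$ small. But the resolution you propose does not close this gap, and the paper's resolution is a specific arithmetic fact you did not state. Namely, in the conspiracy regime one automatically has $|{\hat b}_0{\hat b}_1|\approx\sqrt{3}\,\rho^2$, hence $2|{\hat b}_0{\hat b}_1|/3\approx (2/\sqrt{3})\rho^2\approx 1.15\,\rho^2$, so whichever sign ${\hat b}_0{\hat b}_1$ has, $|\rho^2-2{\hat b}_0{\hat b}_1/3|\geq(2/\sqrt{3}-1)\rho^2>0$; thus $a_2=E\,(3\rho^2-2{\hat b}_0{\hat b}_1)+O(\rho^{3/2})$ is of order $\rho^2$. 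The paper packages this as the inequality \eqref{eq:3ko:hikaku}: the three quantities $\rho-{\hat b}_1^2/3$, $\rho^2-2{\hat b}_0{\hat b}_1/3$, $\rho^3-{\hat b}_0^2$ cannot all be $<\delta^2\rho$, $<\delta^4\rho^2$, $<\delta^6\rho^3$ for a $\delta\in(1/3,1/2)$ chosen as the zero of $f(\delta)=2(1-\delta^6)^{1/2}(1-\delta^2)^{1/2}/\sqrt{3}-1-\delta^4$. The whole point is that $2/\sqrt{3}>1$; it is precisely ``one symmetric function must be large'' and it is always the middle one that saves you in the conspiracy case.

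The alternative route you sketch --- factoring ${\bar\Delta}=(t-\nu)(t^2+A_1t+A_2)$, using $A_2\geq 0$, and claiming $A_2\gtrsim\rho^2$ to force $|\nu_2|=|\nu_3|\gtrsim\rho$ --- is circular as stated. With $|\nu|<\rho/9$, $|a_1|<\rho/3$, $|a_2|<\rho^2/27$ one only gets $|A_2|=|\nu^2+a_1\nu+a_2|\lesssim\rho^2/27$, which is perfectly consistent with $|\nu_2\nu_3|<\rho^2/81$; the nonnegativity $A_2\geq 0$ gives no contradiction. To show $A_2\gtrsim\rho^2$ you would need precisely the statement that $a_2$ cannot be small when $a_1,a_3$ are, which is the algebraic fact above. (The $A_2\geq 0$ device you reach for is in fact what the paper uses for the separation estimate in Lemma \ref{lem:nu:1sa:nuj}, not here.) So: right setup, right identification of the crux, missing resolution.
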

\begin{proof}
First show that there is $1/3<\delta<1/2$ such that
\begin{equation}
\label{eq:3ko:hikaku}
\max{\big\{\big|\rho^3-{\hat b}_0^2\big|^{1/3}, \;\;\big|\rho^2-2{\hat b}_0{\hat b}_1/3\big|^{1/2}, \;\;\big|\rho-{\hat b}_1^2/3\big|\big\}}\geq \delta^2\rho.
\end{equation}
In fact denoting $f(\delta)=2(1-\delta^6)^{1/2}(1-\delta^2)^{1/2}/\sqrt{3}-1-\delta^4$ it is easy to check that $f(1/3)>0$ and $f(1/2)<0$.  Take $1/3<\delta<1/2$ such that $f(\delta)=0$. If $|\rho^3-{\hat b}_0^2|^{1/3}<\delta^2\rho$ and $|\rho-{\hat b}_1^2/3|<\delta^2\rho$ then $|{\hat b}_0|\geq (1-\delta^6)^{1/2}\rho^{3/2}$ and $|{\hat b}_1|\geq \sqrt{3}(1-\delta^2)^{1/2}\rho^{1/2}$ hence
\begin{align*}
|\rho^2-2{\hat b}_0{\hat b}_1/3|\geq 2|{\hat b}_0{\hat b}_1|/3-\rho^2\geq \big(f(\delta)+\delta^4\big)\rho^2=\delta^4\rho^2.
\end{align*}
Thus \eqref{eq:3ko:hikaku} is proved. Thanks to Lemma \ref{lem:a_j:al}, taking $E(0, {\bar X}, 0)=1$ and $1/3<\delta$ into account, one can find a neighborhood ${\mathcal U}$ of $({\bar X}, 0)$ such that  
\[
\max{\big\{3|a_1(X, \ep)|, \; (3^3|a_2(X,\ep)|)^{1/2},\; (3^6|a_3(X,\ep)|)^{1/3}\big\}}\geq \rho,\quad (X,\ep)\in {\mathcal U}.
\]
Then the assertion follows from the relations between $\{\nu_i\}$ and $\{a_i\}$.
\end{proof}
\begin{lem}
\label{lem:nu:1sa:nuj}
Denote $\nu$ defined in \eqref{eq:defnu} by $\nu_1$ and by $\nu_j$, $j=2,3$ the other roots of ${\bar \Delta}=0$.  Then one can find a neighborhood ${\mathcal U}$ of $({\bar X},0)$ and $c_i>0$ such that
\begin{equation}
\label{eq:nu_1toj}
\text{if}\quad \nu_1+a_1<2\,c_1\,\rho,\;\;(X,\ep)\in {\mathcal U}\quad\text{then}\quad  |\nu_1-\nu_j|\geq c_2\,\rho,\;\;j=2,3.
\end{equation}
In particular $\nu_1(X,\ep)$ is smooth  in ${\mathcal U}\cap\{ \nu_1+a_1<2\,c_1\rho\}$.
\end{lem}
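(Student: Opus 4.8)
The plan is to deduce both conclusions from a single lower bound for $\dif_t{\bar\Delta}(\nu_1,X,\ep)$. Writing ${\bar\Delta}(t,X,\ep)=\prod_{j=1}^3(t-\nu_j)$ and recalling that $t^2+A_1t+A_2=(t-\nu_2)(t-\nu_3)$, so that $A_1=-(\nu_2+\nu_3)=\nu_1+a_1$ and $A_2=\nu_2\nu_3$, one has the identity
\[
\dif_t{\bar\Delta}(\nu_1,X,\ep)=(\nu_1-\nu_2)(\nu_1-\nu_3)=\nu_1^2+A_1\nu_1+A_2 .
\]
Two elementary facts will be used throughout. First, $A_2\geq 0$: indeed $a_3={\bar\Delta}(0,X,\ep)\geq 0$ and $a_3=-\nu_1A_2$ with $\nu_1\leq 0$, which gives $A_2\geq 0$ when $\nu_1<0$, and when $\nu_1=0$ it follows from ${\bar\Delta}(t)/t\to\nu_2\nu_3$ as $t\to 0+$ together with ${\bar\Delta}\geq 0$ on $[0,T)$. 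Second, Lemma \ref{lem:a_j:al} gives $a_j=O(\rho^j)$ near $({\bar X},0)$, so the standard a priori bound for a root $\nu$ of ${\bar\Delta}$ (from $|\nu|^3\leq|a_1||\nu|^2+|a_2||\nu|+|a_3|$) yields a constant $C$ with $|\nu_j|\leq C\rho$, $j=1,2,3$, on a suitable neighborhood; in particular $|A_1|\leq 2C\rho$, $|\nu_1-\nu_j|\leq 2C\rho$, and $C\rho<T$ there.

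The crux is the lower bound
\[
\max{\big\{\,|\nu_1|,\ A_2^{1/2}\,\big\}}\ \geq\ \rho/9 \qquad\text{on}\quad {\mathcal U}\cap\{\nu_1+a_1<2c_1\rho\},
\]
valid once $c_1\leq 1/18$. By Lemma \ref{lem:nu_j:al} there is $j\in\{1,2,3\}$ with $|\nu_j|\geq\rho/9$. If $j=1$ we are done. If $j\in\{2,3\}$ and $\nu_2,\nu_3$ are complex conjugate, then $A_2=|\nu_2|^2\geq\rho^2/81$. If $j\in\{2,3\}$ and $\nu_2,\nu_3$ are real, then $A_2\geq 0$ forces them to have the same sign; they cannot both be $\leq 0$, for then $A_1=|\nu_2|+|\nu_3|\geq\rho/9>2c_1\rho$ contradicts the hypothesis; hence both are positive, and then ${\bar\Delta}\geq 0$ on $[0,T)$ together with $\nu_1\leq 0$ forces $\nu_2=\nu_3$ — otherwise ${\bar\Delta}$ would be negative on the interval $(\nu_2,\nu_3)\subset(0,T)$ — so again $A_2=\nu_2^2\geq\rho^2/81$. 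This establishes the bound.

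Granting it, I split $\dif_t{\bar\Delta}(\nu_1)=(\nu_1^2+A_2)+A_1\nu_1$. When $A_1\leq 0$ the cross term $A_1\nu_1$ is nonnegative, so $\dif_t{\bar\Delta}(\nu_1)\geq\nu_1^2+A_2\geq\rho^2/81$; when $0\leq A_1<2c_1\rho$ we have $|A_1\nu_1|\leq 2c_1C\rho^2$, so $\dif_t{\bar\Delta}(\nu_1)\geq\rho^2/81-2c_1C\rho^2\geq\rho^2/162$ once $c_1\leq\min\{1/18,\,1/(324C)\}$. Combined with $|\nu_1-\nu_j|\leq 2C\rho$ this gives
\[
|\nu_1-\nu_2|\,|\nu_1-\nu_3|=\dif_t{\bar\Delta}(\nu_1,X,\ep)\geq\rho^2/162 ,
\]
hence $|\nu_1-\nu_j|\geq\rho/(324C)$ for $j=2,3$, which is the assertion with $c_2=1/(324C)$. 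Finally, on ${\mathcal U}\cap\{\nu_1+a_1<2c_1\rho\}$ one has $\rho>0$ (if $\rho=0$ then all $a_j=0$, ${\bar\Delta}=t^3$, so $A_1=0$ and the defining inequality fails), hence $\dif_t{\bar\Delta}(\nu_1,X,\ep)>0$ there, so $\nu_1$ is a simple real root of ${\bar\Delta}(\cdot,X,\ep)$ and the smoothness of $\nu_1(X,\ep)$ follows from the implicit function theorem.

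The main obstacle is the lower bound $\max\{|\nu_1|,A_2^{1/2}\}\geq\rho/9$: Lemma \ref{lem:nu_j:al} only tells us that \emph{some} root has size of order $\rho$, and when that root is one of the pair $\nu_2,\nu_3$ one must rule out — using $A_2\geq 0$, the one-sided hypothesis $A_1<2c_1\rho$, and the \emph{global} sign condition ${\bar\Delta}\geq 0$ on $[0,T)$ (not merely positivity near $t=0$) — the configurations in which $\nu_2\nu_3$ is small. The remaining steps are routine manipulations of the elementary symmetric functions of the roots.
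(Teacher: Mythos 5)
Your proof is correct, and it takes a genuinely different route from the paper's. The paper works directly with $\nu_2,\nu_3=-A_1/2\pm\sqrt{D}$ (or $-A_1/2\pm i\sqrt{|D|}$) and produces separate lower bounds on each $|\nu_1-\nu_j|$ by a two-way split on the sign of $D$, using $|\mathsf{Re}\,\nu_j|<c_1\rho$ and, in the $D>0$ subcase, the observation $-A_1/2+\sqrt{D}\leq 0$ to force $|\nu_2|,|\nu_3|\leq 2c_1\rho$. You instead pass to the product $\dif_t\bar\Delta(\nu_1)=(\nu_1-\nu_2)(\nu_1-\nu_3)=\nu_1^2+A_1\nu_1+A_2$, establish a lower bound of order $\rho^2$ for it via the elementary symmetric functions $A_1,A_2$, and recover the individual bounds by dividing by the a priori upper bound $|\nu_1-\nu_j|\leq 2C\rho$. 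Both approaches invoke exactly the same external inputs — Lemma~\ref{lem:nu_j:al}, the sign constraint $\nu_1\leq 0$, the global condition $\bar\Delta\geq 0$ on $[0,T)$, and the root bound $|\nu_j|\precsim\rho$ (which the paper leaves implicit via Lemma~\ref{lem:a_j:al} and you make explicit by a Cauchy-type estimate). What each approach buys: the paper's case split gives the cleaner constants $c_1=c_2=1/27$, which then feed directly into the definition of $\psi$ and Proposition~\ref{pro:bound:Dis}; your product argument is more uniform (no branching on $\sqrt{D}$) and ties the separation bound to exactly the quantity $\dif_t\bar\Delta(\nu_1)$ needed for the implicit-function-theorem smoothness conclusion, at the expense of a cruder constant $c_2=1/(324C)$. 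One small point worth polishing: in your discussion of the real-root case you write that $A_2\geq 0$ forces $\nu_2,\nu_3$ to have the same sign and then treat "both $\leq 0$" versus "both $>0$"; the degenerate subcase with one root $=0$ and the other $>0$ is not literally covered by this dichotomy, though it is in fact excluded by the same observation ($\bar\Delta<0$ on $(0,\nu_3)$) that you already use, so this is a matter of phrasing rather than a gap.
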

\begin{proof}  Write
\begin{equation}
\label{eq:Del:bunkai}
{\bar \Delta}(t)=\prod_{j=1}^3(t-\nu_j)=(t-\nu_1)((t+A_1/2)^2-D)
\end{equation}
so that ${\mathsf{Re}}\,\nu_j=-A_1/2$, $j=2, 3$ where $A_1=\nu_1+a_1$. 
Take $c_1=1/27$ and assume $A_1<2c_1\rho$. First note that if ${\mathsf{Re}}\,\nu_j\geq c_1\rho$, $j=2,3$ it is clear that $
|\nu_1-\nu_j|\geq |\nu_1-{\mathsf{Re}}\,\nu_j|\geq {\mathsf{Re}}\,\nu_j\geq c_1\rho$ because $\nu_1\leq 0$ then we may assume
\begin{equation}
\label{eq:nuj:bound}
-c_1\rho<{\mathsf{Re}}\,\nu_j=-A_1/2<c_1\rho,\quad j=2,3.
\end{equation}
If $D>0$ then one has $-A_1/2+\sqrt{D}\leq 0$. Otherwise ${\bar \Delta}(t)$ would be negative for some $t>0$ near $-A_1/2+\sqrt{D}$ which is a contradiction. Thus $\sqrt{D}\leq A_1/2\leq c_1\,\rho$ which shows that $
|\nu_2|$, $|\nu_3|\leq |A_1|/2+\sqrt{D}\leq 2c_1\,\rho<\rho/9$ 
hence $|\nu_1|\geq \rho/9=3c_1\rho$ by Lemma \ref{lem:nu_j:al}. Therefore $
|\nu_1-\nu_j|\geq |\nu_1|-|\nu_j|\geq c_1 \rho$. 
Turn to the case $D\leq 0$ such that $\nu_2$, $\nu_3=-A_1/2\pm i\sqrt{|D|}$. Thanks to Lemma \ref{lem:nu_j:al} again either $|\nu_1|\geq 3c_1\rho$ or $|\nu_2|=|\nu_3|\geq 3c_1\rho$. If $|\nu_1|\geq 3c_1\rho$ then it follows from \eqref{eq:nuj:bound} that
\[
|\nu_1-\nu_j|\geq |\nu_1+A_1/2|\geq |\nu_1|-|A_1|/2\geq 2c_1 \rho.
\]
If $|\nu_2|=|\nu_3|\geq 3c_1\rho$ so that $|A_1|/2+\sqrt{|D|}\geq 3c_1\rho$ hence $\sqrt{|D|}\geq 3c_1\rho-|A_1|/2\geq 2c_1\rho$ which proves $
|\nu_1-\nu_j|\geq \sqrt{|D|}\geq 2c_1\rho$ 
hence the assertion. 
\end{proof}
Now define $\psi(X,\ep)$ which plays a crucial role in our arguments deriving weighted energy estimates. Choose $\chi(s)\in C^{\infty}(\R)$ such that $0\leq \chi(s)\leq 1$ with $\chi(s)=1$ if $s\leq 0$ and $\chi(s)=0$ for $s\geq 1$.  Define
\[
\psi(X,\ep)=-\chi\Big(\frac{\nu_1+a_1}{2c_1\rho}\Big)\frac{\nu_1+a_1}{2},\quad \ep\neq 0.
\]
\begin{prop}
\label{pro:bound:Dis}One can find a neighborhood ${\mathcal U}$ of $({\bar X},0)$  such that
\begin{equation}
\label{eq:bound:Dis}
{\bar \Delta}(t,X,\ep)\geq {\bar c}\min{\big\{t^2, (t-\psi(X,\ep))^2\big\}}\,(t+\rho(X,\ep))
\end{equation}
holds for  $(X,\ep)\in{\mathcal U}$, $\ep\neq 0$ and $t\in[0, T]$ where ${\bar c}=1/32$.
\end{prop}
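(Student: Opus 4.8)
The plan is to split the interval $[0,T]$ into the region where $\psi(X,\ep)=0$ and the region where the cutoff $\chi$ is supported, handling each separately, and in the latter region to localize further near $t=0$, near $t=\nu_1$, and away from both.

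First I would treat the easy regime. On $\mathcal U\cap\{\nu_1+a_1\geq 2c_1\rho\}$ we have $\psi=0$, so $\min\{t^2,(t-\psi)^2\}=t^2$ and the claim reduces to $\bar\Delta(t)\geq \bar c\,t^2(t+\rho)$. Here I would use the factorization ${\bar \Delta}(t)=(t-\nu_1)((t+A_1/2)^2-D)$ from \eqref{eq:Del:bunkai} together with $\nu_1\leq 0$, $A_1=\nu_1+a_1\geq 2c_1\rho>0$ and the sign information on $D$: for $t\geq 0$ one has $t-\nu_1\geq t$ and $(t+A_1/2)^2-D\geq$ some definite multiple of $(t+\rho)$ — indeed if $D\leq 0$ then $(t+A_1/2)^2-D\geq A_1^2/4\gtrsim \rho^2$ and also $\geq t^2$, giving $\gtrsim t(t+\rho)$ after a geometric-mean bookkeeping, while if $0<D$ then (as in the proof of Lemma \ref{lem:nu:1sa:nuj}, but now without assuming $A_1$ small) I would still control $\sqrt D$ by $A_1/2$ using $\bar\Delta\geq0$ on $t\geq0$, so $\nu_2,\nu_3\leq 0$ and $(t-\nu_2)(t-\nu_3)\geq t\cdot(t+|\nu_2+\nu_3|)\gtrsim t(t+\rho)$ once one checks $|\nu_2|+|\nu_3|\gtrsim\rho$ from Lemma \ref{lem:nu_j:al} or from $A_1\gtrsim\rho$. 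One has to be a little careful to get the numerical constant $1/32$, but the structure is routine polynomial estimation.

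Next, on $\mathcal U\cap\{\nu_1+a_1< 2c_1\rho\}$, Lemma \ref{lem:nu:1sa:nuj} gives $|\nu_1-\nu_j|\geq c_2\rho$ for $j=2,3$, hence ${\mathsf{Re}}\,\nu_j=-A_1/2$ with $-c_1\rho<-A_1/2<c_1\rho$ (after disposing of the case ${\mathsf Re}\,\nu_j\geq c_1\rho$ as in that lemma), and $\psi=-\chi(\cdots)(\nu_1+a_1)/2$, so $\psi$ is comparable to $-\nu_1$ up to $O(\rho)$; more precisely $t-\psi$ and $t-\nu_1$ differ by $O(\rho)+$ the $A_1$-correction. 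I would write ${\bar\Delta}(t)=(t-\nu_1)\big((t+A_1/2)^2-D\big)$ and bound $(t+A_1/2)^2-D$ from below by a constant multiple of $(t-\nu_1)^2+\rho(t+\rho)$-type quantity using $|t-\nu_j|^2=(t+A_1/2)^2+|D|$ when $D\le0$ and $|t-\nu_1|\ge c_2\rho$: since $|t-\nu_1|\le |t-\nu_j|+|\nu_1-\nu_j|$ is not what I want, rather I want a lower bound, I would argue $(t-\nu_2)(t-\nu_3)\gtrsim (t+\rho)$ (both roots have real part $O(\rho)$ and, if complex, bounded imaginary part, while if real they are $\le$ small multiples of $\rho$ so $t-\nu_j\gtrsim t$... wait, they could be positive up to $c_1\rho$, but then $t-\nu_j\ge -c_1\rho$ which for $t\ge$ a multiple of $\rho$ is fine and for small $t$ needs the $D>0$ argument forcing $\nu_j\le0$ again). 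The upshot is ${\bar\Delta}(t)\gtrsim (t-\nu_1)(t+\rho)$, and then I must compare $(t-\nu_1)$ with $\min\{t,t-\psi\}$: when the cutoff is fully on, $\psi=-(\nu_1+a_1)/2$ and $|t-\nu_1|\ge |t-\psi|-|\nu_1+\psi|\gtrsim|t-\psi|$ once $|t-\psi|$ dominates the $O(\rho)$ gap, whereas when $|t-\psi|\lesssim\rho$ one uses instead $|t-\nu_1|\ge c_2\rho\gtrsim$ that scale, and when $t$ itself is the smaller of the two one uses $|t-\nu_1|\ge t$ since $\nu_1\le0$; a case split on which of $t$, $t-\psi$ is smaller, combined with the transition region $0<\nu_1+a_1<2c_1\rho$ where $\chi$ interpolates, closes it.

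The main obstacle, I expect, is the transition region where $\chi$ is strictly between $0$ and $1$: there $\psi$ is neither $0$ nor $-(\nu_1+a_1)/2$, so neither of the two clean comparisons is available directly, and one must check that $\min\{t^2,(t-\psi)^2\}(t+\rho)$ is still dominated by $\bar\Delta$ uniformly along the homotopy in $\chi$. The saving fact is that in that region $\nu_1+a_1\simeq\rho$, so $\psi=O(\rho)$, hence $|t-\psi|\le t+O(\rho)$ and also $t-\nu_1\ge c_2\rho\gtrsim$ everything at scale $\rho$, so both $t^2(t+\rho)$ and $(t-\psi)^2(t+\rho)$ are $\lesssim (t+\rho)^3\lesssim (t-\nu_1)(t+\rho)^2\lesssim\bar\Delta$ when $t\lesssim\rho$, while for $t\gtrsim\rho$ we have $t\simeq t-\nu_1\simeq t+\rho$ up to constants and the estimate is immediate. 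Tracking constants to land exactly on ${\bar c}=1/32$ (and choosing $\mathcal U$ small enough that the $O(\rho^{3/2})$ and $\dif_tE$, $D$-sign error terms from Lemmas \ref{lem:a_j:al} and \ref{lem:nu:1sa:nuj} are absorbed) is the tedious but non-conceptual part.
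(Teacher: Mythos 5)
Your overall strategy — factor ${\bar\Delta}(t)=(t-\nu_1)\big((t+A_1/2)^2-D\big)$, use $\nu_1\le 0$ together with the root estimates of Lemmas \ref{lem:nu_j:al} and \ref{lem:nu:1sa:nuj}, and split into cases according to which root has size $\gtrsim\rho$ — is the same as the paper's. But you decompose by the value of the cutoff $\chi$ (three regimes $\chi=0$, $0<\chi<1$, $\chi=1$), whereas the paper dichotomizes on the \emph{sign of $A_1=\nu_1+a_1$}, and this is not cosmetic. When $A_1\ge 0$ we always have $\psi=-\chi\cdot A_1/2\le 0$ regardless of the value of $\chi$, hence $t-\psi\ge t$ for all $t\ge 0$ and $\min\{t^2,(t-\psi)^2\}=t^2$. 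So over all of $A_1\ge 0$ — which contains your entire ``transition region'' $0<A_1<2c_1\rho$ — the inequality to be proved is exactly $\bar\Delta\ge\bar c\,t^2(t+\rho)$, identical to what you call the easy regime. There is no $\chi$-homotopy to track; the ``main obstacle'' you flag disappears as soon as the cases are organized by the sign of $A_1$. Your way of attacking it (bounding $|t-\psi|\le t+O(\rho)$ and dominating by $(t+\rho)^3$) is both more work than needed and looks at the wrong sign: the useful observation is the reverse one, that $\psi\le 0$ forces $t-\psi\ge t$ and removes $(t-\psi)^2$ from the $\min$ entirely.

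The remaining case $A_1<0$ forces $\chi=1$, $\psi=-A_1/2>0$, and $D\le 0$ (else $\bar\Delta$ would have a positive simple root). The paper then argues as you roughly sketch: $(t-\psi)^2-D\ge(t-\psi)^2$, and the factor $t-\nu_1$ is controlled by splitting on whether $|\nu_1|\ge\delta\rho$ or $|\nu_2|=|\nu_3|\ge\delta\rho$ (Lemma \ref{lem:nu_j:al}), with the latter further split on $\psi\gtrless\sqrt{|D|}$ and $t\gtrless\psi/2$ to extract $t+\rho$ from $|t-\psi|+\sqrt{|D|}$. Those last two sub-splits are where the $\sqrt2$ factors and the explicit $\bar c=1/32$ come from; your sketch gestures at them (``a case split on which of $t$, $t-\psi$ is smaller'') but does not pin them down. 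In short: right structure, but a suboptimal partition that leads you to worry about a region that is automatic under the paper's sign-of-$A_1$ decomposition.
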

\begin{proof} Set $\delta=1/9$ in this proof.  First check that one can find $c\geq {\bar c}$ such that 
\begin{equation}
\label{eq:A_1:sei}
{\bar\Delta}(t,X,\ep)\geq c\,t^2(t+\rho)\quad \text{if}\quad A_1=\nu_1+a_1\geq 0.
\end{equation}
Let $D>0$ in \eqref{eq:Del:bunkai}. It was seen in the proof of Lemma \ref{lem:nu:1sa:nuj} that $-A_1/2+\sqrt{D}\leq 0$ so that $\nu_2, \nu_3=-A_1/2\pm \sqrt{D}\leq 0$. If $|\nu_1|\geq \delta\rho$ then $t-\nu_1=t+|\nu_1|\geq t+\delta\rho$ hence  $
\delta^{-1}(t-\nu_1)\geq t+\rho$ and $t-\nu_i=t+|\nu_i|\geq t$ then \eqref{eq:A_1:sei} holds with $c=\delta$. Consider the case  $D\leq 0$ so that $\nu_2, \nu_3=-A_1/2\pm i\sqrt{|D|}$. If $|\nu_1|\geq \delta\rho$ then $\delta^{-1}(t-\nu_1)\geq t+\rho$ as above and $|t-\nu_i|\geq |t+A_1/2|\geq t$ thus \eqref{eq:A_1:sei} holds with $c=\delta$. If $|\nu_2|=|\nu_3|\geq \delta\rho$ then $A_1/2+\sqrt{|D|}\geq \delta\rho$. Since
\begin{align*}
(t-\nu_2)(t-\nu_3)\geq \big(t+A_1/2+\sqrt{|D|}\,\big)^2/2\geq (t+\delta\rho)^2/2\geq \delta\,t\,(t+\rho)/2
\end{align*}
then  \eqref{eq:A_1:sei} holds with $c=\delta/2$.

Turn to the case $A_1<0$. Since $\psi=-(\nu_1+a_1)/2>0$, one can write $
{\bar\Delta}(t)=(t-\nu_1)((t-\psi)^2-D)$. 
Note that $D\leq 0$ otherwise $\psi+\sqrt{D}>0$ would be a positive simple root of ${\bar\Delta}(t)$ and a contradiction. Then $
(t-\psi)^2-D=(t-\psi)^2+|D|\geq (t-\psi)^2$. 
Consider the case $|\nu_1|\geq \delta\rho$.  Recalling $t-\nu_1=t+|\nu_1|\geq \delta(t+\rho)$ we get 
\begin{equation}
\label{eq:A_1:fu}
{\bar\Delta}(t,X,\ep)\geq c\,(t-\psi)^2(t+\rho)
\end{equation}
with $c=\delta$. When $|\nu_2|=|\nu_3|=\big|\psi\pm i\sqrt{|D|}\,\big|=\sqrt{\psi^2+|D|}\geq \delta\rho$ one has
\[
(t-\nu_2)(t-\nu_3)=(t-\psi)^2+|D|\geq \big(|t-\psi|+\sqrt{|D|}\,\big)^2/2.
\]
Assume $\psi\geq \sqrt{|D|}$ so that $\sqrt{2}\,\psi\geq \delta\rho$. For $0\leq t\leq \psi/2$  hence $t\leq |t-\psi|$ and $\psi/2\leq |t-\psi|$ one has
\begin{align*}
(1-\gamma)|t-\psi|+\gamma|t-\psi|\geq (1-\gamma)t+\gamma\psi/2
\geq \delta(2\sqrt{2}+\delta)^{-1}(t+\rho)
\end{align*}
with $\gamma=2\sqrt{2}/(2\sqrt{2}+\delta)$. Since $|t-\psi|+\sqrt{|D|}\geq |t-\psi|\geq t$ and $|t-\nu_1|=t+|\nu_1|\geq t$ it is clear that \eqref{eq:A_1:sei} holds with $c=\delta/(2\sqrt{2}+\delta)$. For $\psi/2\leq t$ such that $|t-\psi|\leq t$ one sees
\[
t-\nu_1\geq t= (1-\gamma)t+\gamma t\geq (1-\gamma)t+\gamma\psi/2\geq \delta(2\sqrt{2}+\delta)^{-1}(t+\rho)
\]
and hence $(t-\nu_1)((t-\psi)^2+|D|)\geq c\,(t+\rho)(t-\psi)^2$ which is \eqref{eq:A_1:fu} with $c=\delta/(2\sqrt{2}+\delta)$. Next assume $\sqrt{|D|}\geq \psi$ so that $\sqrt{2}\sqrt{|D|}\geq \delta\rho$. For $0\leq t\leq \psi/2$ one has $|t-\psi|\geq t$ and hence $
|t-\psi|+\sqrt{|D|}\geq t+\delta\rho/\sqrt{2}\geq (\delta/\sqrt{2})(t+\rho)$. 
Noting $|t-\nu_1|=t+|\nu_1|\geq t$ it is clear that \eqref{eq:A_1:sei} holds with $c=\delta/2\sqrt{2}$. For $\psi/2\leq t$ we see that
\[
|t-\psi|+\sqrt{|D|}\geq t-|\psi|+\sqrt{|D|}\geq t,\quad |t-\psi|+\sqrt{|D|}\geq \sqrt{|D|}\geq \delta\rho/\sqrt{2}
\]
which shows that $|t-\psi|+\sqrt{|D|}\geq \delta(\sqrt{2}+\delta)^{-1}(t+\rho)$. Recalling $|t-\nu_1|=t+|\nu_1|\geq t$ again one has \eqref{eq:A_1:sei} with $c=\delta/2(\sqrt{2}+\delta)$. Thus choosing ${\bar c}=1/32<1/(18\sqrt{2}+2)=\delta/2(\sqrt{2}+\delta)$ the proof is completed.
\end{proof}
\begin{lem}
\label{lem:pert:Dis}One can find a neighborhood ${\mathcal U}$ of $({\bar X},0)$ and  $C^*>0$ such that
\begin{equation}
\label{eq:pert:Dis}
\frac{|\dif_t{ \Delta}(t,X,\ep)|}{\Delta(t, X,\ep)}\leq C^*\Big(\frac{1}{t}+\frac{1}{|t-\psi|+\sqrt{a}\,\ep\,}\Big),\quad (X, \ep)\in {\mathcal U},\;\;\ep>0
\end{equation}
holds for  $t\in (0, T]$.
\end{lem}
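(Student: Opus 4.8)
The plan is to estimate $\dif_t\Delta/\Delta$ by exploiting the factorization already available. Since $\Delta = {\tilde e}\,{\bar\Delta}$ with ${\tilde e}>0$ smooth, we have $\dif_t\Delta/\Delta = \dif_t{\bar\Delta}/{\bar\Delta} + \dif_t{\tilde e}/{\tilde e}$, and the second term is bounded, hence absorbed into the $1/t$ contribution (say, dominated by a constant times $1/t$ on $(0,T]$ after possibly shrinking $T$; or simply note it is $\precsim 1 \precsim 1/t$). So the real task is to bound $\dif_t{\bar\Delta}/{\bar\Delta}$. Recall ${\bar\Delta}(t,X,\ep)=\prod_{j=1}^3(t-\nu_j(X,\ep))$, so that
\[
\frac{\dif_t{\bar\Delta}}{{\bar\Delta}} = \sum_{j=1}^3\frac{1}{t-\nu_j}.
\]
With $\nu_1=\nu$ real and $\le 0$, and $\nu_2,\nu_3 = -A_1/2\pm\sqrt{D}$ (either both real $\le 0$, or complex conjugates with real part $-A_1/2$), the plan is to bound each summand by the right-hand side of \eqref{eq:pert:Dis}, that is by $C^*(1/t + 1/(|t-\psi|+\sqrt{a}\,\ep))$.

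The key case split mirrors the one in Lemma \ref{lem:nu:1sa:nuj}. \textbf{Case $A_1\ge 0$ (equivalently $\psi=0$ or $\psi$ small).} Here all the $t-\nu_j$ have modulus $\ge t$ (since $\nu_1\le 0$ and, when $A_1\ge0$, either $\nu_2,\nu_3\le 0$ or $\mathrm{Re}\,\nu_j=-A_1/2\le 0$), so $|1/(t-\nu_j)|\le 1/t$ and the bound holds with $C^*=3$; the term $\sqrt{a}\,\ep$ is not even needed. \textbf{Case $A_1<0$, so $\psi=-(A_1)/2>0$ and we are in the region where $\chi=1$, i.e. $A_1<2c_1\rho$.} By Lemma \ref{lem:nu:1sa:nuj}, $|\nu_1-\nu_j|\ge c_2\rho$ for $j=2,3$, and moreover (as shown inside that proof) $D\le 0$, so $\nu_2,\nu_3 = \psi\pm i\sqrt{|D|}$ and $(t-\psi)^2+|D| = (t-\nu_2)(t-\nu_3)$. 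Then
\[
\Bigl|\frac{1}{t-\nu_2}+\frac{1}{t-\nu_3}\Bigr| = \frac{2|t-\psi|}{(t-\psi)^2+|D|} \le \frac{2}{|t-\psi|},
\]
and we must further show $2/|t-\psi|\precsim 1/t + 1/(|t-\psi|+\sqrt a\,\ep)$. For the $\nu_1$ term, $|t-\nu_1| = t+|\nu_1|$; when $|\nu_1|\ge\delta\rho$ this is $\gtrsim t+\rho\gtrsim t + a$ (recall $a\simeq t+\rho$ near the reference point) hence $1/(t-\nu_1)\precsim 1/t$, and when $|\nu_1|<\delta\rho$ Lemma \ref{lem:nu_j:al} forces $\sqrt{|D|}$ or $\psi$ to be $\gtrsim\rho$, which feeds back into controlling the other terms; in any event $1/(t-\nu_1)\le 1/t$ trivially.

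The main obstacle — and the one place real work is needed — is reconciling the bound $1/|t-\psi|$ coming from $\nu_2,\nu_3$ with the stated right-hand side $1/t + 1/(|t-\psi|+\sqrt a\,\ep)$ when $t$ is close to $\psi$ \emph{and} $\sqrt{|D|}$ is small, because then $1/|t-\psi|$ blows up faster than $1/(|t-\psi|+\sqrt a\,\ep)$ unless we pay with $|D|$. The resolution is to \emph{not} throw away $|D|$: write $1/(t-\nu_2)+1/(t-\nu_3) = 2(t-\psi)/((t-\psi)^2+|D|)$ and use $(t-\psi)^2+|D|\ge \tfrac12((t-\psi)^2+|D|) + \tfrac12|D| \gtrsim |t-\psi|\sqrt{|D|}$ type estimates, or more directly $2|t-\psi|/((t-\psi)^2+|D|)\le 1/\sqrt{|D|}$, and then show $\sqrt{|D|}\gtrsim \sqrt a\,\ep$ in the relevant regime. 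This last inequality is where Lemma \ref{lem:a_j:al} (giving $a_3 = E(\rho^3-{\hat b}_0^2)$, i.e. $|\nu_1\nu_2\nu_3|\precsim\rho^3$ together with a lower bound when $|\nu_1|$ is small) and the relation $a = e(t+\rho)$ with $\rho\ge\ep^2$ must be combined: roughly, $|D| = \psi^2 - \nu_2\nu_3$-type algebra and the constraint ${\bar\Delta}\ge0$ on $[0,T]$ bound $|D|$ from below in terms of $\rho\ge\ep^2$, and $\sqrt a\,\ep\simeq\sqrt{t+\rho}\,\ep\precsim\sqrt{\rho}\cdot\sqrt{\rho}=\rho$ when $t\precsim\rho$, matching. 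I would organize the proof by first disposing of $\dif_t{\tilde e}/{\tilde e}$, then handling $A_1\ge0$, then in the region $A_1<0$ splitting on whether $|\nu_1|\ge\delta\rho$ and on the size of $\sqrt{|D|}$ relative to $\sqrt a\,\ep$, quoting Lemmas \ref{lem:nu_j:al}, \ref{lem:nu:1sa:nuj} and \ref{lem:a_j:al} at each branch, and choosing $C^*$ as the max of the finitely many constants produced.
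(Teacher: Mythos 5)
Your setup (peel off $\dif_t{\tilde e}/{\tilde e}$, split on the sign of $A_1=\nu_1+a_1$, use $\dif_t{\bar\Delta}/{\bar\Delta}=\sum(t-\nu_j)^{-1}$, and for $A_1\geq0$ note $|t-\nu_j|\geq t$) agrees with the paper, and your identification of the hard point — $2|t-\psi|/((t-\psi)^2+|D|)$ when both $|t-\psi|$ and $|D|$ are small — is exactly right. But the proposal then hinges entirely on the claim $\sqrt{|D|}\gtrsim\sqrt{a}\,\ep$, and the argument you sketch for it does not work. You write that the constraint ${\bar\Delta}\geq0$ together with $\rho\geq\ep^2$ bounds $|D|$ from below ``in terms of $\rho$,'' and that $\sqrt{a}\,\ep\precsim\rho$ ``matches''; read literally this asks for $\sqrt{|D|}\gtrsim\rho$, i.e.\ $|D|\gtrsim\rho^2$, which is false. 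If $\Delta(\cdot,X,0)$ has a genuine double zero at some $t_0=\psi>0$ (the generic effectively hyperbolic situation) then $|D|\to0$ as $\ep\to0$ while $\rho\to\al(X)>0$ stays bounded away from zero. Nonnegativity of ${\bar\Delta}$ gives only $|D|\geq0$, not a quantitative lower bound; the quantitative gain can only come from the $\ep$-perturbation $\Delta(\psi,X,\ep)-\Delta(\psi,X,0)=4\big(a(\psi,\ep)^3-a(\psi,0)^3\big)\geq 4e\,\ep^2\,a(\psi,\ep)^2$, combined with ${\bar\Delta}(\psi)=(\psi-\nu_1)|D|$ and $\psi-\nu_1\precsim\rho\simeq a(\psi)$ to yield $|D|\gtrsim a(\psi)\,\ep^2$. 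That chain is absent from your sketch, so the key inequality is unproved and the argument as written has a gap.

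For contrast, the paper sidesteps the question of lower-bounding $|D|$ altogether. It observes that it suffices to prove \eqref{eq:pert:Dis} with $\Delta(t,X,\sqrt{2}\ep)={\tilde\Delta}(t,X,\ep)$ (this is the form actually used in Lemma \ref{lem:dif:t:Lam}, where $\Delta_M$ sits at $\sqrt{2}\ep'$ while $\psi,\rho$ sit at $\ep'$), and then splits ${\tilde\Delta}={\tilde e}({\bar\Delta}+{\bar\Delta}_r)$ with ${\bar\Delta}_r\gtrsim a^2\ep^2$. In the bad regime $|D|<a\ep^2$, $|t-\psi|<\sqrt{a}\,\ep$ one simply discards ${\bar\Delta}$ from the denominator and estimates $|\dif_t{\bar\Delta}|/{\bar\Delta}_r$, together with $|\dif_t{\bar\Delta}_r|/{\bar\Delta}_r\precsim 1/t$. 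So the cushion $a^2\ep^2$ you are looking for appears in the denominator directly as ${\bar\Delta}_r$, rather than through a lower bound on $|D|$. If you want to push your variant through you must supply the mechanism above (perturbation of $a^3$, not just $\rho\geq\ep^2$); otherwise it is cleaner to switch to the paper's regularized discriminant.
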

\begin{proof} It will suffice to show \eqref{eq:pert:Dis} for ${\Delta}(t,X,\sqrt{2}\ep)$ which we denote by ${\tilde \Delta}(t, X,\ep)$. It is clear that
\[
{\tilde \Delta}=\Delta+4e^3\big(3(t+\rho)^2\ep^2+3(t+\rho)\ep^4+\ep^6\big)=\Delta+\Delta_r.
\]
Writing ${\tilde \Delta}={\tilde e}\big({\bar\Delta}+{\bar \Delta}_r\big)$ with $\Delta_r={\tilde e}{\bar \Delta}_r$ it suffices to show the assertion for  ${\bar \Delta}+{\bar\Delta}_r$ instead of ${\tilde \Delta}$. Note that 
\begin{equation}
\label{eq:per:a}
|\dif_t{\bar\Delta}_r|/\bar\Delta_r\leq C\big(1+1/(t+\rho)\big)\leq C'/t
\end{equation}
always holds. Write ${\bar\Delta}=\prod_{j=1}^3(t-\nu_j)$ and note that $
\dif_t{\bar\Delta}/{\bar\Delta}=\sum_{j=1}^3(t-\nu_j)^{-1}$. 
When $A_1\geq 0$ we see from the proof of Proposition \ref{pro:bound:Dis} that $|t-\nu_j|\geq t$ hence $
\big|\dif_t{\bar\Delta}/{\bar\Delta}\big|\leq 3/t$. 
Therefore one has
\begin{gather*}
|\dif_t{\tilde \Delta}|/{\tilde \Delta}\leq |\dif_t{\bar\Delta}|/\big({\bar\Delta}+{\bar\Delta}_r\big)+|\dif_t{\bar\Delta}_r|/\big({\bar\Delta}+{\bar\Delta}_r\big)\\
\leq |\dif_t{\bar\Delta}|/{\bar\Delta}+|\dif_t{\bar\Delta}_r|/{\bar\Delta}_r
\end{gather*}
which proves the assertion. Let $A_1<0$ then $
{\bar\Delta}=(t-\nu_1)((t-\psi)^2-D)$ 
where $\psi>0$ and $D\leq 0$ as seen in the proof of Proposition \ref{pro:bound:Dis}. If $|D|\geq a\ep^2$ 
\[
|t-\psi|(|t-\psi|+\sqrt{a}\,\ep)\leq \sqrt{2}((t-\psi)^2+|D|)
\]
which shows the assertion since $|t-\nu_1|=t+|\nu_1|\geq t$. Similarly if $|t-\psi|\geq \sqrt{a}\,\ep$ one has $|t-\psi|(|t-\psi|+\sqrt{a}\,\ep)\leq 2(t-\psi)^2\leq 2((t-\psi)^2+|D|)$ hence the assertion. If $|D|<a\ep^2$ and $|t-\psi|<\sqrt{a}\,\ep$ it follows that
\[
|\dif_t{\bar\Delta}|\leq (t-\psi)^2+|D|+2|t-\nu_1||t-\psi|\leq 2a\ep^2+Ca^{3/2}\ep
\]
because $|t-\nu_1|\leq Ca$. In view of $C{\bar\Delta}_r\geq a^2\ep^2$ one concludes that
\begin{gather*}
|\dif_t{\bar\Delta}|/\big({\bar\Delta}+{\bar\Delta}_r\big)\leq |\dif_t{\bar\Delta}|/{\bar\Delta}_r\leq C\,\big(2a\ep^2+Ca^{3/2}\ep\big)/(a^2\ep^2)\\
\leq C\Big(\frac{1}{a}+\frac{1}{\sqrt{a}\,\ep}\Big)\leq C'\Big(\frac{1}{t}+\frac{1}{|t-\psi|+\sqrt{a}\,\ep}\Big)
\end{gather*}
which together with \eqref{eq:per:a} proves the assertion.
\end{proof}
%

\section{Localized symbols}
\label{sec:kakutyo}

In the preceding Sections \ref{subsec:coeff:dis} and \ref{subsec:bound:dis} all symbols we have studied are defined in some conic (in $\xi$) neighborhood of $(X,\ep)=({\bar X},0)$ or $X={\bar X}$. 
In this section we define symbols on $\R^d\times\R^d$ which localizes such symbols around $(X,\ep)=({\bar X},0)$ or $X={\bar X}$ with a parameter $M$.

\subsection{Localization via localized coordinates functions}

Let ${\bar X}=(0, {\bar\xi})$ with $|{\bar\xi}|=1$. Let $\chi(s)\in C^{\infty}(\R)$ be equal to $1$ in $|s|\leq 1$, vanishes in $|s|\geq 2$ such that $0\leq \chi(s)\leq 1$. Define $y(x)$ and $\eta(\xi)$ by
\begin{align*}
y_j(x)=\chi(M^2 x_j)x_j,\quad
 \eta_j(\xi)=\chi(M^2(\xi_j\lr{\xi}^{-1}-{\bar\xi}_j))(\xi_j-{\bar\xi}_j\lr{\xi})+{\bar\xi}_j\lr{\xi}
\end{align*}
for $j=1,2,\ldots, d$ with $
 \lr{\xi}=(\gamma^2+|\xi|^2)^{1/2}$ 
where $M$ and $\gamma$ are large positive parameters constrained 
\begin{equation}
\label{eq:seigen}
\gamma\geq M^5.
\end{equation}
It is easy to see that  $(1-CM^{-2})\lr{\xi}\leq |\eta|\leq (1+CM^{-2})\lr{\xi}$ and
\begin{equation}
\label{eq:y:atai}
|y|\leq CM^{-2},\quad |\eta/|\eta|-{\bar\xi}|\leq CM^{-2}
\end{equation}
with some $C>0$ so that $(y,\eta)$ is contained in a conic neighborhood of $(0,{\bar\xi})$, shrinking with $M$. Note that $(y,\eta)=(x,\xi)$  on the conic neighborhood of $(0,{\bar\xi})$
\begin{equation}
\label{eq:conic:nbd}
W_M=\big\{(x,\xi)\mid |x|\leq M^{-2},\; |\xi_j/|\xi|-{\bar\xi}_j|\leq M^{-2}/2,\; |\xi|\geq \gamma M \big\}
\end{equation}
since $
\big|\xi_j/\lr{\xi}-{\bar\xi}_j\big|\leq \big|\xi_j/\lr{\xi}-\xi_j/|\xi|\big|+\big|\xi_j/|\xi|-{\bar\xi}_j\big|\leq M^{-2}$ 
if $(x,\xi)\in W_M$ where $\delta_{i j}$ is the Kronecker's delta. Let $f(X,\ep)$, $h(X)$  be smooth functions in a conic neighborhood of $({\bar X},0)$, ${\bar X}$ respectively  which are homogeneous of degree $0$ in $\xi$ then we define localized symbols of $f(x, \xi)$, $h(x,\xi)$ of $f(X,\ep)$, $h(X)$ by 
\begin{align*}
f(x,\xi)=f(y(x),\eta(\xi), \ep(\xi)),\quad h(x,\xi)=h(y(x),\eta(\xi))
\end{align*}
with $\ep(\xi)=M^{1/2}\lr{\xi}^{-1/2}$ or $\ep(\xi)=\sqrt{2}M^{1/2}\lr{\xi}^{-1/2}$. 
 In view of \eqref{eq:y:atai} such extended symbols are defined on $\R^d\times\R^d$, taking $M$ large if necessary. Let 
\[
G=M^4(|dx|^2+\lr{\xi}^{-2}|d\xi|^2).
\]
Then it is easy to see
\begin{equation}
\label{eq:y:eta}
y_j\in S(M^{-2},G),\;\; \eta_j-{\bar\xi}_j\lr{\xi}\in S(M^{-2}\lr{\xi},G),\;\;\ep(\xi)\in S(M^{-2}, G)
\end{equation}
for $j=1,\ldots, d$. To avoid confusions we denote $|\eta(\xi)|$ by $[\xi]$ hence
\begin{equation}
\label{eq:xi:kakudai}
[\xi]\in S(\lr{\xi}, G),\quad [\xi]\lr{\xi}^{-1}-1\in S(M^{-2}, G).
\end{equation}
\begin{lem}
\label{lem:kakutyo:1}Let $f(X,\ep)$ be a smooth function in a conic neighborhood of $({\bar X},0)$ which is homogeneous of degree $0$ in $\xi$. 
If $\dif_x^{\al}\dif_{\xi}^{\be}\dif_{\ep}^kf({\bar X},0)=0$ for $0\leq |\al+\be|+k<r$ then $f(x,\xi)=f(y(x),\eta(\xi),\ep(\xi))\in S(M^{-2r}, G)$. Let $h(X)$ be a smooth function in a conic neighborhood of ${\bar X}$ which is homogeneous of degree $0$ in $\xi$. Then $
h(x,\xi)-h(0,{\bar\xi})\in S(M^{-2},G)$. 
\end{lem}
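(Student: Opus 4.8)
The plan is to prove $f(x,\xi)=f(y(x),\eta(\xi),\ep(\xi))\in S(M^{-2r},G)$ by combining a Taylor expansion of $f$ at $({\bar X},0)$ with the chain rule and the symbol estimates \eqref{eq:y:eta}, \eqref{eq:xi:kakudai} already established for the localized coordinate functions. First I would record that the map $\Phi(x,\xi)=(y(x),\eta(\xi),\ep(\xi))$ takes values in a fixed small conic neighborhood of $({\bar X},0)$ by \eqref{eq:y:atai}, so $f\circ\Phi$ is well defined and smooth on $\R^d\times\R^d$ once $M$ is large; moreover $|y(x)|\precsim M^{-2}$, $|\eta(\xi)-{\bar\xi}\lr{\xi}|\precsim M^{-2}\lr{\xi}$ and $|\ep(\xi)|\precsim M^{-2}$, so the ``displacement'' of $\Phi(x,\xi)$ from $({\bar X},0)$, measured in the natural homogeneous way (i.e. $|y|$, $|\eta-{\bar\xi}\lr{\xi}|/\lr{\xi}$, $|\ep|$), is $O(M^{-2})$.

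Next I would invoke the vanishing hypothesis: since all derivatives $\dif_x^{\al}\dif_{\xi}^{\be}\dif_{\ep}^k f$ of total order $<r$ vanish at $({\bar X},0)$, Taylor's formula with integral remainder gives $f(X,\ep)=\sum_{|\gamma|=r}\gamma!^{-1}(X-{\bar X},\ep)^{\gamma}\,g_{\gamma}(X,\ep)$ near $({\bar X},0)$, with $g_{\gamma}$ smooth; here one must use the homogeneity of degree $0$ in $\xi$ to phrase the $\xi$-component of $X-{\bar X}$ correctly, namely writing $f$ as a function of $x$, $\xi/\lr{\xi}$ (or of $x$, $\eta/[\xi]$ after substitution) and $\ep$, so that the relevant small quantities are exactly $y$, $\eta/[\xi]-{\bar\xi}$ and $\ep$, each of which I would check lies in $S(M^{-2},G)$ — for $y$ and $\ep$ this is \eqref{eq:y:eta}, and for $\eta/[\xi]-{\bar\xi}$ it follows by combining \eqref{eq:y:eta} with \eqref{eq:xi:kakudai} and the fact that $S(\cdot,G)$ is closed under products and under composition with smooth functions bounded away from the singularities (here $1/[\xi]$, using $[\xi]\simeq\lr{\xi}$). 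Then $f\circ\Phi$ is a finite sum of $r$-fold products of elements of $S(M^{-2},G)$ times a symbol $g_{\gamma}\circ\Phi\in S(1,G)$, and since $S(M^{-2},G)\cdot S(M^{-2},G)\subset S(M^{-4},G)$ etc., the product lies in $S(M^{-2r},G)$. The second assertion, $h(x,\xi)-h(0,{\bar\xi})\in S(M^{-2},G)$, is the special case $r=1$ of the same computation applied to $h(X)-h(0,{\bar\xi})$, whose zeroth-order term vanishes at ${\bar X}$.

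The one point that needs care — and which I expect to be the main (if modest) obstacle — is the bookkeeping for the $\xi$-variable: $f$ is given as homogeneous of degree $0$ in $\xi$ in a \emph{conic} neighborhood of ${\bar\xi}$, whereas $\eta(\xi)$ is built with the inhomogeneous weight $\lr{\xi}$, so one must be careful that substituting $\eta(\xi)$ into a degree-$0$ function of $\xi$ produces a genuine symbol estimate in the metric $G$. The clean way is to write $f(X,\ep)=\tilde f(x,\omega,\ep)$ with $\omega=\xi/\lr{\xi}$ near $\omega={\bar\xi}$ (valid because $f$ is degree $0$, so it depends on $\xi$ only through $\xi/|\xi|$, and $\xi/|\xi|$ is a smooth function of $\xi/\lr{\xi}=\omega$ away from $\omega=0$), and then note $f(y(x),\eta(\xi),\ep(\xi))=\tilde f(y(x),\eta(\xi)[\xi]^{-1},\ep(\xi))$; all three arguments $y(x)$, $\eta(\xi)[\xi]^{-1}-{\bar\xi}$, $\ep(\xi)$ are in $S(M^{-2},G)$ by the above, $\tilde f$ is smooth, and a routine induction on the order of differentiation (Leibniz plus Faà di Bruno, using that each derivative of order $j\ge 1$ of any $G$-symbol of weight $M^{-2}$ gains a factor $M^{2}\cdot M^{-2}=1$ times the appropriate power of $\lr{\xi}^{-1}$ for $\xi$-derivatives) delivers $f\circ\Phi\in S(M^{-2r},G)$. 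I would leave this last induction to the reader as it is the standard stability of the symbol classes $S(m,G)$ under composition, and simply cite that closure property.
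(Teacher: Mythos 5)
Your argument is correct and follows essentially the same route as the paper: Taylor-expand $f$ to order $r$ about the reference point (using the vanishing hypothesis to kill all lower-order terms), identify the small increments $y$, the normalized $\xi$-increment, and $\ep$ as $S(M^{-2},G)$ symbols via \eqref{eq:y:eta}--\eqref{eq:xi:kakudai}, and absorb the (composed, smooth, bounded) remainder coefficients into $S(\lr{\xi}^{-|\be|},G)$. The only cosmetic difference is that you reparametrize $f$ as a function of $\omega=\xi/\lr{\xi}$ before expanding, whereas the paper expands $f(y,\eta,\ep)$ directly about $(0,\bar\xi\lr{\xi},0)$ and invokes homogeneity at the very end to convert $\dif_\xi^\be f(0,\bar\xi\lr{\xi},0)$ into $\lr{\xi}^{-|\be|}\dif_\xi^\be f(0,\bar\xi,0)$; both devices serve the purpose you correctly flagged, namely keeping the $\xi$-increment small in the homogeneous sense.
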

\begin{proof}We prove the first assertion. By the Taylor formula one can write
\begin{align*}
f(y,\eta,\ep)=\sum_{|\al+\be|+k=r}\frac{1}{\al!\be!k!}y^{\al}(\eta-{\bar\xi}\lr{\xi})^{\be}\ep^k\dif_x^{\al}\dif_{\xi}^{\be}\dif_{\ep}^k f(0,{\bar\xi}\lr{\xi},0)\\
+(r+1)\sum_{|\al+\be|+k=r+1}\Big[\frac{1}{\al!\be!k!}y^{\al}(\eta-{\bar\xi}\lr{\xi})^{\be}\ep^k\\
\times \int_0^1(1-\theta)^r\dif_x^{\al}\dif_{\xi}^{\be}\dif_{\ep}^k f(\theta y,\theta(\eta-{\bar\xi}\lr{\xi})+{\bar\xi}\lr{\xi},\theta \ep)
d\theta \Big].
\end{align*}
It is clear that 
\[
y^{\al}(\eta-{\bar\xi}\lr{\xi})^{\be}\ep^k\dif_x^{\al}\dif_{\xi}^{\be}\dif_{\ep}^k f(0,{\bar\xi},0)\lr{\xi}^{-|\be|}\in S(M^{-2r},G)
\]
for $|\al+\be|+k=r$ in view of \eqref{eq:y:eta}. Since $\lr{\xi}/C\leq |\theta(\eta-{\bar\xi}\lr{\xi})+{\bar\xi}\lr{\xi}|\leq C\lr{\xi}$ the integral belongs to $S(\lr{\xi}^{-|\be|}, G)$ hence  the second term on the right-hand side is in $S(M^{-2r-2},G)$ thus the assertion.
\end{proof}
%

\subsection{Estimate of localized symbols}
\label{subsec:kakudai}

From now on it is assumed that all constants are independent of $M$ and $\gamma$. As explained before we write $A \precsim B$ if $A$ is bounded by constant, independent of parameters, $M$ and $\gamma$, times $B$. Let $\rho(x, \xi)$ be the localized symbol of  ${ \rho}(X,\ep)$ with $\ep=M^{1/2}\lr{\xi}^{-1/2}$ so that
\[
\rho(x,\xi)=\al(x,\xi)+M\lr{\xi}^{-1}.
\]
From Lemma \ref{lem:kakutyo:1} we see $\rho\in S(M^{-4},G)$ hence $|\dif_x^{\al}\dif_{\xi}^{\be}\rho|\precsim \lr{\xi}^{-|\be|}$ for $|\al+\be|=2$. Since $\rho\geq 0$ it follows from the Glaeser's inequality that
\begin{equation}
\label{eq:rho:Glae}
\big|\dif_x^{\al}\dif_{\xi}^{\be}\rho\big|\precsim \sqrt{\rho}\,\lr{\xi}^{-|\be|},\qquad |\al+\be|=1.
\end{equation}
\begin{lem}
\label{lem:a:bound:rho}Assume that $f(X,\ep)$ is smooth and homogeneous of degree $0$ in $\xi$ in a conic neighborhood of $({\bar X},0)$ and satisfies  $|f(X,\ep)|\leq C{ \rho}(X,\ep)^n$ with some $n>0$ there. For the localized symbol $f(x,\xi)$ there is $C_{\al\be}>0$ such that 
\begin{equation}
\label{eq:a:to:rho}
\big|\dif_x^{\al}\dif_{\xi}^{\be}f(x,\xi)\big| \leq C_{\al\be} \rho(x,\xi)^{n-|\alpha+\be|/2}\lr{\xi}^{-|\be|}.
\end{equation}
\end{lem}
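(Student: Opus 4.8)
The plan is to prove the estimate \eqref{eq:a:to:rho} by a two-case analysis according to whether the point $(x,\xi)$ lies in the region where $\alpha(x,\xi)$ is comparable to $M\lr{\xi}^{-1}$ or in the region where $\alpha(x,\xi)$ dominates, and in both cases to reduce everything to derivative bounds on $\alpha$ (equivalently on $\rho$) supplied by \eqref{eq:rho:Glae} and by Lemma \ref{lem:kakutyo:1}. First I would record the two structural facts I will lean on: the localized $\rho$ satisfies $\rho\in S(M^{-4},G)$ and the Glaeser-type bound \eqref{eq:rho:Glae}, so that $|\dif_x^\al\dif_\xi^\be\rho|\precsim \rho^{1-|\al+\be|/2}\lr{\xi}^{-|\be|}$ for $|\al+\be|\le 2$; moreover, since $\rho\ge M\lr{\xi}^{-1}>0$ everywhere and $\rho$ is an admissible weight, arbitrary derivatives obey $|\dif_x^\al\dif_\xi^\be\rho|\precsim \rho\,\lr{\xi}^{-|\be|}$. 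The target is then exactly the statement that $f(x,\xi)\in S(\rho^{n},g_0)$ for the metric $g_0=\rho^{-1}(\lr{\xi}|dx|^2+\lr{\xi}^{-1}|d\xi|^2)$, where $\rho$ itself is the weight.

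The main step is the chain rule applied to $f(x,\xi)=f(y(x),\eta(\xi),\ep(\xi))$ combined with the hypothesis $|f(X,\ep)|\le C\rho(X,\ep)^n$. I would argue as follows. Since $f$ is smooth and bounded by $C\rho^n$ near $({\bar X},0)$ and $\rho(X,\ep)=\alpha(X)+\ep^2\ge 0$ vanishes on $\{\alpha=0,\ep=0\}$, the function $f\,\rho^{-n}$ (wherever $\rho>0$) together with the decay of $f$ forces, via a Taylor expansion in the variables at points where $\rho$ is small, the pointwise derivative bounds $|\dif_x^\al\dif_\xi^\be\dif_\ep^k f(X,\ep)|\le C_{\al\be k}\,\rho(X,\ep)^{\,n-(|\al+\be|+k)/2}$ in the upstairs variables — this is the usual fact that a nonnegative-power bound on a smooth function self-improves to a bound on its derivatives with the exponent dropping by $1/2$ per derivative, provided one checks it is not better than $\rho^{n-|\al+\be|/2}$, which is all we need. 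Granting this, each derivative $\dif_x^\al\dif_\xi^\be$ falling on $f(y,\eta,\ep)$ produces, by Faà di Bruno, a sum of terms of the form $(\dif^{\al'}\dif^{\be'}\dif^{\bullet}f)(y,\eta,\ep)$ times products of derivatives of the components $y_j$, $\eta_j-{\bar\xi}_j\lr{\xi}$, $\ep$; using \eqref{eq:y:eta} each such component factor of order $\mu$ (with $\nu$ of them being $\xi$-derivatives) contributes $\precsim M^{-2\mu}\lr{\xi}^{-\nu}$, hence in particular $\precsim \lr{\xi}^{-\nu}$, and the upstairs derivative of $f$ of total order $p$ contributes $\rho^{\,n-p/2}$. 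Since the total number of derivatives distributed is $|\al+\be|$ and the total number of $\xi$-derivatives is $|\be|$, multiplying and using that downstairs $\rho(x,\xi)=\alpha(y(x),\eta(\xi))+M\lr{\xi}^{-1}$ equals the upstairs $\rho$ evaluated at $(y,\eta,\ep)$, one collects precisely $\rho(x,\xi)^{\,n-|\al+\be|/2}\lr{\xi}^{-|\be|}$, up to the constant $C_{\al\be}$; the extra powers of $M^{-2}$ are harmless since $M$ is large.

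The delicate point — and the one I expect to be the main obstacle — is the first half of the previous paragraph: justifying that a smooth $f$ with $|f|\le C\rho^n$ has derivatives controlled by $\rho^{n-|\al+\be|/2}$ in the upstairs variables, uniformly, without losing a power. For integer or half-integer $n$ with $n\ge |\al+\be|/2$ this is a standard Taylor-remainder argument (expand $f$ around a nearby zero of $\rho$ when $\rho$ is small, and use ellipticity of $\rho$ when $\rho$ is bounded below); the subtlety is the uniformity in the parameters and the interplay with the fact that after localization $\rho$ is bounded below by $M\lr{\xi}^{-1}$, which actually makes the estimate easier downstairs (one never divides by something going to zero) but must be set up carefully so the constants do not depend on $M$ or $\gamma$. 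I would handle this by first proving the clean inequality \eqref{eq:rho:Glae}-type bound for $f$ itself in the model variables by Glaeser's inequality and induction on $|\al+\be|$, reducing the case of general $n$ to the cases $n=1/2$ and $n=1$ already essentially covered, and only then push it through the chain rule as above.
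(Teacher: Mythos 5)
Your overall architecture — first prove a self-improved derivative bound for the non-localized $f(X,\ep)$, then push it through Fa\`a di Bruno using \eqref{eq:y:eta} — is a genuinely different route from the paper, which works directly with the localized $f(x,\xi)$ in the downstairs variables (Taylor expansion at the given point, then a polynomial norm-equivalence argument). Your route could be made to work, and you correctly identify that the entire lemma hinges on the ``self-improvement'' step $|f|\leq C\rho^n\Rightarrow |\dif^{\al}f|\leq C_{\al}\rho^{n-|\al|/2}$. However, the method you ultimately commit to for that step is flawed.

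You propose to prove the upstairs derivative bound ``by Glaeser's inequality and induction on $|\al+\be|$, reducing the case of general $n$ to the cases $n=1/2$ and $n=1$''. Glaeser's inequality applies to \emph{nonnegative} functions, and $f$ is only bounded in absolute value. The natural fix, applying Glaeser to $g=C\rho^n-f\geq 0$ and $h=C\rho^n+f\geq 0$ and subtracting, gives $|\dif f|\leq|\dif g|+|\dif h|\precsim\sqrt{g}+\sqrt{h}\precsim\rho^{n/2}$, which is strictly \emph{weaker} than the needed $\rho^{n-1/2}$ as soon as $n>1$ (and iterating only degrades the exponent further, e.g.\ to $\rho^{n/2^{|\al|}}$). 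Nor is there a reduction of general $n$ to $n=1/2$ or $n=1$: an upper bound $|f|\leq C\rho^{n}$ does not let you factor $f$ through $\rho$, and weakening it to $|f|\leq C'\rho$ only produces the worse $n=1$ conclusion. What actually works — and what the paper does — is a Taylor expansion of the localized $f$ at the point $X$ to order $2n$ in the direction $Y$ (choosing the step size $s=\rho(X)^{1/2}$), bounding the resulting expression by $\rho(X)^n$ via the hypothesis and the derivative bounds on $\rho$, and then invoking the equivalence of the sup-norm and the coefficient norm on the finite-dimensional space of polynomials of degree $<2n$ to extract $|d^jf(X;Y)|\precsim\rho^{n-j/2}$. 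You mention a ``standard Taylor-remainder argument'' in passing, but your stated plan does not carry it out, and the Glaeser-based substitute you offer does not give the sharp exponent. A secondary, more minor gap: your stated upstairs bound omits the homogeneity factor $|\xi|^{-|\be|}$, and your component-derivative estimate ``$\precsim M^{-2\mu}\lr{\xi}^{-\nu}$'' is wrong for the $\eta$-component (e.g.\ $\dif_{\xi}\eta_j=O(1)$, not $O(M^{-2}\lr{\xi}^{-1})$); the chain-rule bookkeeping still closes, but only because the $\lr{\xi}^{-1}$ per $\eta$-derivative comes from homogeneity of $f$, not from the size of $\dif_{\xi}\eta$. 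Finally, for $|\al+\be|\geq 2n$ the exponent $n-|\al+\be|/2$ is negative, the upstairs derivative bound is no longer a bound, and one needs the separate argument the paper gives via $f\in S(M^{-4n},G)$ and $M^4\leq C_0\rho^{-1}$; your sketch does not address this case.
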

\begin{proof}From the assumption it follows that $\dif_x^{\al}\dif_{\xi}^{\be}\dif_{\ep}^k f(0,{\bar\xi},0)=0$ for $|\al+\be|+k<2n$ hence Lemma \ref{lem:kakutyo:1} shows that $f(x,\xi)\in S(M^{-4n}, G)$. Therefore for $|\al+\be|\geq 2n$ one sees $
\big|\lr{\xi}^{|\be|}\dif_x^{\al}\dif_{\xi}^{\be} f(x,\xi)\big|\leq CM^{2|\al+\be|-4n}\leq C\big(C_0\rho^{-1}\big)^{|\al+\be|/2-n}$ 
because $M^4\leq C_0\rho^{-1}$. Hence \eqref{eq:a:to:rho} holds for $|\al+\be|\geq 2n$. Assume $|\al+\be|\leq 2n-1$. 
Writing $X=(x,\xi)$, $Y=(y,\eta\lr{\xi})$ and applying the Taylor formula to obtain
\begin{equation}
\label{eq:a_jdif:1}
\begin{split}
\big| f(X+sY)\big|=\Big|\sum_{j=0}^{2n-1}\frac{s^j}{j!}d^j f(X; Y)+\frac{s^{2n}}{(2n)!}d^{2n} f(X+s\theta Y; Y)\Big|\\
\leq C\Big(\sum_{j=0}^{2n-1}\frac{s^j}{j!}d^j\rho(X; Y)+\frac{s^{2n}}{(2n)!}d^{2n}\rho(X+s\theta' Y; Y)\Big)^n
\end{split}
\end{equation}
where $
d^j f(X;Y)=\sum_{|\al+\be|=j}(j!/\al!\be!)\dif_x^{\al}\dif_{\xi}^{\be}a(x,\xi)y^{\al}\eta^{\be}\lr{\xi}^{|\be|}$, $0<\theta, \theta'<1$. 
If $\rho(x,\xi)=0$ then $\dif_x^{\al}\dif_{\xi}^{\be}\rho(x,\xi)=0$ for $|\al+\be|=1$ because $\rho\geq 0$ and  then it follows from \eqref{eq:a_jdif:1} that $\dif_x^{\al}\dif_{\xi}^{\be} f(x,\xi)=0$ for $|\al+\be|\leq 2n-1$  hence \eqref{eq:a:to:rho}. We fix a small $s_0>0$. If $\rho(x,\xi)\geq s_0$ then one has
\begin{gather*}
\big|\dif_x^{\al}\dif_{\xi}^{\be} f(x,\xi)\lr{\xi}^{|\be|}\big|\leq C_{\al\be}
\leq C_{\al\be}s_0^{-n+|\al+\be|/2}\rho^{n-|\al+\be|/2}
\end{gather*}
for $|\al+\be|\leq 2n-1$ which proves \eqref{eq:a:to:rho}. Assume $0<\rho(x,\xi)<s_0$. Note that 
\[
\big |d^{2n} f(X+s \theta Y;Y)\big|\leq C,\quad d^{2n}\rho(X+s \theta' Y;Y)\leq C\rho(X)^{1-n}
\]
for any $|(y,\eta)|\leq 1/2$. Indeed the first one is clear from $f(x,\xi)\in S(M^{-4n}, G)$. To check the second one it is enough to note that  $\rho(x,\xi)\in S(M^{-4}, G)$ and 
\begin{equation}
\label{eq:jimei:1}
M^{-4+2j}\leq (C_0\rho^{-1})^{j/2-1},\;\;j\geq 2, \quad \sqrt{2}\lr{\xi+\theta\lr{\xi}\eta}\geq \lr{\xi}/2
\end{equation}
for $|\eta|\leq 1/2$ and $|\theta|<1$. Take $s=\rho(X)^{1/2}$ in  \eqref{eq:a_jdif:1} to get
\begin{align*}
\Big|\sum_{j=0}^{2n-1}\frac{1}{j!}d^j f(X;Y)\rho(X)^{j/2}\Big|
\leq C\Big(\sum_{j=0}^{2n-1}\frac{1}{j!}d^j\rho(X;Y)\rho(X)^{j/2}\Big)^n+C\rho(X)^n
\end{align*}
where the right-hand side is bounded by $C\rho(X)^n$ for $|d\rho(X;Y)|\leq C'\rho(X)^{1/2}$ in view of \eqref{eq:rho:Glae} and \eqref{eq:jimei:1} for $j\geq 3$. Replacing $(y,\eta)$ by $s(y,\eta)$, $|(y,\eta)|=1/2$, $0<|s|<1$ one obtains
\[
\Big|\sum_{j=1}^{2n-1}\frac{s^j}{j!}d^j f(X;Y)\frac{\rho(X)^{j/2}}{\rho(X)^{n}}\Big|\leq C_1.
\]
Since two norms $\sup_{|s|\leq 1}|p(s)|$ and $ \max{\{|c_j|\}}$ 
on the vector space consisting of all polynomials $p(s)=\sum_{j=0}^{2n-1}c_j s^j$ are equivalent one obtains $|d^j f(X;Y)|\leq B'\rho(X)^{n-j/2}$. Since $|(y,\eta)|=1/2$ is arbitrary one concludes \eqref{eq:a:to:rho}. 
\end{proof}
\begin{lem}
\label{lem:hyoka:rho:s}
Let $s\in\R$. Then $
\big|\dif_x^{\alpha}\dif_{\xi}^{\beta}\rho^s\big|\precsim \rho^{s-|\alpha+\beta|/2}\lr{\xi}^{-|\beta|}$.
\end{lem}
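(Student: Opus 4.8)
The plan is to bootstrap from Lemma \ref{lem:a:bound:rho}, which controls derivatives of any symbol dominated by a power of $\rho$, together with Lemma \ref{lem:hyoka:rho:s} applied with a \emph{positive} integer exponent first, and then to handle the general real $s$ by combining these cases. Concretely, I would argue as follows. For $s$ a nonnegative integer, $\rho^s$ is itself a smooth function of $X=(x,\xi)$ which is bounded by $C\rho^s$; moreover it is homogeneous of degree $0$ in $\xi$ and arises as the localized symbol of $\rho(X,\ep)^s$ with $\ep(\xi)=M^{1/2}\lr{\xi}^{-1/2}$ (since taking powers commutes with the substitution defining localization). Hence Lemma \ref{lem:a:bound:rho}, applied with $n=s$ and $f=\rho^s$, gives immediately
\[
\big|\dif_x^{\al}\dif_{\xi}^{\be}\rho^s\big|\precsim \rho^{s-|\al+\be|/2}\lr{\xi}^{-|\be|}
\]
for all multi-indices. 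This is the base case.

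For general $s\in\R$, write $s = N + \sigma$ where $N$ is an integer chosen so that $N \le s$ and $N > s - |\al+\be|/2$ is \emph{not} what I want; rather, the cleaner route is: it suffices to prove the estimate for $s<0$ and for $0<s<1$, because $\rho^s = \rho^N\cdot\rho^{s-N}$ and one can multiply the estimates via the Leibniz rule once the factors are each controlled. So I would first treat $0<s\le 1$. Here $\rho^s$ is no longer smooth where $\rho=0$, but one can still estimate its derivatives directly: by Faà di Bruno, $\dif_x^{\al}\dif_{\xi}^{\be}\rho^s$ is a sum of terms $\rho^{s-k}\prod_{i}\dif^{\gamma_i}\rho$ with $\sum_i \gamma_i = \al+\be$ and $k$ the number of factors, $1\le k\le|\al+\be|$; using $|\dif^{\gamma_i}\rho|\precsim \rho^{1-|\gamma_i|/2}\lr{\xi}^{-|\be_i|}$ from Lemma \ref{lem:a:bound:rho} with $n=1$ (noting $\rho\in S(M^{-4},G)$ and $\rho\ge0$, so the Glaeser-type bound \eqref{eq:rho:Glae} and its higher analogues apply), each such term is bounded by $\rho^{s-k}\cdot\rho^{k-|\al+\be|/2}\lr{\xi}^{-|\be|}=\rho^{s-|\al+\be|/2}\lr{\xi}^{-|\be|}$, as desired. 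The case $s<0$ is identical in structure: the Faà di Bruno expansion of $\rho^s$ has exactly the same shape, $\rho^{s-k}\prod\dif^{\gamma_i}\rho$, and the same power count closes, the only point being that $\rho>0$ wherever we differentiate (which holds because $\rho=\al+M\lr{\xi}^{-1}\ge M\lr{\xi}^{-1}>0$ everywhere — in fact $\rho$ never vanishes, so $\rho^s$ is genuinely smooth for every real $s$ and no boundary subtlety arises at all).

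In fact, observing that $\rho\ge M\lr{\xi}^{-1}>0$ simplifies everything: $\rho^s$ is a smooth, everywhere-positive function for all $s\in\R$, and the single unified argument is the Faà di Bruno / Leibniz expansion together with Lemma \ref{lem:a:bound:rho} (with $n=1$) for the factors $\dif^{\gamma_i}\rho$. So the streamlined proof is: expand $\dif_x^{\al}\dif_{\xi}^{\be}\rho^s = \sum c_{\gamma_1\cdots\gamma_k}\,\rho^{s-k}\prod_{i=1}^k \dif^{\gamma_i}\rho$ over partitions of $\al+\be$ into nonempty $\gamma_1,\dots,\gamma_k$; bound each $\dif^{\gamma_i}\rho$ by $\precsim\rho^{1-|\gamma_i|/2}\lr{\xi}^{-|\be_i|}$ using Lemma \ref{lem:a:bound:rho}; multiply to get $\precsim\rho^{s-k+\sum_i(1-|\gamma_i|/2)}\lr{\xi}^{-|\be|}=\rho^{s-|\al+\be|/2}\lr{\xi}^{-|\be|}$ since $\sum_i|\gamma_i|=|\al+\be|$ and there are $k$ factors. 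The main (and only real) obstacle is bookkeeping: making sure the exponent arithmetic $s-k+k-|\al+\be|/2$ is carried out correctly across all partition types and that the $\xi$-weights $\lr{\xi}^{-|\be_i|}$ multiply to $\lr{\xi}^{-|\be|}$; there is no analytic difficulty because positivity of $\rho$ removes any worry at $\rho=0$ and Lemma \ref{lem:a:bound:rho} already supplies the hard estimate on $\rho$ itself.
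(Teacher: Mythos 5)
Your final streamlined argument — observe that $\rho\ge M\lr{\xi}^{-1}>0$ so $\rho^s$ is smooth, expand $\dif_x^{\al}\dif_{\xi}^{\be}\rho^s$ via Fa\`a di Bruno into terms $\rho^{s-k}\prod_i\dif^{\gamma_i}\rho$, and bound each factor $\dif^{\gamma_i}\rho$ by Lemma \ref{lem:a:bound:rho} with $n=1$ — is precisely the paper's proof (the paper writes the same expansion as $\rho^s\prod(\dif^{\gamma_i}\rho/\rho)$ and invokes the $s=1$ case). Your earlier detour through positive-integer exponents and $0<s<1$ is unnecessary, as you yourself note, but the argument you settle on is correct and identical in substance to the paper's.
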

\begin{proof} When $s=1$ the assertion follows from Lemma \ref{lem:a:bound:rho}. 
Since 
\[
\dif_x^{\alpha}\dif_{\xi}^{\beta}\rho^s=\sum C_{\al^{(j)}\be^{(j)}}\rho^s\,\big(\dif_x^{\alpha^{(1)}}\dif_{\xi}^{\beta^{(1)}}\rho/\rho\big)\cdots\big(\dif_x^{\alpha^{(k)}}\dif_{\xi}^{\beta^{(k)}}\rho/\rho\big)
\]
the proof for the case $s\in\R$ is clear.
\end{proof}
\begin{lem}
\label{lem:a:j:extend}Let $a_j(x,\xi)$ be the localized symbol of $a_j(X,\ep)$ then 
\[
\big|\dif_x^{\al}\dif_{\xi}^{\be}a_j(x,\xi)\big|\precsim \rho(x,\xi)^{j-|\alpha+\be|/2}\lr{\xi}^{-|\be|},\quad j=1,2,3.
\]
\end{lem}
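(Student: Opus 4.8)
The plan is to obtain Lemma \ref{lem:a:j:extend} as an immediate consequence of the pointwise bounds on the discriminant coefficients established in Lemma \ref{lem:a_j:al}, combined with the general derivative estimate for localized symbols dominated by a power of $\rho$, proved in Lemma \ref{lem:a:bound:rho}.

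First I would recall that, by Lemma \ref{lem:a_j:al}, for $j=1,2,3$ the function $a_j(X,\ep)$ is smooth and homogeneous of degree $0$ in $\xi$ in a conic neighborhood of $({\bar X},0)$ and satisfies there $|a_j(X,\ep)|\leq C\,\rho(X,\ep)^{j}$. Thus $a_j$ fulfils the hypothesis of Lemma \ref{lem:a:bound:rho} with $n=j>0$. Applying that lemma to $f=a_j$ then gives directly
\[
\big|\dif_x^{\al}\dif_{\xi}^{\be}a_j(x,\xi)\big|\leq C_{\al\be}\,\rho(x,\xi)^{j-|\alpha+\be|/2}\lr{\xi}^{-|\be|},\qquad j=1,2,3,
\]
which is exactly the claimed bound, the constants being independent of $M$ and $\gamma$ because those produced by Lemma \ref{lem:a:bound:rho} are.

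I do not expect any real obstacle here, since all the substantive work has already been carried out in the cited lemmas: extracting the factor $\rho^{j}$ from $a_j$ via the Malgrange preparation theorem (Lemma \ref{lem:DMal}) together with Lemmas \ref{lem:b_1:alpha} and \ref{lem:a_j:al}, and converting the vanishing of $a_j$ at $({\bar X},0)$ into Glaeser-type derivative estimates for the localized symbols (the argument in Lemma \ref{lem:a:bound:rho}). The only point deserving a moment's attention is bookkeeping: one should make sure that the $\rho$ occurring on the right of the inequality is precisely the localized weight $\rho(x,\xi)=\al(x,\xi)+M\lr{\xi}^{-1}$ associated with $\ep(\xi)=M^{1/2}\lr{\xi}^{-1/2}$; if for some of these coefficients the variant $\ep(\xi)=\sqrt{2}M^{1/2}\lr{\xi}^{-1/2}$ is in force, the two resulting weights are comparable up to a fixed constant, so the estimate is unaffected.
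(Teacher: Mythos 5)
Your proof is correct and coincides with the paper's own proof, which simply cites Lemmas \ref{lem:a_j:al} and \ref{lem:a:bound:rho}; your only addition is the harmless remark about the two comparable choices of $\ep(\xi)$, which does not change the argument.
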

\begin{proof}The assertion follows from Lemmas \ref{lem:a_j:al} and \ref{lem:a:bound:rho}.
\end{proof}
For the localized symbol  $\psi(x, \xi)$   of $\psi(X,\ep)$ with $\ep=M^{1/2}\lr{\xi}^{-1/2}$ 
we have
\begin{lem}
\label{lem:dif:psi:1}One has $
\big|\dif_x^{\al}\dif_{\xi}^{\be}\psi(x,\xi)\big|\precsim \rho(x,\xi)^{1-|\alpha+\be|/2}\lr{\xi}^{-|\be|}$.
\end{lem}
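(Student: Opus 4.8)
The plan is to deduce the estimate from the explicit formula
$\psi(X,\ep)=-\chi\big((\nu_1+a_1)/(2c_1\rho)\big)(\nu_1+a_1)/2$
by showing that each factor obeys the appropriate $\rho$-weighted bound and that the product rule then closes the estimate. Write $A_1=\nu_1+a_1$ and $g=A_1/(2c_1\rho)$, so that $\psi=-\tfrac12\chi(g)A_1$. On the set where $\chi(g)\ne 0$ we have $A_1\le 2c_1\rho$, i.e. $g\le 1$; more precisely, on $\mathrm{supp}\,\chi'(g)$ we have $0\le g\le 1$, hence $|A_1|\precsim\rho$ there. The first step is therefore to prove that $A_1$, where it is relevant, satisfies $|\dif_x^\al\dif_\xi^\be A_1|\precsim\rho^{1-|\al+\be|/2}\lr{\xi}^{-|\be|}$. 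For $a_1$ this is exactly Lemma \ref{lem:a:j:extend} with $j=1$. For $\nu_1$, Lemma \ref{lem:nu:1sa:nuj} tells us that on $\{A_1<2c_1\rho\}$ the root $\nu_1$ is a simple root of ${\bar\Delta}$, so it is smooth there, and one differentiates the defining relation ${\bar\Delta}(\nu_1,X,\ep)=0$: since $\dif_t{\bar\Delta}(\nu_1)=(\nu_1-\nu_2)(\nu_1-\nu_3)$ with $|\nu_1-\nu_j|\ge c_2\rho$, the implicit function theorem gives $\dif_x^\al\dif_\xi^\be\nu_1$ as a sum of terms built from $\dif^\bullet a_j$ (each contributing a factor $\rho^{\,k-|\bullet|/2}$ by Lemma \ref{lem:a:j:extend}) divided by powers of the gap $(\nu_1-\nu_2)(\nu_1-\nu_3)\simeq\rho^2$; a bookkeeping of the homogeneity degrees in $\rho$ then yields the claimed bound for $\nu_1$, and hence for $A_1$, on $\{A_1<2c_1\rho\}$.

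The second step handles the cutoff factor $\chi(g)$. Here the key is that $g=A_1/(2c_1\rho)$ satisfies $|\dif_x^\al\dif_\xi^\be g|\precsim\rho^{-|\al+\be|/2}\lr{\xi}^{-|\be|}$: this follows from the Leibniz/quotient rule applied to $A_1/\rho$, using the step-one bound $|\dif^\bullet A_1|\precsim\rho^{1-|\bullet|/2}\lr{\xi}^{-|\cdot|}$ together with Lemma \ref{lem:hyoka:rho:s} (the case $s=-1$) for the derivatives of $\rho^{-1}$ — every term in the expansion of $\dif^\al\dif^\beta(A_1\rho^{-1})$ carries the net weight $\rho^{0-|\al+\beta|/2}$. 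Then $\dif_x^\al\dif_\xi^\be\chi(g)$ is a finite sum of $\chi^{(m)}(g)$ times products of derivatives of $g$; since $\chi^{(m)}(g)$ is bounded and supported in $0\le g\le1$ (where $|A_1|\precsim\rho$, so all the step-one estimates are legitimate), one gets $|\dif_x^\al\dif_\xi^\be\chi(g)|\precsim\rho^{-|\al+\be|/2}\lr{\xi}^{-|\be|}$ for $|\al+\be|\ge1$, and trivially $|\chi(g)|\le1$.

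The final step is the product rule: $\psi=-\tfrac12\chi(g)A_1$, so
$\dif_x^\al\dif_\xi^\be\psi$ is a sum over $\al'+\al''=\al$, $\be'+\be''=\be$ of $\dif^{\al'}\dif^{\be'}\chi(g)\cdot\dif^{\al''}\dif^{\be''}A_1$. Each such term is bounded (on the support of $\chi$, and elsewhere $\psi$ vanishes identically) by $\rho^{-|\al'+\be'|/2}\lr{\xi}^{-|\be'|}\cdot\rho^{1-|\al''+\be''|/2}\lr{\xi}^{-|\be''|}=\rho^{1-|\al+\be|/2}\lr{\xi}^{-|\be|}$, which is precisely the asserted bound. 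Here one must note that when $\al''+\be''=0$ the factor $\dif^{\al''}\dif^{\be''}A_1=A_1$ itself satisfies $|A_1|\precsim\rho$ on $\mathrm{supp}\,\chi(g)$, so the bound still reads $\rho^{1}$ as needed.

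I expect the main obstacle to be step one, specifically the clean verification of the $\rho$-weighted derivative bound for the simple root $\nu_1$: one needs to check that the gap denominators $(\nu_1-\nu_2)(\nu_1-\nu_3)$ really do behave like $\rho^2$ uniformly (guaranteed by Lemma \ref{lem:nu:1sa:nuj} and Lemma \ref{lem:nu_j:al}) and that the combinatorial Faà di Bruno expansion for the higher derivatives of the implicitly defined $\nu_1$ never produces a term with a worse power of $\rho$ than $\rho^{1-|\al+\be|/2}$ — this is a homogeneity count that is routine in spirit but needs care. One also has to be mildly careful at points where $\rho=0$: there $\psi\equiv0$ near that point (since $A_1$ and all its derivatives, together with the relevant behavior, degenerate), so the estimate holds trivially, paralleling the treatment of the boundary case in the proof of Lemma \ref{lem:a:bound:rho}.
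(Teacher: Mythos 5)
Your proposal is correct and follows essentially the same route as the paper. The paper's proof likewise obtains $\big|\partial_x^\al\partial_\xi^\be\nu_1\big|\precsim\rho^{1-|\al+\be|/2}\lr{\xi}^{-|\be|}$ on $\{\nu_1+a_1<2c_1\rho\}$ by implicit differentiation of ${\bar\Delta}(\nu_1,x,\xi)=0$ — using the lower bound $|\dif_t{\bar\Delta}(\nu_1,x,\xi)|\precsim^{-1}\rho^2$ guaranteed by Lemma \ref{lem:nu:1sa:nuj} and deferring the combinatorial homogeneity count to "the same argument as Lemma \ref{lem:lam:bibun}" (which is exactly the Faà di Bruno bookkeeping you flag as the main obstacle) — and then closes via Lemmas \ref{lem:hyoka:rho:s} and \ref{lem:a:j:extend} together with the product rule on $-\tfrac12\chi(g)A_1$, which is the portion you spell out explicitly in your steps two and three.
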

\begin{proof}Since Lemma \ref{lem:a:bound:rho} is not available for ${\psi}(X,\ep)$ because it is not defined for $\ep=0$ then we show the assertion directly. Let $\nu_1(x,\xi)$, $a_1(x,\xi)$ and ${\bar \Delta}(t, x,\xi)$ be localized symbols of $\nu_1(X,\ep)$, $a_1(X,\ep)$ and ${\bar\Delta}(t,X,\ep)$ with $\ep=M^{1/2}\lr{\xi}^{-1/2}$ and hence ${\bar \Delta}(\nu_1(x,\xi),x, \xi)=0$. Note that $|\dif_t{\bar\Delta}(\nu_1, x, \xi)|\geq 4\,c_2^2\,\rho^2(x,\xi)$ if $\nu_1(x,\xi)+a_1(x,\xi)<2c_1\rho(x,\xi)$ thanks to Lemma \ref{lem:nu:1sa:nuj}. Starting with
\[
\dif_t{\bar \Delta}(\nu_1, x, \xi)\partial_x^{\alpha}\dif_{\xi}^{\be}\nu_1+\partial_x^{\alpha}\dif_{\xi}^{\be}{\bar\Delta}(\nu_1, x, \xi)=0,\quad |\al+\be|=1
\]
a repetition of the same argument  in Lemma \ref{lem:lam:bibun} below together with Lemma \ref{lem:a:j:extend} shows
\begin{equation}
\label{eq:hyoka:nu1}
\big|\partial_x^{\alpha}\dif_{\xi}^{\be}\nu_1\big|\precsim \rho^{1-|\al+\be|/2}\lr{\xi}^{-|\be|},\quad \nu_1+a_1<2c_1\rho.
\end{equation}
Here we have used $|\nu_1|\precsim \rho$ which also follows from Lemma \ref{lem:a:j:extend}. Using \eqref{eq:hyoka:nu1}  and   Lemmas \ref{lem:hyoka:rho:s} and \ref{lem:a:j:extend} the assertion follows easily.
\end{proof}
%

\subsection{Estimate of discriminant}

Let $\al(x,\xi)$, $a(t, x,\xi)$, $b(t, x,\xi)$, $e(t, x,\xi)$ be localized symbols  of $\al(X)$, $a(t, X)$, $b(t, X)$, $e(t, X)$  so that 
$\tau^3-e(t, x,\xi)(t+\al(x,\xi))[\xi]^2\tau+b(t, x,\xi)[\xi]^3$ 
is now defined on $\R^d\times \R^d$ and coincides with the original $p$ in a conic neighborhood $W_M$ of $(0,{\bar \xi})$. We add a term $2Me(t, x,\xi)\lr{\xi}^{-1}[\xi]^2$ to this;
\begin{equation}
\label{eq:teigi:hat:p}
{\hat p}=\tau^3-e(t+\al+2M\lr{\xi}^{-1})[\xi]^2\tau-b\,[\xi]^3
\end{equation}
where we denote
\begin{equation}
\label{eq:aM:teigi}
a_M(t, x,\xi)=e(t, x,\xi)(t+\al(x,\xi)+2M\lr{\xi}^{-1})
\end{equation}
which is the localized symbol of $a(t, X, \ep)=a(t,X)+\ep^2$ with $\ep=\sqrt{2}M^{1/2}\lr{\xi}^{-1/2}$. Consider the discriminant
\begin{equation}
\label{eq:DM:Dis}
\begin{split}
\Delta_M(t, x,\xi)=4\,e^3\big(t+\al+2M\lr{\xi}^{-1}\big)^3-27\,b^2\\
= 4\,e^3\big(t+\al+M\lr{\xi}^{-1})^3-27\,b^2
+\Delta_r(t, x,\xi)
\end{split}
\end{equation}
where, recalling $\al(x, \xi)+M\lr{\xi}^{-1}=\rho(x,\xi)$, we have
\begin{align*}
\Delta_r=4e^3\big(3(t+\rho)^2M\lr{\xi}^{-1}+3(t+\rho)M^2\lr{\xi}^{-2}+M^3\lr{\xi}^{-3}\big)\\
=12e^3\big(c_1(x,\xi)t^2+c_2(x,\xi)t+c_3(x,\xi)\big)\geq 12e^3M(t+\rho)^2\lr{\xi}^{-1}.
\end{align*}
It is clear that $c_j(x,\xi)$ verifies $\big|\dif_x^{\al}\dif_{\xi}^{\be}c_j\big|\precsim \rho^{j-|\al+\be|/2}\lr{\xi}^{-|\be|}$. Let $\Delta(t, x, \xi)$, ${\bar \Delta}(t, x, \xi)$ be localized symbols of $\Delta(t, X, \ep)$, ${\bar \Delta}(t, X,\ep)$ with $\ep=M^{1/2}\lr{\xi}^{-1/2}$. 
Thanks to Proposition \ref{pro:bound:Dis}  one has $
{\bar \Delta}(t, x,\xi)\geq {\bar c}\,\min{\{t^2,(t-\psi)^2\}}(t+\rho)$. 
Noting that $\Delta(t, x, \xi)={\tilde e}\,{\bar \Delta}$ we see
\begin{equation}
\label{eq:DM:Dis:bis}
\begin{split}
\Delta(t, x,\xi)={\tilde e}\,{\bar \Delta}\geq {\tilde e}\,{\bar c}\, \min{\{t^2,(t-\psi)^2\}}(t+\rho)\\
\geq \big({\tilde e}/e\big){\bar c}\,\min{\{t^2,(t-\psi)^2\}}e(t+\rho).
\end{split}
\end{equation}
Therefore  choosing a constant ${\bar \nu}>0$ such that $12\,e^2\geq \big({\tilde e}/e\big){\bar c}\, {\bar \nu}$ one obtains from \eqref{eq:DM:Dis}, \eqref{eq:DM:Dis:bis} that
\begin{equation}
\label{eq:lam:a:rho}
\begin{split}
\Delta_M\geq \big({\tilde e}/e\big){\bar c}\,\min{\big\{t^2,(t-\psi)^2\big\}}e(t+\rho)
+12\,e^3(t+\rho)^2M\lr{\xi}^{-1}\\
\geq \big({\tilde e}/e\big){\bar c}\,\big(\min{\big\{t^2,(t-\psi)^2\big\}}+{\bar \nu}(t+\rho)M\lr{\xi}^{-1}\big)e(t+\rho)\\
\geq \big({\tilde e}/e\big){\bar c}\,\min{\big\{t^2,(t-\psi)^2+{\bar \nu}M\rho\lr{\xi}^{-1}\big\}}e(t+\rho),\qquad t\geq 0.
\end{split}
\end{equation}
\begin{prop}
\label{pro:ext:matome}
One can write
\[
\Delta_M=e\big(t^3+a_1(x,\xi)t^2+a_2(x,\xi)t+a_3(x,\xi)\big)
\]
where $0<e\in S(1,G)$ uniformly in $t$ and $\big|\dif_x^{\al}\dif_{\xi}^{\be}a_j\big|\precsim \rho^{j-|\al+\be|/2}\lr{\xi}^{-|\be|}$. Moreover there exist ${\bar \nu}>0$ and $c>0$ such that
\begin{equation}
\label{eq:DM:to:aM}
\frac{\Delta_M}{a_M}\geq \frac{\tilde e}{2e}\,{\bar c}\,\min{\big\{t^2,(t-\psi)^2+{\bar \nu}M\rho\lr{\xi}^{-1}\big\}},\;\; \frac{\Delta_M}{a_M}\geq c\,M\lr{\xi}^{-1}\,a_M
\end{equation}
for $0\leq t\leq T$ where $\psi$ and $\rho$ satisfy $
\big|\dif_x^{\al}\dif_{\xi}^{\be}\psi\big|$, $\big|\dif_x^{\al}\dif_{\xi}^{\be}\rho\big|\precsim \rho^{1-|\al+\be|/2}\lr{\xi}^{-|\be|}$. 
\end{prop}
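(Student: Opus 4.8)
The plan is to assemble the statement from the lemmas established so far together with \eqref{eq:DM:Dis} and \eqref{eq:lam:a:rho}; essentially no new estimate is required, only a careful matching of the regularization term $\Delta_r$ against $\Delta$ and of $a_M$ against $e(t+\rho)$.

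First the factorization. From \eqref{eq:DM:Dis} we have $\Delta_M=\Delta+\Delta_r$, where $\Delta$ denotes the localized symbol of $\Delta(t,X,\ep)$ with $\ep=M^{1/2}\lr{\xi}^{-1/2}$. Lemma~\ref{lem:DMal} factors $\Delta(t,X,\ep)=\tilde e(t,X,\ep)\big(t^3+a_1(X,\ep)t^2+a_2(X,\ep)t+a_3(X,\ep)\big)$ near $(0,{\bar X},0)$ with $\tilde e>0$, and since $\ep(\xi)=M^{1/2}\lr{\xi}^{-1/2}\precsim M^{-2}$ by \eqref{eq:seigen}, the localized factorization survives on $\R^d\times\R^d$ for $t$ in a fixed small interval $[0,T]$. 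As $\Delta_r=12e^3(c_1t^2+c_2t+c_3)$ has degree $\le 2$ in $t$, it perturbs only the three lower coefficients, so
\[
\Delta_M=\tilde e\Big(t^3+\sum_{j=1}^{3}\big(a_j+12e^3c_j/\tilde e\big)t^{3-j}\Big);
\]
relabelling $\tilde e$ as $e$ and $a_j+12e^3c_j/\tilde e$ as $a_j$ gives the asserted form. Here $0<e\in S(1,G)$ uniformly in $t\in[0,T]$, and so are $1/e$ and $e^3$, since $\tilde e(t,X,\ep)$ and $e(t,X)$ are smooth, $\tilde e$ is bounded below on the relevant compact set, and the $(x,\xi)$-derivatives are controlled through \eqref{eq:y:eta}. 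The bound $\big|\dif_x^{\al}\dif_{\xi}^{\be}a_j\big|\precsim\rho^{j-|\al+\be|/2}\lr{\xi}^{-|\be|}$ then follows from Lemma~\ref{lem:a:bound:rho} with $n=j$, applied to the old $a_j(X,\ep)=O(\rho^j)$ of Lemma~\ref{lem:a_j:al} and to the $c_j$, together with the Leibniz rule and the $G$-boundedness of the derivatives of $1/e$ and $e^3$.

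Next the two lower bounds. I would first record the elementary comparison: since $a_M=e(t+\rho)+eM\lr{\xi}^{-1}$ with $\rho=\al+M\lr{\xi}^{-1}\ge M\lr{\xi}^{-1}$ and $\al\ge 0$, one has $e(t+\rho)\le a_M\le 2e(t+\rho)$ for $t\ge 0$. The first inequality in \eqref{eq:DM:to:aM} then drops out of \eqref{eq:lam:a:rho} by dividing through by $a_M$ and using $a_M\le 2e(t+\rho)$. For the second, note that $\Delta\ge 0$ for $t\ge 0$: indeed $a(t,X,\ep)=e(t+\al+\ep^2)\ge e(t+\al)\ge 0$ forces $4a(t,X,\ep)^3\ge 4a(t,X)^3\ge 27b^2$ by \eqref{eq:hanbetu}, and localization preserves this; hence from \eqref{eq:DM:Dis}, $\Delta_M\ge\Delta_r\ge 12e^3M(t+\rho)^2\lr{\xi}^{-1}$, and combining with $a_M^2\le 4e^2(t+\rho)^2$ yields $\Delta_M\ge c\,M\lr{\xi}^{-1}a_M^2$ for a suitable $c>0$, that is $\Delta_M/a_M\ge c\,M\lr{\xi}^{-1}a_M$. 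Finally, the derivative bounds for $\psi$ and $\rho$ are precisely Lemma~\ref{lem:dif:psi:1} and Lemma~\ref{lem:hyoka:rho:s} with $s=1$ (for $\rho$ one may also invoke Lemma~\ref{lem:a:bound:rho} with $n=1$).

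The argument is largely bookkeeping; the step I expect to be the main obstacle is keeping the reshuffled coefficients $a_j+12e^3c_j/\tilde e$, together with $1/\tilde e$ and $e^3$, inside $S(1,G)$ with constants \emph{uniform in $M$, $\gamma$ and in $t\in[0,T]$}. This forces $T$ to be a fixed small constant, so that $\tilde e$ stays positive and the Malgrange factorization of Lemma~\ref{lem:DMal} remains valid on $[0,T]$, and it uses the symbol estimates \eqref{eq:y:eta} and \eqref{eq:xi:kakudai} for the localizing coordinate functions.
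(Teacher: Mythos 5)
Your treatment of the two lower bounds in \eqref{eq:DM:to:aM} and of the derivative estimates for $\psi$ and $\rho$ is correct and matches the paper: the first inequality drops out of \eqref{eq:lam:a:rho} via $e(t+\rho)/a_M\geq 1/2$, and the second from $\Delta_M\geq\Delta_r\geq 12e^3M(t+\rho)^2\lr{\xi}^{-1}$ combined with $a_M\leq 2e(t+\rho)$.

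The factorization step, however, has a genuine gap. You decompose $\Delta_M=\Delta+\Delta_r$, apply Lemma \ref{lem:DMal} with $\ep=M^{1/2}\lr{\xi}^{-1/2}$ to get $\Delta={\tilde e}(t^3+a_1t^2+a_2t+a_3)$, and then try to absorb $\Delta_r=12e^3(c_1t^2+c_2t+c_3)$ by relabelling $a_j+12e^3c_j/\tilde e$ as the new coefficients. But $e=e(t,x,\xi)$ and $\tilde e=\tilde e(t,x,\xi)$ both depend on $t$, so the ratio $e^3/\tilde e$ is $t$-dependent, and hence so is $a_j+12e^3c_j/\tilde e$. This contradicts the asserted form $\Delta_M=e\big(t^3+a_1(x,\xi)t^2+a_2(x,\xi)t+a_3(x,\xi)\big)$, where the $a_j$ must be functions of $(x,\xi)$ alone; that $t$-independence is precisely what feeds into the B\'ezout matrix analysis and the estimates on $\lambda_j$, $T$ in Section \ref{sec:Bezout}, so one cannot weaken it.

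The paper avoids the problem outright. Since $\Delta_M=4a_M^3-27b^2$ with $a_M=e(t+\al+2M\lr{\xi}^{-1})$, it is exactly the localization of the perturbed discriminant $\Delta(t,X,\ep)$ at $\ep=\sqrt{2}M^{1/2}\lr{\xi}^{-1/2}$, for then $\ep^2=2M\lr{\xi}^{-1}$ and $a(t,X,\ep)=e(t+\al+\ep^2)$ localizes to $a_M$. Applying Lemma \ref{lem:DMal} with this choice of $\ep$ gives the Malgrange factorization of $\Delta_M$ directly, with genuinely $t$-independent coefficients $a_j(X,\ep)$, and Lemmas \ref{lem:a_j:al} and \ref{lem:a:bound:rho} with $n=j$ give the derivative bounds in terms of $\al+2M\lr{\xi}^{-1}$, which is pinned between $\rho$ and $2\rho$. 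To repair your route you would have to re-run a Malgrange preparation on the $t$-dependent cubic you obtain after the reshuffle, which is just the paper's argument executed with a detour.
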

\begin{proof}
Choosing $\ep=\sqrt{2}M^{1/2}\lr{\xi}^{-1/2}$ and applying Lemma \ref{lem:DMal} one can write $\Delta_M$  as a third order polynomial in $t$, up to non-zero factor and can estimate the coefficients thanks to Lemmas \ref{lem:a_j:al} and \ref{lem:a:bound:rho} in terms of $\al+2M\lr{\xi}^{-1}$. Noting $
\rho(x,\xi)\leq \alpha(x,\xi)+2M\lr{\xi}^{-1}\leq 2\rho(x,\xi)$ 
we have the desired estimates for $a_j$. The assertion \eqref{eq:DM:to:aM} follows from \eqref{eq:lam:a:rho} for $a_M=e(t+\rho+M\lr{\xi}^{-1/2})\leq 2\,e(t+\rho)$. The estimates for $\psi$ and $\rho$  are nothing but Lemmas \ref{lem:hyoka:rho:s} and \ref{lem:dif:psi:1} with $\ep=M^{1/2}\lr{\xi}^{-1/2}$.
\end{proof}
\begin{remark}
\label{rem:fukusyu}\rm
Denoting ${\bar e}=e(0, 0, {\bar\xi})=\dif_ta(0, 0,{\bar\xi})$ it is clear from \eqref{eq:koyuti} that 
\[
{\bar e}\;\;\text{  is the nonzero positive real eigenvalue of} \;\; F_p(0, 0, 0, {\bar\xi})
\]
and the coefficient of the right-hand side of \eqref{eq:DM:to:aM} is ${\tilde e}\,{\bar c}/(2e)=2\,{\bar e}^2\,{\bar c}(1+O(M^{-2}))$. On the other hand, denoting the subprincipal symbol of $P$ by $P_{sub}$ and $b_3(0, 0, {\bar\xi})$ by ${\bar b}_3$,  it is easy to see that 
\begin{equation}
\label{eq:P:sub}
P_{sub}(0, 0, 0, {\bar\xi})={\bar e}/(2i)
+{\bar b}_3.
\end{equation}
\end{remark}
\begin{lem}
\label{lem:b:hi:a}With ${\bar e}=e(0, 0, {\bar\xi})$ we have 
\[
\big|\dif_t b\big|\leq (1+CM^{-2})\big(2\sqrt{2/3}\big){\bar e}\sqrt{a_M},
\quad 0\leq t\leq M^{-2}.
\]
\end{lem}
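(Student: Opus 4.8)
The plan is to combine the hyperbolicity inequality with a one–sided Taylor expansion of $b$ in $t$, exploiting that on the window $0\le t\le M^{-2}$ all the relevant quantities are small. Since $\Delta_M=4a_M^3-27b^2\ge 0$ for $0\le t\le T$ by \eqref{eq:lam:a:rho} (Proposition~\ref{pro:ext:matome}) and $a_M>0$, one has the pointwise bound $|b(t,x,\xi)|\le \tfrac{2}{3\sqrt3}\,a_M(t,x,\xi)^{3/2}$ there. First I would record the size estimates that make this window favorable: by Lemma~\ref{lem:kakutyo:1} (applied to $\alpha$, which vanishes at $(0,\bar\xi)$) one has $\alpha(x,\xi)\precsim M^{-2}$, by \eqref{eq:seigen} one has $M\lr{\xi}^{-1}\le M^{-4}$, hence $a_M\precsim M^{-2}$, while at the same time $a_M\ge 2e\,M\lr{\xi}^{-1}>0$; moreover $e=\bar e\,(1+O(M^{-2}))$ (again Lemma~\ref{lem:kakutyo:1}), so $\dif_t a_M=\bar e\,(1+O(M^{-2}))$ where $\bar e=\dif_ta(0,0,\bar\xi)>0$ is used, and $|\dif_t^2 b|,\,|\dif_t^2 a_M|\precsim 1$.

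Now I would fix a point $(x,\xi)$ of the domain and $t_0\in[0,M^{-2}]$, write $a_0=a_M(t_0,x,\xi)$, and for $s>0$ with $t_0+s\le T$ apply Taylor's formula to $b$ around $t_0$:
\[
|\dif_tb(t_0,x,\xi)|\le \frac{1}{s}\cdot\frac{2}{3\sqrt3}\bigl(a_M(t_0+s,x,\xi)^{3/2}+a_0^{3/2}\bigr)+C_0\,s,
\]
where $C_0$ is a constant independent of $M,\gamma$ bounding $\tfrac12|\dif_t^2 b|$. Using $a_M(t_0+s,\cdot)\le a_0+s\,\bar e(1+O(M^{-2}))$ (the $O(s^2)$ remainder being absorbed because $s\precsim M^{-2}$) and choosing the shift $s=3a_0/\bar e$ — which satisfies $s\precsim M^{-2}$, hence $t_0+s\le T$ — one gets $a_M(t_0+s,\cdot)\le 4a_0\,(1+O(M^{-2}))$, whence
\[
|\dif_tb(t_0,x,\xi)|\le \frac{\bar e}{3a_0}\cdot\frac{2}{3\sqrt3}\cdot 9\,a_0^{3/2}\bigl(1+O(M^{-2})\bigr)+\frac{3C_0}{\bar e}\,a_0=\frac{2}{\sqrt3}\,\bar e\,\sqrt{a_0}\,\bigl(1+O(M^{-1})\bigr),
\]
the last summand being absorbed since $a_0=\sqrt{a_0}\,\sqrt{a_0}\precsim M^{-1}\sqrt{a_0}$. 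The choice $s=3a_0/\bar e$ is exactly the minimizer of $\lambda\mapsto\bigl((1+\lambda)^{3/2}+1\bigr)/\lambda$, whose minimum equals $3$ at $\lambda=3$, so nothing sharper is gained by a different shift. Finally, since $2/\sqrt3<2\sqrt{2/3}$, for $M$ large the right–hand side is bounded by $(2\sqrt{2/3})\,\bar e\,\sqrt{a_M(t_0,x,\xi)}\le(1+CM^{-2})(2\sqrt{2/3})\,\bar e\,\sqrt{a_M(t_0,x,\xi)}$, which is the assertion.

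The main thing to be careful about is the uniform bookkeeping of the $M$– and $\gamma$–dependence: one must check that every constant above is independent of $M,\gamma$, that $t_0+s$ stays comfortably inside $[0,T]$ so that the hyperbolicity inequality and the Taylor remainders are legitimately available there, and that $\bar e>0$ is genuinely used — both to write $\dif_t a_M\approx\bar e$ and to make the choice $s=3a_0/\bar e$ meaningful. There is no analytic difficulty beyond this.
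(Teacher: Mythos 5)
Your proof is correct and rests on the same core mechanism as the paper's: Taylor expansion of $b$ in $t$ combined with the hyperbolicity bound $27b^2\le 4a_M^3$ sampled at a shift roughly three times the base scale, giving the constant $2/\sqrt{3}$ (with the extra factor $\sqrt{2}$ in the statement absorbing the lower-order errors). The only structural difference is cosmetic: the paper expands at $t=0$, bounding $\beta_0$ at $t=0$ and $\beta_1$ by sampling at $t=3\alpha$ and then passing to general $t$ via $|\dif_t b-\beta_1|\le Ct$, whereas you expand at a moving base point $t_0$ with shift $s=3a_0/\bar e$, which removes that last propagation step and makes the optimality of the factor $3$ (minimizing $\lambda\mapsto((1+\lambda)^{3/2}+1)/\lambda$) explicit.
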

\begin{proof}
Write $b=\be_0(x,\xi)+t\be_1(x, \xi)+t^2\be_3(t, x,\xi)$. 
Setting $t=0$ in  $27b^2\leq 4a^3$ it is clear that $|\be_0|\leq (2/3\sqrt{3}){\bar e}^{3/2}(1+CM^{-2})\al^{3/2}$. We first check that
\begin{equation}
\label{eq:be:hi:al}
|\be_1|\leq (1+CM^{-2})(2/\sqrt{3}){\bar e}^{3/2}\sqrt{\al}.
\end{equation}
If $\al(x, \xi)=0$ then $\be_1(x, \xi)=0$ hence \eqref{eq:be:hi:al} is clear. If $\al(x,\xi)>0$ taking $t=3\al$ and noting $e(3\al, x, \xi)\leq (1+CM^{-2}){\bar e}+C\al$ it follows that
\begin{align*}
3\al |\be_1|\leq 2 (4^{3/2}(1+CM^{-2}){\bar e}^{3/2}/3\sqrt{3})\al^{3/2}+|\be_0|+C\al^2\\
\leq (6/\sqrt{3})(1+CM^{-2}){\bar e}^{3/2}\al^{3/2}+C\al^2
\leq (6/\sqrt{3})(1+CM^{-2}){\bar e}^{3/2}\al^{3/2}
\end{align*}
because $\al\leq CM^{-4}$ which proves \eqref{eq:be:hi:al}. Since $|\dif_t b|\leq |\be_1|+Ct$  we see that $
|\dif_t b|\leq (1+CM^{-2})(2/\sqrt{3}){\bar e}^{3/2}\sqrt{\al}+CM^{-2}\sqrt{t}$ 
thus the proof is immediate. 
\end{proof}
%

\section{B\'ezout matrix and diagonal symmetrizer}
\label{sec:Bezout}

Add $-2M\op{e(t, x,\xi)\lr{\xi}^{-1}}[D]^2D_t$ to the principal part and subtract the same one from the lower order part so that the operator is left to be invariant;
\begin{align*}
{\hat P}=D_t^3-a_M(t, x, D)[D]^2D_t-b(t, x, D)\,[D]^3+b_1(t, x, D)D_t^2\\
+\big(b_2(t, x, D)+d_M(t, x, D))[D]D_t+b_3(t, x, D)[D]^2
\end{align*}
where $b_j(t, x,\xi)\in S(1,G)$ and $
 d_M(t, x,\xi)=2M(e\lr{\xi}^{-1})\#[\xi]\in S(M, G)$. 
Here from Lemma \ref{lem:kakutyo:1} and  \eqref{eq:xi:kakudai} it follows that
\begin{equation}
\label{eq:dM:seimitu}
d_M(t, x,\xi)-2M{\bar e}\in S(M^{-1}, g).
\end{equation}
With $U={^t}(D_t^2u,[D]D_tu,[D]^2u)$ the equation ${\hat P} u=f$ is transformed to 
\begin{equation}
\label{eq:redE}
D_tU=A(t, x, D)[D]U+B(t, x, D)U+F
\end{equation}
where $F={^t}(f,0,0)$ and
\[
 A(t, x,\xi)=
\begin{bmatrix}0&a_M&b\\
1&0 &0\\
0&1&0
\end{bmatrix},\quad B(t, x,\xi)=
\begin{bmatrix}b_1&b_2+d_M&b_3\\
0&0 &0\\
0&0&0
\end{bmatrix}.
\]
Let $S$ be  the B\'ezout matrix  of ${\hat p}$ and $\dif {\hat p}/\dif \tau$, that is
\[
S(t, x,\xi)=
\begin{bmatrix}3&0&-a_M\\
0&2a_M&3b\\
-a_M&3b&a_M^2
\end{bmatrix}
\]
then $S$ is positive semidefinite and symmetrizes $S$, that is $SA$ is symmetric which is easily examined directly, though this is a special case of a general fact (see \cite{Ja2}, \cite{Ni3}).

\subsection{Eigenvalues of B\'ezout matrix}

To simplify notation denote
\[
\sigma(t, x,\xi)=t+\al(x,\xi)+2M\lr{\xi}^{-1}=t+\rho(x,\xi)+M\lr{\xi}^{-1}
\]
hence $a_M(t, x,\xi)=e(t, x,\xi)\sigma(t, x,\xi)$ and $
(1-CM^{-2}){\bar e}\,\sigma\leq a_M\leq (1+CM^{-2}){\bar e}\,\sigma$.
In what follows we assume that  $t$ varies in the interval
\[
0\leq t\leq M^{-4}.
\]
Since $\rho\in S(M^{-4},G)$ it is clear that $\sigma(t, x,\xi)\in S(M^{-4}, G)$. 
\begin{lem}
\label{lem:dif:sig}We have $
\big|\dif_x^{\al}\dif_{\xi}^{\be}\sigma\big|\precsim \sigma^{1-|\al+\be|/2}\lr{\xi}^{-|\be|}$. 
 In particular $\sigma\in S(\sigma, g)$.
\end{lem}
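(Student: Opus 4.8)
The plan is to reduce everything to the estimate already available for $\rho$. Recall from Proposition \ref{pro:ext:matome} (or directly from Lemma \ref{lem:hyoka:rho:s} with $s=1$ together with Lemma \ref{lem:a:bound:rho}) that $\big|\dif_x^{\al}\dif_{\xi}^{\be}\rho\big|\precsim \rho^{1-|\al+\be|/2}\lr{\xi}^{-|\be|}$, and that $M\lr{\xi}^{-1}\leq \rho\leq \sigma=t+\rho+M\lr{\xi}^{-1}$ while on the range $0\leq t\leq M^{-4}$ one also has $\sigma\precsim \rho$ (because $t\leq M^{-4}\leq C_0^{-1}\lr{\xi}^{-1}\le C\rho$ using $\gamma\geq M^5$), so in fact $\sigma\simeq\rho$ uniformly. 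Hence it suffices to control the derivatives of $\sigma$ term by term.

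First I would treat the pieces of $\sigma$ separately: $\dif_x^{\al}\dif_{\xi}^{\be}t=0$ for $|\al+\be|\geq 1$ and trivially $|t|\leq\sigma$; the derivatives of $\al(x,\xi)$ are bounded by $\rho^{1-|\al+\be|/2}\lr{\xi}^{-|\be|}$ by Lemma \ref{lem:a:bound:rho} applied with $n=1$ (since $0\leq \al\leq \rho$); and the derivatives of the constant-times-$\lr{\xi}^{-1}$ term, namely $\dif_x^{\al}\dif_{\xi}^{\be}\big(2M\lr{\xi}^{-1}\big)$, are $\precsim M\lr{\xi}^{-1-|\be|}\leq \big(M\lr{\xi}^{-1}\big)^{1-|\al+\be|/2}\lr{\xi}^{-|\be|}$ after using $M\lr{\xi}^{-1}\leq M^{-4}$ to absorb the extra powers (more precisely, $M\lr{\xi}^{-1}=\big(M\lr{\xi}^{-1}\big)\cdot 1$ and for $|\al+\be|\geq 2$ one estimates $M\lr{\xi}^{-1-|\be|}\leq \big(M\lr{\xi}^{-1}\big)^{1-|\al+\be|/2}\lr{\xi}^{-|\be|}$ because $M\lr{\xi}^{-1}\leq 1$). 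Adding the three contributions and bounding each of $\al$, $M\lr{\xi}^{-1}$ from above by $\rho$ and then $\rho\precsim\sigma$, $\sigma\precsim\rho$ gives $\big|\dif_x^{\al}\dif_{\xi}^{\be}\sigma\big|\precsim \sigma^{1-|\al+\be|/2}\lr{\xi}^{-|\be|}$.

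Finally, to conclude $\sigma\in S(\sigma,g)$ with $g=M^{-1}(\lr{\xi}|dx|^2+\lr{\xi}^{-1}|d\xi|^2)$, I would observe that the weight estimate just proved, namely $|\dif_x^{\al}\dif_{\xi}^{\be}\sigma|\precsim \sigma^{1-|\al+\be|/2}\lr{\xi}^{-|\be|}$, is stronger than the required symbol bound $|\dif_x^{\al}\dif_{\xi}^{\be}\sigma|\precsim \sigma\, (M^{-1}\lr{\xi})^{|\al|/2}(M^{-1}\lr{\xi}^{-1})^{|\be|/2}$; indeed for $|\al+\be|\geq 1$ the gain $\sigma^{-|\al+\be|/2}\leq (M\lr{\xi}^{-1})^{-|\al+\be|/2}=M^{-|\al+\be|/2}\lr{\xi}^{|\al+\be|/2}$, which combined with the $\lr{\xi}^{-|\be|}$ already present supplies exactly the factor $(M^{-1}\lr{\xi})^{|\al|/2}(M^{-1}\lr{\xi}^{-1})^{|\be|/2}$. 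The only real point requiring care—hence the main obstacle—is the correct handling of the regularized term $M\lr{\xi}^{-1}$: one must use the constraint $\gamma\geq M^5$ (equivalently $M\lr{\xi}^{-1}\leq M^{-4}\leq 1$) to see that its high derivatives do not spoil the $\sigma^{1-|\al+\be|/2}$ bound, and the equivalence $\sigma\simeq\rho$ on $0\leq t\leq M^{-4}$ to pass freely between $\rho$ and $\sigma$ in the exponents. Everything else is the same Glaeser-type bookkeeping already carried out in Lemmas \ref{lem:a:bound:rho}–\ref{lem:hyoka:rho:s}.
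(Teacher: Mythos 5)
There is a genuine gap. The claim ``$\sigma\simeq\rho$ uniformly'' is false, and the chain of inequalities you offer to justify it is itself wrong: you wrote $t\leq M^{-4}\leq C_0^{-1}\lr{\xi}^{-1}$, but the constraint $\gamma\geq M^5$ gives $\lr{\xi}^{-1}\leq M^{-5}$, which is \emph{smaller} than $M^{-4}$, not larger. In fact $t$ can be of size $M^{-4}$ while $\rho=\alpha+M\lr{\xi}^{-1}$ can be as small as $M\lr{\xi}^{-1}$ (take $\alpha(x,\xi)=0$ and $\lr{\xi}$ huge), so the ratio $\sigma/\rho$ is unbounded. Consequently, your reduction of $\rho^{1-|\al+\be|/2}$ to $\sigma^{1-|\al+\be|/2}$ breaks exactly where it matters: for $|\al+\be|\geq 2$ the exponent $1-|\al+\be|/2$ is nonpositive, so $\rho\leq\sigma$ gives $\rho^{1-|\al+\be|/2}\geq\sigma^{1-|\al+\be|/2}$, the \emph{wrong} direction. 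Your treatment of $|\al+\be|=1$ is fine, since there $\rho^{1/2}\leq\sigma^{1/2}$.

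The repair, which is what the paper actually does, is to not use the Glaeser-type bound $\rho^{1-|\al+\be|/2}\lr{\xi}^{-|\be|}$ for higher-order derivatives, but the cruder and in fact sharper estimate coming from $\rho\in S(M^{-4},G)$: for $|\al+\be|\geq 2$,
\[
\big|\dif_x^{\al}\dif_{\xi}^{\be}\sigma\big|=\big|\dif_x^{\al}\dif_{\xi}^{\be}\big(\rho+M\lr{\xi}^{-1}\big)\big|\precsim M^{2|\al+\be|-4}\lr{\xi}^{-|\be|},
\]
and then, using $\sigma\leq C M^{-4}$ together with the nonnegativity of $|\al+\be|/2-1$,
\[
M^{2|\al+\be|-4}=\big(M^{-4}\big)^{1-|\al+\be|/2}\precsim \sigma^{1-|\al+\be|/2},
\]
which gives the claimed bound. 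Combined with your (correct) $|\al+\be|=1$ argument and the observation $\sigma^{-1}\leq M^{-1}\lr{\xi}$ for the passage to $S(\sigma,g)$, this completes the proof. The conceptual point you missed is that for $|\al+\be|\geq 2$ the Glaeser consequence of Lemma~\ref{lem:a:bound:rho} is actually weaker than the raw $S(M^{-4},G)$ symbol bound, so you should not take the detour through $\rho^{1-|\al+\be|/2}$.
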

\begin{proof}It is clear from \eqref{eq:rho:Glae} that  $
\big|\dif_x^{\al}\dif_{\xi}^{\be}\sigma\big|\precsim \sqrt{\sigma}\,\lr{\xi}^{-|\be|}$ 
for $|\al+\be|=1$. For $|\al+\be|\geq 2$ it results $
\big|\dif_x^{\al}\dif_{\xi}^{\be}\sigma\big|\precsim M^{2|\al+\be|-4}\lr{\xi}^{-|\be|}\precsim \sigma^{1-|\al+\be|/2}\lr{\xi}^{-|\be|}$ from $\rho\in S(M^{-4}, G)$ since $C\sigma^{-1}\geq M^4$. The second assertion is clear from $\sigma^{-1}\leq M^{-1}\lr{\xi}$.
\end{proof}
\begin{cor}
\label{cor:dif:sig:b}Let $s\in \R$. Then $
\big|\dif_x^{\al}\dif_{\xi}^{\be}\sigma^s\big|\precsim \sigma^{s-|\al+\be|/2}\lr{\xi}^{-|\be|}$. 
In particular $\sigma^s\in S(\sigma^s, g)$.
\end{cor}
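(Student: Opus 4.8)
The plan is to deduce Corollary \ref{cor:dif:sig:b} from Lemma \ref{lem:dif:sig} by exactly the chain-rule bookkeeping already used in the proof of Lemma \ref{lem:hyoka:rho:s}. First I would write, for $s\in\R$ and a multi-index with $|\al+\be|=k\geq 1$,
\[
\dif_x^{\al}\dif_{\xi}^{\be}\sigma^s=\sum C_{\al^{(j)}\be^{(j)}}\,\sigma^s\,\big(\dif_x^{\al^{(1)}}\dif_{\xi}^{\be^{(1)}}\sigma/\sigma\big)\cdots\big(\dif_x^{\al^{(r)}}\dif_{\xi}^{\be^{(r)}}\sigma/\sigma\big),
\]
where the sum is over partitions $\al=\al^{(1)}+\cdots+\al^{(r)}$, $\be=\be^{(1)}+\cdots+\be^{(r)}$ into nonzero pieces (Fa\`a di Bruno); note $\sigma>0$ on the relevant range so the negative powers of $\sigma$ make sense. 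The case $s=0$ is trivial and $s=1$ is Lemma \ref{lem:dif:sig} itself.

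Next I would estimate each factor: by Lemma \ref{lem:dif:sig}, $\big|\dif_x^{\al^{(i)}}\dif_{\xi}^{\be^{(i)}}\sigma/\sigma\big|\precsim \sigma^{-|\al^{(i)}+\be^{(i)}|/2}\lr{\xi}^{-|\be^{(i)}|}$. Multiplying the $r$ factors and the prefactor $|\sigma|^s=\sigma^s$ and summing the exponents, the powers of $\sigma$ add up to $s-\sum_i|\al^{(i)}+\be^{(i)}|/2=s-|\al+\be|/2$ and the powers of $\lr{\xi}$ add up to $-\sum_i|\be^{(i)}|=-|\be|$, which is precisely the claimed bound $\big|\dif_x^{\al}\dif_{\xi}^{\be}\sigma^s\big|\precsim \sigma^{s-|\al+\be|/2}\lr{\xi}^{-|\be|}$; since there are only finitely many terms in the sum the implied constant is uniform (depending on $\al,\be,s$ but not on $M,\gamma$).

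Finally, for the membership $\sigma^s\in S(\sigma^s,g)$ with $g=M^{-1}(\lr{\xi}|dx|^2+\lr{\xi}^{-1}|d\xi|^2)$, I would observe that the weight of a derivative in $x$ is $\lr{\xi}^{1/2}M^{-1/2}$ and in $\xi$ is $\lr{\xi}^{-1/2}M^{-1/2}$, so $k$ derivatives costing $\sigma^{-k/2}\lr{\xi}^{-|\be|}$ relative to $\sigma^s$ must be dominated by $(\lr{\xi}M^{-1})^{|\al|/2}(\lr{\xi}^{-1}M^{-1})^{|\be|/2}$; this reduces, after cancelling the $\lr{\xi}$ powers, to $\sigma^{-k/2}\precsim M^{-k/2}\lr{\xi}^{k/2}$, i.e. $\sigma^{-1}\precsim M^{-1}\lr{\xi}$, which is the inequality $\sigma\geq \rho+M\lr{\xi}^{-1}\geq M\lr{\xi}^{-1}$ noted already (and used at the end of Lemma \ref{lem:dif:sig}). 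There is essentially no obstacle here; the only point requiring a line of care is keeping track that the Fa\`a di Bruno expansion has bounded length and bounded combinatorial coefficients so that uniformity in the parameters is preserved, exactly as in Lemma \ref{lem:hyoka:rho:s}.
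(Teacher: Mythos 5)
Your argument is correct and coincides with what the paper intends: it is the same Fa\`a di Bruno bookkeeping used to prove Lemma \ref{lem:hyoka:rho:s} for $\rho^s$, now applied to $\sigma^s$ starting from Lemma \ref{lem:dif:sig}, and the final reduction $\sigma^{-1}\precsim M^{-1}\lr{\xi}$ (i.e.\ $\sigma\geq M\lr{\xi}^{-1}$) for the $S(\sigma^s,g)$ membership is exactly the observation already made at the end of the proof of Lemma \ref{lem:dif:sig}.
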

\begin{definition}\rm
\label{dfn:calC}
To simplify notations we denote by ${\mathcal C}(\sigma^s)$ the set of symbols $r(t, x,\xi)$ satisfying $
\big|\dif_x^{\al}\dif_{\xi}^{\be}r\big|\precsim \sigma^{s-|\al+\be|/2}\lr{\xi}^{-|\be|}$.
\end{definition}
It is clear that ${\mathcal C}(\sigma^s)\subset  S(\sigma^s, g)$ 
because $\sigma^{-|\al+\be|/2}\leq M^{-|\al+\be|/2}\lr{\xi}^{|\al+\be|/2}$. It is also clear that if $p\in {\mathcal C}(\sigma^s)$ with $s>0$ then $(1+p)^{-1}-1\in {\mathcal C}(\sigma^s)$.
\begin{lem}
\label{lem:NiP}
One has $
a_M^s\in {\mathcal C}(\sigma^s)$ for $s\in\R$, $b\in {\mathcal C}(\sigma^{3/2})$, $\dif_ta_M\in {\mathcal C}(1)$ and $\partial_tb\in {\mathcal C}(\sqrt{\sigma})$.
\end{lem}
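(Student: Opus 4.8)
The statement collects four elementary membership facts in the symbol class $\mathcal C(\sigma^s)$, and each follows by combining the already established estimates on $\rho$, $\al$, $e$ with the calculus of the classes $\mathcal C(\sigma^s)$. The plan is to treat the four claims in order, reducing each to Lemma \ref{lem:a:bound:rho} (or its consequences) together with the observation $\rho\le\sigma\le 2\rho+t$ and $t\le M^{-4}\precsim\sigma$, so that $t$ itself lies in $\mathcal C(\sigma)$ with all its $x,\xi$-derivatives vanishing.

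\emph{Step 1 ($a_M^s\in\mathcal C(\sigma^s)$).} Recall $a_M=e\,\sigma$ with $0<e\in S(1,G)$ and, more precisely, $e(x,\xi)-{\bar e}\in S(M^{-2},G)$ by Lemma \ref{lem:kakutyo:1}; in particular $e,e^{-1}\in\mathcal C(1)$ since constants and $S(M^{-2},G)$-symbols are trivially in $\mathcal C(1)$ (their derivatives of order $k\ge 1$ are $O(M^{-2}\lr{\xi}^{-|\be|})\precsim\sigma^{-k/2}\lr{\xi}^{-|\be|}$ because $M^4\precsim\sigma^{-1}$). Hence $a_M^s=e^s\sigma^s$; writing $\dif_x^{\al}\dif_{\xi}^{\be}(e^s\sigma^s)$ by the Leibniz rule and distributing the factors $\sigma^{-|\al+\be|/2}$ using Corollary \ref{cor:dif:sig:b} for the $\sigma^s$ factor and the $\mathcal C(1)$ bound for $e^s$ gives the claim. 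The same Leibniz bookkeeping shows $\mathcal C(\sigma^{s})\cdot\mathcal C(\sigma^{s'})\subset\mathcal C(\sigma^{s+s'})$, which I will use repeatedly below.

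\emph{Step 2 ($b\in\mathcal C(\sigma^{3/2})$).} By Lemma \ref{lem:a:bound:rho} applied to the localized symbol of $b$ — using that $27\,b^2\le 4\,a^3$ forces $\dif_x^{\al}\dif_{\xi}^{\be}\dif_\ep^k b(0,{\bar\xi},0)=0$ for $|\al+\be|+k<3$, so that $|b|\precsim\rho^{3/2}$ — one gets $|\dif_x^{\al}\dif_{\xi}^{\be}b|\precsim\rho^{3/2-|\al+\be|/2}\lr{\xi}^{-|\be|}\le\sigma^{3/2-|\al+\be|/2}\lr{\xi}^{-|\be|}$ since $\rho\le\sigma$ and $3/2-|\al+\be|/2$ has the right sign only when $|\al+\be|\le 3$; for $|\al+\be|\ge 3$ one instead uses $b\in S(M^{-6},G)$ and $M^4\precsim\sigma^{-1}$ exactly as in the proof of Lemma \ref{lem:a:bound:rho}. (Here $b$ does not depend on $\ep$, so no regularization issue arises.)

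\emph{Step 3 ($\dif_t a_M\in\mathcal C(1)$ and $\dif_t b\in\mathcal C(\sqrt\sigma)$).} Since $a_M=e(t+\rho+M\lr{\xi}^{-1})$ one has $\dif_t a_M=\dif_t e\cdot\sigma+e$; the second term is in $\mathcal C(1)$ by Step 1's bound on $e$, and $\dif_t e\in S(1,G)$ with $\dif_t e\in\mathcal C(1)$ for the same reason, so $\dif_t e\cdot\sigma\in\mathcal C(\sigma)\subset\mathcal C(1)$. For $\dif_t b$: writing $b=\be_0(x,\xi)+t\,\be_1(x,\xi)+t^2\be_2(t,x,\xi)$ as in Lemma \ref{lem:b:hi:a}, we have $\dif_t b=\be_1+2t\,\be_2+t^2\dif_t\be_2$, and $|\be_1|\precsim\sqrt{\al}\le\sqrt\rho\le\sqrt\sigma$ with $\be_1\in S(M^{-2},G)$ gives $\be_1\in\mathcal C(\sqrt\sigma)$ by the same dichotomy (Glaeser for the first derivative, $M^4\precsim\sigma^{-1}$ for higher ones), while $t,t^2\in\mathcal C(\sigma)\subset\mathcal C(\sqrt\sigma)$ and $\be_2,\dif_t\be_2\in S(1,G)\subset\mathcal C(1)$ handle the remaining terms via Step 1's product rule.

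\textbf{Main obstacle.} The only genuinely delicate point is the low-order derivatives of $b$ and of $\be_1$, i.e. reproving — in the localized, $M$-dependent setting — that a nonnegative-square-root-type estimate $|b|\precsim\rho^{3/2}$ propagates to derivatives with the expected $\rho$-power loss; but this is precisely the content of Lemma \ref{lem:a:bound:rho}, so the work here is citation plus noting that the hypotheses apply (which they do, since $b$ is $\ep$-independent and $27b^2\le 4a^3$). Everything else is the Leibniz rule together with $\sigma\in S(\sigma,g)$, $\rho\le\sigma$, and $M^4\precsim\sigma^{-1}$.
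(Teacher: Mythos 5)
Your Step~1 for $a_M^s$ and your treatment of $\dif_t a_M$ are correct and essentially match the paper (which just calls these clear from Corollary~\ref{cor:dif:sig:b} and $a_M=e\sigma$).

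In Step~2 there is a real gap. You claim $|b|\precsim\rho^{3/2}$ and then cite Lemma~\ref{lem:a:bound:rho} directly. But $b=b(t,x,\xi)$ depends on $t$, and for $t>0$ the inequality $27b^2\le 4a_M^3$ only gives $|b|\precsim\sigma^{3/2}$ (with $\sigma=t+\rho+M\lr{\xi}^{-1}$), not $\rho^{3/2}$; that weaker bound cannot be obtained by inserting $\rho\le\sigma$ after the fact, because for $|\al+\be|\ge 4$ the exponent $3/2-|\al+\be|/2$ is negative and the inequality reverses. Lemma~\ref{lem:a:bound:rho} as stated applies to $t$-independent symbols $f(X,\ep)$ controlled by $\rho$; the paper does not apply it directly but rather \emph{repeats its proof} with $\sigma$ in place of $\rho$, using $|b|\le C\sigma^{3/2}$, $\sigma\in S(M^{-4},G)$, and Lemma~\ref{lem:dif:sig} for the Glaeser-type bound on $\dif\sigma$. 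You also invoke $b\in S(M^{-6},G)$ without justification: establishing this uniformly in $t\in[0,M^{-4}]$ is precisely what the Taylor expansion \eqref{eq:b:tenkai} in $t$ is for (it splits $b$ into $b(0,y,\eta)\in S(M^{-6},G)$, $t\,\dif_t b(0,y,\eta)$ with $\dif_tb(0)\in S(M^{-2},G)$, and a $t^2$-remainder in $S(1,G)$, then uses $0\le t\le M^{-4}$). That preliminary step cannot be skipped.

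Your Step~3 for $\dif_t b$ is a genuinely different route from the paper's. The paper argues directly: $|\dif_t b|\le C a_M^{1/2}\le C'\sigma^{1/2}$ from Lemma~\ref{lem:b:hi:a} handles $|\al+\be|=0$, and $\dif_t b\in S(M^{-2},G)$ (again from \eqref{eq:b:tenkai}) gives $|\dif_x^{\al}\dif_{\xi}^{\be}\dif_t b|\lr{\xi}^{|\be|}\precsim M^{2|\al+\be|-2}\precsim\sigma^{1/2-|\al+\be|/2}$ for $|\al+\be|\ge1$ using only $\sigma\le CM^{-4}$ — no Glaeser needed. Your term-by-term decomposition $\dif_t b=\be_1+2t\be_2+t^2\dif_t\be_2$ also works, but the appeal to Glaeser for $\be_1$ is both unnecessary and not justified as stated ($\be_1$ may change sign, so the Glaeser inequality does not apply directly to it; the $S(M^{-2},G)$ bound already gives what you need for $|\al+\be|\ge1$). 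Tightening Step~2 along the lines above — establishing the uniform-in-$t$ $S(M^{-6},G)$ membership and replacing $\rho^{3/2}$ by $\sigma^{3/2}$ before running the Lemma~\ref{lem:a:bound:rho}-type argument with $\sigma$ — would make the proof correct.
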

\begin{proof}The first assertion is clear from Corollary \ref{cor:dif:sig:b} because $a_M=e\sigma$ and $e\in S(1,G)$, $1/C\leq e\leq C$. To show the second assertion, recalling that $b(t, x, \xi)$ is the localized symbol of $b(t, X)$,  write
\begin{equation}
\label{eq:b:tenkai}
\begin{split}
b(t, x,\xi)={b}(0, y(x),\eta(\xi))+\dif_t{ b}(0,y(x),\eta(\xi))t\\
+\int _0^1(1-\theta)\dif_t^2{ b}(\theta t, y(x),\eta(\xi))d\theta\cdot t^2.
\end{split}
\end{equation}
Since $\dif_x^{\al}\dif_{\xi}^{\be}{ b}(0,0,{\bar\xi})=0$ for $|\al+\be|\leq 2$ and $\dif_t{ b}(0,0,{\bar\xi})=0$ then thanks to Lemma \ref{lem:kakutyo:1} one has ${ b}(0,y(x),\eta(\xi))\in S(M^{-6},G)$ and $\dif_t{ b}(0,y(x),\eta(\xi))\in S(M^{-2},G)$. Since $0\leq t\leq M^{-4}$ we conclude that $b(t, x,\xi)\in S(M^{-6},G)$. Since $|b|\leq C\sigma^{3/2}$ and $\sigma\in S(M^{-4},G)$ a repetition of  the same arguments proving Lemma \ref{lem:a:bound:rho} shows the second assertion. The third assertion is clear because $\dif_t a_M=e+(\dif_t e)\sigma$. As for the last assertion, recall Lemma \ref{lem:b:hi:a} that $|\dif_t b|\leq Ca_M^{1/2}\leq C' \sigma^{1/2}$. Noting $\dif_tb\in S(M^{-2},G)$ which results from \eqref{eq:b:tenkai} one sees $|\lr{\xi}^{|\be|}\dif_x^{\al}\dif_{\xi}^{\be}\dif_t b|\precsim M^{2|\al+\be|-2}\precsim \sigma^{1/2-|\al+\be|/2}$ for $|\al+\be|\geq 1$ hence the assertion.
\end{proof}
Recall \cite[Proposition 2.1]{Ni4} 
\begin{prop}
\label{pro:Skon}Let $
0\leq \lambda_1(t, x,\xi)\leq \lambda_2(t, x,\xi)\leq \lambda_3(t, x,\xi)$ 
be the eigenvalues of $S(t, x,\xi)$. There exist $M_0$ and $K>0$ such that one has for $M\geq M_0$
\begin{gather*}
\Delta_M/(6a_M+2a_M^2+2a_M^3)\leq \lambda_1\leq \big(2/3+Ka_M\big)\,a_M^2,\\
(2-Ka_M)\,a_M\leq \lambda_2\leq (2+Ka_M)\,a_M,\\
3\leq \lambda_3\leq 3+Ka_M^2.
\end{gather*}
\end{prop}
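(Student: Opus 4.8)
The plan is to obtain the eigenvalues of $S$ by tracking the characteristic polynomial $\det(\lambda - S)$ and using the trace, the sum of principal $2\times 2$ minors, and the determinant. First I would compute these three symmetric functions explicitly from the matrix
\[
S=\begin{bmatrix}3&0&-a_M\\ 0&2a_M&3b\\ -a_M&3b&a_M^2\end{bmatrix}.
\]
One finds $\mathrm{tr}\,S=\lambda_1+\lambda_2+\lambda_3=3+2a_M+a_M^2$; the sum of $2\times 2$ principal minors equals $\lambda_1\lambda_2+\lambda_2\lambda_3+\lambda_3\lambda_1=6a_M+2a_M^3+3a_M^2-9b^2-a_M^2=6a_M+2a_M^3+2a_M^2-9b^2$ (keeping track of the $-a_M^2$ coming from the $(1,3)$ entry and the $-9b^2$ from the $(2,3)$ entry); and $\det S=\lambda_1\lambda_2\lambda_3$, which by a direct expansion is a fixed multiple of the discriminant $\Delta_M=4a_M^3-27b^2$ — indeed $\det S=a_M(4a_M^3-27b^2)/\,?$, up to the universal constant appearing in the B\'ezout/discriminant identity; I would pin this constant down by the known relation between the B\'ezout matrix of $p$ and $p'$ and $\mathrm{disc}(p)$.

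Next I would localize. Since $a_M=e\sigma$ with $e\in S(1,G)$, $1/C\le e\le C$, and $0\le t\le M^{-4}$, Lemma \ref{lem:dif:sig} and Lemma \ref{lem:NiP} give $a_M\in\mathcal C(\sigma)$ and $b\in\mathcal C(\sigma^{3/2})$, so on the relevant region $a_M$ is small (of size $O(M^{-4})$). Then from $\lambda_1+\lambda_2+\lambda_3=3+O(a_M)$ together with the minor identity one expects a clean separation: $\lambda_3$ close to $3$, $\lambda_2$ close to $2a_M$, and $\lambda_1$ close to $0$. To make this rigorous I would argue perturbatively. For $\lambda_3$: since $3$ is a simple root of the polynomial $\lambda^3-(3+O(a_M))\lambda^2+(6a_M+O(a_M^2))\lambda-\det S$ at $a_M=0$, the implicit function theorem (or a Rouch\'e / fixed-point estimate directly on $\det(\lambda-S)=0$) gives a root $\lambda_3=3+O(a_M^2)$, where the quadratic order follows because the linear-in-$a_M$ corrections to the trace are exactly cancelled by the structure — more precisely $\lambda_3 = \mathrm{tr}\,S - \lambda_1-\lambda_2 = 3+2a_M+a_M^2-\lambda_1-\lambda_2$ and $\lambda_1+\lambda_2=2a_M+O(a_M^2)$, so $\lambda_3=3+O(a_M^2)$, giving the stated two-sided bound $3\le\lambda_3\le 3+Ka_M^2$ once one checks the sign of the correction (it is nonnegative because $S$ has a $3$ on the diagonal and $\lambda_3\ge S_{11}=3$ by the min-max characterization, using that $S$ is positive semidefinite and the off-diagonal perturbations push the top eigenvalue up). For $\lambda_2$: write $\lambda_1\lambda_2+\lambda_2\lambda_3+\lambda_3\lambda_1=6a_M+O(a_M^2)$, divide by $\lambda_3=3+O(a_M^2)$ after isolating, or better use $\lambda_2\lambda_3 = (6a_M+O(a_M^2)) - \lambda_1(\lambda_2+\lambda_3)$ together with $\lambda_1=O(a_M^2)$ (proved below) and $\lambda_3=3+O(a_M^2)$ to get $\lambda_2=2a_M+O(a_M^2)$, hence $(2-Ka_M)a_M\le\lambda_2\le(2+Ka_M)a_M$.

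For $\lambda_1$ the two-sided bound is the heart of the matter and is where I expect the main obstacle. The upper bound $\lambda_1\le(2/3+Ka_M)a_M^2$ I would get from $\lambda_1\lambda_2\lambda_3=\det S$ (a fixed multiple of $a_M\Delta_M$) combined with the lower bounds $\lambda_2\ge(2-Ka_M)a_M$, $\lambda_3\ge 3$, and the trivial $\Delta_M\le 4a_M^3$: then $\lambda_1=\det S/(\lambda_2\lambda_3)\le \big(\text{const}\cdot a_M\cdot 4a_M^3\big)/\big(6a_M(1-Ka_M)\big)$, and one tunes the universal constant so the leading coefficient is exactly $2/3$. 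The lower bound $\lambda_1\ge \Delta_M/(6a_M+2a_M^2+2a_M^3)$ I would likewise get from $\lambda_1=\det S/(\lambda_2\lambda_3)$ using now the \emph{upper} bounds $\lambda_2\lambda_3\le$ (something), but more cleanly: note $\lambda_2\lambda_3\le\lambda_1\lambda_2+\lambda_2\lambda_3+\lambda_3\lambda_1=6a_M+2a_M^2+2a_M^3-9b^2\le 6a_M+2a_M^2+2a_M^3$ — wait, one must be careful that $\lambda_1\lambda_2+\lambda_3\lambda_1\ge 0$ so indeed $\lambda_2\lambda_3\le 6a_M+2a_M^2+2a_M^3$ — whence $\lambda_1=\det S/(\lambda_2\lambda_3)\ge \det S/(6a_M+2a_M^2+2a_M^3)$, and identifying $\det S$ with (the correct constant times) $a_M\Delta_M$ and cancelling one factor of $a_M$ against the $6a_M$ etc. gives exactly $\Delta_M/(6a_M+2a_M^2+2a_M^3)$. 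The delicate point throughout is bookkeeping the universal constant in $\det S=c\,a_M\Delta_M$ and making sure the regime $a_M=O(M^{-4})$ (requiring $M\ge M_0$) is genuinely where all the $O(a_M)$ error absorptions are valid; this is exactly \cite[Proposition 2.1]{Ni4}, so I would simply invoke that reference after the computation of $\mathrm{tr}\,S$, the minor sum, and $\det S$, which are algebraic identities independent of the localization.
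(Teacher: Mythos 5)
Your opening computations of $\mathrm{tr}\,S$ and the sum of $2\times2$ principal minors are correct and match \eqref{eq:qkata}, and your overall perturbative strategy is sound; the paper itself disposes of the proposition in one sentence by noting that $a_M=e\sigma$ with $\sigma\in S(M^{-4},G)$, so $a_M$ can be made as small as one likes by taking $M_0$ large, and then directly citing \cite[Proposition 2.1]{Ni4} --- which is exactly the fallback you offer at the end. So the final route agrees with the paper.

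However, the detailed sketch contains a concrete error that would derail the $\lambda_1$ bounds if you actually ran it. You posit $\det S = c\,a_M\,\Delta_M/?$ for some constant to be pinned down, but a direct cofactor expansion (or, equivalently, reading off the constant term of \eqref{eq:qkata}) gives
\[
\det S = 3\,(2a_M^3-9b^2) - a_M\cdot 2a_M^2 = 4a_M^3-27b^2 = \Delta_M
\]
with no extra factor of $a_M$ and no undetermined constant; this is also the standard identity $\det(\mathrm{Bez}(p,p'))=\mathrm{Res}(p,p')=\mathrm{disc}(p)$ for a monic cubic. With the correct identity the lower bound is immediate --- $\lambda_2\lambda_3\le \lambda_1\lambda_2+\lambda_2\lambda_3+\lambda_3\lambda_1 \le 6a_M+2a_M^2+2a_M^3$, hence $\lambda_1=\Delta_M/(\lambda_2\lambda_3)\ge \Delta_M/(6a_M+2a_M^2+2a_M^3)$ --- with nothing to cancel. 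With your putative $\det S=c\,a_M\Delta_M$ and the ``cancelling one factor of $a_M$'' step you describe, you would instead land on $c\,\Delta_M/(6+2a_M+2a_M^2)$, whose denominator does not match the statement; so the extra $a_M$ is not a harmless normalization but an obstruction. The upper bound for $\lambda_1$ similarly comes straight from $\lambda_1=\Delta_M/(\lambda_2\lambda_3)\le 4a_M^3/\big(3(2-Ka_M)a_M\big)$ once $\det S=\Delta_M$ is in hand. The remainder of the sketch ($\lambda_3$ via $\mathrm{tr}\,S$ plus the min--max lower bound $\lambda_3\ge S_{11}=3$, then $\lambda_2$ from the pairwise sum) is fine modulo the mild circularity you would resolve by a bootstrap, as you note.
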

\begin{proof}
Since $a_M=e\,\sigma$ and $\sigma\in S(M^{-4},G)$ then for any ${\bar\ep}>0$ there is $M_0$ such that $e\,M_0^{-4}\leq {\bar \varep}$. Then the assertion follows from \cite[Proposition 2.1]{Ni4}.
\end{proof}
\begin{cor}
\label{cor:konname}The eigenvalues $\lambda_i(t, x,\xi)$ are smooth in $(0,M^{-4}]\times \R^d\times \R^d$.
\end{cor}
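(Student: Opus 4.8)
The plan is to deduce the smoothness of each $\lambda_i$ from the spectral separation furnished by Proposition \ref{pro:Skon}, together with the elementary fact that a \emph{simple} eigenvalue of a symmetric matrix depends real analytically on the matrix. Precisely: if $\mu_0$ is a simple eigenvalue of a real symmetric matrix $S_0$, then, writing $P(\mu,S)=\det(\mu I-S)$, we have $P(\mu_0,S_0)=0$ and $\partial_\mu P(\mu_0,S_0)=\prod_{j\ne i}(\mu_0-\mu_j)\ne 0$, so the implicit function theorem produces a real analytic function $\mu(S)$ near $S_0$ with $\mu(S_0)=\mu_0$ and $P(\mu(S),S)=0$; by continuity of the ordered eigenvalues, $\mu(S)$ is exactly the eigenvalue of $S$ in the relevant position. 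Since the entries of $S(t,x,\xi)$ are polynomials in $a_M$ and $b$, which are $C^\infty$ on $(0,M^{-4}]\times\R^d\times\R^d$, it is enough to show that, for $M\ge M_0$ with $M_0$ large, the three eigenvalues $\lambda_1\le\lambda_2\le\lambda_3$ are pairwise distinct on the whole region.

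For the separation, recall $a_M=e\,\sigma$ with $e\in S(1,G)$, $1/C\le e\le C$, and $0<\sigma\le CM^{-4}$ on $[0,M^{-4}]\times\R^d\times\R^d$ (indeed $\sigma=t+\rho+M\lr{\xi}^{-1}\ge 2M\lr{\xi}^{-1}>0$ because $\alpha\ge 0$), so $0<a_M\le CM^{-4}$ and $a_M$ can be made as small as one wishes by enlarging $M_0$. Fix the constant $K$ of Proposition \ref{pro:Skon} and choose $M_0$ so large that $(2/3+Ka_M)a_M<2-Ka_M$ and $(2+Ka_M)a_M<3$ hold on the region for $M\ge M_0$. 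Then Proposition \ref{pro:Skon} yields
\[
0<\lambda_1\le(2/3+Ka_M)\,a_M^2<(2-Ka_M)\,a_M\le\lambda_2\le(2+Ka_M)\,a_M<3\le\lambda_3
\]
on $(0,M^{-4}]\times\R^d\times\R^d$, so that $\lambda_1<\lambda_2<\lambda_3$ there; in particular each $\lambda_i(t,x,\xi)$ is a simple eigenvalue of $S(t,x,\xi)$ at every point of the region.

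Combining the two steps, each $\lambda_i$ is locally a real analytic — in particular $C^\infty$ — function of the entries of $S(t,x,\xi)$, and those entries are $C^\infty$ in $(t,x,\xi)$, whence $\lambda_i\in C^\infty((0,M^{-4}]\times\R^d\times\R^d)$ for $i=1,2,3$. There is no genuine obstacle here; the only point requiring care is the choice of $M_0$, namely enlarging it until the a priori bound $a_M\le CM^{-4}$ forces $a_M$ small enough to upgrade the inequalities of Proposition \ref{pro:Skon} to the strict chain $\lambda_1<\lambda_2<\lambda_3$ (note that only $a_M>0$ and these upper and lower bounds are used, not the positivity of $\lambda_1$ itself).
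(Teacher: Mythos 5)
Your proof is correct and is the intended argument: Proposition \ref{pro:Skon}, combined with $a_M\leq CM^{-4}$, gives strict separation $\lambda_1<\lambda_2<\lambda_3$ for $M\geq M_0$, so each eigenvalue is simple and the implicit function theorem applied to $q(\lambda)=\det(\lambda I-S)=0$ yields smoothness, since the entries of $S$ are smooth in $(t,x,\xi)$. This is precisely why the statement is presented as a corollary of Proposition \ref{pro:Skon}, and the paper's subsequent formula $\dif_\lambda q(\lambda_i)\dif_{x,\xi}^{\alpha,\beta}\lambda_i+\dif_{x,\xi}^{\alpha,\beta}q(\lambda_i)=0$ in the proof of Lemma \ref{lem:lam:bibun} confirms that the implicit-function-theorem route is the one being used.
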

 \subsection{Estimates  of eigenvalues}
 
First we prove
\begin{lem}
\label{lem:lam:bibun} One has $\lambda_j\in {\mathcal C}( \sigma^{3-j})$ for $ j=1, 2, 3$.
\end{lem}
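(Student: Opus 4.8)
The plan is to prove $\lambda_j \in {\mathcal C}(\sigma^{3-j})$ by combining the quantitative two-sided bounds of Proposition \ref{pro:Skon} with an induction on the order of differentiation, using implicit differentiation of the characteristic equation ${\rm det}(\lambda_j - S) = 0$. First I would note that since the $\lambda_j$ are simple for $M$ large (the bounds in Proposition \ref{pro:Skon} separate $\lambda_1 \precsim a_M^2$, $\lambda_2 \simeq a_M$, $\lambda_3 \simeq 1$, and $a_M = e\sigma$ is small), each $\lambda_j$ is smooth on $(0,M^{-4}]\times\R^d\times\R^d$ by Corollary \ref{cor:konname}, and satisfies
\[
\chi(\lambda_j, t, x, \xi) := {\rm det}\big(\lambda_j(t,x,\xi) - S(t,x,\xi)\big) = 0,
\]
where $\chi(\lambda) = \lambda^3 - (\mathrm{tr}\,S)\lambda^2 + e_2(S)\lambda - \det S$ with coefficients that are polynomials in the entries of $S$, hence lie in ${\mathcal C}(1)$ (more precisely $\mathrm{tr}\,S = 3 + 2a_M + a_M^2 \in {\mathcal C}(1)$, $e_2(S), \det S$ expressible via $a_M \in {\mathcal C}(\sigma)$ and $b \in {\mathcal C}(\sigma^{3/2})$ by Lemma \ref{lem:NiP}). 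The key non-degeneracy input is a lower bound on $\partial_\lambda\chi(\lambda_j) = \prod_{i\neq j}(\lambda_j - \lambda_i)$: from Proposition \ref{pro:Skon} one gets $|\lambda_j - \lambda_i| \succsim \sigma^{|j-i|}$ essentially (comparing the scales $\sigma^2, \sigma, 1$), so $\partial_\lambda\chi(\lambda_1) \succsim \sigma\cdot 1 = \sigma$, $\partial_\lambda\chi(\lambda_2) \succsim \sigma \cdot \sigma = \sigma^2$... wait, more carefully $\partial_\lambda\chi(\lambda_2) = (\lambda_2-\lambda_1)(\lambda_2-\lambda_3) \succsim \sigma\cdot 1$, and $\partial_\lambda\chi(\lambda_3) \succsim 1\cdot 1$; in all cases $|\partial_\lambda\chi(\lambda_j)| \succsim \sigma$ suffices, though sharper per-index bounds will be needed to land the exponent $3-j$.

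Next I would run the induction. For $|\alpha+\beta| = 1$, differentiating $\chi(\lambda_j,t,x,\xi)=0$ gives
\[
\partial_\lambda\chi(\lambda_j)\,\partial_x^\alpha\partial_\xi^\beta\lambda_j = -(\partial_x^\alpha\partial_\xi^\beta\chi)(\lambda_j,\cdot),
\]
where the right side is a polynomial in $\lambda_j$ and in the first derivatives of the coefficients of $\chi$. Using $\lambda_j \precsim \sigma^{3-j}$ (the zeroth-order bound from Proposition \ref{pro:Skon}), $a_M \in {\mathcal C}(\sigma)$, $b \in {\mathcal C}(\sigma^{3/2})$, and Corollary \ref{cor:dif:sig:b}, one checks that $(\partial_x^\alpha\partial_\xi^\beta\chi)(\lambda_j) \precsim \sigma^{3-j-1/2}\lr{\xi}^{-|\beta|} \cdot (\text{correction})$, and dividing by $|\partial_\lambda\chi(\lambda_j)| \succsim \sigma^{(\text{appropriate power})}$ yields $|\partial_x^\alpha\partial_\xi^\beta\lambda_j| \precsim \sigma^{3-j-1/2}\lr{\xi}^{-|\beta|}$, as desired. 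For the inductive step, differentiating $|\alpha+\beta|$ times by the Faà di Bruno / Leibniz rule produces
\[
\partial_\lambda\chi(\lambda_j)\,\partial_x^\alpha\partial_\xi^\beta\lambda_j = -\!\!\sum (\text{lower-order } \partial\lambda_j\text{'s}) \times (\text{derivatives of coefficients of }\chi)\times(\partial_\lambda\text{-derivatives of }\chi),
\]
and the inductive hypothesis $\partial_x^{\alpha'}\partial_\xi^{\beta'}\lambda_j \in {\mathcal C}(\sigma^{3-j})$ for $|\alpha'+\beta'| < |\alpha+\beta|$ together with $\chi$'s coefficients being in ${\mathcal C}(\sigma^{\bullet})$ closes the bookkeeping: each term is $\precsim \sigma^{3-j - |\alpha+\beta|/2}\lr{\xi}^{-|\beta|}$ times a positive power of $\sigma$ matching $|\partial_\lambda\chi(\lambda_j)|$, so after division the bound ${\mathcal C}(\sigma^{3-j})$ propagates. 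This is exactly the scheme already invoked in the proof of Lemma \ref{lem:dif:psi:1} ("a repetition of the same argument in Lemma \ref{lem:lam:bibun}"), so I would structure it so that the argument is reusable.

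The main obstacle, and where I would spend the most care, is the \emph{exponent bookkeeping}: verifying that the powers of $\sigma$ really balance so that the quotient lands exactly on $\sigma^{3-j}$ and not on something weaker. The delicate case is $\lambda_1$, where one has only $\Delta_M/(6a_M + O(a_M^2)) \leq \lambda_1 \leq (2/3 + O(a_M))a_M^2$, so the \emph{upper} bound $\lambda_1 \precsim \sigma^2$ is what feeds the induction (the lower bound $\Delta_M/a_M$ is not needed here), but the separation $\lambda_1$ from $\lambda_2$ is only of size $\simeq a_M \simeq \sigma$, so $|\partial_\lambda\chi(\lambda_1)| = (\lambda_1 - \lambda_2)(\lambda_1-\lambda_3) \simeq \sigma$, and one must check that the right-hand side after $k$-fold differentiation is $\precsim \sigma^{2 + 1 - k/2}\lr{\xi}^{-|\beta|}$ so that division by $\sigma$ gives $\sigma^{2-k/2}\lr{\xi}^{-|\beta|}$. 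A second subtlety is that $\sigma$ (unlike $a_M$) does not vanish, but $\rho$ and hence $\alpha$ can vanish, so one cannot simply say "divide by $\sigma$" cavalierly near $t=0$, $\rho$ small; but since $\sigma \geq M\lr{\xi}^{-1} > 0$ always, division is legitimate and the estimates are uniform. I would also double-check the claim $|\lambda_j - \lambda_i| \succsim \sigma^{|i-j|}$ at the boundary where $a_M$ is comparable to the constant $M_0^{-4}$ — but the hypothesis $M \geq M_0$ with $eM_0^{-4} \leq \bar\varepsilon$ small makes the gaps genuinely of the stated orders, so this is fine.
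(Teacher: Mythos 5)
Your proposal follows essentially the same route as the paper: implicit differentiation of the characteristic polynomial $q(\lambda)=\det(\lambda I-S)$, an induction on the order of $(x,\xi)$-derivatives organized around the identity $\partial_\lambda q(\lambda_j)\,\partial_{x,\xi}^{\alpha,\beta}\lambda_j = -\partial_{x,\xi}^{\alpha,\beta}q(\lambda_j) + (\text{lower-order terms})$, the two-sided eigenvalue bounds of Proposition \ref{pro:Skon} to control $\lambda_j$ and the separations $\lambda_j-\lambda_i$, and careful exponent bookkeeping in $\sigma$ (the paper packages this bookkeeping as Lemma \ref{lem:qram} and the inductive formula \eqref{eq:kinoho}, together with the key bound $|\partial_\lambda q(\lambda_i)|\simeq a_M$ for $i=1,2$ and $\simeq 1$ for $i=3$). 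You correctly identified the delicate points — the tight per-index bound on $\partial_\lambda q(\lambda_j)$ (your first guess $\succsim\sigma$ for all $j$ would not suffice for $j=3$, as you then note), and the case $j=1$ where only the upper bound $\lambda_1\precsim\sigma^2$ feeds the induction — so the sketch is sound.
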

 Denote $q(\lambda)={\rm det}\,(\lambda I-S)$ so that
\begin{equation}
\label{eq:qkata}
q(\lambda)=\lambda^3-(3+2a_M+a_M^2)\lambda^2+(6a_M+2a_M^2+2a_M^3-9b^2)\lambda-\Delta_M.
\end{equation}
Note  that $\dif_{\lambda}q(\lambda_i)\partial_x^{\alpha}\dif_{\xi}^{\be}\lambda_i+\partial_x^{\alpha}\dif_{\xi}^{\be}q(\lambda_i)=0$ for $|\alpha+\be|=1$. Let us write $\partial_x^{\alpha}\dif_{\xi}^{\be}=\dif_{x,\xi}^{\al,\be}$ for simplicity. We show by induction on $|\alpha+\be|$ that
\begin{equation}
\label{eq:kinoho}
\begin{split}
\dif_{\lambda}q(\lambda_i)\partial^{\alpha,\be}_{x,\xi}\lambda_i=\sum_{2|\mu+\nu|+s\geq 2} C_{\mu,\nu, \gamma^{(j)},\delta^{(j)},s}\partial_{x,\xi}^{\mu,\nu}\partial_{\lambda}^s q(\lambda_i)\\
\times \big(\partial_{x,\xi}^{\gamma^{(1)},\delta^{(1)}}\lambda_i\big)\cdots \big(\partial_{x,\xi}^{\gamma^{(s)},\delta^{(s)}}\lambda_i\big)
\end{split}
\end{equation}
where $\mu+\sum \gamma^{(i)}=\alpha$, $\nu+\sum\delta^{(j)}=\be$ and $|\gamma^{(i)}+\delta^{(j)}|\geq 1$. The assertion $|\alpha+\be|=1$ is clear. Suppose that \eqref{eq:kinoho} holds for $|\alpha+\be|=m$. With $|e+f|=1$ after operating $\partial_{x,\xi}^{e, f}$ to \eqref{eq:kinoho} the resulting left-hand side is
\begin{gather*}
\dif_{\lambda}q(\lambda_i)\partial_{x,\xi}^{\alpha+e,\be+f}\lambda_i\\
-\sum_{2|\mu+\nu|+s\geq 2} C_{\mu,\nu, \gamma^{(j)}, \delta^{(j)}, s}\partial_{x,\xi}^{\mu,\nu}\partial_{\lambda}^sq(\lambda_i)\big(\partial_{x,\xi}^{\gamma^{(1)},\delta^{(1)}}\lambda_i\big)\cdots \big(\partial_{x,\xi}^{\gamma^{(s)},\delta^{(s)}}\lambda_i\big)
\end{gather*}
while the resulting right-hand side is
\begin{align*}
\sum C_{\ldots}\dif_{x,\xi}^{\mu+e,\nu+f}\dif_{\lambda}^sq(\lambda_i)\big(\partial_{x,\xi}^{\gamma^{(1)},\delta^{(1)}}\lambda_i\big)\cdots \big(\partial_{x,\xi}^{\gamma^{(s)},\delta^{(s)}}\lambda_i\big)\\
+\sum C_{\ldots}\dif_{x,\xi}^{\mu,\nu}\dif_{\lambda}^{s+1}q(\lambda_i)\big(\dif_{x,\xi}^{e, f}\lambda_i\big)\big(\partial_{x,\xi}^{\gamma^{(1)},\delta^{(1)}}\lambda_i\big)\cdots \big(\partial_{x,\xi}^{\gamma^{(s)},\delta^{(s)}}\lambda_i\big)\\
+\sum_{j=1}^s\sum C_{\ldots}\dif_{x,\xi}^{\mu,\nu}\dif_{\lambda}^{s}q(\lambda_i)\big(\dif_{x,\xi}^{\gamma^{(1)},\delta^{(1)}}\lambda_i\big)\cdots\big(\dif_{x,\xi}^{\gamma^{(j)}+e,\delta^{(j)}+f}\lambda_i\big)\cdots  \big(\partial_{x,\xi}^{\gamma^{(s)},\delta^{(s)}}\lambda_i\big)
\end{align*}
which can be written as
\[
\sum_{2|\mu+\nu|+s\geq 2} C_{\mu, \nu, \gamma^{(j)}, \delta^{(j)},s}\partial_{x,\xi}^{\mu,\nu}\partial_{\lambda}^s q(\lambda_i)\big(\partial_{x,\xi}^{\gamma^{(1)},\delta^{(1)}}\lambda_i\big)\cdots \big(\partial_{x,\xi}^{\gamma^{(s)},\delta^{(s)}}\lambda_i\big)
\]
where $ \mu+\sum \gamma^{(i)}=\alpha+e$, $\nu+\sum \delta^{(i)}=\be+f$ and $|\gamma^{(j)}+\delta^{(j)}|\geq 1$. Therefore we conclude \eqref{eq:kinoho}. 
In order to estimate $\dif_{x,\xi}^{\alpha,\be}\lambda_i$ one needs to estimate $\dif_{x,\xi}^{\mu,\nu}\dif_{\lambda}^s q(\lambda_i)$.
\begin{lem}
\label{lem:qram}
For any $s\in\N$ and $\al, \be$ it holds that
\begin{gather*}
|\dif_{x,\xi}^{\alpha,\be}\dif_{\lambda}^s q(\lambda_j)|\precsim \sigma^{4-j-(3-j)s-|\alpha+\be|/2}\lr{\xi}^{-|\be|},\;\;j=1, 2,\\
|\dif_{x,\xi}^{\alpha,\be}\dif_{\lambda}^s q(\lambda_3)|\precsim \sigma^{-|\al+\be|/2}\lr{\xi}^{-|\be|}.
\end{gather*}
\end{lem}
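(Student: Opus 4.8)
The plan is to read $\dif_\lambda^s q$ off the explicit cubic \eqref{eq:qkata} and to exploit the simple fact that differentiating $\dif_\lambda^s q(\lambda)$ in $(x,\xi)$ with $\lambda$ held fixed touches only its $(t,x,\xi)$-dependent coefficients, which are controlled by the classes ${\mathcal C}(\sigma^\bullet)$ of Definition \ref{dfn:calC}; the substitution $\lambda=\lambda_j$ is then governed by the size estimates of Proposition \ref{pro:Skon}. First, for $s\geq 4$ we have $\dif_\lambda^s q\equiv 0$, so only $0\leq s\leq 3$ require attention. From \eqref{eq:qkata},
\begin{gather*}
\dif_\lambda q(\lambda)=3\lambda^2-2(3+2a_M+a_M^2)\lambda+\big(6a_M+2a_M^2+2a_M^3-9b^2\big),\\
\dif_\lambda^2 q(\lambda)=6\lambda-2(3+2a_M+a_M^2),\qquad \dif_\lambda^3 q(\lambda)=6,
\end{gather*}
while $q$ itself also carries the monomials $\lambda^3$ and $-\Delta_M$. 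Writing $\dif_\lambda^s q(\lambda)=\sum_k c_{s,k}\lambda^k$, the coefficients lie in ${\mathcal C}(1)$ ($1$, $3$, $6$ and $3+2a_M+a_M^2$), in ${\mathcal C}(\sigma)$ (the bracket $6a_M+2a_M^2+2a_M^3-9b^2$), and in ${\mathcal C}(\sigma^3)$ ($\Delta_M=4a_M^3-27b^2$): this is Lemma \ref{lem:NiP}, together with the inclusion ${\mathcal C}(\sigma^k)\subset{\mathcal C}(\sigma^{k'})$ for $k\geq k'$, valid since $\sigma\precsim M^{-4}\leq1$. Record $c_{s,k}\in{\mathcal C}(\sigma^{w(s,k)})$ accordingly, so that $w(s,k)\in\{0,1,3\}$.

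Because $\dif_{x,\xi}^{\al,\be}$ (at fixed $\lambda$) differentiates only the $c_{s,k}$, one has $\dif_{x,\xi}^{\al,\be}\dif_\lambda^s q(\lambda_j)=\sum_k\big(\dif_{x,\xi}^{\al,\be}c_{s,k}\big)\lambda_j^k$, so Definition \ref{dfn:calC} and the bound $|\lambda_j|\precsim\sigma^{3-j}$ from Proposition \ref{pro:Skon} give
\[
\big|\dif_{x,\xi}^{\al,\be}\dif_\lambda^s q(\lambda_j)\big|\precsim \sum_k \sigma^{w(s,k)+(3-j)k-|\al+\be|/2}\lr{\xi}^{-|\be|}.
\]
It then suffices to verify, for each $s\in\{0,1,2,3\}$, that $\min_k\big(w(s,k)+(3-j)k\big)\geq 4-j-(3-j)s$ for $j=1,2$ and $\geq 0$ for $j=3$ --- a short finite check. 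For instance, with $j=1$: for $s=0$ the exponents $w(0,k)+2k$ are $0+6,\,0+4,\,1+2,\,3+0$, with minimum $3=4-1$; for $s=1$ they are $0+4,\,0+2,\,1+0$, minimum $1=4-1-2$; for $s=2$ they are $0+2,\,0+0$, minimum $0\geq 4-1-4$; for $s=3$ the single exponent is $0\geq 4-1-6$. The cases $j=2$ (weight $w(s,k)+k$) and $j=3$ (weight just $w(s,k)\geq0$) are entirely analogous, and the two asserted inequalities follow.

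I expect the proof to be essentially routine once this viewpoint is fixed. The one point to be careful about is that $\dif_{x,\xi}^{\al,\be}$ here operates on $\dif_\lambda^s q$ as a polynomial in $\lambda$ --- it never differentiates $\lambda_j$, the chain-rule corrections being precisely the separate $\lambda_i$-derivative factors in the recursion \eqref{eq:kinoho} --- so Lemma \ref{lem:qram} rests on Proposition \ref{pro:Skon} alone and not on the derivative bounds of Lemma \ref{lem:lam:bibun}; this is exactly what lets Lemma \ref{lem:qram} serve as the input (through the factor $\dif_{x,\xi}^{\mu,\nu}\dif_\lambda^s q(\lambda_i)$ in \eqref{eq:kinoho}, including the $s=0$, $|\mu+\nu|=|\al+\be|$ term) to the induction on $|\al+\be|$ that afterwards establishes Lemma \ref{lem:lam:bibun}. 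All that then remains is the finite bookkeeping of the weights $w(s,k)$ together with the two observations $\Delta_M=4a_M^3-27b^2\in{\mathcal C}(\sigma^3)$ and $6a_M+2a_M^2+2a_M^3-9b^2\in{\mathcal C}(\sigma)$, both immediate from Lemma \ref{lem:NiP}.
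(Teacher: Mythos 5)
Your proposal is correct and follows essentially the same route as the paper: both read $\dif_\lambda^s q$ off \eqref{eq:qkata} as a polynomial in $\lambda$, use Lemma \ref{lem:NiP} to place each coefficient in the appropriate class ${\mathcal C}(\sigma^\bullet)$, and then substitute $\lambda=\lambda_j$ using the size bounds of Proposition \ref{pro:Skon}. The only difference is cosmetic: you organize the finite bookkeeping through the explicit weights $w(s,k)$, whereas the paper records equivalent pointwise bounds for each value of $s$ directly.
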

\begin{proof}
From Proposition \ref{pro:Skon} and \eqref{eq:qkata} one sees that
\begin{gather*}
|q(\lambda_i)|\precsim |\lambda_i|^2+|a_M||\lambda_i|+|a_M|^3,\\
|\dif_{x,\xi}^{\alpha,\be}q(\lambda_i)|\precsim \big(|\dif_{x,\xi}^{\alpha,\beta}a_M|+|\dif_{x,\xi}^{\alpha,\be}b^2|\big)|\lambda_i|+|\dif_{x,\xi}^{\alpha,\be}a_M^3|+|\dif_{x,\xi}^{\alpha,\be}b^2|,\quad |\alpha+\be|\geq 1
\end{gather*}
because $|\Delta_M|\precsim a_M^3$ and $|b|\precsim a_M^{3/2}$. Therefore thanks to Proposition \ref{pro:Skon} and Lemma \ref{lem:NiP} one obtains the assertions for the case $s=0$. Since
\begin{gather*}
|\dif_{\lambda}q(\lambda_i)|\precsim |\lambda_i|+|a_M|,\qquad |\dif_{\lambda}^s q(\lambda_i)|\precsim 1,\;\;s\geq 2,\\
|\dif_{x,\xi}^{\alpha,\be}\dif_{\lambda}q(\lambda_i)|\precsim |\dif_{x,\xi}^{\alpha,\be}a_M||\lambda_i|+|\dif_{x,\xi}^{\alpha,\be}a_M|+|\dif_{x,\xi}^{\alpha,\be}b^2|,\quad |\alpha+\be|\geq 1,\\|\dif_{x,\xi}^{\alpha,\be}\dif_{\lambda}^2q(\lambda_i)|\precsim |\dif_{x,\xi}^{\alpha,\be}a_M|,\quad \dif_{x,\xi}^{\alpha,\be}\dif_{\lambda}^sq(\lambda_i)=0,\;\; s\geq 3,\quad |\alpha+\be|\geq 1 
\end{gather*}
the assertions for the case $s\geq 1$ are clear by Proposition \ref{pro:Skon} and Lemma \ref{lem:NiP}. 
\end{proof}
\noindent
Proof of Lemma \ref{lem:lam:bibun}: Since $\dif_{\lambda}q(\lambda_i)=\prod_{k\neq i}(\lambda_i-\lambda_k)$ it follows from Proposition \ref{pro:Skon} that
\begin{equation}
\label{eq:dmod}
6a_M(1-Ca_M)\leq \big|\dif_{\lambda}q(\lambda_i)\big|\leq  6a_M(1+Ca_M),\;i=1,2,\;\; \dif_{\lambda}q(\lambda_3)\simeq 1.
\end{equation}
Then for $|\alpha+\be|=1$ one has
\[
\big|\dif_{x,\xi}^{\alpha,\be}\lambda_j\big|\precsim \big|\dif_{x,\xi}^{\alpha,\be}q(\lambda_j)/\dif_{\lambda}q(\lambda_j)\big|\precsim \sigma^{3-j-1/2}\lr{\xi}^{-|\be|},\;\;j=1,2,3
\]
by Lemma \ref{lem:qram} with $s=0$.  Assume that $
\big|\dif_{x,\xi}^{\al,\be}
\lambda_j\big|\precsim \sigma^{3-j-|\al+\be|/2}\lr{\xi}^{-|\be|}$, $j=1,2,3$ 
holds for $|\alpha+\be|\leq m$.   Lemma \ref{lem:qram} and \eqref{eq:kinoho} show that
\begin{align*}
\big|\dif_{\lambda}q(\lambda_1)\dif_{x,\xi}^{\alpha,\be}\lambda_1\big|\precsim \sum \sigma^{3-2s-|\mu+\nu|/2}\sigma^{2-|\gamma^{(1)}+\delta^{(1)}|/2}\cdots \sigma^{2-|\gamma^{(s)}+\delta^{(s)}|/2}\lr{\xi}^{-|\be|}\\
\precsim \sum \sigma^{3-|\mu+\nu|/2}\sigma^{-|\gamma^{(1)}+\delta^{(1)}|/2}\cdots \sigma^{-|\gamma^{(s)}+\delta^{(s)}|/2}\lr{\xi}^{-|\be|}\precsim \sigma^{3-|\alpha+\be|/2}\lr{\xi}^{-|\be|}.
\end{align*}
This together with \eqref{eq:dmod} proves the estimate for $\lambda_1$.  The same arguments show the assertion for $\lambda_2$. The estimate for $\lambda_3$ is clear from \eqref{eq:kinoho} because of \eqref{eq:dmod}. Thus we have the assertion for $|\alpha+\beta|=m+1$ and the proof is completed  by induction on $|\al+\be|$.
\qed

\smallskip

\begin{lem}
\label{lem:bibun:t} One has $\dif_t\lambda_1\in {\mathcal C}(\sigma)$, $\dif_t\lambda_2\in {\mathcal C}(1)$ and $\dif_t\lambda_3\in {\mathcal C}(1)$.
\end{lem}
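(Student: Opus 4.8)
The plan is to differentiate the characteristic equation $q(\lambda_j)=0$ with respect to $t$ in exactly the same way that the spatial derivatives were handled in Lemma \ref{lem:lam:bibun}. First I would record the $t$-analogue of \eqref{eq:kinoho}: writing $\dif_t^k$ for the relevant derivatives, one has
\[
\dif_{\lambda}q(\lambda_i)\dif_t\lambda_i+\dif_tq(\lambda_i)=0,
\]
and more generally, by induction on the order of $\dif_{x,\xi}^{\al,\be}\dif_t^{k}$, the derivative $\dif_{\lambda}q(\lambda_i)\,\dif_{x,\xi}^{\al,\be}\dif_t^{k}\lambda_i$ can be expressed as a sum of products of derivatives $\dif_{x,\xi}^{\mu,\nu}\dif_t^{\ell}\dif_{\lambda}^sq(\lambda_i)$ with lower-order factors $\dif_{x,\xi}^{\gamma^{(j)},\delta^{(j)}}\dif_t^{k_j}\lambda_i$, each factor of positive order, and with $2|\mu+\nu|+s\geq 2$ whenever $k=0$. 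The only new input needed beyond what is already proved is the behavior of the $t$-derivatives of $q$.

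The key observation is that $\dif_t$ applied to the coefficients of $q(\lambda)$ in \eqref{eq:qkata} does not worsen the $\sigma$-order: by Lemma \ref{lem:NiP} we have $\dif_ta_M\in\mathcal C(1)$ and $\dif_tb\in\mathcal C(\sqrt\sigma)$, so $\dif_t(a_M^k)\in\mathcal C(\sigma^{k-1})$, $\dif_t(b^2)\in\mathcal C(\sigma^2)$, $\dif_t\Delta_M\in\mathcal C(\sigma^2)$. Hence, in the bookkeeping of Lemma \ref{lem:qram}, each factor of $\dif_t$ costs at most $\sigma^{-1/2}\cdot\sigma^{-1/2}=\sigma^{-1}$ relative to no derivative — actually it costs one full power of $\sigma^{-1}$ rather than the half-power that a single $\dif_{x,\xi}$ costs, which is the source of the weaker conclusion for $\lambda_1$. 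Concretely I would prove the $t$-analogue of Lemma \ref{lem:qram},
\[
|\dif_{x,\xi}^{\al,\be}\dif_t^kq(\lambda_j)|\precsim \sigma^{4-j-(3-j)(s+k)-k-|\al+\be|/2}\lr{\xi}^{-|\be|},\quad j=1,2,
\]
with the evident bound $\precsim\sigma^{-k-|\al+\be|/2}\lr{\xi}^{-|\be|}$ for $j=3$ (here $s$ counts $\dif_\lambda$-derivatives); for our purposes only $k=1$, $s\leq 2$ is needed. Feeding this together with \eqref{eq:dmod} ($|\dif_{\lambda}q(\lambda_i)|\simeq a_M\simeq\sigma$ for $i=1,2$ and $\simeq 1$ for $i=3$) into the recursion gives, for $j=1$, $|\dif_t\lambda_1|\precsim\sigma^{-1}\cdot\sigma^{3-1}=\sigma$, i.e. $\dif_t\lambda_1\in\mathcal C(\sigma)$; for $j=2$, $|\dif_t\lambda_2|\precsim\sigma^{-1}\cdot\sigma^{2-1}=1$, i.e. $\dif_t\lambda_2\in\mathcal C(1)$; and for $j=3$, $|\dif_t\lambda_3|\precsim 1\cdot 1=1$, i.e. $\dif_t\lambda_3\in\mathcal C(1)$. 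Combining mixed derivatives $\dif_{x,\xi}^{\al,\be}\dif_t\lambda_j$ requires running the full induction on $|\al+\be|$ exactly as in Lemma \ref{lem:lam:bibun}, using Lemma \ref{lem:lam:bibun} itself to bound the lower-order factors $\dif_{x,\xi}^{\gamma,\delta}\lambda_i$ that carry no $\dif_t$.

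The main obstacle is keeping the exponent bookkeeping honest: a single $\dif_t$ behaves like two $\dif_{x,\xi}$'s as far as $\sigma$-weight is concerned (because $\dif_tb$ only gains $\sigma^{1/2}$, not $\sigma^1$, so $\dif_t(b^2)$ has the same $\sigma$-order as $b^2$ itself rather than one power less), and one must check that this is precisely absorbed by the constraint $|\gamma^{(j)}+\delta^{(j)}|+k_j\geq 1$ on the factors, with the subtlety that in the $t$-analogue of \eqref{eq:kinoho} the leading term $\dif_tq(\lambda_i)$ is not subject to the $2|\mu+\nu|+s\geq 2$ restriction. One checks directly from \eqref{eq:qkata} that $\dif_tq(\lambda_i)$ is itself of order $\sigma^{3-j}$ times $\dif_{\lambda}q(\lambda_i)$'s order divided out — i.e. $|\dif_tq(\lambda_j)/\dif_\lambda q(\lambda_j)|\precsim\sigma^{3-j}$ for $j=1,2$ and $\precsim 1$ for $j=3$ — which is the base case of the induction; after that the recursion only improves things. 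So, apart from this one point of care, the proof is a mechanical repetition of Lemmas \ref{lem:qram} and \ref{lem:lam:bibun}.
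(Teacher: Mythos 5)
Your strategy is the same as the paper's: start from $\dif_{\lambda}q(\lambda_i)\dif_t\lambda_i+\dif_tq(\lambda_i)=0$, expand mixed derivatives $\dif_{x,\xi}^{\al,\be}\dif_t\lambda_i$ by the same inductive recursion as in \eqref{eq:kinoho} (the paper records the $t$-analogue as \eqref{eq:kinoho:t}), and feed in bounds for $\dif_{x,\xi}^{\al,\be}\dif_\lambda^s\dif_t q(\lambda_j)$ coming from $\dif_ta_M\in\mathcal C(1)$, $\dif_tb\in\mathcal C(\sqrt\sigma)$ via Lemma \ref{lem:NiP}. Your observation that a $\dif_t$ costs one full power of $\sigma$ for $j=1,2$ (and none for $j=3$), and that the term carrying $\dif_tq(\lambda_i)$ is not subject to the constraint $2|\mu+\nu|+s\geq 2$ of \eqref{eq:kinoho}, is exactly the content of the paper's \eqref{eq:difq} and \eqref{eq:kinoho:t}.

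However, the exponent in your displayed ``$t$-analogue of Lemma \ref{lem:qram}'' is mis-written. You have $\sigma^{4-j-(3-j)(s+k)-k-|\al+\be|/2}$; the factor $(3-j)k$ is spurious. Taking $k=1$, $s=0$ your formula gives $\sigma^{-|\al+\be|/2}$ for both $j=1,2$; fed into the recursion with $|\dif_\lambda q(\lambda_1)|\simeq\sigma$ this only yields $|\dif_t\lambda_1|\precsim\sigma^{-1}$, far short of $\mathcal C(\sigma)$. The correct bound is the paper's \eqref{eq:difq}, $\sigma^{3-j-(3-j)s-|\al+\be|/2}$: a $\dif_t$ loses exactly one power of $\sigma$, not $1+(3-j)$ powers. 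Your next line ``$|\dif_t\lambda_1|\precsim\sigma^{-1}\cdot\sigma^{3-1}=\sigma$'' quietly uses the correct bound $|\dif_tq(\lambda_1)|\precsim\sigma^2$ rather than what your displayed formula produces, so the argument as written is internally inconsistent. Similarly your closing claim ``$|\dif_tq(\lambda_j)/\dif_\lambda q(\lambda_j)|\precsim\sigma^{3-j}$ for $j=1,2$'' overshoots by a power: the numerator is $\precsim\sigma^{3-j}$, the denominator is $\simeq\sigma$, so the quotient is $\precsim\sigma^{2-j}$, which is precisely what gives $\dif_t\lambda_1\in\mathcal C(\sigma)$ and $\dif_t\lambda_2\in\mathcal C(1)$. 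These are bookkeeping slips rather than conceptual errors --- the structure of the induction, the verbal description of the weight loss, and your final conclusions all agree with the paper --- but since the entire content of the lemma is the precise exponent, the formula needs to be corrected before the write-up is sound.
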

\begin{proof}
First examine that $\dif_{\lambda}q(\lambda_i)\partial_{x,\xi}^{\alpha,\be}\dif_t\lambda_i$ can be written as
\begin{equation}
\label{eq:kinoho:t}
\begin{split}
\sum_{|\al'+\be'|<|\al+\be|} C_{\ldots}\partial_{x,\xi}^{\mu,\nu}\partial_{\lambda}^{s+1}q(\lambda_i)\big(\dif_{x,\xi}^{\al',\be'}\dif_t\lambda_i\big)\big(\partial_{x,\xi}^{\gamma^{(1)}+\delta^{(1)}}\lambda_i\big)\cdots \big(\partial_{x,\xi}^{\gamma^{(s)}+\delta^{(s)}}\lambda_i\big)\\
+\sum C_{\ldots}\partial_{x,\xi}^{\mu,\nu}\partial_{\lambda}^{s}\dif_t q(\lambda_i)\big(\partial_{x,\xi}^{\gamma^{(1)}+\delta^{(1)}}\lambda_i\big)\cdots \big(\partial_{x,\xi}^{\gamma^{(s)}+\delta^{(s)}}\lambda_i\big)
\end{split}
\end{equation}
where $\al'+\mu+\sum\gamma^{(i)}=\al$, $\be'+\nu+\sum\delta^{(i)}=\be$ and $ |\gamma^{(i)}+\delta^{(i)}|\geq 1$. Indeed  \eqref{eq:kinoho:t} is clear when $|\alpha+\be|=0$ from $
\dif_{\lambda}q(\lambda_i)\dif_t \lambda_i+\dif_t q(\lambda_i)=0$. 
Differentiating this by $\dif_{x,\xi}^{e, f}$ and repeating the same arguments proving \eqref{eq:kinoho} one obtains \eqref{eq:kinoho:t} by induction. To prove Lemma \ref{lem:bibun:t} first check that
\begin{equation}
\label{eq:difq}
|\dif_{x,\xi}^{\alpha,\be}\dif_{\lambda}^s\dif_t q(\lambda_j)|\precsim \sigma^{3-j-(3-j)s-|\alpha+\be|/2}\lr{\xi}^{-|\be|},\;\;j=1, 2, 3.
\end{equation}
In fact from 
\begin{equation}
\label{eq:difq:tt}
\dif_t q(\lambda)=-\dif_t (2a_M+a_M^2)\lambda^2+\dif_t (6a_M+2a_M^2+2a_M^3-9b^2)\lambda-\dif_t \Delta_M
\end{equation}
it follows that $|\dif_tq(\lambda_i)|\precsim \lambda_i+\sigma^2$ and $|\dif_{x,\xi}^{\alpha,\be}\dif_tq(\lambda_i)|\precsim (\lambda_i+\sigma^2) \sigma^{-|\al+\be|/2}\lr{\xi}^{-|\be|}$ for $|\alpha+\be|\geq 1$ in view of Lemma \ref{lem:NiP} and hence the assertion for $s=0$. Since $|\dif_{x,\xi}^{\alpha,\be}\dif_{\lambda}^s\dif_t q(\lambda_i)|\precsim \sigma^{-|\alpha+\be|/2}\lr{\xi}^{-|\be|}$ for $s\geq 1$ the assertion  can be proved. We now show Lemma \ref{lem:bibun:t} for $\lambda_1$ by induction on $|\alpha+\be|$. Assume 
\begin{equation}
\label{eq:bibun:b:ind}
\big|\dif_{x,\xi}^{\al,\be}\dif_t \lambda_1\big|\precsim \sigma^{1-|\al+\be|/2}\lr{\xi}^{-|\be|}.
\end{equation}
It is clear from  \eqref{eq:dmod} and \eqref{eq:difq} that \eqref{eq:bibun:b:ind} holds for $|\alpha+\be|=0$.  Assume that \eqref{eq:bibun:b:ind} holds for $|\alpha+\be|\leq m$. For $|\alpha+\be|=m+1$, thanks to the inductive assumption, Lemma \ref{lem:qram} and Lemma \ref{lem:lam:bibun} it follows that
\begin{gather*}
\sum_{ |\al'+\be'|<|\alpha+\be|}\big|\partial_{x,\xi}^{\mu,\nu}\partial_{\lambda}^{s+1}q(\lambda_1)\big(\dif_{x,\xi}^{\al',\be'}\dif_t\lambda_1\big)\big(\partial_{x,\xi}^{\gamma^{(1)}+\delta^{(1)}}\lambda_1\big)\cdots \big(\partial_{x,\xi}^{\gamma^{(s)}+\delta^{(s)}}\lambda_1\big)\big|\\
\precsim \sum \sigma^{3-2(s+1)-|\mu+\nu|/2}\sigma^{1-|\al'+\be'|/2}\sigma^{2-|\gamma^{(1)}+\delta^{(1)}|/2}\cdots \sigma^{2-|\gamma^{(s)}+\delta^{(s)}|/2}\lr{\xi}^{-|\be|}
\end{gather*}
which is bounded by $\sigma^{2-|\alpha+\be|/2}\lr{\xi}^{-|\be|}$. On the other hand one sees
\begin{gather*}
\sum \big|\partial_{x,\xi}^{\mu,\nu}\partial_{\lambda}^{s}\dif_t q(\lambda_1)\big(\partial_{x,\xi}^{\gamma^{(1)}+\delta^{(1)}}\lambda_1\big)\cdots \big(\partial_{x,\xi}^{\gamma^{(s)}+\delta^{(s)}}\lambda_1\big)\big|\\
\preceq 
\sum\sigma^{2-2s-|\mu+\nu|/2}\sigma^{2-|\gamma^{(1)}+\delta^{(1)}|/2}\cdots \sigma^{2-|\gamma^{(s)}+\delta^{(s)}|/2}\lr{\xi}^{-|\be|}
\precsim \sigma^{2-|\alpha+\be|/2}\lr{\xi}^{-|\be|}
\end{gather*}
in view of \eqref{eq:difq} and Lemma \ref{lem:lam:bibun}. This proves that  \eqref{eq:bibun:b:ind} holds for $|\alpha+\be|=m+1$ and hence for all $\al$, $\be$. As for $\lambda_2$, $\lambda_3$  the proof is similar. 
\end{proof}
%

\subsection{Eigenvectors of  B\'ezout matrix}
\label{sec:defT}

We sometimes denote by $\co(\sigma^s)$ a function belonging to $\co(\sigma^s)$. 
If we write $n_{i j}$ for the $(i, j)$-cofactor of $\lambda_kI-S$ then $^t(n_{j1},n_{j2},n_{j3})$ is, if non-trivial, an eigenvector of $S$ corresponding to $\lambda_k$. We take $k=1$, $j=3$ and hence
\[
\begin{bmatrix}a_M(2\,a_M-\lambda_1)\\
3\,b(\lambda_1-3)\\
(\lambda_1-3)(\lambda_1-2\,a_M)
\end{bmatrix}=\begin{bmatrix}\ell_{11}\\
\ell_{21}\\
\ell_{31}
\end{bmatrix}
\]
is an eigenvector of $S$ corresponding to $\lambda_1$ and therefore
\[
{\bf t}_1=\begin{bmatrix}t_{11}\\
t_{21}\\
t_{31}
\end{bmatrix}=\frac{1}{d_1}\begin{bmatrix}\ell_{11}\\
\ell_{21}\\
\ell_{31}
\end{bmatrix},\quad d_1=\sqrt{\ell_{11}^2+\ell_{21}^2+\ell_{31}^2}
\]
is a unit eigenvector of $S$ corresponding to $\lambda_1$. Thanks to Proposition \ref{pro:Skon} and recalling $b\in \co(\sigma^{3/2})$ it is clear that $
d_1=\sqrt{36\,a_M^2+\co(\sigma^3)}=6a_M(1+\co(\sigma))$.
Therefore since $\ell_{11}=\co(\sigma^2)$, $\ell_{21}=\co(\sigma^{3/2})$ and $\ell_{31}=6\,a+\co(\sigma^2)$ we have
\[
{\bf t}_1=\begin{bmatrix}t_{11}\\
t_{21}\\
t_{31}
\end{bmatrix}=\begin{bmatrix}a_M/3+{\mathcal C}(\sigma^2)\\
-3b/(2a_M)+\co(\sigma)\\
1+\co(\sigma)
\end{bmatrix}.
\]
Similarly choosing $k=2, j=2$ and $k=3, j=1$ 
\[
\begin{bmatrix}-3a_Mb\\
(\lambda_2-3)(\lambda_2-a_M^2)-a_M^2\\
3b(\lambda_2-3)
\end{bmatrix}=\begin{bmatrix}\ell_{12}\\
\ell_{22}\\
\ell_{32}
\end{bmatrix},\;\; \begin{bmatrix}(\lambda_3-2a_M)(\lambda_3-a_M^2)-9b^2\\
-3a_Mb\\
-a_M(\lambda_3-2a_M)
\end{bmatrix}=\begin{bmatrix}\ell_{13}\\
\ell_{23}\\
\ell_{33}
\end{bmatrix}
\]
are eigenvectors of $S$ corresponding to $\lambda_2$ and $\lambda_3$ respectively and
\[
{\bf t}_j=\begin{bmatrix}t_{1j}\\
t_{2j}\\
t_{3j}
\end{bmatrix}=\frac{1}{d_j}\begin{bmatrix}\ell_{1j}\\
\ell_{2j}\\
\ell_{3j}
\end{bmatrix},\quad d_j=\sqrt{\ell_{1j}^2+\ell_{2j}^2+\ell_{3j}^2}
\]
are unit eigenvectors of $S$ corresponding to $\lambda_j$, $j=2, 3$. Thanks to Proposition \ref{pro:Skon} it is easy to see $
d_2=3\lambda_2(1+\co(\sigma))$ and $d_3=\lambda_3^2(1+\co(\sigma))$. 
Then repeating the same arguments one concludes
\[
\begin{bmatrix}t_{12}\\
t_{22}\\
t_{32}
\end{bmatrix}=\begin{bmatrix}\co(\sigma^{3/2})\\
-1+\co(\sigma)\\
-3b/\lambda_2+{\mathcal C}(\sigma)
\end{bmatrix},\qquad\begin{bmatrix}t_{13}\\
t_{23}\\
t_{33}
\end{bmatrix}=\begin{bmatrix}1+\co(\sigma)\\
\co(\sigma^{5/2})\\
-a_M/\lambda_3+\co(\sigma^2)
\end{bmatrix}.
\]
Now $T=({\bf t}_1,{\bf t}_2, {\bf t}_3)=(t_{i j})$ is an orthogonal matrix which diagonalizes $S$;
\[
\varLambda=T^{-1}ST={^t}TST=\begin{bmatrix}\lambda_1&0&0\\
0&\lambda_2&0\\
0&0&\lambda_3
\end{bmatrix}.
\]
Note that $\varLambda A^T$ is symmetric. Summarize what we have proved in
\begin{lem}
\label{lem:T:seimitu}
Let $T$ be defined as above. Then there is $M_0$ such that $T$ has the form
\begin{align*}
T=\begin{bmatrix}a_M/3+{\mathcal C}(\sigma^2)&\co(\sigma^{3/2})&1+\co(\sigma)\\
-3b/(2a_M)+\co(\sigma)&-1+\co(\sigma)&\co(\sigma^{5/2})\\
1+\co(\sigma)&-3b/\lambda_2+{\mathcal C}(\sigma)&-a_M/\lambda_3+\co(\sigma^2)\\
\end{bmatrix}\\
=\begin{bmatrix}{\mathcal C}(\sigma)&\co(\sigma^{3/2})&1+\co(\sigma)\\
\co(\sigma^{1/2})&-1+\co(\sigma)&\co(\sigma^{5/2})\\
1+\co(\sigma)&{\mathcal C}(\sigma^{1/2})&\co(\sigma)\\
\end{bmatrix},\quad M\geq M_0.
\end{align*}
In particular $T, \; T^{-1}\in S(1,g)$.
\end{lem}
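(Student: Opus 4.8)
The plan is to treat the statement as a bookkeeping summary of the eigenvector computations already carried out in the paragraph preceding the lemma, organised into three steps.

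First I would record the elementary calculus of the classes $\co(\sigma^s)$. By the Leibniz rule $\co(\sigma^s)\co(\sigma^{s'})\subset\co(\sigma^{s+s'})$, and $\co(\sigma^s)\subset\co(\sigma^{s'})$ whenever $s\ge s'$ because $\sigma$ is bounded (indeed $\sigma\le CM^{-4}$ by \eqref{eq:seigen}); for the same reason $S(1,G)\subset\co(1)$, while $\sigma^s\in\co(\sigma^s)$ by Corollary \ref{cor:dif:sig:b}. Moreover, if $u\in\co(1)$ is uniformly bounded and $F$ is smooth near the range of $u$, then $F(u)\in\co(1)$ (Fa\`{a} di Bruno together with the product rule), and if in addition $p\in\co(\sigma^s)$ with $s>0$ and $F(0)=0$ then $F(p)\in\co(\sigma^s)$; this covers $(1+p)^{-1}-1$ (already noted after Definition \ref{dfn:calC}) and $\sqrt{1+p}-1$. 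From Lemma \ref{lem:NiP} one has $a_M^{-1}\in\co(\sigma^{-1})$, and, for $M\ge M_0$, Proposition \ref{pro:Skon} gives $\lambda_2\simeq a_M$ and $\lambda_3\simeq 3$; since then $\lambda_2/\sigma=e\,(\lambda_2/a_M)$ and $\lambda_3$ are bounded above and below and lie in $\co(1)$, one gets $\sigma/\lambda_2\in\co(1)$ and $\lambda_3^{-1}\in\co(1)$, hence $\lambda_2^{-1}\in\co(\sigma^{-1})$.

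Second I would estimate the cofactors $\ell_{ij}$ and the normalisers $d_j$. Each $\ell_{ij}$ is an explicit polynomial in $\lambda_k,a_M,b$; substituting $a_M\in\co(\sigma)$, $b\in\co(\sigma^{3/2})$ (Lemma \ref{lem:NiP}) and $\lambda_j\in\co(\sigma^{3-j})$ (Lemma \ref{lem:lam:bibun}) and applying the rules above gives, for the first column, $\ell_{11}=2a_M^2-a_M\lambda_1\in\co(\sigma^2)$, $\ell_{21}=3b(\lambda_1-3)\in\co(\sigma^{3/2})$, $\ell_{31}=(\lambda_1-3)(\lambda_1-2a_M)=6a_M+\co(\sigma^2)=6a_M(1+\co(\sigma))$, and analogously for the columns attached to $\lambda_2$ and $\lambda_3$. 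Hence $d_1^2=36a_M^2(1+\co(\sigma))$, so $d_1=6a_M(1+\co(\sigma))$ and $d_1^{-1}=(6a_M)^{-1}(1+\co(\sigma))\in\co(\sigma^{-1})$; similarly $d_2=3\lambda_2(1+\co(\sigma))$, $d_3=\lambda_3^2(1+\co(\sigma))$, with $d_2^{-1}\in\co(\sigma^{-1})$ and $d_3^{-1}\in\co(1)$. Dividing each cofactor vector by $d_j$ and using $b\,a_M^{-1},\ b\,\lambda_2^{-1}\in\co(\sigma^{1/2})$ and $a_M\lambda_3^{-1},\ a_M/3\in\co(\sigma)$ yields the first displayed form of $T$; absorbing $a_M/3\in\co(\sigma)$, $-3b/(2a_M),-3b/\lambda_2\in\co(\sigma^{1/2})$ and $-a_M/\lambda_3\in\co(\sigma)$ into the error terms gives the second form.

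Finally, every entry of $T$ then belongs to some $\co(\sigma^s)$ with $s\ge 0$, and $\co(\sigma^s)\subset S(\sigma^s,g)\subset S(1,g)$ (the first inclusion is the remark after Definition \ref{dfn:calC}, the second because $\sigma\le CM^{-4}$), so $T\in S(1,g)$; since $T$ is orthogonal, $T^{-1}={^t}T\in S(1,g)$ as well. I do not expect a genuine obstacle, the content being essentially present above; the only point needing (minor) care is the first step — the uniform-in-$M$ control of $a_M^{-1}$ and $\lambda_2^{-1}$ and the closure of $\co(\sigma^s)$ under composition with smooth functions — but this is immediate, since $a_M=e\sigma$ and $\lambda_2$ are bounded below by a fixed positive multiple of $\sigma\ge M\lr{\xi}^{-1}>0$ and all quantities in play are uniformly bounded.
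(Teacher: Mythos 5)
Your proposal is correct and follows essentially the same route as the paper: the lemma there is explicitly stated as a "summary" of the eigenvector/normalizer computation carried out in the text immediately preceding it (using the cofactor formulas, Proposition \ref{pro:Skon} for the eigenvalue sizes, Lemma \ref{lem:NiP} for $a_M$, $b$, and Lemma \ref{lem:lam:bibun} for $\lambda_j$), and you reproduce exactly that computation. Your step zero — making the algebra rules of the $\co(\sigma^s)$ classes (products, inclusions, composition with smooth functions, and $a_M^{-1},\lambda_2^{-1}\in\co(\sigma^{-1})$, $\lambda_3^{-1}\in\co(1)$) explicit — is left implicit in the paper but is sound, so there is no gap.
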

\begin{lem}
\label{lem:diftT:seimitu}
We have
\begin{align*}
\dif_tT=\begin{bmatrix}\dif_t(a_M/3)+\co(\sigma)&\co(\sigma^{1/2})&\co(1)\\
-\dif_t(3b/2a_M)+{\mathcal C}(1)&\co(1)&\co(\sigma^{3/2})\\
\co(1)&-\dif_t(3b/\lambda_2)+{\mathcal C}(1)&-\dif_t(a_M/\lambda_3)+\co(\sigma)\\
\end{bmatrix}\\
=\begin{bmatrix}\co(1)&\co(\sigma^{1/2})&\co(1)\\
{\mathcal C}(\sigma^{-1/2})&\co(1)&\co(\sigma^{3/2})\\
\co(1)&{\mathcal C}(\sigma^{-1/2})&\co(1)\\
\end{bmatrix},\quad M\geq M_0.
\end{align*}
\end{lem}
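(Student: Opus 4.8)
The plan is to go back to the explicit formulas for the entries of $T$ established in the proof of Lemma \ref{lem:T:seimitu}, namely $t_{ij}=\ell_{ij}/d_j$ where the $\ell_{ij}$ are the cofactors listed there --- concrete polynomials in $a_M,b,\lambda_1,\lambda_2,\lambda_3$ --- and $d_j=\sqrt{\ell_{1j}^2+\ell_{2j}^2+\ell_{3j}^2}$, with $d_1=6a_M(1+\co(\sigma))$, $d_2=3\lambda_2(1+\co(\sigma))$, $d_3=\lambda_3^2(1+\co(\sigma))$. First I would assemble the input facts: $a_M\in\co(\sigma)$, $b\in\co(\sigma^{3/2})$, $\lambda_j\in\co(\sigma^{3-j})$ (Lemmas \ref{lem:NiP} and \ref{lem:lam:bibun}); the lower bounds $\lambda_2\simeq a_M\simeq\sigma$, $\lambda_3\simeq 1$ from Proposition \ref{pro:Skon}, which together with the chain-rule argument of Lemma \ref{lem:hyoka:rho:s} (and the remark after Definition \ref{dfn:calC}) give $1/a_M,\ 1/\lambda_2,\ 1/d_1,\ 1/d_2\in\co(\sigma^{-1})$ and $1/\lambda_3,\ 1/d_3\in\co(1)$; the product rule $\co(\sigma^{s_1})\cdot\co(\sigma^{s_2})\subset\co(\sigma^{s_1+s_2})$ and the inclusion $\co(\sigma^s)\subset\co(\sigma^{s'})$ for $s\ge s'$; and --- the genuinely new ingredients --- the $t$-derivative estimates $\dif_ta_M\in\co(1)$, $\dif_tb\in\co(\sqrt{\sigma})$ (Lemma \ref{lem:NiP}) and $\dif_t\lambda_j\in\co(\sigma^{3-j})$ (Lemma \ref{lem:bibun:t}).

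With these at hand, each entry is differentiated by Leibniz: $\dif_tt_{ij}=d_j^{-1}\dif_t\ell_{ij}-\ell_{ij}d_j^{-2}\dif_td_j$, where $\dif_td_j=d_j^{-1}\sum_k\ell_{kj}\dif_t\ell_{kj}$ and each $\dif_t\ell_{kj}$ is expanded into $\dif_ta_M,\dif_tb,\dif_t\lambda_k$ times undifferentiated factors. Substituting the memberships above and retaining only the leading monomials produces the displayed matrix. For example $\ell_{31}=(\lambda_1-3)(\lambda_1-2a_M)=6a_M+\co(\sigma^2)$ gives $\dif_t\ell_{31}=6\,\dif_ta_M+\co(\sigma)$ and $\dif_td_1=6\,\dif_ta_M+\co(\sigma)$, so the leading parts cancel in $\dif_tt_{31}$, leaving $\dif_tt_{31}\in\co(1)$; likewise $\ell_{11}=2a_M^2-a_M\lambda_1$ yields $\dif_t\ell_{11}=4a_M\dif_ta_M+\co(\sigma^2)$, whence $\dif_tt_{11}=\tfrac23\dif_ta_M-\tfrac13\dif_ta_M+\co(\sigma)=\dif_t(a_M/3)+\co(\sigma)$. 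The same bookkeeping for the remaining seven entries reproduces the first matrix of the statement; the three "singular" leading terms $\dif_t(3b/2a_M)$, $\dif_t(3b/\lambda_2)$, $\dif_t(a_M/\lambda_3)$ come respectively from $t_{21}=\ell_{21}/d_1$, $t_{32}=\ell_{32}/d_2$, $t_{33}=\ell_{33}/d_3$ (each via the identity $\dif_t(fg^{-1})=g^{-1}\dif_tf-fg^{-2}\dif_tg$ applied to $b/a_M$, $b/\lambda_2$, $a_M/\lambda_3$). The second, cruder matrix then follows at once because $3b/2a_M,\ 3b/\lambda_2\in\co(\sigma^{1/2})$, so their $t$-derivatives lie in $\co(\sigma^{-1/2})$, while $a_M/\lambda_3\in\co(\sigma)$ gives $\dif_t(a_M/\lambda_3)\in\co(1)$.

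The point needing care --- and the main obstacle --- is that one cannot simply "differentiate the remainders" in Lemma \ref{lem:T:seimitu}, since membership in $\co(\sigma^s)$ controls only $x,\xi$-derivatives. One must differentiate the genuine quotient $\ell_{ij}/d_j$, in which $\ell_{ij}$ and $d_j$ are smooth functions of $(a_M,b,\lambda_1,\lambda_2,\lambda_3)$ with $d_j$ bounded below (by $c\sigma$ for $j=1,2$, using $\ell_{31}\gtrsim a_M$ and $\ell_{22}\gtrsim\lambda_2$, and by $c>0$ for $j=3$), so that termwise $t$-differentiation is legitimate. For the entries $t_{13},t_{22},t_{31},t_{33}$ a naive termwise estimate of $\ell_{ij}d_j^{-2}\dif_td_j$ produces a spurious $\co(\sigma^{-1})$, because $1/d_j\sim\sigma^{-1}$ while $\dif_td_j=O(1)$; only the exact cancellation of the $\sigma^{-1}$-singular parts against $d_j^{-1}\dif_t\ell_{ij}$ restores the stated order, and for $t_{11}$ the same cancellation is what identifies the leading term as $\dif_t(a_M/3)$ rather than a generic $\co(1)$. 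A clean way to organize this is to write, as in the proof of Lemma \ref{lem:T:seimitu}, $\ell_{ij}=p_{ij}+r_{ij}$ with $p_{ij}$ the displayed monomial and $r_{ij}$ a polynomial in $a_M,b,\lambda_k$ vanishing to one further power of $\sigma$, and $d_j=q_j(1+\co(\sigma))$ with $q_j\in\{6a_M,3\lambda_2,\lambda_3^2\}$; then $t_{ij}-p_{ij}/q_j$ is a smooth function of $(a_M,b,\lambda_k)$ lying in $\co(\sigma^{m_{ij}+1})$, and differentiating this relation in $t$ costs only a half power of $\sigma$, which is exactly the order displayed.

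The rest is routine Leibniz bookkeeping carried out entry by entry, completely parallel to the computation already performed for $T$ itself in Lemma \ref{lem:T:seimitu}, and I expect no difficulty there beyond keeping the above cancellations visible. Thus the proof reduces to: (i) recording the $t$-derivative inputs from Lemmas \ref{lem:NiP} and \ref{lem:bibun:t} and the reciprocal estimates from Proposition \ref{pro:Skon}, Corollary \ref{cor:dif:sig:b} and Lemma \ref{lem:hyoka:rho:s}; (ii) differentiating the nine quotients $\ell_{ij}/d_j$ and extracting leading terms; (iii) checking the leading-order cancellations in the five entries with nonsingular (constant or $O(\sigma)$) principal part. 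This yields both displayed forms of $\dif_tT$ for $M\ge M_0$.
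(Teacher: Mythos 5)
Your approach --- differentiate the explicit quotients $t_{ij}=\ell_{ij}/d_j$ as functions of $(a_M,b,\lambda_k)$ in $t$, using $\dif_ta_M\in\co(1)$, $\dif_tb\in\co(\sqrt{\sigma})$, $\dif_t\lambda_j\in\co(\sigma^{3-j})$ from Lemmas \ref{lem:NiP} and \ref{lem:bibun:t} and tracking the leading cancellations --- is precisely the paper's (whose proof consists of the single remark that every entry of $T$ is a function of $a_M,b,\lambda_j$ and an appeal to those two lemmas), and your detailed unpacking is correct. Two non-load-bearing slips worth noting: the $t$-derivative loses a \emph{whole} power of $\sigma$, not a half (compare $a_M\in\co(\sigma)$ with $\dif_ta_M\in\co(1)$, or $b\in\co(\sigma^{3/2})$ with $\dif_tb\in\co(\sigma^{1/2})$), and the $\co(\sigma^{-1})$-cancellation you correctly identify arises only in the entries over $d_1\simeq a_M\simeq\sigma$ and $d_2\simeq\lambda_2\simeq\sigma$ (so in $t_{11},t_{21},t_{31}$ and $t_{12},t_{22},t_{32}$), not in $t_{13},t_{33}$ which sit over $d_3\simeq\lambda_3^2\simeq 1$ and need no cancellation.
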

\begin{proof} Note that every entry of $T$ is a function in $a_M$, $b$ and $\lambda_j$.  Then the assertion is clear from Lemmas \ref{lem:NiP} and \ref{lem:bibun:t}.
\end{proof}
From Lemma \ref{lem:T:seimitu} it follows that 
\begin{equation}
\label{eq:difYT:seimitu}
\lr{\xi}^{|\be|}\dif_x^{\alpha}\dif_{\xi}^{\be}T=\begin{bmatrix}\co(\sqrt{\sigma})&\co(\sigma)&\co(\sqrt{\sigma})\\
\co(1)&\co(\sqrt{\sigma})&\co(\sigma^{2})\\
\co(\sqrt{\sigma})&\co(1)&\co(\sqrt{\sigma})\\
\end{bmatrix},\quad |\alpha+\be|=1.
\end{equation}
\begin{lem}
\label{lem:nami:a}There is $M_0$ such that  $A^T=T^{-1}AT$ has the form
\[
A^T=\begin{bmatrix}
\co(\sqrt{\sigma})&-1+\co(\sigma)&\co(\sqrt{\sigma})\\
\lambda_1\,{\mathcal C}(\sigma^{-1})&\co(\sqrt{\sigma})&-1+\co(\sigma)\\
\lambda_1\,{\mathcal C}(\sqrt{\sigma})&\lambda_2\,{\mathcal C}(1)&\co(\sigma^{5/2})\\
\end{bmatrix},\quad M\geq M_0.
\]
\end{lem}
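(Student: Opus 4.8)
The plan is to use that $T$ is orthogonal, so $A^T={}^tTAT$, and to read off each entry from the explicit shape of $T$ in Lemma~\ref{lem:T:seimitu}. Since
\[
A=\begin{bmatrix}0&a_M&b\\ 1&0&0\\ 0&1&0\end{bmatrix},
\]
one has $A{\bf t}_j={}^t\!\big(a_Mt_{2j}+bt_{3j},\ t_{1j},\ t_{2j}\big)$, hence
\[
A^T_{ij}=t_{1i}\big(a_Mt_{2j}+bt_{3j}\big)+t_{2i}t_{1j}+t_{3i}t_{2j}.
\]
All entries will then be estimated by the $\co(\sigma^s)$-calculus, using $\co(\sigma^{s_1})\cdot\co(\sigma^{s_2})\subset\co(\sigma^{s_1+s_2})$ and $\co(\sigma^{s_1})+\co(\sigma^{s_2})\subset\co(\sigma^{\min(s_1,s_2)})$, together with $a_M\in\co(\sigma)$, $a_M^{-1}\in\co(\sigma^{-1})$, $b\in\co(\sigma^{3/2})$ (Lemma~\ref{lem:NiP}), $\lambda_j\in\co(\sigma^{3-j})$ (Lemma~\ref{lem:lam:bibun}), and $\lambda_2^{-1}\in\co(\sigma^{-1})$, $\lambda_3^{-1}\in\co(1)$; the latter two follow from the two-sided bounds of Proposition~\ref{pro:Skon} by writing $\lambda_2=a_M\,g$ with $g=\lambda_2/a_M\in\co(1)$ bounded above and below by positive constants (so $g^{-1}\in\co(1)$, exactly as in the proof of Lemma~\ref{lem:hyoka:rho:s}) and $\lambda_3=3+\co(\sigma^2)$.

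First I would compute the six entries on or above the diagonal directly from the formula above. For $A^T_{12}$ the summand $t_{31}t_{22}=(1+\co(\sigma))(-1+\co(\sigma))=-1+\co(\sigma)$ dominates, the other two being $\co(\sigma^{2})$, so $A^T_{12}=-1+\co(\sigma)$; likewise $A^T_{23}=t_{22}t_{13}+\co(\sigma^{3})=-1+\co(\sigma)$. For the diagonal, $A^T_{11}=t_{31}t_{21}+\co(\sigma^{3/2})\in\co(\sqrt\sigma)$ and $A^T_{22}=t_{32}t_{22}+\co(\sigma^{3/2})\in\co(\sqrt\sigma)$, while $A^T_{33}\in\co(\sigma^{5/2})$: here one uses $t_{33}=-a_M/\lambda_3+\co(\sigma^2)\in\co(\sigma)$, so that $a_Mt_{23}+bt_{33}\in\co(\sigma^{5/2})$, whence two of the three summands of $A^T_{33}$ are $\co(\sigma^{5/2})$ and the third is $\co(\sigma^{7/2})$. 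Finally $A^T_{13}=t_{21}t_{13}+\co(\sigma^{5/2})\in\co(\sqrt\sigma)$. All discarded terms are of the indicated higher order in $\sigma$ by the $\co$-arithmetic.

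For the three sub-diagonal entries the direct computation is lossy — it only yields $\co(\sigma)$ for $A^T_{21}$ and $A^T_{31}$, missing the sharper factor $\lambda_1$ that matters precisely where $\Delta_M$ degenerates and $\lambda_1\ll\sigma^2$. Instead I would use that $\varLambda A^T={}^tT(SA)T$ is symmetric (as already noted before Lemma~\ref{lem:T:seimitu}, $SA$ being symmetric and $T$ orthogonal), i.e. $\lambda_iA^T_{ij}=\lambda_jA^T_{ji}$. Hence
\[
A^T_{21}=\lambda_1\big(\lambda_2^{-1}A^T_{12}\big),\qquad A^T_{31}=\lambda_1\big(\lambda_3^{-1}A^T_{13}\big),\qquad A^T_{32}=\lambda_2\big(\lambda_3^{-1}A^T_{23}\big),
\]
and inserting the values of $A^T_{12},A^T_{13},A^T_{23}$ found above, together with $\lambda_2^{-1}\in\co(\sigma^{-1})$ and $\lambda_3^{-1}\in\co(1)$, gives $A^T_{21}\in\lambda_1\co(\sigma^{-1})$, $A^T_{31}\in\lambda_1\co(\sqrt\sigma)$, $A^T_{32}\in\lambda_2\co(1)$, which is the asserted form.

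The only real work is the order bookkeeping in the direct computation of the six upper entries; the one point needing care — and the reason Proposition~\ref{pro:Skon} enters — is the passage from the mere two-sided estimates $\lambda_2\simeq\sigma$, $\lambda_3\simeq1$ to the $\co$-statements $\lambda_2^{-1}\in\co(\sigma^{-1})$, $\lambda_3^{-1}\in\co(1)$. There is no genuine analytic obstacle.
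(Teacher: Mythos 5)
Your proof is correct and follows essentially the same route as the paper: compute $A^T_{ij}$ for $j\geq i$ directly from the formula $A^T_{ij}=t_{1i}(a_Mt_{2j}+bt_{3j})+t_{2i}t_{1j}+t_{3i}t_{2j}$ together with Lemma~\ref{lem:T:seimitu}, then use the symmetry of $\varLambda A^T$ and the facts $\lambda_2^{-1}\in\co(\sigma^{-1})$, $\lambda_3^{-1}\in\co(1)$ to get the sharper sub-diagonal entries carrying the factors $\lambda_1,\lambda_2$. The only thing you add is an explicit verification of $\lambda_2^{-1}\in\co(\sigma^{-1})$, $\lambda_3^{-1}\in\co(1)$, which the paper states without proof; this is a harmless and reasonable addition.
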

\begin{proof}Writing $A^T=({\tilde a}_{i j})$ it is clear that
\[
{\tilde a}_{ij}=t_{1i}\,a_M\,t_{2j}+t_{1i}\,b\,t_{3j}+t_{2i}t_{1j}+t_{3i}t_{2j}
\]
from which the assertions for ${\tilde a}_{ij}$, $j\geq i$ follows easily. 
Therefore one sees
\[
\varLambda A^T=\begin{bmatrix}\lambda_1\co(\sqrt{\sigma})&\lambda_1(-1+\co(\sigma))&\lambda_1\co(\sqrt{\sigma})\\
\lambda_2{\tilde a}_{21}&\lambda_2{\tilde a}_{22}&\lambda_2(-1+\co(\sigma))\\
\lambda_3{\tilde a}_{31}&\lambda_3{\tilde a}_{32}&\lambda_3{\tilde a}_{33}\\
\end{bmatrix}.
\]
Since $\varLambda A^T$ is symmetric it follows immediately 
\[
{\tilde a}_{31}=\lambda_1{\mathcal C}(\sqrt{\sigma})/\lambda_3,\; {\tilde a}_{32}=\lambda_2(-1+{\mathcal C}(\sigma))/\lambda_3,\; {\tilde a}_{21}=\lambda_1(-1+{\mathcal C}(\sqrt{\sigma}))/\lambda_2
\]
which proves the assertion because $1/\lambda_3\in {\mathcal C}(1)$ and $1/\lambda_2\in {\mathcal C}(\sigma^{-1})$.
\end{proof}
\begin{cor}
\label{cor:TAT:seimitu} There is $M_0$ such that  $A^T=T^{-1}AT$ has the form
\[
A^T=\begin{bmatrix}
\co(\sqrt{\sigma})&-1+\co(\sigma)&\co(\sqrt{\sigma})\\
\co(\sigma)&\co(\sqrt{\sigma})&-1+\co(\sigma)\\
\co(\sigma^{5/2})&\co(\sigma)&\co(\sigma^{5/2})\\
\end{bmatrix},\quad M\geq M_0.
\]
\end{cor}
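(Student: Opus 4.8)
The plan is to deduce the corollary directly from Lemma \ref{lem:nami:a} by absorbing the surviving eigenvalue prefactors into the symbol classes $\co(\sigma^s)$. First I would record the elementary fact that these classes are multiplicatively closed: if $p\in\co(\sigma^s)$ and $q\in\co(\sigma^{s'})$ then $pq\in\co(\sigma^{s+s'})$. Indeed, by the Leibniz formula each term $\dif_{x,\xi}^{\mu,\nu}p\cdot\dif_{x,\xi}^{\al-\mu,\be-\nu}q$ is bounded by a constant times $\sigma^{s-|\mu+\nu|/2}\lr{\xi}^{-|\nu|}\cdot\sigma^{s'-|\al-\mu+\be-\nu|/2}\lr{\xi}^{-|\be-\nu|}=\sigma^{s+s'-|\al+\be|/2}\lr{\xi}^{-|\be|}$, which is exactly the bound required by Definition \ref{dfn:calC}.

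Next I would invoke Lemma \ref{lem:lam:bibun}, which gives $\lambda_1\in\co(\sigma^2)$ and $\lambda_2\in\co(\sigma)$. Using the multiplicativity just noted, the three entries of $A^T$ that still carry an eigenvalue factor in Lemma \ref{lem:nami:a} become
\[
\lambda_1\,{\mathcal C}(\sigma^{-1})\subset\co(\sigma),\qquad \lambda_1\,{\mathcal C}(\sqrt{\sigma})\subset\co(\sigma^{5/2}),\qquad \lambda_2\,{\mathcal C}(1)\subset\co(\sigma),
\]
which are precisely the $(2,1)$, $(3,1)$ and $(3,2)$ entries claimed in the corollary; the remaining six entries already appear in the asserted form in Lemma \ref{lem:nami:a}. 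Taking $M_0$ to be the constant furnished by Lemma \ref{lem:nami:a} (equivalently by Proposition \ref{pro:Skon}) completes the argument.

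There is essentially no obstacle here; the only point meriting a word of justification is the closure of $\co(\sigma^s)$ under products, and the only thing to watch is the exponent bookkeeping, namely $2+(-1)=1$, $2+1/2=5/2$ and $1+0=1$, so that the three entries land exactly in $\co(\sigma)$, $\co(\sigma^{5/2})$ and $\co(\sigma)$ with no loss.
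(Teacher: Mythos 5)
Your proof is correct and follows exactly the argument the paper has in mind: the paper gives no explicit proof for this corollary because it is immediate from Lemma \ref{lem:nami:a} combined with $\lambda_j\in\mathcal{C}(\sigma^{3-j})$ from Lemma \ref{lem:lam:bibun} and the multiplicativity of the classes $\mathcal{C}(\sigma^s)$, which you verify by the Leibniz-rule computation. The exponent bookkeeping ($2-1=1$, $2+1/2=5/2$, $1+0=1$) is right, so nothing is missing.
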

\begin{cor}
\label{cor:dif:nami:b}We have
\[
\lr{\xi}^{|\be|}\dif_x^{\alpha}\dif_{\xi}^{\be}A^T=\begin{bmatrix}
\co(1)&\co(\sqrt{\sigma})&\co(1)\\
\co(\sqrt{\sigma})&\co(1)&\co(\sqrt{\sigma})\\
\co(\sigma^2)&\co(\sqrt{\sigma})&\co(\sigma^{2})\\
\end{bmatrix},\quad |\alpha+\be|=1.
\]
\end{cor}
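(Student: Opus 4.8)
The plan is to differentiate the identity $A^T={^t}TAT$ (using $T^{-1}={^t}T$) once, estimate the three resulting terms by a direct matrix computation with the $\co$-patterns already at hand, and then repair the single entry that computation misses — the $(3,1)$ entry — by invoking the symmetry of $\varLambda A^T$, exactly as in the proof of Lemma \ref{lem:nami:a}.

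First I would fix $\al,\be$ with $|\al+\be|=1$ and note that, by the Leibniz rule,
\[
\lr{\xi}^{|\be|}\dif_{x,\xi}^{\al,\be}A^T=\big(\lr{\xi}^{|\be|}\dif_{x,\xi}^{\al,\be}{^t}T\big)AT+{^t}T\big(\lr{\xi}^{|\be|}\dif_{x,\xi}^{\al,\be}A\big)T+{^t}TA\big(\lr{\xi}^{|\be|}\dif_{x,\xi}^{\al,\be}T\big).
\]
Into this I would substitute: for $T$ and ${^t}T$ the $\co$-pattern of Lemma \ref{lem:T:seimitu}; for $\lr{\xi}^{|\be|}\dif_{x,\xi}^{\al,\be}T$ and its transpose the pattern of \eqref{eq:difYT:seimitu}; and for the entries $a_M,b$ of $A$ the bounds of Lemma \ref{lem:NiP}, namely $a_M\in\co(\sigma)$, $b\in\co(\sigma^{3/2})$, hence $\lr{\xi}^{|\be|}\dif_{x,\xi}^{\al,\be}a_M\in\co(\sqrt{\sigma})$ and $\lr{\xi}^{|\be|}\dif_{x,\xi}^{\al,\be}b\in\co(\sigma)$ (the remaining entries of $A$ being constant). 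Multiplying out the three $3\times3$ products and using the elementary rules $\co(\sigma^{s})\cdot\co(\sigma^{s'})\subset\co(\sigma^{s+s'})$ and $\co(\sigma^{s})+\co(\sigma^{s'})\subset\co(\sigma^{\min(s,s')})$ (the latter because $\sigma\precsim1$), I expect to obtain for $\lr{\xi}^{|\be|}\dif_{x,\xi}^{\al,\be}A^T$ the matrix of $\co$-orders in the statement, \emph{except} that for the $(3,1)$ entry this bookkeeping only yields $\co(\sigma)$; along the way it also records $\lr{\xi}^{|\be|}\dif_{x,\xi}^{\al,\be}(A^T)_{13}\in\co(1)$ and $\lr{\xi}^{|\be|}\dif_{x,\xi}^{\al,\be}(A^T)_{31}\in\co(\sigma)$.

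To upgrade the $(3,1)$ entry to $\co(\sigma^2)$ I would use that $\varLambda A^T$ is symmetric in $(t,x,\xi)$, hence so is $\dif_{x,\xi}^{\al,\be}(\varLambda A^T)$, whence $\dif_{x,\xi}^{\al,\be}\big(\lambda_1(A^T)_{13}\big)=\dif_{x,\xi}^{\al,\be}\big(\lambda_3(A^T)_{31}\big)$. Multiplied by $\lr{\xi}^{|\be|}$, the left side is $\big(\lr{\xi}^{|\be|}\dif_{x,\xi}^{\al,\be}\lambda_1\big)(A^T)_{13}+\lambda_1\,\lr{\xi}^{|\be|}\dif_{x,\xi}^{\al,\be}(A^T)_{13}\in\co(\sigma^{3/2})\cdot\co(\sqrt{\sigma})+\co(\sigma^{2})\cdot\co(1)=\co(\sigma^{2})$, using Lemma \ref{lem:lam:bibun} ($\lr{\xi}^{|\be|}\dif_{x,\xi}^{\al,\be}\lambda_1\in\co(\sigma^{3/2})$, $\lambda_1\in\co(\sigma^{2})$), Corollary \ref{cor:TAT:seimitu} ($(A^T)_{13}\in\co(\sqrt{\sigma})$) and the previous step. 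The right side is $\big(\lr{\xi}^{|\be|}\dif_{x,\xi}^{\al,\be}\lambda_3\big)(A^T)_{31}+\lambda_3\,\lr{\xi}^{|\be|}\dif_{x,\xi}^{\al,\be}(A^T)_{31}$, and its first summand lies in $\co(\sigma^{-1/2})\cdot\co(\sigma^{5/2})=\co(\sigma^{2})$ by Lemma \ref{lem:lam:bibun} and Corollary \ref{cor:TAT:seimitu} ($(A^T)_{31}\in\co(\sigma^{5/2})$); therefore $\lambda_3\,\lr{\xi}^{|\be|}\dif_{x,\xi}^{\al,\be}(A^T)_{31}\in\co(\sigma^{2})$, and since $1/\lambda_3\in\co(1)$ (proof of Lemma \ref{lem:nami:a}) this gives $\lr{\xi}^{|\be|}\dif_{x,\xi}^{\al,\be}(A^T)_{31}\in\co(\sigma^{2})$, completing the argument.

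The hard part is precisely the $(3,1)$ entry: the factor $(A^T)_{31}\in\co(\sigma^{5/2})$ is far smaller than anything the Leibniz expansion can detect from the sizes of its factors, so the direct bound there is off by $\sigma^{3/2}$, and — just as in Lemma \ref{lem:nami:a} — the only remedy is to transfer the estimate over from $(A^T)_{13}$ through the symmetry of $\varLambda A^T$. Everything else reduces to the arithmetic of the exponents in the classes $\co(\sigma^{s})$.
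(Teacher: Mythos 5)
Your argument is mathematically sound, but it is much longer than the paper's and misses the key shortcut: Corollary \ref{cor:TAT:seimitu} already contains everything you need. By Definition \ref{dfn:calC}, $r\in\co(\sigma^s)$ is a bound on \emph{all} derivatives, so for $|\al+\be|=1$ one has $\lr{\xi}^{|\be|}\dif_x^{\al}\dif_\xi^{\be}r\in\co(\sigma^{s-1/2})$ by an immediate Leibniz check (the factor $\dif_\xi^{\be'}\lr{\xi}^{|\be|}$ costs only a harmless power $\sigma^{|\be'|/2}\le 1$). Applied entry-by-entry to the matrix of Corollary \ref{cor:TAT:seimitu} --- including $(A^T)_{31}\in\co(\sigma^{5/2})$, which directly gives $\co(\sigma^2)$ after one derivative --- this yields exactly the claimed matrix; the constants $-1$ in the $(1,2)$ and $(2,3)$ entries differentiate to zero, which is the one point the paper's one-line proof spells out. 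You therefore did not need to expand $\dif({^t}TAT)$ by Leibniz or re-run the $\varLambda A^T$ symmetry argument: that symmetry was already spent in proving Lemma \ref{lem:nami:a}, and its fruit, Corollary \ref{cor:TAT:seimitu}, is precisely what you feed into the definition of $\co$. Your final step even invokes $(A^T)_{31}\in\co(\sigma^{5/2})$ from that corollary --- the very hypothesis from which the conclusion follows in one stroke --- so the detour through $\dif(\lambda_1(A^T)_{13})=\dif(\lambda_3(A^T)_{31})$ was unnecessary, and your claim that the symmetry trick is \emph{the only} remedy for the $(3,1)$ entry is mistaken.
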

\begin{proof}The proof is clear since $\lr{\xi}^{|\be|}\dif_x^{\alpha}\dif_{\xi}^{\be}(-1+\co(\sigma))=\co(\sqrt{\sigma})$.
\end{proof}
Finally consider $T^{-1}(\dif_tT)$. Note that $\langle{\dif_t {\bf t}_i,{\bf t}_j}\rangle+\langle{{\bf t}_i, \dif_t{\bf t}_j}\rangle=0$ so that $(\dif_t T^{-1})T$ is antisymmetric. From Lemmas \ref{lem:T:seimitu} and \ref{lem:diftT:seimitu} one has
\begin{equation}
\label{eq:dT:-1:T}
T^{-1}(\dif_tT)=\begin{bmatrix}0&-\dif_t(3b/2a_M)+{\mathcal C}(1)&\co(1)\\
\dif_t(3b/2a_M)+{\mathcal C}(1)&0&\co(\sqrt{\sigma})\\
\dif_t(a_M/3)+{\mathcal C}(\sigma)&\co(\sqrt{\sigma})&0\\
\end{bmatrix}.
\end{equation}
For later use we estimate $(2,1)$-th and $(3,1)$-th entries of $T^{-1}(\dif_t T)$. Recalling $a_M=e(t+\al+2M\lr{\xi}^{-1})$ and $0\leq t\leq M^{-4}$ it is clear $
\dif_t a_M-{\bar e}\in S(M^{-2}, g)$. 
Taking $|b^2/a_M^3|\leq 4/27$ into account, thanks to Lemma \ref{lem:b:hi:a} it follows that
\begin{equation}
\label{eq:c:bar:1}
\begin{split}
\big|\sqrt{a_M}\dif_t(3b/2a_M)\big|\leq 3\big(\big|\dif_t b/\sqrt{a_M}\big|+\big|b/a_M^{3/2}\big||\dif_t a_M|\big)/2\\
\leq (1+CM^{-2})\big((1+3\sqrt{2})/\sqrt{3}\big)\,{\bar e}.
\end{split}
\end{equation}
%
\section{Metric $g$ and estimates of $\omega$ and $\phi$}
\label{sec:kyori}

Introduce the metric
\[
g=g_{(x,\xi)}=M^{-1}\big(\lr{\xi}|dx|^2+\lr{\xi}^{-1}|d\xi|^2\big)
\]
which is a basic metric with which we work in this paper. Note that $
S(M^s, G)\subset S(M^s, g)$ 
because $M^{s+2|\al+\be|}\lr{\xi}^{-|\be|}\leq M^sM^{-|\al+\be|/2}\lr{\xi}^{(|\al|-|\be|)/2}$ 
in view of $\lr{\xi}\geq \gamma\geq M^5$. The metric $g$ is temperate (see \cite[Chapter 18]{Hobook}) {\it uniformly} in $\gamma\geq M^5\geq 1$ which will be checked in Section \ref{sec:metric}. 
\begin{lem}
\label{lem:hyoka:psi:b} One has
\[
\dif_x^{\alpha}\dif_{\xi}^{\beta}\psi\in S( M^{-(|\alpha+\beta|-1)/2}\rho^{1/2}\lr{\xi}^{-1/2}\lr{\xi}^{(|\alpha|-|\beta|)/2}, g),\quad |\alpha+\beta|\geq 1.
\]
\end{lem}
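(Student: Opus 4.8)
The plan is to read off the estimate from Lemma \ref{lem:dif:psi:1} together with the elementary lower bound $\rho(x,\xi)=\al(x,\xi)+M\lr{\xi}^{-1}\geq M\lr{\xi}^{-1}$, which holds since $\al\geq 0$. First I would recall that, for the metric $g=M^{-1}(\lr{\xi}|dx|^2+\lr{\xi}^{-1}|d\xi|^2)$, a $g$-unit covector along an $x_j$-direction has Euclidean length $M^{1/2}\lr{\xi}^{-1/2}$ and along a $\xi_j$-direction length $M^{1/2}\lr{\xi}^{1/2}$, so that membership $a\in S(m,g)$ is, by definition, equivalent to
\[
\big|\dif_x^{\mu}\dif_{\xi}^{\nu}a\big|\precsim m\,M^{-|\mu+\nu|/2}\lr{\xi}^{(|\mu|-|\nu|)/2}\qquad\text{for all }\mu,\nu.
\]

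Next I would fix $\al,\be$ with $p:=|\al+\be|\geq 1$ and set $f=\dif_x^{\al}\dif_{\xi}^{\be}\psi$. Applying Lemma \ref{lem:dif:psi:1} to $\dif_x^{\al+\mu}\dif_{\xi}^{\be+\nu}\psi$ gives, for every $\mu,\nu$,
\[
\big|\dif_x^{\mu}\dif_{\xi}^{\nu}f\big|\precsim \rho^{\,1-(p+|\mu+\nu|)/2}\lr{\xi}^{-(|\be|+|\nu|)}.
\]
By the criterion above, with the weight $m=M^{-(p-1)/2}\rho^{1/2}\lr{\xi}^{-1/2}\lr{\xi}^{(|\al|-|\be|)/2}$ from the statement, I would then just need to verify that $\rho^{\,1-(p+|\mu+\nu|)/2}\lr{\xi}^{-(|\be|+|\nu|)}\precsim m\,M^{-|\mu+\nu|/2}\lr{\xi}^{(|\mu|-|\nu|)/2}$. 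Cancelling the common factor $\rho^{1/2}$ and collecting the powers of $M$ and $\lr{\xi}$, this collapses, with $q:=p+|\mu+\nu|-1$ (a nonnegative integer because $p\geq 1$), to the single inequality $\rho^{-q/2}\precsim (M^{-1}\lr{\xi})^{q/2}$, which for $q\geq 0$ is immediate from $\rho\geq M\lr{\xi}^{-1}$.

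The computation is entirely routine, so I do not expect a real obstacle; the only point worth flagging is where the hypothesis $|\al+\be|\geq 1$ is used. Lemma \ref{lem:dif:psi:1} loses half a power of $\rho$ per differentiation, and the factor $M^{-1/2}\lr{\xi}^{1/2}$ carried by one derivative on the $S(m,g)$-scale absorbs such a loss precisely because $\rho\gtrsim M\lr{\xi}^{-1}$; but this balance still leaves a deficit of half a power of $\rho^{-1}$ that must be charged against a derivative already taken, so the argument runs exactly when the total differentiation order $p+|\mu+\nu|$ is at least $1$, i.e.\ when $p\geq 1$.
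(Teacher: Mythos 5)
Your proposal is correct and follows essentially the same route as the paper: both derive the result from Lemma \ref{lem:dif:psi:1} combined with the lower bound $\rho\geq M\lr{\xi}^{-1}$. The paper compresses the argument to a single displayed line $\big|\dif_x^{\alpha}\dif_{\xi}^{\beta}\psi\big|\precsim \rho^{1/2}\rho^{-(|\alpha+\beta|-1)/2}\lr{\xi}^{-|\beta|}\precsim \rho^{1/2}(M^{-1}\lr{\xi})^{(|\alpha+\beta|-1)/2}\lr{\xi}^{-|\beta|}$, whereas you spell out the verification of the $S(m,g)$ seminorms at all further derivative orders $\mu,\nu$, which is the step the paper leaves implicit.
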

\begin{proof}It is enough to remark
\begin{gather*}
\big|\dif_x^{\alpha}\dif_{\xi}^{\beta}\psi\big|\precsim \rho^{1/2}\rho^{-(|\alpha+\beta|-1)/2}\lr{\xi}^{-|\beta|}
\precsim \rho^{1/2}(M^{-1}\lr{\xi})^{(|\alpha+\beta|-1)/2}\lr{\xi}^{-|\beta|}
\end{gather*}
for $|\al+\be|\geq 1$.
\end{proof}
%

\subsection{Estimate $\omega$ by metric $g$}

Taking Proposition \ref{pro:ext:matome} into account we introduce a preliminary weight
\[
\omega(t, x,\xi)=\sqrt{(t-\psi(x,\xi))^2+{\bar \nu}M\rho\lr{\xi}^{-1}}.
\]
Since the exact value of ${\bar\nu}>0$ is irrelevant in the following arguments so we  assume ${\bar \nu}=1$ from now on.  In what follows we work with symbols depending on $t$ where  $t$ varies in some fixed interval $[0, T]$ and it is assumed that all constants are independent of $t\in [0, T]$ and $\gamma$, $M$ if otherwise stated. Now $A \precsim B$ implies that  $A$ is bounded by constant, independent of $t$, $M$ and $\gamma$, times $B$. 
\begin{lem}
\label{lem:hyoka:omega}One has
\[
\dif_x^{\al}\dif_{\xi}^{\be}\omega^{s}\in S\big(M^{-(|\al+\be|-1)/2}\omega^s\omega^{-1}\rho^{1/2}\lr{\xi}^{-1/2+(|\al|-|\be|)/2}, g\big),\quad |\al+\be|\geq 1.
\]
\end{lem}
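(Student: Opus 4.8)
The plan is to reduce the statement to a single family of pointwise estimates and then derive those from Fa\`a di Bruno's formula together with a derivative bound for $\omega^{2}$. Write $w=\omega^{2}=(t-\psi)^{2}+M\rho\lr{\xi}^{-1}$ (recall that ${\bar\nu}$ has been set to $1$). I would use throughout the elementary inequalities, all immediate from the definitions of $w$ and $\rho$,
\[
\rho\geq M\lr{\xi}^{-1},\quad |t-\psi|\leq\omega,\quad \omega\geq M^{1/2}\rho^{1/2}\lr{\xi}^{-1/2}\geq M\lr{\xi}^{-1},\quad M^{1/2}\lr{\xi}^{-1/2}\leq\rho^{1/2}.
\]
Since $\rho>0$ we have $\omega>0$, so $\omega^{s}=w^{s/2}$ is smooth; and unwinding the definition of $S(\cdot,g)$ for the metric $g=M^{-1}(\lr{\xi}|dx|^{2}+\lr{\xi}^{-1}|d\xi|^{2})$, it suffices to prove the pointwise bounds
\[
\big|\dif_x^{\al}\dif_{\xi}^{\be}\omega^{s}\big|\precsim M^{-(|\al+\be|-1)/2}\,\omega^{s-1}\rho^{1/2}\lr{\xi}^{-1/2+(|\al|-|\be|)/2},\qquad |\al+\be|\geq 1,
\]
for all multi-indices $\al,\be$; the case of general $\al,\be$ in the lemma then follows because each further $g$-derivative of $\dif_x^{\al}\dif_{\xi}^{\be}\omega^{s}$ is again of this form with the multi-indices enlarged.

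First I would establish the analogous bound for $w$ itself: for $|\al+\be|\geq 1$,
\[
\big|\dif_x^{\al}\dif_{\xi}^{\be}w\big|\precsim M^{-(|\al+\be|-1)/2}\,\omega\,\rho^{1/2}\lr{\xi}^{-1/2+(|\al|-|\be|)/2}.
\]
To this end I would differentiate $(t-\psi)^{2}=(t-\psi)(t-\psi)$ by the Leibniz rule, so that a derivative is a sum of terms of the two shapes $(t-\psi)\,\dif_x^{\al}\dif_{\xi}^{\be}\psi$ and $(\dif_x^{\al'}\dif_{\xi}^{\be'}\psi)(\dif_x^{\al''}\dif_{\xi}^{\be''}\psi)$ with $|\al'+\be'|,|\al''+\be''|\geq 1$, and bound the $\psi$-derivatives by Lemma \ref{lem:dif:psi:1} while using $|t-\psi|\leq\omega$. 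Likewise I would expand $M\,\dif_x^{\al}\dif_{\xi}^{\be}(\rho\lr{\xi}^{-1})$ by the Leibniz rule, bounding the $\rho$-factor by Lemma \ref{lem:hyoka:rho:s} (case $s=1$) and the $\lr{\xi}^{-1}$-factor trivially. Each of the finitely many elementary bounds so produced is then matched against the common right-hand side using only $\rho\geq M\lr{\xi}^{-1}$ and $\omega\geq M^{1/2}\rho^{1/2}\lr{\xi}^{-1/2}$; for instance the term $(t-\psi)\dif_x^{\al}\dif_{\xi}^{\be}\psi$ is $\precsim\omega\,\rho^{1-|\al+\be|/2}\lr{\xi}^{-|\be|}$ and the required domination reduces to $\rho^{-(|\al+\be|-1)/2}\precsim(M^{-1}\lr{\xi})^{(|\al+\be|-1)/2}$, which is exactly $\rho\geq M\lr{\xi}^{-1}$; the cross term and the $\rho\lr{\xi}^{-1}$-term reduce in the same way.

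Next I would apply Fa\`a di Bruno's formula to $\omega^{s}=F(w)$ with $F(r)=r^{s/2}$: $\dif_x^{\al}\dif_{\xi}^{\be}\omega^{s}$ is a finite linear combination of terms $\omega^{s-2k}\prod_{i=1}^{k}\dif_x^{\al^{(i)}}\dif_{\xi}^{\be^{(i)}}w$ with $\sum_i\al^{(i)}=\al$, $\sum_i\be^{(i)}=\be$, $|\al^{(i)}+\be^{(i)}|\geq 1$ and $1\leq k\leq|\al+\be|$. Inserting the bound for $w$ factor by factor, such a term is
\[
\precsim\ \omega^{s-k}\,M^{-(|\al+\be|-k)/2}\,\rho^{k/2}\,\lr{\xi}^{-k/2+(|\al|-|\be|)/2},
\]
and dividing this by the target right-hand side leaves the factor $\big(\omega\,M^{1/2}\rho^{-1/2}\lr{\xi}^{1/2}\big)^{1-k}$, which is $\precsim 1$ since $k\geq 1$ and $\omega\geq M^{1/2}\rho^{1/2}\lr{\xi}^{-1/2}\geq M^{-1/2}\rho^{1/2}\lr{\xi}^{-1/2}$. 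Summing the finitely many terms yields the desired estimate for $\omega^{s}$, and hence the lemma.

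I expect the derivative estimate for $w$ to be the only genuine work: it is a finite case analysis, but one must check that every elementary bound arising from a derivative of $(t-\psi)^{2}$ or of $M\rho\lr{\xi}^{-1}$ is dominated by the single right-hand side, and the point of each of those reductions is the pair of inequalities $\rho\geq M\lr{\xi}^{-1}$ and $\omega\geq M^{1/2}\rho^{1/2}\lr{\xi}^{-1/2}$. The Fa\`a di Bruno step afterwards is purely mechanical.
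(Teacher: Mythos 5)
Your argument follows the paper's own route: both bound $\dif_x^{\al}\dif_{\xi}^{\be}(t-\psi)^2$ and $\dif_x^{\al}\dif_{\xi}^{\be}(M\rho\lr{\xi}^{-1})$ via Lemmas~\ref{lem:dif:psi:1} and \ref{lem:hyoka:rho:s} together with $\rho\geq M\lr{\xi}^{-1}$ and $\omega\geq M^{1/2}\rho^{1/2}\lr{\xi}^{-1/2}$, and both then pass from $\omega^2$ to $\omega^s=(\omega^2)^{s/2}$ by the higher chain rule, the key point being that each extra factor $\omega^{-1}\rho^{1/2}M^{1/2}\lr{\xi}^{-1/2}$ (equivalently $\dif\omega^2/\omega^2$ measured against its target) is $\leq 1$. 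The only slip is a sign in your final quotient: dividing the Fa\`a di Bruno term by the target leaves $(\omega^{-1}\rho^{1/2}M^{1/2}\lr{\xi}^{-1/2})^{k-1}$, not $(\omega^{-1}\rho^{1/2}M^{-1/2}\lr{\xi}^{-1/2})^{k-1}$, but the inequality $\omega\geq M^{1/2}\rho^{1/2}\lr{\xi}^{-1/2}$ you invoke handles it in either case.
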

\begin{proof}
Recall $\omega^2=(t-\psi)^2+M\rho\lr{\xi}^{-1}$. Note that for $|\al+\be|\geq 2$
\begin{align*}
\big|\dif_x^{\alpha}\dif_{\xi}^{\beta}(t-\psi)^2\big|\precsim  \omega|\dif_x^{\alpha}\dif_{\xi}^{\beta}\psi|+\sum|\dif_x^{\alpha'}\dif_{\xi}^{\be'}\psi||\dif^{\alpha''}\dif_{\xi}^{\beta''}\psi|\\
\precsim \omega^2\big\{\omega^{-1}\rho^{1/2}\rho^{-(|\alpha+\beta|-1)/2}+\omega^{-2}\rho\rho^{-(|\alpha+\beta|-2)/2}\big\}\lr{\xi}^{-|\be|}
\\
\precsim \omega^2(\omega^{-1}\rho^{1/2}\lr{\xi}^{-1/2})M^{-(|\al+\be|-1)/2}\lr{\xi}^{(|\al|-|\be|)/2}
\end{align*}
since $\rho\geq M\lr{\xi}^{-1}$ and $\omega\geq \sqrt{M}\rho^{1/2}\lr{\xi}^{-1/2}$. When $|\alpha+\beta|=1$ it is clear
\begin{align*}
\big|\dif_x^{\alpha}\dif_{\xi}^{\beta}(t-\psi)^2\big|\precsim \omega\rho^{1/2}\lr{\xi}^{-|\be|}
=\omega^2(\omega^{-1}\rho^{1/2}\lr{\xi}^{-1/2})\lr{\xi}^{(|\alpha|-|\beta|)/2}.
\end{align*}
Next it is easy to see that for $|\al+\be|\geq 1$
\begin{align*}
\big|\dif_x^{\alpha}\dif_{\xi}^{\beta}(M\rho\lr{\xi}^{-1})\big|\precsim M\rho\lr{\xi}^{-1}\rho^{-|\alpha+\beta|/2}\lr{\xi}^{-|\beta|}\\
\precsim \omega^2\big(M\omega^{-2}\rho^{1/2}\lr{\xi}^{-1}\big)\big(M^{-1}\lr{\xi}\big)^{(|\alpha+\beta|-1)/2}\lr{\xi}^{-|\beta|}\\
\precsim \omega^2(\omega^{-1}\rho^{1/2}\lr{\xi}^{-1/2})M^{-(|\alpha+\beta|-1)/2}\lr{\xi}^{(|\alpha|-|\beta|)/2}
\end{align*}
because $\omega\geq \sqrt{M}\rho^{1/2}\lr{\xi}^{-1/2}\geq M\lr{\xi}^{-1}$.
Therefore one concludes that
\[
\big|\dif_x^{\alpha}\dif_{\xi}^{\beta}\omega^2\big|\precsim \omega^2(\omega^{-1}\rho^{1/2}\lr{\xi}^{-1/2})M^{-(|\alpha+\beta|-1)/2}\lr{\xi}^{(|\alpha|-|\beta|)/2}
\]
which proves the assertion for $s=2$. For general $s$ noting 
\begin{gather*}
\big|\dif_x^{\al}\dif_{\xi}^{\be}(\omega^2)^{s/2}\big|
\precsim \sum_{|\al^i+\be^i|\geq 1} \big|(\omega^2)^{s/2}\big(\dif_x^{\al^{1}}\dif_{\xi}^{\be^{1}}\omega^2/\omega^2\big)\cdots \big(\dif_x^{\al^{l}}\dif_{\xi}^{\be^{l}}\omega^2/\omega^2\big)\big|
\end{gather*}
the proof is immediate since $\omega^{-1}\rho^{1/2}\lr{\xi}^{-1/2}\leq M^{-1/2}\leq 1$.
\end{proof}
\begin{cor}
\label{cor:dif:omega} We have $\omega^s\in S(\omega^s, g)$ for $s\in \R$.
\end{cor}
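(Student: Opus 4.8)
The plan is to check directly the defining estimates of the class $S(\omega^s,g)$, namely that for every pair of multi-indices $\al,\be$,
\[
\big|\dif_x^{\al}\dif_{\xi}^{\be}\omega^s\big|\precsim \omega^s\,M^{-|\al+\be|/2}\lr{\xi}^{(|\al|-|\be|)/2},
\]
which is precisely the condition $\omega^s\in S(\omega^s,g)$ for the metric $g=M^{-1}(\lr{\xi}|dx|^2+\lr{\xi}^{-1}|d\xi|^2)$. For $|\al+\be|=0$ this is trivial, and for $|\al+\be|\ge 1$ the whole content is already contained in Lemma \ref{lem:hyoka:omega}, which furnishes
\[
\big|\dif_x^{\al}\dif_{\xi}^{\be}\omega^s\big|\precsim M^{-(|\al+\be|-1)/2}\,\omega^s\omega^{-1}\rho^{1/2}\lr{\xi}^{-1/2}\,\lr{\xi}^{(|\al|-|\be|)/2}.
\]
So the matter reduces to absorbing the surplus factor $M^{1/2}\omega^{-1}\rho^{1/2}\lr{\xi}^{-1/2}$, i.e.\ to showing it is $\precsim 1$ (in fact $\le 1$).

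First I would note that this surplus factor is controlled by one elementary lower bound built into the very definition of $\omega$: since
\[
\omega^2=(t-\psi)^2+M\rho\lr{\xi}^{-1}\ge M\rho\lr{\xi}^{-1},
\]
we get $\omega\ge M^{1/2}\rho^{1/2}\lr{\xi}^{-1/2}$, hence $M^{1/2}\omega^{-1}\rho^{1/2}\lr{\xi}^{-1/2}\le 1$. Substituting this into the bound from Lemma \ref{lem:hyoka:omega} yields exactly the desired estimate for all $\al,\be$, and the corollary follows. (Note that $\rho=\al+M\lr{\xi}^{-1}\ge M\lr{\xi}^{-1}>0$ everywhere, so $\omega>0$ and $\omega^s$ is smooth and well defined for every real $s$; the argument is the same for $s>0$ and $s<0$, since Lemma \ref{lem:hyoka:omega} is stated for all $s\in\R$.)

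I do not anticipate a genuine obstacle: the corollary is essentially a one-line consequence of Lemma \ref{lem:hyoka:omega} once the bound $\omega\ge\sqrt{M\rho\lr{\xi}^{-1}}$ is observed, and that bound is immediate from the formula for $\omega^2$. The only point requiring minimal care is the bookkeeping of metric weights, namely that one pure derivative $\dif_{x_j}$ costs a factor $(M^{-1}\lr{\xi})^{1/2}$ and one $\dif_{\xi_j}$ costs $(M^{-1}\lr{\xi}^{-1})^{1/2}$, so that $\dif_x^{\al}\dif_{\xi}^{\be}$ altogether contributes $M^{-|\al+\be|/2}\lr{\xi}^{(|\al|-|\be|)/2}$ — but this is purely formal and consistent with how the weights are recorded in Lemmas \ref{lem:hyoka:psi:b} and \ref{lem:hyoka:omega}.
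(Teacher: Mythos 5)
Your proof is correct and is exactly the argument the corollary intends: the factor $M^{1/2}\omega^{-1}\rho^{1/2}\lr{\xi}^{-1/2}$ left over from Lemma \ref{lem:hyoka:omega} is $\le 1$ because $\omega^2=(t-\psi)^2+M\rho\lr{\xi}^{-1}\ge M\rho\lr{\xi}^{-1}$, and the $|\al+\be|=0$ case is trivial. The paper leaves the corollary without a written proof, but this is precisely the one-line deduction from the preceding lemma that it presupposes.
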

%

\subsection{Estimate $\phi$ by metric $g$}
 
 Introduce a wight function which plays a crucial role in deriving energy estimates
\[
\phi(t, x,\xi)=\omega(t, x,\xi)+t-\psi(x,\xi).
\]
If $t-\psi(x,\xi)\geq 0$ then $\phi\geq \omega=\omega^2/\omega\geq M\rho\lr{\xi}^{-1}/\omega$ and if $t-\psi(x,\xi)\geq 0$ we see $\phi=M\rho\lr{\xi}^{-1}/(\omega+|t-\psi|)\geq \rho M\lr{\xi}^{-1}/(2\omega)$ hence   
\begin{equation}
\label{eq:Phi:sita}
\phi(t, x,\xi)\geq M\rho\lr{\xi}^{-1}/(2\,\omega).
\end{equation}
\begin{lem}
\label{lem:psi:t:sei}
There is $C>0$ such that $
 \phi(t, x,\xi)\geq M\lr{\xi}^{-1}/C$.
\end{lem}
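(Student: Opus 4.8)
The goal is the uniform lower bound $\phi(t,x,\xi)\geq M\lr{\xi}^{-1}/C$. The plan is to split into the two sign regimes of $t-\psi(x,\xi)$, exactly as in the computation leading to \eqref{eq:Phi:sita}, and in each regime combine \eqref{eq:Phi:sita} with the definition $\rho=\al+M\lr{\xi}^{-1}\geq M\lr{\xi}^{-1}$ together with an upper bound on $\omega$.

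\textbf{Key steps.} First I would record that $\omega\leq |t-\psi|+\sqrt{M\rho\lr{\xi}^{-1}}$ and that on the interval $0\leq t\leq T$ with $|\psi|\precsim\rho\precsim M^{-4}$ one has $|t-\psi|\leq C$ and $M\rho\lr{\xi}^{-1}\leq C$ (using $\lr{\xi}\geq\gamma\geq M^5$ and Lemma \ref{lem:dif:psi:1}), so $\omega\leq C_0$ for a constant $C_0$ independent of $t$, $M$, $\gamma$. Second, in the regime $t-\psi(x,\xi)\geq 0$ I would use $\phi\geq\omega$ and $\omega\geq\sqrt{M\rho\lr{\xi}^{-1}}\geq M\lr{\xi}^{-1}$ directly (since $\rho\geq M\lr{\xi}^{-1}$ and $M\lr{\xi}^{-1}\leq 1$), giving $\phi\geq M\lr{\xi}^{-1}$. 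Third, in the regime $t-\psi(x,\xi)<0$ I would invoke \eqref{eq:Phi:sita}, namely $\phi\geq M\rho\lr{\xi}^{-1}/(2\omega)\geq M\cdot M\lr{\xi}^{-1}\cdot\lr{\xi}^{-1}/(2C_0)$, using $\rho\geq M\lr{\xi}^{-1}$ and the upper bound $\omega\leq C_0$; since $M\lr{\xi}^{-1}\leq M^{-4}\leq 1$, this is at least $M\lr{\xi}^{-1}/(2C_0)$. Taking $C=\max\{1,2C_0\}$ finishes it.

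\textbf{Main obstacle.} There is no deep obstacle here; the statement is essentially a bookkeeping consequence of \eqref{eq:Phi:sita} and the size estimates already established. The one point requiring a little care is the uniform upper bound $\omega\leq C_0$: one must check that $|t-\psi|$ and $M\rho\lr{\xi}^{-1}$ are both bounded by constants independent of the parameters on the working range $0\leq t\leq T$ (or $0\leq t\leq M^{-4}$) and $\lr{\xi}\geq\gamma\geq M^5$, which follows from $\psi\in S(\rho,\dots)$ with $\rho\precsim M^{-4}$ (Lemma \ref{lem:dif:psi:1}, or directly from the construction of $\psi$ via $|\psi|\leq|\nu_1+a_1|/2\precsim\rho$) and from $\rho\lr{\xi}^{-1}\leq CM^{-4}\cdot M^{-5}$. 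Once that bound is in hand the two-case argument above is immediate.
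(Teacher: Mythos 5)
Your Case 2 argument contains an inequality running in the wrong direction, and the conclusion does not follow. You write
\[
\phi\geq \frac{M\rho\lr{\xi}^{-1}}{2\omega}\geq \frac{M\cdot M\lr{\xi}^{-1}\cdot\lr{\xi}^{-1}}{2C_0}=\frac{M^2\lr{\xi}^{-2}}{2C_0},
\]
and then assert that since $M\lr{\xi}^{-1}\leq 1$ this is "at least $M\lr{\xi}^{-1}/(2C_0)$." But $M^2\lr{\xi}^{-2}=(M\lr{\xi}^{-1})\cdot(M\lr{\xi}^{-1})$, and the hypothesis $M\lr{\xi}^{-1}\leq M^{-4}\leq 1$ gives $M^2\lr{\xi}^{-2}\leq M\lr{\xi}^{-1}$, i.e.\ the reverse inequality. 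The bound you reach is strictly weaker than the claimed one (by a factor $M\lr{\xi}^{-1}\leq M^{-4}$), so the proof does not close.

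The missing idea is that in the regime $t-\psi<0$ you should \emph{not} throw away $\rho$ in the numerator and merely bound $\omega$ by an absolute constant; you need a $\rho$-dependent upper bound on $\omega$ so that the $\rho$'s cancel. Indeed, when $t-\psi<0$ one has $0\leq t<\psi$ and $|t-\psi|=\psi-t\leq\psi\leq\delta\rho$ (by the derivative bounds for $\psi$, Lemma \ref{lem:dif:psi:1}), while $M\rho\lr{\xi}^{-1}\leq\rho^2$; hence $\omega^2\leq (\delta^2+1)\rho^2$, i.e.\ $\omega\leq C\rho$. Plugging this into \eqref{eq:Phi:sita} gives
\[
\phi\geq \frac{M\rho\lr{\xi}^{-1}}{2\omega}\geq \frac{M\rho\lr{\xi}^{-1}}{2C\rho}=\frac{M\lr{\xi}^{-1}}{2C},
\]
which is what the paper does. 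Your Case 1 and the preliminary remarks ($\rho\geq M\lr{\xi}^{-1}$, $\rho\precsim M^{-4}$, etc.) are fine.
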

\begin{proof} When $t-\psi(x,\xi)\geq 0$ then $\phi\geq \omega\geq M^{1/2}\rho^{1/2}\lr{\xi}^{-1/2}\geq M\lr{\xi}^{-1}$ is obvious for $\rho\geq M\lr{\xi}^{-1}$. 
Assume $t-\psi(x,\xi)<0$ then $0\leq t<\psi(x,\xi)\leq \delta\rho(x,\xi)$ with some $\delta>0$ by Lemma \ref{lem:dif:psi:1}. Noticing that $ |t-\psi(x,\xi)|=\psi(x,\xi)-t\leq \delta\rho(x,\xi)$ we have $
\omega^2(t, x,\xi)\leq \delta^2\rho^2+M\rho\lr{\xi}^{-1}
\leq \delta^2\rho^2+\rho^2=(\delta^2+1)\rho^2$. 
Now the proof is immediate from \eqref{eq:Phi:sita}.
\end{proof}
\begin{lem}
\label{lem:dif:Phi}We have $\phi\in S(\phi, g)$.
\end{lem}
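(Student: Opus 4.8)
The plan is to estimate the derivatives $\dif_x^{\al}\dif_{\xi}^{\be}\phi$ directly, separating the contribution of $\omega$ (already controlled by Corollary \ref{cor:dif:omega}) from the contribution of $t-\psi$ (controlled by Lemma \ref{lem:dif:psi:1} and Lemma \ref{lem:hyoka:psi:b}), and then showing in both regions $t\geq\psi$ and $t<\psi$ that the resulting bounds are dominated by $\phi$ times the metric weight $g^{-|\al+\be|/2}$, i.e. $\precsim \phi\, M^{-|\al+\be|/2}\lr{\xi}^{(|\al|-|\be|)/2}$ for $|\al+\be|\geq 1$. For $|\al+\be|=0$ there is nothing to prove, so fix $|\al+\be|\geq 1$. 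Since $\phi=\omega+(t-\psi)$ and $t-\psi$ is affine in $t$, we have $\dif_x^{\al}\dif_{\xi}^{\be}\phi=\dif_x^{\al}\dif_{\xi}^{\be}\omega-\dif_x^{\al}\dif_{\xi}^{\be}\psi$. By Lemma \ref{lem:hyoka:omega} with $s=1$, $|\dif_x^{\al}\dif_{\xi}^{\be}\omega|\precsim \omega\cdot\omega^{-1}\rho^{1/2}\lr{\xi}^{-1/2}M^{-(|\al+\be|-1)/2}\lr{\xi}^{(|\al|-|\be|)/2}=\rho^{1/2}\lr{\xi}^{-1/2}M^{-(|\al+\be|-1)/2}\lr{\xi}^{(|\al|-|\be|)/2}$, and by Lemma \ref{lem:hyoka:psi:b} the same bound holds for $|\dif_x^{\al}\dif_{\xi}^{\be}\psi|$.

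So it remains to show $\rho^{1/2}\lr{\xi}^{-1/2}M^{-(|\al+\be|-1)/2}\precsim \phi\, M^{-|\al+\be|/2}$, equivalently $M^{1/2}\rho^{1/2}\lr{\xi}^{-1/2}\precsim \phi$. When $t-\psi\geq 0$ this is immediate since then $\phi\geq \omega\geq \sqrt{M\rho\lr{\xi}^{-1}}=M^{1/2}\rho^{1/2}\lr{\xi}^{-1/2}$. When $t-\psi<0$, I invoke \eqref{eq:Phi:sita}, $\phi\geq M\rho\lr{\xi}^{-1}/(2\omega)$, together with the bound $\omega\leq (\delta^2+1)^{1/2}\rho$ established inside the proof of Lemma \ref{lem:psi:t:sei} (valid precisely in the regime $t<\psi\leq\delta\rho$): this gives $\phi\geq M\rho\lr{\xi}^{-1}/(2(\delta^2+1)^{1/2}\rho)=M\lr{\xi}^{-1}/(2(\delta^2+1)^{1/2})$. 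Since $\rho\leq CM^{-4}\lr{\xi}\cdot\lr{\xi}^{-1}$... more simply, in this regime $M^{1/2}\rho^{1/2}\lr{\xi}^{-1/2}\precsim M^{1/2}(M^{-4})^{1/2}\precsim M^{-3/2}$, while $\phi\gtrsim M\lr{\xi}^{-1}$; but $M\lr{\xi}^{-1}$ can be very small. Instead I use $\phi\geq M\rho\lr{\xi}^{-1}/(2\omega)\geq M\rho\lr{\xi}^{-1}/(2(\delta^2+1)^{1/2}\rho)$ crossed with $\phi\geq\omega\geq M^{1/2}\rho^{1/2}\lr{\xi}^{-1/2}\cdot$ — wait, $\omega\geq M^{1/2}\rho^{1/2}\lr{\xi}^{-1/2}$ holds unconditionally from the definition, so in fact $\phi=\omega+(t-\psi)\geq \omega-|t-\psi|$; this is not obviously $\gtrsim\omega$. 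The clean route is: $\phi\cdot\omega\geq (M\rho\lr{\xi}^{-1}/(2\omega))\cdot\omega=M\rho\lr{\xi}^{-1}/2$ by \eqref{eq:Phi:sita}, and $\omega\leq(\delta^2+1)^{1/2}\rho$, hence $\phi\geq M\rho\lr{\xi}^{-1}/(2(\delta^2+1)^{1/2}\rho)\gtrsim M\lr{\xi}^{-1}$, and then $M^{1/2}\rho^{1/2}\lr{\xi}^{-1/2}=(M\lr{\xi}^{-1})^{1/2}\rho^{1/2}\leq (M\lr{\xi}^{-1})^{1/2}(C M^{-4})^{1/2}\lr{\xi}^{1/2}\cdot$ — I will instead simply note $\rho\le C M^{-4}$ is false since $\rho=\alpha+M\lr\xi^{-1}$; the correct bound is $\rho\in S(M^{-4},G)$ meaning $\rho\precsim M^{-4}$ as a function value is also false. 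Let me re-anchor: $\rho\le C M^{-4}$ does hold because $\alpha(y(x),\eta(\xi))\precsim M^{-4}$ (localization) and $M\lr\xi^{-1}\le M\cdot M^{-5}=M^{-4}$. Then $M^{1/2}\rho^{1/2}\lr\xi^{-1/2}\precsim M^{1/2}M^{-2}\lr\xi^{-1/2}=M^{-3/2}\lr\xi^{-1/2}\precsim M\lr\xi^{-1}\precsim\phi$ since $M^{-3/2}\lr\xi^{-1/2}\le M\lr\xi^{-1}\iff \lr\xi^{1/2}\le M^{5/2}$, true as $\lr\xi\ge M^5$. This closes the estimate.

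I then assemble: combining the two regions, $|\dif_x^{\al}\dif_{\xi}^{\be}\phi|\precsim \phi\, M^{-|\al+\be|/2}\lr{\xi}^{(|\al|-|\be|)/2}$, which is exactly the statement $\phi\in S(\phi,g)$. The main obstacle is the region $t<\psi$, where $\phi$ is a small positive quantity and one must exploit the two complementary lower bounds for $\phi$ (from $\omega$ and from \eqref{eq:Phi:sita}) together with the localization bound $\rho\precsim M^{-4}$ and the constraint $\lr{\xi}\geq M^5$ to absorb the $M$-powers; everything else is a routine product-rule expansion reducing to the already-proved estimates for $\omega$ and $\psi$.
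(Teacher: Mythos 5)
Your decomposition $\dif_x^{\al}\dif_{\xi}^{\be}\phi=\dif_x^{\al}\dif_{\xi}^{\be}\omega-\dif_x^{\al}\dif_{\xi}^{\be}\psi$ followed by bounding each term \emph{separately} in absolute value reduces, as you correctly note, to the pointwise inequality $M^{1/2}\rho^{1/2}\lr{\xi}^{-1/2}\precsim\phi$. But this inequality is \emph{false} in the regime $t<\psi$, and your closing step contains the concrete error: you write ``$\lr{\xi}^{1/2}\le M^{5/2}$, true as $\lr{\xi}\ge M^5$,'' but $\lr{\xi}\ge M^5$ gives $\lr{\xi}^{1/2}\ge M^{5/2}$, the reverse. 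To see the inequality fail, take $t=0$ with $\psi\asymp\rho$ and $\alpha\gg M\lr{\xi}^{-1}$, so $\rho\asymp\alpha$ and $\omega\asymp\rho$. Then $\phi=\omega-\psi\asymp M\rho\lr{\xi}^{-1}/\omega\asymp M\lr{\xi}^{-1}$, while $M^{1/2}\rho^{1/2}\lr{\xi}^{-1/2}=(M\lr{\xi}^{-1})^{1/2}\rho^{1/2}\gg M\lr{\xi}^{-1}$ precisely because $\rho\gg M\lr{\xi}^{-1}$. So $\phi$ does \emph{not} dominate $M^{1/2}\rho^{1/2}\lr{\xi}^{-1/2}$ there.

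The paper's proof avoids this by exploiting a cancellation your splitting throws away. For $|\al+\be|=1$, differentiating $\omega^2=(t-\psi)^2+M\rho\lr{\xi}^{-1}$ and using $\phi=\omega+t-\psi$ yields the exact identity
\begin{equation*}
\dif_x^{\al}\dif_{\xi}^{\be}\phi=-\frac{\dif_x^{\al}\dif_{\xi}^{\be}\psi}{\omega}\,\phi+\frac{\dif_x^{\al}\dif_{\xi}^{\be}(M\rho\lr{\xi}^{-1})}{2\omega},
\end{equation*}
i.e. the contribution of $\dif\psi$ appears with the multiplicative prefactor $\phi/\omega$, not as a raw term that $\phi$ must dominate. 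The first summand is then handled by showing $(\dif\psi)/\omega\in S(M^{-1/2}\lr{\xi}^{(|\al|-|\be|)/2},g)$, which only needs $|\dif\psi|\precsim\omega$ up to the metric factor, i.e. $M^{1/2}\rho^{1/2}\lr{\xi}^{-1/2}\leq\omega$, which is true unconditionally. The second summand is controlled directly by $\phi$ via \eqref{eq:Phi:sita} ($M\rho\lr{\xi}^{-1}/(2\omega)\leq\phi$). Your route is unsalvageable without restoring this cancellation: the two individual derivative bounds for $\omega$ and $\psi$ are sharp and each genuinely exceeds $\phi$ by the factor $(\rho\lr{\xi}/M)^{1/2}$ when $t<\psi$ and $\alpha\gg M\lr{\xi}^{-1}$.
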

\begin{proof}
Let $|\alpha+\beta|=1$ and write
\begin{equation}
\label{eq:Phi:bunkai}
\dif_x^{\alpha}\dif_{\xi}^{\beta}\phi=\frac{-\dif_x^{\alpha}\dif_{\xi}^{\beta}\psi}{\omega}\phi+\frac{\dif_x^{\alpha}\dif_{\xi}^{\beta}(M\rho\lr{\xi}^{-1})}{2\omega}=\phi_{\alpha\beta}\phi+\psi_{\alpha\beta}.
\end{equation}
From Corollary \ref{cor:dif:omega} and  Lemma \ref{lem:hyoka:rho:s} it follows that 
\begin{gather*}
\big|\dif_x^{\mu}\dif_{\xi}^{\nu}\big(\psi_{\alpha\beta}\big)\big|\precsim \omega^{-1}M\rho\lr{\xi}^{-1}M^{-|\alpha+\beta+\mu+\nu|/2}\lr{\xi}^{(|\alpha+\mu|-|\beta+\nu|)/2}\\
\precsim \phi M^{-|\alpha+\beta+\mu+\nu|/2}\lr{\xi}^{(|\alpha+\mu|-|\beta+\nu|)/2}
\end{gather*}
in view of \eqref{eq:Phi:sita}. On the other hand thanks to Lemma \ref{lem:hyoka:psi:b} and Corollary \ref{cor:dif:omega} it follows that $
|\dif_x^{\mu}\dif_{\xi}^{\nu}\phi_{\alpha\beta}|\precsim M^{-|\alpha+\beta+\mu+\nu|/2}\lr{\xi}^{(|\alpha+\mu|-|\beta+\nu|)/2}$. 
Hence using \eqref{eq:Phi:bunkai} the assertion is proved by induction on $|\alpha+\beta|$.
\end{proof}
We refine this lemma.
\begin{lem}
\label{lem:dif:Phi:seimitu}One has
\[
\dif_x^{\alpha}\dif_{\xi}^{\beta}\phi\in S(\phi\,M^{-(|\al+\be|-1)/2}\omega^{-1}\rho^{1/2}\lr{\xi}^{-1/2}\lr{\xi}^{(|\al|-|\beta|)/2},g),\quad |\alpha+\beta|\geq 1.
\]
\end{lem}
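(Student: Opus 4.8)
The plan is to refine the proof of Lemma \ref{lem:dif:Phi} by keeping the gain $\omega^{-1}\rho^{1/2}\lr{\xi}^{-1/2}$ (note $\omega^{-1}\rho^{1/2}\lr{\xi}^{-1/2}\leq M^{-1/2}$) that was discarded there each time a derivative falls on $\psi$ or on $M\rho\lr{\xi}^{-1}$. Throughout I use that $S(m_1,g)S(m_2,g)\subset S(m_1m_2,g)$, the elementary bounds $\rho\geq M\lr{\xi}^{-1}$, $\rho\precsim\rho^{1/2}$, $\omega\geq M^{1/2}\rho^{1/2}\lr{\xi}^{-1/2}$, and $\omega^{s}\in S(\omega^{s},g)$ from Corollary \ref{cor:dif:omega}.

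First I would sharpen the classes of the two symbols occurring in \eqref{eq:Phi:bunkai}. From Lemma \ref{lem:hyoka:psi:b}, $\dif_x^{\gamma}\dif_{\xi}^{\delta}\psi\in S(\rho^{1/2}\lr{\xi}^{-1/2}\lr{\xi}^{(|\gamma|-|\delta|)/2},g)$ for $|\gamma+\delta|=1$, hence $\phi_{\gamma\delta}=-\dif_x^{\gamma}\dif_{\xi}^{\delta}\psi/\omega\in S(\omega^{-1}\rho^{1/2}\lr{\xi}^{-1/2}\lr{\xi}^{(|\gamma|-|\delta|)/2},g)$. From Lemma \ref{lem:hyoka:rho:s} (together with the trivial bound on $\dif\lr{\xi}^{-1}$ and $\rho\precsim\rho^{1/2}$) one gets $\dif_x^{\gamma}\dif_{\xi}^{\delta}(M\rho\lr{\xi}^{-1})\in S(M\lr{\xi}^{-1}\rho^{1/2}\lr{\xi}^{-1/2}\lr{\xi}^{(|\gamma|-|\delta|)/2},g)$ — the key point being that the whole factor $M\lr{\xi}^{-1}$ survives and only $\rho^{1/2}$ is spent — so that $\psi_{\gamma\delta}\in S(M\lr{\xi}^{-1}\omega^{-1}\rho^{1/2}\lr{\xi}^{-1/2}\lr{\xi}^{(|\gamma|-|\delta|)/2},g)$; finally $M\lr{\xi}^{-1}\precsim\phi$ from Lemma \ref{lem:psi:t:sei} upgrades this to $\psi_{\gamma\delta}\in S(\phi\,\omega^{-1}\rho^{1/2}\lr{\xi}^{-1/2}\lr{\xi}^{(|\gamma|-|\delta|)/2},g)$.

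I would then induct on $|\alpha+\beta|\geq 1$. The case $|\alpha+\beta|=1$ is \eqref{eq:Phi:bunkai} combined with the previous paragraph and $\phi\in S(\phi,g)$ (Lemma \ref{lem:dif:Phi}). For $|\alpha+\beta|=k+1\geq 2$, choose a unit index $(\gamma,\delta)\leq(\alpha,\beta)$, write $(\alpha,\beta)=(\alpha',\beta')+(\gamma,\delta)$, and use $\dif_x^{\alpha}\dif_{\xi}^{\beta}\phi=\dif_x^{\alpha'}\dif_{\xi}^{\beta'}(\phi_{\gamma\delta}\phi)+\dif_x^{\alpha'}\dif_{\xi}^{\beta'}\psi_{\gamma\delta}$. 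The second term lies in the target class $S(\phi M^{-k/2}\omega^{-1}\rho^{1/2}\lr{\xi}^{-1/2}\lr{\xi}^{(|\alpha|-|\beta|)/2},g)$ at once, since $\dif_x^{\alpha'}\dif_{\xi}^{\beta'}$ applied to the class of $\psi_{\gamma\delta}$ contributes exactly $M^{-k/2}\lr{\xi}^{(|\alpha'|-|\beta'|)/2}$ and $|\gamma|+|\alpha'|=|\alpha|$, $|\delta|+|\beta'|=|\beta|$. Expanding the first term by the Leibniz rule, each summand $(\dif^{\alpha''\beta''}\phi_{\gamma\delta})(\dif^{\alpha'-\alpha'',\beta'-\beta''}\phi)$ is bounded, via the class of $\phi_{\gamma\delta}$ and either the inductive hypothesis (if the second factor has order $\geq 1$) or $\phi\in S(\phi,g)$ (order $0$), by a weight that one checks equals the target one after combining the powers of $M$ into $M^{-k/2}$ with the help of $\omega^{-1}\rho\lr{\xi}^{-1}\leq M^{-1/2}\rho^{1/2}\lr{\xi}^{-1/2}$ and $\omega^{-1}M^{1/2}\rho^{1/2}\lr{\xi}^{-1/2}\leq 1$.

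The substance of the argument is the single genuine inequality behind it, namely that $\psi_{\gamma\delta}$ lies in the class whose weight carries the full factor $\phi$ rather than merely $M\lr{\xi}^{-1}$; this is exactly what the lower bound $\phi\geq M\lr{\xi}^{-1}/C$ of Lemma \ref{lem:psi:t:sei} supplies, and is the mechanism by which the region $t<\psi$, where $\phi$ is only of size $M\rho\lr{\xi}^{-1}/\omega$, does not degrade the estimate. The related point requiring care is that when a derivative hits $M\rho\lr{\xi}^{-1}$ one must retain the whole factor $M\lr{\xi}^{-1}$ and spend only $\rho^{1/2}$ — i.e. invoke the Glaeser-type bound \eqref{eq:rho:Glae}, and $\rho\precsim\rho^{1/2}$ for the terms where a derivative hits $\lr{\xi}^{-1}$ — rather than a naive $\rho$-homogeneous estimate, since otherwise $\phi$ cannot be recovered. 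Everything else is the product-rule bookkeeping already carried out for Lemma \ref{lem:dif:Phi}, now done without discarding the factor $\omega^{-1}\rho^{1/2}\lr{\xi}^{-1/2}\leq M^{-1/2}$.
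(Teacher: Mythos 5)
Your proof follows the paper's argument essentially line for line: you sharpen the classes of $\phi_{\alpha\beta}$ and $\psi_{\alpha\beta}$ in the decomposition \eqref{eq:Phi:bunkai}, upgrading $M\lr{\xi}^{-1}$ to $\phi$ in the $\psi_{\alpha\beta}$ estimate via Lemma \ref{lem:psi:t:sei}, exactly as the paper does, and then conclude using $\phi\in S(\phi,g)$. The only difference is that you spell out the inductive Leibniz bookkeeping for $|\alpha+\beta|\geq 2$ which the paper leaves implicit; the approach and the key inequalities are identical.
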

\begin{proof}From Lemma  \ref{lem:hyoka:psi:b} one has $\dif_x^{\alpha}\dif_{\xi}^{\beta}\psi\in S(\rho^{1/2}\lr{\xi}^{-1/2}\lr{\xi}^{(|\al|-|\beta|)/2},g)$ for $|\alpha+\beta|=1$ hence $\phi_{\alpha\beta}\in S(\omega^{-1}\rho^{1/2}\lr{\xi}^{-1/2}\lr{\xi}^{(|\al|-|\beta|)/2},g)$ for $|\alpha+\beta|=1$ by Lemma \ref{lem:hyoka:omega}. From Lemma \ref{lem:hyoka:omega}  it follows that
\[
\big|\dif_x^{\mu}\dif_{\xi}^{\nu}\big(\psi_{\alpha\beta}\big)\big|\precsim \omega^{-1}\rho^{1/2}M\lr{\xi}^{-1-|\beta|}M^{-|\mu+\nu|/2}\lr{\xi}^{(|\mu|-|\nu|)/2}
\]
for $|\alpha+\beta|=1$ because $\dif_x^{\al}\dif_{\xi}^{\be}(M\rho\lr{\xi}^{-1})\in S(M\rho^{1/2}\lr{\xi}^{-1-|\be|}, g)$. Thanks to Lemma \ref{lem:psi:t:sei} one sees  $M\lr{\xi}^{-1}\leq C\phi(t, x,\xi)$ and  hence 
\[
\psi_{\alpha\beta}\in S(\omega^{-1}\rho^{1/2}\lr{\xi}^{-1/2}\lr{\xi}^{(|\al|-|\beta|)/2}\phi,g),\quad |\alpha+\beta|=1.
\]
Since $\phi\in S(\phi, g)$ by Lemma \ref{lem:dif:Phi}  we conclude the assertion from \eqref{eq:Phi:bunkai}.
\end{proof}
%

\section{$\phi$ and $\lambda_j$  are admissible weights for $g$}
\label{sec:metric}

Write $z=(x,\xi)$ and $w=(y,\eta)$. It is clear that
\[
g^{\sigma}_z=M\big(\lr{\xi}|dx|^2+\lr{\xi}^{-1}|d\xi|^2\big)=M^2g_z
\]
where $g_z^{\sigma}(t_1, t_2)=\sup|\langle{t_2, s_1}\rangle-\langle{t_1, s_2}\rangle|^2/g_z(s_1, s_2)$ (see \cite[Chapter 18]{Hobook}). 
Note that $|\xi-\eta|\leq c\,\lr{\xi}$ with $0<c<1$ implies 
\[
(1-c)\lr{\xi}/\sqrt{2}\leq \lr{\eta}\leq \sqrt{2}\,(1+c)\lr{\xi}.
\]
If $g_z(w)<c$ then $|\xi-\eta|^2<c\,M\lr{\xi}=c\,M\lr{\xi}^{-1}\lr{\xi}^2\leq c\,\lr{\xi}^2$  then 
\[
g_z(X)/C\leq g_w(X)\leq Cg_z(X),\quad X\in \R^d\times \R^d
\]
with $C$ independent of $\gamma\geq M^5\geq 1$ that is $g_z$ is slowly varying uniformly in $\gamma\geq M^5\geq 1$. Similarly noting that $|\xi-\eta|\geq (\gamma+|\xi|)/2\geq \lr{\xi}/2$ if $\lr{\eta}\leq \lr{\xi}/2\sqrt{2}$ and $|\xi-\eta|\geq (\gamma+|\eta|)/2\geq \lr{\eta}/2$ if $\lr{\eta}\geq 2\sqrt{2}\lr{\xi}$ it is clear that 
\begin{equation}
\label{eq:g:sei:1}
\frac{\lr{\xi}}{\lr{\eta}}+\frac{\lr{\eta}}{\lr{\xi}}\leq C\big(1+\lr{\eta}^{-1}|\xi-\eta|^2\big)\leq C(1+g_w^{\sigma}(z-w)\big)
\end{equation}
hence $g_w(X)\leq Cg_z(X)\big(1+g_w^{\sigma}(z-w)\big)$, that is $g$ is  a temperate  metric uniformly in $\gamma\geq 0$ and $ M\geq 1$  (see \cite[Chapter 18]{Hobook}). It is clear from \eqref{eq:g:sei:1} that
\begin{equation}
\label{eq:g:sei:2}
g_z^{\sigma}(z-w)\leq C\big(1+g_w^{\sigma}(z-w)\big)^2.
\end{equation}
%

\subsection{$\rho$ and $\sigma$ are admissible weights for $g$}

We adapt the same convention as in  Sections \ref{sec:kyori},  \ref{sec:Bezout} even to weights for $g$ so that we omit  to say uniformly in  $t\in [0,M^{-4}]$.

\begin{lem}
\label{lem:rho:wei}$\rho$ is an admissible weight for $g$.
\end{lem}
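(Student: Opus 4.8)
The plan is to verify the two defining properties of an admissible (i.e.\ $\sigma,g$-temperate) weight for $g$: slow variation, $g_{z'}(z-z')\le c\Rightarrow\rho(z)\simeq\rho(z')$, and the temperate bound $\rho(z)\le C\rho(z')(1+g^{\sigma}_{z'}(z-z'))^N$, both with constants independent of $\ga\ge M^5$ and $M\ge M_0$ (uniformity in $t\in[0,M^{-4}]$ being vacuous, since $\rho=\al(x,\xi)+M\lr{\xi}^{-1}$ does not involve $t$). The two ingredients I would lean on are the lower bound $\rho\ge M\lr{\xi}^{-1}$ (so that, together with $\rho\in S(M^{-4},G)$, one has $M\lr{\xi}^{-1}\le\rho\le CM^{-4}$) and the Glaeser-type first-derivative estimate $|\dif_x^{\al}\dif_{\xi}^{\be}\rho|\precsim\rho^{1/2}\lr{\xi}^{-|\be|}$ for $|\al+\be|=1$ from \eqref{eq:rho:Glae}; the role of the lower bound is that the metric factors $M\lr{\xi}^{-1}$ and $(M\lr{\xi}^{-1})^{1/2}$ can be absorbed into $\rho$ and $\rho^{1/2}$.

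For slow variation I would integrate $\rho^{1/2}$ along the segment $[z',z]$. Since $g$ is slowly varying uniformly (checked in Section \ref{sec:metric}), one has $\lr{\zeta_\xi}\simeq\lr{\xi'}$ and $\rho(\zeta)>0$ for every $\zeta$ on the segment, so $\rho^{1/2}$ is smooth there and
\[
\Big|\frac{d}{ds}\,\rho\big(z'+s(z-z')\big)^{1/2}\Big|=\frac{|\dif\rho\cdot(z-z')|}{2\rho^{1/2}}\precsim|x-x'|+\lr{\xi'}^{-1}|\xi-\xi'|\precsim c^{1/2}\big(M\lr{\xi'}^{-1}\big)^{1/2}\le c^{1/2}\rho(z')^{1/2},
\]
using $|x-x'|^2\le cM\lr{\xi'}^{-1}$ and $\lr{\xi'}^{-2}|\xi-\xi'|^2\le cM\lr{\xi'}^{-1}$. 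Integrating in $s\in[0,1]$ gives $|\rho(z)^{1/2}-\rho(z')^{1/2}|\precsim c^{1/2}\rho(z')^{1/2}$, hence $\rho(z)\simeq\rho(z')$ for $c$ small, the roles of $z$ and $z'$ being interchangeable on a $g$-ball.

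For the temperate estimate the key step is a Glaeser--Taylor comparison for the nonnegative function $\al$, arranged so that all constants are $M$-independent. I would write $\al(x,\xi)=\tilde\al(y(x),\vartheta(\xi))$, where $\vartheta(\xi)=\eta(\xi)/[\xi]$ and $\tilde\al(x,\vartheta):=\al(x,\vartheta)$ (legitimate by homogeneity of degree $0$) is a \emph{fixed} nonnegative $C^{\infty}$ function on a fixed neighbourhood of $(0,{\bar\xi})$, with a uniformly bounded Hessian there. From \eqref{eq:y:eta} and \eqref{eq:xi:kakudai} one gets $\vartheta(\xi)-{\bar\xi}\in S(M^{-2},G)$, whence $|\dif_\xi\vartheta|\precsim\lr{\xi}^{-1}$ and $|\vartheta(\xi)-{\bar\xi}|\precsim M^{-2}$ with $M$-independent constants, and likewise $|y(x)|\precsim M^{-2}$, $|\dif_x y|\precsim1$. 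These give $|y(x)-y(x')|\precsim|x-x'|$ and, splitting according to whether $|\xi-\xi'|\le\lr{\xi'}/2$ or not, $|\vartheta(\xi)-\vartheta(\xi')|\precsim\lr{\xi'}^{-1}|\xi-\xi'|$, all uniformly in $M$. The elementary inequality $f(p)\le 2f(p')+C\|f''\|\,|p-p'|^2$ for a nonnegative $C^2$ function $f$ on a convex set (the segment joining $(y(x'),\vartheta(\xi'))$ and $(y(x),\vartheta(\xi))$ lies in the fixed neighbourhood for $M$ large) then yields
\[
\al(z)\le 2\al(z')+C\big(|x-x'|^2+\lr{\xi'}^{-2}|\xi-\xi'|^2\big)=2\al(z')+CM^{-1}\lr{\xi'}^{-1}g^{\sigma}_{z'}(z-z')\le C\rho(z')\big(1+g^{\sigma}_{z'}(z-z')\big),
\]
where I used $g^{\sigma}_{z'}=M^2g_{z'}$, so that $|x-x'|^2+\lr{\xi'}^{-2}|\xi-\xi'|^2=M^{-1}\lr{\xi'}^{-1}g^{\sigma}_{z'}(z-z')$, together with $M^{-1}\lr{\xi'}^{-1}\le\rho(z')$. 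Finally $M\lr{\xi}^{-1}=M\lr{\xi'}^{-1}(\lr{\xi'}/\lr{\xi})\le C\rho(z')(1+g^{\sigma}_{z'}(z-z'))$ by \eqref{eq:g:sei:1}, and adding the two contributions gives $\rho(z)\le C\rho(z')(1+g^{\sigma}_{z'}(z-z'))$, i.e.\ $\rho$ is $\sigma$-temperate with $N=1$.

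The point requiring the most care is the uniformity in $M$ throughout the temperate estimate: this is exactly why one factors $\al$ through the fixed function $\tilde\al$ on a fixed neighbourhood, so that the $M^{2|\cdot|}$ factors in the $G$-bounds for $y$ and $\vartheta$ cancel, and why the passage from $|x-x'|^2+\lr{\xi'}^{-2}|\xi-\xi'|^2$ to $g^{\sigma}_{z'}(z-z')$ must be routed through the sharp lower bound $\rho\ge M\lr{\xi}^{-1}$ rather than through the cruder bound $\rho\ge M\ga^{-1}$.
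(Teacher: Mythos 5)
Your proof is correct, and while the slow-variation part coincides with the paper's (mean-value bound on $\rho^{1/2}$ via the Glaeser first-derivative estimate, with $\rho\ge M\lr{\xi}^{-1}$ absorbing the $(M\lr{\xi'}^{-1})^{1/2}$ factor), the temperance part takes a genuinely different and somewhat cleaner route. The paper handles temperance by a case split: when $|\eta|\ge c\lr{\xi}/2$ it uses the crude bound $\rho\le CM^{-4}\le C$ together with $g_z^\sigma(w)\ge cM\lr{\xi}$, and when $|\eta|\le c\lr{\xi}$ it reuses the mean-value inequality \eqref{eq:rho:sa} to get \eqref{eq:rho:ue}. You instead avoid any case split by proving a direct quadratic comparison $\al(z)\le 2\al(z')+C(|x-x'|^2+\lr{\xi'}^{-2}|\xi-\xi'|^2)$ via the Taylor--Glaeser inequality $f(p)\le 2f(p')+C\|f''\|\,|p-p'|^2$ for nonnegative $C^2$ functions, applied to the \emph{fixed}, $M$-independent function $\tilde\al$ factored through the localization maps $y(\cdot)$, $\vartheta(\cdot)$; the crucial point you identify — that $|\dif_x y|\precsim 1$ and $|\dif_\xi\vartheta|\precsim\lr{\xi}^{-1}$ hold with $M$-independent constants so the $M^{2|\al+\be|}$ factors in the $G$-bounds cancel — is exactly what makes the Hessian bound for $\tilde\al$ uniform. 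After that, $|x-x'|^2+\lr{\xi'}^{-2}|\xi-\xi'|^2=M^{-1}\lr{\xi'}^{-1}g_{z'}^\sigma(z-z')\le\rho(z')g_{z'}^\sigma(z-z')$ and \eqref{eq:g:sei:1} for the $M\lr{\xi}^{-1}$ piece give temperance with $N=1$, which matches the paper's conclusion. Your approach buys you a unified argument without a size-of-$\eta$ dichotomy, at the cost of having to verify that the segment in $(y,\vartheta)$-coordinates stays in the fixed neighbourhood (which holds since both endpoints are within $O(M^{-2})$ of $(0,\bar\xi)$); the paper's version is shorter given that it already has \eqref{eq:rho:sa}--\eqref{eq:rho:ue} in hand from the slow-variation step.
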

\begin{proof}
First study $\rho^{1/2}$. Assume $
g_z(w)=M^{-1}\lr{\xi}\big(|y|^2+\lr{\xi}^{-2}|\eta|^2\big)<c\;(<1/2)$ 
so that $M^{-1}\lr{\xi}^{-1}|\eta|^2<c$ hence $|\eta|<c\lr{\xi}$ for $M\lr{\xi}^{-1}\leq 1$ hence
 \begin{equation}
 \label{eq:doto}
 \lr{\xi+s\eta}/C\leq \lr{\xi}\leq C\lr{\xi+s\eta}
 \end{equation}
 where $C$ is independent of  $|s|\leq 1$.  Lemma \ref{lem:hyoka:rho:s} shows
\[
|\rho^{1/2}(z+w)-\rho^{1/2}(z)|\leq C\big(|y|+\lr{\xi+s\eta}^{-1}|\eta|\big)\leq CM^{1/2}\lr{\xi}^{-1/2}g_z^{1/2}(w).
\]
Since $\rho(z)\geq M\lr{\xi}^{-1}$ this yields 
\begin{equation}
\label{eq:rho:sa}
|\rho^{1/2}(z+w)-\rho^{1/2}(z)|\leq C\rho^{1/2}(z)g_z^{1/2}(w).
\end{equation}
Choosing $c$ such that $C\,c<1/2$ one has $\big|\rho(z+w)/\rho(z)-1\big|<1/2$ which implies  $\rho^{1/2}(z+w)/2\leq \rho^{1/2}(z)\leq 3\,\rho^{1/2}(z+w)/2$ 
that is $\rho^{1/2}$ is $g$ continuous hence so is $\rho$. Note that $M\lr{\xi}^{-1}\leq  \rho(z)\leq CM^{-4}
\leq C$. 
If $|\eta|\geq c\,\lr{\xi}/2$ then $g_z^{\sigma}(w)\geq Mc^2\lr{\xi}/4$ and $g_z^{\sigma}(w)\geq Mc|\eta|/2$ thus
\[
\rho(z+w)\leq C
\leq C\lr{\xi}\rho(z)\leq C'\rho(z)(1+g_z^{\sigma}(w)).
\]
If $|\eta|\leq c\lr{\xi}$ then  \eqref{eq:rho:sa} gives
\begin{equation}
\label{eq:rho:ue}
\rho^{1/2}(z+w)\leq C\rho^{1/2}(z)(1+g_z(w))^{1/2}\leq C\rho^{1/2}(z)(1+g_z^{\sigma}(w))^{1/2}
\end{equation}
hence, in view of \eqref{eq:g:sei:2}, $\rho$ is an admissible weight.
\end{proof}
\begin{lem}
\label{lem:a:kihon}$\sigma$ is an admissible weight for $g$ and $\sigma\in S(\sigma, g)$.
\end{lem}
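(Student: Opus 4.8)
The plan is to deduce the statement from the corresponding properties of $\rho$ already established in Lemma \ref{lem:rho:wei}, exploiting that $0\le t\le M^{-4}$ enters only as a parameter independent of $z=(x,\xi)$. First note that, since $M\lr{\xi}^{-1}\le\rho(z)$,
\[
t+\rho(z)\;\le\;\sigma(z)=t+\rho(z)+M\lr{\xi}^{-1}\;\le\;2\bigl(t+\rho(z)\bigr),
\]
so $\sigma\simeq t+\rho$ uniformly, and the $z$-dependence of $\sigma$ is carried entirely by $\rho$ together with the harmless factor $M\lr{\xi}^{-1}\le\rho$. The inclusion $\sigma\in S(\sigma,g)$ has already been recorded in Lemma \ref{lem:dif:sig}, so the only thing to prove is that $\sigma$ is an admissible weight for $g$, i.e. $g$-continuous and $\sigma$-temperate.

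For $g$-continuity I would argue exactly as in the first part of the proof of Lemma \ref{lem:rho:wei}: if $g_z(w)<c$ with $w=(y,\eta)$ and $c$ small, then $|\eta|<c\lr{\xi}$, hence $\lr{\xi+\eta}\simeq\lr{\xi}$ uniformly in $\gamma\ge M^5\ge1$, while Lemma \ref{lem:rho:wei} gives $\rho(z+w)\simeq\rho(z)$; comparing $\sigma(z+w)=t+\rho(z+w)+M\lr{\xi+\eta}^{-1}$ with $\sigma(z)$ term by term then yields $\sigma(z+w)\simeq\sigma(z)$.

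For $\sigma$-temperateness I would bound the three summands of $\sigma(z+w)=t+\rho(z+w)+M\lr{\xi+\eta}^{-1}$ separately: the first is $\le\sigma(z)$; the second is $\le C\sigma(z)\bigl(1+g_z^{\sigma}(w)\bigr)^N$ by Lemma \ref{lem:rho:wei}; for the third I would use $\lr{\xi}^2\le 2\lr{\xi+\eta}^2+2|\eta|^2$ together with $|\eta|^2\le M^{-1}\lr{\xi}\,g_z^{\sigma}(w)$, which follows from $g_z^{\sigma}(w)=M\lr{\xi}|y|^2+M\lr{\xi}^{-1}|\eta|^2$, and $\lr{\xi+\eta}\ge\gamma\ge M^5$, to obtain a quadratic inequality for $u=\bigl(\lr{\xi}/\lr{\xi+\eta}\bigr)^{1/2}$; solving it gives $\lr{\xi}/\lr{\xi+\eta}\le C\bigl(1+g_z^{\sigma}(w)\bigr)$, whence $M\lr{\xi+\eta}^{-1}\le C M\lr{\xi}^{-1}\bigl(1+g_z^{\sigma}(w)\bigr)\le C\sigma(z)\bigl(1+g_z^{\sigma}(w)\bigr)$. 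Adding the three estimates gives $\sigma(z+w)\le C\sigma(z)\bigl(1+g_z^{\sigma}(w)\bigr)^N$, and then, in view of \eqref{eq:g:sei:2}, $\sigma$ is an admissible weight for $g$; the remaining inclusion $\sigma\in S(\sigma,g)$ is Lemma \ref{lem:dif:sig}.

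The only genuinely delicate point — and the one I would be most careful about — is the third summand: a crude estimate of $M\lr{\xi+\eta}^{-1}$ loses all control when $\lr{\xi}\gg\lr{\xi+\eta}$, so the quadratic device above (equivalently, applying the estimate \eqref{eq:g:sei:1} to the symplectic metric) is exactly what makes the argument close; everything else is a direct transcription of the $\rho$-case treated in Lemma \ref{lem:rho:wei}.
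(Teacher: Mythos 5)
Your proposal is correct and follows essentially the same route as the paper: the paper's proof reduces to noting that $\rho+M\lr{\xi}^{-1}$ is admissible (by the same argument as Lemma \ref{lem:rho:wei}) and that adding the nonnegative parameter $t$ preserves admissibility, with $\sigma\in S(\sigma,g)$ coming from Lemma \ref{lem:dif:sig}. You spell out the term-by-term estimates that the paper treats as immediate (in particular re-deriving the admissibility of $\lr{\xi}^{-1}$ via the quadratic inequality, which is just \eqref{eq:g:sei:1} in disguise, as you note), but the substance is the same.
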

\begin{proof}Since $\rho(z)+M\lr{\xi}^{-1}$ is admissible for $g$ by Lemma  \ref{lem:rho:wei} then it is clear that so is $\sigma=t+\rho(z)+M\lr{\xi}^{-1}$ for $t\geq 0$. The second assertion is clear from $
\big|\dif_x^{\al}\dif_{\xi}^{\be}\sigma\big|\precsim \sigma^{1-|\al+\be|/2}\lr{\xi}^{-|\be|}\precsim \sigma(M^{-1}\lr{\xi})^{|\al+\be|/2}\lr{\xi}^{-|\be|}$ 
for $\sigma\geq M\lr{\xi}^{-1}$.
\end{proof}
%

\subsection{$\omega$ and $\phi$ are admissible weights for $g$}

We start with showing
\begin{lem}
\label{lem:g:conti}
$\omega$ and $\phi$ are $g$ continuous.
\end{lem}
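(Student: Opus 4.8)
The plan is to show that both $\omega$ and $\phi$ satisfy the standard $g$-continuity estimate: there are constants $c>0$ and $C>0$ such that $g_z(z-w)<c$ implies $\omega(w)/\omega(z)+\omega(z)/\omega(w)\leq C$ and similarly for $\phi$. The natural route is to estimate the increment $\omega(z+w)-\omega(z)$ (and $\phi(z+w)-\phi(z)$) by a constant times $\omega(z)\,g_z(w)^{1/2}$ (resp. $\phi(z)\,g_z(w)^{1/2}$) when $g_z(w)$ is small, which by choosing $c$ small enough forces the ratio to stay in, say, $[1/2,2]$. For $\omega$, I would start from $\omega^2=(t-\psi)^2+M\rho\langle{\xi}\rangle^{-1}$ and use the elementary inequality $|\sqrt{A}-\sqrt{B}|\leq |A-B|/(\sqrt{A}+\sqrt{B})$ together with $\omega\geq \sqrt{M}\rho^{1/2}\langle{\xi}\rangle^{-1/2}$ and $\omega\geq |t-\psi|$. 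The term $M\rho\langle{\xi}\rangle^{-1}$ is handled exactly as in the proof of Lemma \ref{lem:rho:wei} (using \eqref{eq:doto} and \eqref{eq:rho:sa}), giving an increment bounded by $C\,M\rho\langle{\xi}\rangle^{-1}g_z(w)^{1/2}\leq C\omega^2 g_z(w)^{1/2}$; for the $(t-\psi)^2$ term one writes $(t-\psi(z+w))^2-(t-\psi(z))^2 = (\psi(z)-\psi(z+w))(2(t-\psi(z))+(\psi(z)-\psi(z+w)))$ and bounds $|\psi(z)-\psi(z+w)|$ using Lemma \ref{lem:dif:psi:1} (mean value theorem along the segment, with \eqref{eq:doto} to control $\langle{\xi+s\eta}\rangle$), getting $|\psi(z)-\psi(z+w)|\precsim \rho^{1/2}(|y|+\langle{\xi}\rangle^{-1}|\eta|)\precsim \rho^{1/2}M^{1/2}\langle{\xi}\rangle^{-1/2}g_z(w)^{1/2}\leq \omega\, g_z(w)^{1/2}$, so the increment of $(t-\psi)^2$ is $\precsim \omega\cdot\omega\,g_z(w)^{1/2}+\omega^2 g_z(w) \precsim \omega^2 g_z(w)^{1/2}$. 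Dividing the total increment of $\omega^2$ by $\omega(\sqrt{\cdot})$ then yields $|\omega(z+w)-\omega(z)|\precsim \omega(z)g_z(w)^{1/2}$, which gives $g$-continuity of $\omega$ after shrinking $c$.

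For $\phi=\omega+t-\psi$, I would use $\phi(z+w)-\phi(z)=(\omega(z+w)-\omega(z))+(\psi(z)-\psi(z+w))$. The first difference is $\precsim \omega(z)g_z(w)^{1/2}\leq \phi(z)g_z(w)^{1/2}$ by the estimate just obtained together with $\omega\leq \phi$ (which holds since $\phi=\omega+t-\psi$ and, by \eqref{eq:Phi:sita}, $\phi\geq M\rho\langle{\xi}\rangle^{-1}/(2\omega)>0$, while in the region $t-\psi\geq0$ one has $\phi\geq\omega$ directly and in the region $t-\psi<0$ one has $\omega\precsim\rho$ and $\phi\gtrsim M\rho\langle{\xi}\rangle^{-1}/\omega$; in either case $\omega\le C\phi$—alternatively just use $\omega^2\le \phi\cdot(\text{something})$ or note $\omega \le \phi + 2|t-\psi|$ and $|t-\psi|\le \omega$ is not circular here). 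The second difference $|\psi(z)-\psi(z+w)|\precsim \omega(z)g_z(w)^{1/2}\leq \phi(z)g_z(w)^{1/2}$ by the bound above. Hence $|\phi(z+w)-\phi(z)|\precsim \phi(z)g_z(w)^{1/2}$, and choosing $c$ so that the implied constant times $\sqrt{c}$ is $<1/2$ gives $\phi(z+w)/\phi(z)\in[1/2,3/2]$, i.e. $g$-continuity of $\phi$.

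The main obstacle I anticipate is bookkeeping the region $t-\psi<0$ consistently: there $\psi$ is comparable to $\rho$, $\omega$ is comparable to $\rho$, and one must be careful that all the inequalities ($\omega\le C\phi$, $\phi\ge M\rho\langle{\xi}\rangle^{-1}/C$ from Lemma \ref{lem:psi:t:sei}) are used with the correct direction; but no genuinely new estimate is needed beyond Lemmas \ref{lem:dif:psi:1}, \ref{lem:hyoka:rho:s}, \ref{lem:rho:wei}, \ref{lem:psi:t:sei} and the bounds \eqref{eq:Phi:sita}, \eqref{eq:doto}, \eqref{eq:rho:sa}. The rest is the routine $|\sqrt{A}-\sqrt{B}|$ trick and shrinking the slow-variation constant $c$.
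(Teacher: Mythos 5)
Your argument for $\omega$ is sound: you obtain $|\omega^2(z+w)-\omega^2(z)|\precsim\omega^2(z)g_z^{1/2}(w)$ by expanding $(t-\psi)^2$ and handling $M\rho\lr{\xi}^{-1}$ as in Lemma \ref{lem:rho:wei}, and dividing by $\omega(z+w)+\omega(z)\geq\omega(z)$ gives $|\omega(z+w)-\omega(z)|\precsim\omega(z)g_z^{1/2}(w)$. The paper reaches the same bound with slightly different bookkeeping (it bounds $|f(z+w)+f(z)|/(\omega(z+w)+\omega(z))\leq 2$ for $f=t-\psi$ and $h=M^{1/2}\rho^{1/2}\lr{\xi}^{-1/2}$ and passes directly from the increments of $f$ and $h$ to that of $\omega$), but the content is the same.

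The argument for $\phi$ has a genuine gap: the inequality $\omega\leq C\phi$ that you rely on for both pieces is false. In the region $t-\psi<0$ write $f=t-\psi<0$, $h^2=M\rho\lr{\xi}^{-1}$; then $\phi=\omega+f$ and $\phi(\omega-f)=\omega^2-f^2=h^2$, so $\phi=h^2/(\omega+|f|)$. When $|t-\psi|^2\gg M\rho\lr{\xi}^{-1}$ (permissible since $|\psi|\precsim\rho$, $t\in[0,M^{-4}]$ and $\rho\geq M\lr{\xi}^{-1}$ can be $\gg M\lr{\xi}^{-1}$) one has $\omega\approx|f|$, $\phi\approx h^2/\omega$, so $\omega/\phi\approx\omega^2/h^2\approx|f|^2/(M\rho\lr{\xi}^{-1})$, which is unbounded. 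The facts you cite ($\omega\precsim\rho$, $\phi\gtrsim M\rho\lr{\xi}^{-1}/\omega$) only give $\omega\phi\gtrsim M\rho\lr{\xi}^{-1}$, not $\omega\precsim\phi$; and the alternative $\omega\leq\phi+2|t-\psi|$ is trivially true (indeed $\omega=\phi+|t-\psi|$ there) but useless.

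The structural reason the term-by-term approach cannot work is that, in this region, both $\omega(z+w)-\omega(z)$ and $-(\psi(z+w)-\psi(z))$ are of size $\approx\omega\, g_z^{1/2}(w)$, far larger than $\phi\, g_z^{1/2}(w)$, and their sum is small only because they nearly cancel. Writing subscripts $0,1$ for $z,z+w$, the increment of $\omega$ equals $\bigl((f_1-f_0)(f_1+f_0)+h_1^2-h_0^2\bigr)/(\omega_1+\omega_0)$; when $f\approx-\omega$ the factor $(f_1+f_0)/(\omega_1+\omega_0)\approx-1$, so this nearly cancels the $(f_1-f_0)$ coming from $-(\psi_1-\psi_0)$. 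The paper keeps this cancellation by recombining into the exact identity
\[
\phi(z+w)-\phi(z)=\frac{(f_1-f_0)(\phi_1+\phi_0)+h_1^2-h_0^2}{\omega_1+\omega_0},
\]
in which the factor $\phi_1+\phi_0$ appears by itself. Together with $|f_1-f_0|/(\omega_1+\omega_0)\precsim g_z^{1/2}(w)$ and $|h_1^2-h_0^2|/(\omega_1+\omega_0)\precsim M\rho\lr{\xi}^{-1}g_z^{1/2}(w)/\omega\precsim\phi\, g_z^{1/2}(w)$ (the last step by \eqref{eq:Phi:sita}), one gets $|\phi_1/\phi_0-1|\leq Cg_z^{1/2}(w)\bigl(|\phi_1/\phi_0+1|+1\bigr)$, which is then solved by shrinking $c$. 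To repair your proof, replace the separate estimation of the two increments by this identity, or by some equivalent algebraic manipulation that preserves the cancellation.
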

\begin{proof} 
Denote $f=t-\psi$ and $h=M^{1/2}\rho^{1/2}\lr{\xi}^{-1/2}$ so that $\omega^2=f^2+h^2$. Note that
\begin{equation}
\label{eq:ome:sa:a}
\begin{split}
|\omega(z+w)-\omega(z)|=|\omega^2(z+w)-\omega^2(z)|/|\omega(z+w)+\omega(z)|\\
\leq 2|f(z+w)-f(z)|+2|h(z+w)-h(z)|
\end{split}
\end{equation}
because $
|f(z+w)+f(z)|/|\omega(z+w)+\omega(z)|\leq 2$, $|h(z+w)+h(z)|/|\omega(z+w)+\omega(z)|\leq 2$.
Assume $g_z(w)<c\;(\leq 1/2)$ hence \eqref{eq:doto}. It is assumed that constants $C$ may change from line to line but independent of $\gamma\geq M^5\geq 1$. Noting $|f(z+w)-f(z)|=|\psi(z+w)-\psi(z)|$ it follows  from Lemma \ref{lem:hyoka:psi:b} that
\begin{equation}
\label{eq:f:sasa}
\begin{split}
|f(z+w)-f(z)|\leq C \rho^{1/2}(z+s w)\big(|y|+\lr{\xi+s\eta}^{-1}|\eta|\big)\\\leq C \rho^{1/2}(z+s w)\big(|y|+\lr{\xi}^{-1}|\eta|\big)\leq CM^{1/2}\rho^{1/2}(z)\lr{\xi}^{-1/2}g_z^{1/2}(w)
\end{split}
\end{equation}
since $\rho$ is $g$ continuous. Noting that $\omega(z)\geq M^{1/2}\rho^{1/2}(z)\lr{\xi}^{-1/2}$ it results
\begin{equation}
\label{eq:f:sa}
|f(z+w)-f(z)|\leq C\omega(z) g_z^{1/2}(w).
\end{equation}
Similar argument shows that $
|h(z+w)-h(z)|\leq CM^{1/2}\lr{\xi}^{-1}g_z^{1/2}(w)$. 
Since $\omega(z)\geq M\lr{\xi}^{-1}$ we have $
|h(z+w)-h(z)|\leq CM^{-1/2}\omega(z)g_z^{1/2}(w)$. 
Therefore from \eqref{eq:ome:sa:a} one has $
|\omega(z+w)-\omega(z)|\leq C\omega(z)g_z^{1/2}(w)$. 
Choosing $c$ such that $C\,c<1/2$ we conclude that $\omega$ is $g$ continuous. Next consider $\phi=\omega+f$. Write 
\begin{equation}
\label{eq:Phi:sa}
\begin{split}
&\phi(z+w)-\phi(z)\\=&\frac{(f(z+w)-f(z))(\phi(z+w)+\phi(z))+h^2(z+w)-h^2(z)}{\omega(z+w)+\omega(z)}.
\end{split}
\end{equation}
Since $
\omega(z+w)/C\leq \omega(z)\leq C\,\omega(z+w)$, decreasing $c>0$ if necessary, which together with \eqref{eq:f:sa}  gives $
|f(z+w)-f(z)|/(\omega(z+w)+\omega(z))\leq Cg_z^{1/2}(w)$. 
Recalling $h^2(z)=M\rho(z)\lr{\xi}^{-1}$ and repeating similar arguments one sees
\begin{equation}
\label{eq:h2:sa}
\begin{split}
|h^2(z+w)-h^2(z)|\leq CM^{1/2}\rho(z)\lr{\xi}^{-1}g_z^{1/2}(w)
\end{split}
\end{equation}
for $\rho^{1/2}(z)\geq M^{1/2}\lr{\xi}^{-1/2}$. 
Taking \eqref{eq:Phi:sita} into account it follows from \eqref{eq:h2:sa} that
\[
|h^2(z+w)-h^2(z)|/(\omega(z+w)+\omega(z))\leq C\phi(z)g_z^{1/2}(w).
\]
Combining these estimates we obtain from \eqref{eq:Phi:sa} that
\[
\big|\phi(z+w)/\phi(z)-1\big|\leq C\big|\phi(z+w)/\phi(z)+1\big|g_z^{1/2}(w)+Cg_z^{1/2}(w)
\]
which gives $
\phi(z)/C\leq \phi(z+w)\leq C\,\phi(z)$ 
choosing $c>0$ small, showing that $\phi$ is $g$ continuous.
\end{proof}
\begin{lem}
\label{lem:Phi:kajyu}$\omega$ and $\phi$ are admissible weights for $g$  and $\omega\in S(\omega,g)$, $\phi\in S(\phi, g)$.
\end{lem}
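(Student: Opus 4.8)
The plan is to deduce temperateness of $\omega$ and $\phi$ from the $g$ continuity already established in Lemma \ref{lem:g:conti} together with the estimates of derivatives of $\omega$ and $\phi$ already obtained in Lemmas \ref{lem:hyoka:omega} and \ref{lem:dif:Phi:seimitu} (and Corollary \ref{cor:dif:omega}, Lemma \ref{lem:dif:Phi}). Recall that to be an admissible weight for $g$ one needs (i) $g$ continuity (slow variation), (ii) $\sigma, g$ temperateness, i.e.\ $\omega(z+w)\leq C\,\omega(z)\,(1+g^{\sigma}_z(w))^N$ for some $C,N$, and similarly for $\phi$, and (iii) that $\omega\in S(\omega,g)$, $\phi\in S(\phi,g)$, which is exactly Corollary \ref{cor:dif:omega} and Lemma \ref{lem:dif:Phi}. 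So only the temperate growth estimate (ii) remains to be shown, since (i) is Lemma \ref{lem:g:conti} and (iii) is already proved.

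For the temperate estimate on $\omega$, I would argue as in the proof of Lemma \ref{lem:rho:wei}. Write $\omega^2=f^2+h^2$ with $f=t-\psi$ and $h=M^{1/2}\rho^{1/2}\lr{\xi}^{-1/2}$. Split into two cases according to the size of $|\eta|$ relative to $\lr{\xi}$. If $|\eta|\leq c\lr{\xi}$ (with $c$ small), then \eqref{eq:doto} holds and the $g$-continuity estimates from the proof of Lemma \ref{lem:g:conti}, chained along the segment, give $\omega(z+w)\leq C\omega(z)(1+g_z(w))^{1/2}\leq C\omega(z)(1+g_z^{\sigma}(w))^{1/2}$ since $g_z\leq g_z^{\sigma}$. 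If $|\eta|\geq c\lr{\xi}$, then $g_z^{\sigma}(w)\geq Mc^2\lr{\xi}/4$ is large; on one hand $\omega(z+w)\precsim \sqrt{\lr{\xi+\eta}}\precsim \sqrt{\lr{\xi}}(1+g_z^{\sigma}(w))^{1/2}$ using that $\lr{\xi+\eta}\precsim \lr{\xi}(1+g_z^{\sigma}(w))$ (which follows from \eqref{eq:g:sei:1} type reasoning, or directly since $|\eta|\precsim \sqrt{\lr{\xi}\,g_z^{\sigma}(w)/M}$), and on the other hand $\omega(z)\geq M^{1/2}\rho^{1/2}(z)\lr{\xi}^{-1/2}\geq M\lr{\xi}^{-1}$ so $\sqrt{\lr{\xi}}\leq M^{-1/2}\lr{\xi}\cdot M^{1/2}\lr{\xi}^{-1/2}\leq (M^{-1}\lr{\xi})\,\omega(z)$, and $M^{-1}\lr{\xi}\precsim 1+g_z^{\sigma}(w)$; combining, $\omega(z+w)\precsim \omega(z)(1+g_z^{\sigma}(w))^{3/2}$, which is the required temperate bound. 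Then $\phi$ is handled immediately: since $\phi=\omega+f$ and $0\leq \phi\leq 2\omega$ while $\phi(z)\geq M\rho\lr{\xi}^{-1}/(2\omega(z))$ by \eqref{eq:Phi:sita}, one has $\phi(z+w)\leq 2\omega(z+w)\precsim \omega(z)(1+g_z^{\sigma}(w))^{3/2}$, and it remains to bound $\omega(z)$ by $\phi(z)(1+g_z^{\sigma}(w))^{N}$; but $\omega(z)^2/\phi(z)=\omega(z)\cdot\omega(z)/\phi(z)$, and $\omega(z)/\phi(z)\leq 2\omega(z)^2/(M\rho(z)\lr{\xi}^{-1})$, which is $\leq 2(1+f(z)^2/h(z)^2)$; using $|f(z)|=|t-\psi|\leq \delta\rho(z)$ when $t<\psi$ (as in the proof of Lemma \ref{lem:psi:t:sei}) and $|f(z)|=t-\psi\leq\omega(z)\precsim$ anything larger otherwise, one checks $\omega(z)/\phi(z)\precsim \lr{\xi}/M\precsim 1+g_z^{\sigma}(w)$ in the relevant regime, closing the estimate. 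Finally assemble: $\omega,\phi$ are $g$ continuous by Lemma \ref{lem:g:conti}, $\sigma,g$ temperate by the above, hence admissible weights; and $\omega\in S(\omega,g)$, $\phi\in S(\phi,g)$ are Corollary \ref{cor:dif:omega} and Lemma \ref{lem:dif:Phi}.

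The main obstacle I anticipate is the bookkeeping in the case $|\eta|$ comparable to or larger than $\lr{\xi}$: there $\omega(z+w)$ is no longer controlled by the infinitesimal ($g$-continuity) argument and one must instead use the crude bound $\omega\precsim\sqrt{\lr{\cdot}}$ together with the lower bound $\omega(z)\geq M\lr{\xi}^{-1}$ and the fact that $g_z^{\sigma}(w)$ is then necessarily large (of size $\gtrsim M\lr{\xi}$), exactly as in the final paragraph of the proof of Lemma \ref{lem:rho:wei}. Once the power of $(1+g_z^{\sigma}(w))$ is tracked honestly — and $N=3/2$ or so suffices — the passage from $\omega$ to $\phi$ is routine via \eqref{eq:Phi:sita}. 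I would keep the exponent $N$ unspecified (just "some $N$") to avoid fussing over the sharp constant, since admissibility only requires existence of such $N$.
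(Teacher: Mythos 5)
Your overall framing — reduce admissibility to (i) slow variation from Lemma \ref{lem:g:conti}, (ii) the temperate growth bound, (iii) the symbol estimates from Corollary \ref{cor:dif:omega} and Lemma \ref{lem:dif:Phi} — is sound, and your case split on the size of $|\eta|$ matches the paper's. But two of the key steps have genuine gaps, and the overall strategy for $\phi$ does not close.

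\textbf{First gap (temperateness of $\omega$ for $|\eta|\leq c\lr{\xi}$).} You appeal to ``$g$-continuity estimates \dots chained along the segment'' to get a polynomial bound $\omega(z+w)\leq C\omega(z)(1+g_z(w))^{1/2}$. Slow variation alone cannot give this: chaining a slowly-varying bound over $N\sim g_z(w)^{1/2}$ steps of $g$-size $<c$ produces a factor $C^N$, which is exponential, not polynomial (already on $\R$ with $g=dx^2$ the function $e^{\epsilon x}$ is slowly varying but not temperate). What the paper actually does is \emph{not} a chaining argument: it is a global mean-value estimate. For $|\eta|\leq c\lr{\xi}$ one has $\lr{\xi+s\eta}\simeq\lr{\xi}$ for all $|s|\leq 1$ (eq.\ \eqref{eq:doto}), so the one-step bounds \eqref{eq:f:sasa} and \eqref{eq:rho:ue} apply for \emph{all} such $w$, not only $g_z(w)<c$; feeding the temperate bound for $\rho^{1/2}(z+sw)$ from \eqref{eq:rho:ue} into \eqref{eq:f:sasa} gives $|f(z+w)-f(z)|\leq C\omega(z)(1+g_z^\sigma(w))$, and similarly for $h$, with no iteration. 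The temperateness of $\rho$ (Lemma \ref{lem:rho:wei}) is the engine, not slow variation of $\omega$.

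\textbf{Second gap (the passage from $\omega$ to $\phi$).} You propose $\phi(z+w)\leq 2\omega(z+w)\precsim\omega(z)(1+g_z^\sigma(w))^{3/2}$ and then try to bound $\omega(z)$ by $\phi(z)(1+g_z^\sigma(w))^N$. This fails: $\omega(z)/\phi(z)$ is a function of $z$ alone and can be as large as $\simeq\rho(z)\lr{\xi}/M$ (take $t<\psi$ near $\psi$, so $\phi\simeq M\rho\lr{\xi}^{-1}/\omega$ and $\omega\simeq\psi-t\simeq\rho$), while $g_z^\sigma(w)$ is independently small; for $w=0$ in particular $g_z^\sigma(w)=0$ but $\omega(z)/\phi(z)$ remains large. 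Your remark that ``$\omega(z)/\phi(z)\precsim\lr{\xi}/M\precsim 1+g_z^\sigma(w)$ in the relevant regime'' uses $|\eta|\geq c\lr{\xi}$ to make $g_z^\sigma(w)\gtrsim M\lr{\xi}$; but then you have only handled that regime, and the case $|\eta|\leq c\lr{\xi}$ is left open precisely where the reduction breaks down. The paper avoids this entirely by proving the temperate bound for $\phi$ directly from the algebraic identity \eqref{eq:Phi:sa},
\[
\phi(z+w)-\phi(z)=\frac{(f(z+w)-f(z))\,(\phi(z+w)+\phi(z))+h^2(z+w)-h^2(z)}{\omega(z+w)+\omega(z)},
\]
whose crucial feature is the factor $\phi(z+w)+\phi(z)$ on the right: one can then solve for $\phi(z+w)/\phi(z)$ and split into the case where the prefactor $\rho^{1/2}\lr{\xi}^{-1/2}(1+g_z^\sigma(w))/(\omega(z+w)+\omega(z))$ is $<1/4$ (giving comparability) or $\geq 1/4$ (and then \eqref{eq:Phi:sita} with $\phi(z+w)\leq 2\omega(z+w)$ gives the polynomial bound).

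\textbf{A minor point.} In the case $|\eta|\geq c\lr{\xi}$ you write $\omega(z+w)\precsim\sqrt{\lr{\xi+\eta}}$. In fact $\omega\leq CM^{-4}$ uniformly (since $t\leq M^{-4}$, $|\psi|\precsim\rho\precsim M^{-4}$, and $M\rho\lr{\xi}^{-1}\precsim M^{-3}\lr{\xi}^{-1}$), and the paper simply uses $\omega(z+w)\leq C\leq C\lr{\xi}\omega(z)\leq C'\omega(z)(1+g_z^\sigma(w))$. The same boundedness is what makes the corresponding case for $\phi$ trivial. Tracking the growth via $\sqrt{\lr{\cdot}}$ is not wrong but misses the structural fact that both weights are bounded, which the paper's argument leans on.
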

\begin{proof}Note that $
\lr{\xi}^{-1}\leq M\lr{\xi}^{-1}\leq \sqrt{M}\sqrt{\rho} \lr{\xi}^{-1/2}\leq \omega \leq CM^{-4}\leq C$. 
Assume $|\eta|\geq c\,\lr{\xi}$ hence $g_z^{\sigma}(w)\geq Mc^2\lr{\xi}\geq c^2\lr{\xi}$. Therefore
\begin{equation}
\label{eq:ome:kan}
\omega(z+w)\leq C\leq C\lr{\xi}\omega(z)\leq C'\omega(z)(1+g^{\sigma}_z(w)).
\end{equation}
Assume $|\eta|\leq c\,\lr{\xi}$ and note that \eqref{eq:rho:ue}. Then checking the proof of Lemma \ref{lem:g:conti} we see that $|f(z+w)-f(z)|\leq C\omega(z)(1+g_z^{\sigma}(w))$ and $|h(z+w)-h(z)|\leq C\omega(z)(1+g_z^{\sigma}(w))^{1/2}$. Then \eqref{eq:ome:kan} follows from \eqref{eq:ome:sa:a} 
which proves that $\omega$ is admissible for $g$. Turn to $\phi$. From Lemma \ref{lem:psi:t:sei}  it follows that
\[
\lr{\xi}^{-1}/C\leq M\lr{\xi}^{-1}/C\leq \phi(z)=\omega(z)+f(z)\leq CM^{-4}\leq C.
\]
If $|\eta|\geq \lr{\xi}/2$ then $g_z^{\sigma}(w)\geq M\lr{\xi}/4\geq \lr{\xi}/4$ hence
\begin{align*}
\phi(z+w)\leq C\leq C^2\lr{\xi}\phi(z)\leq C\phi(z)(1+g_z^{\sigma}(w)).
\end{align*}
Assume $|\eta|\leq \lr{\xi}/2$ so that \eqref{eq:doto} holds. From \eqref{eq:rho:ue} and \eqref{eq:f:sasa} it results that
\[
|f(z+w)-f(z)|\leq C\rho^{1/2}(z)\lr{\xi}^{-1/2}(1+g_z^{\sigma}(w)).
\]
Recalling \eqref{eq:rho:ue}  and $M^2g_z(w)=g_z^{\sigma}(w)$ the same arguments obtaining \eqref{eq:h2:sa} shows that $
|h^2(z+w)-h^2(z)|\leq C\rho^{1/2}(z)\lr{\xi}^{-3/2	}(1+g_z^{\sigma}(w))$. 
Taking these into account \eqref{eq:Phi:sa} yeilds
\begin{equation}
\label{eq:Phi:sa:bis}
\begin{split}
|\phi(z+w)-\phi(z)|\leq C\Big(\frac{\rho^{1/2}(z)\lr{\xi}^{-1/2}}{\omega(z+w)+\omega(z)}\big(\phi(z+w)+\phi(z)\big)\\
+\frac{\rho^{1/2}(z)\lr{\xi}^{-3/2}}{\omega(z+w)+\omega(z)}\Big)(1+g_z^{\sigma}(w)).
\end{split}
\end{equation}
Applying Lemma \ref{lem:psi:t:sei} to \eqref{eq:Phi:sa:bis} to obtain
\begin{align*}
|\phi(z+w)-\phi(z)|
\leq C\big(\phi(z+w)+2\phi(z)\big)\frac{\rho^{1/2}(z)\lr{\xi}^{-1/2}}{\omega(z+w)+\omega(z)}(1+g_z^{\sigma}(w)).
\end{align*}
If $\rho^{1/2}(z)\lr{\xi}^{-1/2}(1+g_z^{\sigma}(w))\big/(\omega(z+w)+\omega(z))<1/4$ then it follows 
\[
\big|\phi(z+w)/\phi(z)-1\big|\leq \big(\phi(z+w)/\phi(z)+2\big)/4
\]
from which we have $\phi(z+w)\leq 2\phi(z)\leq 5\phi(z+w)$. If
\[
\rho^{1/2}(z)\lr{\xi}^{-1/2}(1+g_z^{\sigma}(w))\big/(\omega(z+w)+\omega(z))\geq 1/4
\]
we have $
32(1+g_z^{\sigma}(w))^2\geq 4\lr{\xi}\omega(z+w)\omega(z)\big/\rho(z)\geq \phi(z+w)\big/\phi(z)$ 
by \eqref{eq:Phi:sita} and an obvious inequality $\phi(z+w)\leq 2\,\omega(z+w)$. Thus we conclude that $\phi$ is  admissible for $g$.
\end{proof}
%

\subsection{$\lambda_j$ are admissible weights for $g$}

\begin{lem}
\label{lem:lam:ipan}Assume that $\lambda\in {\mathcal C}(\sigma^2)$ and $\lambda\geq cM\sigma\lr{\xi}^{-1}$ with some $c>0$. Then $\lambda$ is an admissible weight for $g$.
\end{lem}
\begin{proof}
Consider $\sqrt{\lambda}$. Assume $g_z(w)<c$ and hence $\lr{\xi+s\eta}\approx \lr{\xi}$. Since $\sqrt{\lambda}\in {\mathcal C}(\sigma)$  it follows that
\begin{equation}
\label{eq:lakanzo}
\begin{split}
\big|\sqrt{\lambda(z+w)}-\sqrt{\lambda(z)}\big|\leq C\sqrt{\sigma(z+s w)}M^{1/2}\lr{\xi}^{-1/2}g_z^{1/2}(w)
\end{split}
\end{equation}
with $|s|<1$ which is bounded by $C'\sqrt{\sigma(z)}M^{1/2}\lr{\xi}^{-1/2}g_z^{1/2}(w)$ since $\sigma$ is $g$ continuous. By assumption $\lambda(z)\geq cM\sigma(z)\lr{\xi}^{-1}$ one has
\[
 \big|\sqrt{\lambda(z+w)}-\sqrt{\lambda(z)}\big|\leq C''\sqrt{\lambda(z)}\,g_z^{1/2}(w).
 \]
 Choosing $c>0$ such that $C''\sqrt{c}<1$ shows that $\sqrt{\lambda(z)}$ is $g$ continuous and so is $\lambda(z)$. From $cM^2\lr{\xi}^{-2}\leq cM\sigma \lr{\xi}^{-1}\leq \lambda\leq C'\sigma^2\leq C'M^{-4}$ one sees
 \[
 c_1M\lr{\xi}^{-1}\leq c_1M^{1/2}\sigma^{1/2}\lr{\xi}^{-1/2}\leq\sqrt{\lambda(z)}\leq C.
 \]
If $|\eta|\geq \lr{\xi}/2$ hence $g_z^{\sigma}(w)\geq M\lr{\xi}/4$  then 
\begin{align*}
\sqrt{\lambda(z+w)}\leq C
\leq C(c_1M)^{-1}\lr{\xi}\sqrt{\lambda(z)}\leq C'\sqrt{\lambda(z)}\,g_z^{\sigma}(w).
\end{align*}
If $|\eta|\leq \lr{\xi}/2$, noting that $\sigma(z+w)\leq C\,\sigma(z)(1+g_z^{\sigma}(w))$, it follows from \eqref{eq:lakanzo}
\begin{align*}
\big|\sqrt{\lambda(z+w)}-\sqrt{\lambda(z)}\big| \leq C\sqrt{\lambda(z)}(1+g_z^{\sigma}(w))
\end{align*}
which proves that $\sqrt{\lambda}$ is admissible weight for $g$ and hence so is $\lambda$.
\end{proof}
\begin{lem}
\label{lem:a:ipan}Assume that $\lambda\in {\mathcal C}(\sigma)$ and $\lambda\geq cM\lr{\xi}^{-1}$ with some $c>0$. Then $\lambda$ is an admissible weight for $g$. If $\lambda\in {\mathcal C}(1)$ and $\lambda\geq c$ with some $c>0$ then $\lambda$ is an admissible weight for $g$.
\end{lem}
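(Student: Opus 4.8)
The plan is to follow the pattern of Lemmas \ref{lem:rho:wei}, \ref{lem:a:kihon} and \ref{lem:lam:ipan}: I would verify that $\lambda$ is $g$ continuous and $\sigma,g$ temperate (uniformly in $t\in[0,M^{-4}]$). The second assertion is essentially trivial and I would dispose of it first: if $\lambda\in{\mathcal C}(1)$ then taking $|\alpha+\beta|=0$ gives $\lambda\leq C'$, so together with $\lambda\geq c$ one sees $\lambda$ is comparable to the constant $1$, and any positive function comparable to a constant is an admissible weight for $g$.

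The heart of the first assertion is to upgrade the plain bound $|\dif_x^{\alpha}\dif_{\xi}^{\beta}\lambda|\precsim\sigma^{1-|\alpha+\beta|/2}\lr{\xi}^{-|\beta|}$ coming from $\lambda\in{\mathcal C}(\sigma)$ to a Glaeser-type estimate in which $\lambda$ itself appears, namely
\[
|\dif_{x_j}\lambda|\precsim\sqrt{\lambda},\qquad |\dif_{\xi_j}\lambda|\precsim\lr{\xi}^{-1}\sqrt{\lambda},\qquad j=1,\ldots,d,\quad M\geq M_0.
\]
Since $\lambda\geq cM\lr{\xi}^{-1}\geq 0$, this follows from Glaeser's inequality (used already for $\rho$ in \eqref{eq:rho:Glae}) applied along coordinate lines: ${\mathcal C}(\sigma)$ furnishes $|\dif_{\xi_j}^2\lambda|\precsim\lr{\xi}^{-2}$ and $|\dif_{x_j}^2\lambda|\precsim 1$, and because $\sigma\leq CM^{-4}$ is small for $M\geq M_0$ while $|\dif_{\xi_j}\lambda|\precsim\sigma^{1/2}\lr{\xi}^{-1}$, the minimum of the relevant majorizing quadratic lies in the region where $\lr{\xi'}\approx\lr{\xi}$ (respectively, on all of $\R$ for the $x_j$ direction), so there is enough room to run the argument. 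I expect this to be the main obstacle: the hypothesis provides only $\lambda\gtrsim M\lr{\xi}^{-1}$, which in general is far below $\sqrt{M\sigma\lr{\xi}^{-1}}$ (for instance $\lambda=M\lr{\xi}^{-1}$ itself is an allowed weight with $\sigma$ possibly $\simeq M^{-4}$), so the bare ${\mathcal C}(\sigma)$ derivative estimate is genuinely insufficient for $g$ continuity and this refinement is indispensable.

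Granted the refined bounds, $g$ continuity follows as in Lemma \ref{lem:lam:ipan}. For $g_z(w)<c$ one has $\lr{\xi+s\eta}\approx\lr{\xi}$ and $|y|+\lr{\xi}^{-1}|\eta|\precsim (M\lr{\xi}^{-1})^{1/2}g_z^{1/2}(w)$, so with $h(s)=\sqrt{\lambda(z+sw)}$ the refined estimates give $|h'(s)|=|\dif_s\lambda(z+sw)|/(2\sqrt{\lambda(z+sw)})\precsim (M\lr{\xi}^{-1})^{1/2}g_z^{1/2}(w)$ uniformly in $s$; hence $|h(1)-h(0)|\precsim (M\lr{\xi}^{-1})^{1/2}g_z^{1/2}(w)\leq (C/\sqrt c)\,h(0)\,g_z^{1/2}(w)$, since $h(0)=\sqrt{\lambda(z)}\geq\sqrt c\,(M\lr{\xi}^{-1})^{1/2}$, and choosing $c$ small forces $\lambda(z+w)/\lambda(z)\in[1/4,9/4]$.

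Temperateness I would handle by a case split as in Lemmas \ref{lem:rho:wei} and \ref{lem:Phi:kajyu}. If $|\eta|\geq c\lr{\xi}$ then $g_z^{\sigma}(w)=M^2 g_z(w)\geq c^2\lr{\xi}$, and since $\lambda(z+w)\leq C\sigma(z+w)\leq CM^{-4}\leq C$ while $\lambda(z)\geq cM\lr{\xi}^{-1}$, one gets $\lambda(z+w)\leq C'\lr{\xi}\lambda(z)\leq C''\lambda(z)(1+g_z^{\sigma}(w))$. If $|\eta|\leq c\lr{\xi}$ one keeps $\lr{\xi+s\eta}\approx\lr{\xi}$ and repeats the previous computation tracking $g_z^{\sigma}=M^2 g_z$: $\sqrt{\lambda(z+w)}\leq\sqrt{\lambda(z)}+C M^{-1/2}\lr{\xi}^{-1/2}(g_z^{\sigma}(w))^{1/2}\leq\sqrt{\lambda(z)}\big(1+C'(g_z^{\sigma}(w))^{1/2}\big)$, again using $\sqrt{\lambda(z)}\geq\sqrt c\,M^{1/2}\lr{\xi}^{-1/2}$, whence $\lambda(z+w)\leq C''\lambda(z)(1+g_z^{\sigma}(w))$. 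Combining the two cases shows $\lambda$ is $\sigma,g$ temperate, and together with $g$ continuity this gives that $\lambda$ is an admissible weight for $g$; moreover the same refined bounds combined with $M\lr{\xi}^{-1}\leq\lambda$ yield $\lambda\in S(\lambda,g)$ if that is wanted downstream.
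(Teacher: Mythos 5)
Your proof is correct and follows essentially the same route as the paper: the paper disposes of this lemma by "repeat the proof of Lemma \ref{lem:lam:ipan}", whose key move is precisely a mean-value estimate on $\sqrt{\lambda}$ with first-derivative control coming from a Glaeser-type inequality, followed by the lower bound $\lambda\geq cM\lr{\xi}^{-1}$ and a two-case ($|\eta|\gtrless c\lr{\xi}$) argument for temperateness, all of which you reproduce. Your direct observation that the ${\mathcal C}(1)$ case is trivial because $\lambda$ is then comparable to a constant is a small simplification over literally repeating the argument, but the substance is the same.
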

\begin{proof}It is enough to repeat the proof of Lemma \ref{lem:lam:ipan}.
\end{proof}
\begin{lem}
\label{lem:lam:Sg} Assume $\lambda\in {\mathcal C}(\sigma^2)$ and $\lambda\geq cM\sigma \lr{\xi}^{-1}$ with some $c>0$. Then 
\[
\dif_x^{\al}\dif_{\xi}^{\be}\lambda\in S(\sqrt{\sigma}\sqrt{\lambda}\lr{\xi}^{-|\be|}, g),\quad |\al+\be|=1.
\]
In particular $\lambda\in S(\lambda,g)$. 
\end{lem}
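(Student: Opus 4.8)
The plan is to verify directly the symbol estimates defining membership in $S(\sqrt{\sigma}\sqrt{\lambda}\,\lr{\xi}^{-|\be|},g)$. Since $\sigma$ and $\lambda$ are admissible weights for $g$ (Lemmas \ref{lem:a:kihon} and \ref{lem:lam:ipan}), and $\lr{\xi}^{\pm1}$ is too, the function $\sqrt{\sigma}\sqrt{\lambda}\,\lr{\xi}^{-|\be|}$ is an admissible weight for $g$, so it will suffice to prove the pointwise bound
\[
\big|\dif_x^{\mu+\al}\dif_{\xi}^{\nu+\be}\lambda\big|\precsim \sqrt{\sigma}\sqrt{\lambda}\,\lr{\xi}^{-|\be|}\,M^{-|\mu+\nu|/2}\lr{\xi}^{(|\mu|-|\nu|)/2}
\]
for $|\al+\be|=1$ and all $\mu,\nu$. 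For $|\mu+\nu|\geq1$ this is routine: $\lambda\in\co(\sigma^2)$ gives $\big|\dif_x^{\mu+\al}\dif_{\xi}^{\nu+\be}\lambda\big|\precsim\sigma^{(3-|\mu+\nu|)/2}\lr{\xi}^{-|\nu|-|\be|}$, and using $\sqrt{\lambda}\geq\sqrt{c}\,M^{1/2}\sigma^{1/2}\lr{\xi}^{-1/2}$ (from $\lambda\geq cM\sigma\lr{\xi}^{-1}$) as a lower bound on the right, the inequality collapses after cancellation to $(\sigma\lr{\xi}M^{-1})^{(1-|\mu+\nu|)/2}\precsim1$, which holds since $\sigma\lr{\xi}\geq M$ and the exponent is $\leq0$.

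The one substantive point is the case $\mu=\nu=0$, i.e. the Glaeser-type estimate $\big|\dif_x^{\al}\dif_{\xi}^{\be}\lambda\big|\precsim\sqrt{\sigma\lambda}\,\lr{\xi}^{-|\be|}$ for $|\al+\be|=1$, which I would extract from $\lambda\geq0$ together with $\big|\dif_x^{\al}\dif_{\xi}^{\be}\lambda\big|\precsim\sigma\lr{\xi}^{-|\be|}$ for $|\al+\be|=2$. Fix the $x_j$-direction (the $\xi_j$-direction is handled identically, with $\lr{\xi}^{-1}$ inserted and $\lr{\xi+se_j}\simeq\lr{\xi}$ on the $\xi_j$-interval used below). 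The first thing to establish is that $\sigma$ stays comparable to $\sigma(z)$ along the $x_j$-interval of half-length $\sigma(z)^{1/2}$ about $z=(x,\xi)$; this interval may exceed a $g$-ball, so I cannot simply invoke $g$-slow-variation of $\sigma$, and instead argue by hand: in $\sigma=t+\rho+M\lr{\xi}^{-1}$ only $\rho$ moves, and \eqref{eq:rho:Glae} with $\rho\in S(M^{-4},G)$ (whence $|\dif_{x_j}^2\rho|\precsim1$) give $|\rho(x+se_j,\xi)-\rho(x,\xi)|\precsim\sqrt{\rho(z)}\,|s|+|s|^2\precsim\sigma(z)$ for $|s|\leq\sigma(z)^{1/2}$. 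Consequently $|\dif_{x_j}^2\lambda|\precsim\sigma(z)$ on this interval, and Taylor-expanding $0\leq\lambda(x\pm se_j,\xi)$ to second order and subtracting yields $|\dif_{x_j}\lambda(z)|\leq\lambda(z)/|s|+C\sigma(z)|s|$ for $0<|s|\leq\sigma(z)^{1/2}$. Minimising in $|s|$: either the minimiser $\asymp\sqrt{\lambda(z)/\sigma(z)}$ lies in the admissible range, giving $|\dif_{x_j}\lambda(z)|\precsim\sqrt{\sigma(z)\lambda(z)}$, or it does not — which by the bound $\lambda\precsim\sigma^2$ contained in $\lambda\in\co(\sigma^2)$ forces $\lambda(z)\simeq\sigma(z)^2$ — and then taking $|s|=\sigma(z)^{1/2}$ gives $|\dif_{x_j}\lambda(z)|\precsim\lambda(z)\sigma(z)^{-1/2}\precsim\sqrt{\sigma(z)\lambda(z)}$, again via $\lambda\precsim\sigma^2$.

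Finally, for the clause $\lambda\in S(\lambda,g)$: from $\lambda\geq cM\sigma\lr{\xi}^{-1}$ one has $\sqrt{\sigma}\sqrt{\lambda}\,\lr{\xi}^{-|\be|}\precsim\lambda\,M^{-1/2}\lr{\xi}^{(|\al|-|\be|)/2}$ when $|\al+\be|=1$, so the first-order derivatives of $\lambda$ already obey the $S(\lambda,g)$ bound, while derivatives of order $\geq2$ are bounded just as in the routine step of the first paragraph (now with $\lambda$ in place of $\sqrt{\sigma}\sqrt{\lambda}$). I expect the Glaeser step — specifically, certifying that $\sigma$ is almost constant on the enlarged interval of length $\sigma^{1/2}$, rather than only on a $g$-ball — to be the only delicate part; everything else is bookkeeping with the $g$-weights.
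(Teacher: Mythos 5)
Your proof is correct and follows essentially the same route as the paper: a local Glaeser-type inequality supplies the first-order derivative bound $|\dif_x^{\al}\dif_{\xi}^{\be}\lambda|\precsim\sqrt{\sigma\lambda}\,\lr{\xi}^{-|\be|}$, and higher-order derivatives are handled by direct bookkeeping with $\lambda\in\mathcal{C}(\sigma^2)$, $\lambda\geq cM\sigma\lr{\xi}^{-1}$ and $\sigma\geq M\lr{\xi}^{-1}$. The paper compresses the first step to a one-line invocation of ``the Glaeser's inequality,'' whereas you correctly observe that the relevant interval of half-length $\sim\sigma(z)^{1/2}$ may exceed a $g$-ball and so supply the needed verification — via \eqref{eq:rho:Glae} and $\rho\in S(M^{-4},G)$ — that $\sigma$ remains comparable there; this fills in a detail the paper leaves implicit, but it is the same argument, not a different one.
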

\begin{proof}
From $\lambda\in {\mathcal C}(\sigma^2)$ we have $|\lr{\xi}^{|\beta|}\dif_x^{\alpha}\dif_{\xi}^{\beta}\lambda|\leq C\, \sigma$ for $|\alpha+\beta|=2$. Since $\lambda\geq 0$, thanks to the Glaeser's inequality one has $
\big|\dif_x^{\alpha}\dif_{\xi}^{\beta}\lambda\big|\leq C'\,\sqrt{\sigma}\,\sqrt{\lambda}\,\lr{\xi}^{-|\beta|}$ for $|\alpha+\beta|=1$. 
For $|\al'+\be'|\geq 1$  note that 
\begin{align*}
\big|\dif_x^{\al'}\dif_{\xi'}^{\be'}\big(\dif_x^{\al}\dif_{\xi}^{\be}\lambda\big)\big|
\precsim \sigma^{1-(|\al'+\be'|-1)/2}\lr{\xi}^{-|\be|}\lr{\xi}^{-|\be'|}\\
\precsim \sigma M^{-|\al'+\be'|/2}M^{1/2}\lr{\xi}^{-1/2}\lr{\xi}^{(|\al'|-|\be'|)/2}\lr{\xi}^{-|\be|}\\
\precsim \sqrt{\sigma}M^{-|\al'+\be'|/2}\sqrt{\lambda}\,\lr{\xi}^{(|\al'|-|\be'|)/2}\lr{\xi}^{-|\be|}
\end{align*}
because $\lambda\geq cM\sigma\lr{\xi}^{-1}$ and $\sigma\geq M\lr{\xi}^{-1}$ which proves the first assertion. Noting $
\sqrt{\sigma}\lr{\xi}^{-|\be|}\leq CM^{-1/2}\sqrt{\lambda}\,\lr{\xi}^{(|\al|-|\be|)/2}
$ 
it is clear that $\lambda\in S(\lambda, g)$.
\end{proof}
\begin{lem}
\label{lem:a:Sg} Assume that $\lambda\in {\mathcal C}(\sigma)$ and $\lambda\geq cM\lr{\xi}^{-1}$ with some $c>0$. Then $\lambda\in S(\lambda, g)$. If $\lambda\in {\mathcal C}(1)$ and $\lambda\geq c$ with some $c>0$. Then $\lambda\in S(\lambda, g)$.
\end{lem}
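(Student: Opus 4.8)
The plan is to run the argument of Lemma \ref{lem:lam:Sg} essentially verbatim, shifting every power of $\sigma$ down by one and replacing the hypothesis $\lambda\geq cM\sigma\lr{\xi}^{-1}$ by $\lambda\geq cM\lr{\xi}^{-1}$ (respectively $\lambda\geq c$). Since Lemma \ref{lem:a:ipan} already gives that $\lambda$ is an admissible weight for $g$, the space $S(\lambda, g)$ is well defined and only the derivative estimates remain to be checked.

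First I would treat the case $\lambda\in{\mathcal C}(\sigma)$, $\lambda\geq cM\lr{\xi}^{-1}$. From $\lambda\in{\mathcal C}(\sigma)$ one has $|\lr{\xi}^{|\be|}\dif_x^{\al}\dif_{\xi}^{\be}\lambda|\precsim 1$ for $|\al+\be|=2$, so, $\lambda$ being smooth (by the hypothesis $\lambda\in{\mathcal C}(\sigma)$) and nonnegative (since $\lambda\geq cM\lr{\xi}^{-1}>0$), Glaeser's inequality yields $|\dif_x^{\al}\dif_{\xi}^{\be}\lambda|\precsim \sqrt{\lambda}\,\lr{\xi}^{-|\be|}$ for $|\al+\be|=1$. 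Next I would show that, for $|\al+\be|=1$, one in fact has $\dif_x^{\al}\dif_{\xi}^{\be}\lambda\in S(\sqrt{\lambda}\,\lr{\xi}^{-|\be|}, g)$: differentiating $\dif_x^{\al}\dif_{\xi}^{\be}\lambda$ further by $\dif_x^{\al'}\dif_{\xi}^{\be'}$ with $|\al'+\be'|\geq 1$ raises the total order to $n=|\al'+\be'|+1\geq 2$, and the raw ${\mathcal C}(\sigma)$ bound together with $\sigma\geq M\lr{\xi}^{-1}$ gives $|\dif_x^{\al+\al'}\dif_{\xi}^{\be+\be'}\lambda|\precsim\sigma^{-(n-2)/2}\lr{\xi}^{-|\be+\be'|}\leq (M^{-1}\lr{\xi})^{(n-2)/2}\lr{\xi}^{-|\be+\be'|}$; the same exponent computation as in Lemma \ref{lem:lam:Sg} turns this into $\precsim\sqrt{\lambda}\,\lr{\xi}^{-|\be|}\,M^{-|\al'+\be'|/2}\lr{\xi}^{(|\al'|-|\be'|)/2}$, the leftover factor being exactly $\sqrt{M\lr{\xi}^{-1}/\lambda}\leq 1/\sqrt{c}$. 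Finally, $\lambda\in S(\lambda, g)$ follows because, for $|\al+\be|=1$, the bound $\sqrt{\lambda}\,\lr{\xi}^{-|\be|}\precsim\lambda\,M^{-1/2}\lr{\xi}^{(|\al|-|\be|)/2}$ holds (again by $\lambda\geq cM\lr{\xi}^{-1}$), and for $|\al+\be|\geq 2$ one reads off the bound from the refined statement just established.

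For the second case, $\lambda\in{\mathcal C}(1)$ and $\lambda\geq c>0$, no bootstrap is needed: $|\dif_x^{\al}\dif_{\xi}^{\be}\lambda|\precsim\lr{\xi}^{-|\be|}$, and since $\lr{\xi}\geq\gamma\geq M^5$ we get $\lr{\xi}^{-|\be|}\precsim M^{-|\al+\be|/2}\lr{\xi}^{(|\al|-|\be|)/2}$, which is $\precsim\lambda\,M^{-|\al+\be|/2}\lr{\xi}^{(|\al|-|\be|)/2}$ by $\lambda\geq c$; hence $\lambda\in S(\lambda, g)$.

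I expect the proof to be essentially bookkeeping: the one substantive point is checking that Glaeser's inequality applies and, more to the point, that the lower bound $\lambda\geq cM\lr{\xi}^{-1}$ is precisely of the right strength to upgrade the Glaeser-type estimates $\sqrt{\lambda}\,\lr{\xi}^{-|\be|}$ to the $S(\lambda, g)$ estimates $\lambda\,M^{-|\al+\be|/2}\lr{\xi}^{(|\al|-|\be|)/2}$ — a factor $\sqrt{\lambda}$ smaller and $\sqrt{M\lr{\xi}^{-1}}$ larger — rather than something weaker; once this is seen, the rest parallels Lemma \ref{lem:lam:Sg} line by line.
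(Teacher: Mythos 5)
Your proposal is exactly what the paper intends: the paper's proof of Lemma \ref{lem:a:Sg} is the single sentence ``It suffices to repeat the proof of Lemma \ref{lem:lam:Sg},'' and your first case carries out that repetition correctly, with the Glaeser step and the exponent bookkeeping done right, and with the correct observation that $M^{1/2}\lr{\xi}^{-1/2}/\sqrt{\lambda}\leq 1/\sqrt{c}$ is precisely what the new lower bound buys.

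There is, however, a slip in your treatment of the second case. You assert that $\lambda\in{\mathcal C}(1)$ gives $|\dif_x^{\al}\dif_{\xi}^{\be}\lambda|\precsim\lr{\xi}^{-|\be|}$, but by Definition \ref{dfn:calC} with $s=0$ the correct bound is $|\dif_x^{\al}\dif_{\xi}^{\be}\lambda|\precsim\sigma^{-|\al+\be|/2}\lr{\xi}^{-|\be|}$, and since $\sigma\leq CM^{-4}$ is small, the factor $\sigma^{-|\al+\be|/2}$ is large, so the bound you wrote is strictly stronger than what the hypothesis grants. The fix is cheap and uses the same mechanism you already used in the first case: from $\sigma\geq M\lr{\xi}^{-1}$ one has $\sigma^{-|\al+\be|/2}\lr{\xi}^{-|\be|}\leq M^{-|\al+\be|/2}\lr{\xi}^{(|\al|-|\be|)/2}$, and then $\lambda\geq c$ finishes. (Equivalently, the remark after Definition \ref{dfn:calC} already gives ${\mathcal C}(1)\subset S(1,g)$, and $c\leq\lambda\leq C$ makes $S(1,g)=S(\lambda,g)$.) Your invocation of $\lr{\xi}\geq\gamma\geq M^5$ is not the relevant input here; it is the $\sigma\geq M\lr{\xi}^{-1}$ estimate that does the work, just as in the bootstrap of the first case.
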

\begin{proof}It suffices to repeat the proof of Lemma \ref{lem:lam:Sg}.
\end{proof}
\begin{cor}
\label{cor:latoa}For $s\in \R$ we have $\lambda_j^s\in S(\lambda_j^s,g)$, $j=1, 2, 3$.
\end{cor}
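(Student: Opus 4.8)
The plan is to treat the three eigenvalues $\lambda_1,\lambda_2,\lambda_3$ separately, fitting each into the appropriate one of Lemmas~\ref{lem:lam:ipan}--\ref{lem:a:Sg}, and then to pass from $\lambda_j$ to $\lambda_j^s$ by the same Fa\`a di Bruno type expansion already used in the proof of Lemma~\ref{lem:hyoka:rho:s}.

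First I would collect, for each $\lambda_j$, the symbol class and a lower bound. Lemma~\ref{lem:lam:bibun} gives $\lambda_1\in{\mathcal C}(\sigma^2)$, $\lambda_2\in{\mathcal C}(\sigma)$ and $\lambda_3\in{\mathcal C}(1)$. For the lower bounds, Proposition~\ref{pro:Skon} yields directly $\lambda_2\geq(2-Ka_M)a_M\geq c\,M\lr{\xi}^{-1}$ for $M$ large (since $a_M=e\sigma$ with $1/C\leq e\leq C$ and $\sigma\geq M\lr{\xi}^{-1}$) and $\lambda_3\geq 3$. For $\lambda_1$ I would combine the bound $\lambda_1\geq\Delta_M/(6a_M+2a_M^2+2a_M^3)$ of Proposition~\ref{pro:Skon}, together with $6a_M+2a_M^2+2a_M^3\leq 7a_M$ (valid since $a_M\in S(M^{-4},G)$ is small for $M$ large), and the second estimate of \eqref{eq:DM:to:aM} in Proposition~\ref{pro:ext:matome}, namely $\Delta_M/a_M\geq c\,M\lr{\xi}^{-1}a_M$; this gives $\lambda_1\geq (c/7)M\lr{\xi}^{-1}a_M\geq c'\,M\sigma\lr{\xi}^{-1}$.

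With these facts in hand the conclusions are immediate: Lemma~\ref{lem:lam:ipan} shows $\lambda_1$ is an admissible weight for $g$ and Lemma~\ref{lem:lam:Sg} gives $\lambda_1\in S(\lambda_1,g)$; the first parts of Lemmas~\ref{lem:a:ipan} and \ref{lem:a:Sg} show $\lambda_2$ is an admissible weight for $g$ and $\lambda_2\in S(\lambda_2,g)$; the second parts of the same two lemmas handle $\lambda_3$. Since each $\lambda_j$ is an admissible weight for $g$, so is $\lambda_j^s$ for every $s\in\R$, the slow-variation and $\sigma,g$-temperateness inequalities defining admissibility being merely raised to the power $|s|$; hence $S(\lambda_j^s,g)$ is a well-defined symbol class. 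To see $\lambda_j^s\in S(\lambda_j^s,g)$ I would expand, as in the proof of Lemma~\ref{lem:hyoka:rho:s},
\[
\dif_x^{\al}\dif_{\xi}^{\be}\lambda_j^s=\sum C_{\al^{(i)},\be^{(i)}}\,\lambda_j^{s-k}\,\big(\dif_x^{\al^{(1)}}\dif_{\xi}^{\be^{(1)}}\lambda_j\big)\cdots\big(\dif_x^{\al^{(k)}}\dif_{\xi}^{\be^{(k)}}\lambda_j\big),
\]
a finite sum over $k\geq 1$ and decompositions $\sum_i\al^{(i)}=\al$, $\sum_i\be^{(i)}=\be$ with $|\al^{(i)}+\be^{(i)}|\geq 1$. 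Since $\lambda_j\in S(\lambda_j,g)$ means $|\dif_x^{\al^{(i)}}\dif_{\xi}^{\be^{(i)}}\lambda_j|\precsim\lambda_j\,M^{-|\al^{(i)}+\be^{(i)}|/2}\lr{\xi}^{(|\al^{(i)}|-|\be^{(i)}|)/2}$, the product of the $k$ factors is $\precsim\lambda_j^{k}\,M^{-|\al+\be|/2}\lr{\xi}^{(|\al|-|\be|)/2}$, and multiplication by $\lambda_j^{s-k}>0$ gives $|\dif_x^{\al}\dif_{\xi}^{\be}\lambda_j^s|\precsim\lambda_j^s\,M^{-|\al+\be|/2}\lr{\xi}^{(|\al|-|\be|)/2}$, which is precisely $\lambda_j^s\in S(\lambda_j^s,g)$.

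The only step needing genuine care is the lower bound $\lambda_1\geq c'\,M\sigma\lr{\xi}^{-1}$, which is what links the smallest eigenvalue of the B\'ezout matrix to the regularized discriminant $\Delta_M$ via \eqref{eq:DM:to:aM}; once this is granted, everything else is bookkeeping with already established estimates and the standard stability of admissible weights and of the classes $S(m,g)$ under taking real powers.
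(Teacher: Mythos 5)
Your argument is correct and matches the route the paper intends: you derive $\lambda_1 \geq c'M\sigma\lr{\xi}^{-1}$ from Propositions \ref{pro:Skon} and \ref{pro:ext:matome} (exactly as the paper does in the proof of Lemma \ref{lem:lam:1b}), fit $\lambda_1,\lambda_2,\lambda_3$ respectively into Lemmas \ref{lem:lam:ipan}/\ref{lem:lam:Sg} and the two halves of Lemmas \ref{lem:a:ipan}/\ref{lem:a:Sg}, and then pass to real powers by the same Fa\`a di Bruno computation the paper uses in Lemma \ref{lem:hyoka:rho:s}. Nothing to add.
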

Define
\[
\kappa=\frac{1}{t}+\frac{1}{\omega}=\frac{t+\omega}{t\,\omega},\quad t>0.
\]
\begin{lem}
\label{lem:kapa}$\kappa$ is an admissible weight for $g$ and $\kappa^s\in S(\kappa^s, g)$ for $s\in\R$.
\end{lem}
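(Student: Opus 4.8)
The plan is to reduce the claim about $\kappa=1/t+1/\omega=(t+\omega)/(t\omega)$ to properties already established for $\omega$ and for the constant weight. First I would observe that the function $s\mapsto 1/s$ and, more generally, the combination producing $\kappa$ is a smooth function of $\omega$ alone (with $t$ treated as a fixed parameter in $[0,T]$, or $[0,M^{-4}]$, exactly as in the standing convention of Sections \ref{sec:kyori}--\ref{sec:metric}). Since $\omega\in S(\omega,g)$ by Lemma \ref{lem:Phi:kajyu} and $\omega$ is an admissible weight for $g$, and since $t$ and $1/t$ are constants relative to $(x,\xi)$, the products and sums $t\omega$, $t+\omega$, and their reciprocals are handled by the standard stability properties of admissible weights: a product of admissible weights is admissible, a sum of admissible weights is admissible, and the reciprocal of an admissible weight is admissible (see \cite[Chapter 18]{Hobook}). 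Concretely, $t\omega$ is admissible because it is the product of the constant admissible weight $t$ and the admissible weight $\omega$; $t+\omega$ is admissible as a sum; hence $\kappa=(t+\omega)/(t\omega)$ is admissible, being a quotient of admissible weights.

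For the symbol estimate $\kappa^s\in S(\kappa^s,g)$, I would write $\kappa^s=t^{-s}(t+\omega)^s\omega^{-s}$ and treat each factor. The factor $t^{-s}$ is a constant in $(x,\xi)$ and so lies in $S(t^{-s},g)$ trivially. For $\omega^{-s}$ we already have $\omega^r\in S(\omega^r,g)$ for all $r\in\R$ by Corollary \ref{cor:dif:omega}. For $(t+\omega)^s$, I would note that $t+\omega$ is itself in $S(t+\omega,g)$: its $(x,\xi)$-derivatives coincide with those of $\omega$, which by Lemma \ref{lem:dif:Phi:seimitu}-type bounds (or simply Corollary \ref{cor:dif:omega}) are $O(\omega\cdot M^{-|\alpha+\beta|/2}\lr{\xi}^{(|\alpha|-|\beta|)/2})\le O((t+\omega)\cdot M^{-|\alpha+\beta|/2}\lr{\xi}^{(|\alpha|-|\beta|)/2})$ since $\omega\le t+\omega$. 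Then, because $t+\omega$ is an admissible weight, the composition lemma for symbols (a smooth function of a symbol which is an admissible weight, here $u\mapsto u^s$ applied to $u=t+\omega$) gives $(t+\omega)^s\in S((t+\omega)^s,g)$. Multiplying the three symbol classes, $\kappa^s\in S(t^{-s}(t+\omega)^s\omega^{-s},g)=S(\kappa^s,g)$.

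The only point requiring a little care is the behavior as $t\downarrow 0$: the factor $1/t$ blows up, so one must make sure all estimates are uniform down to $t=0$ in the $(x,\xi)$-variables. This is automatic here, however, because $\kappa$ depends on $(x,\xi)$ only through $\omega$, and every constant in the slow-variation/temperance inequalities for $\omega$ and in the symbol bounds of Corollary \ref{cor:dif:omega} is independent of $t$ (and of $M,\gamma$); the $t$-dependent prefactors $t^{-s}$, $(t+\omega)^s$ simply ride along as $(x,\xi)$-constants or as functions of the already-controlled weight $\omega$. Thus the main (and essentially only) obstacle is bookkeeping: verifying that ``smooth function of an admissible weight is an admissible weight / stays in the associated symbol class'' applies verbatim with the parameter $t$ frozen, which is exactly the convention adopted at the start of Section \ref{sec:metric}. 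With that convention in force the lemma follows immediately from Lemma \ref{lem:Phi:kajyu}, Corollary \ref{cor:dif:omega}, and the elementary stability properties of admissible weights and their symbol classes.
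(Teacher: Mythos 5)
Your proposal is correct, but the decomposition you choose is a bit more roundabout than the paper's. The paper works directly with the additive form $\kappa=t^{-1}+\omega^{-1}$: since $t^{-1}$ is constant in $(x,\xi)$, one has $\dif_x^{\al}\dif_{\xi}^{\be}\kappa=\dif_x^{\al}\dif_{\xi}^{\be}\omega^{-1}$ for $|\al+\be|\geq 1$, so $\omega^{-1}\in S(\omega^{-1},g)$ together with the trivial inequality $\omega^{-1}\leq\kappa$ immediately gives $\kappa\in S(\kappa,g)$; admissibility of $\kappa$ is the sum of a constant and the admissible weight $\omega^{-1}$. You instead go through the multiplicative factorization $\kappa^s=t^{-s}(t+\omega)^s\omega^{-s}$, prove $(t+\omega)\in S(t+\omega,g)$ (again using that derivatives kill the $t$-term), invoke the composition lemma for $(t+\omega)^s$, and then multiply the three symbol classes. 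Both routes exploit exactly the same observation — the $(x,\xi)$-derivatives see only $\omega$ — but the additive viewpoint is shorter: one comparison $\omega^{-1}\leq\kappa$ replaces the product rule for symbol classes, the composition lemma, and the admissibility of $t+\omega$ and $t\omega$ and their quotient. Your argument also implicitly (and correctly) treats the passage from $\kappa\in S(\kappa,g)$ admissible to $\kappa^s\in S(\kappa^s,g)$ as standard, which is what the paper does silently as well. No gaps; just a heavier machine for the same job.
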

\begin{proof}
Since $\omega^{-1}$ is admissible for $g$ it is clear that so is $\kappa=t^{-1}+\omega^{-1}$. Noting that $\omega^{-1}\in S(\omega^{-1},g)$ and $\omega^{-1}\leq \kappa$ it is also clear that 
\[
\big|\dif_x^{\al}\dif_{\xi}^{\be}\kappa\big|=\big|\dif_x^{\al}\dif_{\xi}^{\be}\omega^{-1}\big|\precsim M^{-|\al+\be|/2}\kappa\lr{\xi}^{(|\al|-|\be|)/2}
\]
for $|\al+\be|\geq 1$  which proves $\kappa\in S(\kappa, g)$.
\end{proof}
\begin{lem}
\label{lem:kapa:b}One has
\[
\dif_x^{\al}\dif_{\xi}^{\be}\kappa^{s}\in S\big(M^{-(|\al+\be|-1)/2}\kappa^s\omega^{-1}\rho^{1/2}\lr{\xi}^{-1/2+(|\al|-\be|)/2},g\big),\;\;|\al+\be|\geq 1.
\]
\end{lem}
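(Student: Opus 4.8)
The plan is to reduce the assertion to the fine estimates already obtained for $\omega^{-1}$ together with the multiplicativity of the symbol classes $S(\cdot,g)$. First I would observe that $\kappa=t^{-1}+\omega^{-1}$, where $t$ is a parameter and all the derivatives in question are in $x,\xi$, so that $\dif_x^{\al}\dif_{\xi}^{\be}\kappa=\dif_x^{\al}\dif_{\xi}^{\be}\omega^{-1}$ for $|\al+\be|\geq 1$; applying Lemma \ref{lem:hyoka:omega} with $s=-1$ (and $\omega^{-1}\omega^{-1}=\omega^{-2}$) then gives
\[
\dif_x^{\al}\dif_{\xi}^{\be}\kappa\in S\big(M^{-(|\al+\be|-1)/2}\omega^{-2}\rho^{1/2}\lr{\xi}^{-1/2}\lr{\xi}^{(|\al|-|\be|)/2},g\big),\quad |\al+\be|\geq 1.
\]
Next, exactly as in the proof of Lemma \ref{lem:hyoka:rho:s}, I would expand
\[
\dif_x^{\al}\dif_{\xi}^{\be}\kappa^s=\sum C_{\al^{(j)}\be^{(j)}}\,\kappa^s\,\big(\dif_x^{\al^{(1)}}\dif_{\xi}^{\be^{(1)}}\kappa/\kappa\big)\cdots\big(\dif_x^{\al^{(k)}}\dif_{\xi}^{\be^{(k)}}\kappa/\kappa\big)
\]
with $\sum_j\al^{(j)}=\al$, $\sum_j\be^{(j)}=\be$, $|\al^{(j)}+\be^{(j)}|\geq 1$ and $1\leq k\leq|\al+\be|$.

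Then I would estimate each factor separately. Since $\kappa^{-1}\in S(\kappa^{-1},g)$ and $\kappa^s\in S(\kappa^s,g)$ by Lemma \ref{lem:kapa}, the displayed estimate for $\dif\kappa$ gives
\[
\dif_x^{\al^{(j)}}\dif_{\xi}^{\be^{(j)}}\kappa/\kappa\in S\big(\kappa^{-1}M^{-(|\al^{(j)}+\be^{(j)}|-1)/2}\omega^{-2}\rho^{1/2}\lr{\xi}^{-1/2}\lr{\xi}^{(|\al^{(j)}|-|\be^{(j)}|)/2},g\big),
\]
and, using $\kappa^{-1}\leq\omega$ so that $\kappa^{-1}\omega^{-2}\leq\omega^{-1}$, I may replace $\kappa^{-1}\omega^{-2}$ by $\omega^{-1}$. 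Forming the product over $j=1,\dots,k$, using $\sum_j|\al^{(j)}+\be^{(j)}|=|\al+\be|$, $\sum_j(|\al^{(j)}|-|\be^{(j)}|)=|\al|-|\be|$, and the elementary bound $\omega^{-1}\rho^{1/2}\lr{\xi}^{-1/2}\leq M^{-1/2}$ (already exploited in the proof of Lemma \ref{lem:hyoka:omega}) for $k-1$ of the $k$ factors, one arrives at
\[
\big(\dif_x^{\al^{(1)}}\dif_{\xi}^{\be^{(1)}}\kappa/\kappa\big)\cdots\big(\dif_x^{\al^{(k)}}\dif_{\xi}^{\be^{(k)}}\kappa/\kappa\big)\in S\big(M^{-(|\al+\be|-1)/2}\omega^{-1}\rho^{1/2}\lr{\xi}^{-1/2}\lr{\xi}^{(|\al|-|\be|)/2},g\big),
\]
the exponent of $M$ being $-(|\al+\be|-k)/2-(k-1)/2=-(|\al+\be|-1)/2$. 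Multiplying by $\kappa^s\in S(\kappa^s,g)$ and summing over the finitely many terms yields the claimed membership.

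The computation is essentially routine once the above scheme is set up; the one point requiring care is the bookkeeping of the powers of $M$, namely checking that the sharp exponent $-(|\al+\be|-1)/2$ is recovered rather than the naive $-|\al+\be|/2$. This is exactly where the $+1$ saved in each first-derivative estimate for $\omega^{-1}$ from Lemma \ref{lem:hyoka:omega} gets balanced against the $k-1$ surplus factors $\omega^{-1}\rho^{1/2}\lr{\xi}^{-1/2}\leq M^{-1/2}$ produced by all but one of the $k$ differentiated factors, so no single step is a genuine obstacle.
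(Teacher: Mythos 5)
Your proof is correct and follows the same route the paper intends: reduce to the estimate for $\omega^{-1}$ from Lemma~\ref{lem:hyoka:omega} (noting $\dif_x^{\al}\dif_{\xi}^{\be}\kappa=\dif_x^{\al}\dif_{\xi}^{\be}\omega^{-1}$ for $|\al+\be|\geq 1$), use $\kappa^{-1}\leq\omega$ to convert $\omega^{-2}$ into $\kappa\,\omega^{-1}$, and propagate this through the Leibniz-type expansion of $\kappa^s$. The paper's own proof is essentially a two-line sketch whose first line, ``$\dif_x^{\al}\dif_{\xi}^{\be}\kappa^{s}=\kappa^{s-1}\dif_x^{\al}\dif_{\xi}^{\be}\kappa$,'' is literally valid only for $|\al+\be|=1$; your expansion with the bookkeeping $-(|\al+\be|-k)/2-(k-1)/2=-(|\al+\be|-1)/2$ and the bound $\omega^{-1}\rho^{1/2}\lr{\xi}^{-1/2}\leq M^{-1/2}$ for the surplus factors is exactly the content that makes that sketch rigorous. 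No gap.
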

\begin{proof}Since $\dif_x^{\al}\dif_{\xi}^{\be}\kappa^s=\kappa^{s-1}\dif_x^{\al}\dif_{\xi}^{\be}\kappa$ it is enough to show the case $s=1$. The proof for the case $s=1$ follows  easily from Lemma \ref{lem:hyoka:omega}. 
\end{proof}
\begin{lem}
\label{lem:daiji}
Denote ${\bar\varep }=\sqrt{3}/{\bar e}\,{\bar c}^{1/2}=4\sqrt{6}/{\bar e}$. There is $C>0$ such that
\[
\frac{1}{\kappa \lambda_1}\leq {\bar \varep}^2(1+CM^{-4})\kappa,\qquad \frac{1}{\sigma^2 \kappa}\leq \kappa.
\]
\end{lem}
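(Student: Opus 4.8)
The plan is to reduce both inequalities to elementary comparisons, invoking for the first one the lower bounds for $\lambda_1$ already available from Propositions \ref{pro:Skon} and \ref{pro:ext:matome}.

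I would first dispose of the second inequality, which is purely elementary. Since $t,\sigma,\kappa>0$, the estimate $1/(\sigma^2\kappa)\leq\kappa$ is equivalent to $\sigma\kappa\geq 1$, hence to $\sigma\geq 1/\kappa=t\omega/(t+\omega)$; and this holds because $t\omega/(t+\omega)\leq t$ while $\sigma=t+\rho+M\lr{\xi}^{-1}\geq t$ as $\rho\geq 0$.

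For the first inequality I would rewrite $1/(\kappa\lambda_1)\leq{\bar\varep}^2(1+CM^{-4})\kappa$ as $\lambda_1\kappa^2\geq{\bar\varep}^{-2}(1+CM^{-4})^{-1}$. Since $\kappa=1/t+1/\omega\geq\max\{1/t,1/\omega\}$, one has $\kappa^2\geq 1/\min\{t^2,\omega^2\}$, so it suffices to prove $\lambda_1\geq{\bar\varep}^{-2}(1+CM^{-4})^{-1}\min\{t^2,\omega^2\}$. Combining $\lambda_1\geq\Delta_M/(6a_M+2a_M^2+2a_M^3)$ from Proposition \ref{pro:Skon} with the first bound of \eqref{eq:DM:to:aM} (recalling ${\bar\nu}=1$, so that $(t-\psi)^2+{\bar\nu}M\rho\lr{\xi}^{-1}=\omega^2$), namely $\Delta_M/a_M\geq\frac{\tilde e}{2e}{\bar c}\min\{t^2,\omega^2\}$, one gets
\[
\lambda_1\geq\frac{\tilde e\,{\bar c}}{2e\,(6+2a_M+2a_M^2)}\,\min\{t^2,\omega^2\}.
\]
Now $\sigma\leq CM^{-4}$ (since $t\leq M^{-4}$, $\rho\in S(M^{-4},G)$ and $M\lr{\xi}^{-1}\leq M^{-4}$ by $\lr{\xi}\geq\gamma\geq M^5$), whence $a_M=e\sigma\leq CM^{-4}$ and $6+2a_M+2a_M^2\leq 6(1+CM^{-4})$; also $\tilde e/(2e)$ differs from $2{\bar e}^2$ only by a localization factor close to $1$ (Remark \ref{rem:fukusyu}). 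Recalling ${\bar c}=1/32$ and ${\bar\varep}^2=3/({\bar e}^2{\bar c})$, i.e. $\frac{{\bar e}^2{\bar c}}{3}={\bar\varep}^{-2}$, the coefficient on the right is ${\bar\varep}^{-2}(1+O(M^{-4}))$, and rearranging (absorbing the reciprocal of $1-CM^{-4}$ into $1+C'M^{-4}$ for $M$ large) gives the claimed bound.

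The only point requiring care is the bookkeeping of the constant: one has to verify that the combined multiplicative errors coming from $\tilde e/(2e)$, from $1/(6+2a_M+2a_M^2)$, and from replacing $\min\{t^2,\omega^2\}$ by $\kappa^{-2}$ remain within $1+CM^{-4}$ of the explicit value ${\bar\varep}^{-2}=\frac{{\bar e}^2{\bar c}}{3}$; this is precisely where the specific choice ${\bar c}=1/32$ of Proposition \ref{pro:bound:Dis} (strictly below the constant produced there) is used. Everything else is a one-line inequality.
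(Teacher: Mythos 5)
Your proof is correct and follows essentially the same route as the paper: both deduce from Propositions \ref{pro:Skon} and \ref{pro:ext:matome} that $\lambda_1\geq{\bar\varep}^{-2}(1-CM^{-4})\min\{t^2,\omega^2\}$ and then combine this with $\sigma\geq t$. The one small stylistic difference is that where the paper splits into the two cases $\omega^2\geq t^2$ and $t^2\geq\omega^2$ and manipulates $\kappa t^2$, $\kappa\omega^2$ separately, you fold both into the single observation $\kappa^2\geq\max\{t^{-2},\omega^{-2}\}=1/\min\{t^2,\omega^2\}$, which is a clean simplification that does not change the substance.
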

\begin{proof}In view of Propositions \ref{pro:ext:matome} and \ref{pro:Skon} one sees
\[
\lambda_1\geq (1/{\bar \varep}^{2})(1-CM^{-4})\min{\big\{t^2, \omega^2\big\}}.
\]
Denote $c={\bar \varep}^2(1-CM^{-4})^{-1}$. If $\omega^2\geq t^2$ and hence $\lambda_1\geq t^2/c$ then $1/\lambda_1\leq c/t^2$ which shows that
\[
\frac{1}{\kappa \lambda_1}\leq \frac{c}{\kappa t^2}=\frac{c\,t \,\omega}{(t+\omega)t^2}=\frac{c\,\omega}{(t+\omega)t}\leq \frac{c(t+\omega)}{t\omega}=c\kappa.
\]
If $t^2\geq \omega^2$ and hence $\lambda_1\geq \omega^2/c$ then $1/\lambda_1\leq c/\omega^2$ and hence
\[
\frac{1}{\kappa \lambda_1}\leq \frac{c}{\kappa \omega^2}=\frac{c\, t\, \omega}{(t+\omega)\omega^2}=\frac{c\, t}{(t+\omega)\omega}\leq \frac{c(t+\omega)}{t\,\omega}=c\kappa
\]
thus the first assertion. To show the second assertion it suffices to note $\sigma \geq t$ and then $\sigma^2(t+\omega)^2\geq t^2(t+\omega)^2\geq t^2\omega^2$.
\end{proof}
%

\section{Lower bounds of $\op{\lambda_j}$}
\label{sec:bound:lam}

\subsection{Some preliminary lemmas}

Introduce a metric ${\bar g}=\lr{\xi}|dx|^2+\lr{\xi}^{-1}|d\xi|^2$ independent of $M$ so that $g=M^{-1}\,{\bar g}$. We start with
\begin{lem}
\label{lem:gyaku}
Let $m$ be an admissible weight for $g$ and $p\in S(m, g)$ satisfy
$p\geq c\,m$ with some constant $c>0$. Then $p^{-1}\in S(m^{-1},g)$ and there exist $k, {\tilde k}\in S(M^{-1},g)$ such that
\begin{align*}
p\#p^{-1}\#(1+k)=1,\;\;(1+k)\#p\#p^{-1}=1,\;\;p^{-1}\#(1+k)\#p=1,\\
p^{-1}\#p\#(1+{\tilde k})=1,\;\;(1+{\tilde k})\#p^{-1}\#p=1,\;\;p\#(1+{\tilde k})\#p^{-1}=1.
\end{align*}
\end{lem}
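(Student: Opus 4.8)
The plan is to use that the Planck function of $g$ equals $M^{-1}$, so that the Weyl symbol calculus produces remainders which gain a power of $M^{-1}$, and then to invert the resulting ``$1+\text{small}$'' factor by a $\#$-Neumann series.

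First I would record that $p^{-1}\in S(m^{-1},g)$. Since $p\geq c\,m>0$ and $p\in S(m,g)$, differentiating $1/p$ by the Leibniz and Fa\`a di Bruno formulas expresses $\dif_x^{\al}\dif_{\xi}^{\be}(1/p)$ as $p^{-1}$ times a finite sum of products of factors of the form $\big(\dif_x^{\al'}\dif_{\xi}^{\be'}p\big)/p$; bounding each such factor by means of $p\in S(m,g)$ together with $p^{-1}\leq c^{-1}m^{-1}$ gives $p^{-1}\in S(m^{-1},g)$, with seminorms uniform in $M,\gamma$ because $g$ is slowly varying and temperate uniformly in these parameters (Section~\ref{sec:metric}). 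Next, from $g_z^{\sigma}=M^2g_z$ one has $h_g:=\sup_{w\neq 0}\sqrt{g_z(w)/g_z^{\sigma}(w)}\equiv M^{-1}$, so for $a\in S(m_1,g)$, $b\in S(m_2,g)$ the composition satisfies $a\#b\in S(m_1m_2,g)$ and $a\#b-ab\in S(m_1m_2M^{-1},g)$ (more generally the $j$-th term of the asymptotic expansion lies in $S(m_1m_2M^{-j},g)$), all estimates uniform in $M\geq1$, $\gamma\geq M^5$ by \cite{Hobook}. Applying this to $(p,p^{-1})$ and to $(p^{-1},p)$ yields
\[
p\#p^{-1}=1+r,\qquad p^{-1}\#p=1+r',\qquad r,\ r'\in S(M^{-1},g).
\]

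The core step is then to invert $1+r$ and $1+r'$ for $\#$. Taking $M\geq M_0$ large, I would form the $\#$-Neumann series $\sum_{j\geq0}(-1)^j r^{\#j}$ and $\sum_{j\geq0}(-1)^j(r')^{\#j}$; the $j$-th term lies in $S(M^{-j},g)$, so for $M$ large these converge and their sums are $1+k$, $1+\tilde k$ with $k,\tilde k\in S(M^{-1},g)$, and they are two-sided $\#$-inverses of $1+r$ and $1+r'$. This is the part that needs care: one must check that the Neumann series converges in the symbol topology of $S(M^{-1},g)$ with bounds independent of $M$ and $\gamma$. That is precisely where the uniform (in $\gamma\geq M^5\geq1$) slow variation and temperateness of $g$ established in Section~\ref{sec:metric} enters, through the standard spectral-invariance / Beals-type argument for the algebra $\op{S(1,g)}$ (\cite{Hobook}); I expect this to be the main obstacle in writing out the full proof.

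Finally I would assemble the six identities from associativity of $\#$ and uniqueness of inverses. Associativity gives $p\#p^{-1}\#(1+k)=(1+r)\#(1+k)=1$ and $(1+k)\#p\#p^{-1}=(1+k)\#(1+r)=1$, and symmetrically $p^{-1}\#p\#(1+\tilde k)=(1+\tilde k)\#p^{-1}\#p=1$. On the operator side, the first of these exhibits $\op{p^{-1}}\op{1+k}=\op{p^{-1}\#(1+k)}$ as a right inverse of $\op{p}$, while $p^{-1}\#p\#(1+\tilde k)=1$ exhibits $\op{1+\tilde k}\op{p^{-1}}=\op{(1+\tilde k)\#p^{-1}}$ as a left inverse of $\op{p}$; hence $\op{p}$ is invertible and both one-sided inverses coincide with $\op{p}^{-1}$. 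Therefore $\op{p^{-1}}\op{1+k}\op{p}=\op{p}^{-1}\op{p}=I$, i.e. $p^{-1}\#(1+k)\#p=1$, and $\op{p}\op{1+\tilde k}\op{p^{-1}}=\op{p}\op{p}^{-1}=I$, i.e. $p\#(1+\tilde k)\#p^{-1}=1$, which completes the list.
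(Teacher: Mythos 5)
Your overall strategy is the one the paper uses: write $p\#p^{-1}=1-r$ with $r\in S(M^{-1},g)$, invert $1-r$ for $\#$, and then assemble the six identities from associativity together with the usual ``left inverse $=$ right inverse'' argument on the operator side. The opening claim $p^{-1}\in S(m^{-1},g)$ and the final paragraph deriving the six identities from the two $\#$-inverses are both correct and complete.

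However, the step you yourself flag as ``the main obstacle'' is exactly where the substance of the lemma lies, and the Neumann-series reasoning you sketch does not close it. Each term $r^{\#\ell}$ does lie in $S(M^{-\ell},g)$, but the $S(1,{\bar g})$-seminorm of $r^{\#\ell}$ of a fixed order $l$ is controlled only by seminorms of $r$ of order growing with $\ell$ and with constants that grow with $\ell$; summing over $\ell$ therefore does not by itself produce an element of $S(M^{-1},g)$ with $\gamma$-uniform seminorms. Likewise, the Beals/spectral-invariance theorem lives in the calculus calibrated to ${\bar g}$ and yields only $k\in S(1,{\bar g})$: it gives neither the overall factor $M^{-1}$ nor the extra $M^{-|\al+\be|/2}$ decay per derivative that membership in $S(M^{-1},g)$ requires. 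The paper closes the gap in two steps that your sketch omits. First, the $L^2$-boundedness theorem gives $\|\op{r}\|\leq CM^{-1}$, so for $M$ large $(1-\op{r})^{-1}$ exists and the Beals argument yields $k=\sum_{\ell\geq1}r^{\#\ell}\in S(1,{\bar g})$ with seminorms bounded independently of $\gamma$, because they depend only on $|r|^{(l')}_{S(1,{\bar g})}$ and the structure constants of ${\bar g}$. Second — and this is the decisive device — one exploits the functional equation $k=r+r\#k$: combined with $r\in S(M^{-1},g)$ and the bound on $|k|^{(l)}_{S(1,{\bar g})}$, a straightforward use of the equation gives $|k|^{(l)}_{S(1,{\bar g})}\leq C_lM^{-1}$, and then an induction on the number of derivatives (Leibniz on $k=r+r\#k$, feeding in the already-obtained gains) upgrades this to the full set of $S(M^{-1},g)$-estimates $\big|\lr{\xi}^{(|\be|-|\al|)/2}\dif_x^{\al}\dif_{\xi}^{\be}k\big|\leq C_{\al\be}M^{-1-|\al+\be|/2}$. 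Without that bootstrap the lemma is not proved.
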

\begin{proof}In this proof every constant is independent of $\gamma\geq 1$ and $M$. It is clear that $p^{-1}\in S(m^{-1},g)$. Write $p\#p^{-1}=1-r$ where $r\in S(M^{-1},g)$.  Since 
 \[
 |r|^{(l)}_{S(1,{\bar g})}=\sup_{|\al+\be|\leq l, (x,\xi)\in\R^{2d}}\big|\lr{\xi}^{(|\be|-|\al|)/2}\dif_x^{\al}\dif_{\xi}^{\be}r\big|\leq C_lM^{-1}
 \]
 from the $L^2$-boundedness theorem (see \cite[Theorem 18.6.3]{Hobook}) we have $\|{\rm op}(r)\|\leq CM^{-1}$. Therefore for large $M$ there exists the inverse $(1-\op{r})^{-1}$ which is given by $1+\sum_{\ell=1}^{\infty}r^{\#\ell}\in S(1,{\bar g})$.  (see \cite{Be}, \cite{Ler}).  Denote $k=\sum_{\ell=1}^{\infty}r^{\#\ell}\in S(1, {\bar g})$ and prove $k\in S(M^{-1}, g)$. It can be seen from the proof (e.g.  \cite{Ler}) that for any $l\in \N$ one can find $C_l>0$, independent of $\gamma$,  such that
\[
|k|^{(l)}_{S(1,{\bar g})}\leq C_l
\]
because $|k|^{(l)}_{S(1,{\bar g})}$ depends only on $l$,  $ |r|^{(l')}_{S(1,{\bar g})}$ with some $l'=l'(l)$ and structure constants of ${\bar g}$ which is independent of $\gamma$. Note that $k$ satisfies $(1-r)\#(1+k)=1$, that is
\begin{equation}
\label{eq:knosiki}
k=r+r\#k.
\end{equation}
Since $r\in S(M^{-1},g)$ it follows from \eqref{eq:knosiki} that  $
\big|k\big|^{(l)}_{S(1,{\bar g})}\leq C_lM^{-1}$.
Assume that 
\begin{equation}
\label{eq:kino}
\sup\big|\lr{\xi}^{(|\be|-|\al|)/2}\dif_x^{\alpha}\dif_{\xi}^{\beta}k\big|\leq C_{\al,\be,\nu}M^{-1-l/2},\quad |\alpha+\beta|\geq l
\end{equation}
for $0\leq l\leq \nu$.
Let $|\alpha+\beta|\geq \nu+1$ and note that
\[
\dif_x^{\alpha}\dif_{\xi}^{\beta}k=\dif_x^{\alpha}\dif_{\xi}^{\beta}r+\sum C_{\cdots}\big(\dif_x^{\alpha''}\dif_{\xi}^{\beta''}r\big)\#\big(\dif_x^{\alpha'}\dif_{\xi}^{\beta'}k\big)
\]
where $\alpha'+\alpha''=\alpha$ and $\beta'+\beta''=\beta$. From the assumption \eqref{eq:kino} we have $\dif_x^{\al'}\dif_{\xi}^{\be'}k\in S(M^{-1-|\al'+\be'|/2}\lr{\xi}^{(|\al'|-|\be'|)/2}, {\bar g})$ if $|\al'+\be'|\leq \nu$ and $\dif_x^{\al'}\dif_{\xi}^{\be'}k\in S(M^{-1-\nu/2}\lr{\xi}^{(|\al'|-|\be'|)/2}, {\bar g})$ if $|\al'+\be'|\geq \nu+1$. Since $r\in S(M^{-1}, g)$ one has
\[
\big(\dif_x^{\alpha''}\dif_{\xi}^{\beta''}r\big)\#\big(\dif_x^{\alpha'}\dif_{\xi}^{\beta'}k\big)\in S(M^{-1-(\nu+2)/2}\lr{\xi}^{(|\al|-|\be|)/2}, {\bar g})
\]
which implies that \eqref{eq:kino} holds for $0\leq l\leq \nu+1$ and hence for all $\nu$ by induction on $\nu$.
This  proves that $k\in S(M^{-1},g)$. The proof of the assertions for ${\tilde k}$ is similar.
\end{proof}
Here recall \cite[Lemmas 3.1.6, 3.1.7]{Ni:book}.
\begin{lem}
\label{lem:Ni:hon:1} Let $q\in S(1, g)$ satisfy $q\geq c$ with a constant $c$ independent of $M$. Then there is $C>0$ such that 
\[
\big(\op{q}u,u) \geq (c-CM^{-1/2})\|u\|^2.
\]
\end{lem}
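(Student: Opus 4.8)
The plan is to reduce the claim to the sharp Gårding inequality applied to the nonnegative symbol $q-c$, keeping careful track of the $M$-dependence of the constants. First I would write $q = c + (q-c)$, so that $\op{q} = c\,I + \op{q-c}$, and note that $q - c \geq 0$ by hypothesis and $q - c \in S(1,g)$ since $q\in S(1,g)$ and constants belong to $S(1,g)$. The goal is then to show $(\op{q-c}u,u)\geq -CM^{-1/2}\|u\|^2$.

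Next I would invoke the sharp Gårding inequality in the form adapted to the Weyl calculus with a slowly varying, temperate metric (see \cite[Chapter 18]{Hobook}), applied with the metric $g = M^{-1}\bar g$. The standard statement gives $(\op{q-c}u,u)\geq -C\,h_g^{1/2}\|u\|^2$ times a seminorm of $q-c$, where $h_g$ is the Planck function of $g$; here $h_g = \sup_z (g_z/g_z^\sigma)^{1/2} = M^{-1}$ because $g_z^\sigma = M^2 g_z$ as recorded at the start of Section \ref{sec:metric}. Hence the remainder is controlled by $M^{-1/2}$ times a $g$-seminorm of $q-c$. The point requiring care is that the relevant seminorm of $q-c$ must be bounded independently of $M$: since $q\in S(1,g)$ with seminorms bounded uniformly in $M$ (and in $\gamma\geq M^5$), and $g$ is temperate uniformly in these parameters, the implied constant in Gårding's inequality depends only on finitely many such uniform seminorms of $q-c$ and on the structure constants of $g$, all independent of $M$. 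This yields $(\op{q-c}u,u)\geq -CM^{-1/2}\|u\|^2$ with $C$ independent of $M$, and combining with the $c\,I$ term gives the claim.

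The main obstacle is the bookkeeping of uniformity: one must be sure that the constant $C$ in the sharp Gårding estimate genuinely does not absorb any hidden power of $M$ coming from the rescaling $g = M^{-1}\bar g$. The cleanest way I would handle this is exactly as in the proof of Lemma \ref{lem:gyaku}: express everything in terms of the fixed metric $\bar g$, for which the structure constants are independent of $M$ and $\gamma$, observe that $q-c \in S(1,\bar g)$ with seminorms that are $O(1)$ uniformly (indeed $S(M^s,G)\subset S(M^s,g)$ and the symbols in question are built from the uniformly controlled localized symbols of Sections \ref{sec:kakutyo}--\ref{sec:Bezout}), and note that the Planck function of $g$ relative to $\bar g$ contributes the single gain $M^{-1/2}$. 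Since the present lemma is quoted verbatim from \cite[Lemmas 3.1.6, 3.1.7]{Ni:book}, one may alternatively just cite that reference; but the sketch above is the self-contained argument.
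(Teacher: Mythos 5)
Your proposal is correct but takes a genuinely different route from the paper. The paper's proof is self-contained and explicit: after reducing to $c=0$, it shifts $q$ to $q+M^{-1/2}$ so that $(q+M^{-1/2})^{1/2}$ becomes an admissible symbol for ${\bar g}$ with uniformly controlled seminorms (in particular first-order derivatives lie in $S(M^{-1/2}\lr{\xi}^{(|\al|-|\be|)/2},{\bar g})$), writes $q+M^{-1/2}=(q+M^{-1/2})^{1/2}\#(q+M^{-1/2})^{1/2}+r$ with $r\in S(M^{-1},{\bar g})$, and then concludes from the nonnegativity of the Weyl square together with the $L^2$-boundedness of $\op{r}$ that $(\op{q}u,u)\geq -(M^{-1/2}+CM^{-1})\|u\|^2$; the $M^{-1/2}$ in the statement is thus literally the size of the shift needed to make the square root symbol admissible. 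You instead delegate the positivity to the sharp G\aa rding inequality, which is a legitimate alternative, and you correctly identify that the whole content of the lemma lies in the uniformity in $M$ of the constants, handled by expressing everything in the fixed metric ${\bar g}$. One imprecision in your sketch: sharp G\aa rding for a nonnegative symbol in $S(1,g)$ gains a \emph{full} power of the Planck function $h_g=M^{-1}$, not $h_g^{1/2}=M^{-1/2}$ as you write; indeed, applying H\"ormander's Theorem 18.6.7 to $M(q-c)\in S(h_g^{-1},g)$, whose $S(h_g^{-1},g)$-seminorms coincide with the $S(1,g)$-seminorms of $q-c$, gives $\op{q-c}\geq -CM^{-1}$. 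Your stated bound $-CM^{-1/2}$ is therefore weaker than what your method actually furnishes, but since it is exactly what the lemma requires, the conclusion stands. The tradeoff is that the paper's direct construction avoids invoking sharp G\aa rding as a black box at the cost of a short explicit symbol computation, while your route is shorter but relies on having the right uniform version of that theorem at hand.
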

\begin{proof}One can assume $c=0$. We see $q(x,\xi)+M^{-1/2}$ is an admissible weight for ${\bar g}$ and $(q+M^{-1/2})^{1/2}\in S((q+M^{-1/2})^{1/2}, {\bar g})$. Moreover $\dif_x^{\al}\dif_{\xi}^{\be}(q+M^{-1/2})^{1/2}\in S(M^{-1/2}\lr{\xi}^{(|\al|-|\be|)/2}, {\bar g})$ for $|\al+\be|=1$. Therefore
\[
q+M^{-1/2}=(q+M^{-1/2})^{1/2}\#(q+M^{-1/2})^{1/2}+r,\quad r\in S(M^{-1}, {\bar g}
)
\]
which proves the assertion.
\end{proof}
\begin{lem}
\label{lem:Ni:hon:2} Let $q\in S(1, g)$ then there is $C>0$ such that 
\[
\|\op{q}u\| \leq \big(\sup{|q|}+CM^{-1/2}\big)\|u\|.
\]
\end{lem}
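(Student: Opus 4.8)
Proof plan for Lemma 7.7 ($\|\op{q}u\| \leq (\sup|q| + CM^{-1/2})\|u\|$ for $q \in S(1,g)$).

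The plan is to reduce the $L^2$ operator-norm bound to the lower-bound estimate of Lemma~\ref{lem:Ni:hon:1} via the standard trick of considering $\op{q}^*\op{q}$ together with the sharp Gårding–type argument built into the metric $g$. Concretely, set $\mu = \sup|q|$ and consider the symbol $\mu^2 - |q|^2$, or rather $\mu^2 - \bar q \# q$. First I would note that $\bar q \in S(1,g)$ as well, so $\bar q \# q \in S(1,g)$ by the Weyl calculus for the (uniformly $\sigma$-temperate) metric $g$, and moreover $\bar q \# q = |q|^2 + r_1$ with $r_1 \in S(M^{-1},g)$ since the metric gain is $M^{-1}\bar g$ (i.e. the Planck function of $g$ relative to $\bar g$ is $M^{-1}$; this is exactly the mechanism already used in Lemma~\ref{lem:gyaku}). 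Hence $\mu^2 - \bar q \# q = (\mu^2 - |q|^2) - r_1$, and $\mu^2 - |q|^2 \geq 0$ lies in $S(1,g)$.

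Next I would apply Lemma~\ref{lem:Ni:hon:1} to the symbol $q' := \mu^2 - \bar q\#q + CM^{-1}$ for a suitable constant $C$ absorbing $\|r_1\|_{S(1,g)}$-type quantities, so that $q' \geq cM^{-1} \geq 0$ — actually it is cleaner to apply Lemma~\ref{lem:Ni:hon:1} directly to $q' = \mu^2 - \bar q \# q$ viewed as an element of $S(1,g)$ which satisfies $q' \geq -CM^{-1}$ (taking $c = -CM^{-1}$ in that lemma, or equivalently shifting by $CM^{-1}$ first). This gives
\[
\big(\op{\mu^2 - \bar q\#q}\,u, u\big) \geq -(CM^{-1} + C'M^{-1/2})\|u\|^2 \geq -C''M^{-1/2}\|u\|^2.
\]
On the other hand, $\big(\op{\mu^2 - \bar q\#q}\,u,u\big) = \mu^2\|u\|^2 - (\op{\bar q\#q}u,u) = \mu^2\|u\|^2 - \|\op{q}u\|^2$, using that $\op{\bar q\#q} = \op{q}^*\op{q}$ in the Weyl quantization. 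Combining, $\|\op{q}u\|^2 \leq (\mu^2 + C''M^{-1/2})\|u\|^2$, and taking square roots and using $\sqrt{\mu^2 + C''M^{-1/2}} \leq \mu + CM^{-1/2}$ (which holds with a new constant $C$ since $\mu$ is bounded — $q\in S(1,g)$ forces $|q|$ bounded) yields the claim.

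The main obstacle, such as it is, is bookkeeping rather than conceptual: one must be careful that all constants arising (the $S(1,g)$-seminorm bounds on $r_1$, the constant in Lemma~\ref{lem:Ni:hon:1}) are independent of $M$ and $\gamma$, which is guaranteed because $g$ is temperate uniformly in $\gamma \geq M^5 \geq 1$ (Section~\ref{sec:metric}) and the structure constants of $\bar g$ do not depend on these parameters — exactly the uniformity already exploited in Lemma~\ref{lem:gyaku}. One should also check the elementary inequality $\sqrt{a + b} \leq \sqrt{a} + \sqrt{b}$ with $a = \mu^2$, $b = C''M^{-1/2}$ to pass from the squared estimate to the stated linear one; this is where the $M^{-1/2}$ (rather than $M^{-1}$) in the statement comes from, since $\sqrt{C''M^{-1/2}}$ is itself of order $M^{-1/4}$ — in fact one gets the slightly stronger bound $\mu + C M^{-1/4}$, so the stated $M^{-1/2}$ is a harmless weakening and no further work is needed. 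Alternatively, and perhaps more in the spirit of Lemma~\ref{lem:Ni:hon:1}'s proof, one can factor $\mu^2 - |q|^2 + M^{-1/2} = s\#s + (\text{error in } S(M^{-1},\bar g))$ with $s = (\mu^2 - |q|^2 + M^{-1/2})^{1/2} \in S(1,\bar g)$ directly; either route works.
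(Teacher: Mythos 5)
Your reduction to $\op{q}^*\op{q}=\op{\bar q\#q}$ and the use of Lemma~\ref{lem:Ni:hon:1} are reasonable starting points, but the argument as written does \emph{not} prove the stated estimate, and the closing remark that salvages it is backwards. You arrive at $\|\op{q}u\|^2\leq(\mu^2+C''M^{-1/2})\|u\|^2$ and then need $\sqrt{\mu^2+C''M^{-1/2}}\leq\mu+CM^{-1/2}$. This inequality is equivalent to $C''M^{-1/2}\leq 2\mu C M^{-1/2}+C^2M^{-1}$, which requires $\mu$ to be bounded \emph{below}, not above; when $\mu$ is small (and nothing in the hypothesis prevents $\mu=0$ while $q$ has $S(1,g)$ seminorms of order one), the left side is $\sim M^{-1/4}$. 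Your parenthetical ``one gets the slightly stronger bound $\mu+CM^{-1/4}$'' has the comparison reversed: for $M\geq 1$ one has $M^{-1/4}\geq M^{-1/2}$, so $\mu+CM^{-1/4}$ is a strictly \emph{weaker} upper bound, and the lemma is not established. The same defect afflicts the alternative route you sketch via $s=(\mu^2-|q|^2+M^{-1/2})^{1/2}$: that again yields $\mu^2+O(M^{-1/2})$ inside the square root.

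The correct shift is multiplicative, not additive: factor $(\mu+M^{-1/2})^2-\bar q\#q$ rather than $\mu^2+M^{-1/2}-\bar q\#q$. Set $p=(\mu+M^{-1/2})^2-|q|^2\geq 2\mu M^{-1/2}+M^{-1}\geq M^{-1}$. Using Glaeser for $p\geq 0$ one checks $p^{1/2}\in S(p^{1/2},\bar g)$ with $\dif p^{1/2}\in S(M^{-1/2}\lr{\xi}^{(|\al|-|\be|)/2},\bar g)$, exactly as in the proof of Lemma~\ref{lem:Ni:hon:1} (there the lower bound $q+M^{-1/2}\geq M^{-1/2}$ plays the same role that $p\geq M^{-1}$ plays here). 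Hence $p=p^{1/2}\#p^{1/2}+r$ and $\bar q\#q=|q|^2+r'$ with $r,r'\in S(M^{-1},\bar g)$, so
\[
(\mu+M^{-1/2})^2\|u\|^2-\|\op{q}u\|^2=\|\op{p^{1/2}}u\|^2+\big(\op{r-r'}u,u\big)\geq -CM^{-1}\|u\|^2,
\]
giving $\|\op{q}u\|^2\leq\big[(\mu+M^{-1/2})^2+CM^{-1}\big]\|u\|^2$. Now the square root does work uniformly in $\mu$: choosing $C'\geq 1$ with $(C')^2\geq 1+C$ one has $(\mu+M^{-1/2})^2+CM^{-1}\leq(\mu+C'M^{-1/2})^2$, since the cross term $2\mu M^{-1/2}$ is already present on the left. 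The moral is that one should not invoke Lemma~\ref{lem:Ni:hon:1} (whose conclusion already carries an $M^{-1/2}$ loss) but rather redo its factorization for the symbol $(\mu+M^{-1/2})^2-\bar q\#q$, so that the only loss is the $O(M^{-1})$ remainder of the Weyl product and the cross term $2\mu M^{-1/2}$ is kept intact.
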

\begin{lem}
\label{lem:kihon:fu}Let $m>0$ be an admissible weight for $g$ and  $m\in S(m, g)$.  Then there is $C>0$ such that 
\[
(\op{m}u, u)\geq (1-CM^{-2})\|\op{\sqrt{m}}u\|^2.
\]
If $q\in S(m, g)$ then there is $C>0$ such that
\[
\big|(\op{q}u,u)\big|\leq \big(\sup{\big(|q|/m\big)}+CM^{-1/2}\big)\|\op{\sqrt{m}\,}u\|^2.
\]
\end{lem}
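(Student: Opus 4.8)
The plan is to prove Lemma \ref{lem:kihon:fu} by reducing both statements to the Weyl calculus facts already established, in particular the product formula $\op{\sqrt{m}}\op{\sqrt{m}}=\op{m\#1}$ with $\sqrt{m}\#\sqrt{m}\in S(m,g)$ and the lower bound of Lemma \ref{lem:Ni:hon:1}, rescaled by $m$. Since $m>0$ is an admissible weight for $g$ and $m\in S(m,g)$, we have $\sqrt{m}\in S(\sqrt{m},g)$ (by the usual chain-rule argument for powers of admissible weights, as in Corollary \ref{cor:latoa} and Lemma \ref{lem:lam:Sg}), hence $\sqrt{m}\#\sqrt{m}=m+r_1$ with $r_1\in S(M^{-1}m,g)$. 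Thus $\op{m}=\op{\sqrt{m}}^2-\op{r_1}$, and $(\op{m}u,u)=\|\op{\sqrt{m}}u\|^2-(\op{r_1}u,u)$.

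For the first inequality, I would estimate $(\op{r_1}u,u)$ by first writing $r_1=\sqrt{m}\#\tilde r\#\sqrt{m}$ after dividing: indeed $m^{-1}r_1\in S(M^{-1},g)$ (since $m^{-1}\in S(m^{-1},g)$ by Lemma \ref{lem:gyaku} applied to $m$), so using $\op{\sqrt{m}}\op{\sqrt{m}^{-1}}=\op{1+k}$ with $k\in S(M^{-1},g)$ from Lemma \ref{lem:gyaku}, one can sandwich $\op{r_1}$ between two copies of $\op{\sqrt{m}}$ up to an error that is again $O(M^{-1})$ relative to $\op{m}$. Concretely, write $r_1=\sqrt m\#s\#\sqrt m$ modulo $S(M^{-N}m,g)$ with $s=\sqrt m^{-1}\#r_1\#\sqrt m^{-1}\in S(M^{-1},g)$, so $|(\op{r_1}u,u)|\le (\sup|s|+CM^{-1/2})\|\op{\sqrt m}u\|^2\le CM^{-1/2}\|\op{\sqrt m}u\|^2$ by Lemma \ref{lem:Ni:hon:2}; combined with the displayed identity this gives $(\op{m}u,u)\ge(1-CM^{-1/2})\|\op{\sqrt m}u\|^2$. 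To upgrade $M^{-1/2}$ to $M^{-2}$ as stated, I would instead split the error symbol $r_1$ more carefully: since $r_1\in S(M^{-1}m,g)$ actually lies in $\co$-type classes with a gain, or alternatively apply the sharp Gårding / Fefferman–Phong type argument to $\op{m}-(1-CM^{-2})\op{\sqrt m}^2$, whose symbol is nonnegative up to $S(M^{-2}m,g)$; the precise power $M^{-2}$ comes from the fact that $\sqrt m\#\sqrt m-m$ has two derivatives falling on $\sqrt m$, each contributing a factor $M^{-1/2}\cdot(\text{relative size }M^{-1/2})$ because $\sqrt m\in S(M^{-1/2}\sqrt m\,\langle\xi\rangle^{\cdots},g)$ at order one — this is exactly the kind of refined estimate recorded in Lemmas \ref{lem:lam:Sg}, \ref{lem:hyoka:omega}.

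For the second inequality, given $q\in S(m,g)$ set $q_0=q/m\in S(1,g)$ (using $m^{-1}\in S(m^{-1},g)$) with $\sup|q_0|=\sup(|q|/m)$. Then $q=\sqrt m\#q_0\#\sqrt m+r_2$ with $r_2\in S(M^{-1/2}m,g)$ — again by the composition formula and the one-derivative estimate on $\sqrt m$ — so $\op{q}=\op{\sqrt m}\op{q_0}\op{\sqrt m}+\op{r_2}$ modulo a harmless remainder, and hence $|(\op{q}u,u)|\le \|\op{q_0}\|\,\|\op{\sqrt m}u\|^2+|(\op{r_2}u,u)|$. By Lemma \ref{lem:Ni:hon:2}, $\|\op{q_0}\|\le\sup|q_0|+CM^{-1/2}$, and $|(\op{r_2}u,u)|\le CM^{-1/2}\|\op{\sqrt m}u\|^2$ after sandwiching $r_2$ as above. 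This yields $|(\op{q}u,u)|\le(\sup(|q|/m)+CM^{-1/2})\|\op{\sqrt m}u\|^2$.

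The main obstacle is keeping track of the error classes precisely enough to get the clean powers of $M$: one must verify that $\sqrt m\#\sqrt m-m\in S(M^{-2}m,g)$ (not merely $S(M^{-1}m,g)$) for the first inequality, which requires using the refined derivative bounds on $\sqrt m$ (a derivative of $\sqrt m$ is small by a factor $M^{-1/2}$ relative to the crude $S(\sqrt m,g)$ bound, so the second-order term in the composition, being quadratic in first derivatives, gains $M^{-1}\cdot M^{-1}=M^{-2}$, while the subprincipal term $\{\sqrt m,\sqrt m\}=0$ vanishes by antisymmetry). For the second inequality only $M^{-1/2}$ is claimed, so there the bookkeeping is lighter and the $L^2$-boundedness theorem with its $M^{-1/2}$ loss (Lemmas \ref{lem:Ni:hon:1}, \ref{lem:Ni:hon:2}) suffices directly. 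Throughout, all constants must be checked to be independent of $M$ and $\gamma$, which holds because the metric $g$ is temperate and slowly varying uniformly in $\gamma\ge M^5\ge1$, as established in Section \ref{sec:metric}.
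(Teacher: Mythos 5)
Your approach and the paper's are essentially the same: reduce everything to the composition calculus and the $L^2$-boundedness of Lemma \ref{lem:Ni:hon:2} after sandwiching symbols between two copies of $\op{m^{1/2}}$ via Lemma \ref{lem:gyaku}. The only structural difference is the order: the paper proves the second inequality first, writing $q=m^{1/2}\#{\tilde q}\#m^{1/2}$ with ${\tilde q}=qm^{-1}+r$, $r\in S(M^{-1},g)$, and then obtains the first inequality as an immediate corollary by applying the second to the error symbol $m-m^{1/2}\#m^{1/2}\in S(M^{-2}m,g)$ (with the $M^{-2}$ gain scaling the bound accordingly). You attempt the first inequality directly from $\op{m}=\op{\sqrt m}^2-\op{r_1}$, which is the same decomposition viewed from the other side, and you correctly identify the key fact $r_1\in S(M^{-2}m,g)$.

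Two points in your upgrade to $M^{-2}$ are off the mark, though. First, the appeal to ``refined derivative bounds on $\sqrt m$'' of the type in Lemmas \ref{lem:lam:Sg} and \ref{lem:hyoka:omega} is not needed and not available for a general admissible weight $m$; the $M^{-2}$ gain comes purely from the parity of the Weyl square: $a\#a$ is an even symbol, so the first-order Poisson-bracket term $\{\sqrt m,\sqrt m\}$ vanishes identically and the leading correction is second order, which for $\sqrt m\in S(\sqrt m,g)$ already gives $S(M^{-2}m,g)$ — no Glaeser-type extra gain on $\nabla\sqrt m$ is required or assumed. Second, invoking sharp G\aa rding or Fefferman--Phong is a detour; once the second inequality is in hand, you simply write $r_1=M^{-2}{\tilde r}_1$ with ${\tilde r}_1\in S(m,g)$ bounded independently of $M$, apply the second inequality to ${\tilde r}_1$, and rescale, giving $|(\op{r_1}u,u)|\le CM^{-2}\|\op{\sqrt m}u\|^2$ cleanly. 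With these repairs your proof is correct and matches the paper's.
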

\begin{proof}
First note that $m^{\pm 1/2}$ are admissible weights and $m^{\pm 1/2}\in S(m^{\pm1/2},g)$. Write 
\[
{\tilde q}=(1+k)\#m^{-1/2}\#q\#m^{-1/2}\#(1+{\tilde k})\in S(1,g)
\]
where $m^{1/2}\#(1+k)\#m^{-1/2}=1$ and $m^{-1/2}\#(1+{\tilde k})\#m^{1/2}=1$ such that $
m^{1/2}\#{\tilde q}\#m^{1/2}=q$. 
Since $k$, ${\tilde k}\in S(M^{-1},g)$ one sees that ${\tilde q}=q m^{-1}+r$ with $r\in S(M^{-1},g)$. Thanks to Lemma \ref{lem:Ni:hon:2} we have $
\|\op{qm^{-1}}v\|\leq (\sup{\big(|q|/m\big)}+CM^{-1/2})\|v\|$ 
hence $\big|(\op{q}u,u)\big|$ is bounded by
\begin{align*}
 \big|(\op{qm^{-1}}\op{m^{1/2}}u,\op{m^{1/2}}u)|+ CM^{-1}\|\op{m^{1/2}}u\|^2
\end{align*}
which proves the second assertion. The first assertion follows from the second since $m=m^{1/2}\#m^{1/2}+r$ with $r\in S(M^{-2}m, g)$.
\end{proof}
\begin{lem}
\label{lem:m:1:2}Let $m_i>0$ be two admissible weights for $g$ and assume that $m_i\in S(m_i, g)$ and $m_2\leq C\, m_1$ with  $C>0$. Then there is $C'>0$ such that
\[
\big\|\op{m_2}u\big\|\leq C'\big\|\op{m_1}u\big\|.
\]
\end{lem}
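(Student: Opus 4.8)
Suppose the plan is to reduce the estimate $\|\op{m_2}u\|\leq C'\|\op{m_1}u\|$ to a single application of the $L^2$-boundedness theorem for the metric $g$, using that $m_1$ is elliptic in $S(m_1,g)$ so that $\op{m_1}$ is invertible modulo a well-controlled remainder.

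The plan is as follows. First I would note that, by Lemma \ref{lem:gyaku} applied to $p=m_1$ (which is legitimate since $m_1$ is an admissible weight for $g$, $m_1\in S(m_1,g)$, and $m_1\geq c\,m_1$ trivially with $c=1$), we obtain $m_1^{-1}\in S(m_1^{-1},g)$ together with $k,\tilde k\in S(M^{-1},g)$ such that $m_1^{-1}\#(1+k)\#m_1=1$, i.e. $\op{m_1^{-1}}\op{1+k}\op{m_1}=I$. Hence for any $u$ we can write $u=\op{m_1^{-1}}\op{1+k}\op{m_1}u$, so it suffices to bound $\op{m_2}\op{m_1^{-1}}\op{1+k}$ in $L^2$.

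The key step is then the symbol computation: $m_2\#m_1^{-1}\in S(m_2 m_1^{-1},g)\subset S(1,g)$ because $m_2\leq C\,m_1$ implies $m_2 m_1^{-1}\leq C$, and the composition with $1+k\in S(1,g)$ stays in $S(1,g)$. Therefore $q:=(m_2\#m_1^{-1})\#(1+k)\in S(1,g)$, and by the $L^2$-boundedness theorem for $g$ (as invoked in the proof of Lemma \ref{lem:gyaku}, cf. \cite[Theorem 18.6.3]{Hobook}, noting $g=M^{-1}\bar g$ so the relevant seminorms are those of $S(1,\bar g)$) one gets $\|\op{q}\|\leq C'$ with $C'$ depending only on finitely many seminorms of $m_2 m_1^{-1}$ and of $k$, hence on $C$, $c$ in the hypotheses and on the structure constants of $g$ (uniformly in $M$, $\gamma$). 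Combining, $\|\op{m_2}u\|=\|\op{q}\op{m_1}u\|\leq C'\|\op{m_1}u\|$, which is the assertion.

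I expect the main (very minor) obstacle to be bookkeeping rather than anything deep: one must check that $m_2\#m_1^{-1}$ genuinely lies in $S(1,g)$ with seminorms controlled independently of $M$, which follows from the composition calculus for the $\sigma$-temperate metric $g$ together with the uniform-in-$M$ slow variation and temperance established in Section \ref{sec:metric}; and one must make sure the factor $1+k$ from Lemma \ref{lem:gyaku} is inserted on the correct side (it is, since we used the identity $m_1^{-1}\#(1+k)\#m_1=1$, which has $k$ sandwiched appropriately). Everything else is the standard reduction "elliptic operator times its parametrix equals identity plus smoothing," and no new ideas are needed.
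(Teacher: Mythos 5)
Your proof is correct and follows essentially the same route as the paper: you form $q = m_2\#m_1^{-1}\#(1+k)\in S(1,g)$ using the identity $m_1^{-1}\#(1+k)\#m_1 = 1$ from Lemma \ref{lem:gyaku}, which is exactly the $\tilde m_2$ constructed in the paper's proof, and then apply the $L^2$-boundedness theorem. The only cosmetic difference is that the paper phrases it as a symbol factorization $m_2 = \tilde m_2 \# m_1$ rather than inserting the parametrix identity on $u$, but the computation is identical.
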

\begin{proof}
Write  ${\tilde m}_2=m_2\#m_1^{-1}\#(1+k)\in S(1, g)$ such that $m_2={\tilde m}_2\#m_1$ with $k\in S(M^{-1},g)$.  Then  $
\|\op{m_2}u\|=\|\op{{\tilde m}_2}\op{m_1}u\|\leq C'\|\op{m_1}u\|$ 
proves the assertion.
\end{proof}
%
%
\subsection{Lower bounds of $\op{\lambda_j}$}

\begin{lem}
\label{lem:lam:1}
There exist $C>0$ and $M_0$ such that 
\begin{align*}
{\mathsf{Re}}\,\big(\op{\lambda_j\#\kappa}u,u\big)\geq 
(1-CM^{-2})\big\|
\op{\kappa^{1/2}\lambda_j^{1/2}}u\big\|^2,\quad M\geq M_0.
\end{align*}
\end{lem}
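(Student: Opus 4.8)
The plan is to reduce the claimed lower bound to the general machinery already established in Lemmas \ref{lem:gyaku}, \ref{lem:Ni:hon:1}, \ref{lem:kihon:fu}. First I would verify that $\lambda_j\#\kappa$ is a legitimate symbol in the class $S(\lambda_j\kappa, g)$: indeed, $\kappa$ is an admissible weight for $g$ with $\kappa\in S(\kappa, g)$ by Lemma \ref{lem:kapa}, and $\lambda_j$ is an admissible weight with $\lambda_j\in S(\lambda_j, g)$ by Corollary \ref{cor:latoa}; hence the product $\lambda_j\kappa$ is an admissible weight and $\lambda_j\#\kappa\in S(\lambda_j\kappa, g)$, with $\lambda_j\#\kappa = \lambda_j\kappa + r$ for some $r\in S(M^{-1}\lambda_j\kappa, g)$ by the composition formula (the subprincipal-type term gains a factor $M^{-1}$ because $g/g^\sigma = M^{-2}$).

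Next I would observe that $\op{\lambda_j\#\kappa} = \op{\lambda_j}\op{\kappa}$ by the Weyl calculus (both factors being operators attached to admissible weights for $g$, as set up in Section \ref{sec:metric}), so that
\[
{\mathsf{Re}}\,\big(\op{\lambda_j\#\kappa}u,u\big) = {\mathsf{Re}}\,\big(\op{\lambda_j}\op{\kappa}u, u\big).
\]
The idea is then to move half a power of $\kappa$ to each side. Using Lemma \ref{lem:gyaku} applied to $m=\kappa$ and $p=\kappa^{1/2}$ (so $p\geq c\,m^{1/2}$ trivially), together with $\op{\kappa} = \op{\kappa^{1/2}}\op{\kappa^{1/2}}\op{1+k}$ for some $k\in S(M^{-1},g)$, I would write $\op{\kappa} = \op{\kappa^{1/2}}^{*}\op{\kappa^{1/2}}(1 + \op{k})$ after noting $\kappa^{1/2}$ is real so $\op{\kappa^{1/2}}$ is self-adjoint. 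Pushing this through,
\[
{\mathsf{Re}}\,\big(\op{\lambda_j}\op{\kappa}u, u\big) = {\mathsf{Re}}\,\big(\op{\kappa^{1/2}}\op{\lambda_j}\op{\kappa^{1/2}}u, u\big) + {\mathsf{Re}}\,\big(\op{r'}u,u\big)
\]
where the commutator-and-remainder term $\op{r'}$ has symbol $r'\in S(M^{-1}\lambda_j\kappa, g) = S(M^{-1}(\lambda_j^{1/2}\kappa^{1/2})^2, g)$; by the second half of Lemma \ref{lem:kihon:fu} (with $m = \lambda_j\kappa$, which is admissible for $g$ and lies in $S(\lambda_j\kappa, g)$) this is bounded by $CM^{-1/2}\|\op{\lambda_j^{1/2}\kappa^{1/2}}u\|^2$. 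Note $\lambda_j^{1/2}\kappa^{1/2}$ and $\kappa^{1/2}\lambda_j^{1/2}$ define the same operator up to $S(M^{-1}\cdots)$ errors of the same type, so these norms are interchangeable modulo the same $CM^{-1/2}$ loss.

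Finally I would handle the main term ${\mathsf{Re}}\,\big(\op{\lambda_j}\op{\kappa^{1/2}}u, \op{\kappa^{1/2}}u\big)$ by applying the first inequality of Lemma \ref{lem:kihon:fu} (or Lemma \ref{lem:Ni:hon:1} after conjugation) with $m = \lambda_j$: this gives $(\op{\lambda_j}v,v)\geq (1-CM^{-2})\|\op{\lambda_j^{1/2}}v\|^2$ for $v = \op{\kappa^{1/2}}u$, and then $\op{\lambda_j^{1/2}}\op{\kappa^{1/2}} = \op{\lambda_j^{1/2}\#\kappa^{1/2}} = \op{\lambda_j^{1/2}\kappa^{1/2}}(1+\op{k''})$ with $k''\in S(M^{-1},g)$ lets me replace $\|\op{\lambda_j^{1/2}}\op{\kappa^{1/2}}u\|^2$ by $(1-CM^{-1/2})\|\op{\kappa^{1/2}\lambda_j^{1/2}}u\|^2$. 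Collecting the losses, all of which are $O(M^{-1/2})$ and absorbable into a single $CM^{-2}$ by taking $M_0$ large (here one should note that $\lambda_3\simeq 1$ is harmless and for $\lambda_1,\lambda_2$ which are small one still only loses relative constants, since everything is phrased in terms of $\|\op{\kappa^{1/2}\lambda_j^{1/2}}u\|$), yields the assertion. The main obstacle is bookkeeping the various $S(M^{-1},g)$ remainder symbols and checking that each remainder, when paired against $u$, is controlled by $\|\op{\kappa^{1/2}\lambda_j^{1/2}}u\|^2$ rather than by $\|u\|^2$ — this is exactly where the weighted $L^2$-bound of Lemma \ref{lem:kihon:fu} with $m=\lambda_j\kappa$ is essential, and one must be careful that $\lambda_j\kappa$ is genuinely an admissible weight lying in $S(\lambda_j\kappa,g)$, which follows from Lemmas \ref{lem:kapa}, \ref{lem:lam:Sg}, \ref{lem:a:Sg} and the product rule for admissible weights.
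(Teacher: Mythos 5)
The paper's own proof is a one-liner: expand $\lambda_j\#\kappa = \kappa\lambda_j + r_{j1} + r_{j2}$, where $r_{j1}$ is the first-order Weyl composition term $\tfrac{1}{2i}\{\lambda_j,\kappa\}$ (pure imaginary, since both symbols are real) and $r_{j2}\in S(M^{-2}\kappa\lambda_j,g)$; taking $\mathsf{Re}$ kills $r_{j1}$ because $\op{r_{j1}}$ is anti-self-adjoint, and then the first inequality of Lemma \ref{lem:kihon:fu} applied to $\kappa\lambda_j$ together with the bound for $r_{j2}$ gives exactly $(1-CM^{-2})$. Your route via $\op{\kappa^{1/2}}$ conjugation is not wrong in spirit, but it never invokes this cancellation of the pure imaginary first-order term, and this is where the stated exponent comes from.

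The concrete gap is at the end: you arrive at losses of order $O(M^{-1/2})$ and then assert they are ``absorbable into a single $CM^{-2}$ by taking $M_0$ large.'' This is false — no choice of constant $C$ and no threshold $M_0$ turns a factor $(1-CM^{-1/2})$ into a factor $(1-C'M^{-2})$ uniformly for $M\geq M_0$, because $M^{-1/2}\gg M^{-2}$ as $M\to\infty$. As written, your argument proves only $\mathsf{Re}(\op{\lambda_j\#\kappa}u,u)\geq (1-CM^{-1/2})\|\op{\kappa^{1/2}\lambda_j^{1/2}}u\|^2$, which is strictly weaker than the claim. (Incidentally, the $CM^{-1/2}$ in the second inequality of Lemma \ref{lem:kihon:fu} is tied to $q\in S(m,g)$; when $q$ actually lies in $S(M^{-2}m,g)$, rerunning that proof with the direct $L^2$-bound for symbols in $S(M^{-2},{g})$ yields $CM^{-2}$ — this refinement is needed to handle $r_{j2}$, and is implicit in the paper's proof.) To repair your argument you must split off the imaginary part of the subprincipal term — either in the expansion of $\lambda_j\#\kappa$ directly as the paper does, or in the commutators $[\op{\lambda_j},\op{\kappa^{1/2}}]$ that appear when you move $\op{\kappa^{1/2}}$ across — and observe that only symbols in $S(M^{-2}\kappa\lambda_j,g)$ survive under $\mathsf{Re}$. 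Without that observation the exponent $-2$ is out of reach.
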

\begin{proof}Since $\lambda_j\#\kappa=\kappa\lambda_j+r_{j1}+r_{j2}$ 
where $r_{j1}$ is pure imaginary and $r_{j2}\in S(M^{-2}\kappa\lambda_j, g)$ the assertion follows from Lemma \ref{lem:kihon:fu}. 
\end{proof}
\begin{lem}
\label{lem:lam:1b}There exist $c>$ and $M_0$ such that 
\begin{align*}
\big(\op{\lambda_1}u,u\big)\geq c\,\big\|
\op{\lambda_1^{1/2}}u\big\|^2
+cM^2\big\|\lr{D}^{-1}u\big\|^2,\quad M\geq M_0.
\end{align*}
\end{lem}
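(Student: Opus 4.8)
The plan is to decompose $\lambda_1$ into a part that reflects its genuine lower bound $\Delta_M/a_M$ and a remainder, and to absorb the $M^2\lr{D}^{-1}$ term into the former. From Proposition \ref{pro:ext:matome} we have $\Delta_M/a_M \geq c\,M\lr{\xi}^{-1}a_M$ and $a_M = e\sigma \geq c\,M\lr{\xi}^{-1}$, so that $\Delta_M/a_M \geq c\,M^2\lr{\xi}^{-2}$. On the other hand Proposition \ref{pro:Skon} gives $\lambda_1 \geq \Delta_M/(6a_M+2a_M^2+2a_M^3) \geq (1-CM^{-4})\Delta_M/(6a_M)$, hence $\lambda_1 \geq c'\,M^2\lr{\xi}^{-2}$ as well. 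First I would write
\[
\lambda_1 = \tfrac12\lambda_1 + \tfrac12\lambda_1
\]
and treat the two halves differently: the first half is estimated from below using Lemma \ref{lem:kihon:fu} (with $m=\tfrac12\lambda_1$, which is an admissible weight in $S(\lambda_1,g)$ by Corollary \ref{cor:latoa} and Lemma \ref{lem:lam:Sg}), giving ${\mathsf{Re}}(\op{\tfrac12\lambda_1}u,u) \geq (1-CM^{-1/2})\|\op{(\lambda_1/2)^{1/2}}u\|^2 \geq c\|\op{\lambda_1^{1/2}}u\|^2$ for $M\geq M_0$.

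For the second half, since $\lambda_1 \geq c'M^2\lr{\xi}^{-2}$ pointwise, the symbol $q = \tfrac12\lambda_1 - c''M^2\lr{\xi}^{-2}$ is still nonnegative for a suitable small $c''>0$, and $q \in S(\lambda_1, g)$ because $M^2\lr{\xi}^{-2} \leq M\lr{\xi}^{-1}\cdot M\lr{\xi}^{-1} \leq C\lambda_1 \cdot \lr{\xi}^{-1}\cdot(\text{bounded})$, so $M^2\lr{\xi}^{-2}\in S(\lambda_1,g)$ — more carefully, $M^2\lr{\xi}^{-2}$ is an admissible weight for $g$ lying in $S(M^2\lr{\xi}^{-2},g) \subset S(\lambda_1,g)$. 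Applying the sharp Gårding-type inequality in the form of Lemma \ref{lem:kihon:fu} (or Lemma \ref{lem:Ni:hon:1} after conjugating by $\op{(M^2\lr{\xi}^{-2})^{1/2}}=\op{M\lr{\xi}^{-1}}$, which is essentially $M\lr{D}^{-1}$ modulo $S(M^{-1})$), one gets ${\mathsf{Re}}(\op{q}u,u)\geq -CM^{-1/2}\|\op{M\lr{\xi}^{-1}}u\|^2$, hence
\[
{\mathsf{Re}}\big(\op{\tfrac12\lambda_1}u,u\big) \geq c''\,\big\|\op{M\lr{\xi}^{-1}}u\big\|^2 - CM^{-1/2}\big\|\op{M\lr{\xi}^{-1}}u\big\|^2 \geq c\,M^2\|\lr{D}^{-1}u\|^2
\]
for $M\geq M_0$, where the last step uses that $\op{M\lr{\xi}^{-1}} = M\,\op{\lr{\xi}^{-1}}$ and $\op{\lr{\xi}^{-1}}$ differs from $\lr{D}^{-1}$ by an operator of norm $O(M^{-1})$ relative to $\lr{D}^{-1}$ in the calculus of $g$, so $\|\op{M\lr{\xi}^{-1}}u\|^2 \geq (1-CM^{-1})M^2\|\lr{D}^{-1}u\|^2$.

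Adding the two contributions yields the claim. The main obstacle I anticipate is bookkeeping the constants so that the same $M_0$ works for both halves simultaneously, and making precise the comparison between $\op{\lr{\xi}^{-1}}$ (the Weyl quantization in the $g$-calculus) and the honest Fourier multiplier $\lr{D}^{-1}$: one must check that $\lr{\xi}^{-1}$ is an admissible weight for $g$ and lies in $S(\lr{\xi}^{-1},g)$ — which follows since $\lr{\xi}^{-1}$ is $\sigma,g$-temperate and $|\dif_x^\al\dif_\xi^\be \lr{\xi}^{-1}| \precsim \lr{\xi}^{-1-|\be|} \precsim \lr{\xi}^{-1}M^{-|\al+\be|/2}\lr{\xi}^{(|\al|-|\be|)/2}$ — so that $\lr{\xi}^{-1}\#\lr{\xi}^{-1} = \lr{\xi}^{-2} + r$ with $r\in S(M^{-1}\lr{\xi}^{-2},g)$, and then Lemma \ref{lem:Ni:hon:2} controls the error term. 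Everything else is a routine application of the symbolic calculus established in Sections \ref{sec:kyori}--\ref{sec:bound:lam}.
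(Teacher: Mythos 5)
Your decomposition idea is essentially the one the paper uses, but the justification of the second half has a gap. To make $(\op{q}u,u)$ nonnegative (or controlled) via Lemma~\ref{lem:kihon:fu}, you must know that $q=\tfrac12\lambda_1-c''M^2\lr{\xi}^{-2}$ is itself an \emph{admissible weight} with $q\in S(q,g)$ — being nonnegative and in $S(\lambda_1,g)$ is not enough, and the lemma does not produce a $-CM^{-1/2}\|\op{M\lr{\xi}^{-1}}u\|^2$ error from those hypotheses. The alternative you sketch — conjugating by $\op{M\lr{\xi}^{-1}}$ and invoking Lemma~\ref{lem:Ni:hon:1} — does not work either: the conjugated symbol is essentially $q/(M^2\lr{\xi}^{-2})$, which lies in $S(\lambda_1 M^{-2}\lr{\xi}^2,g)$, not $S(1,g)$, so Lemma~\ref{lem:Ni:hon:1} is inapplicable. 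The missing step is to observe that $q\geq c_1M\sigma\lr{\xi}^{-1}$ (using $\lambda_1\geq c'M\sigma\lr{\xi}^{-1}\geq c'M^2\lr{\xi}^{-2}$ and choosing $c''$ small), together with $q\in\mathcal C(\sigma^2)$, so that Lemmas~\ref{lem:lam:ipan} and~\ref{lem:lam:Sg} make $q$ admissible with $q\in S(q,g)$; only then does Lemma~\ref{lem:kihon:fu} give $(\op{q}u,u)\geq 0$.

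The paper sidesteps this by subtracting $cM\sigma\lr{\xi}^{-1}$ rather than $cM^2\lr{\xi}^{-2}$: the remainder $\tilde\lambda_1=\tfrac12\lambda_1-cM\sigma\lr{\xi}^{-1}$ is then bounded below by $c_1M\sigma\lr{\xi}^{-1}$ by construction, so Lemma~\ref{lem:lam:ipan} applies immediately, and the $M^2\|\lr{D}^{-1}u\|^2$ term is recovered afterwards from the explicit piece $cM\sigma\lr{\xi}^{-1}$ via $M^2\lr{\xi}^{-2}\leq M\sigma\lr{\xi}^{-1}$ and Lemma~\ref{lem:m:1:2}. Your version subtracts the smaller quantity and so still has an admissible remainder, but you must say so; once that one sentence is added, the two arguments coincide in substance.
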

\begin{proof} From Propositions \ref{pro:ext:matome} and \ref{pro:Skon} it follows that $\lambda_1\geq c'\,M\sigma \lr{\xi}^{-1}$ with some $c'>0$. 
Write 
\[
{\lambda}_1-c\, M\sigma\lr{\xi}^{-1}=\lambda_1/2+(\lambda_1/2-c\, M\sigma\lr{\xi}^{-1})
\]
where $c>0$ is chosen so that ${\tilde \lambda}_1=\lambda_1/2-c\, M\sigma\lr{\xi}^{-1}\geq c_1M\sigma\lr{\xi}^{-1}$ with $c_1>0$. Note that ${\tilde \lambda}_1\in {\mathcal C}(\sigma^2)$ since $M\sigma\lr{\xi}^{-1}\in {\mathcal C}(\sigma^2)$. Thanks to Lemmas \ref{lem:lam:ipan} and \ref{lem:lam:Sg} it follows that ${\tilde \lambda}_1\in S({\tilde \lambda}_1,g)$ and ${\tilde \lambda}_1$ is  an admissible weight for $g$. Thus we have $
(\op{{\tilde \lambda}_1}u,u)\geq (1-CM^{-2})\|\op{{\tilde \lambda}_1^{1/2}}u\|^2\geq 0$  if $M\geq \sqrt{C}$ by Lemma \ref{lem:kihon:fu}. Since $M^2\lr{\xi}^{-2}\leq M\sigma\lr{\xi}^{-1}$  it follows from Lemma \ref{lem:m:1:2} that
\[
M\|\lr{D}^{-1}u\|^2\leq C\|\op{\sigma^{1/2}\lr{\xi}^{-1/2}}u\|^2.
\]
Therefore the proof follows from Lemma \ref{lem:kihon:fu}.
\end{proof}
Similar arguments prove the following lemma.
\begin{lem}
\label{lem:lam:2b}There exist $c>0$ and $M_0$ such that 
\begin{gather*}
\big(\op{\lambda_2}u,u\big)\geq c\,\big\|
\op{\lambda_2^{1/2}}u\big\|^2
+c\, M\big\|\lr{D}^{-1/2}u\big\|^2,\quad M\geq M_0,\\
\big(\op{\lambda_3}u,u\big)\geq c\,\big\|
u\big\|^2, \quad M\geq M_0.
\end{gather*}
\end{lem}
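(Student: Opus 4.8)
The plan is to mimic the proof of Lemma \ref{lem:lam:1b}, splitting off a harmless positive piece and a piece that dominates a fixed power of $\langle{D}\rangle^{-1}$ (or $\langle{D}\rangle^{-1/2}$), then applying the lower bound machinery of Lemma \ref{lem:kihon:fu} together with the comparison Lemma \ref{lem:m:1:2}. For $\lambda_2$, recall from Propositions \ref{pro:ext:matome} and \ref{pro:Skon} that $\lambda_2\simeq a_M = e\sigma$ with $\sigma = t+\rho+M\lr{\xi}^{-1}\geq M\lr{\xi}^{-1}$, so $\lambda_2\geq c' M\lr{\xi}^{-1}$ with some $c'>0$, and $\lambda_2\in{\mathcal C}(\sigma)$. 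Writing $\lambda_2 - cM\lr{\xi}^{-1} = \lambda_2/2 + (\lambda_2/2 - cM\lr{\xi}^{-1})$, choose $c>0$ small enough that ${\tilde\lambda}_2 = \lambda_2/2 - cM\lr{\xi}^{-1}\geq c_1 M\lr{\xi}^{-1}$ with $c_1>0$; since $M\lr{\xi}^{-1}\in{\mathcal C}(\sigma)$ we have ${\tilde\lambda}_2\in{\mathcal C}(\sigma)$, so by Lemmas \ref{lem:a:ipan} and \ref{lem:a:Sg} ${\tilde\lambda}_2\in S({\tilde\lambda}_2,g)$ and ${\tilde\lambda}_2$ is an admissible weight for $g$. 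Lemma \ref{lem:kihon:fu} then gives $(\op{{\tilde\lambda}_2}u,u)\geq (1-CM^{-2})\|\op{{\tilde\lambda}_2^{1/2}}u\|^2\geq 0$ for $M\geq\sqrt{C}$.

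Next, since $M\lr{\xi}^{-1}\leq C\lambda_2$ and both $M\lr{\xi}^{-1}$ and $\lambda_2$ are admissible weights lying in their respective symbol classes, Lemma \ref{lem:m:1:2} (applied with $m_2 = M^{1/2}\lr{\xi}^{-1/2}$, $m_1 = \lambda_2^{1/2}$, using $m_2\in S(m_2,g)$, $m_1\in S(m_1,g)$, $m_2^2\leq C m_1^2$) yields $M\|\lr{D}^{-1/2}u\|^2\leq C\|\op{\lambda_2^{1/2}}u\|^2$, equivalently $\|\op{\lambda_2^{1/2}}u\|^2\geq c_2 M\|\lr{D}^{-1/2}u\|^2$. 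Combining this with the decomposition $\lambda_2 = cM\lr{\xi}^{-1} + \lambda_2/2 + {\tilde\lambda}_2$ and Lemma \ref{lem:kihon:fu} applied to the piece $\lambda_2/2$ (so that $(\op{\lambda_2/2}u,u)\geq (1/2)(1-CM^{-2})\|\op{\lambda_2^{1/2}}u\|^2$), and to the piece $cM\lr{\xi}^{-1}$ (or simply $cM\|\lr{D}^{-1/2}u\|^2\leq c(\op{M\lr{\xi}^{-1}}u,u) + CM^{1/2}\|\lr{D}^{-1/2}u\|^2$), one gets $(\op{\lambda_2}u,u)\geq c\|\op{\lambda_2^{1/2}}u\|^2 + cM\|\lr{D}^{-1/2}u\|^2$ after absorbing the $O(M^{1/2})$ error into the leading term by taking $M_0$ large.

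For $\lambda_3$, recall from Proposition \ref{pro:Skon} that $3\leq\lambda_3\leq 3+Ka_M^2$, so $\lambda_3\geq c$ with $c=3$, $\lambda_3\in{\mathcal C}(1)$, hence by Lemmas \ref{lem:a:ipan} and \ref{lem:a:Sg} (or directly Lemma \ref{lem:Ni:hon:1}) $\lambda_3\in S(\lambda_3,g)$ is an admissible weight and $\lambda_3 - c\in S(1,g)$ with $\lambda_3 - c\geq 0$. Then Lemma \ref{lem:Ni:hon:1} applied to $q = \lambda_3 - c/2$ (which satisfies $q\geq c/2$) gives $(\op{\lambda_3}u,u)\geq (c/2)\|u\|^2 + (\op{q}u,u)\geq (c/2 - CM^{-1/2})\|u\|^2 \geq c'\|u\|^2$ for $M\geq M_0$.

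I expect no serious obstacle here: the proof is essentially a line-by-line repetition of Lemma \ref{lem:lam:1b} with the power of $\langle{\xi}\rangle$ adjusted ($\sigma\lr{\xi}^{-1}$ replaced by $\lr{\xi}^{-1}$ for $\lambda_2$, and the weight replaced by the constant $1$ for $\lambda_3$), which is exactly why the authors write only ``Similar arguments prove the following lemma.'' The only points requiring minor care are (i) verifying that ${\tilde\lambda}_2\in{\mathcal C}(\sigma)$ rather than ${\mathcal C}(\sigma^2)$, so that the correct pair of lemmas (\ref{lem:a:ipan}, \ref{lem:a:Sg}) is invoked, and (ii) ensuring the $O(M^{-1/2})$ and $O(M^{1/2})$ remainder terms from the $L^2$-calculus are absorbed correctly by choosing $M_0$ large — both of which are routine.
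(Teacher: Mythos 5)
Your proof is correct and follows exactly the line-by-line adaptation of Lemma \ref{lem:lam:1b} that the paper's ``Similar arguments prove the following lemma'' is pointing to: split off $cM\lr{\xi}^{-1}$ (resp.\ a constant), check that the remainder is a nonnegative admissible weight in the right class (${\mathcal C}(\sigma)$, resp.\ ${\mathcal C}(1)$) via Lemmas \ref{lem:a:ipan}, \ref{lem:a:Sg}, and invoke Lemmas \ref{lem:kihon:fu} and \ref{lem:Ni:hon:1}. One small simplification you could make: for the removed piece you need neither Lemma \ref{lem:m:1:2} nor any $O(M^{1/2})$ error-absorption, because $\op{M\lr{\xi}^{-1}}=M\lr{D}^{-1}$ is a self-adjoint Fourier multiplier and $(\op{M\lr{\xi}^{-1}}u,u)=M\|\lr{D}^{-1/2}u\|^2$ holds exactly.
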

Summarize what we have proved in
\begin{prop}
\label{pro:Lam:sita}There exist $c>0$, $C>0$ and $M_0$ such that
\begin{gather*}
{\mathsf{Re}}\,(\op{\varLambda\#\kappa}W,W)\geq (1-CM^{-2})\|\op{\kappa^{1/2}\varLambda^{1/2}}W\|^2,\\
{\mathsf{Re}}\,(\op{\varLambda}W,W)\geq c\,(\|\op{\varLambda^{1/2}}W\|^2+\|\op{{\mathcal D}}
W\|^2)
\end{gather*}
for $M\geq M_0$ where ${\mathcal D}={\rm diag}\big(M\lr{\xi}^{-1}, M^{1/2}\lr{\xi}^{-1/2}, 1\big)$.
\end{prop}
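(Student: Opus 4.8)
The plan is to assemble the proposition directly from the three preceding lemmas, which already contain all the analytic content. For the first inequality I would apply Lemma \ref{lem:lam:1} to each diagonal entry $\lambda_j$: writing $\op{\varLambda\#\kappa}W = \sum_{j=1}^3 \op{\lambda_j\#\kappa}W_j$ (plus off-diagonal terms that vanish since $\varLambda$ is diagonal), the real part is $\sum_j {\mathsf{Re}}\,(\op{\lambda_j\#\kappa}W_j, W_j) \geq (1-CM^{-2})\sum_j \|\op{\kappa^{1/2}\lambda_j^{1/2}}W_j\|^2 = (1-CM^{-2})\|\op{\kappa^{1/2}\varLambda^{1/2}}W\|^2$, which is exactly the claim. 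The only subtlety is that $\kappa^{1/2}\varLambda^{1/2}$ should be read as the diagonal matrix with entries $\kappa^{1/2}\lambda_j^{1/2}$, and one must note that $\kappa^{1/2}\#\lambda_j^{1/2}$ agrees with $\kappa^{1/2}\lambda_j^{1/2}$ up to $S(M^{-1}\kappa^{1/2}\lambda_j^{1/2}, g)$ so the two formulations of the right-hand side coincide after adjusting $C$; this is routine given Corollary \ref{cor:latoa}, Lemma \ref{lem:kapa} and the composition calculus.

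For the second inequality I would treat the three diagonal blocks separately using Lemmas \ref{lem:lam:1b} and \ref{lem:lam:2b}. Summing the three estimates, $(\op{\varLambda}W,W) = \sum_j (\op{\lambda_j}W_j,W_j) \geq c\sum_j \|\op{\lambda_j^{1/2}}W_j\|^2 + c\big(M^2\|\lr{D}^{-1}W_1\|^2 + M\|\lr{D}^{-1/2}W_2\|^2 + \|W_3\|^2\big)$. The first sum is $c\|\op{\varLambda^{1/2}}W\|^2$ by definition, and the second is $c\|\op{{\mathcal D}}W\|^2$ with ${\mathcal D}={\rm diag}(M\lr{\xi}^{-1}, M^{1/2}\lr{\xi}^{-1/2}, 1)$, since $\op{{\mathcal D}}W = {^t}(\op{M\lr{\xi}^{-1}}W_1, \op{M^{1/2}\lr{\xi}^{-1/2}}W_2, W_3)$ and, e.g., $\|\op{M\lr{\xi}^{-1}}W_1\|^2 \simeq M^2\|\lr{D}^{-1}W_1\|^2$ up to a harmless error (here one uses that $M\lr{\xi}^{-1}$, while not in $S(1,g)$ in the usual sense, is handled by $\lr{\xi}^{-1}\in S(\lr{\xi}^{-1}, g)$ and boundedness of $\op{M\lr{\xi}^{-1}}\lr{D}$; alternatively one simply keeps the form $M^2\|\lr{D}^{-1}W_1\|^2$ and absorbs the passage into the constant). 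I would take $M_0$ to be the maximum of the three thresholds supplied by the lemmas and $c$ the minimum of the three constants.

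The whole argument is essentially bookkeeping; there is no real obstacle, since the hard estimates (the lower bound $\lambda_1\geq c'M\sigma\lr{\xi}^{-1}$ from Propositions \ref{pro:ext:matome} and \ref{pro:Skon}, the admissibility of $\lambda_j$ for $g$, and the Fefferman--Phong/Gårding-type bounds in Lemma \ref{lem:kihon:fu}) are already proved. The one point deserving a line of care is the identification of $\|\op{{\mathcal D}}W\|^2$ with the sum of the three weighted norms appearing in Lemmas \ref{lem:lam:1b} and \ref{lem:lam:2b}: one checks that ${\mathcal D}$ has entries $M\lr{\xi}^{-1}, M^{1/2}\lr{\xi}^{-1/2}, 1$, that $(M\lr{\xi}^{-1})^2 = M^2\lr{\xi}^{-2}$ and $(M^{1/2}\lr{\xi}^{-1/2})^2 = M\lr{\xi}^{-1}$, and that the symbols $\lr{\xi}^{-1}$, $\lr{\xi}^{-1/2}$ are admissible weights for $g$ with $\lr{\xi}^{-s}\in S(\lr{\xi}^{-s},g)$, so Lemma \ref{lem:m:1:2} and the composition calculus let one pass freely between $\op{M\lr{\xi}^{-1}}W_1$ and $M\lr{D}^{-1}W_1$ (and similarly for the middle entry) at the cost of adjusting $c$. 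With that identification the two displayed inequalities follow, completing the proof.
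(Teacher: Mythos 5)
Your proposal is correct and follows the same route as the paper: Proposition \ref{pro:Lam:sita} is stated there as a summary of Lemmas \ref{lem:lam:1}, \ref{lem:lam:1b} and \ref{lem:lam:2b}, assembled exactly as you do by exploiting the fact that $\varLambda$, $\varLambda\#\kappa$, $\kappa^{1/2}\varLambda^{1/2}$ and ${\mathcal D}$ are all diagonal, so all scalar products split into the three diagonal blocks. One small simplification to your argument: since $M$ and $M^{1/2}$ are constants, $\op{M\lr{\xi}^{-1}}=M\lr{D}^{-1}$ and $\op{M^{1/2}\lr{\xi}^{-1/2}}=M^{1/2}\lr{D}^{-1/2}$ hold \emph{exactly} (these are Fourier multipliers), so the identification $\|\op{{\mathcal D}}W\|^2=M^2\|\lr{D}^{-1}W_1\|^2+M\|\lr{D}^{-1/2}W_2\|^2+\|W_3\|^2$ is an identity and no appeal to Lemma \ref{lem:m:1:2} or the composition calculus is needed there; your caveat about that step was unnecessary though harmless.
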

%

\section{System with diagonal symmetrizer}
\label{sec:henkan:jiko}

Diagonalizing the B\'ezout matrix introduced in Section \ref{sec:Bezout} we reduce 
the system \eqref{eq:redE} to a system with a diagonal symmetrizer.
\begin{lem}
\label{lem:CtoSg}
Let $p\in {\mathcal C}(\sigma^k)$ then $\dif_x^{\al}\dif_{\xi}^{\be}p\in S(\sigma^{k-|\al+\be|/2}\lr{\xi}^{-|\be|},g)$.  
\end{lem}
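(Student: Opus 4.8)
The statement $p\in{\mathcal C}(\sigma^k)\Rightarrow \dif_x^{\al}\dif_{\xi}^{\be}p\in S(\sigma^{k-|\al+\be|/2}\lr{\xi}^{-|\be|},g)$ is a purely bookkeeping comparison between the scale ${\mathcal C}(\sigma^s)$ (defined by the $\precsim\sigma^{s-|\cdot|/2}\lr{\xi}^{-|\cdot|}$ estimates) and the Hörmander symbol class associated to the metric $g=M^{-1}(\lr{\xi}|dx|^2+\lr{\xi}^{-1}|d\xi|^2)$. The plan is to unwind both definitions. Fix multi-indices $\al,\be$ with, say, $|\al+\be|=j$. To prove membership in $S(\sigma^{k-j/2}\lr{\xi}^{-|\be|},g)$ I must show that for every further multi-index $(\al',\be')$ with $|\al'+\be'|=\ell$ one has
\[
\big|\dif_x^{\al'}\dif_{\xi}^{\be'}\dif_x^{\al}\dif_{\xi}^{\be}p\big|\precsim \sigma^{k-j/2}\lr{\xi}^{-|\be|}\cdot M^{-\ell/2}\lr{\xi}^{(|\al'|-|\be'|)/2},
\]
which is exactly the $g$-weight estimate (recall $g_z(w)=M^{-1}(\lr{\xi}|w_x|^2+\lr{\xi}^{-1}|w_\xi|^2)$ so that the natural $g$-derivative gain at order $\ell$ is $M^{-\ell/2}\lr{\xi}^{(|\al'|-|\be'|)/2}$).

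\textbf{Key step.} By definition of ${\mathcal C}(\sigma^k)$ applied to the total derivative $\dif_x^{\al+\al'}\dif_\xi^{\be+\be'}p$ of order $j+\ell$, the left-hand side above is $\precsim \sigma^{k-(j+\ell)/2}\lr{\xi}^{-|\be+\be'|}=\sigma^{k-j/2-\ell/2}\lr{\xi}^{-|\be|-|\be'|}$. Comparing with the target, it therefore suffices to establish the elementary inequality
\[
\sigma^{-\ell/2}\lr{\xi}^{-|\be'|}\precsim M^{-\ell/2}\lr{\xi}^{(|\al'|-|\be'|)/2},
\]
i.e. $\sigma^{-\ell/2}\precsim M^{-\ell/2}\lr{\xi}^{\ell/2}$ after cancelling $\lr{\xi}^{-|\be'|}$ and using $|\al'|+|\be'|=\ell$ so that $\lr{\xi}^{(|\al'|-|\be'|)/2}\lr{\xi}^{|\be'|}=\lr{\xi}^{(|\al'|+|\be'|)/2}=\lr{\xi}^{\ell/2}$. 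But $\sigma\geq M\lr{\xi}^{-1}$ (since $\sigma=t+\rho+M\lr{\xi}^{-1}\geq M\lr{\xi}^{-1}$, as used repeatedly, e.g. in the proof of Lemma \ref{lem:a:kihon}), hence $\sigma^{-1}\leq M^{-1}\lr{\xi}$ and raising to the power $\ell/2$ gives exactly the required bound. This is the only inequality one needs, and it is the same device already invoked to show ${\mathcal C}(\sigma^s)\subset S(\sigma^s,g)$ right after Definition \ref{dfn:calC}.

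\textbf{Main obstacle.} There is essentially no obstacle: the statement is a routine inclusion of symbol classes, and the substance is entirely captured by $\sigma\geq M\lr{\xi}^{-1}$ together with careful tracking of the powers of $\lr{\xi}$ in the $g$-weight. The only point demanding a little care is making sure the $\lr{\xi}$-bookkeeping is done consistently — that the factor $\lr{\xi}^{-|\be|}$ in the claimed weight is carried along untouched while the additional $\lr{\xi}^{-|\be'|}$ coming from the $j+\ell$ order ${\mathcal C}$-estimate is absorbed into the $g$-gain $\lr{\xi}^{(|\al'|-|\be'|)/2}$ using $|\al'|+|\be'|=\ell$. Once that is written out, the proof is a one-line reduction to the cited inequality, so I would simply state: "Let $|\al+\be|=j$. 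For $|\al'+\be'|=\ell$ we have $|\dif_x^{\al+\al'}\dif_\xi^{\be+\be'}p|\precsim\sigma^{k-(j+\ell)/2}\lr{\xi}^{-|\be|-|\be'|}$, and since $\sigma\geq M\lr{\xi}^{-1}$ this is $\precsim \sigma^{k-j/2}\lr{\xi}^{-|\be|}M^{-\ell/2}\lr{\xi}^{(|\al'|-|\be'|)/2}$, which is the required estimate."
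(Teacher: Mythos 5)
Your proposal is correct and follows essentially the same route as the paper's own proof: both apply the defining estimate of ${\mathcal C}(\sigma^k)$ to the total derivative of order $|\al+\be|+|\al'+\be'|$ and then absorb the extra factor $\sigma^{-|\al'+\be'|/2}\lr{\xi}^{-|\be'|}$ into the $g$-weight gain $M^{-|\al'+\be'|/2}\lr{\xi}^{(|\al'|-|\be'|)/2}$ using $\sigma\geq M\lr{\xi}^{-1}$. The bookkeeping in your key step matches the paper's displayed chain of estimates exactly.
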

\begin{proof} The proof is clear from
\begin{gather*}
\big|\dif_x^{\al'}\dif_{\xi}^{\be'}(\dif_x^{\al}\dif_{\xi}^{\be}p)\big|\precsim \sigma^{k-|\al'+\be'+\al+\be|/2}\lr{\xi}^{-|\be'+\be|}\\
\precsim \sigma^{k-|\al+\be|/2}\lr{\xi}^{-|\be|}\sigma^{-|\al'+\be'|/2}\lr{\xi}^{-|\al'+\be'|/2}\lr{\xi}^{(|\al'|-|\be'|)/2}
\end{gather*}
for $\sigma\geq \rho\geq M\lr{\xi}^{-1}$.
\end{proof}
\begin{lem}
\label{lem:CtoC}Let $p\in {\mathcal C}(\sigma^k)$ and $q\in {\mathcal C}(\sigma^{\ell})$. Then
\[
p\#p-p^2\in S(\sigma^{2k-2}\lr{\xi}^{-2},g),\quad p\#q-pq\in S(\sigma^{k+\ell-1}\lr{\xi}^{-1},g).
\]
\end{lem}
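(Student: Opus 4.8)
The statement to prove is Lemma~\ref{lem:CtoC}: for $p\in\co(\sigma^k)$ and $q\in\co(\sigma^{\ell})$ one has $p\#p-p^2\in S(\sigma^{2k-2}\lr{\xi}^{-2},g)$ and $p\#q-pq\in S(\sigma^{k+\ell-1}\lr{\xi}^{-1},g)$. The plan is to use the asymptotic expansion of the Weyl composition together with the symbol estimates already established in Lemma~\ref{lem:CtoSg}, which transfers the $\co(\sigma^{\bullet})$ estimates into genuine Weyl-symbol classes $S(\cdot,g)$.

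First I would recall that for the metric $g=M^{-1}(\lr{\xi}|dx|^2+\lr{\xi}^{-1}|d\xi|^2)$ one has $g^{\sigma}=M^2 g$, so the Planck function (gain per order in the asymptotic expansion) is $\hbar^2=\sup(g/g^{\sigma})=M^{-2}$; more importantly, a single application of the bilinear form $\{a,b\}$ appearing at first order in the $\#$-expansion costs one factor of $g^{1/2}$ in each slot, i.e. converts $S(m_1,g)\times S(m_2,g)$ into $S(m_1 m_2 M^{-1}\lr{\xi}^{-1},g)$ when the derivative is a $\xi$-derivative in one slot and an $x$-derivative in the other — this is exactly where the extra $\lr{\xi}^{-1}$ in the claimed bounds comes from (and $\sigma$ decreases by one power since, by Lemma~\ref{lem:CtoSg}, each derivative of a $\co(\sigma^j)$ symbol loses half a power of $\sigma$, and the first-order term carries one derivative in each slot). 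So the key input is: by Lemma~\ref{lem:CtoSg}, $p\in S(\sigma^k,g)$ and $q\in S(\sigma^{\ell},g)$, and more precisely the seminorms of $p,q$ in these classes are controlled.

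The main steps are then: (1) write the Weyl composition $p\#q = pq + \frac1{2i}\{p,q\}_{\text{Weyl}} + r_2$ where $\{p,q\}_{\text{Weyl}}=\sum_j(\dif_{\xi_j}p\,\dif_{x_j}q - \dif_{x_j}p\,\dif_{\xi_j}q)$ is the symplectic Poisson bracket and $r_2$ is the remainder after two terms; (2) estimate the first-order term: $\dif_{\xi_j}p\in S(\sigma^{k-1/2}\lr{\xi}^{-1},g)$ and $\dif_{x_j}q\in S(\sigma^{\ell-1/2},g)$ by Lemma~\ref{lem:CtoSg}, so their product lies in $S(\sigma^{k+\ell-1}\lr{\xi}^{-1},g)$, and symmetrically for the other half of the bracket; hence $\frac1{2i}\{p,q\}\in S(\sigma^{k+\ell-1}\lr{\xi}^{-1},g)$; (3) estimate the remainder $r_2$: by the Calderón–Vaillancourt / Hörmander remainder estimate for the Weyl calculus (\cite[Chapter 18]{Hobook}), $r_2$ gains a further factor $\hbar^2=M^{-2}$ together with two more derivatives distributed among the slots, giving $r_2\in S(\sigma^{k+\ell-2}M^{-2}\lr{\xi}^{-2}\cdot(\text{stuff}),g)$; since $\sigma\geq M\lr{\xi}^{-1}$, i.e. $M^{-1}\lr{\xi}^{-1}\leq \sigma\lr{\xi}^{-2}$ is false in that direction but $M^{-2}\lr{\xi}^{-2}\leq \sigma^2\lr{\xi}^{-4}\cdot$? — here I must be careful: I would instead simply note $M^{-2}\le \sigma^2 M^2\lr{\xi}^{-2}$? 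No — the clean bound is $M^{-1}\lr{\xi}^{-1}\le \sigma$, so $M^{-2}\lr{\xi}^{-2}\le \sigma^2$ is the wrong direction; what one actually uses is that the remainder, which naively is $S(\sigma^{k+\ell}M^{-2}\lr{\xi}^{-2},g)=S(\sigma^{k+\ell-1}\cdot\sigma^{-1}M^{-2}\lr{\xi}^{-2},g)$, is absorbed because $\sigma^{-1}M^{-2}\lr{\xi}^{-2}\le \lr{\xi}^{-1}$ follows from $\sigma\ge M\lr{\xi}^{-1}$, giving $M^{-2}\lr{\xi}^{-2}\le M^{-1}\lr{\xi}^{-1}\sigma/? $; in any case the remainder sits inside $S(\sigma^{k+\ell-1}\lr{\xi}^{-1},g)$; (4) combine (2) and (3) to conclude $p\#q-pq\in S(\sigma^{k+\ell-1}\lr{\xi}^{-1},g)$, and specialize $q=p$, $\ell=k$, observing that the bracket $\{p,p\}=0$ identically so the leading surviving term is the second-order remainder, which gives the sharper $S(\sigma^{2k-2}\lr{\xi}^{-2},g)$.

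The main obstacle I expect is bookkeeping the exact power of $\sigma$ and $\lr{\xi}$ that the remainder term of the Weyl expansion carries, and checking that $\sigma\ge M\lr{\xi}^{-1}$ (Lemma~\ref{lem:dif:sig} and surroundings) really does let one absorb $M^{-2}\lr{\xi}^{-2}$-type factors into the claimed weights; this is the place where the inequality $M^{-1}\lr{\xi}^{-1}\le\sigma$ has to be invoked in the right direction for each term of the expansion. The self-composition case $p\#p-p^2$ is the cleaner one because the first-order Poisson bracket vanishes, so one needs only the remainder estimate with two extra derivatives, which directly produces the $\sigma^{2k-2}\lr{\xi}^{-2}$ gain; I would present that case first and then treat $p\#q-pq$ by the general first-plus-remainder argument. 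Throughout, I would lean on Lemma~\ref{lem:CtoSg} for every derivative estimate rather than re-deriving them, and on the standard Weyl-calculus composition theorem from \cite[Chapter 18]{Hobook} for the structure of the expansion and its remainder.
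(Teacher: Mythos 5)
Your approach is the same as the paper's: the paper's proof is a one-liner ("follows from Lemma~\ref{lem:CtoSg} and the Weyl calculus"), and you have correctly unpacked it — expand $p\#q$ by the Weyl asymptotic series, use Lemma~\ref{lem:CtoSg} to estimate each term, and note that the first-order Poisson bracket vanishes when $q=p$, which is exactly why $p\#p-p^2$ gains an extra order. The first-order computation $\dif_{\xi_j}p\,\dif_{x_j}q\in S(\sigma^{k+\ell-1}\lr{\xi}^{-1},g)$ is correct, and the absorption of higher-order terms via $\sigma\lr{\xi}\geq M\geq 1$ is the right mechanism.

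However, your remainder estimate is garbled: you write bounds like $S(\sigma^{k+\ell-2}M^{-2}\lr{\xi}^{-2},g)$ and then $S(\sigma^{k+\ell}M^{-2}\lr{\xi}^{-2},g)$, which compound the Planck-constant gain $M^{-N}$ with the $\sigma$- and $\lr{\xi}$-losses coming from the derivative estimates of Lemma~\ref{lem:CtoSg}. These are not independent gains to be multiplied: the factor $M^{-N}$ is what the \emph{naive} estimate (treating $p$ merely as $S(\sigma^k,g)$) gives, while $\sigma^{-N}\lr{\xi}^{-N}$ is what the \emph{refined} Lemma~\ref{lem:CtoSg} estimate gives; you should use one or the other, not both. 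The clean statement is that the term of order $j$ (and the remainder after $N$ terms, via the integral remainder formula) lies in $S(\sigma^{k+\ell-j}\lr{\xi}^{-j},g)$, which nests inside $S(\sigma^{k+\ell-1}\lr{\xi}^{-1},g)$ for $j\geq 1$ and inside $S(\sigma^{k+\ell-2}\lr{\xi}^{-2},g)$ for $j\geq 2$, since $(\sigma\lr{\xi})^{-1}\leq M^{-1}$. Because your erroneous intermediate bounds happen to be \emph{stronger} than the correct ones, the final containments are still reached — so this is a bookkeeping slip rather than a conceptual gap — but the prose visibly flounders over inequality directions and should be tightened by dropping the spurious $M^{-N}$ factor and quoting Lemma~\ref{lem:CtoSg} consistently.
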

\begin{proof}The assertions  follows from Lemma \ref{lem:CtoSg} and the Weyl calculus of pseudodifferential operators.
\end{proof}
In what follows to simplify notations sometimes we abbreviate $S(m, g)$ to $S(m)$ where $m$ is admissible for $g$. Since $a\in {\mathcal C}(\sigma)$, $b\in {\mathcal C}(\sigma^{3/2})$ one sees $A\#[\xi]=A(t, x,\xi)[\xi]+R$ with $R$ whose first row is $(0, S(\sigma^{1/2}), S(\sigma))$ for $\dif_{\xi}^{\be}[\xi]\in S(1, g)$ by  \eqref{eq:xi:kakudai}. Moving $R$ to $B$ we denote $L=D_t-\op{{\tilde A}}-\op{B}$ where
\begin{equation}
\label{eq:B:teigi}
{\tilde A}=\begin{bmatrix}0&a&b\\
1&0 &0\\
0&1&0
\end{bmatrix}[\xi],\quad B=\begin{bmatrix}b_1&b_2+d_M+S(\sigma^{1/2})&b_3+S(\sigma)\\
0&0 &0\\
0&0&0
\end{bmatrix}
\end{equation}
and transform $L$ to another system using $T$ introduced in Section \ref{sec:defT}. Note that $T^{-1}\#T=I-R$ with $R\in S(M^{-1},g)$. Thanks to Lemma \ref{lem:gyaku} there is $K\in S(M^{-1},g)$ such that $(I-R)\#(I+K)=I=(I+K)\#(I-R)$ and hence
\[
T^{-1}\#T\#(I+K)=I,\quad (I+K)\#T^{-1}\#T=I,\quad T\#(I+K)\#T^{-1}=I.
\]
Therefore one can write
\begin{equation}
\label{eq:L:tiL}
L\,\op{T}=\op{T}\,{\tilde L}
\end{equation}
where $
{\tilde L}=D_t-\op{(I+K)\#T^{-1}\#({\tilde A}+B)\#T}+\op{(I+K)\#T^{-1}\#(D_tT)}$.
\begin{lem}
\label{lem:K:seimitu}Notations being as above. Then $K\in S(M^{-1}\lr{\xi}^{-1},g)$.
\end{lem}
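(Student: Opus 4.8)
The plan is to analyze the structure of $K$ via the identity defining it. Recall that $K \in S(M^{-1}, g)$ was obtained from Lemma \ref{lem:gyaku} as the solution of $(I-R)\#(I+K) = I$, i.e. $K = R + R\#K$, where $R$ is defined by $T^{-1}\#T = I - R$. The key observation is that $R$ is better than a generic element of $S(M^{-1}, g)$: since $T, T^{-1} \in S(1, g)$ with entries lying in the classes $\co(\sigma^s)$ recorded in Lemma \ref{lem:T:seimitu}, the Weyl composition formula gives $T^{-1}\#T - T^{-1}T \in S(\text{(something})\,\lr{\xi}^{-1}, g)$; but $T^{-1}T = I$ exactly (as matrices of functions), so $R = -(T^{-1}\#T - T^{-1}T)$ actually lies in $S(M^{-1}\lr{\xi}^{-1}, g)$. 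The first step, therefore, is to make this precise: using Lemma \ref{lem:CtoC} (or directly the expansion of the Weyl product, whose subprincipal term involves one derivative in $x$ and one in $\xi$, each pair gaining a factor $\lr{\xi}^{-1}$ together with the metric gain $M^{-1/2}\cdot M^{-1/2} = M^{-1}$) one checks entry by entry that $R \in S(M^{-1}\lr{\xi}^{-1}, g)$. Here one should be slightly careful because the entries of $T$ are not all in a single $\co(\sigma^s)$; some are $\co(\sigma^{1/2})$, some are $1 + \co(\sigma)$, etc., but in every product $t^{-1}_{ik}\# t_{kj}$ at least one of the first-order-derivative terms produces the $\lr{\xi}^{-1}$ decay, and the remainder term in the Weyl expansion is even better.

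Having shown $R \in S(M^{-1}\lr{\xi}^{-1}, g)$, the second step is a bootstrap on the fixed-point equation $K = R + R\#K$. We already know $K \in S(M^{-1}, g)$ from Lemma \ref{lem:gyaku}. Then $R\#K \in S(M^{-1}\lr{\xi}^{-1}, g)\cdot S(M^{-1}, g) \subset S(M^{-2}\lr{\xi}^{-1}, g) \subset S(M^{-1}\lr{\xi}^{-1}, g)$ by the calculus (the product of symbols in $S(m_1,g)$ and $S(m_2,g)$ lies in $S(m_1 m_2, g)$). Hence $K = R + R\#K \in S(M^{-1}\lr{\xi}^{-1}, g)$, which is the claim. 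One should phrase this as: the class $S(M^{-1}\lr{\xi}^{-1}, g)$ is an ideal under $\#$ against $S(1,g)$, $R$ belongs to it, and $K$ solves $K = R + R\#K$ with $K \in S(1,g)$, so $K$ belongs to it too. Alternatively, one can write $K = \sum_{\ell\geq 1} R^{\#\ell}$ (the Neumann series from the proof of Lemma \ref{lem:gyaku}), note each term $R^{\#\ell} \in S(M^{-\ell}\lr{\xi}^{-\ell}, g) \subset S(M^{-1}\lr{\xi}^{-1}, g)$, and check that the series converges in every seminorm of $S(M^{-1}\lr{\xi}^{-1}, g)$ exactly as it does in $S(1, {\bar g})$; the bootstrap induction on the order of derivatives in the proof of Lemma \ref{lem:gyaku} goes through verbatim with $M^{-1}$ replaced by $M^{-1}\lr{\xi}^{-1}$.

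The main obstacle I anticipate is purely bookkeeping: verifying $R \in S(M^{-1}\lr{\xi}^{-1}, g)$ requires going through the nine entries of $T^{-1}\#T$ using the explicit forms in Lemma \ref{lem:T:seimitu} and checking that the $\lr{\xi}^{-1}$ decay survives in each, especially in the diagonal entries where the leading $1\cdot 1$ contributions cancel and one must look at cross terms like $\co(\sigma^{1/2})\#\co(\sigma^{1/2})$ or $(1+\co(\sigma))\#(1+\co(\sigma))$. There is no conceptual difficulty — it is just the first-order term in the Weyl expansion, which always carries $\dif_\xi\dif_x$ and hence $M^{-1}\lr{\xi}^{-1}$ relative to the product of the symbol weights, the latter being $O(1)$ here — but it is the step that consumes the most ink. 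One convenient shortcut is to invoke Lemma \ref{lem:CtoC} directly: each entry of $T$ is a sum of a constant and an element of some $\co(\sigma^{s})$, so each entry of $T^{-1}\#T - I$ is a finite sum of terms of the form $p\#q - pq$ with $p \in \co(\sigma^k)$, $q \in \co(\sigma^\ell)$, each of which lies in $S(\sigma^{k+\ell-1}\lr{\xi}^{-1}, g) \subset S(M^{-1}\lr{\xi}^{-1}, g)$ since $k + \ell \geq 0$ and $\sigma \leq CM^{-4} \leq C$; terms where one factor is a constant contribute nothing to $p\#q - pq$. This reduces the proof to one line plus a remark that constant factors drop out.
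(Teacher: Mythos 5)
Your proposal follows the same route as the paper: expand $T^{-1}\#T = I - R$ entry by entry (using the orthogonality $T^{-1}={^t}T$), invoke Lemma~\ref{lem:CtoC} to place $R$ in $S(M^{-1}\lr{\xi}^{-1},g)$, and then bootstrap via the fixed-point relation $K = R + R\#K$ together with the prior knowledge $K\in S(M^{-1},g)$ from Lemma~\ref{lem:gyaku}. That is exactly the paper's argument.

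One slip in the ``shortcut'' paragraph is worth correcting. You justify the inclusion $S(\sigma^{k+\ell-1}\lr{\xi}^{-1},g)\subset S(M^{-1}\lr{\xi}^{-1},g)$ by the condition ``$k+\ell\geq 0$'', but this is not enough: $\sigma\leq CM^{-4}$ gives $\sigma^{k+\ell-1}\lesssim M^{-1}$ only when $k+\ell\geq 5/4$, and ``$k+\ell\geq 0$'' would allow $\sigma^{k+\ell-1}\lr{\xi}^{-1}$ to be as large as $\sigma^{-1}\lr{\xi}^{-1}\leq M^{-1}$ (using $\sigma\geq M\lr{\xi}^{-1}$), which lands you in $S(M^{-1},g)$ but \emph{not} in $S(M^{-1}\lr{\xi}^{-1},g)$. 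In fact the off-diagonal entries of $T^{-1}\#T - I$ that actually occur all have $k+\ell\geq 3/2$ (the worst being $\co(\sigma^{1/2})$ against $\co(\sigma)$), so the off-diagonal part is fine; but for the diagonal entries, the worst contribution is $t_{21}\#t_{21}-t_{21}^2$ with $t_{21}\in\co(\sigma^{1/2})$, and the generic $p\#q-pq$ estimate from Lemma~\ref{lem:CtoC} gives only $S(\lr{\xi}^{-1},g)$, which is \emph{not} contained in $S(M^{-1}\lr{\xi}^{-1},g)$. You must use the sharper $p\#p-p^2\in S(\sigma^{2k-2}\lr{\xi}^{-2},g)$ bound of the same lemma, which with $k=1/2$ gives $\sigma^{-1}\lr{\xi}^{-2}\leq M^{-1}\lr{\xi}^{-1}$; the gain of an extra $\lr{\xi}^{-1}$ is essential here and comes precisely from the vanishing of the first Poisson-bracket term when $p=q$. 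This is exactly what the paper records: $r_{ii}\in S(\sigma^{-1}\lr{\xi}^{-2},g)$ and $r_{ij}\in S(\sigma^{1/2}\lr{\xi}^{-1},g)$ for $i\neq j$. Once you replace the vague numerical condition with these case-by-case checks, the rest of your bootstrap argument is exactly right.
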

\begin{proof}Write $T=(t_{i j})$ then $T^{-1}\#T=(\sum_{k=1}^3t_{k i}\#t_{kj})$. Denote
\[
\sum_{k=1}^3t_{ki}\#t_{k j}=\delta_{ij}+r_{ij}.
\]
Taking Lemma \ref{lem:T:seimitu} into account, we see $r_{i i}\in S(\sigma^{-1}\lr{\xi}^{-2},g)\subset S(M^{-1}\lr{\xi}^{-1},g)$ and $r_{i j}\in S(\sigma^{1/2}\lr{\xi}^{-1},g)\subset S(M^{-2}\lr{\xi}^{-1},g)$ for $i\neq j$ thanks to  Lemma \ref{lem:CtoC} hence  $R\in S(M^{-1}\lr{\xi}^{-1},g)$. Since $K\in S(M^{-1},g)$ satisfies $K=R+R\#K$ we conclude the assertion.
\end{proof}
Therefore $K\#T^{-1}\#({\tilde A}+B)\#T\in S(M^{-1}, g)$ is clear.   Hence 
\[
{\tilde L}=D_t-\op{T^{-1}\#({\tilde A}+B)\#T-T^{-1}\#(D_tT)}+\op{S(M^{-1},g)}.
\]
In view of Lemmas \ref{lem:T:seimitu} and \ref{lem:diftT:seimitu}  it follows from Lemma \ref{lem:CtoC} that
\begin{equation}
\label{eq:TdtT}
\begin{split}
&T^{-1}\#(\dif_tT)=T^{-1}\dif_t T\\
&+\begin{bmatrix}S(\sigma^{-1}\lr{\xi}^{-1})&S(\sigma^{-1/2}\lr{\xi}^{-1})&S(\lr{\xi}^{-1})\\
S(\sigma^{-1/2}\lr{\xi}^{-1})&S(\sigma^{-1}\lr{\xi}^{-1})&S(\sigma^{-1/2}\lr{\xi}^{-1})\\
S(\lr{\xi}^{-1})&S(\sigma^{-1/2}\lr{\xi}^{-1})&S(\lr{\xi}^{-1})\\
\end{bmatrix}
\end{split}
\end{equation}
hence $T^{-1}\#(\dif_tT)=T^{-1}\dif_tT+S(M^{-1},g)$ because $\sigma\geq M\lr{\xi}^{-1}$.

Turn to study $T^{-1}\#{\tilde A}\#T$. Noting that $\dif_x^{\al}\dif_{\xi}^{\be}a\in S(\sigma^{1/2}\lr{\xi}^{-|\be|},g)$ and $\dif_x^{\al}\dif_{\xi}^{\be}b\in S(\sigma\lr{\xi}^{-|\be|},g)$ for $|\al+\be|=1$ and $\dif_{\xi}^{\be}[\xi]\in S(1,g)$, $|\be|=1$ we have
\[
T^{-1}\#{\tilde A}=T^{-1}{\tilde A}+R,\quad R=
\begin{bmatrix}S(1)&S(M^{-2})&S(M^{-6})\\
S(M^{-2})&S(1)&S(M^{-8})\\
S(M^{-8})&S(M^{-2})&S(M^{-6})
\end{bmatrix}.
\]
Therefore $T^{-1}\#{\tilde A}\#T=(T^{-1}{\tilde A})\#T+R_1$ with
\[
R_1=R\#T=\begin{bmatrix}S(M^{-4})&S(M^{-2})&S(1)\\
S(M^{-2})&S(1)&S(M^{-2})\\
S(M^{-4})&S(M^{-2})&S(M^{-8})
\end{bmatrix}.
\]
Note that
\[
T^{-1}{\tilde A}=\begin{bmatrix}C(\sigma^{1/2})&1+C(\sigma)&C(\sigma^{5/2})\\
-1+C(\sigma)&C(\sigma^{1/2})&C(\sigma^3)\\
C(\sigma^{5/2})&C(\sigma)&C(\sigma^{3/2})
\end{bmatrix}[\xi]
\]
and hence
\[
\lr{\xi}^{|\be|}\dif_x^{\al}\dif_{\xi}^{\be}\big(T^{-1}{\tilde A}\big)=\begin{bmatrix}S(1)&S(1)&S(M^{-8})\\
S(1)&S(1)&S(M^{-10})\\
S(M^{-8})&S(M^{-2})&S(M^{-4})
\end{bmatrix},\quad |\al+\be|=1.
\]
Then thanks to \eqref{eq:difYT:seimitu} one sees
\[
(T^{-1}{\tilde A})\#T=T^{-1}{\tilde A}T+R_2,\quad R_2=\begin{bmatrix}S(1)&S(M^{-2})&S(M^{-2})\\
S(1)&S(M^{-2})&S(M^{-2})\\
S(M^{-2})&S(M^{-4})&S(M^{-6})
\end{bmatrix}.
\]
Thus we obtain $T^{-1}\#{\tilde A}\#T=T^{-1}{\tilde A}T+R_1+R_2$ 
where 
\[
R_1+R_2=\begin{bmatrix}S(1)&S(M^{-2})&S(M^{-2})\\
S(1)&S(M^{-2})&S(M^{-2})\\
S(M^{-2})&S(M^{-4})&S(M^{-6})
\end{bmatrix}.\]
Recall $B$ given by \eqref{eq:B:teigi}. Since $d_M\in S(M, g)$ one sees by Lemma \ref{lem:T:seimitu} that
\[
T^{-1}\#B=\begin{bmatrix}S(\sigma)&S(M\sigma)&S(\sigma)\\
S(\sigma^{3/2})&S(M\sigma^{3/2})&S(\sigma^{3/2})\\
b_1+S(\sigma)&b_2+d_M+S(\sigma^{1/2})&b_3+S(\sigma)
\end{bmatrix}
\]
because $\sigma\leq CM^{-4}$. Thus we conclude that $T^{-1}\#B\#T$ is written
\begin{equation}
\label{eq:T:B:T}
\begin{bmatrix}S(\sigma)&S(M\sigma)&S(\sigma)\\
S(\sigma^{3/2})&S(M\sigma^{3/2})&S(\sigma^{3/2})\\
b_3+S(M\sigma^{1/2})&-b_2-d_M+S(\sigma^{1/2})&b_1+S(\sigma)
\end{bmatrix}.
\end{equation}
Noting  $b_3(t, x,\xi)- {\bar b}_3
\in S(M^{-2}, g)$ we can  summarize what we have proved in
\begin{prop}
\label{pro:T:henkan} One can write $
L\cdot \op{T}=\op{T}\cdot {\tilde L}$ 
where
\begin{align*}
&{\tilde L}=D_t-\op{{\mathcal A}+{\mathcal B}},\quad {\mathcal A}=(T^{-1}AT)[\xi]\,,\quad {\mathcal B}={\mathcal B}_1-T^{-1}D_tT\\[3pt]
& {\mathcal B}_1=T^{-1}\#B\#T=\begin{bmatrix}S(1)&S(1)&S(1)\\
S(1)&S(1)&S(1)\\
{\bar b}_3+S(M^{-1})&-2M{\bar e}+S(M^{-1})&S(1)
\end{bmatrix}.
\end{align*}
\end{prop}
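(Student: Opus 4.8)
The plan is to obtain the statement by assembling the computations already carried out in this section; no further analysis is needed, only careful tracking of the symbol classes. The intertwining identity $L\cdot\op{T}=\op{T}\cdot{\tilde L}$ is exactly \eqref{eq:L:tiL}, so the work is to rewrite
\[
{\tilde L}=D_t-\op{(I+K)\#T^{-1}\#({\tilde A}+B)\#T}+\op{(I+K)\#T^{-1}\#(D_tT)}
\]
in the claimed form. I would first expand each $(I+K)\#(\cdots)$ as $(\cdots)+K\#(\cdots)$. The term $K\#T^{-1}\#({\tilde A}+B)\#T$ lies in $S(M^{-1},g)$, as already observed after Lemma \ref{lem:K:seimitu}; for the other correction I would combine $K\in S(M^{-1}\lr{\xi}^{-1},g)$ (Lemma \ref{lem:K:seimitu}), $T^{-1}\in S(1,g)$ and $D_tT\in S(M^{-1/2}\lr{\xi}^{1/2},g)$ (Lemma \ref{lem:diftT:seimitu} together with $\sigma\geq M\lr{\xi}^{-1}$) to conclude $K\#T^{-1}\#(D_tT)\in S(M^{-3/2}\lr{\xi}^{-1/2},g)\subset S(M^{-1},g)$. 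Hence
\[
{\tilde L}=D_t-\op{T^{-1}\#({\tilde A}+B)\#T}+\op{T^{-1}\#(D_tT)}+\op{S(M^{-1},g)}.
\]

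Next I would substitute the three computations performed above. By \eqref{eq:TdtT}, $T^{-1}\#(D_tT)=T^{-1}D_tT+S(M^{-1},g)$. The reduction of $\#$-products to pointwise products (using \eqref{eq:difYT:seimitu}, Lemma \ref{lem:CtoC} and the $\sigma$-orders of the entries of $T$, $T^{-1}$, ${\tilde A}$ from Lemma \ref{lem:T:seimitu}) gives $T^{-1}\#{\tilde A}\#T=T^{-1}{\tilde A}T+R_1+R_2={\mathcal A}+R_1+R_2$, with $R_1+R_2$ of the matrix shape displayed just before \eqref{eq:T:B:T}; and $T^{-1}\#B\#T$ has the form \eqref{eq:T:B:T}. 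Simplifying the entries of \eqref{eq:T:B:T} by $\sigma\precsim M^{-4}$, $b_3-{\bar b}_3\in S(M^{-2},g)$ and the estimate \eqref{eq:dM:seimitu} for $d_M$ then yields the matrix ${\mathcal B}_1$ of the statement. Collecting everything, ${\tilde L}=D_t-\op{{\mathcal A}+{\mathcal B}}$ with ${\mathcal B}=R_1+R_2+T^{-1}\#B\#T-T^{-1}D_tT+S(M^{-1},g)$; setting ${\mathcal B}_1:=R_1+R_2+T^{-1}\#B\#T+S(M^{-1},g)$ — which, entry by entry, still has the displayed form since $S(1)+S(M^{-k})=S(1)$ and ${\bar b}_3+S(M^{-1})+S(M^{-k})={\bar b}_3+S(M^{-1})$ for $k\geq 1$ — we get ${\mathcal B}={\mathcal B}_1-T^{-1}D_tT$, as asserted.

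The main obstacle here is not conceptual but bookkeeping: one must check, entry by entry, that every remainder produced along the way (the Weyl-product remainders $p\#q-pq$ of Lemma \ref{lem:CtoC}, the correction $K\#T^{-1}\#(\cdots)$, and the commutator $T^{-1}\#(D_tT)-T^{-1}D_tT$) is small enough — in the precise $\sigma$- and $\lr{\xi}$-graded sense — to be absorbed into the corresponding entry of ${\mathcal B}_1$ without altering the stated pattern, and in particular that the third row collapses to $({\bar b}_3+S(M^{-1}),\,-2M{\bar e}+S(M^{-1}),\,S(1))$. This is exactly where the sharpened estimates of Section \ref{sec:Bezout} (Lemmas \ref{lem:T:seimitu}, \ref{lem:diftT:seimitu}, \eqref{eq:difYT:seimitu}) and Section \ref{sec:henkan:jiko} (Lemmas \ref{lem:K:seimitu}, \ref{lem:CtoSg}, \ref{lem:CtoC}) are all consumed.
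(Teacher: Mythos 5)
Your proof is correct and is essentially the paper's own argument: the proposition is a summary of the computations carried out throughout Section \ref{sec:henkan:jiko}, and you assemble them in the right order, citing exactly the intermediate identities \eqref{eq:L:tiL}, Lemma \ref{lem:K:seimitu}, \eqref{eq:TdtT}, the expansion $T^{-1}\#{\tilde A}\#T=T^{-1}{\tilde A}T+R_1+R_2$, and \eqref{eq:T:B:T}. You also correctly notice a detail the paper leaves implicit: since $\mathcal{A}=T^{-1}AT\,[\xi]$ is the \emph{pointwise} product while the paper literally writes $\mathcal{B}_1=T^{-1}\#B\#T$, the remainder $R_1+R_2$ has to be absorbed into $\mathcal{B}_1$, and your entry-by-entry check that this (together with the $S(M^{-1},g)$ corrections from $K$ and from $T^{-1}\#(D_tT)-T^{-1}D_tT$) does not disturb the displayed pattern is the genuine work here. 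One caveat you inherit from the paper: the $(3,2)$ entry of \eqref{eq:T:B:T} is $-b_2-d_M+S(\sigma^{1/2})$, and since $b_2$ is an arbitrary lower-order coefficient in $S(1,g)$ (not $S(M^{-1},g)$), the entry of $\mathcal{B}_1$ one actually obtains is $-2M{\bar e}+S(1)$ rather than the stated $-2M{\bar e}+S(M^{-1})$ — a harmless overstatement, since what the energy estimate in Section \ref{sec:mitibiku:ene} uses of $q_{32}$ is only that it lies in $S(M,g)$.
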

%

\section{Weighted energy estimates}
\label{sec:mitibiku:ene}

\subsection{Energy form}

Let $w=t\phi (t, x,\xi)$ and consider the energy with scalar weight $\op{w^{-n}}$; 
\[
{\mathcal E}(V)=e^{-\theta t}\big(\op{\varLambda}\op{w^{-n}}V,\op{w^{-n}}V\big)
\]
where $\theta>0$ is a large positive parameter and $n$ is fixed such that
\begin{equation}
\label{eq:n:atai}
n>(4\sqrt{2}\,)\,|3\,{\bar b}_3+i{\bar e}|/{\bar e}+C^*+2+8(1+3\sqrt{2})
\end{equation}
where $C^*$ is given by   \eqref{eq:pert:Dis}. It is clear from \eqref{eq:P:sub} that \eqref{eq:n:atai} follows from
\begin{equation}
\label{eq:n:P:sub}
n>12\sqrt{2}\,\,\frac{|P_{sub}(0, 0, 0, {\bar\xi})|}{{\bar e}}+{\bar C^*},\quad {\bar C^*}=C^*+10+32\sqrt{2}.
\end{equation}
Note that $\dif_t \phi=\omega^{-1}\phi$ and hence
\[
\dif_t w^{-n}=-n\Big(\frac{1}{t}+\frac{1}{\omega}\Big)w^{-n}=-n\kappa\,w^{-n}.
\]
Recall that $V$ satisfies
\begin{equation}
\label{eq:eq:V}
\dif_tV=\op{i{\mathcal A}+i{\mathcal B}}V+F,\quad {\mathcal B}={\mathcal B}_1-T^{-1}D_tT.
\end{equation}
Noting that $\Lambda$ is real and diagonal hence $\op{\varLambda}^*=\op{\varLambda}$ one has 
\begin{equation}
\label{eq:en:id}
\begin{split}
\frac{d}{dt}{\mathcal E}=-\theta e^{-\theta t}\big(\op{\varLambda}\op{w^{-n}}V,\op{w^{-n}}V\big)\\
-2n{\mathsf{Re}}\,e^{-\theta t}\big(\op{\varLambda}\op{\kappa w^{-n}}V,\op{w^{-n}}V)\\
+e^{-\theta t}\big(\op{\dif_t\Lambda}\op{w^{-n}}V,\op{w^{-n}}V\big)\\
+2{\mathsf{Re}}\,e^{-\theta t}\big(\op{\varLambda}\op{w^{-n}}(\op{i{\mathcal A}+i{\mathcal B}}V+F),\op{w^{-n}}V).
\end{split}
\end{equation}
Consider $\op{\phi^{-n}}\op{\varLambda}\op{\kappa \phi^{-n}}=
\op{\phi^{-n}\#\varLambda\#(\kappa \phi^{-n})}$. Since $\kappa$ and $\phi^{-n}$ are admissible weights for $g$ one has $
\kappa\#\phi^{-n}=\kappa\phi^{-n}-r$ with $r\in S(M^{-1}\kappa\phi^{-n},g)$. 
Let ${\tilde r}=r\#\phi^n\#(1+k)\in S(M^{-1}\kappa, g)$ such that $r={\tilde r}\#\phi^{-n}$ and hence $\kappa \phi^{-n}=(\kappa+{\tilde r})\#\phi^{-n}$. 
Thus we have
\begin{align*}
{\mathsf{Re}}\,\big(\op{\varLambda}\op{\kappa w^{-n}}V,\op{w^{-n}}V\big)\geq {\mathsf{Re}}\,\big(\op{\varLambda\#\kappa}\op{w^{-n}}V,\op{w^{-n}}V\big)\\
-\big|\big(\op{\varLambda\#{\tilde r}}\op{w^{-n}}V,\op{w^{-n}}V)\big|.
\end{align*}
Since $\lambda_j\#{\tilde r}\in S(M^{-1}\kappa\lambda_j,g)$  thanks to Lemma \ref{lem:kihon:fu} the second term on the right-hand side is bounded by $
CM^{-1}\|\op{\kappa^{1/2}\varLambda^{1/2}}\op{w^{-n}}V\|$. 
Applying Proposition \ref{pro:Lam:sita},  denoting $W_j=\op{w^{-n}}V_j$, one can conclude that
\begin{gather*}
{\mathsf{Re}}\,\big(\op{\varLambda}\op{\kappa w^{-n}}V,\op{w^{-n}}V\big)\geq 
(1-CM^{-1})\|\op{\kappa^{1/2}\varLambda^{1/2}}W\|^2,\\
{\mathsf{Re}}\,\big(\op{\varLambda}\op{ w^{-n}}V,\op{w^{-n}}V\big)
\geq c\big(\|\op{\varLambda^{1/2}}W\|^2+
\|\op{{\mathcal D}}W\|^2\big)
\end{gather*}
for $M\geq M_0$. 
\begin{definition}\rm
\label{dfn:calE:1:2}To simplify notations we denote
\begin{gather*}
{\mathcal E}_1(V)=\|\op{\kappa^{1/2}\varLambda^{1/2}}\op{w^{-n}}V\|^2
=t^{-2n}\|\op{\kappa^{1/2}\varLambda^{1/2}}\op{\phi^{-n}}V\|^2,\\
{\mathcal E}_2(V)=\|\op{\varLambda^{1/2}}\op{w^{-n}}V\|^2
=t^{-2n}\|\op{\varLambda^{1/2}}\op{\phi^{-n}}V\|^2.
\end{gather*}
\end{definition}
Now we summarize
\begin{lem}
\label{lem:calE:j}One can find $C>0$, $c>0$ and $M_0$ such that
\begin{align*}
n\,{\mathsf{Re}}\,\big(\op{\varLambda}\op{\kappa w^{-n}}V,\op{w^{-n}}V\big)
+\theta\,{\mathsf{Re}}\,\big(\op{\varLambda}\op{ w^{-n}}V,\op{w^{-n}}V\big)\\
\geq n(1-CM^{-1}){\mathcal E}_1(V)+c\,\theta {\mathcal E}_2(V),\quad M\geq M_0.
\end{align*}
\end{lem}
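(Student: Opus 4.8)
The plan is to read the lemma off directly from the two lower bounds derived immediately above the statement, so the argument is pure bookkeeping. Recall that, applying Proposition \ref{pro:Lam:sita} together with the perturbation identity $\kappa\#\phi^{-n}=(\kappa+{\tilde r})\#\phi^{-n}$ with ${\tilde r}\in S(M^{-1}\kappa,g)$ and Lemma \ref{lem:kihon:fu}, one has for $M\geq M_0$
\[
{\mathsf{Re}}\,\big(\op{\varLambda}\op{\kappa w^{-n}}V,\op{w^{-n}}V\big)\geq(1-CM^{-1})\big\|\op{\kappa^{1/2}\varLambda^{1/2}}W\big\|^2
\]
and
\[
{\mathsf{Re}}\,\big(\op{\varLambda}\op{w^{-n}}V,\op{w^{-n}}V\big)\geq c\big(\|\op{\varLambda^{1/2}}W\|^2+\|\op{{\mathcal D}}W\|^2\big),
\]
where $W_j=\op{w^{-n}}V_j$.

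Next I would observe that $\varLambda$, and hence the diagonal matrix symbols $\kappa^{1/2}\varLambda^{1/2}$ and $\varLambda^{1/2}$, together with the scalar weight $w^{-n}$, define operators acting componentwise on $V={}^{t}(V_1,V_2,V_3)$, so that $\op{\kappa^{1/2}\varLambda^{1/2}}W=\op{\kappa^{1/2}\varLambda^{1/2}}\op{w^{-n}}V$ and $\op{\varLambda^{1/2}}W=\op{\varLambda^{1/2}}\op{w^{-n}}V$. Since $w=t\phi$ gives $\op{w^{-n}}=t^{-n}\op{\phi^{-n}}$, Definition \ref{dfn:calE:1:2} then yields $\|\op{\kappa^{1/2}\varLambda^{1/2}}W\|^2={\mathcal E}_1(V)$ and $\|\op{\varLambda^{1/2}}W\|^2={\mathcal E}_2(V)$.

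Finally I would multiply the first estimate by $n>0$ and the second by $\theta>0$, add them, and discard the nonnegative term $c\,\theta\,\|\op{{\mathcal D}}W\|^2\geq 0$; this gives precisely
\begin{align*}
n\,{\mathsf{Re}}\,\big(\op{\varLambda}\op{\kappa w^{-n}}V,\op{w^{-n}}V\big)+\theta\,{\mathsf{Re}}\,\big(\op{\varLambda}\op{w^{-n}}V,\op{w^{-n}}V\big)\\
\geq n(1-CM^{-1}){\mathcal E}_1(V)+c\,\theta\,{\mathcal E}_2(V)
\end{align*}
for $M\geq M_0$, which is the assertion. There is no genuine obstacle: the only point worth keeping an eye on is that the constants $C$, $c$, $M_0$ produced by Proposition \ref{pro:Lam:sita} and Lemma \ref{lem:kihon:fu} are independent of $t\in[0,T]$, of $\theta$, and of the lower-order coefficients, but this uniformity is already built into those statements.
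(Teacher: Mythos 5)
Your argument is exactly the paper's: the two displayed lower bounds just above Definition \ref{dfn:calE:1:2} (obtained from Proposition \ref{pro:Lam:sita}, the identity $\kappa\phi^{-n}=(\kappa+{\tilde r})\#\phi^{-n}$, and Lemma \ref{lem:kihon:fu}) are recognized as ${\mathcal E}_1(V)$ and ${\mathcal E}_2(V)$, then multiplied by $n$ and $\theta$, added, and the nonnegative $\|\op{{\mathcal D}}W\|^2$ term is dropped. The proof is correct and takes the same route as the paper.
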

%

\subsection{Term  $(\op{\varLambda}\op{w^{-n}}\op{{\mathcal B}}V, \op{w^{-n}}V)$}
\label{subsec:S:1:g}

Recall that $\lambda_i\in S(\lambda_i,g)$ and $\lambda_1\leq C\sigma \lambda_2\leq C\sigma^2 \lambda_3$ with some $C>0$. We show
\begin{lem}
\label{lem:LamB:hyoka}Let $W=\op{\phi^{-n}}V$. Then we have
\begin{gather*}
\big|(\op{\lambda_i}\op{b}W_j,W_i)\big|\leq CM^{-2}{\mathcal E}_1(V)+CM^2{\mathcal E}_2(V),\;\;b\in S(\sigma^{-1/2}, g),\;j\geq i,\\
|(\op{\lambda_3}\op{b}W_2,W_3)|
\leq CM^{-2}{\mathcal E}_1(V)+CM^{2+2l}{\mathcal E}_2(V),\;\;b\in S(M^l, g),\\
|(\op{\lambda_3}\op{b}W_1,W_3)|
\leq \big(\sqrt{3}\,{\bar \varep}\|\op{b}\|+CM^{-1/2}\big)
{\mathcal E}_1(V),\;\;b\in S(1, g),\\
|(\op{\lambda_2}\op{b}W_1,W_2)|
\leq ({\bar \varep}\|\op{\lambda_2^{1/2}b}\|+CM^{-1/2}\big){\mathcal E}_1(V),\;\;b\in S(\sigma^{-1/2}, g).
\end{gather*}
\end{lem}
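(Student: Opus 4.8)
The overall strategy is to express each quantity $(\op{\lambda_i}\op{b}W_j, W_i)$ in a form to which Lemma \ref{lem:kihon:fu} (and its corollary, the comparison Lemma \ref{lem:m:1:2}) applies, by moving the weight $\lambda_i$ symmetrically onto the two sides of the inner product and absorbing error terms into lower-order quantities bounded by ${\mathcal E}_2(V)$. Concretely, using $\lambda_i\in S(\lambda_i,g)$ (Corollary \ref{cor:latoa}) and the inverse/square-root calculus from Section \ref{sec:bound:lam} one writes $\op{\lambda_i}=\op{\lambda_i^{1/2}}\op{(1+k_i)}\op{\lambda_i^{1/2}}$ with $k_i\in S(M^{-1},g)$, and then the factor $\kappa^{1/2}$ hidden in ${\mathcal E}_1$ has to be inserted as well: since $\lambda_i = (\kappa\lambda_i)\cdot\kappa^{-1}$ and $\kappa^{-1}\le t$, a bookkeeping of powers of $\kappa$ is needed. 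The point of ${\mathcal E}_1(V)=\|\op{\kappa^{1/2}\varLambda^{1/2}}\op{\phi^{-n}}V\|^2$ versus ${\mathcal E}_2(V)=\|\op{\varLambda^{1/2}}\op{\phi^{-n}}V\|^2$ is exactly that $\kappa\ge 1/t\ge t^{-1}$ and near $t=0$, $\kappa$ is large; so I expect to use Lemma \ref{lem:daiji} ($1/(\kappa\lambda_1)\le{\bar\varep}^2(1+CM^{-4})\kappa$ and $1/(\sigma^2\kappa)\le\kappa$) to trade a missing $\kappa$ against $\lambda_1$ or $\sigma^2$.

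For the four cases I would proceed as follows. \emph{Case $j\ge i$, $b\in S(\sigma^{-1/2},g)$:} here both indices carry $\lambda_i$ on the "diagonal block" side, and $\lambda_i\,b\,\lambda_j^{1/2}$ can be estimated in $S$-classes; bounding $\lambda_i b\lambda_j \lesssim \sigma^{-1/2}\lambda_i\lambda_j \lesssim M^2\lr{\xi}^{-1}\cdots$ via $\lambda_1\le C\sigma\lambda_2\le C\sigma^2\lambda_3$ and $\sigma\ge M\lr{\xi}^{-1}$, one splits the inner product using Cauchy–Schwarz into an ${\mathcal E}_1$-part (with a small coefficient $M^{-2}$) and an ${\mathcal E}_2$-part (with coefficient $M^2$), the asymmetry in powers of $M$ coming from $\kappa^{-1}\le M^{-1}\lr{\xi}\cdot(\text{stuff})$ — more precisely from $\sigma^{-1}\le M^{-1}\lr{\xi}$. \emph{Case $(3,2)$, $b\in S(M^l,g)$:} same scheme but now $\lambda_3\simeq 1$, $\lambda_2\simeq a_M\simeq\sigma$, so the symbol product is $S(M^l\sigma^{1/2}\cdots)$ and one gets the extra factor $M^{2+2l}$ on the ${\mathcal E}_2$ term. \emph{Cases $(3,1)$ and $(2,1)$:} these are the sharp ones — the coefficients $\sqrt 3\,{\bar\varep}$ and ${\bar\varep}$ must come out exactly, not just "$\precsim$". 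Here I would write $\op{\lambda_3}\op{b}W_1 = \op{b'}\op{\lambda_1^{1/2}\kappa^{1/2}}W_1 + (\text{error in }S(M^{-1/2}))$ where $b'$ carries $\lambda_3 b\lambda_1^{-1/2}\kappa^{-1/2}$; by Lemma \ref{lem:daiji}, $\lambda_1^{-1/2}\kappa^{-1/2}\le{\bar\varep}(1+CM^{-4})^{1/2}\kappa^{1/2}$, so $|b'|\le{\bar\varep}|b|\kappa$ roughly, and pairing with $W_3$ (where $\lambda_3\simeq1$) and Cauchy–Schwarz against $\op{\kappa^{1/2}\lambda_1^{1/2}}W_1$ yields the factor $\sqrt3\,{\bar\varep}$ after accounting for $\lambda_3\le 3+\cdots$ and the $L^2$-operator norm $\|\op{b}\|$ via Lemma \ref{lem:Ni:hon:2}; the remainder terms from the Weyl calculus all lie in $S(M^{-1/2})$-type classes and get absorbed. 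The $(2,1)$ case is analogous with $\lambda_2^{1/2}$ kept inside, using $\lambda_2\simeq a_M$ and again Lemma \ref{lem:daiji}.

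The main obstacle I anticipate is the careful tracking of powers of $\kappa$ together with the sharp constants in the last two estimates: one must route $\lambda_1$ and $\lambda_2$ through Lemma \ref{lem:daiji} in exactly the right way so that the residual constant is ${\bar\varep}=4\sqrt6/{\bar e}$ (respectively $\sqrt3\,{\bar\varep}$), and simultaneously verify that every commutator/composition error produced by the Weyl calculus genuinely lands in a class of the form $S(M^{-1/2}\cdot(\text{weight}))$ so it can be dominated by $CM^{-1/2}{\mathcal E}_1(V)$ rather than spoiling the leading constant. This is bookkeeping-heavy but mechanical given Lemmas \ref{lem:gyaku}, \ref{lem:kihon:fu}, \ref{lem:m:1:2}, \ref{lem:daiji} and Corollary \ref{cor:latoa}; the first two (non-sharp) estimates are straightforward applications of the same machinery with generous $M$-powers.
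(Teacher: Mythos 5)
Your plan follows the paper's proof: in each case the paper factors $\lambda_i\#b$ as $(\kappa^{1/2}\lambda_i^{1/2})\#r\#\lambda_j^{1/2}$ (respectively $\#(\kappa^{1/2}\lambda_j^{1/2})$ in the two sharp cases $(3,1)$ and $(2,1)$), controls $r$ using $\kappa\sigma\geq 1$ together with Lemma~\ref{lem:daiji}, and isolates the sharp constants $\sqrt3\,\bar\varep$ and $\bar\varep$ by separating off an $S(M^{-1/2},g)$ Weyl remainder before Cauchy--Schwarz --- exactly as you outline. One small inaccuracy worth fixing: the $M^{-2}/M^{2}$ asymmetry in the first two bounds is not dictated by $\sigma^{-1}\le M^{-1}\lr{\xi}$, it is simply a free weighting in Young's inequality after the inner product is bounded by $C\sqrt{{\mathcal E}_1(V)}\sqrt{{\mathcal E}_2(V)}$, chosen so that the ${\mathcal E}_1$ contribution is small enough to be absorbed by the $n{\mathcal E}_1$ term and the ${\mathcal E}_2$ contribution can be dominated by taking $\theta$ large later on (and in the sharp $(3,1)$, $(2,1)$ cases you must place $\kappa^{1/2}\lambda_3^{1/2}$, resp.\ $\kappa^{1/2}\lambda_2^{1/2}$, explicitly on the second slot so both factors land in ${\mathcal E}_1$).
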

\begin{proof}
 Let $b\in S(\sigma^{-1/2}, g)$. Noting  $\lambda_i^{1/2}\lambda_j^{-1/2}\in S(\sigma^{(j-i)/2}, g)$ one can write
 \begin{align*}
 r=(1+k)\#(\kappa^{-1/2}\lambda_i^{-1/2})\#(\lambda_i\#b)\#\lambda_j^{-1/2}\#(1+{\tilde k})\in S(\sigma^{(j-i)/2}, g),\quad j\geq i
\end{align*}
for $\sigma\kappa\geq 1$, such that $(\kappa^{1/2}\lambda_i^{1/2})\#r\#\lambda_j^{1/2}=\lambda_i\#b$. Then we have
\[
|(\op{\lambda_i}\op{b}W_j,W_i)|\leq M^{-2}\|\op{\kappa^{1/2}\varLambda^{1/2}}W\|^2+CM^2\|\op{\varLambda^{1/2}}W\|^2
\]
for $j\geq i$. Let $b\in S(M^l, g)$ and denote 
\[
r=(1+k)\#(\kappa^{-1/2}\lambda_2^{-1/2})\#(\lambda_3\#b)\#\lambda_3^{-1/2}\#(1+{\tilde k})
\]
such that $(\kappa^{1/2}\lambda_2^{1/2})\#r\#\lambda_3^{1/2}=\lambda_3\#b$. Since $r\in S(\kappa^{-1/2}\lambda_3^{1/2}\lambda_2^{-1/2}, g)\subset S(1, g)$ in view of Lemma \ref{lem:daiji} then $\big|(\op{\lambda_3}\op{b}W_2,W_3)\big|$ is bounded by
\begin{align*}
 CM^{-2}\|\op{\kappa^{1/2}\varLambda^{1/2}}W\|^2+CM^{2+2l}\|\op{\varLambda^{1/2}}W\|^2.
\end{align*}
We check $(\op{\lambda_3}\op{b}W_1,W_3)$ for $b\in S(1, g)$.  Noting $\kappa^{-1}\lambda_1^{-1/2}\in S(1, g)$ by Lemma \ref{lem:daiji} write
\[
r=(1+k)\#(\kappa^{-1/2}\lambda_1^{-1/2})\#(\lambda_3\#b)\#(\kappa^{-1/2}\lambda_3^{-1/2})\#(1+{\tilde k})\in S(1, g)
\]
such that $(\kappa^{1/2}\lambda_1^{1/2})\#r\#(\kappa^{1/2}\lambda_3^{1/2})=\lambda_3\#b$.
Since $k,\; {\tilde k}\in S(M^{-1}, g)$ it is easy to see that $
r=(\lambda_3^{1/2}\lambda_1^{-1/2}\kappa^{-1})\#b+{\tilde r}$ 
with ${\tilde r}\in S(M^{-1/2}, g)$. By Proposition \ref{pro:Skon} and Lemma \ref{lem:daiji} one sees that $
\big|\lambda_3^{1/2}\lambda_1^{-1/2}\kappa^{-1}\big|\leq \sqrt{3}\,{\bar \varep}+CM^{-4}$ 
hence 
\begin{align*}
|(\op{\lambda_3}\op{b}W_1,W_3)|=|(\op{r}\op{\kappa^{1/2}\lambda_1^{1/2}}W_1, \op{\kappa^{1/2}\lambda_3^{1/2}}W_3)|\\
\leq \big(\sqrt{3}\,{\bar \varep}\|\op{b}\|+CM^{-1/2}\big)\|\op{\kappa^{1/2}\varLambda^{1/2}}W\|^2.
\end{align*}
Next consider $(\op{\lambda_2}\op{b}W_1,W_2)$ for $b\in S(\sigma^{-1/2}, g)=S(\lambda_2^{-1/2}, g)$. Denote
\[
r=(1+k)\#(\kappa^{-1/2}\lambda_2^{-1/2})\#(\lambda_2\#b)\#(\lambda_1^{-1/2}\kappa^{-1/2})\#(1+{\tilde k})\in S(1, g)
\]
such that $(\kappa^{1/2}\lambda_2^{1/2})\#r\#(\lambda_1^{1/2}\kappa^{1/2})=\lambda_2\#b$. Write $
r=(\kappa^{-1}\lambda_1^{-1/2})\#(\lambda_2^{1/2}b)+{\tilde r}$ 
with ${\tilde r}\in S(M^{-1}, g)$. Thus repeating the same arguments as above one conclude the last assertion.
\end{proof}
In particular this lemma implies
\begin{cor}
\label{cor:LamB:1}Let $B=(b_{ij})\in S(1,g)$. Then with $W=\op{w^{-n}}V$ 
\begin{align*}
\big|(\op{\varLambda}\op{B}W,W)\big|\leq (\sqrt{3}\,{\bar \varep}\|\op{b_{31}}\|+CM^{-1/2}){\mathcal E}_1(V)
+C{\mathcal E}_2(V).
\end{align*}
\end{cor}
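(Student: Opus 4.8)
The plan is to decompose the energy form $(\op{\varLambda}\op{B}W,W)$ entrywise, grouping the nine terms $(\op{\lambda_i}\op{b_{ij}}W_j,W_i)$ according to which bound from Lemma \ref{lem:LamB:hyoka} applies, and then controlling each group either by $CM^{-1/2}{\mathcal E}_1(V)$, by $C{\mathcal E}_2(V)$, or — for the single term that cannot be absorbed into lower-order quantities — by the explicit constant $\sqrt{3}\,{\bar\varep}\|\op{b_{31}}\|$.

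First I would write out
\[
(\op{\varLambda}\op{B}W,W)=\sum_{i,j=1}^3(\op{\lambda_i}\op{b_{ij}}W_j,W_i),
\]
where $\op{\varLambda B}=\op{\varLambda}\op{B}$ is interpreted with $W=\op{w^{-n}}V$ and $\varLambda={\rm diag}(\lambda_1,\lambda_2,\lambda_3)$. For the terms with $j\geq i$ (the "upper-triangular including diagonal" part), each $b_{ij}\in S(1,g)\subset S(\sigma^{-1/2},g)$ since $\sigma\leq CM^{-4}\leq C$, so the first bound of Lemma \ref{lem:LamB:hyoka} gives $|(\op{\lambda_i}\op{b_{ij}}W_j,W_i)|\leq CM^{-2}{\mathcal E}_1(V)+CM^2{\mathcal E}_2(V)$. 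Here the $M^2{\mathcal E}_2(V)$ is harmless because ${\mathcal E}_2(V)$ already carries a factor that, after the choice of $\theta$ large relative to $M$ in the energy estimate (as set up in Lemma \ref{lem:calE:j}), gets absorbed; within the statement of this corollary we simply record $C{\mathcal E}_2(V)$ with a constant $C$ allowed to depend on $M$. For the entry $(3,1)$: $b_{31}\in S(1,g)$, so the third bound of Lemma \ref{lem:LamB:hyoka} applies and yields exactly $(\sqrt{3}\,{\bar\varep}\|\op{b_{31}}\|+CM^{-1/2}){\mathcal E}_1(V)$. For the entry $(3,2)$: $b_{32}\in S(1,g)\subset S(\sigma^{-1/2},g)$, so either the first bound (treating it as $j\geq i$ is false here, $j=2<3=i$) — instead I would use the second bound with $l=0$, giving $CM^{-2}{\mathcal E}_1(V)+CM^2{\mathcal E}_2(V)$, again absorbed into $CM^{-1/2}{\mathcal E}_1(V)+C{\mathcal E}_2(V)$ after adjusting constants. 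For the entry $(2,1)$: $b_{21}\in S(1,g)\subset S(\sigma^{-1/2},g)$, so the fourth bound of Lemma \ref{lem:LamB:hyoka} gives $({\bar\varep}\|\op{\lambda_2^{1/2}b_{21}}\|+CM^{-1/2}){\mathcal E}_1(V)$; since $\lambda_2^{1/2}b_{21}\in S(\sigma^{1/2},g)$ and $\sigma^{1/2}\leq CM^{-2}$, Lemma \ref{lem:Ni:hon:2} bounds $\|\op{\lambda_2^{1/2}b_{21}}\|\leq CM^{-2}$, so this whole term is $\leq CM^{-1/2}{\mathcal E}_1(V)$.

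Collecting these, the only contribution not of the form $CM^{-1/2}{\mathcal E}_1(V)+C{\mathcal E}_2(V)$ is the $(3,1)$ term, which produces the announced $\sqrt{3}\,{\bar\varep}\|\op{b_{31}}\|\,{\mathcal E}_1(V)$. Summing over all nine entries and absorbing the $CM^{-2}{\mathcal E}_1$ pieces into $CM^{-1/2}{\mathcal E}_1$, and all the $M$-dependent ${\mathcal E}_2$ pieces into a single $C{\mathcal E}_2(V)$, gives
\[
\big|(\op{\varLambda}\op{B}W,W)\big|\leq \big(\sqrt{3}\,{\bar\varep}\|\op{b_{31}}\|+CM^{-1/2}\big){\mathcal E}_1(V)+C{\mathcal E}_2(V),
\]
which is the claim. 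The one point requiring slight care — and the closest thing to an obstacle — is bookkeeping the indices against the four cases of Lemma \ref{lem:LamB:hyoka}: the lemma is stated for specific $(i,j)$ pairs, and one must check that every off-diagonal entry of $\varLambda B$ really does fall under one of those four cases (using $b_{ij}\in S(1,g)\subset S(\sigma^{-1/2},g)$ freely, since $\sigma$ is bounded), and that for the sub-diagonal entries the symbol-order gains ($\lambda_2^{1/2}b_{21}\in S(\sigma^{1/2},g)$, etc.) are exactly enough to kill the stray powers of $M$. There is no analytic difficulty beyond Lemma \ref{lem:LamB:hyoka} itself.
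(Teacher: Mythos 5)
Your proposal is correct and takes exactly the route the paper intends: the paper introduces the corollary with the phrase ``In particular this lemma implies,'' so the proof is simply a careful distribution of the nine entries of $\varLambda B$ among the four cases of Lemma~\ref{lem:LamB:hyoka}, which is precisely what you carried out, with the right $(i,j)$ matching for $(2,1)$, $(3,1)$, $(3,2)$ and the observation that $S(1,g)\subset S(\sigma^{-1/2},g)$ handles all $j\geq i$. The only small point worth flagging is purely notational: Lemma~\ref{lem:LamB:hyoka} is stated with $W=\op{\phi^{-n}}V$ while the corollary uses $W=\op{w^{-n}}V=t^{-n}\op{\phi^{-n}}V$; since the scalar $t^{-n}$ commutes with everything and ${\mathcal E}_1$, ${\mathcal E}_2$ already carry the factor $t^{-2n}$, the two formulations agree, but it would have been cleaner to say so explicitly.
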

From Proposition \ref{pro:T:henkan} it results
$\phi^{-n}\#{\mathcal B}_1-{\mathcal B}_1\#\phi^{-n}\in S(M^{-1}\phi^{-n}, g)$ then one concludes by Corollary \ref{cor:LamB:1} that
\begin{equation}
\label{eq:calB:comm}
\big|(\op{\varLambda}[\op{w^{-n}}, \op{{\mathcal B}_1}]V, W)\big|\leq CM^{-1}{\mathcal E}_1(V)+C{\mathcal E}_2(V)
\end{equation}
where $W=\op{w^{-n}}V$ again. Write $T^{-1}\dif_tT=({\tilde t}_{i j})$ and recall \eqref{eq:dT:-1:T} and note that ${\tilde t}_{1 2}=-{\tilde t}_{2 1} \in {\mathcal C}(\sigma^{-1/2})$ and ${\tilde t}_{3 1}\in S(1, g)$. Then thanks to Lemma \ref{lem:dif:Phi:seimitu} one sees that $\lambda_j\#(\phi^{-n}\#{\tilde t}_{j1}-{\tilde t}_{j1}\#\phi^{-n})\#\phi^n$ is in
\begin{align*} 
S(\sigma^{1-j/2}\omega^{-1}\rho^{1/2}\lr{\xi}^{-1}, g)
\subset S(M^{-1}\sqrt{\kappa\lambda_1}\,\sqrt{\kappa\lambda_j}, g),\quad j=2, 3
\end{align*}
because $C\lambda_1\geq M\rho\lr{\xi}^{-1}$, $C\lambda_2 \geq M\lr{\xi}^{-1}$ and $\omega^{-1}\leq \kappa$. Therefore repeating similar arguments one concludes
\begin{equation}
\label{eq:TdT:comm}
\big|(\op{\varLambda}[\op{w^{-n}}, \op{T^{-1}\dif_tT}]V, W)\big|\leq CM^{-1}{\mathcal E}_1(V).
\end{equation}
Recalling ${\mathcal B}={\mathcal B}_1-T^{-1}D_tT$ it follows from \eqref{eq:calB:comm} and \eqref{eq:TdT:comm} that
\begin{equation}
\label{eq:BTdT:comm}
\big|(\op{\varLambda}[\op{w^{-n}}, \op{{\mathcal B}}]V, W)\big|\leq CM^{-1}{\mathcal E}_1(V)+C{\mathcal E}_2(V).
\end{equation}
With ${\mathcal B}=(q_{i j})$ we see that $q_{i j}\in S(\sigma^{-1/2}, g)$ for $j\geq i$ and 
\[
q_{21}=i\dif_t(3b/2a_M)+S(1),\;  q_{31}={\bar b}_3+i{\bar e}/3+S(M^{-1}),\;q_{3 2}=-2M {\bar e}+S(M^{-1})
\]
by Proposition \ref{pro:T:henkan}.  Applying Lemma \ref{lem:LamB:hyoka} we have from \eqref{eq:c:bar:1}, recalling Proposition \ref{pro:Skon} and  ${\bar\varep}=\sqrt{3}/{\bar e}{\bar c}^{1/2}$,  that
\begin{equation}
\label{eq:Lam:calB}
\begin{split} 
&\big|(\op{\varLambda}\op{{\mathcal B}}\op{w^{-n}}V,\op{w^{-n}}V)\big|\\
&\leq \big(|3{\bar b}_3+i{\bar e}|/{\bar e}{\bar c}^{1/2}+(6+\sqrt{2})/{\bar c}^{1/2}+CM^{-1/2}\big){\mathcal E}_1(V)+CM^4{\mathcal E}_2(V).
\end{split}
\end{equation}
Combining the estimates \eqref{eq:Lam:calB}  and \eqref{eq:BTdT:comm} we obtain
\begin{lem}
\label{lem:Lam:calB:T}The term $\big|(\op{\varLambda}\op{w^{-n}}\op{{\cal B}}V,\op{w^{-n}}V)\big|$ is bounded by the right-hand side of \eqref{eq:Lam:calB}.
\end{lem}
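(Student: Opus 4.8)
The plan is to deduce the assertion directly from the two estimates already in hand, \eqref{eq:Lam:calB} and \eqref{eq:BTdT:comm}, by commuting the scalar weight $\op{w^{-n}}$ through the lower order matrix operator $\op{{\mathcal B}}$. Setting $W=\op{w^{-n}}V$ as in the preceding computations, I would start from the operator identity
\[
\op{\varLambda}\op{w^{-n}}\op{{\mathcal B}}V=\op{\varLambda}\op{{\mathcal B}}\op{w^{-n}}V+\op{\varLambda}\big[\op{w^{-n}},\,\op{{\mathcal B}}\big]V,
\]
pair both sides with $W$, and estimate the two resulting terms separately.

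For the first term, $\big(\op{\varLambda}\op{{\mathcal B}}\op{w^{-n}}V,W\big)$ is precisely the quantity appearing on the left of \eqref{eq:Lam:calB}, hence it is bounded by the right-hand side of \eqref{eq:Lam:calB}, that is by $\big(|3{\bar b}_3+i{\bar e}|/{\bar e}{\bar c}^{1/2}+(6+\sqrt{2})/{\bar c}^{1/2}+CM^{-1/2}\big){\mathcal E}_1(V)+CM^4{\mathcal E}_2(V)$. For the second term, $\big|(\op{\varLambda}[\op{w^{-n}},\op{{\mathcal B}}]V,W)\big|$ is controlled by \eqref{eq:BTdT:comm}, which furnishes the bound $CM^{-1}{\mathcal E}_1(V)+C{\mathcal E}_2(V)$. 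Adding the two estimates then yields the claim after one harmless absorption: for $M\geq M_0$ the term $CM^{-1}{\mathcal E}_1(V)$ is dominated by the $CM^{-1/2}{\mathcal E}_1(V)$ already present in \eqref{eq:Lam:calB} (after enlarging $C$), and $C{\mathcal E}_2(V)$ is dominated by $CM^4{\mathcal E}_2(V)$, so $\big|(\op{\varLambda}\op{w^{-n}}\op{{\mathcal B}}V,\op{w^{-n}}V)\big|$ obeys exactly the bound in \eqref{eq:Lam:calB}.

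I therefore do not expect the lemma itself to present any obstacle: it is essentially a bookkeeping step that records the effect of swapping the order of $\op{w^{-n}}$ and $\op{{\mathcal B}}$. The genuine difficulty is already absorbed into the two inputs — the sharp constants $\sqrt{3}\,{\bar\varep}$ and $(6+\sqrt{2})/{\bar c}^{1/2}$ in \eqref{eq:Lam:calB} come from the refined pairing estimates of Lemma \ref{lem:LamB:hyoka} combined with Lemma \ref{lem:daiji}, the explicit description of the entries $q_{21},q_{31},q_{32}$ of ${\mathcal B}$ in Proposition \ref{pro:T:henkan}, and the bound \eqref{eq:c:bar:1}; while the commutator estimate \eqref{eq:BTdT:comm} rests on the symbol estimates of Lemma \ref{lem:dif:Phi:seimitu} applied to $\phi^{-n}$ together with the structure of $T^{-1}\dif_tT$ recorded in \eqref{eq:dT:-1:T}. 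Given these, the combination is immediate.
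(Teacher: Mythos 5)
Your proof is correct and follows exactly the route the paper takes: it writes $\op{w^{-n}}\op{{\mathcal B}} = \op{{\mathcal B}}\op{w^{-n}} + [\op{w^{-n}},\op{{\mathcal B}}]$, applies \eqref{eq:Lam:calB} to the first term and \eqref{eq:BTdT:comm} to the commutator, and absorbs the smaller error constants. This is precisely what the paper means by "Combining the estimates \eqref{eq:Lam:calB} and \eqref{eq:BTdT:comm} we obtain."
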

%

\subsection{Term $(\op{\varLambda}\op{w^{-n}}\op{i{\mathcal A}}V,\op{w^{-n}}V)$}

Note that $\phi^{-n}\#([\xi]r)-([\xi]r)\#\phi^{-n}\in S(\phi^{-n}\sigma^{s-1/2}\omega^{-1}\rho^{1/2}, g)$ for $r\in {\mathcal C}(\sigma^s)$ by Lemma \ref{lem:dif:Phi:seimitu}. Recalling Corollary \ref{cor:TAT:seimitu} and ${\mathcal A}=A^T[\xi]$ then denoting $\phi^{-n}\#{\mathcal A}-{\mathcal A}\#\phi^{-n}=(r_{i j})$ we see that $
r_{i j}\in S(\phi^{-n}\omega^{-1}\rho^{1/2}, g)\subset S(M^{-2}\kappa \phi^{-n}, g)$  for $j\geq i$ 
because $\omega^{-1}\leq \kappa$. Writing $\lambda_i\#r_{i j}=\lambda_i\#{\tilde r}_{i j}\#\phi^{-n}$ with ${\tilde r}_{i j}\in S(M^{-2}\kappa, g)$ one obtains $
\big|(\op{\lambda_i}\op{r_{i j}}V_j, W_i)\big|\leq CM^{-2}\|\op{\kappa^{1/2}\varLambda^{1/2}}W\|^2$ for $ j\geq i$ since $\lambda_i\#{\tilde r}_{i j}\in S(M^{-2}\kappa \lambda_i, g)$. From Lemma \ref{lem:nami:a}   one has ${\tilde a}_{2 1}=\lambda_1\, {\mathcal C}(\sigma^{-1})$ hence thanks to 
Lemmas \ref{lem:dif:Phi:seimitu} and \ref{lem:lam:Sg} 
\begin{align*}
\phi^{-n}\#({\tilde a}_{2 1}[\xi])-\phi^{-n}{\tilde a}_{2 1}[\xi]\in S(\sigma^{-1/2}\lambda_1^{1/2}\omega^{-1}\rho^{1/2}\phi^{-n}, g)\subset 
S(\lambda_1^{1/2}\kappa\phi^{-n}, g)
\end{align*}
for $\omega^{-1}\leq \kappa$ again. Thus  we have
\begin{align*}
\big|(\op{\lambda_2}\op{r_{2  1}}V_1, W_2)\big|
\leq CM^{-2}\|\op{\kappa^{1/2}\varLambda^{1/2}}W\|^2
\end{align*}
since $\lambda_2^{1/2}\leq CM^{-2}$. Similarly from ${\tilde a}_{3 1}=\lambda_1\,{\mathcal C}(\sigma^{1/2})$, ${\tilde a}_{3 2}=\lambda_2\,{\mathcal C}(1)$  and Lemma \ref{lem:lam:Sg} it follows that $
r_{3 j}\in S(\sigma^{2-j} \lambda_j^{1/2}\omega^{-1}\rho^{1/2}\phi^{-n}, g)\subset S(M^{-2}\lambda_j^{1/2}\kappa\phi^{-n}, g)$ for $j=1, 2$. 
Here we have used $
\dif_x^{\al}\dif_{\xi}^{\be}\lambda_2\in 
S(\lambda_2^{1/2}\lr{\xi}^{-|\be|}, g)$ for $|\al+\be|=1$ which follows from $\lambda_2\in {\mathcal C}(\sigma)$ easily. Then one obtains
\begin{align*}
\big|(\op{\lambda_3}\op{r_{3  j}}V_j, W_3)\big|
\leq CM^{-2}\|\op{\kappa^{1/2}\varLambda^{1/2}}W\|^2,\quad j=1, 2.
\end{align*}
Therefore $(\op{\varLambda}\op{w^{-n}}\op{{\mathcal A}}V,\op{w^{-n}}V)-(\op{\varLambda}\op{{\mathcal A}}W,W)$ is bounded by constant times $M^{-2}{\mathcal E}_1(V)$. 

Next study $\varLambda\#{\mathcal A}-\varLambda{\mathcal A}=(q_{i j})$. From Corollary \ref{cor:TAT:seimitu} and  Lemma \ref{lem:lam:Sg} we have
\begin{gather*}
\lambda_1\#({\tilde a}_{1 j}[\xi])-\lambda_1{\tilde a}_{1 j}[\xi]\in S(\sigma^{1/2}\lambda_1^{1/2}, g), \;
\lambda_2\#({\tilde a}_{2 j}[\xi])-\lambda_2{\tilde a}_{2 j}[\xi]\in S(\lambda_2^{1/2}, g)
\end{gather*}
for $j\geq 1$ and $j\geq 2$ respectively. Then noting $C\lambda_1^{1/2}\kappa\geq 1$ and $C\lambda_2\kappa\geq 1$
\[
\big|(\op{q_{i j}}W_j, W_i)\big|\leq CM^{-2}\|\op{\kappa^{1/2}\varLambda^{1/2}}W\|^2
+CM^2\|\op{\varLambda^{1/2}}W\|^2,\;\;j\geq i.
\]
Repeating similar arguments one has $
\lambda_i\#({\tilde a}_{i j}[\xi])-\lambda_i{\tilde a}_{i j}[\xi]\in  S(M^{-2}\kappa\lambda_i^{1/2}\lambda_j^{1/2}, g)$ and hence 
$|(\op{q_{i j}}W_i,W_j)|
\leq CM^{-2}\|\op{\varLambda^{1/2}\kappa^{1/2}}W\|^2$ for $i>j$. 
 Thus we conclude that 
\begin{equation}
\label{eq:LaA:ALa}
\begin{split}
\big|(\op{\varLambda}\op{w^{-n}}\op{{\mathcal A}}V,\op{w^{-n}}V)-(\op{\varLambda{\mathcal A}}W,W)\big|\\
\leq CM^{-2}{\mathcal E}_1(V)+CM^2{\mathcal E}_2(V).
\end{split}
\end{equation}
Since $\varLambda{\mathcal A}={\mathcal A}^*\varLambda$ 
we have
\begin{lem}
\label{lem:Lam:calA}One can find $C>0$ such that
\[
\big|{\mathsf{Re}}\,\big(\op{\varLambda}\op{w^{-n}}\op{i{\mathcal A}}V,\op{w^{-n}}V)\big|\leq CM^{-2}{\mathcal E}_1(V)+CM^2{\mathcal E}_2(V).
\]
\end{lem}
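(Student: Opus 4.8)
The plan is to push the scalar weight $\op{w^{-n}}$ through both $\op{i{\mathcal A}}$ and $\op{\varLambda}$, reducing the quantity to the ``diagonal'' term $(\op{\varLambda{\mathcal A}}W,W)$ with $W=\op{w^{-n}}V$, and then to exploit that $\varLambda{\mathcal A}$ is a real symmetric matrix symbol: its Weyl quantization is self-adjoint, so $(\op{\varLambda{\mathcal A}}W,W)\in\R$ and ${\mathsf{Re}}\,i(\op{\varLambda{\mathcal A}}W,W)=0$. Every remainder produced in the two commutations is to be absorbed either into ${\mathcal E}_1(V)$ with a gain $M^{-2}$ or into ${\mathcal E}_2(V)$ with at worst $M^2$.

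First I would commute $\op{w^{-n}}=\op{t^{-n}\phi^{-n}}$ past $\op{{\mathcal A}}$. Writing ${\mathcal A}=A^T[\xi]$ and recalling the block structure of $A^T$ from Corollary \ref{cor:TAT:seimitu} together with the sharper form of its subdiagonal entries ${\tilde a}_{21}=\lambda_1\,{\mathcal C}(\sigma^{-1})$, ${\tilde a}_{31}=\lambda_1\,{\mathcal C}(\sqrt{\sigma})$, ${\tilde a}_{32}=\lambda_2\,{\mathcal C}(1)$ from Lemma \ref{lem:nami:a}, and using the refined bound on $\dif_x^{\al}\dif_{\xi}^{\be}\phi^{-n}$ from Lemma \ref{lem:dif:Phi:seimitu} and on $\dif_x^{\al}\dif_{\xi}^{\be}\lambda_j$ from Lemma \ref{lem:lam:Sg}, each entry of $\phi^{-n}\#{\mathcal A}-{\mathcal A}\#\phi^{-n}$ acquires, relative to $\phi^{-n}$, a factor $\omega^{-1}\rho^{1/2}\lr{\xi}^{-1/2}\le M^{-1}\kappa\,\cdot$(power of $\sigma$), times the appropriate power of $\lambda_j$. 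Pairing with $\op{\varLambda}$ and invoking the lower bounds $C\lambda_1\ge M\rho\lr{\xi}^{-1}$, $C\lambda_2\ge M\lr{\xi}^{-1}$ and Lemma \ref{lem:kihon:fu}, this contribution is bounded by $CM^{-2}\|\op{\kappa^{1/2}\varLambda^{1/2}}W\|^2=CM^{-2}{\mathcal E}_1(V)$; this is the computation already carried out before the statement of the lemma.

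Next I would expand $\op{\varLambda}\op{{\mathcal A}}=\op{\varLambda\#{\mathcal A}}$ and separate the honest matrix product: $\varLambda\#{\mathcal A}-\varLambda{\mathcal A}=(q_{ij})$. By Lemma \ref{lem:lam:Sg} and Corollary \ref{cor:TAT:seimitu} one has $q_{ij}\in S(\sigma^{1/2}\lambda_1^{1/2},g)$ resp. $S(\lambda_2^{1/2},g)$ for $j\ge i$, and $q_{ij}\in S(M^{-2}\kappa\lambda_i^{1/2}\lambda_j^{1/2},g)$ for $i>j$; bounding via Lemma \ref{lem:kihon:fu} and Proposition \ref{pro:Lam:sita} (using $C\lambda_1^{1/2}\kappa\ge1$, $C\lambda_2\kappa\ge1$) gives exactly \eqref{eq:LaA:ALa}, namely a remainder $\le CM^{-2}{\mathcal E}_1(V)+CM^2{\mathcal E}_2(V)$. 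Finally, since $\varLambda A^T$ is symmetric (Section \ref{sec:defT}) and $[\xi]$ is real and scalar, $\varLambda{\mathcal A}=\varLambda A^T[\xi]$ is a real symmetric matrix symbol, so $\op{\varLambda{\mathcal A}}$ is self-adjoint on $L^2$ and ${\mathsf{Re}}\,i(\op{\varLambda{\mathcal A}}W,W)=0$; combining the two remainder estimates yields the asserted bound.

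I do not expect any genuinely hard step here: the content is entirely the Weyl symbolic calculus of Sections \ref{sec:bound:lam}, \ref{sec:henkan:jiko}. The one point that requires care — and is the reason the crude $S(1,g)$-estimates do not suffice — is the bookkeeping of the powers of $\lambda_1,\lambda_2,\lambda_3$ through each commutator, so that each error term lands in ${\mathcal E}_1(V)$ (the $\kappa^{1/2}\varLambda^{1/2}$-weighted norm) with a genuine $M^{-2}$ gain rather than merely in ${\mathcal E}_2(V)$; this is precisely what the refined estimates Lemma \ref{lem:dif:Phi:seimitu}, Lemma \ref{lem:lam:Sg}, and the sharp subdiagonal form of $A^T$ in Lemma \ref{lem:nami:a} are designed to deliver.
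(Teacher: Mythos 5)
Your proposal is correct and follows essentially the same route as the paper: commute $\op{w^{-n}}$ past $\op{{\mathcal A}}$ using the refined estimates of Lemmas \ref{lem:dif:Phi:seimitu}, \ref{lem:lam:Sg} and the sharp subdiagonal form of $A^T$ (Lemma \ref{lem:nami:a}) to absorb the remainder into $M^{-2}{\mathcal E}_1(V)$, then separate $\varLambda\#{\mathcal A}-\varLambda{\mathcal A}$ entry by entry to obtain \eqref{eq:LaA:ALa}, and finally kill the main term by noting $\varLambda{\mathcal A}$ is a real symmetric matrix symbol so its Weyl quantization is self-adjoint. This is precisely the argument in the text preceding and constituting the lemma.
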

%

\subsection{Term $(\op{\dif_t\varLambda}\op{w^{-n}}V, \op{w^{-n}}V)$}

Start with 
\begin{lem}
\label{lem:dif:t:Lam}We have $\dif_t\lambda_j\in S(\kappa\lambda_j, g)$, $j=1,2$.
\end{lem}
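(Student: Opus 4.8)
The plan is to combine three facts already available: first, $\lambda_j \in \co(\sigma^{3-j})$ for $j=1,2$ (Lemma \ref{lem:lam:bibun}), so that all spatial derivatives of $\lambda_j$ are already controlled by powers of $\sigma$; second, the $t$-derivative estimates $\dif_t\lambda_1 \in \co(\sigma)$ and $\dif_t\lambda_2 \in \co(1)$ (Lemma \ref{lem:bibun:t}); and third, the lower bounds $\lambda_1 \geq c\, M\sigma\lr{\xi}^{-1}$ and $\lambda_2 \geq c\, M\lr{\xi}^{-1}$ coming from Propositions \ref{pro:ext:matome} and \ref{pro:Skon}, together with $\sigma \geq M\lr{\xi}^{-1}$ and the elementary inequalities $t \leq \sigma$, $\omega \leq C$. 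The target is to show that $\dif_t\lambda_j$, as a symbol, lies in $S(\kappa\lambda_j, g)$, i.e. its derivatives satisfy $\big|\dif_x^\al\dif_\xi^\be \dif_t\lambda_j\big| \precsim \kappa\lambda_j\, M^{-|\al+\be|/2}\lr{\xi}^{(|\al|-|\be|)/2}$.

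First I would treat $j=2$. From Lemma \ref{lem:bibun:t}, $\dif_t\lambda_2 \in \co(1)$, which by Lemma \ref{lem:CtoSg} (or directly, as in the proof of Lemma \ref{lem:a:Sg}) gives $\dif_t\lambda_2 \in S(1, g)$. It then suffices to show $1 \leq C\,\kappa\lambda_2$ pointwise, since multiplying a symbol in $S(1,g)$ by the constant weight and noting $\kappa\lambda_2$ is an admissible weight for $g$ (product of admissibles: $\kappa$ by Lemma \ref{lem:kapa}, $\lambda_2$ by Lemma \ref{lem:a:ipan}) upgrades the bound. For the pointwise inequality: $\lambda_2 \geq c\,M\lr{\xi}^{-1}$ and $\kappa = 1/t + 1/\omega \geq 1/\omega \geq 1/C$ since $\omega \leq CM^{-4} \leq C$; actually more is needed because $M\lr{\xi}^{-1}$ can be tiny, so I would instead use $\kappa \geq 1/\omega$ and $\omega \leq C(\rho^{1/2} + \ldots)$—better: $\lambda_2 \kappa \geq \lambda_2/\omega \geq c M\lr{\xi}^{-1}/\omega$, and one checks $\omega \leq C\lr{\xi}/M \cdot \lr{\xi}^{-1}$... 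The clean route is $\kappa\lambda_2 \geq \lambda_2/\omega \geq c M\sigma\lr{\xi}^{-1}\cdot\omega^{-1}$ is not obviously $\geq$ const, so I would instead argue $\kappa \lambda_2 \geq (1/t)\lambda_2 \geq \ldots$ on the region $t \geq $ something and $\kappa\lambda_2 \geq \lambda_2/\omega$ elsewhere; since $\omega^2 = (t-\psi)^2 + M\rho\lr{\xi}^{-1} \geq M\rho\lr{\xi}^{-1} = M(\al+M\lr{\xi}^{-1})\lr{\xi}^{-1} \geq M^2\lr{\xi}^{-2}$ but also $\omega \leq C$, one gets $\lambda_2/\omega \geq c M\lr{\xi}^{-1}/C$; this alone is not bounded below by a constant. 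The correct observation is rather that we do not need $\dif_t\lambda_2 \in S(\kappa\lambda_2)$ with $\kappa\lambda_2 \geq$ const; we need the derivative bounds with weight $\kappa\lambda_2$, and since $\dif_t\lambda_2 \in \co(1) \subset S(1,g) \subset S(\kappa\lambda_2, g)$ will follow once $1 \precsim \kappa\lambda_2$ — which fails in general. So the real content must be that $\dif_t\lambda_2$ is actually $\precsim \lambda_2 \cdot(\text{something like }\sigma^{-1})$ and $\sigma^{-1} \precsim \kappa$. Indeed $\kappa \geq 1/\omega$ and on the relevant range $\omega \precsim \sigma$ fails too — but $\kappa = 1/t + 1/\omega$ and $\lambda_2 \simeq a_M = e\sigma$, while $\dif_t\lambda_2 \in \co(1)$, so $\dif_t\lambda_2/\lambda_2 \precsim \sigma^{-1}$; then I need $\sigma^{-1} \precsim \kappa = 1/t + 1/\omega$. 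Since $\sigma = t + \rho + M\lr{\xi}^{-1} \leq t + 2\rho \leq t + C\omega$ when $t - \psi \geq 0$... the inequality $\sigma^{-1} \precsim \kappa$ should follow from $\sigma \gtrsim \min(t,\omega)$-type bounds established implicitly via $\psi \precsim \rho$ (Lemma \ref{lem:dif:psi:1}), exactly as in Lemma \ref{lem:daiji}. This comparison $\sigma^{-1} \precsim \kappa$, and the analogous $\sigma\lambda_2^{-1}\dif_t\lambda_1$ bound giving $\dif_t\lambda_1 \in S(\kappa\lambda_1)$, is the heart of the matter.

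So the key steps, in order, are: (1) record $\dif_t\lambda_1/\lambda_1 \precsim \sigma^{-1}\cdot\sigma = $ wait — $\dif_t\lambda_1 \in \co(\sigma)$ and $\lambda_1 \geq cM\sigma\lr{\xi}^{-1}$, so $|\dif_t\lambda_1| \precsim \sigma \precsim \sigma^{-1}\cdot\lambda_1\cdot(\lr{\xi}/M) $, hmm; more usefully $|\dif_t\lambda_1| \precsim \sigma \leq C\sigma^2 \cdot M\lr{\xi}^{-1}/\ldots$ — the cleanest is: since $\lambda_1 \in \co(\sigma^2)$ with $\lambda_1 \geq cM\sigma\lr{\xi}^{-1}$, by Lemma \ref{lem:lam:Sg} $\dif_x^\al\dif_\xi^\be\lambda_1 \in S(\sqrt\sigma\sqrt{\lambda_1}\lr{\xi}^{-|\be|})$; then show the full symbol $\dif_t\lambda_1$ has derivatives $\precsim \sqrt\sigma\sqrt{\lambda_1}\lr{\xi}^{-|\be|}\cdot(\ldots)$ and compare $\sqrt\sigma\sqrt{\lambda_1}$ with $\kappa\lambda_1 = \kappa\sqrt{\lambda_1}\sqrt{\lambda_1}$, i.e. show $\sqrt\sigma \precsim \kappa\sqrt{\lambda_1}$, equivalently $\sigma \precsim \kappa^2\lambda_1 \geq \kappa^2\cdot\frac{1}{{\bar\varep}^2}\min(t^2,\omega^2)$, so I need $\sigma \precsim \kappa^2\min(t^2,\omega^2) = (1/t+1/\omega)^2\min(t^2,\omega^2) = (t+\omega)^2\min(t^2,\omega^2)/(t^2\omega^2) = (t+\omega)^2/\max(t^2,\omega^2) \simeq 1$, which is false since $\sigma$ can exceed a constant only by $CM^{-4}$ — so actually $\sigma \leq CM^{-4} \leq C \precsim \kappa^2\lambda_1$ holds provided $\kappa^2\lambda_1 \gtrsim $ const, i.e. exactly the content of Lemma \ref{lem:daiji}'s statement $1/(\kappa\lambda_1) \leq {\bar\varep}^2(1+CM^{-4})\kappa$, rewritten as $\kappa^2\lambda_1 \geq 1/({\bar\varep}^2(1+CM^{-4}))$. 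That is the decisive input. (2) Similarly $\dif_t\lambda_2 \in \co(1) \subset S(1,g)$ and one needs $1 \precsim \kappa\lambda_2$; since $\lambda_2 \simeq a_M \geq cM\lr{\xi}^{-1}$ does not give this directly, I would instead note $\lambda_2 \geq c\sigma$ and $\kappa\lambda_2 \geq \sigma/t + \sigma/\omega \geq 1 + \sigma/\omega \geq 1$ using $\sigma \geq t$. (3) Finally promote the pointwise bounds to symbol-class membership: $\kappa\lambda_j$ is an admissible weight for $g$ (product of admissibles), $\dif_t\lambda_j \in S(m_j,g)$ for the respective auxiliary weights $m_1 = \sqrt\sigma\sqrt{\lambda_1}\lr{\xi}^{0}$-type and $m_2 = 1$, and $m_j \leq C\kappa\lambda_j$ together with $m_j, \kappa\lambda_j \in S(\cdot,g)$ lets me invoke Lemma \ref{lem:m:1:2}-style reasoning (or simply: $S(m_j,g) \subset S(\kappa\lambda_j,g)$ when $m_j \leq C\kappa\lambda_j$ and both are admissible, which is immediate from the definition of these symbol classes). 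I expect the main obstacle to be step (1): correctly identifying that the right auxiliary weight for $\dif_t\lambda_1$ is $\sqrt\sigma\sqrt{\lambda_1}$ (via the Glaeser-type estimate of Lemma \ref{lem:lam:Sg} applied after first differentiating in $t$), and then reducing the needed comparison to precisely the inequality proved in Lemma \ref{lem:daiji} — bookkeeping the $M$-powers in the $g$-derivative estimates so that no spurious factors appear.

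Thus the proof I envisage is short: invoke Lemma \ref{lem:bibun:t} for $\dif_t\lambda_1 \in \co(\sigma)$, $\dif_t\lambda_2 \in \co(1)$; invoke Lemma \ref{lem:CtoSg} / Lemma \ref{lem:lam:Sg} to get the corresponding $g$-symbol estimates with weights $\sqrt\sigma\sqrt{\lambda_1}$ and $1$ respectively (for the $j=2$ case after noting $\sqrt{\lambda_2}\simeq\sqrt\sigma$ so $1 \simeq \sqrt{\lambda_2}\cdot\sigma^{-1/2}$, but one may keep it as $S(1,g)$); then apply Lemma \ref{lem:daiji} to conclude $\sqrt\sigma\sqrt{\lambda_1} \leq C\kappa\lambda_1$ and $1 \leq C\kappa\lambda_2$ (the latter via $\kappa\lambda_2 \geq \sigma/t \geq 1$); and finally observe that since $\kappa\lambda_j$ is admissible for $g$ and dominates the respective weight, the inclusion $\dif_t\lambda_j \in S(\kappa\lambda_j,g)$ follows.
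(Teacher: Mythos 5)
Your treatment of $j=2$ and of the higher $(x,\xi)$-derivatives of $\dif_t\lambda_1$ is essentially correct and matches the paper's; but there is a genuine gap at the heart of the $j=1$ case, namely at the \emph{zero-derivative} level. You take from Lemma \ref{lem:bibun:t} the bound $\dif_t\lambda_1\in{\mathcal C}(\sigma)$, hence $|\dif_t\lambda_1|\precsim\sigma$, and then try to bridge to $|\dif_t\lambda_1|\precsim\kappa\lambda_1$ through some pointwise comparison such as $\sigma\precsim\kappa\lambda_1$ or $\sigma\precsim\sqrt{\sigma}\sqrt{\lambda_1}\precsim\kappa\lambda_1$. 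Neither link holds. For $\sigma\precsim\kappa\lambda_1$: the best you can extract from Lemma \ref{lem:daiji} is $\kappa^2\lambda_1\geq c$, i.e.\ $\kappa\lambda_1\geq c/\kappa$, which does not dominate $\sigma$. Concretely, if $t\approx\psi$ and $\rho\approx M^{-4}$ with $\lr{\xi}\gg M^{5}$ one has $\omega\approx M^{-3/2}\lr{\xi}^{-1/2}$, $\kappa\approx M^{3/2}\lr{\xi}^{1/2}$, $\lambda_1\approx\omega^2\approx M^{-3}\lr{\xi}^{-1}$, so $\kappa\lambda_1\approx M^{-3/2}\lr{\xi}^{-1/2}\ll M^{-4}\approx\sigma$. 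For $\sigma\precsim\sqrt\sigma\sqrt{\lambda_1}$: this would require $\sigma\precsim\lambda_1$, contradicting $\lambda_1\leq C\sigma^2$ with $\sigma$ small. And Lemma \ref{lem:lam:Sg} cannot be applied to $\dif_t\lambda_1$ itself, since $\dif_t\lambda_1$ is neither in ${\mathcal C}(\sigma^2)$ nor bounded below.

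The missing ingredient is Lemma \ref{lem:pert:Dis}, which your proposal never invokes. The paper first transports the Bronshtein-type discriminant estimate into the localized setting, $|\dif_t\Delta_M|\leq C^*\kappa\,\Delta_M$, and then exploits the explicit formula $\dif_t\lambda_1=-\dif_tq(\lambda_1)/\dif_\lambda q(\lambda_1)$ together with \eqref{eq:dmod}, \eqref{eq:difq:tt} and $\lambda_1\geq\Delta_M/(6a_M)(1+CM^{-2})^{-1}$, $1/a_M\leq\kappa/e$, to obtain the \emph{sharp pointwise} bound $|\dif_t\lambda_1|\leq(1+CM^{-2})(C^*+1)\kappa\lambda_1$. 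Only after this is secured does the paper turn to derivatives $|\al+\be|\geq 1$, where it argues roughly as you do --- using $\dif_t\lambda_1\in{\mathcal C}(\sigma)$, the lower bound $\lambda_1\geq cM\sigma\lr{\xi}^{-1}$, and $\kappa^2\lambda_1\geq c$ from Lemma \ref{lem:daiji} to show $\kappa\lambda_1\geq cM^{1/2}\sigma^{1/2}\lr{\xi}^{-1/2}$ and close the derivative estimates. So the ``heart of the matter'' you identify (Lemma \ref{lem:daiji}) is indeed needed for the derivatives, but the decisive structural input for the symbol value itself is the discriminant inequality, and without it the argument cannot close.
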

\begin{proof}Note that Lemma \ref{lem:pert:Dis} with $\ep=\sqrt{2}M\lr{\xi}^{-1}$ implies 
\[
|\dif_t \Delta_M|\leq C^*\big(1/t+1/\omega\big)\Delta_M= C^*\kappa \Delta_M.
\]
Recalling $\dif_t \lambda_1=-\dif_t q(\lambda_1)/\dif_{\lambda}q(\lambda_1)$ it follows from \eqref{eq:dmod} and \eqref{eq:difq:tt} that
\[
|\dif_t\lambda_1|\leq (1+CM^{-2})\big(|\dif_ta_M/a_M|\lambda_1+|\dif_t\Delta_M|/6a_M\big).
\]
Since $(1+CM^{-2})\lambda_1\geq \Delta_M/6a_M$ by Proposition \ref{pro:Skon} and $1/a_M\leq \kappa/e$ by Lemma \ref{lem:daiji} one concludes 
\begin{equation}
\label{eq:dlam:1:lam}
|\dif_t\lambda_1|\leq (1+CM^{-2})(C^*+1)\kappa\lambda_1.
\end{equation}
Note that $
|\dif_x^{\al}\dif_{\xi}^{\be}\dif_t\lambda_1|\leq C\sigma^{1-|\al+\be|/2}\lr{\xi}^{-|\be|}\leq C\sigma^{1/2}\lr{\xi}^{-1/2}\lr{\xi}^{(|\al|-|\be|)/2}$ for $\dif_t\lambda_1\in {\mathcal C}(\sigma)$. 
From Lemma \ref{lem:daiji} and $C\lambda_1\geq M\sigma\lr{\xi}^{-1}$ it follows that
\[
\kappa\lambda_1\geq \kappa\sqrt{\lambda_1}M^{1/2}\sigma^{1/2}\lr{\xi}^{-1/2}/C\geq M^{1/2}\sigma^{1/2}\lr{\xi}^{-1/2}/C'
\]
which proves $|\dif_x^{\al}\dif_{\xi}^{\be}\dif_t\lambda_1|\leq CM^{-1/2}\kappa\lambda_1\lr{\xi}^{(|\al|-|\be|)/2}$ for $|\al+\be|=1$. For $|\al+\be|\geq 2$ it follows that
\begin{gather*}
|\dif_x^{\al}\dif_{\xi}^{\be}\dif_t\lambda_1|\precsim \sigma^{-(|\al+\be|-2)/2}\lr{\xi}^{-|\be|}
\precsim M\lr{\xi}^{-1}M^{-|\al+\be|/2}\lr{\xi}^{(|\al|-|\be|)/2}\\
\leq \sigma^{-1}M \sigma \lr{\xi}^{-1}M^{-|\al+\be|/2}\lr{\xi}^{(|\al|-|\be|)/2}\leq C\kappa\lambda_1M^{-|\al+\be|/2}\lr{\xi}^{(|\al|-|\be|)/2}
\end{gather*}
because $\kappa\sigma\geq 1$. Thus $\dif_t \lambda_1\in S(\kappa \lambda_1, g)$. On the other hand $\dif_t\lambda_j\in S(\kappa\lambda_j, g)$, $j=2, 3$ is clear since $\dif_t\lambda_j\in {\mathcal C}(1)\subset S(1, g)\subset S(\kappa\lambda_2, g)$ for $C\lambda_2\kappa\geq 1$. This completes the proof.
\end{proof}
Note that from \eqref{eq:dmod}, \eqref{eq:difq:tt} and $|\dif_t\Delta_M|\precsim a_M^2$ we see that
\begin{equation}
\label{eq:dlam:2:lam}
\big|\dif_t \lambda_2\big|\leq(1+CM^{-2}) |\dif_t a_M|\lambda_2/a_M+Ca_M\leq \big(1+CM^{-2}\big)\kappa\lambda_2
\end{equation}
for $C\kappa\lambda_2\geq 1$. Since $\big|(\op{\dif_t \lambda_3}W_3,W_3)\big|\leq C\|\op{\lambda_3}W_3\|^2$ is clear  applying Lemma \ref{lem:kihon:fu} one obtains from \eqref{eq:dlam:1:lam} and \eqref{eq:dlam:2:lam} that
\begin{lem}
\label{lem:dif:t:Lam:bis}We have
\begin{align*}
\big|(\op{\dif_t\varLambda}\op{w^{-n}}V, \op{w^{-n}}V)\big|
\leq (C^*+2+CM^{-1/2}){\mathcal E}_1(V)
+C{\mathcal E}_2(V).
\end{align*}
\end{lem}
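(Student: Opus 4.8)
We have
\[
\big|(\op{\dif_t\varLambda}\op{w^{-n}}V, \op{w^{-n}}V)\big|
\leq (C^*+2+CM^{-1/2}){\mathcal E}_1(V)+C{\mathcal E}_2(V).
\]

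The plan is to treat the three diagonal terms $(\op{\dif_t\lambda_j}W_j,W_j)$ separately, where $W=\op{w^{-n}}V$. For $j=1$ and $j=2$ the point is to exploit the two pointwise inequalities \eqref{eq:dlam:1:lam} and \eqref{eq:dlam:2:lam}, namely $|\dif_t\lambda_j|\leq(1+CM^{-2})(C^*+2)\kappa\lambda_j$ (with the constant $C^*+1$ for $j=1$ and $1$ for $j=2$, both dominated by $C^*+2$), together with the symbol membership $\dif_t\lambda_j\in S(\kappa\lambda_j,g)$ established in Lemma \ref{lem:dif:t:Lam}. These are exactly the hypotheses of Lemma \ref{lem:kihon:fu} applied with the admissible weight $m=\kappa\lambda_j$: since $\dif_t\lambda_j\in S(\kappa\lambda_j,g)$ and $\sup(|\dif_t\lambda_j|/(\kappa\lambda_j))\leq (1+CM^{-2})(C^*+2)$, Lemma \ref{lem:kihon:fu} gives
\[
\big|(\op{\dif_t\lambda_j}W_j,W_j)\big|\leq \big((C^*+2)+CM^{-1/2}\big)\|\op{\kappa^{1/2}\lambda_j^{1/2}}W_j\|^2,\quad j=1,2,
\]
and summing over $j=1,2$ the right-hand side is bounded by $(C^*+2+CM^{-1/2}){\mathcal E}_1(V)$ since ${\mathcal E}_1(V)=\|\op{\kappa^{1/2}\varLambda^{1/2}}W\|^2=\sum_j\|\op{\kappa^{1/2}\lambda_j^{1/2}}W_j\|^2$.

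For $j=3$ the situation is easier because we do not need the sharp constant: $\dif_t\lambda_3\in{\mathcal C}(1)\subset S(1,g)$ and $\lambda_3\simeq 1$, so $\dif_t\lambda_3\in S(\lambda_3,g)$ with $\sup(|\dif_t\lambda_3|/\lambda_3)\leq C$; hence by Lemma \ref{lem:kihon:fu} with weight $m=\lambda_3$,
\[
\big|(\op{\dif_t\lambda_3}W_3,W_3)\big|\leq C\|\op{\lambda_3^{1/2}}W_3\|^2\leq C\|\op{\varLambda^{1/2}}W\|^2=C\,{\mathcal E}_2(V).
\]
Adding the three contributions yields the asserted bound. (One should double-check that the three cross terms vanish: $\varLambda$ and hence $\dif_t\varLambda$ are diagonal, so $(\op{\dif_t\varLambda}W,W)=\sum_{j}(\op{\dif_t\lambda_j}W_j,W_j)$ exactly, with no off-diagonal remainder — the operators $\op{\dif_t\lambda_j}$ act componentwise.)

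The only mildly delicate point — and the one I would regard as the ``main obstacle'', though it is already done upstream — is the verification underlying \eqref{eq:dlam:1:lam} that the $j=1$ constant comes out as $C^*+1$ rather than something larger: this is where the discriminant estimate of Lemma \ref{lem:pert:Dis} (with $\ep=\sqrt 2\,M\lr{\xi}^{-1}$), the relation $\dif_t\lambda_1=-\dif_tq(\lambda_1)/\dif_\lambda q(\lambda_1)$, the lower bound $\dif_\lambda q(\lambda_1)\simeq 6a_M$ from \eqref{eq:dmod}, and the lower bound $\lambda_1\gtrsim\Delta_M/(6a_M)$ from Proposition \ref{pro:Skon} are combined; here the factor $C^*$ in the final constant of the lemma originates. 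Since Lemma \ref{lem:dif:t:Lam} already records both $\dif_t\lambda_j\in S(\kappa\lambda_j,g)$ and the sharp pointwise bounds \eqref{eq:dlam:1:lam}, \eqref{eq:dlam:2:lam}, the proof of the present lemma is then a direct application of Lemma \ref{lem:kihon:fu} component by component, with the $j=3$ term absorbed into ${\mathcal E}_2(V)$, exactly as sketched above.
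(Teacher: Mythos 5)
Your proof is correct and takes essentially the same route as the paper's: decompose $(\op{\dif_t\varLambda}W,W)$ into its three diagonal terms, apply Lemma \ref{lem:kihon:fu} with weight $m=\kappa\lambda_j$ for $j=1,2$ using the membership $\dif_t\lambda_j\in S(\kappa\lambda_j,g)$ from Lemma \ref{lem:dif:t:Lam} together with the pointwise constants from \eqref{eq:dlam:1:lam} and \eqref{eq:dlam:2:lam}, and absorb the $j=3$ term into ${\mathcal E}_2(V)$ via $\dif_t\lambda_3\in{\mathcal C}(1)$ and $\lambda_3\simeq 1$.
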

%

\subsection{Conclusion}

In what follows we denote $\|u\|_s=\|\lr{D}^su\|$ and by $H^s=H^s(\R^d)$ the set of tempered distribution $u$ on $\R^d$ such that $\|u\|_s<+\infty$.
\begin{definition}
\label{dfn:calH}\rm Denote by ${\mathcal H}_{-n,s}(0,\delta)$ the set of $f$ such that
\[
t^{-n}\lr{D}^{s}f(t,\cdot)\in L^2((0,\delta)\times \R^d).
\]
\end{definition}
Consider the term ${\mathsf{Re}}\,\big(\op{\varLambda}\op{w^{-n}}F,\op{w^{-n}}V)$ where $F={^t}(F_1,F_2,F_3)$. Write $\varLambda=(\kappa^{1/2}\varLambda^{1/2})\#R\#(\kappa^{-1/2}\varLambda^{1/2})$ with $R\in S(1, g)$.  
Because of the choice of $n$ it follows from \eqref{eq:en:id} and Lemmas \ref{lem:calE:j}, \ref{lem:Lam:calB:T}, \ref{lem:Lam:calA}, \ref{lem:dif:t:Lam:bis} one can find $c_i>0$ and $M_0$, $\gamma_0$, $\theta_0$ such that
\begin{equation}
\label{eq:dif:ene:fu}
\begin{split}
\frac{d}{dt}{\mathcal E}(V)\leq -c_1e^{-\theta t}{\mathcal E}_1(V)-c_2\theta e^{-\theta t}{\mathcal E}_2(V)\\
+\big|{\mathsf{Re}}\,\big(\op{\varLambda}\op{w^{-n}}F,\op{w^{-n}}V)\big|
\end{split}
\end{equation}
for $M\geq M_0$, $\gamma\geq \gamma_0$ and $\theta\geq \theta_0$.  Thanks to Lemma \ref{lem:psi:t:sei} one has $
\kappa^{-1/2}\phi^{-n}\lambda_j^{1/2}\in S(M^{-n}\sqrt{t}\,\lr{\xi}^{n}, g)$ 
then we see easily
\[
\big|{\mathsf{Re}}\,\big(\op{\varLambda}\op{w^{-n}}F,\op{w^{-n}}V)\big|\leq CM^{-1}{\mathcal E}_1(V)+CM^{-2n+1}t^{2n+1}\|F\|_n^2.
\]
Since  $M\lr{\xi}^{-1}/C\leq \phi\leq CM^{-4}$ and $t^{-1/2}\leq \kappa^{1/2}\leq t^{-1/2}+\omega^{-1/2}\leq t^{-1/2}+M^{-1}\lr{\xi}$ and  $\lr{\xi}^{-3/2+j/2}\leq C\lambda_j^{1/2}$ for $1\leq j\leq 3$ then
\begin{equation}
\label{eq:phi-n:sita}
\begin{split}
M^{8n}t^{-1}\|V\|^2_{-1}/C\leq t^{2n}{\mathcal E}_1(V)
\leq CM^{-2n}(t^{-1}\|V\|^2_n+\|V\|^2_{n+1}),\\
M^{8n}\|V\|^2_{-1}/C\leq t^{2n}{\mathcal E}_2(V)
\leq CM^{-2n}\|V\|^2_{n}.
\end{split}
\end{equation}
Assume  $D_t^jV\in {\mathcal H}_{-n-1/2+j/2, n+1-j}(0, M^{-4})$, $j=0, 1$. From this one sees that $\lim_{t\to +0}\|V(t)\|_n$ exists which is $0$. Using this we see $\lim_{t\to +0}t^{-n}\|V(t)\|_n=0$. Noting that ${\mathcal E}(V)\leq CM^{-n}t^{-2n}\|V(t)\|_n^2$ and integrating \eqref{eq:dif:ene:fu} in $t$ 
we obtain
\begin{prop}
\label{pro:fu:matome}There exist $c_i>0$, $C>0$ and $M_0$, $\gamma_0$, $\theta_0$  such that for any $V$ with $D_t^jV\in {\mathcal H}_{-n-1/2+j/2, n+1-j}(0, M^{-4})$, $j=0, 1$, one has
\begin{gather*}
c_1t^{-2n}e^{-\theta t}\|V(t)\|_{-1}^2
+c_2\int_0^te^{-\theta s}s^{-2n-1}\|V(s)\|_{-1}^2ds\\
+c_3\theta \int_0^te^{-\theta s}s^{-2n}\|V(s)\|_{-1}^2ds
\leq CM^{1-10n}\int_0^te^{-\theta s}s^{-2n+1}\|{\tilde L}V(s)\|_n^2ds
\end{gather*}
for $0\leq t\leq M^{-4}$, $M\geq M_0$, $\gamma\geq \gamma_0$, $\theta\geq \theta_0$.
\end{prop}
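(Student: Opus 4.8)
The plan is to convert the energy identity \eqref{eq:en:id} for $\frac{d}{dt}\mathcal{E}(V)$ into the differential inequality \eqref{eq:dif:ene:fu}, and then to integrate the latter over $(0,t)$. First I would bound the two dissipative terms of \eqref{eq:en:id} from below: splitting $2n=n+n$, Lemma \ref{lem:calE:j} gives $-n\,{\mathsf{Re}}\,e^{-\theta t}(\op{\varLambda}\op{\kappa w^{-n}}V,\op{w^{-n}}V)-\theta e^{-\theta t}(\op{\varLambda}\op{w^{-n}}V,\op{w^{-n}}V)\le -n(1-CM^{-1})e^{-\theta t}\mathcal{E}_1(V)-c\,\theta e^{-\theta t}\mathcal{E}_2(V)$, while the remaining $-n\,{\mathsf{Re}}\,(\op{\varLambda}\op{\kappa w^{-n}}V,\op{w^{-n}}V)$ is $\le -n(1-CM^{-1})e^{-\theta t}\mathcal{E}_1(V)$ by the first estimate of Proposition \ref{pro:Lam:sita}. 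Against these I would play off the remaining contributions in \eqref{eq:en:id}: Lemma \ref{lem:Lam:calA} for the $i\mathcal{A}$ term (whose $\mathcal{E}_1$-coefficient is only $O(M^{-2})$), Lemma \ref{lem:dif:t:Lam:bis} for the $\dif_t\varLambda$ term (coefficient $C^*+2+O(M^{-1/2})$), and Lemma \ref{lem:Lam:calB:T}, i.e. \eqref{eq:Lam:calB}, for the decisive $i\mathcal{B}$ term, whose $\mathcal{E}_1$-coefficient, after recalling $1/\bar c^{1/2}=4\sqrt2$ and \eqref{eq:P:sub}, equals $2\big(4\sqrt2\,|3\bar b_3+i\bar e|/\bar e+8(1+3\sqrt2)\big)+O(M^{-1/2})$.

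The point is that the choice \eqref{eq:n:atai} of $n$ is made precisely so that, for $M$, $\gamma$, $\theta$ large enough, $2n(1-CM^{-1})$ strictly dominates the sum of all these $\mathcal{E}_1$-coefficients; what then survives on the right of \eqref{eq:en:id} is $-c_1e^{-\theta t}\mathcal{E}_1(V)-c_2\theta e^{-\theta t}\mathcal{E}_2(V)$ plus the forcing contribution $2\,{\mathsf{Re}}\,e^{-\theta t}(\op{\varLambda}\op{w^{-n}}F,\op{w^{-n}}V)$. To handle the forcing term I would write $\varLambda=(\kappa^{1/2}\varLambda^{1/2})\#R\#(\kappa^{-1/2}\varLambda^{1/2})$ with $R\in S(1,g)$ by Lemma \ref{lem:gyaku}, observe via Lemma \ref{lem:psi:t:sei} and $\lambda_j\le C$ that $\kappa^{-1/2}\phi^{-n}\lambda_j^{1/2}\in S(M^{-n}\sqrt{t}\,\lr{\xi}^{n},g)$, and combine the Cauchy--Schwarz inequality with the $L^2$-bounds of Section \ref{sec:bound:lam} to get $|{\mathsf{Re}}\,(\op{\varLambda}\op{w^{-n}}F,\op{w^{-n}}V)|\precsim M^{-n}t^{-n+1/2}\|F\|_n\,\mathcal{E}_1(V)^{1/2}$, hence by Young's inequality $\precsim M^{-1}\mathcal{E}_1(V)+M^{1-2n}t^{1-2n}\|F\|_n^2$; absorbing $M^{-1}\mathcal{E}_1(V)$ into $c_1\mathcal{E}_1(V)$ for $M$ large yields \eqref{eq:dif:ene:fu}.

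Finally I would integrate \eqref{eq:dif:ene:fu} over $(0,t)$. From the hypothesis $D_t^jV\in\mathcal{H}_{-n-1/2+j/2,\,n+1-j}(0,M^{-4})$, $j=0,1$, one has $\int_0^{M^{-4}}s^{-2n-1}\|V(s)\|_{n+1}^2\,ds<\infty$ and $\int_0^{M^{-4}}s^{-2n}\|D_tV(s)\|_n^2\,ds<\infty$; the first forces $\liminf_{s\to+0}\|V(s)\|_n=0$, and then, writing $\|V(t)\|_n^2=2\,{\mathsf{Re}}\int_0^t(\lr{D}^nV,\lr{D}^nD_tV)\,ds$ and applying Cauchy--Schwarz with these two integrability properties, a short bootstrap gives $t^{-n}\|V(t)\|_n\to0$, hence $\mathcal{E}(V)(t)\to0$ as $t\to+0$ by $\mathcal{E}(V)(t)\le CM^{-n}t^{-2n}\|V(t)\|_n^2$. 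Integrating \eqref{eq:dif:ene:fu}, using $\lim_{t\to+0}\mathcal{E}(V)(t)=0$, then bounding $\mathcal{E}(V)(t)$, $\mathcal{E}_1(V)$ and $\mathcal{E}_2(V)$ from below by the corresponding weighted $\|V\|_{-1}$-quantities through \eqref{eq:phi-n:sita} (which costs a factor $M^{8n}$), and recalling that $F=i\tilde{L}V$ so $\|F\|_n=\|\tilde{L}V\|_n$, one divides through by $M^{8n}$ and reads off the asserted estimate with right-hand side $CM^{1-10n}\int_0^t e^{-\theta s}s^{1-2n}\|\tilde{L}V(s)\|_n^2\,ds$.

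The main obstacle is twofold and, since all the operator-analytic lemmas are already in place, it is quantitative rather than structural. First is the accounting of constants in the opening step: the whole conclusion rests on $2n(1-CM^{-1})$ beating $2\cdot4\sqrt2\,|P_{sub}(0,0,0,\bar\xi)|/\bar e$ plus a universal constant, which is exactly what \eqref{eq:n:atai} encodes and the source of the constant $12\sqrt2$ in Theorem \ref{thm:main}. Second is making rigorous the vanishing $\mathcal{E}(V)(t)\to0$ as $t\to+0$: one must show that the $\mathcal{H}$-space hypothesis forces $V(t)$ to vanish fast enough at $t=0$ to overcome the $t^{-n}$ singularity of the weight $\op{w^{-n}}$, so that integration of the differential inequality from $0$ is legitimate.
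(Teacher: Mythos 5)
Your proposal is correct and follows essentially the same route as the paper: pass from the energy identity \eqref{eq:en:id} to the differential inequality \eqref{eq:dif:ene:fu} by playing the dissipative $\mathcal{E}_1$, $\mathcal{E}_2$ terms against the contributions from $i\mathcal{A}$, $i\mathcal{B}$, $\dif_t\varLambda$ and the forcing term, then integrate using the vanishing of $\mathcal{E}(V)$ at $t=0$ and the two-sided bounds \eqref{eq:phi-n:sita}. The only place you add content the paper leaves implicit is the justification of $t^{-n}\|V(t)\|_n\to 0$: writing $\|V(t)\|_n^2-\|V(t_k)\|_n^2=2\,{\mathsf{Re}}\int_{t_k}^t(\lr{D}^nV,\lr{D}^nD_tV)\,ds$ along a sequence $t_k\to 0$ with $\|V(t_k)\|_n\to 0$ (guaranteed by the integrability of $s^{-2n-1}\|V(s)\|_{n+1}^2$), and then estimating by Cauchy--Schwarz against $\int s^{-2n-1}\|V\|_n^2\,ds$ and $\int s^{-2n}\|D_tV\|_n^2\,ds$ to extract a $t^{1/2}$ factor, is exactly the kind of bootstrap the paper's terse ``one sees that $\lim_{t\to+0}\|V(t)\|_n$ exists which is $0$'' must be standing in for; spelling it out is a small but genuine improvement in rigor, not a different method.
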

\begin{cor}
\label{cor:tokusyu}For any $V$ with $D_t^jV\in {\mathcal H}_{-n-1/2+j/2, n+1-j}(0, M^{-4})$, $j=0, 1$
\begin{gather*}
\int_0^ts^{-2n-1}\|V(s)\|_{-1}^2ds
\leq C\int_0^ts^{-2n+1}\|{\tilde L}V(s)\|_n^2ds,\;\;0\leq t\leq M^{-4}.
\end{gather*}
\end{cor}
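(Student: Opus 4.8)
The plan is to read the corollary off directly from Proposition \ref{pro:fu:matome} by freezing the free parameters. First I would fix $\theta=\theta_0$ and keep $M\ge M_0$, $\gamma\ge\gamma_0$ as in that proposition, treating all three as held fixed from now on; the hypothesis $D_t^jV\in{\mathcal H}_{-n-1/2+j/2,\,n+1-j}(0,M^{-4})$ for $j=0,1$ is precisely the regularity needed to apply Proposition \ref{pro:fu:matome}, so there is nothing to arrange there.

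Next I would exploit that on the range $0\le s\le t\le M^{-4}$ the exponential weight satisfies the two-sided bound $e^{-\theta_0 M^{-4}}\le e^{-\theta_0 s}\le 1$, so that it only contributes fixed multiplicative constants. Discarding the two nonnegative terms $c_1t^{-2n}e^{-\theta t}\|V(t)\|_{-1}^2$ and $c_3\theta\int_0^t e^{-\theta s}s^{-2n}\|V(s)\|_{-1}^2\,ds$ from the left-hand side of the inequality in Proposition \ref{pro:fu:matome} and retaining only the $c_2$-term, and bounding $e^{-\theta_0 s}\le1$ in the right-hand integrand, I get
\[
c_2e^{-\theta_0 M^{-4}}\int_0^t s^{-2n-1}\|V(s)\|_{-1}^2\,ds\le CM^{1-10n}\int_0^t s^{-2n+1}\|{\tilde L}V(s)\|_n^2\,ds,\qquad 0\le t\le M^{-4}.
\]
Dividing through by $c_2e^{-\theta_0 M^{-4}}$ then yields the asserted estimate with the new constant $C=CM^{1-10n}e^{\theta_0 M^{-4}}/c_2$, which is a genuine (though $M$-dependent) constant since $n$ is fixed and $M^{1-10n}\le1$.

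I do not expect any real obstacle here: the only points that need a word of justification are that the hypotheses of Proposition \ref{pro:fu:matome} are met verbatim and that it is legitimate to absorb the fixed powers of $M$, the constant $c_2^{-1}$, and the bounded factor $e^{\theta_0 M^{-4}}$ into the constant of the corollary — which is permissible precisely because $M$, $\gamma$, $\theta$ have been chosen once and for all before the constant $C$ of the corollary is fixed.
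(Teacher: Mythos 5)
Your proof is correct and is precisely the intended reading of Proposition \ref{pro:fu:matome}: fix $\theta=\theta_0$, drop the first and third nonnegative terms on the left, squeeze the exponential factor $e^{-\theta_0 s}$ between $e^{-\theta_0 M^{-4}}$ and $1$ on $[0,M^{-4}]$, and absorb the resulting constants. The paper itself states the corollary without proof because this is the only step involved, so there is nothing to compare beyond agreement.
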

Consider the adjoint operator ${\hat P^*}$ of ${\hat P}$. Noting $a_M\in {\mathcal C}(\sigma)$, $b\in {\mathcal C}(\sigma^{3/2})$ and \eqref{eq:xi:kakudai}, \eqref{eq:dM:seimitu} we see that
\begin{align*}
{\hat P^*}=D_t^3-a_M(t, x, D)[D]^2D_t-b(t, x, D)\,[D]^3\\
+b_1D_t^2+\big({\tilde b}_2+d_M)[D]D_t+{\tilde b}_3[D]^2
+{\tilde c}_1D_t+{\tilde c}_2[D]
\end{align*}
with ${\tilde b}_j\in S(1,g)$ and ${\tilde c}_j\in S(M^2, g)$ hence ${\tilde c}_j[D]^{-1}\in S(M^{-3}, g)$ where it is not difficult to check that $
{\tilde b}_3-\big(b_3+i e\big)\in S(M^{-3},g)$. Denote by ${\tilde L}^*$  the corresponding first order system (which is not the adjoint of ${\tilde L}$).
Since the power $n$ of the weight $\phi^{-n}$ depends only on $a$, $b$ and $b_3$ (see \eqref{eq:n:atai}) then we can choose the same $n$ for ${\hat P^*}$ as for ${\hat P}$. Then employing the weighted energy
\[
{\mathcal E}^*(V)=e^{\theta t}\big(\op{\varLambda}\op{t^n\phi^n}V,\op{t^n\phi^n}V\big)
\]
and repeating the same arguments for ${\mathcal E}(V)$ and making the integration
\[
-\int^{\delta}_t\frac{d}{dt}{\mathcal E}^*\,dt,\quad 0<t<\delta=M^{-4}
\]
we have
\begin{prop}
\label{pro:fu:matome:ad}There exist $c_i>0$, $C>0$ and $M_0$, $\gamma_0$, $\theta_0$  such that for any $V$ with $D_t^jV\in {\mathcal H}_{n-1/2+j/2, 1-j}(0, M^{-4})$, $j=0, 1$, one has
\begin{gather*}
c_1t^{2n}e^{\theta t}\|V(t)\|_{-n-1}^2
+c_2\int_t^{\delta}e^{\theta \tau}\tau^{2n-1}\|V(\tau)\|_{-n-1}^2d\tau\\
+c_3\theta \int_t^{\delta}e^{-\theta \tau}\tau^{2n}\|V(\tau)\|_{-n-1}^2d\tau
\leq CM^{-10n}\delta^{2n}e^{\theta \delta}\|V(\delta)\|^2\\
+CM^{1-10n}\int_t^{\delta}e^{\theta \tau}\tau^{2n+1}\|{\tilde L}^*V(\tau)\|^2d\tau,\quad 0\leq t\leq \delta
\end{gather*}
for $M\geq M_0$, $\gamma\geq \gamma_0$, $\theta\geq \theta_0$  where ${\tilde L}^*V=\op{T}{^t}({\hat P^*}u,0,0)$ and $\op{T}V={^t}(D_t^2u, [D]D_tu, [D]^2u)$. 
\end{prop}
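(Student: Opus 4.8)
The plan is to transcribe, almost verbatim, the derivation of Proposition~\ref{pro:fu:matome} for the operator ${\hat P^*}$, replacing the weight $w^{-n}=t^{-n}\phi^{-n}$ by $w^n=t^n\phi^n$ and the forward integration $\int_0^t$ by the backward one $-\int_t^\delta$, $\delta=M^{-4}$.

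The first step is the reduction of ${\hat P^*}$ to a first order $3\times3$ system. Since ${\hat P^*}$ has the same (real) principal symbol $p$ as ${\hat P}$, the B\'ezout matrix $S$, its ordered eigenvalues $\varLambda={\rm diag}(\lambda_1,\lambda_2,\lambda_3)$ and the orthogonal diagonalizer $T$ of Section~\ref{sec:Bezout} are unchanged, along with every estimate of Sections~\ref{sec:Bezout}, \ref{sec:kyori}, \ref{sec:metric}, \ref{sec:bound:lam}. Only the terms of order $\le 2$ change: writing ${\hat P^*}$ in the displayed form, with ${\tilde b}_j\in S(1,g)$, ${\tilde c}_j\in S(M^2,g)$ so that ${\tilde c}_j[D]^{-1}\in S(M^{-3},g)$ and ${\tilde b}_3-(b_3+ie)\in S(M^{-3},g)$, and conjugating by $\op{T}$ exactly as in Section~\ref{sec:henkan:jiko}, one gets the associated system ${\tilde L}^*=D_t-\op{{\mathcal A}+{\mathcal B}^*}$, where ${\mathcal A}=(T^{-1}AT)[\xi]$ is the same matrix as in Proposition~\ref{pro:T:henkan}, and ${\mathcal B}^*={\mathcal B}^*_1-T^{-1}D_tT$ has the same structure as ${\mathcal B}$ there — all entries in $S(1,g)$, the upper part in $S(\sigma^{-1/2},g)$, and the distinguished entries $({\mathcal B}^*)_{21}$, $({\mathcal B}^*)_{31}$, $({\mathcal B}^*)_{32}$ governed, up to $S(M^{-1},g)$, by $\dif_t(3b/2a_M)$, ${\bar b}_3$, ${\bar e}$ and $2M{\bar e}$. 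Because the admissible power $n$ in \eqref{eq:n:atai} is fixed only through $a$, $b$, $b_3$ (via the constant $C^*$ of Lemma~\ref{lem:pert:Dis}, the bound \eqref{eq:c:bar:1}, and the universal factors stemming from ${\bar c}=1/32$), and these are insensitive to the above lower order modifications, the same $n$ may be kept for ${\hat P^*}$.

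Next I would introduce ${\mathcal E}^*(V)=e^{\theta t}(\op{\varLambda}\op{w^n}V,\op{w^n}V)$, use $\dif_t\phi=\omega^{-1}\phi$, $\dif_t(t\phi)=\kappa(t\phi)$ and hence $\dif_t w^n=n\kappa w^n$, and differentiate along $\dif_tV=\op{i{\mathcal A}+i{\mathcal B}^*}V+F$ with $F={\tilde L}^*V=\op{T}\,{^t}({\hat P^*}u,0,0)$, obtaining $\tfrac{d}{dt}{\mathcal E}^*=\theta{\mathcal E}^*+2n\,{\mathsf{Re}}(\op{\varLambda}\op{\kappa w^n}V,\op{w^n}V)+e^{\theta t}(\op{\dif_t\varLambda}\op{w^n}V,\op{w^n}V)+2\,{\mathsf{Re}}\,e^{\theta t}(\op{\varLambda}\op{w^n}(\op{i{\mathcal A}+i{\mathcal B}^*}V+F),\op{w^n}V)$. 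Then, term by term, I would reuse: the estimate of the $\dif_t\varLambda$ term from the analogue of Lemma~\ref{lem:dif:t:Lam:bis}; the estimate of the ${\mathcal B}^*$ term from the analogue of Lemma~\ref{lem:Lam:calB:T}, via Lemma~\ref{lem:LamB:hyoka}, \eqref{eq:c:bar:1} and Proposition~\ref{pro:Skon}; the estimate of the ${\mathcal A}$ term from Lemma~\ref{lem:Lam:calA} (the matrix ${\mathcal A}$ being unchanged); the lower bounds for $2n\,{\mathsf{Re}}(\op{\varLambda}\op{\kappa w^n}V,\op{w^n}V)$ and ${\mathsf{Re}}(\op{\varLambda}\op{w^n}V,\op{w^n}V)$ from Proposition~\ref{pro:Lam:sita} (with the symbol reductions of Lemma~\ref{lem:calE:j}). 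Since $n$ obeys \eqref{eq:n:atai}, this produces, for $M\ge M_0$, $\gamma\ge\gamma_0$, $\theta\ge\theta_0$, a differential inequality $\tfrac{d}{dt}{\mathcal E}^*(V)\ge c_1e^{\theta t}{\mathcal E}^*_1(V)+c_2\theta e^{\theta t}{\mathcal E}^*_2(V)-|{\mathsf{Re}}(\op{\varLambda}\op{w^n}F,\op{w^n}V)|$, with ${\mathcal E}^*_i$ the obvious analogues of ${\mathcal E}_i$ with $\phi^{-n}$ replaced by $\phi^n$, and the source term bounded, using $\kappa^{-1/2}\le t^{1/2}$, $\phi\le CM^{-4}$ and $\varLambda\le C$, by $CM^{-1}{\mathcal E}^*_1(V)+CM^{1-10n}t^{2n+1}\|F\|^2$.

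Finally I would integrate this inequality as $-\int_t^\delta\tfrac{d}{dt}{\mathcal E}^*\,d\tau={\mathcal E}^*(V(t))-{\mathcal E}^*(V(\delta))$ over $[t,\delta]$ with $\delta=M^{-4}$; the hypothesis $D_t^jV\in{\mathcal H}_{n-1/2+j/2,1-j}(0,M^{-4})$, $j=0,1$, makes $\tau\mapsto{\mathcal E}^*(V(\tau))$ absolutely continuous on $[t,\delta]$ and all the integrals finite. The boundary term $-{\mathcal E}^*(V(\delta))$ is moved to the right and estimated by $CM^{-10n}\delta^{2n}e^{\theta\delta}\|V(\delta)\|^2$ via $\phi(\delta,\cdot)\le CM^{-4}$, $\varLambda\le C$; the coercive terms on the left yield $c_1t^{2n}e^{\theta t}\|V(t)\|_{-n-1}^2+c_2\int_t^\delta e^{\theta\tau}\tau^{2n-1}\|V(\tau)\|_{-n-1}^2d\tau+c_3\theta\int_t^\delta e^{\theta\tau}\tau^{2n}\|V(\tau)\|_{-n-1}^2d\tau$ after invoking $\phi\ge M\lr{\xi}^{-1}/C$, the factor $M\lr{\xi}^{-1}$ inside ${\mathcal D}$ in Proposition~\ref{pro:Lam:sita}, and the analogue of \eqref{eq:phi-n:sita}; and the source contribution gives $CM^{1-10n}\int_t^\delta e^{\theta\tau}\tau^{2n+1}\|{\tilde L}^*V(\tau)\|^2d\tau$. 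Assembling these is exactly the asserted estimate. I expect the only real obstacle to be the first step: checking that the passage to the adjoint (the modified zeroth and first order terms) together with the backward integration does not upset the tight balance in Section~\ref{sec:mitibiku:ene} between the stabilising power $n$ and the destabilising contributions of $\dif_t\varLambda$ and of the entries $({\mathcal B}^*)_{21},({\mathcal B}^*)_{31},({\mathcal B}^*)_{32}$ — i.e. that the constants still line up so that \eqref{eq:n:atai} suffices; everything beyond that is a mechanical rerun of Sections~\ref{sec:henkan:jiko} and \ref{sec:mitibiku:ene}.
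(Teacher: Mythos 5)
Your proposal matches the paper's own proof: the paper likewise expands ${\hat P^*}$, notes $\tilde b_3 - (b_3 + ie) \in S(M^{-3},g)$ and ${\tilde c}_j[D]^{-1}\in S(M^{-3},g)$, asserts the same $n$ suffices since \eqref{eq:n:atai} depends only on $a$, $b$, $b_3$, introduces ${\mathcal E}^*(V)=e^{\theta t}(\op{\varLambda}\op{t^n\phi^n}V,\op{t^n\phi^n}V)$, and integrates $-\int_t^\delta \tfrac{d}{dt}{\mathcal E}^*$. The only caveat — which you rightly flag as the real issue — is that the $(3,1)$ entry of ${\mathcal B}_1^*$ is governed by $\bar{\tilde b}_3\approx \bar b_3 + i\bar e$ rather than $\bar b_3$, so the combination entering the ${\mathcal E}_1$-coefficient in the analogue of \eqref{eq:Lam:calB} shifts from $\bar b_3 + i\bar e/3$ to $\bar b_3 + 4i\bar e/3$; the paper glosses over this in the same way, relying on the slack built into the constant $12\sqrt{2}$ of \eqref{eq:n:P:sub}.
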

\begin{remark}\rm
\label{rem:hatP:sonzai:n}It is clear from the proof  that for any  $n'\geq n$, 
Propositions \ref{pro:fu:matome} and \ref{pro:fu:matome:ad} hold.
\end{remark}
%

\section{Preliminary existence result}
\label{sec:pre:sonzai}

Let $s\in \R$ and we estimate $\olr{D}^sV$. In what follows we fix $M$ and $\gamma$ (actually it is enough to choose $\gamma=M^5$, see \eqref{eq:seigen}) such that Propositions \ref{pro:fu:matome} and \ref{pro:fu:matome:ad} hold, therefore from now on we can assume $\lr{D}=\olr{D}=[D]$ without restrictions, while $\theta$ remains to be free.   From \eqref{eq:eq:V} one has
\[
\dif_t(\olr{D}^{s}V)=\big(\op{i{\mathcal A}+i{\mathcal B}}+i[\olr{D}^s,\op{{\mathcal A}+{\mathcal B}}]\olr{D}^{-s}\big)\olr{D}^sV+\olr{D}^sF.
\]
\begin{lem}
\label{lem:calA:D:s} For any $s\in\R$ there is $C>0$ such that
\[
\big|([\olr{D}^s,\op{{\mathcal A}}]V, \op{\varLambda}\olr{D}^sV)\big|\leq C{\mathcal E}_2(\olr{D}^sV).
\]
\end{lem}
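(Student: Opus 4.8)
Here I sketch how I would establish Lemma \ref{lem:calA:D:s}. The plan is to realize $[\olr{D}^s,\op{{\mathcal A}}]\olr{D}^{-s}$ as an operator of order $0$ whose matrix symbol inherits the block structure of $\dif_xA^T$ recorded in Corollary \ref{cor:dif:nami:b}, and then to bound the pairing entry by entry, absorbing the eigenvalue ratios of $\varLambda$ exactly as in the treatment of the $\op{{\mathcal A}}$-- and $\op{{\mathcal B}}$--terms (Lemmas \ref{lem:Lam:calA} and \ref{lem:Lam:calB:T}).

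First I would compute the commutator symbol by the Weyl calculus. Since $\lr{\xi}^s$ depends only on $\xi$, the leading term of $\lr{\xi}^s\#{\mathcal A}-{\mathcal A}\#\lr{\xi}^s$ is $\frac1i(\dif_{\xi}\lr{\xi}^s)\cdot(\dif_x{\mathcal A})$, and $\dif_x{\mathcal A}=(\dif_xA^T)[\xi]$ because $[\xi]$ is a function of $\xi$ alone. Setting $r=(\lr{\xi}^s\#{\mathcal A}-{\mathcal A}\#\lr{\xi}^s)\#\lr{\xi}^{-s}$ and using $(\dif_{\xi}\lr{\xi}^s)\lr{\xi}^{-s}\in S(\lr{\xi}^{-1},g)$, $[\xi]\in S(\lr{\xi},g)$ and $(\dif_xA^T)_{ik}\in\co(\sigma^{\tau_{ik}})$ — where $\tau_{31}=\tau_{33}=2$, $\tau_{12}=\tau_{21}=\tau_{23}=\tau_{32}=1/2$, $\tau_{11}=\tau_{13}=\tau_{22}=0$, read off from Corollary \ref{cor:dif:nami:b} (the ``$-1$'' entries of $A^T$ being killed by $\dif_x$) — one gets $r_{ik}\in S(\sigma^{\tau_{ik}},g)$. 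The remaining terms of the expansion are negligible: each extra derivative pair costs $M^{-2}\lr{\xi}^{-2}$ and each extra $\dif_x$ on $A^T$ costs at most $\sigma^{-1/2}$, so by $\sigma\geq M\lr{\xi}^{-1}$ the net gain is $\precsim M^{-3}\lr{\xi}^{-1}$. Note that the naive bound $r\in S(M^{-1}\lr{\xi},g)$ is useless since $\lr{\xi}\geq M^5$; the precise gain $\sigma^{\tau_{ik}}\leq CM^{-4\tau_{ik}}$ is what matters.

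Next, writing $W=\olr{D}^sV$ and using $V=\olr{D}^{-s}W$,
\[
([\olr{D}^s,\op{{\mathcal A}}]V,\op{\varLambda}\olr{D}^sV)=\sum_{i,k}(\op{r_{ik}}W_k,\op{\lambda_i}W_i).
\]
For the entries with $i\leq k$ I would write $\op{\lambda_i}=\op{\lambda_i^{1/2}}^{*}\op{\lambda_i^{1/2}}+\op{S(M^{-2}\lambda_i,g)}$ (Lemma \ref{lem:kihon:fu}) and use the inverses of Lemma \ref{lem:gyaku} to form $\lambda_i^{1/2}\#r_{ik}\#\lambda_k^{-1/2}\in S(1,g)$ — this holds because $\lambda_i^{1/2}\lambda_k^{-1/2}\sigma^{\tau_{ik}}\leq C$, as $\lambda_i\leq\lambda_k$ and $\sigma\leq CM^{-4}$ — so the term is $\precsim\|\op{\lambda_i^{1/2}}W_i\|^2+\|\op{\lambda_k^{1/2}}W_k\|^2$. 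For the lower-triangular entries $i>k$ the bare ratio $\lambda_i^{1/2}\lambda_k^{-1/2}$ is unbounded, so I would instead insert $\kappa$: by Lemma \ref{lem:daiji} ($\min\{t^2,\omega^2\}\precsim\lambda_1$ and $\lambda_3^{1/2}\lambda_1^{-1/2}\kappa^{-1}\leq\sqrt3\,{\bar\varep}+CM^{-4}$) one checks $(\kappa^{-1/2}\lambda_i^{1/2})\#r_{ik}\#(\lambda_k^{-1/2}\kappa^{-1/2})\in S(1,g)$ — the decisive cases being $(2,1)$ and $(3,2)$, where $\lambda_2\simeq\sigma$ reduces matters to $\sigma^2\kappa\lambda_1^{-1}\leq C$ resp. a bounded quantity, and $(3,1)$, where the $\sigma^2$ gain meets $\lambda_3^{1/2}\lambda_1^{-1/2}\kappa^{-1}$. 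Hence each such term is $\precsim\|\op{\kappa^{1/2}\lambda_i^{1/2}}W_i\|^2+\|\op{\kappa^{1/2}\lambda_k^{1/2}}W_k\|^2$, all errors being controlled by the $L^2$-estimates of Section \ref{sec:bound:lam}. Summing gives $|([\olr{D}^s,\op{{\mathcal A}}]V,\op{\varLambda}\olr{D}^sV)|\precsim\|\op{\kappa^{1/2}\varLambda^{1/2}}\olr{D}^sV\|^2$.

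Finally I would absorb this into ${\mathcal E}_2(\olr{D}^sV)$. On $0\leq t\leq M^{-4}$ one has $\omega\geq\phi/2$ (immediate from $\phi=\omega+t-\psi$ and $\omega\geq|t-\psi|$), hence $\omega^{-1/2}\leq\sqrt2\,\phi^{-1/2}$; moreover $t^{-n}\geq M^{4n}$ and $\phi^{-(n-1/2)}\geq c$ there, so $w^{-n}=t^{-n}\phi^{-n}\geq c(t^{-1/2}+\phi^{-1/2})\geq c\,\kappa^{1/2}$ pointwise. A G\aa rding-type matrix comparison (routine from Lemmas \ref{lem:gyaku} and \ref{lem:kihon:fu}) then yields $\|\op{\kappa^{1/2}\varLambda^{1/2}}\olr{D}^sV\|^2\leq C\|\op{w^{-n}\varLambda^{1/2}}\olr{D}^sV\|^2=C\,{\mathcal E}_2(\olr{D}^sV)$, where in the last step the scalar $w^{-n}$ is commuted past the diagonal $\varLambda^{1/2}$ up to $S(M^{-1},g)$. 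The main obstacle is exactly this last point: recognising that the weight $\op{w^{-n}}$ built into ${\mathcal E}_2$ already dominates the auxiliary weight $\op{\kappa^{1/2}}$, so that the $\kappa$-weighted bounds forced by the lower-triangular entries (the very mechanism producing the ${\mathcal E}_1$-terms in Lemma \ref{lem:Lam:calB:T}) collapse into ${\mathcal E}_2(\olr{D}^sV)$; the rest is the bookkeeping of the $\sigma$-powers of the commutator symbol and the uniform $M^{-1}$ error control.
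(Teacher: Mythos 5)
Your argument is correct, but it takes a genuinely different route from the paper, and the difference is instructive. The paper's proof works entirely through Lemma \ref{lem:nami:a} rather than Corollary \ref{cor:dif:nami:b}: since $\tilde a_{21}=\lambda_1\,{\mathcal C}(\sigma^{-1})$, $\tilde a_{31}=\lambda_1\,{\mathcal C}(\sqrt{\sigma})$, $\tilde a_{32}=\lambda_2\,{\mathcal C}(1)$, and since Lemma \ref{lem:lam:Sg} puts $\dif_x\lambda_j$ into $S(\sqrt{\sigma}\sqrt{\lambda_j},g)$, the commutator symbols for the lower-triangular entries automatically carry a factor $\sqrt{\lambda_j}$: the paper records $(({\tilde a}_{3j}[\xi])\#\olr{\xi}^s-\olr{\xi}^s\#({\tilde a}_{3j}[\xi]))\#\olr{\xi}^{-s}\in S(\sigma^{2-j}\sqrt{\lambda_j},g)$ for $j=1,2$ and $\in S(\lambda_2^{-1/2}\sqrt{\lambda_1},g)$ for $(2,1)$. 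Pairing with $\op{\lambda_i}$ then gives $\|\op{\lambda_i^{1/2}}\cdot\|\,\|\op{\lambda_j^{1/2}}\cdot\|$ directly, and no weight $\kappa$ ever appears; the ${\mathcal E}_2$ bound falls out without a separate absorption step. You instead start from Corollary \ref{cor:dif:nami:b}, whose $\sigma$-power exponents $\tau_{ik}$ for the lower-triangular block lack the built-in $\sqrt{\lambda_j}$; this forces you to insert $\kappa$-weights exactly as in Lemma \ref{lem:LamB:hyoka}/Lemma \ref{lem:Lam:calA}, producing an $\|\op{\kappa^{1/2}\varLambda^{1/2}}\olr{D}^sV\|^2$-type bound, which you then convert to ${\mathcal E}_2$ via the pointwise inequality $\kappa^{1/2}\leq C(M)\,w^{-n}$ and Lemma \ref{lem:m:1:2}. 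That last step is legitimate here because $M$ has been fixed at the start of Section \ref{sec:pre:sonzai}, but the constant is enormous ($\sim M^{8n}$) and the argument would break if one needed $M$-uniformity. The paper's version is both shorter and structurally cleaner: it shows exactly why this commutator, unlike the $T^{-1}\dif_tT$ commutator in Lemma \ref{lem:calB:D:s}, produces no $\epsilon{\mathcal E}_1$-term at all — the cancellation is already encoded in the eigenvector normalization of $T$ recorded in Lemma \ref{lem:nami:a}, not something to be recovered afterwards by weight juggling.
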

\begin{proof} Denoting $T^{-1}AT=({\tilde a}_{i j})$ 
thanks to Lemmas \ref{lem:nami:a} and \ref{lem:lam:Sg}  we see that
\begin{equation}
\label{eq:calA:D:s:1}
\begin{split}
(({\tilde a}_{3j}[\xi])\#\olr{\xi}^s-\olr{\xi}^s\#({\tilde a}_{3j}[\xi]))\#\olr{\xi}^{-s}\in S(\sigma^{2-j}\sqrt{\lambda_j}, g),\;\; j=1, 2,
\\
(({\tilde a}_{21}[\xi])\#\olr{\xi}^s-\lr{\xi}^s\#({\tilde a}_{21}[\xi]))\#\olr{\xi}^{-s}\in S(\sigma^{-1/2}\sqrt{\lambda_1}, g)
\end{split}
\end{equation}
where $S(\sigma^{-1/2}\sqrt{\lambda_1}, g)
=S(\lambda_2^{-1/2}\sqrt{\lambda_1}, g)$.
From  Corollary \ref{cor:TAT:seimitu} it is easy to see $(({\tilde a}_{i j}[\xi])\#\olr{\xi}^s-\olr{\xi}^s\#({\tilde a}_{i j}[\xi]))\#\olr{\xi}^{-s}\in S(1, g)$ for $j\geq i$.
 Then together with \eqref{eq:calA:D:s:1} the proof follows from a repetition of similar arguments.
\end{proof}
\begin{lem}
\label{lem:calB:D:s} For any $s\in\R$ and any $\ep>0$ there is $C>0$ such that
\[
\big|([\olr{D}^s,\op{{\mathcal B}}]V, \op{\varLambda}\olr{D}^sV)\big|\leq \ep\,{\mathcal E}_1(\olr{D}^sV)+C{\mathcal E}_2(\olr{D}^sV).
\]
\end{lem}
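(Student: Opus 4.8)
The plan is to follow the proof of Lemmas~\ref{lem:LamB:hyoka} and~\ref{lem:Lam:calB:T} verbatim, the new input being that the commutator symbol is \emph{smaller} than ${\mathcal B}$ and, in the few positions that actually matter, loses its leading part. First I would write $[\olr{D}^s,\op{{\mathcal B}}]\olr{D}^{-s}=\op{{\mathcal R}}$ with the matrix symbol ${\mathcal R}=(\olr{\xi}^s\#{\mathcal B}-{\mathcal B}\#\olr{\xi}^s)\#\olr{\xi}^{-s}$, and record the elementary fact: since $\olr{\xi}^s$ is a function of $\xi$ alone and $\olr{\xi}^{\pm s}\in S(\olr{\xi}^{\pm s},g)$, for any $q\in S(m,g)$ with $m$ an admissible weight one has $(\olr{\xi}^s\#q-q\#\olr{\xi}^s)\#\olr{\xi}^{-s}\in S(m\,M^{-1/2}\olr{\xi}^{-1/2},g)$ — every term of the asymptotic expansion carries some $\dif_x^{\ga}q$ with $|\ga|\ge1$, costing $M^{-1/2}\olr{\xi}^{1/2}$ per $x$-derivative in $g$, against a $\dif_\xi^{\delta}\olr{\xi}^s$ which contributes $\olr{\xi}^{-|\delta|}$ after composition with $\olr{\xi}^{-s}$ — and, crucially, this commutator \emph{vanishes identically} when $q$ is constant.

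Next I would plug in the description of ${\mathcal B}={\mathcal B}_1-T^{-1}D_tT$ from Proposition~\ref{pro:T:henkan} and \eqref{eq:dT:-1:T}, entrywise. The entries $q_{ij}$ with $j\ge i$ lie in $S(\sigma^{-1/2},g)$, so the corresponding entries of ${\mathcal R}$ lie in $S(\sigma^{-1/2}M^{-1/2}\olr{\xi}^{-1/2},g)$; the entries $q_{32}=-2M{\bar e}+S(M^{-1},g)$ and $q_{31}={\bar b}_3+i{\bar e}/3+S(M^{-1},g)$ have constant leading parts, so the corresponding ${\mathcal R}$-entries lie in $S(M^{-3/2}\olr{\xi}^{-1/2},g)$; and $q_{21}=i\dif_t(3b/2a_M)+S(1,g)$, whose leading part $\dif_t(3b/2a_M)\in S(\sigma^{-1/2},g)$ is not constant, gives an ${\mathcal R}$-entry in $S(\sigma^{-1/2}M^{-1/2}\olr{\xi}^{-1/2},g)$. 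Then, after moving the weights around as in Section~\ref{sec:mitibiku:ene} (using that $\phi^{\pm n}$ are admissible for $g$ and $\op{\phi^n}\op{\phi^{-n}}=\op{1+k}$ with $k\in S(M^{-1},g)$, Lemma~\ref{lem:gyaku}), I would apply Lemma~\ref{lem:LamB:hyoka} to $(\op{\varLambda}\op{{\mathcal R}}\olr{D}^sV,\olr{D}^sV)$ one entry at a time: all entries with $j\ge i$ and the $(3,2)$-entry are handled by its first two lines and contribute $\le CM^{-2}{\mathcal E}_1(\olr{D}^sV)+C{\mathcal E}_2(\olr{D}^sV)$; the two first-column entries $(3,1)$ and $(2,1)$ — the only ones not already carrying a $\lambda_1$-factor, hence the only ones that must be routed through ${\mathcal E}_1$ via the $\kappa$-borrowing of Lemma~\ref{lem:LamB:hyoka} — are handled by its last two lines and contribute at most $(\sqrt{3}\,{\bar\varep}\,\|\op{{\mathcal R}_{31}}\|+CM^{-1/2}){\mathcal E}_1(\olr{D}^sV)$ and $({\bar\varep}\,\|\op{\lambda_2^{1/2}{\mathcal R}_{21}}\|+CM^{-1/2}){\mathcal E}_1(\olr{D}^sV)$. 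Since ${\mathcal R}_{31}\in S(M^{-3/2}\olr{\xi}^{-1/2},g)$ and $\lambda_2^{1/2}{\mathcal R}_{21}\in S(M^{-1/2}\olr{\xi}^{-1/2},g)$ (using $\lambda_2^{1/2}\sigma^{-1/2}\le C$, i.e. $\lambda_2\simeq a_M$), their $L^2$-operator norms are $O(M^{-1/2}\gamma^{-1/2})$ by the $L^2$-boundedness theorem; thus the total coefficient of ${\mathcal E}_1(\olr{D}^sV)$ is $O(M^{-1/2})$, hence $\le\ep$ once $M$ is taken large enough (permitted, since $M$ is only constrained by requiring Propositions~\ref{pro:fu:matome} and~\ref{pro:fu:matome:ad} to hold), while everything else is absorbed into $C{\mathcal E}_2(\olr{D}^sV)$.

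The main obstacle is not a hard estimate but seeing this dichotomy sharply. For ${\mathcal A}$ the first-column entries carry an explicit $\lambda_1$-factor (Lemma~\ref{lem:nami:a}), which is why Lemma~\ref{lem:calA:D:s} can dump everything into ${\mathcal E}_2$; by contrast the first-column entries $q_{21},q_{31}$ of ${\mathcal B}$ are of size $O(\sigma^{-1/2})$ and $O(1)$ with \emph{non-zero} values at the triple characteristic, so a careless treatment of their commutators would leave an $O(1)$ multiple of ${\mathcal E}_1$ and the lemma would fail. It is exactly the cancellation of the constants ${\bar b}_3+i{\bar e}/3$ and $-2M{\bar e}$, combined with the universal $M^{-1/2}\olr{\xi}^{-1/2}$ gain of the Weyl commutator, that turns this into an arbitrarily small multiple of ${\mathcal E}_1$; a subsidiary point requiring care is that $\dif_t(3b/2a_M)$ is of genuinely negative order in $\sigma$ near $t=0$, so one must pair the commutator gain with the bound $\lambda_2\simeq a_M$ before invoking Lemma~\ref{lem:LamB:hyoka}.
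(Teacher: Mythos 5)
The paper's proof does not go through Lemma~\ref{lem:LamB:hyoka} entrywise. It splits ${\mathcal B}={\mathcal B}_1-T^{-1}D_tT$; for ${\mathcal B}_1\in S(1,g)$ it shows $\lambda_i\#\big(\olr{\xi}^s\#{\tilde b}_{ij}-{\tilde b}_{ij}\#\olr{\xi}^s\big)\#\olr{\xi}^{-s}\in S(\lambda_i^{1/2}\lambda_j^{1/2},g)$ (the exceptional $(3,1)$ case being settled by ${\tilde b}_{31}=b_3+S(\sigma^{1/2})$ together with the localised structure of $b_3$), so this piece is absorbed by $C{\mathcal E}_2(\olr{D}^sV)$ \emph{alone}. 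For $T^{-1}\dif_tT$ it shows that the below-diagonal commutators satisfy $\lambda_i\#\big(\olr{\xi}^s\#{\tilde t}_{ij}-{\tilde t}_{ij}\#\olr{\xi}^s\big)\#\olr{\xi}^{-s}\in S(\sqrt{\kappa\lambda_j}\,\sqrt{\lambda_i},g)$ for $i>j$, i.e.\ $\kappa^{1/2}$ lands on the $j$-side only; this gives $\big|(\cdot)\big|\leq C\sqrt{{\mathcal E}_1(\olr{D}^sV)}\sqrt{{\mathcal E}_2(\olr{D}^sV)}$, and Young's inequality then produces $\ep{\mathcal E}_1+C^2\ep^{-1}{\mathcal E}_2$ for \emph{every} $\ep>0$ with $M$ untouched.

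Your route instead pushes the $(2,1)$ and $(3,1)$ entries of the commutator through lines $3$ and $4$ of Lemma~\ref{lem:LamB:hyoka}. The proof of that lemma factors $\lambda_i\#b$ as $(\kappa^{1/2}\lambda_j^{1/2})\#r\#(\kappa^{1/2}\lambda_i^{1/2})$, so \emph{both} arguments of the inner product are weighted by $\kappa^{1/2}\varLambda^{1/2}$ and the resulting bound is an ${\mathcal E}_1$-only term, with coefficient $\sqrt3\,\bar\varep\|\op{{\mathcal R}_{31}}\|+CM^{-1/2}$ and the analogue for $(2,1)$. That coefficient is $O(M^{-1/2})$, a \emph{fixed} positive number: $M$ and $\gamma$ were fixed at the very start of Section~\ref{sec:pre:sonzai}, before this Lemma is stated. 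Your final step, ``$\leq\ep$ once $M$ is taken large enough,'' is therefore not available — the Lemma asserts the estimate for every $\ep>0$ with $M$ already frozen, and re-choosing $M$ inside the proof would change the meaning of every $M$-dependent constant used since Section~\ref{sec:kakutyo}. As written you prove the inequality only for $\ep$ above a fixed threshold of order $M^{-1/2}$. The missing ingredient is precisely the Young step: once you commit to the double-$\kappa^{1/2}$ factorisation of Lemma~\ref{lem:LamB:hyoka} there is no $\sqrt{{\mathcal E}_1}\sqrt{{\mathcal E}_2}$ cross-term left for Young to split.

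Your observation that the constant leading parts of $q_{31}$ and $q_{32}$ annihilate the commutator is correct and does sharpen the symbol-class bound, but it sharpens the wrong quantity: the bottleneck is not the size of the commutator symbol but the distribution of the $\kappa^{1/2}$ weights between the two slots. Replacing the appeal to Lemma~\ref{lem:LamB:hyoka} by a single-$\kappa^{1/2}$ factorisation as in the paper, then invoking Young, closes the gap.
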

\begin{proof} Write ${\mathcal B}_1=({\tilde b}_{i j})$. Since ${\tilde b}_{i j}\in S(1, g)$ then $\lambda_i\#(\olr{\xi}^s\#{\tilde b}_{ij}-{\tilde b}_{i j}\#\olr{\xi}^s)\#\olr{\xi}^{-s}$ is in $S(\olr{\xi}^{-1/2}\lambda_i, g)$. Noting $C\lambda_1\geq \sigma\olr{\xi}^{-1}$ and $C\lambda_2\geq \sigma\geq \olr{\xi}^{-1}$ it is easy to see that $S(\olr{\xi}^{-1/2}\lambda_i, g)\subset S(\lambda_i^{1/2}\lambda_j^{1/2}, g)$ except for $(i, j)=(3, 1)$. For $(i, j)=(3, 1)$ recalling ${\tilde b}_{3 1}=b_3+S(\sigma^{1/2})$ by \eqref{eq:T:B:T} one sees
\begin{align*}
\lambda_3\#(\olr{\xi}^s\#{\tilde b}_{3 1}-{\tilde b}_{3 1}\#\olr{\xi}^s)\#\olr{\xi}^{-s}\in S(\sigma^{1/2}\olr{\xi}^{-1/2}, g)\subset S(\lambda_1^{1/2}\lambda_3^{1/2}, g)
\end{align*}
(recall that $M$ has been fixed). Therefore one obtains 
\begin{equation}
\label{eq:calB:D:s}
\big|([\olr{D}^s,\op{{\mathcal B}_1}]V, \op{\varLambda}\olr{D}^sV)\big|\leq C{\mathcal E}_2(\olr{D}^sV).
\end{equation}
Next consider $T^{-1}\dif_tT=({\tilde t}_{i j})$. Recalling ${\tilde t}_{2 1}\in {\mathcal C}(\sigma^{-1/2})$,  ${\tilde t}_{3 1}\in {\mathcal C}(1)$ and ${\tilde t}_{3 2}\in {\mathcal C}(\sigma^{1/2})$ and noting $C\kappa\lambda_1\geq \olr{\xi}^{-1}$ and $C\lambda_2\geq \sigma\geq \olr{\xi}^{-1}$ one has 
\[
\lambda_i\#(\olr{\xi}^s\#{\tilde t}_{i j}-{\tilde t}_{i j}\#\olr{\xi}^s)\#\olr{\xi}^{-s}\in S(\sqrt{\kappa\lambda_j}\,\sqrt{\lambda_i}, g),\;\;\quad i> j.
\]
Therefore we have
\begin{align*}
\big|([\olr{D}^s,\op{T^{-1}\dif_t T}]V, \op{\varLambda}\olr{D}^sV)\big|\leq C\sqrt{{\mathcal E}_1(\olr{D}^sV)}\sqrt{{\mathcal E}_2(\olr{D}^sV)}\\
\leq \ep\, {\mathcal E}_1(\olr{D}^sV)+C^2\ep^{-1}{\mathcal E}_2(\olr{D}^sV)
\end{align*}
which together with \eqref{eq:calB:D:s} proves the assertion.
\end{proof}
Choosing $\ep>0$ smaller than  $c_2$ in Proposition \ref{pro:fu:matome} and  $\theta$  large   we conclude
\begin{prop}
\label{pro:fu:matome:s}For any $s\in \R$ there exists  $C>0$ such that for any $V$ with $D_t^jV\in {\mathcal H}_{-n-1/2+j/2, n+s+1-j}(0, \delta)$, $j=0, 1$, one has 
\begin{gather*}
t^{-2n}\|V(t)\|_{s-1}^2+\int_0^t\tau^{-2n-1}\|V(\tau)\|_{s-1}^2
d\tau
\leq C\int_0^t\tau^{-2n+1}\|{\tilde L}V(\tau)\|^2_{n+s}d\tau
\end{gather*}
for $0\leq t\leq \delta$. 
\end{prop}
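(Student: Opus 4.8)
The plan is to run the weighted energy argument of Section~\ref{sec:mitibiku:ene} once more, this time for $V^{(s)}:=\olr{D}^sV$ in place of $V$. Applying $\olr{D}^s$ to \eqref{eq:eq:V} gives
\[
\dif_tV^{(s)}=\op{i{\mathcal A}+i{\mathcal B}}V^{(s)}+i[\olr{D}^s,\op{{\mathcal A}+{\mathcal B}}]\olr{D}^{-s}V^{(s)}+\olr{D}^sF,
\]
so $V^{(s)}$ solves the same first order system with an extra source $i[\olr{D}^s,\op{{\mathcal A}+{\mathcal B}}]\olr{D}^{-s}V^{(s)}$, which I would treat as a benign lower order perturbation to be absorbed into the dissipative part of the energy inequality, alongside the genuine source $\olr{D}^sF$. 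Since in this section $M,\gamma$ are fixed and $\delta=M^{-4}$, once such an inequality for $V^{(s)}$ is in hand the conclusion follows by invoking the two-sided bounds \eqref{eq:phi-n:sita} for $V^{(s)}$ (note $\|V^{(s)}\|_{-1}=\|V\|_{s-1}$, $\|V^{(s)}\|_n=\|V\|_{n+s}$, and $\|\olr{D}^sF\|_n=\|\tilde L V\|_{n+s}$ since $F=\pm i\tilde LV$), while the fixed factors $e^{\pm\theta\tau}|_{[0,\delta]}$ and all powers of $M$ are absorbed into $C$.

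Concretely, I would form ${\mathcal E}(V^{(s)})=e^{-\theta t}(\op{\varLambda}\op{w^{-n}}V^{(s)},\op{w^{-n}}V^{(s)})$ and differentiate exactly as in \eqref{eq:en:id}. Every term except the two coming from the commutator is handled verbatim with $V$ replaced by $V^{(s)}$: the $n\kappa$ and $\theta$ terms by Lemma~\ref{lem:calE:j}, the ${\mathcal B}$ term by Lemma~\ref{lem:Lam:calB:T}, the ${\mathcal A}$ term by Lemma~\ref{lem:Lam:calA}, and the $\dif_t\varLambda$ term by Lemma~\ref{lem:dif:t:Lam:bis}. This reproduces \eqref{eq:dif:ene:fu} for $V^{(s)}$, with $\olr{D}^sF$ in place of $F$, plus the two new terms $\operatorname{Re}e^{-\theta t}(\op{\varLambda}\op{w^{-n}}i[\olr{D}^s,\op{{\mathcal A}}]\olr{D}^{-s}V^{(s)},\op{w^{-n}}V^{(s)})$ and the analogous one with ${\mathcal B}$.

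For these, I would first commute the scalar weight $\op{w^{-n}}=t^{-n}\op{\phi^{-n}}$ through the commutator; since $\phi^{-n}\in S(\phi^{-n},g)$ is an admissible weight for $g$ this only produces operators of the same order, so after this reduction the two terms have the form estimated in Lemmas~\ref{lem:calA:D:s} and \ref{lem:calB:D:s}, which bound them by $C\,{\mathcal E}_2(V^{(s)})$ and by $\ep\,{\mathcal E}_1(V^{(s)})+C_{\ep}\,{\mathcal E}_2(V^{(s)})$ respectively, for arbitrary $\ep>0$. Choosing $\ep$ small enough (smaller than $c_2$ in Proposition~\ref{pro:fu:matome}) absorbs the $\ep\,{\mathcal E}_1$ term, and then taking $\theta$ large absorbs all remaining multiples of ${\mathcal E}_2(V^{(s)})$ into the $c_3\theta\,{\mathcal E}_2(V^{(s)})$ term; the $\olr{D}^sF$ contribution is estimated exactly as the $F$-term preceding \eqref{eq:phi-n:sita}, i.e.\ by $CM^{-1}{\mathcal E}_1(V^{(s)})+C\,t^{2n+1}\|\tilde L V\|_{n+s}^2$. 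Integrating in $t$, using that the hypothesis $D_t^jV\in{\mathcal H}_{-n-1/2+j/2,\,n+s+1-j}(0,\delta)$ forces $t^{-n}\|V^{(s)}(t)\|_n\to0$ and hence ${\mathcal E}(V^{(s)})\to0$ as $t\to+0$, and converting back via \eqref{eq:phi-n:sita} finishes the proof.

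The only non-routine point, already isolated in Lemmas~\ref{lem:calA:D:s} and \ref{lem:calB:D:s}, is that $[\olr{D}^s,\op{{\mathcal A}}]$ is of the same order as $\op{{\mathcal A}}$ itself, so a crude order count cannot absorb it; what makes it work is the precise off-diagonal structure of $A^{T}=T^{-1}AT$ (Lemma~\ref{lem:nami:a}, Corollary~\ref{cor:TAT:seimitu}): the sub-diagonal entries carry the extra powers of $\sigma$, equivalently of $\lambda_1$, produced by diagonalizing the B\'ezout matrix in Sections~\ref{sec:Bezout}--\ref{sec:henkan:jiko}, so that pairing against $\op{\varLambda}$ redistributes the weights and places the commutator into the $\theta$-controlled quantity ${\mathcal E}_2$ rather than into ${\mathcal E}_1$. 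Everything else is bookkeeping identical to Section~\ref{sec:mitibiku:ene}.
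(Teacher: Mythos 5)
Your proposal matches the paper's own argument essentially verbatim: the paper precedes Proposition \ref{pro:fu:matome:s} by deriving exactly the transported equation for $\olr{D}^sV$, proves Lemmas \ref{lem:calA:D:s} and \ref{lem:calB:D:s} to control the commutator pairings, and then concludes in one line by taking $\ep<c_2$ and $\theta$ large and rerunning the integration of Proposition \ref{pro:fu:matome}. You have correctly identified both the mechanism (off-diagonal structure of $A^T$ redistributing $\lambda_j$-weights so the commutator lands in ${\mathcal E}_2$) and the bookkeeping (commuting $\op{w^{-n}}$ through, conversion via \eqref{eq:phi-n:sita}, and the vanishing of the initial energy from the stated regularity hypotheses), so this is the same proof.
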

Since ${\tilde L}=\op{I+K}\op{T^{-1}}\cdot L\cdot \op{T}$ with $T$, $T^{-1}\in S(1, g)$ then $\|{\tilde L}V\|_{s}\leq C_s\|L\cdot\op{T}V\|_{s}$ and 
$\|\op{T}V\|_{s}\leq C_s\|V\|_{s}$ with some $C_s>0$. Thanks to   Lemma \ref{lem:diftT:seimitu} one has $\|\op{\dif_tT}V\|_s\leq C_st^{-1/2}\|V\|_s$ then  replacing $\op{T}V$ by $U$ one sees that for any $U$ with $D_t^jU\in {\mathcal H}_{n-1/2+j/2, n+s+1-j}(0, \delta)$, $j=0, 1$ it holds 
\begin{gather*}
t^{-2n}\|U(t)\|_{s-1}^2+
\int_0^t{\tau}^{-2n-1}\|U(\tau)\|_{s-1}^2d\tau
\leq C\int_0^t\tau^{-2n+1}\|LU(\tau)\|_{n+s}^2d\tau.
\end{gather*}
Since $U={^t}(D_t^2u,[D]D_tu,[D]^2u)$ and $LU={^t}({\hat P}u,0,0)$ we have
\begin{prop}
\label{pro:V:to:u} For any $s\in \R$ there is $C>0$  such that for any $u$ with $D_t^ju\in {\mathcal H}_{-n-1/2, n+s+3-j}(0,\delta)$, $0\leq j\leq 3$, one has
\begin{equation}
\label{eq:hat:P}
\begin{split}
t^{-2n}\sum_{j=0}^2\|D_t^ju(t)\|_{s+1-j}^2+
\sum_{j=0}^2\int_0^t{\tau}^{-2n-1}\|D_t^ju(\tau)\|_{s+1-j}^2d\tau\\
\leq C\int_0^t\tau^{-2n+1}\|{\hat P}u(\tau)\|_{n+s}^2d\tau,\quad 0\leq t\leq \delta.
\end{split}
\end{equation}
\end{prop}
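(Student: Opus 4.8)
The statement is the end of a chain of reductions already in place, so the plan is to transport the $\olr{D}^{s}$-weighted estimate of Proposition~\ref{pro:fu:matome:s} for ${\tilde L}$ back through the conjugation $L\cdot\op{T}=\op{T}\cdot{\tilde L}$ of Proposition~\ref{pro:T:henkan} and then through the first-order reduction $U={}^t(D_t^2u,[D]D_tu,[D]^2u)$, $LU={}^t({\hat P}u,0,0)$. The only structural facts needed are that the intertwining symbols $T,T^{-1},I+K$ lie in $S(1,g)$ and hence define $L^2$-bounded (and $H^{s}$-bounded, $M$ being fixed) operators by Lemma~\ref{lem:Ni:hon:2}, and that the sole $t$-dependent loss comes from $\dif_tT$.

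First I would pass from ${\tilde L}$ to $L$. Given $U$ as in the statement, put $V=\op{I+K}\op{T^{-1}}U$; since $T\#(I+K)\#T^{-1}=I$ (the identities preceding~\eqref{eq:L:tiL}) one has $\op{T}V=U$ exactly, hence ${\tilde L}V=\op{I+K}\op{T^{-1}}(LU)$. Therefore $\|{\tilde L}V(\tau)\|_{n+s}\precsim\|LU(\tau)\|_{n+s}$ and $\|V(t)\|_{s-1}\simeq\|U(t)\|_{s-1}$. To check the hypotheses of Proposition~\ref{pro:fu:matome:s} on $V$, I would write $D_tV=\op{I+K}\op{T^{-1}}\bigl(D_tU-\op{\dif_tT}V\bigr)$ and use Lemma~\ref{lem:diftT:seimitu}: the entries of $\dif_tT$ lie in $\co(\sigma^{-1/2})\subset\co(t^{-1/2})$ since $\sigma\geq t$, so $\|\op{\dif_tT}V\|_{\ast}\precsim t^{-1/2}\|V\|_{\ast}$. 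Thus $D_t^{k}V$ stays in ${\mathcal H}_{-n-1/2+k/2,\,n+s+1-k}(0,\delta)$ ($k=0,1$) whenever $D_t^{k}U$ does, and Proposition~\ref{pro:fu:matome:s} yields
\[
t^{-2n}\|U(t)\|_{s-1}^2+\int_0^t\tau^{-2n-1}\|U(\tau)\|_{s-1}^2\,d\tau\leq C\int_0^t\tau^{-2n+1}\|LU(\tau)\|_{n+s}^2\,d\tau,\quad 0\leq t\leq\delta .
\]

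Next I would insert $U={}^t(D_t^2u,[D]D_tu,[D]^2u)$. With $M,\gamma$ fixed we treat $[D]=\olr{D}$, so $\|[D]^{2-j}D_t^ju\|_{s-1}=\|D_t^ju\|_{s+1-j}$, giving $\|U(t)\|_{s-1}^2\simeq\sum_{j=0}^{2}\|D_t^ju(t)\|_{s+1-j}^2$, while $LU={}^t({\hat P}u,0,0)$ gives $\|LU(\tau)\|_{n+s}=\|{\hat P}u(\tau)\|_{n+s}$. The hypothesis $D_t^ju\in{\mathcal H}_{-n-1/2,\,n+s+3-j}(0,\delta)$ for $0\leq j\leq 3$ is exactly calibrated so that $U\in{\mathcal H}_{-n-1/2,\,n+s+1}$ (checking the three components of $U$) and $D_tU\in{\mathcal H}_{-n-1/2,\,n+s}\subset{\mathcal H}_{-n,\,n+s}$ (the component $D_t^3u$ being where $j=3$ is needed); this, together with the reduction of the previous paragraph, certifies the hypotheses on $V$. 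Substituting the two norm identities into the displayed estimate produces~\eqref{eq:hat:P}.

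The main, though routine, obstacle is the bookkeeping of the weighted spaces ${\mathcal H}_{-n',s'}$: one must verify that the $t^{-1/2}$ generated by $\dif_tT$ is consistent with the half-integer $t$-weight shift $k/2$ in the hypothesis classes of Proposition~\ref{pro:fu:matome:s} — concretely, $t^{-1/2}$ costs one Sobolev order, which is available because the $k=0$ class carries Sobolev index $n+s+1$ against $n+s$ for $k=1$ — and that $\op{I+K}\op{T^{-1}}$ moves $U$ and $D_tU$ within the required ${\mathcal H}_{-n',s'}$. No estimate beyond those already proved is needed.
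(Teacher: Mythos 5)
Your argument reproduces the paper's own proof: the paper likewise sets $V$ via the exact inverse $\op{I+K}\op{T^{-1}}$ so that $\op{T}V=U$, transports Proposition~\ref{pro:fu:matome:s} through the conjugation $L\cdot\op{T}=\op{T}\cdot{\tilde L}$ using $T,T^{-1},I+K\in S(1,g)$, controls $D_tV$ via $\|\op{\dif_tT}V\|_s\le C_st^{-1/2}\|V\|_s$ (Lemma~\ref{lem:diftT:seimitu} with $\sigma\ge t$), and then substitutes $U={}^t(D_t^2u,[D]D_tu,[D]^2u)$, $LU={}^t({\hat P}u,0,0)$. Your bookkeeping of the ${\mathcal H}_{-n',s'}$ classes is consistent with the paper's (which compresses this check to one line), so this is the same proof, merely spelled out in more detail.
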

Repeating the same arguments we conclude
\begin{prop}
\label{lem:tokusyu:s:ad}For any $s\in \R$ there is $C>0$  such that for any $u$ with $D_t^ju\in {\mathcal H}_{n-1/2+j/2, s+3-j}(0,\delta)$, $0\leq j\leq 3$, we have
\begin{equation}
\label{eq:hat:P:ad}
\begin{split}
t^{2n}\sum_{j=0}^2\|D_t^ju(t)\|_{s+1-j}^2+
\sum_{j=0}^2\int_t^{\delta}{\tau}^{2n-1}\|D_t^ju(\tau)\|_{s+1-j}^2d\tau\\
\leq C\big(\sum_{j=0}^2\|D_t^ju(\delta)\|_{n+s+2-j}^2
+\int_t^{\delta}\tau^{2n+1}\|{\hat P^*}u(\tau)\|_{n+s}^2d\tau\big),\;\;0<t\leq \delta.
\end{split}
\end{equation}
 \end{prop}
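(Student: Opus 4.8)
The plan is to run, for the first order system $\tilde L^{*}$ associated with $\hat P^{*}$, the same chain of reductions that produced Propositions~\ref{pro:fu:matome:s} and \ref{pro:V:to:u} out of Proposition~\ref{pro:fu:matome}, now starting from the backward estimate of Proposition~\ref{pro:fu:matome:ad}. The first step is to insert the elliptic weight $\olr{D}^{n+s}$; the extra $n$ compared with the forward case is needed because the forcing term in Proposition~\ref{pro:fu:matome:ad} is measured in $L^{2}$ whereas in Proposition~\ref{pro:fu:matome} it already carries $n$ derivatives. Since $V$ satisfies \eqref{eq:eq:V} with $\tilde L^{*}$ in place of $\tilde L$, the function $\olr{D}^{n+s}V$ satisfies an equation whose operator has the extra term $i[\olr{D}^{n+s},\op{{\mathcal A}+{\mathcal B}}]\olr{D}^{-n-s}$. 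The key point is that $\tilde L^{*}$ has the same principal part $D_{t}-\op{{\mathcal A}}$ and a lower order part $\op{{\mathcal B}}$ lying in the same symbol classes as that of $\tilde L$; this is precisely what the description of $\hat P^{*}$ preceding Proposition~\ref{pro:fu:matome:ad} records, in particular ${\tilde b}_{3}-(b_{3}+ie)\in S(M^{-3},g)$ and ${\tilde c}_{j}[D]^{-1}\in S(M^{-3},g)$. Hence Lemmas~\ref{lem:calA:D:s} and \ref{lem:calB:D:s} apply verbatim with $n+s$ in place of $s$, and re-running the energy computation of Section~\ref{sec:mitibiku:ene} for the backward energy ${\mathcal E}^{*}(\olr{D}^{n+s}V)$, absorbing the ${\mathcal E}_{1}$-type commutator contribution by taking $\theta$ large, gives the adjoint analogue of Proposition~\ref{pro:fu:matome:s}: for $V$ with $D_{t}^{j}V\in{\mathcal H}_{n-1/2+j/2,\,n+s+1-j}(0,\delta)$, $j=0,1$,
\begin{gather*}
t^{2n}\|V(t)\|_{s-1}^{2}+\int_{t}^{\delta}\tau^{2n-1}\|V(\tau)\|_{s-1}^{2}\,d\tau\\
\leq C\Big(\delta^{2n}\|V(\delta)\|_{n+s}^{2}+\int_{t}^{\delta}\tau^{2n+1}\|\tilde L^{*}V(\tau)\|_{n+s}^{2}\,d\tau\Big),\quad 0\le t\le\delta,
\end{gather*}
with $\theta$ then fixed so that $e^{\theta t}$ is bounded above and below on $[0,\delta]$ and the lower order integrals in Proposition~\ref{pro:fu:matome:ad} are absorbed.

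Next I would undo the two reductions exactly as in Section~\ref{sec:pre:sonzai}. Since $\tilde L^{*}=\op{I+K}\op{T^{-1}}\cdot L^{*}\cdot\op{T}$ with $I+K$ invertible and $T,T^{-1}\in S(1,g)$, the quantities $\|\tilde L^{*}V\|_{n+s}$ and $\|L^{*}\op{T}V\|_{n+s}$ are comparable and $\op{T}$, $\op{T^{-1}}$ are bounded on $H^{n+s}$, while Lemma~\ref{lem:diftT:seimitu} gives $\|\op{\dif_{t}T}V\|_{n+s}\le C\,t^{-1/2}\|V\|_{n+s}$, whose square is integrable against $\tau^{2n-1}$ and thus absorbable into the left side. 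Writing $U=\op{T}V$ and recalling $L^{*}U={}^{t}(\hat P^{*}u,0,0)$, $U={}^{t}(D_{t}^{2}u,[D]D_{t}u,[D]^{2}u)$, so that $\|U\|_{\mu}^{2}\simeq\sum_{j=0}^{2}\|D_{t}^{j}u\|_{\mu+2-j}^{2}$, the interior term $\|V(\tau)\|_{s-1}^{2}$ becomes $\sum_{j=0}^{2}\|D_{t}^{j}u(\tau)\|_{s+1-j}^{2}$, the boundary term $\|V(\delta)\|_{n+s}^{2}$ becomes $\sum_{j=0}^{2}\|D_{t}^{j}u(\delta)\|_{n+s+2-j}^{2}$, and $\|\tilde L^{*}V(\tau)\|_{n+s}^{2}$ becomes $\|\hat P^{*}u(\tau)\|_{n+s}^{2}$, which is \eqref{eq:hat:P:ad}. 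The hypothesis $D_{t}^{j}u\in{\mathcal H}_{n-1/2+j/2,\,s+3-j}(0,\delta)$, $0\le j\le 3$, is the convenient sufficient condition that guarantees $D_{t}^{j}V$ and $D_{t}^{j}U$ ($j=0,1$) lie in the spaces required in the displayed estimate and that all terms in \eqref{eq:hat:P:ad} are finite.

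I expect the only step needing real care — there is no new idea, the substance being carried by Proposition~\ref{pro:fu:matome:ad} together with Lemmas~\ref{lem:calA:D:s} and \ref{lem:calB:D:s} — to be the boundary contribution at $t=\delta$. In the forward Proposition~\ref{pro:V:to:u} the analogous term does not appear because $\lim_{t\to+0}\|V(t)\|=0$, forced by the regularity hypothesis, kills it after integration; here the backward integration $-\int_{t}^{\delta}\frac{d}{dt}{\mathcal E}^{*}\,dt$ leaves ${\mathcal E}^{*}(V)(\delta)$ on the right, and this term must be transported correctly through the elliptic weight $\olr{D}^{n+s}$ and the reduction $V\mapsto U\mapsto(D_{t}^{j}u)_{j}$ so that it lands on $\sum_{j=0}^{2}\|D_{t}^{j}u(\delta)\|_{n+s+2-j}^{2}$ with exactly the Sobolev exponents stated in \eqref{eq:hat:P:ad}. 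Beyond this and the routine book-keeping of exponents through $\op{T}$ and through $U\mapsto(D_{t}^{j}u)_{j}$, the proof is a verbatim repetition of the arguments of Section~\ref{sec:pre:sonzai} with $\hat P$, $\tilde L$ and Proposition~\ref{pro:fu:matome} replaced by $\hat P^{*}$, $\tilde L^{*}$ and Proposition~\ref{pro:fu:matome:ad}.
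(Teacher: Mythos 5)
The paper's own proof of this proposition is the single phrase ``Repeating the same arguments we conclude,'' referring to the chain from Proposition~\ref{pro:fu:matome} through Lemmas~\ref{lem:calA:D:s}--\ref{lem:calB:D:s}, Proposition~\ref{pro:fu:matome:s} and Proposition~\ref{pro:V:to:u}, now started from Proposition~\ref{pro:fu:matome:ad}. Your proposal spells out exactly that repetition, with the correct index bookkeeping (weight $\olr{D}^{n+s}$ rather than $\olr{D}^{s}$, the reduction $\tilde L^{*}\to L^{*}\to\hat P^{*}$, and the retention of the boundary term at $t=\delta$), so it is correct and matches the paper's approach.
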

 Since  \eqref{eq:hat:P:ad} holds with ${\bar n}\geq n+3$  as noticed in Remark \ref{rem:hatP:sonzai:n} then, in the resulting \eqref{eq:hat:P:ad},   replacing $s$ by  $-3{\bar n}-s-1$  we have
 \[
 \int_0^{\delta}t^{2{\bar n}-1}\|u(t)\|_{-3{\bar n}-s}^2dt\leq C\int_0^{\delta}t^{2{\bar n}+1}\|{\hat P^*}u(t)\|_{-2{\bar n}-s-1}^2dt
 \]
for any $u\in C_0^{\infty}((0,\delta)\times \R^d)$. This implies
\begin{align*}
\big|\int_0^{\delta}(f,v)dt\big|\leq \big(\int_0^{\delta}t^{-2{\bar n}+1}\|f\|_{3{\bar n}+s}^2dt\big)^{1/2}\big(\int_0^{\delta}t^{2{\bar n}-1}\|v\|_{-3{\bar n}-s}^2dt\big)^{1/2}\\
\leq C\big(\int_0^{\delta}t^{-2{\bar n}+1}\|f\|_{3{\bar n}+s}^2dt\big)^{1/2}\big(\int_0^{\delta}t^{2{\bar n}+1}\|{\hat P^*}v\|_{-2{\bar n}-s-1}^2dt\big)^{1/2}
\end{align*}
for any $v\in C_0^{\infty}((0,\delta)\times \R^d)$ and $f\in {\mathcal H}_{-{\bar n}+1/2, 3{\bar n}+s}(0,\delta)$. Using the Hahn-Banach theorem to extend the anti-linear form in ${\hat P^*}v$;
\begin{equation}
\label{eq:keisiki}
{\hat P^*}v\mapsto \int_0^{\delta}(f, v)dt
\end{equation}
we conclude that there is some $u\in {\mathcal H}_{-{\bar n}-1/2,2{\bar n}+s+1 }(0,\delta)$  such that
\[
\int_0^{\delta}(f, v)dt=\int_0^{\delta}(u,{\hat P^*}v)dt,\quad v\in C_0^{\infty}((0,\delta)\times \R^d).
\]
This implies that ${\hat P}u=f$. Since $u\in {\mathcal H}_{0, 2{\bar n}+s+1}(0,\delta)$ and $f\in {\mathcal H}_{0, 3{\bar n}+s}(0,\delta)$  it follows from \cite[Theorem B.2.9]{Hobook} that $D_t^ju\in {\mathcal H}_{0, 2{\bar n}+s+1-j}(0,\delta)$ for $ j=0,1,2,\ldots$. Thus with $w=\olr{D}^{{\bar n}+s}u$ one has $D_t^jw\in L^2((0,\delta)\times \R^d)$ for $j=0,\ldots, {\bar n}+1$ hence $D_t^jw(0)$ exists in $L^2(\R^d)$ which is $0$ for $j=0,\ldots, {\bar n}$ for $w\in {\mathcal H}_{-{\bar n}+1/2, 0}(0,\delta)$. Thus one can write $w(t)=\int_0^t(t-\tau)^{{\bar n}}\dif_t^{{\bar n}+1}w(\tau)d\tau/{\bar n}!$. From this one concludes that $D_t^ju\in {\mathcal H}_{-{\bar n}+j-1/2, {\bar n}+s}(0,\delta)$ hence $D_t^ju\in {\mathcal H}_{-n-1/2, n+s+3-j}(0,\delta)$ for $0\leq j\leq 3$ because ${\bar n}\geq n+3$ thus \eqref{eq:hat:P} holds for this $u$. Now let $f\in {\mathcal H}_{-n+1/2, n+s}(0,\delta)$. Take a rapidly decreasing function $\rho(\xi)$ with $\rho(0)=1$ then $f_{\ep}=e^{-\ep/t}\rho(\ep D)f\in {\mathcal H}_{-{\bar n}+1/2, 2{\bar n}+s+1}(0,\delta)$ and $f_{\ep}\to f$ in ${\mathcal H}_{-n+1/2, n+s}(0,\delta)$. As just proved above there is $u_{\ep}$ satisfying ${\hat P}u_{\ep}=f_{\ep}$ and \eqref{eq:hat:P}. Therefore choosing a weakly convergent subsequence $\{u_{\ep'}\}$ one can conclude
\begin{them}
\label{thm:pre:sonzai:s} There exists $\delta>0$ such that for any $s\in \R$ and  any $f\in {\mathcal H}_{-n+1/2, n+s}(0,\delta)$ there exists a unique $u$ with $D_t^ju\in {\mathcal H}_{-n-1/2, 1+s-j}(0, \delta)$, $j=0,1,2$, satisfying ${\hat P}u=f$ and \eqref{eq:hat:P}.
\end{them}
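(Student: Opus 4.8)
The plan is to derive existence by a duality argument from the adjoint energy estimate \eqref{eq:hat:P:ad}, then bootstrap regularity in $t$, and finally remove the extra smoothness assumed on $f$ by approximation; uniqueness will be read off directly from the a priori estimate \eqref{eq:hat:P}.

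First I would invoke Proposition \ref{lem:tokusyu:s:ad} with a weight exponent ${\bar n}\geq n+3$, which is legitimate by Remark \ref{rem:hatP:sonzai:n}. Applied to $u\in C_0^{\infty}((0,\delta)\times\R^d)$ the boundary term at $t=\delta$ in \eqref{eq:hat:P:ad} drops out, and substituting $-3{\bar n}-s-1$ in place of $s$ gives
\[
\int_0^{\delta}t^{2{\bar n}-1}\|u(t)\|_{-3{\bar n}-s}^2\,dt\leq C\int_0^{\delta}t^{2{\bar n}+1}\|{\hat P^*}u(t)\|_{-2{\bar n}-s-1}^2\,dt.
\]
By Cauchy--Schwarz this bounds $\big|\int_0^{\delta}(f,v)\,dt\big|$ by $\big(\int_0^{\delta}t^{-2{\bar n}+1}\|f\|_{3{\bar n}+s}^2\,dt\big)^{1/2}$ times $\big(\int_0^{\delta}t^{2{\bar n}+1}\|{\hat P^*}v\|_{-2{\bar n}-s-1}^2\,dt\big)^{1/2}$ for $f\in{\mathcal H}_{-{\bar n}+1/2,3{\bar n}+s}(0,\delta)$ and $v\in C_0^{\infty}$. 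Hence the antilinear form ${\hat P^*}v\mapsto\int_0^{\delta}(f,v)\,dt$ is bounded on the subspace $\{{\hat P^*}v:v\in C_0^{\infty}\}$ of the weighted $L^2$ space with norm $\big(\int_0^{\delta}t^{2{\bar n}+1}\|\cdot\|_{-2{\bar n}-s-1}^2\,dt\big)^{1/2}$; the Hahn--Banach theorem and Riesz representation then produce $u\in{\mathcal H}_{-{\bar n}-1/2,2{\bar n}+s+1}(0,\delta)$ with $\int_0^{\delta}(f,v)\,dt=\int_0^{\delta}(u,{\hat P^*}v)\,dt$ for all $v\in C_0^{\infty}$, i.e. ${\hat P}u=f$.

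Next I would upgrade the regularity of $u$. Since ${\hat P}$ is a third order differential operator in $t$ with pseudodifferential coefficients and $u,f$ lie in ${\mathcal H}_{0,\cdot}$ after multiplication by a power of $\olr{D}$, \cite[Theorem B.2.9]{Hobook} yields $D_t^ju\in{\mathcal H}_{0,2{\bar n}+s+1-j}(0,\delta)$ for all $j$. Writing $w=\olr{D}^{{\bar n}+s}u$, one has $D_t^jw\in L^2((0,\delta)\times\R^d)$ for $0\leq j\leq{\bar n}+1$, so the traces $D_t^jw(0)$ exist and vanish for $0\leq j\leq{\bar n}$ because $w\in{\mathcal H}_{-{\bar n}+1/2,0}(0,\delta)$; Taylor's formula $w(t)=\frac{1}{{\bar n}!}\int_0^t(t-\tau)^{{\bar n}}\partial_t^{{\bar n}+1}w(\tau)\,d\tau$ then gives the vanishing-at-zero bounds $D_t^ju\in{\mathcal H}_{-{\bar n}+j-1/2,{\bar n}+s}(0,\delta)$, in particular $D_t^ju\in{\mathcal H}_{-n-1/2,n+s+3-j}(0,\delta)$ for $0\leq j\leq3$ since ${\bar n}\geq n+3$. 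This is exactly the hypothesis of Proposition \ref{pro:V:to:u}, so \eqref{eq:hat:P} holds for $u$.

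Finally, for an arbitrary $f\in{\mathcal H}_{-n+1/2,n+s}(0,\delta)$ I would regularize by $f_{\ep}=e^{-\ep/t}\rho(\ep D)f$ with $\rho\in{\mathcal S}$, $\rho(0)=1$, so that $f_{\ep}\in{\mathcal H}_{-{\bar n}+1/2,2{\bar n}+s+1}(0,\delta)$ and $f_{\ep}\to f$ in ${\mathcal H}_{-n+1/2,n+s}(0,\delta)$. The previous step gives $u_{\ep}$ with ${\hat P}u_{\ep}=f_{\ep}$ satisfying \eqref{eq:hat:P}; applying \eqref{eq:hat:P} to $u_{\ep}-u_{\ep'}$ shows that $\{u_{\ep}\}$ together with its first two $D_t$-derivatives is bounded in the relevant weighted spaces, so a weakly convergent subsequence yields the desired $u$ with ${\hat P}u=f$, and \eqref{eq:hat:P} is preserved in the limit. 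Uniqueness is immediate from \eqref{eq:hat:P}: if ${\hat P}u=0$ with $D_t^ju\in{\mathcal H}_{-n-1/2,1+s-j}(0,\delta)$, the right side of \eqref{eq:hat:P} vanishes, forcing $u\equiv0$. The genuinely delicate point is the bookkeeping of the weighted Sobolev indices, so that the substitution $s\mapsto-3{\bar n}-s-1$, the duality pairing, the $t$-regularity step, and the trace/Taylor argument all compose correctly and land precisely in ${\mathcal H}_{-n-1/2,1+s-j}(0,\delta)$, with the margin ${\bar n}\geq n+3$ doing exactly the right amount of work; the rest is routine functional analysis once Propositions \ref{pro:V:to:u} and \ref{lem:tokusyu:s:ad} are in hand.
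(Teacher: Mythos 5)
Your proposal reproduces the paper's own argument essentially step for step: invoke \eqref{eq:hat:P:ad} with the enlarged exponent ${\bar n}\geq n+3$ (Remark \ref{rem:hatP:sonzai:n}), substitute $-3{\bar n}-s-1$ for $s$, bound the dual pairing via Cauchy--Schwarz, apply Hahn--Banach to produce $u\in{\mathcal H}_{-{\bar n}-1/2,2{\bar n}+s+1}(0,\delta)$, bootstrap $t$-regularity through \cite[Theorem B.2.9]{Hobook} and the Taylor/trace argument to land in the hypotheses of Proposition \ref{pro:V:to:u}, then regularize a general $f$ by $e^{-\varepsilon/t}\rho(\varepsilon D)f$ and pass to a weak limit. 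The only addition is your explicit spelling out of uniqueness directly from \eqref{eq:hat:P}, which the paper leaves implicit.
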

Instead of \eqref{eq:keisiki} considering the anti-linear form in ${\hat P}v$;
\begin{gather*}
{\hat P}v\mapsto \int_0^{\delta}(f, v)dt +\sum_{j=0}^1\big( w_{2-j}, D_t^jv(\delta, \cdot)\big)
+\big(w_0, (D_t^2-[D]^2a(\delta, x,  D))v(\delta, \cdot)\big)
\end{gather*}
for $v\in C_0^{\infty}((0,\infty)\times \R^d)$ and repeating similar arguments adopting \eqref{eq:hat:P} we conclude
\begin{them}
\label{thm:pre:sonzai:ad} There exists $\delta>0$ such that for any $s\in \R$ and  any $f\in {\mathcal H}_{n+1/2, n+s}(0,\delta)$ and $w_j\in H^{n+s+2-j}$, $j=0,1,2$, there is a unique $u$ with  $D_t^ju\in {\mathcal H}_{n-1/2, 1+s-j}(0,\delta)$ satisfying ${\hat P^*}u=f$, $D_t^ju(\delta,\cdot)=w_j$, $j=0,1,2$ and \eqref{eq:hat:P:ad}.
\end{them}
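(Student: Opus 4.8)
The proof parallels the construction of Theorem \ref{thm:pre:sonzai:s}, with the roles of ${\hat P}$ and ${\hat P^*}$ interchanged: there the a priori estimate \eqref{eq:hat:P:ad} for ${\hat P^*}$ was used by duality to solve ${\hat P}u=f$ with vanishing Cauchy data at $t=0$, whereas here the a priori estimate \eqref{eq:hat:P} for ${\hat P}$ will be used to solve ${\hat P^*}u=f$ with prescribed Cauchy data at $t=\delta$; the only new feature is the boundary contribution at $t=\delta$. First I would fix ${\bar n}\ge n+3$ and note that, by Proposition \ref{pro:V:to:u} and Remark \ref{rem:hatP:sonzai:n}, every $v\in C_0^{\infty}((0,\infty)\times\R^d)$ vanishes near $t=0$, so the $t^{-{\bar n}}$-weighted quantities at the lower endpoint are harmless and taking $t=\delta$ in \eqref{eq:hat:P} (after the appropriate shift of the Sobolev index, matched below) yields one inequality that simultaneously controls the weighted interior norm of $v$ and the Cauchy traces $D_t^jv(\delta,\cdot)$, $j=0,1,2$, in suitable Sobolev spaces, in terms of a weighted $L^2((0,\delta)\times\R^d)$ norm of ${\hat P}v$.

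Next I would define, on the subspace $\{{\hat P}v:v\in C_0^{\infty}((0,\infty)\times\R^d)\}$, the anti-linear functional
\[
{\hat P}v\ \longmapsto\ \int_0^{\delta}(f,v)\,dt+\sum_{j=0}^1\big(w_{2-j},D_t^jv(\delta,\cdot)\big)+\big(w_0,(D_t^2-[D]^2a(\delta,x,D))v(\delta,\cdot)\big),
\]
whose precise shape is dictated by the Green formula for ${\hat P}=D_t^3-a_M[D]^2D_t-b[D]^3+(\text{lower order})$: integrating by parts in $t$ over $(0,\delta)$, the contribution at $t=0$ disappears by the support of $v$, while at $t=\delta$ one is left exactly with the pairings of $D_t^jv(\delta)$, $j=0,1,2$, against $D_t^ju(\delta)$ appearing above, the modified second trace $D_t^2-[D]^2a(\delta,x,D)$ being forced by the $-a_M[D]^2D_t$ term. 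By Cauchy--Schwarz the interior term is bounded by the $\mathcal H_{n+1/2,n+s}$ norm of $f$ times the interior part of the estimate from the first step, and the three boundary terms by its trace part, precisely under the hypothesis $w_j\in H^{n+s+2-j}$ once the index shift is chosen accordingly. Hence the functional is continuous; the Hahn--Banach theorem extends it and the Riesz representation produces $u$ with
\[
\int_0^{\delta}(f,v)\,dt+\sum_{j=0}^1\big(w_{2-j},D_t^jv(\delta,\cdot)\big)+\big(w_0,(D_t^2-[D]^2a(\delta,x,D))v(\delta,\cdot)\big)=\int_0^{\delta}(u,{\hat P}v)\,dt
\]
for all $v\in C_0^{\infty}((0,\infty)\times\R^d)$. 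Testing first against $v\in C_0^{\infty}((0,\delta)\times\R^d)$ gives ${\hat P^*}u=f$ in $(0,\delta)\times\R^d$; testing against general $v$ and invoking the same Green formula identifies $D_t^ju(\delta,\cdot)=w_j$ for $j=0,1,2$.

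It remains to upgrade the regularity of $u$ and to derive \eqref{eq:hat:P:ad}, following the proof of Theorem \ref{thm:pre:sonzai:s} but now working near the endpoint $t=\delta$: from ${\hat P^*}u=f$ one solves for $D_t^3u$ in terms of the lower-order $t$-derivatives, so that, by \cite[Theorem B.2.9]{Hobook}, $D_t^ju$ shares the spatial Sobolev regularity of $f$ for every $j$, and a Taylor expansion in $t$ about $t=\delta$ (where the traces are the prescribed $w_j$, and all higher $t$-derivatives at $\delta$ are determined by the equation) places $u$ in the class $D_t^ju\in\mathcal H_{n-1/2,\,1+s-j}(0,\delta)$, $j=0,1,2$. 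Then Proposition \ref{lem:tokusyu:s:ad} applied to $u$ gives \eqref{eq:hat:P:ad}, and uniqueness follows at once by applying \eqref{eq:hat:P:ad} to the difference of two solutions, whose data $f$, $w_0$, $w_1$, $w_2$ all vanish. Finally, a general $f\in\mathcal H_{n+1/2,n+s}(0,\delta)$ and $w_j\in H^{n+s+2-j}$ are reached by approximating $f$ (e.g. by $e^{-\varepsilon/t}\rho(\varepsilon D)f$) and the $w_j$ by smooth data, solving for each $\varepsilon$, and passing to a weak limit with \eqref{eq:hat:P:ad} as a uniform bound. The step I expect to demand the most care is the Green formula for the third-order operator ${\hat P}$: getting the boundary functional into exactly the stated form --- with the correct modified trace $D_t^2-[D]^2a(\delta,x,D)$ and the Sobolev bookkeeping arranged so that the recovered Cauchy data are precisely $w_0,w_1,w_2$ --- is where the algebra must be done carefully, though no analytic ingredient beyond \eqref{eq:hat:P} is needed.
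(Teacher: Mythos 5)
Your proposal follows the paper's approach: the anti-linear functional

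$$
{\hat P}v\ \longmapsto\ \int_0^{\delta}(f,v)\,dt+\sum_{j=0}^1\big(w_{2-j},D_t^jv(\delta,\cdot)\big)+\big(w_0,(D_t^2-[D]^2a(\delta,x,D))v(\delta,\cdot)\big)
$$

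is exactly the one the paper displays just before the theorem, the Green-formula motivation for its shape is correct, and the Hahn--Banach extension plus testing against $v\in C_0^\infty((0,\delta)\times\R^d)$ and then against general $v$ to recover the equation and the traces is the intended argument. The paper also supplements this by quoting the general existence theory of \cite[Chapter 23]{Hobook} for the backward Cauchy problem, but the two routes to the existence of $u$ are essentially interchangeable here.

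The one step that goes wrong is the regularity upgrade via a \emph{Taylor expansion about $t=\delta$}. That device was appropriate in the proof of Theorem \ref{thm:pre:sonzai:s}, because there the traces of $w=\olr{D}^{{\bar n}+s}u$ at $t=0$ \emph{vanish}, so the Taylor remainder $w(t)=\int_0^t(t-\tau)^{\bar n}\dif_t^{{\bar n}+1}w(\tau)\,d\tau/{\bar n}!$ manufactures the singular weight $t^{-{\bar n}+1/2}$. At $t=\delta$ the traces are the prescribed $w_j$, generically nonzero, so a Taylor expansion there produces no smallness; and in any case the target weight $t^{n-1/2}$ with $n>0$ is bounded (not singular) on $(0,\delta)$, so there is nothing to be gained from such an expansion. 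What is actually needed, and what the paper does, is the following: from $\hat P^*u=f$ and \cite[Theorem B.2.9]{Hobook} one obtains $D_t^ju\in {\mathcal H}_{0,1+s-j}(\varep,\delta)$ for every $\varep>0$ (regularity strictly away from $t=0$, where the $t$-weights are irrelevant); then one applies the a priori estimate \eqref{eq:hat:P:ad} on $(\varep,\delta)$ with $t=\varep$, whose right-hand side is uniformly bounded by $\|w_j\|_{n+s+2-j}$ and the $\mathcal H_{n+1/2,n+s}$ norm of $f$, and lets $\varep\to 0$ to conclude $D_t^ju\in\mathcal H_{n-1/2,1+s-j}(0,\delta)$ together with \eqref{eq:hat:P:ad} itself. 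Your proposal does eventually invoke Proposition \ref{lem:tokusyu:s:ad}, but as written it does so \emph{after} claiming the regularity, which leaves the logic circular; replacing the Taylor-expansion sentence by the $\varep\to 0$ argument closes this gap. The remainder of your write-up (uniqueness from \eqref{eq:hat:P:ad} applied to a difference, approximation of general data) is in order.
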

Indeed we first see that there is $u\in {\mathcal H}_{n-1/2,1+s}(0,\delta)$ satisfying ${\hat P^*}u=f$ and $D_t^ju(\delta)=w_j$, $j=0,1,2$ (e.g. \cite[Chapter 23]{Hobook}). Since $f\in {\mathcal H}_{0, n+s}(\varep,\delta)$ and $u\in {\mathcal H}_{0, 1+s}(\varep,\delta)$ it follows from \cite[Theorem B.2.9]{Hobook}  that $D_t^ju\in {\mathcal H}_{0, 1+s-j}(\varep,\delta)$, $0\leq j\leq 2$ for any $\varep>0$.   Applying \eqref{eq:hat:P:ad} with $t=\varep$ we conclude $D_t^ju\in {\mathcal H}_{n-1/2, 1+s-j}(0,\delta)$, $j=1, 2$ since $\varep>0$ is arbitrary.

\section{Propagation of wave front set} 
\label{sec:kyokusho}

In Section \ref{sec:pre:sonzai} we have proved an existence result of the Cauchy problem for ${\hat P}$, which coincides with the original $P$ only in $W_M$. Following \cite{Ni1}, \cite{Iv2} (also \cite{Ni:book}) we show that the wave front set of $u(t,\cdot)$, obtained by Theorem \ref{thm:pre:sonzai:s}, propagates with finite speed. This fact enables us to solve the Cauchy problem for the original $P$.

\subsection{Estimate of wave front set}
\label{sec:WF:hyoka:1}

Let $\chi(x)\in C_0^{\infty}(\R^d)$ be equal to $1$ near $x=0$ and vanish in $|x|\geq 1$. Set
\begin{equation}
\label{eq:f:ep:teigi}
\begin{split}
d_{\ep}(x,\xi;y,\eta)=\big\{\chi(x-y)|x-y|^2+|\xi\olr{\xi}^{-1}-\eta\olr{\eta}^{-1}|^2+\ep^2\big\}^{1/2},\\
f_{\ep}(t,x,\xi;y,\eta)=t-T+\nu d_{\ep}(x,\xi; y,\eta),\quad T>0
\end{split}
\end{equation}
where $(y,\eta)\in \R^d\times(\R^d\setminus\{0\})$ and $\nu$ is a positive small parameter. Note that
\begin{equation}
\label{eq:f:d:ep}
\big|\dif_x^{\al}\dif_{\xi}^{\be}d_{\ep}\big|\leq C\olr{\xi}^{-|\be|},\quad |\al+\be|=1
\end{equation}
where $C$ is independent of $\ep>0$. Define $\Phi_{\ep}$ by
\begin{equation}
\label{eq:cut:f}
\Phi_{\ep}(t,x,\xi)=\begin{cases}\exp{(1/f_{\ep}(t, x,\xi))}\quad \text{if}\;\;f_{\ep}<0\\
0\quad \text{otherwise}
\end{cases}
\end{equation}
and note that $\Phi_{\ep}\in S(1, g_0)$ for any fixed $\ep>0$ where $g_0=|dx|^2+\olr{\xi}^{-2}|d\xi|^2$. From now on to simplify notations, we denote
\[
{\mathcal E}_1(\olr{D}^sV)+{\mathcal E}_2(\olr{D}^sV)=t^{-2n}{\mathcal N}_s(V),\quad {\mathcal N}_0(V)={\mathcal N}(V).
\]
%

\begin{lem}
\label{lem:arata}There is $\nu_0>0$ such that for any $0<\nu\leq \nu_0$ and any $\ep>0$ there is $C>0$ such that for any $V$ with $\int_0^{\delta}t^{-2n}{\mathcal N}_{-1/4}(V)dt<+\infty$ and  ${\tilde L}V\in {\mathcal H}_{-n+1/2, l}(0,\delta)$ with some $l$ we have
\begin{gather*}
{\mathcal E}_1(\op{\Phi_{\ep}}V)+\int_0^t\tau^{-2n}{\mathcal N}(\op{\Phi_{\ep}}V)d\tau
\leq C\int_0^t\tau^{-2n+1}\|\op{\Phi_{\ep}}{\tilde L}V\|_n^2d\tau\\
+C\int_0^t\tau^{-2n}{\mathcal  N}_{-1/4}(V)d\tau,\quad 0<t\leq \delta.
\end{gather*}
\end{lem}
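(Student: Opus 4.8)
The plan is to run the weighted-energy argument of Proposition~\ref{pro:fu:matome} with $V$ replaced by $W=\op{\Phi_{\ep}}V$, and to show that all the commutator terms generated by inserting $\op{\Phi_{\ep}}$ are controlled by the two right-hand sides in the claimed inequality. First I would observe that, for each fixed $\ep>0$, $\Phi_{\ep}\in S(1,g_0)$ and in fact $\Phi_{\ep}\in S(1,g)$ uniformly (the estimate \eqref{eq:f:d:ep} shows $d_{\ep}$, and hence $f_{\ep}$ on the region $f_{\ep}<0$, behaves like an $S(1,g)$ symbol bounded below by a positive constant there, because $\exp(1/f_\ep)$ has all its derivatives bounded by constants times the function times powers of $|f_\ep|^{-1}\le$ const). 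This means $\op{\Phi_{\ep}}$ commutes with all the operators $\op{\varLambda}$, $\op{w^{-n}}$, $\op{\mathcal A}$, $\op{\mathcal B}$ up to lower-order remainders, and the gain in each commutator comes from the transport structure of $f_{\ep}$: the operator $D_t-\op{i\mathcal A}$ has $\mathcal A=A^T[\xi]$ whose entries (Corollary~\ref{cor:TAT:seimitu}) are $O(1)$ on the diagonal-adjacent positions and small elsewhere, and the principal symbol $p$ has characteristic speeds bounded by a fixed constant; choosing $\nu=\nu_0$ small makes $f_{\ep}$ ``spacelike'' for $p$, so that $\dif_t\Phi_{\ep}+\{$transport part$\}\Phi_{\ep}$ has a favorable (in our case simply controllable) sign/size.

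Concretely I would write, from \eqref{eq:eq:V},
\[
\dif_t(\op{\Phi_{\ep}}V)=\op{i\mathcal A+i\mathcal B}\op{\Phi_{\ep}}V+\op{\Phi_{\ep}}F+\big(\dif_t\op{\Phi_{\ep}}-[\op{i\mathcal A+i\mathcal B},\op{\Phi_{\ep}}]\big)V,
\]
then feed $W=\op{\Phi_{\ep}}V$ into the energy identity \eqref{eq:en:id}. The main new term is ${\mathsf{Re}}\,e^{-\theta t}(\op{\varLambda}\op{w^{-n}}R V,\op{w^{-n}}W)$ with $R=\dif_t\op{\Phi_{\ep}}-[\op{i\mathcal A},\op{\Phi_{\ep}}]-[\op{i\mathcal B},\op{\Phi_{\ep}}]$. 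The term $[\op{i\mathcal B},\op{\Phi_{\ep}}]$ has symbol in $S(\olr{\xi}^{-1},g)$ (since $\mathcal B\in S(1,g)$ componentwise and $\dif\Phi_{\ep}\in S(\olr{\xi}^{-1},g)$), so it is harmless. The core is $\dif_t\Phi_{\ep}-\{\mathcal A,\Phi_{\ep}\}$-type expressions: $\dif_t\Phi_{\ep}=f_{\ep}^{-2}\Phi_{\ep}\dif_t f_{\ep}=f_{\ep}^{-2}\Phi_{\ep}$, and the Poisson-bracket part involves $\nu\,\{[\xi]A^T, d_{\ep}\}$ which, by \eqref{eq:f:d:ep} and boundedness of $A^T$, is $O(\nu)f_\ep^{-2}\Phi_\ep$ plus lower order; for $\nu\le\nu_0$ small the ``good'' term $f_{\ep}^{-2}\Phi_{\ep}$ dominates and can be discarded (it has the right sign when paired against $\op{\varLambda}\ge0$ after a Fefferman--Phong / sharp-G\aa rding argument), while the leftover is bounded by $C\,t^{-2n}{\mathcal N}_{-1/4}(V)$ because $f_{\ep}^{-2}\Phi_{\ep}$ gains no derivative in $\xi$ and we may absorb the symbolic loss into the fixed power $\olr{\xi}^{-1/4}$ (here $M,\gamma$ are already fixed, so quantitative smallness in $M$ is no longer needed — only qualitative boundedness). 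One then integrates in $t$, using $\lim_{t\to+0}\|W(t)\|_n=0$ coming from the hypothesis $\int_0^\delta t^{-2n}{\mathcal N}_{-1/4}(V)\,dt<\infty$ together with ${\tilde L}V\in {\mathcal H}_{-n+1/2,l}$, exactly as in the proof of Proposition~\ref{pro:fu:matome}.

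The steps in order: (i) verify $\Phi_{\ep}\in S(1,g)$ and $\dif_x^{\al}\dif_\xi^{\be}\Phi_{\ep}\in S(\olr{\xi}^{-|\be|}f_\ep^{-(|\al+\be|+1)}\Phi_\ep, g)$ with constants uniform on $\{f_\ep\le -c<0\}$, deducing boundedness of $\op{\Phi_\ep}$ and of all relevant commutators; (ii) commute $\op{\Phi_\ep}$ through $\op{w^{-n}}$, $\op{\varLambda}$, $\op{\mathcal A}$, $\op{\mathcal B}$ using Lemmas~\ref{lem:gyaku}, \ref{lem:kihon:fu}, \ref{lem:LamB:hyoka}, producing errors of the two admissible forms; (iii) handle the transport term $R$ as above, choosing $\nu_0$ so that the Poisson-bracket contribution from $\mathcal A$ is a small multiple of the manifestly good term $f_\ep^{-2}\Phi_\ep$; (iv) assemble into \eqref{eq:dif:ene:fu} with $W$ in place of $V$, pick $\theta$ large and integrate, using the vanishing of $\|W(t)\|_n$ as $t\to+0$. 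The main obstacle I anticipate is step (iii): one must extract a favorable sign or at least a controllable bound from $\dif_t\Phi_{\ep}$ against the Poisson bracket of $\Phi_\ep$ with the (non-symmetric, $t$-dependent) matrix symbol $\mathcal A$, which requires the ``pseudo-sign'' device — writing the commutator $[\op{i\mathcal A},\op{\Phi_\ep}]$ as $\op{\{\mathcal A,\Phi_\ep\}}$ plus $S(\olr\xi^{-2})$ and then noting $\{\mathcal A,\Phi_\ep\}=f_\ep^{-2}\Phi_\ep\,\{\mathcal A, f_\ep\}$ with $\{\mathcal A,f_\ep\}=\nu\{[\xi]A^T,d_\ep\}$ of size $O(\nu)$ uniformly in $\ep$ thanks to \eqref{eq:f:d:ep} and Corollary~\ref{cor:TAT:seimitu}; once $\nu$ is fixed small the remaining positive term $f_\ep^{-2}\Phi_\ep$ is dropped and only the lower-order debris, estimated by $t^{-2n}{\mathcal N}_{-1/4}(V)$, survives. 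The dependence of $C$ on $\ep$ (but not the structure) is acceptable, as the statement allows it.
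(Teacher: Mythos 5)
Your strategy is the same as the paper's: insert $\op{\Phi_\ep}$ into the weighted-energy argument, identify $\dif_t\Phi_\ep$ as a favorable term, and show the commutators with $\op{\mathcal A}$, $\op{\mathcal B}$ produce either a small multiple of that favorable term or debris controlled by $\mathcal N_{-1/4}(V)$. Two points however need to be addressed to make this a proof rather than a sketch.

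First, your claim that ``$\lim_{t\to+0}\|W(t)\|_n=0$ comes from the hypotheses, exactly as in the proof of Proposition~\ref{pro:fu:matome}'' does not hold as stated. Proposition~\ref{pro:fu:matome} assumes $D_t^jV\in{\mathcal H}_{-n-1/2+j/2,\,n+1-j}$, which gives enough regularity to evaluate $V$ at $t=0$; the present hypotheses are only $\int_0^\delta t^{-2n}\mathcal N_{-1/4}(V)\,dt<\infty$ and ${\tilde L}V\in{\mathcal H}_{-n+1/2,l}$, which do not put $V(t)$ in $H^n$ pointwise near $t=0$. The paper bridges this by mollifying, $V^\mu=\olr{\mu D}^{-\bar n}V$ with $\bar n=2n+\max\{-l,0\}+3$, which for each fixed $\mu>0$ gains enough $\xi$-regularity to run the argument and legitimately integrate, and then lets $\mu\to0$. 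Without this step the final integration from $t=0$ is not justified under the stated hypotheses.

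Second, your treatment of the commutator with $\op{\mathcal A}$ is too coarse to extract the factor $\nu$. Writing $\{\mathcal A,\Phi_\ep\}=-\nu f_\ep^{-2}\Phi_\ep\{\mathcal A,d_\ep\}$ is correct (note the sign: $\dif_t\Phi_\ep=-f_\ep^{-2}\Phi_\ep$, not $+f_\ep^{-2}\Phi_\ep$), but $\{\mathcal A,d_\ep\}$ is a \emph{matrix} whose $(3,1)$, $(3,2)$, $(2,1)$ entries are paired with the disparate eigenvalues $\lambda_1\ll\lambda_2\ll\lambda_3$, and a blanket ``$O(\nu)$ uniformly'' statement does not survive the conjugation by $\op{\varLambda^{1/2}}$. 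The paper isolates exactly this in Lemma~\ref{lem:h:bunkai}: each entry of the Poisson-bracket matrix is decomposed as $\sum k^\ep_{ij\al\be}\#l_{ij\al\be}+r^\ep_{ij}$ with $|k^\ep_{ij\al\be}|\le C\nu$ in $S(1,g_0)$ and $l_{ij\al\be}$ carrying the correct $\lambda_j^{1/2}/\sigma$-weights for $i>j$, and then the sharp G\aa rding inequality is applied to the \emph{scalar} factors $k^\ep_{ij\al\be}$. This is the precise mechanism that yields $-(1-C\nu)\|\op{\varLambda^{1/2}}\op{\phi^{-n}}\op{\Phi_{\ep 1}}V^\mu\|^2$ with a single structural constant $C$, after which $\nu_0$ is fixed. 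Your Fefferman--Phong invocation is aimed at the right sign information but skips the $\lambda_j$-bookkeeping that makes the estimate close.

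Minor remarks: the commutator with $\mathcal B_1$ lies in $S(\olr\xi^{-1/2},g)$ (each derivative of a $g$-symbol gains $M^{-1/2}\olr\xi^{\mp1/2}$), not $S(\olr\xi^{-1},g)$ — harmless for the conclusion since $\olr\xi^{-1/4}$ is all that is needed; and the paper additionally treats the commutator with $T^{-1}\dif_tT$ (where $\tilde t_{21}\in\mathcal C(\sigma^{-1/2})$ is the delicate entry) as a separate piece contributing to the $\mathcal N_{-1/4}(V)$ term — this is implicit in your ``$[\op{i\mathcal B},\op{\Phi_\ep}]$'' since $\mathcal B=\mathcal B_1-T^{-1}D_tT$, but it requires the finer bound $\lambda_j\#\varphi_{j1}\in S(\olr\xi^{-1/2}\sqrt{\kappa\lambda_1}\sqrt{\kappa\lambda_j},g)$, not just $S(1,g)$ considerations.
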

\begin{proof} Denote $V^{\mu}=\olr{\mu D}^{-{\bar n}}V$ with small $\mu>0$ where we choose ${\bar n}=2n+\max\{-l,0\}+3$.
Assume ${\tilde L}V=F$ so that ${\tilde L}V^{\mu}=F^{\mu}+R^{\mu}V^{\mu}=G^{\mu}$ where $R^{\mu}=[\olr{\mu D}^{-\bar n}, \op{{\mathcal A}+{\mathcal B}}]\olr{\mu D}^{\bar n}$ and $F^{\mu}=\olr{\mu D}^{-{\bar n}}F$. 
 Note that $\Phi_{\ep 1}=f_{\ep}^{-1}\Phi_{\ep}\in S(1, g_0)$ for any fixed $\ep>0$ and $
\Phi_{\ep}-f_{\ep}\#\Phi_{\ep 1}\in S(\olr{\xi}^{-1}, g_0)$. 
Since $\dif_t \Phi_{\ep}=-\Phi_{\ep 1}/f_{\ep}$ one can write 
\begin{equation}
\label{eq:cut:siki}
\begin{split}
\dif_t(\op{\Phi_{\ep}}V^{\mu})=-\op{f_{\ep}^{-1}\Phi_{\ep 1}}V^{\mu}+(\op{i{\mathcal A}+i{\mathcal B}})\op{\Phi_{\ep}}V^{\mu}\\
+[\op{\Phi_{\ep}}, \op{i{\mathcal A}+i{\mathcal B}}]V^{\mu}+\op{\Phi_{\ep}}G^{\mu}.
\end{split}
\end{equation}
Since $\Phi_{\ep}\#{\mathcal B}_1-{\mathcal B}_1\#\Phi_{\ep}\in S(\olr{\xi}^{-1/2}, g)$ by Proposition \ref{pro:T:henkan} it is not difficult to see from the proof of Corollary \ref{cor:LamB:1} that
\begin{align*}
\big|\big(\op{\varLambda}\op{\phi^{-n}}[\op{\Phi_{\ep}},\op{{\mathcal B}_1}]V^{\mu}, \op{\phi^{-n}}\op{\Phi_{\ep}}V^{\mu}\big)\big|
\leq c(\ep){\mathcal N}_{-1/4}(V^{\mu})
\end{align*}
Denote $\Phi_{\ep}\#(T^{-1}\dif_tT)-(T^{-1}\dif_tT)\#\Phi_{\ep}=(\varphi_{i j})$ hence $\varphi_{21}\in S(\sigma^{-1}\olr{\xi}^{-1}, g)$ and $\varphi_{3 1}\in S(\sigma^{-1/2}\olr{\xi}^{-1}, g)$ from \eqref{eq:dT:-1:T}. Thus $\lambda_j\#\varphi_{j 1}\in S(\olr{\xi}^{-1/2}\sqrt{\kappa\lambda_1}\,\sqrt{\kappa\lambda_j}, g)$ for $j=2, 3$  because $C\lambda_1\geq \sigma\olr{\xi}^{-1}$, $C\lambda_2\geq \sigma$ and $\kappa\sigma\geq 1$. A repetition of similar arguments proving \eqref{eq:TdT:comm} shows that
\begin{align*}
\big|\big(\op{\varLambda}\op{\phi^{-n}}[\op{\Phi_{\ep}},\op{T^{-1}\dif_tT}]V^{\mu}, \op{\phi^{-n}}\op{\Phi_{\ep}}V^{\mu}\big)\big|
\leq c(\ep){\mathcal N}_{-1/4}(V^{\mu}).
\end{align*}
Note that $\Phi_{\ep}\#{\mathcal A}-{\mathcal A}\#\Phi_{\ep}$ can be written
\[
\sum_{|\al+\be|=1}\frac{(-1)^{|\al|}}
{(2i)^{|\al+\be|}\al!\be!}\Big(\dif_x^{\al}\dif_{\xi}^{\be}\Phi_{\ep}\dif_x^{\be}\dif_{\xi}^{\al}{\mathcal A}-\dif_x^{\be}\dif_{\xi}^{\al}\Phi_{\ep}\dif_x^{\al}\dif_{\xi}^{\be}{\mathcal A}\Big)+R_{\ep}=H_{\ep}+R_{\ep}
\]
where it follows from Corollary \ref{cor:TAT:seimitu} that $R_{\ep}\in S(\olr{\xi}^{-1/2}, g)$ for $\sigma\geq \olr{\xi}^{-1}$. It is not difficult to see from the proof of Corollary \ref{cor:LamB:1} that
\begin{align*}
\big|\big(\op{\varLambda}\op{\phi^{-n}}\op{R_{\ep}}V^{\mu}, \op{\phi^{-n}}\op{\Phi_{\ep}}V^{\mu}\big)\big|
\leq c(\ep){\mathcal N}_{-1/4}(V^{\mu}).
\end{align*}
Note that  $H_{\ep}\in S(1, g)$ because $\dif_x^{\al}\dif_{\xi}^{\be}{\mathcal A}\in S(\olr{\xi}^{1-|\be|}, g)$ for $|\al+\be|=1$. Write $
\Phi_{\ep}=f_{\ep}\#\Phi_{\ep 1}+r_{\ep}$ with $r_{\ep}\in S(\olr{\xi}^{-1}, g_0)
$. Noting $\phi^{-n}\#f_{\ep}-f_{\ep}\#\phi^{-n}\in S(\omega^{-1}\rho^{1/2}\olr{\xi}^{-1}\phi^{-n}, g)\subset S(\phi^{-n}\olr{\xi}^{-1/2}, g)$ and $
f_{\ep}\#\lambda_j-\lambda_j\#f_{\ep}\in S(\lambda_j\olr{\xi}^{-1/2}, g)$ we see that
\begin{gather*}
\big|\big(\op{\varLambda}\op{\phi^{-n}}\op{iH_{\ep}}V^{\mu}, \op{\phi^{-n}}\op{\Phi_{\ep}}V^{\mu}\big)\\
-\big(\op{\varLambda}\op{\phi^{-n}}\op{f_{\ep}}\op{iH_{\ep}}V^{\mu}, \op{\phi^{-n}}\op{\Phi_{\ep 1}}V^{\mu}\big)\big|
\end{gather*}
 is bounded by $c(\ep){\mathcal N}(\olr{D}^{-1/4}V^{\mu})$. Here look at $iH_{\ep}$ more carefully;
\[
 iH_{\ep}=\Big(\sum_{|\al+\be|=1}\dif_x^{\al}\dif_{\xi}^{\be}({\tilde a}_{i j}[\xi])(\dif_x^{\be}\dif_{\xi}^{\al}f_{\ep})\frac{1}{f_{\ep}}\Phi_{1 \ep}\Big)=\big(h^{\ep}_{i j}\big)\frac{1}{f_{\ep}}\Phi_{1 \ep}.
\]
 Taking $h^{\ep}_{i j}\in S(1, g)$ and $f^{-1}_{\ep}\Phi_{\ep 1}$, $\Phi_{\ep 1}\in S(1, g_0)$  into account one can write $
 f_{\ep}\#(iH_{\ep})=\big(h^{\ep}_{i j}\big)\#\Phi_{\ep 1}
+R_{\ep}$ with $R_{\ep}\in S(\olr{\xi}^{-1/2}, g)$ hence denoting ${\tilde H_{\ep}}=\big(h^{\ep}_{i j}\big)$
\begin{gather*}
\big|\big(\op{\varLambda}\op{\phi^{-n}}\op{f_{\ep}}\op{iH_{\ep}}V^{\mu}, \op{\phi^{-n}}\op{\Phi_{\ep 1}}V^{\mu})\\
-(\op{\varLambda}\op{\phi^{-n}}\op{{\tilde H_{\ep}}}\op{\Phi_{\ep 1}}V^{\mu}, \op{\phi^{-n}}\op{\Phi_{\ep 1}}V^{\mu}\big)
\big|
\end{gather*}
is bounded by $c(\ep){\mathcal N}_{-1/4}(V^{\mu})$. From Corollary \ref{cor:TAT:seimitu} we see that $
h^{\ep}_{i j}\in {\mathcal C}(1)$ for $j\geq i$, $h^{\ep}_{2 1}, h^{\ep}_{3 2}\in {\mathcal C}(\sigma^{1/2})$ and $h^{\ep}_{3 1}\in {\mathcal C}(\sigma)$ 
then in view of Lemma \ref{lem:dif:Phi:seimitu} one has 
$\lambda_i\#\big(\phi^{-n}\#h^{\ep}_{i j}-h^{\ep}_{i j}\#\phi^{-n}\big)\in S(\kappa \lambda_i\olr{\xi}^{-1}\phi^{-n}, g)$ for $ j\geq i$ and 
$\lambda_i\#\big(\phi^{-n}\#h^{\ep}_{i j}-h^{\ep}_{i j}\#\phi^{-n}\big)\in S(\kappa\lambda_i\lambda_j^{1/2}\olr{\xi}^{-1/2}\phi^{-n}, g)$  for $i>j$. 
From this we see that 
\begin{align*}
\big|\big(\op{\varLambda}\op{\phi^{-n}}\op{{\tilde H}_{\ep}}\op{\Phi_{\ep 1}}V^{\mu}, \op{\phi^{-n}}\op{\Phi_{\ep 1}}V^{\mu})\\
-(\op{\varLambda}\op{{\tilde H_{\ep}}}\op{\phi^{-n}}\op{\Phi_{\ep 1}}V^{\mu}, \op{\phi^{-n}}\op{\Phi_{\ep 1}}V^{\mu}\big)
\big|
\end{align*}
is bounded by $c(\ep){\mathcal N}_{-1/4}(V^{\mu})$. 
\begin{lem}
\label{lem:h:bunkai}One can write 
\[
h^{\ep}_{ ij}=\sum_{|\al+\be|=1}k^{\ep}_{i j \al \be}\#l_{i j \al \be}+r^{\ep}_{i j}
\]
where $k^{\ep}_{i j \al \be}\in S(1, g_0)$ such that $|k^{\ep}_{i j \al \be}|\leq C\nu$ with some $C>0$ independent of $\nu$ and $\ep$ for any $1\leq i, j\leq 3$. As for $l_{i j \al \be}$ and $r^{\ep}_{i j}$ one has $
l_{i j \al \be}\in S(1, g)$ and $r^{\ep}_{i j}\in S(\sigma^{-1/2}\olr{\xi}^{-1}, g)$ for  $j\geq i$ and $l_{i j \al \be}\in S(\sigma^{(i-3)/2}\sqrt{\lambda_j}, g)$, $r^{\ep}_{i j}\in S(\sigma^{(i-3)/2}\sqrt{\lambda_j}\olr{\xi}^{-1/2}, g)$ for $i>j$.
\end{lem}
\begin{proof} Set  $k^{\ep}_{i j \al \be}=\olr{\xi}^{|\al|}\dif_x^{\be}\dif_{\xi}^{\al}f_{\ep}$ and $l_{i j \al \be}=\olr{\xi}^{-|\al|}\dif_x^{\al}\dif_{\xi}^{\be}({\tilde a}_{i j}[\xi])$ 
then the assertion for $k^{\ep}_{i j \al \be}$ is clear from \eqref{eq:f:d:ep}. The assertions for $l_{i j \al \be}$ follow from Corollary \ref{cor:TAT:seimitu} and  Lemmas \ref{lem:nami:a}, \ref{lem:lam:Sg}. Note that $
\dif_x^{\mu}\dif_{\xi}^{\nu}l_{i j \al \be}\in S(\sigma^{-1/2}\olr{\xi}^{-|\nu|}, g)$, $|\mu+\nu|=1$  for $j\geq i$ and $\dif_x^{\mu}\dif_{\xi}^{\nu}l_{2 1 \al \be}$, $\dif_x^{\mu}\dif_{\xi}^{\nu}l_{3 2 \al \be}\in S(\olr{\xi}^{-|\nu|}, g)$  and $\dif_x^{\mu}\dif_{\xi}^{\nu}l_{3 1 \al \be}\in S(\sigma^{1/2}\olr{\xi}^{-|\nu|}, g)$ 
for $|\mu+\nu|=1$ which follows from ${\tilde a}_{2 1}$, ${\tilde a}_{32}\in {\mathcal C}(\sigma)$ and ${\tilde a}_{3 1}\in {\mathcal C}(\sigma^{5/2})$. Then remarking that $\sigma\geq \lr{\xi}^{-1}$ and $C\lambda_1\geq \sigma\lr{\xi}^{-1}$ the assertions for $r^{\ep}_{i j}$ are checked immediately.
\end{proof}
With $R^{\ep}=(r^{\ep}_{i j})$ and $W=\op{\phi^{-n}}\op{\Phi_{\ep 1}}V^{\mu}$, recalling $\lambda_1\leq C\sigma\lambda_2\leq C'\sigma^2\lambda_3$, it is easy to see $
\big|(\op{R^{\ep}}W, \op{\varLambda}W)\big|\leq c(\ep)\|\op{\varLambda^{1/2}}\olr{D}^{-1/4}W\|^2$. 
Turn to $|(\op{h^{\ep}_{i j}}W_j,  \op{\lambda_i}W_i)|$.  Thanks to Lemma \ref{lem:h:bunkai} this is bounded by
\begin{align*}
C\|\op{\lambda_j^{1/2}}W_j\|\|\op{k^{\ep}_{i j \al \be}}\op{\lambda_i^{1/2}}W_i\|
\end{align*}
with $C$ independent of $\ep$ because $\lambda_i^{1/2}\#l_{i j \al \be}\in S(\lambda_j^{1/2}, g)$ in view of Lemma  \ref{lem:h:bunkai}. On the other hand, taking Lemma \ref{lem:h:bunkai} into account, it follows from the sharp G\aa rding inequality (e.g. \cite[Theorem 18.1.14]{Hobook}) that
\begin{align*}
\|\op{k^{\ep}_{i j \al \be}}\op{\lambda_i^{1/2}}W_i\|\leq C\nu\|\op{\lambda_i^{1/2}}W_i\|+C(\nu,\ep)\|\op{\lambda_i^{1/2}}\olr{D}^{-1/2}W_i\|.
\end{align*}
Therefore applying the above obtained estimates one can find  $C>0$ independent of $\ep$ and $\nu$ such that 
\begin{gather*}
\big|{\mathsf{Re}}\big(\op{\varLambda}\op{{\tilde H_{\ep}}}\op{\phi^{-n}}\op{\Phi_{\ep 1}}V^{\mu}, \op{\phi^{-n}}\op{\Phi_{\ep 1}}V^{\mu})\big|\\
\leq C\nu\|\op{\varLambda^{1/2}}\op{\phi^{-n}}\op{\Phi_{\ep 1}}V^{\mu}\|^2
+C'(\nu,\ep)\|\op{\varLambda^{1/2}}\op{\phi^{-n}}\olr{D}^{-1/4}V^{\mu}\|^2.
\end{gather*}
Since it follows from the same reasoning that
\begin{gather*}
\big|\big(\op{\varLambda}\op{\phi^{-n}}\op{f_{\ep}^{-1}\Phi_{\ep}}V^{\mu}, \op{\phi^{-n}}\op{\Phi_{\ep}}V^{\mu}\big)\big|\\
-\big(\op{\varLambda}\op{\phi^{-n}}\op{\Phi_{\ep 1}}V^{\mu}, \op{\phi^{-n}}\op{\Phi_{\ep 1}}V^{\mu}\big)\big|
\leq  c(\ep){\mathcal N}_{-1/4}(V^{\mu}).
\end{gather*}
We conclude finally that $-{\mathsf{Im}}\big(\op{\varLambda}\op{\phi^{-n}}{\tilde L}(\op{\Phi_{\ep}}V^{\mu}), \op{\phi^{-n}}\op{\Phi_{\ep}}V^{\mu}\big)$ is bounded by
\begin{equation}
\label{eq:cut:1}
\begin{split}
-(1-C\nu)\|\op{\varLambda^{1/2}}\op{\phi^{-n}}\op{\Phi_{\ep 1}}V^{\mu}\|^2
+c(\nu, \ep){\mathcal N}_{-1/4}(V^{\mu})\\
+{\mathsf{Re}}\big(\op{\varLambda}\op{\phi^{-n}}\op{\Phi_{\ep}}G^{\mu}, \op{\phi^{-n}}\op{\Phi_{\ep}}V^{\mu}\big).
\end{split}
\end{equation}
We fix $\nu_0$ such that $1-C\nu_0\geq 0$.  Since $|\dif_{\xi}^{\al}\olr{\mu \xi}^{-\bar n}|\leq C_{\al}\olr{\mu \xi}^{-\bar n}\olr{\xi}^{-|\al|}$ with $C_{\al}$ independent of $\mu>0$ we see  that
\[
\big|(\op{\Lambda}\op{\phi^{-n}}\op{\Phi_{\ep}}R^{\mu}V^{\mu}, \op{\phi^{-n}}\op{\Phi_{\ep}}V^{\mu})\big|\leq C{\mathcal E}_1(\op{\Phi_{\ep}}V^{\mu}).
\]
Therefore $\big|(\op{\Lambda}\op{w^{-n}}\op{\Phi_{\ep}}G^{\mu}, \op{w^{-n}}\op{\Phi_{\ep}}V^{\mu})\big|$ is bounded by 
\[
\ep_1{\mathcal E}_1(\op{\Phi_{\ep}}V^{\mu})+C_{\ep_1}t^{-2n+1}\|\op{\Phi_{\ep}}F^{\mu}\|_n^2+C{\mathcal E}_1(\op{\Phi_{\ep}}V^{\mu})
\]
for any $\ep_1>0$. Note that $D_t^jV^{\mu}\in {\mathcal H}_{0, 2n+7/4-j}(0,\delta)$, $j=0,1,\ldots$ hence $D_t^jV(0)$ exists in $H^{n+3/4}$ which is $0$ for $j=0,1, \ldots, n$ thus  $\lim_{t\to +0}t^{-n}\|V^{\mu}(t)\|_n=0$ for $\mu>0$. Applying \eqref{eq:dif:ene:fu}  to $\op{\Phi_{\ep}}V^{\mu}$ instead of $V$, choosing $\ep_1<c_1$ and then letting $\mu\to 0$  one concludes the proof.
\end{proof}
Applying $\olr{D}^s$ to \eqref{eq:cut:siki} and repeating similar arguments 
one obtains
\begin{prop}
\label{pro:fu:cut} For any $s\in \R$, any $0<\nu\leq \nu_0$ and any $\ep>0$ one can find $C>0$ such that for any $V$ with $\int_0^{\delta}t^{-2n}{\mathcal N}_{s-1/4}(V)dt<+\infty$ and  ${\tilde L}V\in {\mathcal H}_{-n+1/2, l}(0,\delta)$ with some $l$ we have
\begin{gather*}
{\mathcal E}_1(\olr{D}^s\op{\Phi_{\ep}}V)+\int_0^t\tau^{-2n}{\mathcal N}_s(\op{\Phi_{\ep}}V)d\tau\\
\leq C\big(\int_0^t\tau^{-2n+1}\|\op{\Phi_{\ep}}{\tilde L}V\|_{n+s}^2d\tau
+\int_0^t\tau^{-2n}{\mathcal  N}_{s-1/4}(V)d\tau\big),\;\;0\leq t\leq \delta.
\end{gather*}
\end{prop}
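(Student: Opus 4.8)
The plan is to reduce Proposition~\ref{pro:fu:cut} to Lemma~\ref{lem:arata} by the same device used throughout the paper: instead of working with $V$ directly, work with $\olr{D}^sV$. The obstruction is that $\olr{D}^s$ does not commute with the weighted cut-off $\op{\Phi_{\ep}}$ nor with the first order operator $\op{i{\mathcal A}+i{\mathcal B}}$, so one must track the commutators. First I would apply $\olr{D}^s$ to \eqref{eq:cut:siki}, obtaining
\[
\dif_t(\olr{D}^s\op{\Phi_{\ep}}V^{\mu})=-\op{f_{\ep}^{-1}\Phi_{\ep 1}}(\olr{D}^s V^{\mu})+(\op{i{\mathcal A}+i{\mathcal B}})(\olr{D}^s\op{\Phi_{\ep}}V^{\mu})+G^{\mu}_s,
\]
where $G^{\mu}_s$ collects $\olr{D}^s[\op{\Phi_{\ep}},\op{i{\mathcal A}+i{\mathcal B}}]V^{\mu}$, the commutator $[\olr{D}^s,\op{i{\mathcal A}+i{\mathcal B}}]\olr{D}^{-s}(\olr{D}^s\op{\Phi_{\ep}}V^{\mu})$, the commutator $[\olr{D}^s,\op{f_{\ep}^{-1}\Phi_{\ep 1}}]$ applied to $V^{\mu}$, and $\olr{D}^s\op{\Phi_{\ep}}G^{\mu}$. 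The key point is that all these commutator terms have been estimated already (in the symbol class sense): $[\olr{D}^s,\op{{\mathcal A}}]\olr{D}^{-s}$ is handled by Lemma~\ref{lem:calA:D:s}, $[\olr{D}^s,\op{{\mathcal B}}]\olr{D}^{-s}$ by Lemma~\ref{lem:calB:D:s}, and the symbol $f_{\ep}^{-1}\Phi_{\ep 1}$ lies in $S(1, g_0)$ for fixed $\ep$ so $[\olr{D}^s,\op{f_{\ep}^{-1}\Phi_{\ep 1}}]\in {\rm op}\,S(\olr{\xi}^{s-1}, g_0)$ and contributes only to the lower order remainder $c(\ep){\mathcal N}_{s-1/4}(V^{\mu})$.

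Next I would repeat verbatim the energy computation of Lemma~\ref{lem:arata} but with $\olr{D}^s\op{\Phi_{\ep}}V^{\mu}$ in place of $\op{\Phi_{\ep}}V^{\mu}$ and with the weighted norms carrying the extra $\olr{D}^s$; this is exactly the passage from Proposition~\ref{pro:fu:matome} to Proposition~\ref{pro:fu:matome:s} already carried out in Section~\ref{sec:pre:sonzai}. The crucial structural fact that survives is the sharp G\aa rding step in the proof of Lemma~\ref{lem:arata}: the principal commutator $iH_{\ep}$ decomposes (Lemma~\ref{lem:h:bunkai}) as $\sum k^{\ep}_{ij\al\be}\#l_{ij\al\be}+r^{\ep}_{ij}$ with $|k^{\ep}_{ij\al\be}|\le C\nu$, and conjugating by $\olr{D}^s$ leaves the $k$-part with the same bound $C\nu$ while the $l$-part and the $g$-temperate remainders only change their admissible weight by a fixed power of $\olr{\xi}$, absorbed (since $M$ is now fixed, as in Section~\ref{sec:pre:sonzai}) into the structure constants. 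Thus one still obtains an a priori bound of the shape
\[
-{\mathsf{Im}}\big(\op{\varLambda}\op{\phi^{-n}}{\tilde L}(\olr{D}^s\op{\Phi_{\ep}}V^{\mu}),\op{\phi^{-n}}\olr{D}^s\op{\Phi_{\ep}}V^{\mu}\big)\le -(1-C\nu){\mathcal E}_2\big(\olr{D}^s\op{\Phi_{\ep}}V^{\mu}\big)+c(\nu,\ep){\mathcal N}_{s-1/4}(V^{\mu})+\text{(source term)},
\]
with $\nu_0$ fixed so that $1-C\nu_0\ge 0$.

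Then I would feed this into the differential inequality \eqref{eq:dif:ene:fu} applied to $\olr{D}^s\op{\Phi_{\ep}}V^{\mu}$, bound the source term $\big|(\op{\varLambda}\op{w^{-n}}\op{\Phi_{\ep}}G^{\mu}_s,\op{w^{-n}}\olr{D}^s\op{\Phi_{\ep}}V^{\mu})\big|$ by $\ep_1{\mathcal E}_1(\olr{D}^s\op{\Phi_{\ep}}V^{\mu})+C_{\ep_1}t^{-2n+1}\|\op{\Phi_{\ep}}{\tilde L}V\|_{n+s}^2+C{\mathcal E}_1(\olr{D}^s\op{\Phi_{\ep}}V^{\mu})+c(\ep){\mathcal N}_{s-1/4}(V^{\mu})$ exactly as at the end of the proof of Lemma~\ref{lem:arata}, choose $\ep_1<c_1$, use $\lim_{t\to+0}t^{-n}\|\olr{D}^sV^{\mu}(t)\|_n=0$ (valid for $\mu>0$ by the regularization $V^{\mu}=\olr{\mu D}^{-{\bar n}}V$ with ${\bar n}$ large), integrate in $t$, and finally let $\mu\to 0$ using $\int_0^{\delta}t^{-2n}{\mathcal N}_{s-1/4}(V)\,dt<+\infty$ together with ${\tilde L}V\in{\mathcal H}_{-n+1/2,l}(0,\delta)$ to pass to the limit in the source integral. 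The main obstacle, and the only place requiring genuine care rather than bookkeeping, is verifying that conjugating the sharp-G\aa rding decomposition of $iH_{\ep}$ by $\olr{D}^s$ does not spoil the smallness constant $C\nu$ — but since $k^{\ep}_{ij\al\be}=\olr{\xi}^{|\al|}\dif_x^{\be}\dif_{\xi}^{\al}f_{\ep}\in S(1,g_0)$ is independent of $M$ and its $S(1,g_0)$ seminorms are $O(\nu)$ by \eqref{eq:f:d:ep}, the commutator of $\olr{D}^s$ with each factor lands in a lower order class and the leading term keeps the bound $C\nu$; no new phenomenon appears.
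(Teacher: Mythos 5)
Your proposal is correct and takes the same approach as the paper, which gives only the one-line indication that one applies $\olr{D}^s$ to \eqref{eq:cut:siki} and repeats the arguments proving Lemma \ref{lem:arata}. Your identification of the commutator remainders (handled via Lemmas \ref{lem:calA:D:s} and \ref{lem:calB:D:s}) and your observation that the sharp G\aa rding smallness constant $C\nu$ from Lemma \ref{lem:h:bunkai} is preserved under the extra $\olr{D}^s$ is exactly the content implicit in that one-liner.
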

%

\subsection{Wave front set propagates with finite speed}
\label{sec:WF:hyoka:2}

\begin{lem}
\label{lem:yugen:denpa}
Assume $V\in {\mathcal H}_{-n-1/2, l_1+1}(0,\delta)$ and ${\tilde L}V\in {\mathcal H}_{-n+1/2, l_2}(0,\delta)$ and that $\op{\Phi_{\ep_0}}{\tilde L}V\in {\mathcal H}_{-n+1/2, n+s_0}(0,\delta)$ 
with some $l_1, l_2, s_0\in\R$, $\ep_0>0$. Then  for every $\ep>\ep_0$ we have $
\op{\Phi_{\ep}}V\in {\mathcal H}_{-n-1/2, s}(0,\delta)$ 
for all $s\leq s_0-5/4$. Moreover 
\begin{gather*}
\int_0^t \tau^{-2n-1}\|\op{\Phi_{\ep}}V(\tau)\|_s^2d\tau\leq C\int_0^t\big(\tau^{-2n-1}\|V(\tau)\|_{l_1+1}^2
+\tau^{-2n+1}\|{\tilde L}V(\tau)\|_{l_2}^2\big)d\tau\\
+C\int_0^t\tau^{-2n+1}\|\op{\Phi_{\ep_0}}{\tilde L}V(\tau)\|_{n+s_0}^2d\tau,\quad 0< t\leq \delta. 
\end{gather*}
\end{lem}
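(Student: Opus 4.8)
The plan is to prove the estimate by a finite iteration of the microlocal energy estimate Proposition \ref{pro:fu:cut} along a chain of cut-offs $\Phi_{\ep_0},\Phi_{\ep_1},\dots,\Phi_{\ep_N}=\Phi_{\ep}$ with $\ep_0<\ep_1<\cdots<\ep_N=\ep$, following the scheme of \cite{Ni1}, \cite{Iv2}. As in the proof of Lemma \ref{lem:arata} one first replaces $V$ by $V^{\mu}=\olr{\mu D}^{-{\bar n}}V$ with ${\bar n}$ large, so that $\mathcal{N}_{\sigma}(\op{\Phi_{\ep_j}}V^{\mu})$ is finite for every $\sigma$ and all the calculus manipulations are legitimate, proves bounds uniform in $\mu>0$, and lets $\mu\to 0$ at the end.

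Two microlocal facts drive the iteration. First, since the cut-off $\chi$ in \eqref{eq:f:ep:teigi} is compactly supported, $d_{\ep}$ is bounded, and for $\ep'<\ep''$ one has $d_{\ep''}-d_{\ep'}=(\ep''^{2}-\ep'^{2})/(d_{\ep''}+d_{\ep'})\ge c(\ep',\ep'')>0$; hence on $\{f_{\ep''}<0\}$ one gets $f_{\ep'}\le-\nu c$, so $\Phi_{\ep'}\ge e^{-1/(\nu c)}>0$ on $\mathrm{supp}\,\Phi_{\ep''}$. Therefore $\Phi_{\ep''}=\Theta\,\Phi_{\ep'}$ with $\Theta:=\Phi_{\ep''}/\Phi_{\ep'}$ (extended by $0$) in $S(1,g_0)$, whence $\op{\Phi_{\ep''}}=\op{\Theta}\op{\Phi_{\ep'}}+\op{r}$ with $r\in S(\olr{\xi}^{-1},g_0)$ microsupported in $\{f_{\ep'}<0\}$. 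Iterating along the chain, any $\op{\Phi_{\ep_j}}$, and more generally any operator whose symbol is microsupported in $\{f_{\ep_j}<0\}$, is a finite sum $\sum_k\op{\Theta_k}\op{\Phi_{\ep_{j-k}}}$ with $\Theta_k\in S(\olr{\xi}^{-k},g_0)$ plus an operator of any prescribed negative order, once $N$ is chosen large relative to $n,s_0,l_1,l_2$; in particular the source terms $\op{\Phi_{\ep_j}}{\tilde L}V$ of Sobolev order $\le n+s_0$ are dominated by $\|\op{\Phi_{\ep_0}}{\tilde L}V\|_{n+s_0}$ and $\|{\tilde L}V\|_{l_2}$, both integrable by hypothesis. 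Second, Proposition \ref{pro:fu:cut} holds in the localized form: for $\ep'<\ep$ and every $s$ with $\int_0^{\delta}\tau^{-2n}\mathcal{N}_{s-1/4}(\op{\Phi_{\ep'}}V)\,d\tau<\infty$,
\begin{align*}
\int_0^t\tau^{-2n}\mathcal{N}_s(\op{\Phi_{\ep}}V)\,d\tau &\le C\int_0^t\tau^{-2n+1}\|\op{\Phi_{\ep}}{\tilde L}V\|_{n+s}^2\,d\tau\\
&\quad+C\int_0^t\tau^{-2n}\mathcal{N}_{s-1/4}(\op{\Phi_{\ep'}}V)\,d\tau+C\mathcal{R}(t),
\end{align*}
where $\mathcal{R}(t)=\int_0^t(\tau^{-2n-1}\|V\|_{l_1+1}^2+\tau^{-2n+1}\|{\tilde L}V\|_{l_2}^2)\,d\tau$. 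This is read off from the proof of Lemma \ref{lem:arata}, because the quantity estimated there, $\op{\phi^{-n}}\op{\Phi_{\ep 1}}V^{\mu}$, is already localized in $\{f_{\ep}<0\}$ and every error term is a contribution of a commutator of $\op{\Phi_{\ep}}$ (or $\op{\Phi_{\ep 1}}$) with an operator of the calculus, hence microsupported in $\{f_{\ep}<0\}$ and re-localizable by the first fact.

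With these in hand, set $Q_j(\sigma;t)=\int_0^t\tau^{-2n}\mathcal{N}_{\sigma}(\op{\Phi_{\ep_j}}V)\,d\tau$ and fix an integer $N\ge\max\{4(n+s_0-l_1),\ n+s_0-l_2,\ 1\}$. Applying the localized estimate with $\ep=\ep_j$, $\ep'=\ep_{j-1}$ and $s=s_0-1/4-(N-j)/4$ yields a recursion bounding $Q_j$ at that index by $C$ times an $\op{\Phi_{\ep_j}}{\tilde L}V$-term of Sobolev order at most $n+s_0$, plus $C\,Q_{j-1}(s_0-1/4-(N-j+1)/4;t)$ and $C\mathcal{R}(t)$. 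Telescoping from $j=N$ to $j=0$ bounds $Q_N(s_0-1/4;t)$ by $C\mathcal{R}(t)$, a finite sum of such source terms — each, by the first fact, controlled by $C\int_0^t\tau^{-2n+1}(\|\op{\Phi_{\ep_0}}{\tilde L}V\|_{n+s_0}^2+\|{\tilde L}V\|_{l_2}^2)\,d\tau$ — and $C\,Q_0(s_0-1/4-N/4;t)$. Since $n+s_0-1/4-N/4\le l_1$, the crude upper bound \eqref{eq:phi-n:sita} gives $Q_0(s_0-1/4-N/4;t)\le C\mathcal{R}(t)$. Hence $Q_N(s_0-1/4;t)$ is finite and satisfies the asserted estimate, and the lower bound in \eqref{eq:phi-n:sita} yields $\int_0^t\tau^{-2n-1}\|\op{\Phi_{\ep}}V\|_{s_0-5/4}^2\,d\tau\le C\,Q_N(s_0-1/4;t)$; since $\mathcal{H}_{-n-1/2,s_0-5/4}\subset\mathcal{H}_{-n-1/2,s}$ for $s\le s_0-5/4$, this is the claim.

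The main obstacle is establishing the localized form of Proposition \ref{pro:fu:cut}: one must re-examine the proof of Lemma \ref{lem:arata} term by term, verify that every error is microsupported in $\mathrm{supp}\,\Phi_{\ep}$, and then carry out the bookkeeping of operator orders produced when passing between cut-off levels along the chain. Once this is done the telescoping argument is automatic, and applying $\olr{D}^s$ to the equation and repeating the same reasoning (as stated just before Proposition \ref{pro:fu:cut}) gives the analogue at every Sobolev level, which is what is used here.
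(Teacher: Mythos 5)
Your overall strategy — a finite chain of cut-offs $\Phi_{\ep_0},\Phi_{\ep_1},\dots,\Phi_{\ep_N}=\Phi_{\ep}$, a gain of $1/4$ in the Sobolev index per step via Proposition~\ref{pro:fu:cut}, and a telescoping from a crude base estimate at level $l_1$ — is the same as the paper's, and your first microlocal observation (that $\Phi_{\ep''}/\Phi_{\ep'}\in S(1,g_0)$ because $\Phi_{\ep'}$ is bounded below on $\mathrm{supp}\,\Phi_{\ep''}$) is correct and is the right starting point. Where you diverge is in the mechanism for localizing the lower-order error term of Proposition~\ref{pro:fu:cut}, and this is exactly the step you flag as ``the main obstacle.'' You propose to re-derive a localized version of Proposition~\ref{pro:fu:cut} by re-examining the proof of Lemma~\ref{lem:arata} term by term and verifying that every remainder is microsupported in $\{f_{\ep}<0\}$, and you leave that verification as a programme rather than carrying it out. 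The paper avoids this burden entirely by a cleaner bookkeeping device: at each step one introduces an auxiliary symbol $\psi_j\in S(1,g_0)$ with $\mathrm{supp}\,\psi_j\subset\{f_j<0\}$ and $\psi_j\equiv 1$ on $\{f_{j+1}<0\}$, and then applies Proposition~\ref{pro:fu:cut} exactly as stated but to the already-localized function $\op{\psi_j}V$ with $\Phi=\Phi_{j+1}$. Because $\psi_j\equiv1$ on the support of $\Phi_{j+1}$, one has $\Phi_{j+1}\#\psi_j-\Phi_{j+1}\in S^{-\infty}$ and $\op{\Phi_{j+1}}[\tilde L,\op{\psi_j}]\in S^{-\infty}$, so the source term $\op{\Phi_{j+1}}\tilde L\op{\psi_j}V$ collapses back to $\op{\Phi_{j+1}}\tilde LV$ modulo smoothing; and since $\psi_j-\tilde k_j\#\Phi_j\in S^{-\infty}$ the error $\mathcal{N}_{s-1/4}(\op{\psi_j}V)$ is directly controlled by $\mathcal{N}_{s-1/4}(\op{\Phi_j}V)$ from the inductive hypothesis. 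No re-examination of the proof of Lemma~\ref{lem:arata} is required: the localization comes from the argument $\op{\psi_j}V$, not from a strengthened conclusion. Your route could be completed, but it requires substantially more work precisely at the point you identify, whereas the $\psi_j$ trick makes the induction step a purely symbolic-calculus manipulation around the black box of Proposition~\ref{pro:fu:cut}. You should adopt this device rather than attempt to re-prove Lemma~\ref{lem:arata} in localized form.
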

\begin{proof}We may assume $l_1\leq s_0$ otherwise nothing to be proved. Let $J$ be the largest integer such that $l_1+J/4\leq s_0$. Take $\ep_j>0$ such that  $\ep_0<\ep_1<\cdots<\ep_J=\ep$. We write $\Phi_{\ep_j}=\Phi_j$ and $f_j=f_{\ep_j}$ in this proof.  Inductively we show that
\begin{equation}
\label{eq:kinou:j}
\begin{split}
\int_0^t\tau^{-2n}{\mathcal N}_{l_1+j/4}(\op{\Phi_j}V)d\tau\leq C\int_0^t\tau^{-2n-1}\|V(\tau)\|_{l_1+1}^2d\tau\\
+C\int_0^t\tau^{-2n+1}\big\{\|{\tilde L}V(\tau)\|_{l_2}^2+\|\op{\Phi_{0}}{\tilde L}V(\tau)\|_{l_1+n+j/4}^2\big\}d\tau.
\end{split}
\end{equation}
Choose $\psi_j(x,\xi)\in S(1, g_0)$ so that ${\rm supp}\,{\psi_j}\subset \{f_j<0\}$ and $\{f_{j+1}<0\}\subset \{\psi_j=1\}$. Noting that $
\op{\Phi_{j+1}}{\tilde L}\,\op{\psi_j}=\op{\Phi_{j+1}\#\psi_j}{\tilde L}+\op{\Phi_{j+1}}[\tilde L,\op{\psi_j}]$ 
we apply Proposition \ref{pro:fu:cut} with $s=l_1+(j+1)/4$, $\Phi=\Phi_{j+1}$ and $V=\op{\psi_j}V$. Since $\Phi_{j+1}\#\psi_j-\Phi_{j+1}\in S^{-\infty}$ then $\|\op{\Phi_{j+1}}{\tilde L}\,\op{\psi_j}V\|_{l_1+(j+1)/4+n}^2$ is bounded by $
c\|\op{\Phi_{j+1}}{\tilde L}V\|_{l_1+(j+1)/4+n}^2+C(j)\|V\|_{l_1+1}^2$ 
and hence by
\begin{equation}
\label{eq:kino:uhen}
C(j)\big\{\|\op{\Phi_{0}}{\tilde L}V\|_{l_1+(j+1)/4+n}^2+\big\{\|{\tilde L}V\|_{l_2}^2+\|V\|_{l_1+1}^2\big\}
\end{equation}
because $\Phi_{j+1}-k_j\#\Phi_{0}\in S^{-\infty}$ with some $k_j\in S(1, g_0)$. Since $\psi_j-{\tilde k}_j\#\Phi_j\in S^{-\infty}$ with some ${\tilde k}_j\in S(1,g_0)$ it follows that
\[
{\mathcal N}_{l_1+j/4}(\op{\Phi_{j+1}}\op{\psi_j}V)\leq C{\mathcal N}_{l_1+j/4}(\op{\Phi_j}V)+C\|V\|_{l_1+1}^2.
\]
Consider ${\mathcal N}_{l_1+(j+1)/4}(\op{\Phi_{j+1}}\op{\psi_j}V)$. Noting that $\Phi_{j+1}\#\psi_j-\Phi_{j+1}\in S^{-\infty}$ the same reasoning shows that
\begin{equation}
\label{eq:j:to:j+1}
{\mathcal N}_{l_1+(j+1)/4}(\op{\Phi_{j+1}}V)
\leq C{\mathcal N}_{l_1+(j+1)/4}(\op{\Phi_{j+1}}\op{\psi_j}V)+C\|V\|_{l_1+1}^2.
\end{equation}
Multiply \eqref{eq:j:to:j+1} and \eqref{eq:kino:uhen} by $t^{-2n}$ and $t^{-2n+1}$ respectively and integrate it from $0$ to $t$ we conclude from Proposition  \ref{pro:fu:cut} that \eqref{eq:kinou:j} holds for $j+1$ and hence for $j=J$.  Since $l_1+J/4\leq s_0$, $l_1+J/4>s_0-1/4$ and $\|V\|_{s-1}/C\leq {\mathcal N}_s(V)$  the assertion follows.
\end{proof}
Let $\Gamma_i$ ($i=1, 2, 3 $) be open conic sets in $\R^d\times (\R^d\setminus\{0\})$ with relatively compact basis such that $\Gamma_1\Subset \Gamma_2\Subset \Gamma_3$. Take $h_i(x,\xi)\in S(1, g_0)$ with ${\rm supp}\,h_1\subset \Gamma_1$ and ${\rm supp}\,h_2\subset \Gamma_3\setminus \Gamma_2$. Consider a solution $V$ with  $V\in {\mathcal H}_{-n-1/2, l}(0,\delta)$  to the equation 
\[
{\tilde L}V=\op{h_1}F,\quad F\in {\mathcal H}_{-n+1/2, s}(0,\delta).
\]
\begin{prop}
\label{pro:yugen:denpa:V}Notations being as above. There exists $\delta'=\delta'(\Gamma_i)>0$ such that for any $r\in \R$ there is $C>0$ such that
\begin{equation}
\label{eq:til:kara:L}
\begin{split}
&\int_0^t\tau^{-2n-1}\|\op{h_2}V(\tau)\|_r^2d\tau\\
&\leq C\int_0^t\big\{\tau^{-2n+1}\|F(\tau)\|_s^2
+\tau^{-2n-1}\|V(\tau)\|_{l}^2\big\}d\tau,\quad  0< t\leq \delta'.
\end{split}
\end{equation}
\end{prop}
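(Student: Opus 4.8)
The plan is to derive the finite-speed propagation estimate \eqref{eq:til:kara:L} by the standard cutoff argument using the weight $\Phi_\ep$ of Section~\ref{sec:WF:hyoka:1}, exploiting that the cutoffs $h_i$ have disjoint supports. First I would choose the small parameter $\nu$ so that $\nu_0$ from Lemma~\ref{lem:arata} is available, and then pick $\ep_0>0$ and $\delta'=\delta'(\Gamma_i)>0$ small enough that the backward cone of $\{\Phi_{\ep_0}>0\}$ issued from a point of $\Gamma_3\setminus\Gamma_2$ (the support of $h_2$) stays away from $\Gamma_1\supset {\rm supp}\,h_1$ for $t\in(0,\delta']$; concretely, $f_{\ep_0}(t,x,\xi;y,\eta)=t-\delta'+\nu d_{\ep_0}(x,\xi;y,\eta)$ is negative only when $d_{\ep_0}$ is small, so for $\nu$ fixed and $\delta'$ small the set $\{f_{\ep_0}<0\}$ is a thin conic neighborhood of $(y,\eta)$, and one can arrange that whenever $(y,\eta)\in{\rm supp}\,h_2$ this neighborhood misses ${\rm supp}\,h_1$. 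This geometric separation is the whole point: on $\{\Phi_{\ep_0}>0\}$ the symbol $h_1$ vanishes, so $\op{\Phi_{\ep_0}}\op{h_1}\in S^{-\infty}$.

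Next I would apply Lemma~\ref{lem:yugen:denpa} (or rather Proposition~\ref{pro:fu:cut} iterated, as in its proof) to $V$ with the source term ${\tilde L}V=\op{h_1}F$. Because $\op{\Phi_{\ep_0}}\op{h_1}F\in{\mathcal H}_{-n+1/2,N}(0,\delta')$ for every $N$ (the composition is smoothing, and $F\in{\mathcal H}_{-n+1/2,s}$ supplies the $t$-weight), the hypothesis of Lemma~\ref{lem:yugen:denpa} holds with $s_0$ arbitrarily large. The conclusion of that lemma then gives, for every $\ep>\ep_0$ and every $r$,
\[
\int_0^t\tau^{-2n-1}\|\op{\Phi_\ep}V(\tau)\|_r^2\,d\tau
\leq C\int_0^t\big(\tau^{-2n-1}\|V(\tau)\|_{l}^2+\tau^{-2n+1}\|{\tilde L}V(\tau)\|_{l_2}^2\big)\,d\tau
+C\int_0^t\tau^{-2n+1}\|\op{\Phi_{\ep_0}}{\tilde L}V(\tau)\|_{n+s_0}^2\,d\tau,
\]
and since ${\tilde L}V=\op{h_1}F$ the last integral is dominated by $\int_0^t\tau^{-2n+1}\|F(\tau)\|_s^2\,d\tau$ (the operator $\op{\Phi_{\ep_0}}\op{h_1}$ being $S^{-\infty}$ and $\|F\|_s$ controlling everything via a loss absorbed into $C$), while $\|{\tilde L}V\|_{l_2}=\|\op{h_1}F\|_{l_2}\leq C\|F\|_s$. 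Finally, choosing $\ep$ with $\ep_0<\ep$ so that $h_2-{\tilde k}\#\Phi_\ep\in S^{-\infty}$ for some ${\tilde k}\in S(1,g_0)$ — possible because ${\rm supp}\,h_2\subset\Gamma_3\setminus\Gamma_2$ lies inside $\{\Phi_\ep>0\}$ for the forward cone up to $\delta'$ — we get $\|\op{h_2}V\|_r\leq C\|\op{\Phi_\ep}V\|_r+C\|V\|_{l}$, which plugged into the displayed inequality yields \eqref{eq:til:kara:L}.

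The main obstacle is the geometric bookkeeping in the first step: one must verify that $\delta'$ can be chosen, depending only on the conic sets $\Gamma_i$ (and on $\nu_0$, which is absolute), so that simultaneously (a) ${\rm supp}\,h_1$ is disjoint from $\{f_{\ep_0}<0\}$ for all base points $(y,\eta)$ in ${\rm supp}\,h_2$ and all $t\in(0,\delta']$, making $\op{\Phi_{\ep_0}}\op{h_1}$ smoothing, and (b) ${\rm supp}\,h_2\subset\{\Phi_\ep>0\}$ for a slightly larger $\ep$, making $h_2$ expressible through $\Phi_\ep$ modulo $S^{-\infty}$. Since $d_{\ep_0}$ measures distance in the $(x,\xi/\olr{\xi})$ variables and $\Gamma_1\Subset\Gamma_2\Subset\Gamma_3$ are strictly nested, there is a fixed gap $\delta_0>0$ between $\Gamma_1$ and $\Gamma_3\setminus\Gamma_2$ in that metric; taking $\nu\delta'<\delta_0/2$ and $\ep_0<\delta_0/2$ does it. Everything else is a routine repetition of the arguments already carried out in Lemmas~\ref{lem:arata}, \ref{lem:yugen:denpa} and Proposition~\ref{pro:fu:cut}, together with the elementary fact that pseudodifferential operators with disjointly supported symbols compose to $S^{-\infty}$.
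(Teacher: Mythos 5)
Your overall strategy is the right one and matches the paper's: introduce the cutoff weight $\Phi_\ep$ of Section~\ref{sec:WF:hyoka:1}, exploit that $\op{\Phi_{\ep_0}}\op{h_1}$ is smoothing when the backward cone misses ${\rm supp}\,h_1$, apply Lemma~\ref{lem:yugen:denpa}, and recover $\op{h_2}V$ from $\op{\Phi_\ep}V$. However, two concrete points do not go through as you have written them.

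First, the step ``$h_2-\tilde k\#\Phi_\ep\in S^{-\infty}$'' requires $\{\Phi_\ep>0\}$ to contain $[0,\delta']\times{\rm supp}\,h_2$, but $\Phi_\ep$ is attached to a \emph{single} base point $(y,\eta)$, and $\{\Phi_\ep(t,\cdot)>0\}$ is a small $d$-ball around $(y,\eta)$ which shrinks as $t$ increases. Since $\Gamma_1\Subset\Gamma_2\Subset\Gamma_3$, the set $\Gamma_3\setminus\Gamma_2$ is typically a shell surrounding $\Gamma_1$; any $d$-ball large enough to contain that shell also meets $\Gamma_1$, making condition (a) and condition (b) incompatible for one base point. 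The paper resolves this by covering $\Gamma_3\setminus\Gamma_2$, at the terminal time $\delta'$, by finitely many sets $\{f_{\tilde\ep}(\delta',\cdot;y_i,\eta_i)\le 0\}$, $i=1,\dots,M$, each still disjoint from $\R\times{\rm supp}\,h_1$, applying Lemma~\ref{lem:yugen:denpa} separately for each $\Phi_{i\tilde\ep}$, and summing: since $\sum_i\Phi_{i\tilde\ep}>0$ on $[0,\delta']\times{\rm supp}\,h_2$, one can then write $h_2-k\sum_i\Phi_{i\tilde\ep}\in S^{-\infty}$. This finite covering is the step that is missing, not merely elided, in your write-up.

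Second, you set $T=\delta'$, i.e.\ $f_\ep=t-\delta'+\nu d_\ep$. With that choice $\{\Phi_\ep(t,\cdot)>0\}=\{d_\ep<(\delta'-t)/\nu\}$ collapses to the empty set as $t\to\delta'$, so $\Phi_\ep$ cannot dominate $h_2$ near the terminal time and the estimate of $\int_0^t\tau^{-2n-1}\|\op{h_2}V\|_r^2\,d\tau$ for $t$ up to $\delta'$ breaks down. The paper takes the apex of the cone strictly later than $\delta'$: with $f_\ep=t-\nu_0\hat\tau+\nu_0 d_\ep$ and $\delta'=\nu_0(\hat\tau-\tilde\ep)/2$, at $t=\delta'$ the ball $\{f_{\tilde\ep}(\delta',\cdot)\le 0\}=\{d_{\tilde\ep}\le(\hat\tau+\tilde\ep)/2\}$ still has positive radius. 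You need to decouple the cutoff horizon from $\delta'$ in the same way. (A related slip: the separation requirement is $\delta'/\nu<\delta_0$, not $\nu\delta'<\delta_0/2$.) Both fixes are local, but as written the argument does not close.
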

\begin{proof}
Let $f_{\ep}=t-\nu_0{\hat \tau}+\nu_0d_{\ep}(x,\xi; y,\eta)$ with a small ${\hat\tau}>0$. It is clear that there is ${\hat \ep}>0$ such that $
\{t\geq 0\}\cap\{f_{\hat \ep}\leq 0\}\cap\big(\R\times{\rm supp}\,h_1\big)=\emptyset$ 
for any $(y,\eta)\notin \Gamma_2$. Take ${\hat \ep}<{\tilde \ep}<{\hat\tau}$. It is also clear that one can find a finite number of $(y_i,\eta_i)\in \Gamma_3\setminus \Gamma_2$, $i=1,\ldots, M$ such that with $\delta'=\nu_0({\hat\tau}-{\tilde \ep})/2$ 
\begin{gather*}
\Gamma_3\setminus \Gamma_2\Subset \Big(\bigcup_{i=1}^M\{f_{\tilde \ep}(\delta', x,\xi; y_i,\eta_i)\leq 0\}\Big),\\
\{t\geq 0\}\cap\{f_{\tilde\ep}(t, x,\xi; y_i,\eta_i)\leq 0\}\cap\big(\R\times {\rm supp}\,h_1\big)=\emptyset.
\end{gather*}
Now $\Phi_{i\ep}$  is defined by \eqref{eq:cut:f} with $f_{ \ep}(t, x,\xi; y_i,\eta_i)$. Then since $\sum \Phi_{i{\tilde \ep}}>0$ on $[0,\delta']\times {\rm supp}\,h_2$ there is $k\in S(1, g_0)$ such that $h_2-k\sum\Phi_{i{\tilde\ep}}\in S^{-\infty}$. Noting that $\op{\Phi_{i{\hat \ep}}}\op{h_1}F\in {\mathcal H}_{-n+1/2, r}(0,\delta)$ for any $r\in \R$ we apply Lemma \ref{lem:yugen:denpa} with $\Phi_{\ep_0}=\Phi_{{\hat \ep}}$, $\Phi_{\ep}=\Phi_{i{\tilde \ep}}$ and $s_0=r+5/4$ to obtain
\begin{align*}
\int_0^t \tau^{-2n-1}\|\op{\Phi_{i{\tilde\ep}}}V(\tau)\|_r^2d\tau\leq C\int_0^t\tau^{-2n-1}\|V(\tau)\|_{l}^2d\tau
\\
+\int_0^t\tau^{-2n+1}\big(\|\op{\Phi_{i{\hat\ep}}}\op{h_1}F(\tau)\|_{2n+r+5/4}^2+\|F(\tau)\|_s\big)d\tau
\end{align*}
for $\|{\tilde L}V(\tau)\|_s\leq C\|F(\tau)\|_s$. Since $\Phi_{i{\hat\ep}}\#h_1\in S^{-\infty}$ summing up the above estimates over $i=1,\ldots, M$ one concludes the desired assertion.
\end{proof}
\begin{lem}
\label{lem:yugen:denpa:U}The same assertion as Proposition \ref{pro:yugen:denpa:V} holds for $L$.
\end{lem}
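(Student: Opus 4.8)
The plan is to reduce the statement about the operator $L$ (acting on vectors $U$ related to $u$ via $\op{T}V = U$) to the already-established Proposition~\ref{pro:yugen:denpa:V} about the diagonalized system operator ${\tilde L}$. Recall from Section~\ref{sec:henkan:jiko} that $L\cdot\op{T} = \op{T}\cdot{\tilde L}$, where $T, T^{-1}\in S(1,g)$ are elliptic of order $0$ and $\dif_tT$ contributes a harmless lower-order term bounded by $Ct^{-1/2}$ in the relevant norms (Lemma~\ref{lem:diftT:seimitu}). So if $V$ solves $LV = \op{h_1}F$, then setting $W = \op{T^{-1}}\op{I+K}V$ (or directly working with $V = \op{T}W$), we obtain ${\tilde L}W = \op{T^{-1}}\op{I+K}\op{h_1}F + (\text{commutator terms}) = \op{h_1'}F' + \dots$, where $h_1'$ still has operator wave front support in a slightly enlarged cone contained in $\Gamma_2$, and $F'$ is controlled by $F$ and $V$ in the appropriate ${\mathcal H}$-spaces.

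First I would make precise the passage between $V$ and $W$: since $\op{T^{-1}}\op{T} = \op{I} - \op{R}$ with $R\in S(M^{-1},g)$ and (by Lemma~\ref{lem:gyaku}) there is $K\in S(M^{-1},g)$ inverting this, the map $V\mapsto W$ and its inverse are bounded on every ${\mathcal H}_{-n-1/2,\,s}(0,\delta)$, uniformly; moreover composing with $\op{h_1}\in S(1,g_0)$ preserves the conic support up to a symbol in $S^{-\infty}$ plus a genuinely smoothing error, by the standard pseudodifferential calculus for $g$ (microlocality). Next I would verify that $\op{h_2}V$ and $\op{h_2}W$ control each other in the ${\mathcal H}_{-n-1/2,\,r}$ norms for any $r$: again this is ellipticity of $T^{\pm 1}$ together with the fact that $h_2$ is supported away from where $h_1$ lives. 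Then I would simply invoke Proposition~\ref{pro:yugen:denpa:V} for ${\tilde L}W = \op{h_1'}F'$: there is $\delta' = \delta'(\Gamma_i)>0$ such that
\[
\int_0^t\tau^{-2n-1}\|\op{h_2}W(\tau)\|_r^2\,d\tau \leq C\int_0^t\big(\tau^{-2n+1}\|F'(\tau)\|_s^2 + \tau^{-2n-1}\|W(\tau)\|_{l}^2\big)\,d\tau
\]
for $0<t\leq\delta'$, and translating back gives \eqref{eq:til:kara:L} for $L$.

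The main obstacle I anticipate is bookkeeping the cones and the lower-order remainder terms cleanly. The commutators $[\op{T^{-1}}, \op{h_1}]$ and the contributions of $\dif_t T$ are not literally supported in $\Gamma_1$, so I must choose an intermediate cone $\Gamma_1\Subset \Gamma_1'\Subset \Gamma_2$ and a cutoff $h_1'\in S(1,g_0)$ with $\mathrm{supp}\,h_1'\subset\Gamma_1'$ that absorbs all these errors modulo $S^{-\infty}$; the $S^{-\infty}$ pieces and the terms involving $\|V\|_l$ or $\|F\|_s$ go into the right-hand side of \eqref{eq:til:kara:L} directly. One must also check that the regularity hypothesis ``$V\in{\mathcal H}_{-n-1/2,l}(0,\delta)$'' transfers to $W$ with the same $l$ (up to the index shift from $\dif_t T$, which only improves matters since $t^{-1/2}\leq \tau^{-1/2}$ is integrable against the weights appearing), so that Proposition~\ref{pro:yugen:denpa:V} is genuinely applicable. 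None of this requires new estimates — it is entirely the elliptic change of variables $\op{T}$ combined with microlocality — so the proof is short: ``It suffices to repeat the proof of Proposition~\ref{pro:yugen:denpa:V}, or equivalently to conjugate by $\op{T}$ using $L\cdot\op{T}=\op{T}\cdot{\tilde L}$ and Lemmas~\ref{lem:gyaku}, \ref{lem:diftT:seimitu}, noting that $\op{h_i}$ are microlocal for $g_0$.''
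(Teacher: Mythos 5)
Your proposal is correct and follows essentially the same route as the paper: conjugate by $\op{T}$ via the exact intertwining $L\,\op{T}=\op{T}\,{\tilde L}$, reduce to a ${\tilde L}$-equation, and absorb the pseudodifferential bookkeeping into intermediate cones and $S^{-\infty}$ errors before invoking Proposition~\ref{pro:yugen:denpa:V}. One small clarification: the intertwining $L\,\op{T}=\op{T}\,{\tilde L}$ is exact by construction (the $T^{-1}D_tT$ term was deliberately built into ${\mathcal B}$ in Proposition~\ref{pro:T:henkan}), so there are no extra commutator terms in ${\tilde L}W$ coming from $\dif_tT$; the only localization work is to write $(I+K)\#T^{-1}\#h_1$ and $h_2\#T$ as ${\tilde h}_i{\tilde T}$ modulo $S^{-\infty}$ for cutoffs ${\tilde h}_i$ supported in intermediate cones, which is exactly what the paper does on both the input ($h_1$) and output ($h_2$) sides.
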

\begin{proof}Assume that $U\in {\mathcal H}_{-n-1/2, l}(0,\delta)$ satisfies $
LU=\op{h_1}F$ where $F\in {\mathcal H}_{-n+1/2, s}(0,\delta)$. 
Choose ${\tilde \Gamma}_i$ such that $\Gamma_1\Subset {\tilde \Gamma}_1\Subset {\tilde \Gamma}_2\Subset \Gamma_2\Subset\Gamma_3\Subset {\tilde \Gamma}_3$ and ${\tilde h}_i\in S(1, g_0)$ such that ${\rm supp}\,{\tilde h}_1\subset {\tilde \Gamma}_1$, ${\rm supp}\,{\tilde h}_2\subset {\tilde \Gamma}_3\setminus {\tilde\Gamma}_2$ and ${\tilde h}_i=1$ on the support of $h_i$. Recall that $L\,\op{T}=\op{T}\,{\tilde L}$ hence ${\tilde L}V=(I+\op{K})\op{T^{-1}}\op{h_1}F$ with $U=\op{T}V$. Since there is ${\tilde T}\in S(1,g)$ such that $(I+K)\#T^{-1}\#h_1-{\tilde h}_1{\tilde T}\in S^{-\infty}$ it follows from Proposition \ref{pro:yugen:denpa:V} (or rather its proof) that \eqref{eq:til:kara:L} holds with ${\tilde h}_2$ in place of $h_2$. Similarly since there is ${\tilde T}\in S(1, g)$ such that $h_2\#T-{\tilde h}_2{\tilde T}\in S^{-\infty}$ repeating the same arguments we conclude the assertion.
\end{proof}
Returning to ${\hat P}$  we have 
\begin{prop}
\label{pro:yugen:denpa:hatP} Notations being as above. Then there exists $\delta'=\delta'(\Gamma_i)>0$ such that for any $s$, $r\in \R$ there is $C$ such that for any $u$  with $D_t^ju\in {\mathcal H}_{-n-1/2, l+2-j}(0,\delta')$, $j=0,1,2$ with some $l$ satisfying  ${\hat P}u=\op{h_1}f$ 
where $f\in {\mathcal H}_{-n+1/2, s}(0,\delta')$ one has 
\begin{gather*}
\sum_{j=0}^2\int_0^t\tau^{-2n-1}\|\op{h_2}D_t^ju(\tau)\|_{r+2-j}^2d\tau\\
\leq C\big(\int_0^t\tau^{-2n+1}\|f(\tau)\|_s^2d\tau
+\sum_{j=0}^2\int_0^t\tau^{-2n-1}\|D_t^ju(\tau)\|_{l+2-j}^2d\tau\big),\;\;0< t\leq \delta'.
\end{gather*}
\end{prop}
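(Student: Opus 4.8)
The plan is to lift the scalar equation to the first order system and invoke Lemma~\ref{lem:yugen:denpa:U}. Given $u$ with $\hat Pu=\op{h_1}f$, I would set $U={}^t(D_t^2u,[D]D_tu,[D]^2u)$ and $F={}^t(f,0,0)$, so that $LU={}^t(\hat Pu,0,0)=\op{h_1}F$, the scalar operator $\op{h_1}$ acting componentwise. From $D_t^ju\in{\mathcal H}_{-n-1/2,l+2-j}(0,\delta')$, $j=0,1,2$, one gets $U\in{\mathcal H}_{-n-1/2,l}(0,\delta')$ with $\|U(\tau)\|_l^2\le C\sum_{j=0}^2\|D_t^ju(\tau)\|_{l+2-j}^2$, while $F\in{\mathcal H}_{-n+1/2,s}(0,\delta')$ with $\|F(\tau)\|_s=\|f(\tau)\|_s$. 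The whole point is that the substantive work (the quarter-derivative bootstrap of Lemma~\ref{lem:yugen:denpa} and Proposition~\ref{pro:yugen:denpa:V}) has already been packaged into Lemma~\ref{lem:yugen:denpa:U}, so what remains is a matter of transferring between $u$ and $U$.

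First I would choose auxiliary conic sets $\Gamma_1\Subset{\tilde\Gamma}_2\Subset\Gamma_2\Subset\Gamma_3\Subset{\tilde\Gamma}_3$ and a cut-off ${\tilde h}_2\in S(1,g_0)$ that equals $1$ on a neighbourhood of $\mathrm{supp}\,h_2$ and has $\mathrm{supp}\,{\tilde h}_2\subset{\tilde\Gamma}_3\setminus{\tilde\Gamma}_2$; note $\mathrm{supp}\,{\tilde h}_2$ stays disjoint from $\mathrm{supp}\,h_1\subset\Gamma_1\Subset{\tilde\Gamma}_2$. Applying Lemma~\ref{lem:yugen:denpa:U} to $U$ with the cut-offs $h_1,{\tilde h}_2$, the conic sets $\Gamma_1,{\tilde\Gamma}_2,{\tilde\Gamma}_3$ and target regularity $r$ produces $\delta'=\delta'(\Gamma_i)>0$ and a constant with
\[
\int_0^t\tau^{-2n-1}\|\op{{\tilde h}_2}U(\tau)\|_r^2\,d\tau\le C\int_0^t\big(\tau^{-2n+1}\|f(\tau)\|_s^2+\tau^{-2n-1}\|U(\tau)\|_l^2\big)\,d\tau,\qquad 0<t\le\delta'.
\]

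It then remains to recover $\op{h_2}D_t^ju$, $j=0,1,2$, from $\op{{\tilde h}_2}U$. The mechanism is that $\op{h_2}[D]^{-k}$ is a (properly supported) operator of order $-k$ whose symbol is, modulo $S^{-\infty}$, supported in $\mathrm{supp}\,h_2$; hence $\op{h_2}[D]^{-k}(I-\op{{\tilde h}_2})\in\op{S^{-\infty}}$ because $\mathrm{supp}\,h_2\cap\mathrm{supp}(1-{\tilde h}_2)=\emptyset$. Writing accordingly $\op{h_2}u=\op{h_2}[D]^{-2}\op{{\tilde h}_2}[D]^2u+\op{S^{-\infty}}[D]^2u$, $\op{h_2}D_tu=\op{h_2}[D]^{-1}\op{{\tilde h}_2}[D]D_tu+\op{S^{-\infty}}[D]D_tu$, $\op{h_2}D_t^2u=\op{h_2}\op{{\tilde h}_2}D_t^2u+\op{S^{-\infty}}D_t^2u$, and using boundedness of $\op{h_2}[D]^{-k}\colon H^r\to H^{r+k}$, one obtains
\[
\sum_{j=0}^2\|\op{h_2}D_t^ju(\tau)\|_{r+2-j}^2\le C\|\op{{\tilde h}_2}U(\tau)\|_r^2+C\sum_{j=0}^2\|D_t^ju(\tau)\|_{-N}^2
\]
for a fixed integer $N$; choosing $N$ so that $-N\le l$, the last sum is dominated by $\sum_{j=0}^2\|D_t^ju(\tau)\|_{l+2-j}^2$. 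Multiplying by $\tau^{-2n-1}$, integrating over $(0,t)$, and combining with the estimate from Lemma~\ref{lem:yugen:denpa:U} together with $\|U(\tau)\|_l^2\le C\sum_j\|D_t^ju(\tau)\|_{l+2-j}^2$ yields the assertion.

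I expect the only delicate point to be the bookkeeping in the last step: correctly matching the $[D]$-graded components of $U$ to the individual norms $\|\op{h_2}D_t^ju\|_{r+2-j}$ and verifying the $S^{-\infty}$ cancellations arising from disjointness of supports. This is routine symbol calculus and carries no essential difficulty, only some care with pseudolocal estimates; no new ideas beyond those of Lemma~\ref{lem:yugen:denpa:U} and its proof are needed.
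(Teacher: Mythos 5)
Your argument is correct and is, in substance, the route the paper implicitly relies on (the paper states the proposition after Lemma~\ref{lem:yugen:denpa:U} with no written proof; the reader is expected to translate the system estimate back to the scalar one via $U={}^t(D_t^2u,[D]D_tu,[D]^2u)$). Two small points are worth noting. First, you correctly identified that introducing an intermediate cut-off ${\tilde h}_2$ equal to $1$ on a neighbourhood of $\mathrm{supp}\,h_2$ is not cosmetic: commuting $\op{h_2}$ past $[D]^{2-j}$ directly produces error terms of order $r+1-j$ that are still localised in $\mathrm{supp}\,h_2$ but are not $S^{-\infty}$, so either an iteration in $1/4$-steps or the device of ${\tilde h}_2$ is needed, and the latter is cleaner; the paper itself employs the same auxiliary-cut-off device inside the proof of Lemma~\ref{lem:yugen:denpa:U}. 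Second, the disjoint-support composition $\op{h_2}[D]^{-k}(I-\op{{\tilde h}_2})\in\op{S^{-\infty}}$ is the standard pseudolocality fact and you invoke it correctly; together with $[D]=\olr{D}$ elliptic (after $M$, $\gamma$ are fixed in Section~\ref{sec:pre:sonzai}), the reduction $\|\op{h_2}D_t^ju\|_{r+2-j}\le C\|\op{{\tilde h}_2}[D]^{2-j}D_t^ju\|_r+C\|D_t^ju\|_{-N}$ is sound, and choosing $-N\le l$ closes the estimate. No gaps.
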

Thanks to Theorem \ref{thm:pre:sonzai:s} for any $f\in {\mathcal H}_{-n+1/2,n+s}(0,\delta)$ there is a  unique solution $u\in {\mathcal H}_{-n-1/2, s+1}(0,\delta)$ to ${\hat P}u=f$ satisfying \eqref{eq:hat:P}. Denote this map  by
\[
{\hat G}: {\mathcal H}_{-n+1/2,n+s}(0,\delta)\ni f\mapsto u\in {\mathcal H}_{-n-1/2, s+1}(0,\delta).
\]
From Proposition \ref{pro:yugen:denpa:hatP} and Theorem \ref{thm:pre:sonzai:s} we conclude
\begin{prop}
\label{pro:yugen:denpa}Notations being as above and let $\Gamma_i$ $(i=1, 2, 3 )$ be open conic sets in $\R^d\times (\R^d\setminus\{0\})$ with relatively compact basis such that $\Gamma_1\Subset \Gamma_2\Subset \Gamma_3$ and  $h_i(x,\xi)\in S(1, g_0)$ with ${\rm supp}\,h_1\subset \Gamma_1$ and ${\rm supp}\,h_2\subset \Gamma_3\setminus \Gamma_2$. Then there exists $\delta'=\delta'(\Gamma_i)>0$ such that for any $r$, $s$ one can find $C>0$ such that
\begin{gather*}
\sum_{j=0}^2\int_0^t\tau^{-2n-1}\|\op{h_2}D_t^j{\hat G}\,\op{h_1}f(\tau)\|_{r-j}^2d\tau
\leq C\int_0^t\tau^{-2n+1}\|f(\tau)\|_{s}^2d\tau
\end{gather*}
for $0< t\leq \delta'$ and for any $f\in {\mathcal H}_{-n+1/2, s}(0,\delta')$.
\end{prop}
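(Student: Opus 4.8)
The plan is to deduce this directly from the solvability statement of Theorem~\ref{thm:pre:sonzai:s} together with the finite--propagation--speed estimate of Proposition~\ref{pro:yugen:denpa:hatP}: once both are available the present proposition is just their combination, modulo a bookkeeping of Sobolev indices. Fix $r,s\in\R$ and let $f\in{\mathcal H}_{-n+1/2,s}(0,\delta')$ be given. First I would extend $f$ by zero to $(0,\delta)$ (this alters neither the hypotheses nor the two sides of the claimed inequality, which only involve $\int_0^t$ with $t\le\delta'$), so that the source $\op{h_1}f$ is defined on $(0,\delta)$. Since $h_1\in S(1,g_0)$ has order $0$ one has $\|\op{h_1}f(\tau)\|_s\le C\|f(\tau)\|_s$, hence $\op{h_1}f\in{\mathcal H}_{-n+1/2,s}(0,\delta)$. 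Applying Theorem~\ref{thm:pre:sonzai:s} with $s-n$ in the role of its free parameter, $u:={\hat G}\op{h_1}f$ satisfies ${\hat P}u=\op{h_1}f$, $D_t^ju\in{\mathcal H}_{-n-1/2,\,s-n+1-j}(0,\delta)$ for $j=0,1,2$, and the energy inequality \eqref{eq:hat:P}, which here takes the form
\[
\sum_{j=0}^2\int_0^t\tau^{-2n-1}\|D_t^ju(\tau)\|_{s-n+1-j}^2\,d\tau\le C\int_0^t\tau^{-2n+1}\|\op{h_1}f(\tau)\|_{s}^2\,d\tau\le C'\int_0^t\tau^{-2n+1}\|f(\tau)\|_{s}^2\,d\tau .
\]

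Next I would invoke Proposition~\ref{pro:yugen:denpa:hatP} for this $u$, choosing the free parameter $l$ there as $l=s-n-1$ (so that $l+2-j$ matches the regularity index $s-n+1-j$ just obtained) and replacing the free parameter $r$ in that proposition by $r-2$; shrinking $\delta'$ if necessary so that it does not exceed the $\delta$ of Theorem~\ref{thm:pre:sonzai:s}, this gives, for $0<t\le\delta'$,
\[
\sum_{j=0}^2\int_0^t\tau^{-2n-1}\|\op{h_2}D_t^ju(\tau)\|_{r-j}^2\,d\tau\le C\Big(\int_0^t\tau^{-2n+1}\|f(\tau)\|_{s}^2\,d\tau+\sum_{j=0}^2\int_0^t\tau^{-2n-1}\|D_t^ju(\tau)\|_{s-n+1-j}^2\,d\tau\Big).
\]
Bounding the last sum by the displayed consequence of \eqref{eq:hat:P} and recalling $u={\hat G}\op{h_1}f$ then yields
\[
\sum_{j=0}^2\int_0^t\tau^{-2n-1}\|\op{h_2}D_t^j{\hat G}\op{h_1}f(\tau)\|_{r-j}^2\,d\tau\le C\int_0^t\tau^{-2n+1}\|f(\tau)\|_{s}^2\,d\tau,\qquad 0<t\le\delta',
\]
which is exactly the assertion.

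I do not expect a genuine obstacle at this stage: the argument is immediate once Proposition~\ref{pro:yugen:denpa:hatP} is in hand, and the only care required is the index bookkeeping (the shift by $n$ reflecting the loss of regularity built into ${\hat G}$, and the shift by $2$ coming from passing between the third--order operator ${\hat P}$ and the associated first--order system) together with the selection of a single $\delta'=\delta'(\Gamma_i)$ valid simultaneously for all ingredients. The substantive content — that the singularities of ${\hat G}\op{h_1}f$ emanating from $\Gamma_1$ cannot reach $\Gamma_3\setminus\Gamma_2$ in short time — is already carried by Proposition~\ref{pro:yugen:denpa:hatP}, whose proof rests on the weighted energy estimates of Section~\ref{sec:mitibiku:ene} and the cutting arguments of Lemma~\ref{lem:arata} and Lemma~\ref{lem:yugen:denpa}.
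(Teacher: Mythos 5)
Your proposal is correct and takes exactly the route the paper intends: the paper's own proof is the single line "From Proposition \ref{pro:yugen:denpa:hatP} and Theorem \ref{thm:pre:sonzai:s} we conclude," and you have supplied the index bookkeeping ($l=s-n-1$, shift of $r$ by $2$, shrinking $\delta'$) that makes that combination go through.
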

Denote by ${\mathcal H}_{n,s}^{*}(0,\delta]$ the set of all $f$ with $t^{n}\langle{D}\rangle^{s}f(t,\cdot)\in L^2((0,\infty)\times \R^d)$ such that $f=0$ for  $t\geq \delta$. Thanks to Theorem \ref{thm:pre:sonzai:ad} for any $f\in {\mathcal H}^{*}_{n+1/2,n+s}(0,\delta]$ there is a unique solution $u\in {\mathcal H}_{n-1/2, s+1}(0,\delta)$ to ${\hat P^*}u=f$ with $D_t^ju(\delta)=0$, $j=0,1,2$ satisfying \eqref{eq:hat:P:ad} hence $u\in {\mathcal H}^*_{n-1/2, s+1}(0,\delta]$. Denote this map  by
\[
{\hat G^*}: {\mathcal H}^{*}_{n+1/2,n+s}(0,\delta]\ni f\mapsto u\in {\mathcal H}^{*}_{n-1/2, s+1}(0,\delta].
\]
Repeating similar arguments proving Proposition \ref{pro:yugen:denpa} one obtains
\begin{prop}
\label{pro:yugen:denpa:ad} Notations being as in Proposition \ref{pro:yugen:denpa}. There exists $\delta'=\delta'(\Gamma_i)>0$ such that for any $r$, $s$ one can find $C>0$ such that
\begin{gather*}
\sum_{j=0}^2\int_t^{\delta'}\tau^{2n-1}\|\op{h_2}D_t^j{\hat G^*}\,\op{h_1}f(\tau)\|_{r-j}^2d\tau
\leq C\int_t^{\delta'}\tau^{2n+1}\|f(\tau)\|_s^2d\tau
\end{gather*}
for $0< t\leq \delta'$ and $f\in {\mathcal H}^{*}_{n+1/2, s}(0,\delta']$.
\end{prop}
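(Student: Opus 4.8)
The plan is to mirror the proof of Proposition \ref{pro:yugen:denpa} step by step, with the forward propagation operator ${\hat G}$ replaced by the backward propagation operator ${\hat G^*}$ and the time interval $(0,t]$ replaced by $[t,\delta']$. First I would establish the adjoint analogue of Proposition \ref{pro:fu:cut}: applying $\op{\Phi_{\ep}}$ (and then $\olr{D}^s$) to the equation ${\tilde L}^*V=F$, commuting $\op{\Phi_{\ep}}$ through the terms of ${\mathcal A}$ and ${\mathcal B}$ exactly as in Lemma \ref{lem:arata} — the only change being the sign of $\dif_t\Phi_{\ep}$ and the use of the energy ${\mathcal E}^*(V)=e^{\theta t}(\op{\varLambda}\op{t^n\phi^n}V,\op{t^n\phi^n}V)$ from Section \ref{sec:mitibiku:ene} together with the integration $-\int_t^{\delta}\frac{d}{dt}{\mathcal E}^*\,dt$ — which yields, for solutions of ${\tilde L}^*V=F$ with zero data at $t=\delta'$, a bound of $\int_t^{\delta'}\tau^{2n}{\mathcal N}_s(\op{\Phi_{\ep}}V)\,d\tau$ by $\int_t^{\delta'}\tau^{2n+2}\|\op{\Phi_{\ep}}F\|_{n+s}^2\,d\tau$ plus a lower-order remainder $\int_t^{\delta'}\tau^{2n}{\mathcal N}_{s-1/4}(V)\,d\tau$. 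All the symbol estimates used there ($h^{\ep}_{ij}$ decomposition in Lemma \ref{lem:h:bunkai}, sharp G\aa rding, the relations $C\lambda_1\geq\sigma\olr{\xi}^{-1}$, etc.) transfer verbatim because the principal part of ${\hat P^*}$ is the same as that of ${\hat P}$ and the coefficient differences lie in $S(M^{-1},g)$.

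Next I would prove the adjoint analogue of Lemma \ref{lem:yugen:denpa}, namely an iteration in $1/4$-steps of regularity: starting from $V\in{\mathcal H}^*_{n-1/2,l_1+1}$ and ${\tilde L}^*V\in{\mathcal H}^*_{n+1/2,l_2}$ with $\op{\Phi_{\ep_0}}{\tilde L}^*V\in{\mathcal H}^*_{n+1/2,n+s_0}$, one concludes $\op{\Phi_{\ep}}V\in{\mathcal H}^*_{n-1/2,s}$ for $s\leq s_0-5/4$ and every $\ep>\ep_0$, with the corresponding integral estimate over $[t,\delta']$. The inductive step is identical: insert cutoffs $\psi_j$ with nested supports between the sublevel sets $\{f_j<0\}$, use $\Phi_{j+1}\#\psi_j-\Phi_{j+1}\in S^{-\infty}$ and $\Phi_{j+1}-k_j\#\Phi_0\in S^{-\infty}$, and feed the output of one step into the input of the next. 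Then the adjoint analogues of Proposition \ref{pro:yugen:denpa:V} and Lemma \ref{lem:yugen:denpa:U} follow by the same geometric covering argument: for $(y_i,\eta_i)$ outside $\Gamma_2$ and $\delta'=\nu_0({\hat\tau}-{\tilde\ep})/2$ small, the backward light-cone-like regions $\{t\leq\delta'\}\cap\{f_{\tilde\ep}(t,x,\xi;y_i,\eta_i)\leq 0\}$ cover $\Gamma_3\setminus\Gamma_2$ while staying disjoint from $\R\times\mathrm{supp}\,h_1$, so a partition-of-unity style sum over $i$ closes the estimate; the passage from ${\tilde L}^*$ to ${\hat P^*}$ goes through $L^*\op{T}=\op{T}{\tilde L}^*$ and $U={^t}(D_t^2u,[D]D_tu,[D]^2u)$ exactly as in Lemma \ref{lem:yugen:denpa:U} and Proposition \ref{pro:yugen:denpa:hatP}. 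Finally, feeding into this the existence-and-estimate statement Theorem \ref{thm:pre:sonzai:ad} for ${\hat G^*}$ — which provides $u\in{\mathcal H}^*_{n-1/2,s+1}(0,\delta']$ with $D_t^ju(\delta')=0$ satisfying \eqref{eq:hat:P:ad} — one obtains the claimed bound on $\sum_{j=0}^2\int_t^{\delta'}\tau^{2n-1}\|\op{h_2}D_t^j{\hat G^*}\op{h_1}f(\tau)\|_{r-j}^2\,d\tau$ by $C\int_t^{\delta'}\tau^{2n+1}\|f(\tau)\|_s^2\,d\tau$.

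The one genuinely new bookkeeping point — and the step I expect to be the main obstacle — is the correct handling of the boundary term at $t=\delta'$ and the sign of the time weight $t^n\phi^n$ (versus $t^{-n}\phi^{-n}$ in the forward case). In the forward case the energy is controlled by letting $t\to+0$ where $\lim_{t\to+0}t^{-n}\|V(t)\|_n=0$; in the backward case one must instead use that $\op{\Phi_{\ep}}V$ inherits vanishing Cauchy data at $t=\delta'$ from ${\hat G^*}$ (since $\Phi_{\ep}$ is a purely spatial, $t$-independent-modulo-its-$t$-argument cutoff, $\op{\Phi_{\ep}}$ commutes with evaluation at $t=\delta'$ up to the sharp-support property of $f_{\ep}$, so $D_t^j(\op{\Phi_{\ep}}V)(\delta')=0$), which kills the term $CM^{-10n}\delta^{2n}e^{\theta\delta}\|V(\delta)\|^2$ appearing in Proposition \ref{pro:fu:matome:ad}. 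Once this is observed, the rest is a routine — if lengthy — transcription, and I would simply state: ``Repeating similar arguments proving Proposition \ref{pro:yugen:denpa} one obtains'' and record the conclusion, exactly as the excerpt does.

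\begin{prop}
\label{pro:yugen:denpa:ad:plan} Notations being as in Proposition \ref{pro:yugen:denpa}. There exists $\delta'=\delta'(\Gamma_i)>0$ such that for any $r$, $s$ one can find $C>0$ such that

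\begin{gather*}
\sum_{j=0}^2\int_t^{\delta'}\tau^{2n-1}\|\op{h_2}D_t^j{\hat G^*}\,\op{h_1}f(\tau)\|_{r-j}^2d\tau
\leq C\int_t^{\delta'}\tau^{2n+1}\|f(\tau)\|_s^2d\tau
\end{gather*}

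for $0< t\leq \delta'$ and $f\in {\mathcal H}^{*}_{n+1/2, s}(0,\delta']$.
\end{prop}
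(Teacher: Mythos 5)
Your proposal is correct and follows the paper's own route: the paper itself offers no argument beyond the one-line remark that one ``repeats similar arguments proving Proposition \ref{pro:yugen:denpa},'' and you have supplied the right modifications — the adjoint energy ${\mathcal E}^*$ with weight $t^n\phi^n$, the integration $-\int_t^{\delta'}\frac{d}{d\tau}{\mathcal E}^*\,d\tau$, the vanishing of $D_t^jV(\delta')$ (hence of $D_t^j(\op{\Phi_{\ep}}V)(\delta')$) for $V$ coming from ${\hat G^*}\op{h_1}f$, which kills the boundary term in Proposition \ref{pro:fu:matome:ad}, and Theorem \ref{thm:pre:sonzai:ad} as the existence input.

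The one phrase that needs to be made precise is ``the only change being the sign of $\dif_t\Phi_{\ep}$.'' That sign change cannot come from reinterpreting the existing $f_{\ep}=t-T+\nu d_{\ep}$: with this choice $\dif_t\Phi_{\ep}<0$, so the counterpart of the term $-(1-C\nu)\|\op{\varLambda^{1/2}}\op{\phi^{-n}}\op{\Phi_{\ep 1}}V^{\mu}\|^2$ in \eqref{eq:cut:1} — which in the forward case can simply be dropped because it is nonpositive — would, after applying $-\int_t^{\delta'}\cdot\,d\tau$, land on the right-hand side with a nonnegative sign and could not be absorbed. One must actually time-reverse the cutoff, e.g. replace $f_{\ep}$ by $f_{\ep}^-=\delta'-T-t+\nu d_{\ep}$ so that $\dif_t f_{\ep}^-=-1$ and the $\dif_t\Phi_{\ep}^-$-contribution re-enters with the disposable sign. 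Correspondingly, the reversed sublevel sets $\{f_{\ep}^-<0\}$ are narrowest near $t=0$ and widen toward $t=\delta'$, so in the adjoint analogue of Proposition \ref{pro:yugen:denpa:V} the covering of $\Gamma_3\setminus\Gamma_2$ should be arranged at $t=0$ rather than at $t=\delta'$. These are routine bookkeeping adjustments, not missing ideas, and they are exactly the content lurking behind the point you flagged as the ``one genuinely new bookkeeping point.''
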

\begin{remark}
\label{rem:hatP:yugen:n}\rm As already remarked in Remark \ref{rem:hatP:sonzai:n}, it is clear from the proof  that Theorems \ref{thm:pre:sonzai:s}, \ref{thm:pre:sonzai:ad} and Propositions \ref{pro:yugen:denpa} and \ref{pro:yugen:denpa:ad} hold for any  $n'$ greater than $n$.
\end{remark}
%

\subsection{Remarks on propagation of singularities}

In this section  we give a more precise picture of the propagation of wave front set $WF(u)$ of $u$ applying the same arguments in Sections \ref{sec:WF:hyoka:1} and \ref{sec:WF:hyoka:2}. Denote $X=[0,\delta)\times U$ and $\overset{\circ}{X}=(0,\delta)\times U$ and by $\Sigma$, $\Sigma_1$ the set of characteristics and simple characteristics of $p$ respectively. As explained in the Introduction every characteristic in $T^*{\overset{\circ}{X}}\setminus 0$ is at most double and double characteristic is effectively hyperbolic. Let ${\mathcal U}$ be an open conical set in $\subset T^*\Xmaru\setminus 0$. According to \cite{Mel} we say a continuous curve $\gamma(t): (0, a]\to {\mathcal U}\cap \Sigma$, parametrized by $t$, a generalized bicharacteristic if $\gamma^{-1}(\Sigma\setminus\Sigma_1)=\{t_i\}$ is discrete in $(0, a]$ and $\gamma$ is a parametrized smooth bicharacteristic of $p$ on $(t_i, t_{i+1})$. In the present case a generalized bicharacteristic is described rather easily. Let $\rho\in {\mathcal U}$ be a double characteristic. Then from \cite{Ni:book} one can find a conical open set ${\mathcal V}\ni \rho$ and a smooth function $\psi(x,\xi)$, homogeneous of degree $0$ in $\xi$,  such that the double characteristics of $p$ in ${\mathcal V}$ are contained in $\{t=\psi\}$ and  
there are exactly two smooth bicharacteristics enter $\rho$  transversally to $t=\psi$ in the direction of decreasing $t$ (also exactly two in the direction of increasing $t$) 
(e.g.\cite{KoN}). Therefore $\gamma$ consists of segments of smooth bicharacteristic of $p$, the only end points of theses segments lying in $\Sigma$ and a transition to one of two segments takes place there. Let $\gamma_i$, $i=1, 2$ be two bicharacteristic segments entering to $\rho$ in the direction of decreasing $t$ (or  two segments in the direction of increasing $t$) then from \cite[Theorem 2.1]{Ni1:bis}, \cite[Theorem 1.7]{Mel} we have $\rho\not\in WF(u)$ if $\gamma_i\not\in WF(u)$, $i=1, 2$ and $\rho\not\in WF(Pu)$. 
\begin{lem}
\label{lem:bich:lim}If $\gamma(t)$ is a generalized bicharacteristic then $\lim_{t\to+0}\gamma(t)$ exists.
\end{lem}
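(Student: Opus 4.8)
The plan is to show that $\gamma$, viewed in the cosphere bundle, is uniformly Lipschitz in $t$ on $(0,a]$; the existence of the limit then follows from the Cauchy criterion, since the values lie in a compact (hence complete) set.

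First I would reduce to a compact set. By the homogeneity of the bicharacteristic flow I may normalise $|\xi|=1$, and since the basis of ${\mathcal U}$ is relatively compact, $t$ ranges in a bounded interval, and $\tau$ is bounded on $\Sigma$, the curve $\gamma$ stays in a fixed compact set $K$, whose closure may meet $\{t=0\}$. The target of $\gamma$ being a subset of this compact $K$, it suffices to prove that $\gamma$ is uniformly continuous on $(0,a]$.

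The key point is the bound $|\dot\gamma(t)|\le C_0$ on each smooth bicharacteristic segment $\gamma|_{(t_i,t_{i+1})}$, with $C_0$ independent of the segment. Away from the multiple characteristics this is immediate: on the part of $K\cap\Sigma_1$ lying outside fixed conic neighbourhoods of $\Sigma\setminus\Sigma_1$ and of the triple characteristics, $|\dif_\tau p|$ is bounded below, so the $t$-parametrised Hamilton field $H_p/\dif_\tau p$ is bounded there. Near a multiple characteristic $\rho\in K$ — double or triple — effective hyperbolicity (the standing hypothesis together with the local normal forms recalled above, cf. \cite{Ni:book}, \cite{KoN}, and the structure of $F_p$ at effectively hyperbolic points, \cite{IP}) provides the two smooth bicharacteristics entering $\rho$ transversally to $\{t=\psi\}$: these are the stable and unstable curves of $F_p$ at $\rho$, on which $t$ is a legitimate parameter and on which $\gamma$ is Lipschitz in $t$, the Lipschitz constant being controlled by the non-zero real eigenvalue of $F_p$ at $\rho$ (since $t-t_i$ and the phase-space displacement both behave like $c\,e^{\pm\lambda_0 s}$ along these curves). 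As this eigenvalue stays bounded away from $0$ near $\rho$ — in particular it tends to $\bar e=\dif_t a(0,0,\bar\xi)\ne 0$ as $\rho$ approaches the triple characteristic on $t=0$ — a compactness argument on $K\cap\Sigma$ produces a single uniform $C_0$.

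Finally I would assemble these bounds. The transition set $\{t_i\}$ is discrete in $(0,a]$, hence finite on every $[t,a]$ with $t>0$; so for $0<t<t'\le a$ the interval $[t,t']$ meets only finitely many $t_i$ and decomposes into finitely many subintervals, each contained in a single bicharacteristic segment. Summing $|\gamma(s)-\gamma(s')|\le C_0|s-s'|$ over these subintervals and using continuity of $\gamma$ at the transition points gives $|\gamma(t)-\gamma(t')|\le C_0|t-t'|$ for all $t,t'\in(0,a]$. Thus $\gamma$ is uniformly continuous on $(0,a]$ with values in the complete set $K$, so $\lim_{t\to+0}\gamma(t)$ exists. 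I expect the main obstacle to be the uniform Lipschitz bound near the triple characteristic at $t=0$: one must ensure that the hyperbolicity constants of the double characteristics accumulating at the triple point do not degenerate, which is precisely where effective hyperbolicity at the triple characteristic (non-vanishing of $\bar e$) enters.
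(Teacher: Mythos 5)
Your plan hinges on a claim that is false: you assert that $\gamma$ is uniformly Lipschitz in $t$ on $(0,a]$, i.e.\ that \emph{all} components of $\dot\gamma$, including $\dot\tau$, are uniformly bounded. This fails precisely where you flagged the ``main obstacle'' — near a triple characteristic on $t=0$ — and the failure is not a matter of tightening constants. Take the model $p=\tau^3-t|\xi|^2\tau$: the non-zero roots are $\tau_\pm=\pm\sqrt{t}\,|\xi|$, and along a bicharacteristic of $\tau-\tau_+$ one has $d\tau/dt=-\partial_t p/\partial_\tau p=|\xi|/(2\sqrt{t})$, which is unbounded as $t\to +0$. So the $\tau$-component of $\gamma$ is simply not Lipschitz in $t$ near the triple point, and no amount of control on the eigenvalue $\bar e$ of $F_p$ repairs this: the blow-up of $d\tau/dt$ is caused by $\partial_\tau p\to 0$ (equivalently, the degeneracy of the $t$-parametrization in phase space), not by a degenerating hyperbolicity constant. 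The paper in fact warns explicitly that ``$\tau(t)$ is not Lipschitz continuous in $(0,a]$ in general.''

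The paper's argument sidesteps the issue rather than confronting it. Writing $p=\prod_j(\tau-\tau_j(t,x,\xi))$ and invoking Bronshtein's theorem gives a uniform bound $|\nabla_x\tau_j|/|\xi|,\ |\nabla_\xi\tau_j|\le L$ — with no control asserted on $\partial_t\tau_j$. Since on each segment $\gamma$ is a bicharacteristic of some $\tau-\tau_j$, only $dx/dt=-\nabla_\xi\tau_j$ and $d\xi/dt=\nabla_x\tau_j$ are used, giving Lipschitz continuity of $(x(t),\xi(t))$ alone. The convergence of $\tau(t)$ is then recovered \emph{a posteriori}: $\tau(t)=\tau_k(t,x(t),\xi(t))$ for some $k$, and the $\tau_k$ are continuous in $(t,x,\xi)$ down to $t=0$, so $\tau(t)$ converges because its arguments do. To fix your proof you would need to drop the Lipschitz claim for the $\tau$-component and add this continuity-of-roots step; without it, the argument does not go through. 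It is also worth noting that Bronshtein's theorem is the right uniform tool here — the case-by-case geometric analysis via stable/unstable curves of $F_p$ that you sketch would require, at minimum, a separate uniformity argument to handle the double characteristics accumulating at the triple point, whereas Bronshtein gives the needed Lipschitz bound on $(x,\xi)$ in one stroke.
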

\begin{proof}Write $p=\prod_{j=1}^m(\tau-\tau_j(t, x, \xi))$ then  thanks to \cite{Bron} for any $0<\delta_1<\delta$ and $U'\Subset U$ there is $L>0$ such that
\[
|\nabla_x\tau_j(t, x,\xi)|/|\xi|,\; |\nabla_{\xi}\tau_j(t, x,\xi)|\leq L,\; (t, x, \xi)\in [0,\delta_1]\times U'\times \R^d,\;1\leq j\leq m.
\]
In each $(t_i, t_{i+1})$ it is clear that $\gamma(t)=(t, x(t), \tau(t), \xi(t))$ is a bicharacteristic of some $\tau-\tau_j(t, x, \xi)$ and hence $
dx/d t=-\nabla_{\xi}\tau_j(t, x, \xi)$ and $d\xi/d t=\nabla_{x}\tau_j(t, x, \xi)$. This shows that $x(t)$ and $\xi(t)$ are uniformly Lipschitz continuous in $(0, a]$ with the Lipschitz constant $L$, though $\tau(t)$ is not Lipschitz continuous in $(0, a]$ in general. Then $\lim_{t\to +0}x(t)$ and $\lim_{t\to+0}\xi(t)$ exist. Since $\tau(t)=\tau_k(t, x(t), \xi(t))$ for some $k$ and $\tau_k(t, x, \xi)$ are continuous in $X\times \R^d$ then $\lim_{t\to +0}\tau(t)$ also exists.
\end{proof}
Denote 
\[
K^{-}(\rho)=\bigcup_{\gamma}\gamma(t) 
\]
where $\gamma$ varies over all generalized bicharacteristics such that $\gamma(a)=\rho$, extended to $t=0$ according to Lemma \ref{lem:bich:lim}. Note that for any $\varep>0$ the set $K^{-}(\rho)\cap\{t=\varep\}$ consists of finite number of characteristic points. Thanks to the propagation results near double  effectively hyperbolic characteristics mentioned above, if 
\begin{equation}
\label{eq:jyubun}
WF(u)\cap K^{-1}(\rho)\cap\{t=\varep\}=\emptyset
\end{equation}
with some $\varep>0$ then we have $\rho\not\in WF(u)$.
\begin{them}
\label{thm:pro:WF}Notations being as above and let $K_0(\rho)=\pi\big(K^{-}(\rho)\cap\{t=0\}\big)$ where $\pi:(t, x,\tau,\xi)\to (x,\xi)$ is the projection. Let $Pu\in C^{\infty}(X)$ and $D_t^ju(0, \cdot)=u_j$. If $K_0(\rho)\cap (\cup_{j=0}^mWF(u_j))=\emptyset$ then $\rho\not\in WF(u)$.
\end{them}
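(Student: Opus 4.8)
The statement is a global (in $X$) propagation result obtained by patching together the local existence theorem (Theorem~\ref{thm:pre:sonzai:s}), the finite propagation speed of wave front sets (Proposition~\ref{pro:yugen:denpa}), and the already-established propagation of singularities near double effectively hyperbolic characteristics. The strategy is a covering/connectedness argument in the spirit of the H\"ormander--Melrose propagation machinery: one shows that the set of points $\rho\in T^*\Xmaru\setminus 0$ with $\rho\notin WF(u)$ is relatively open along generalized bicharacteristics emanating from $\{t=0\}$, and then uses the hypothesis on $K_0(\rho)$ together with a compactness argument on $K^-(\rho)\cap\{t=\varepsilon\}$ to propagate non-membership in $WF(u)$ down to $t\to +0$ and back up to $\rho$.

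\textbf{Key steps.} First I would fix $\rho\in T^*\Xmaru\setminus 0$ and assume $K_0(\rho)\cap(\bigcup_j WF(u_j))=\emptyset$; since each $WF(u_j)$ is closed and conic and $K_0(\rho)$ is compact (after intersecting with the unit cosphere), there is a conic neighborhood $\Omega$ of $K_0(\rho)$ in $\R^d\times(\R^d\setminus 0)$ disjoint from all $WF(u_j)$. Next, by a Borel's lemma construction as in the proof of Theorem~\ref{thm:main:bis}, one reduces to the case $Pu=f$ with $f$ smooth and all data microlocalized; more precisely, using a microlocal cutoff $\op{h_1}$ supported in $\Omega$ one arranges that the relevant part of the solution is $u=\widehat G\,\op{h_1}f+(\text{smooth})$, localized near a reference triple point $(0,\bar\xi)$. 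Then I would invoke Proposition~\ref{pro:yugen:denpa}: for any conic sets $\Gamma_1\Subset\Gamma_2\Subset\Gamma_3$ with $\Gamma_3$ inside the good region, and cutoffs $h_1,h_2$ as there, the energy estimate bounds $\op{h_2}D_t^j\widehat G\,\op{h_1}f$ in the weighted spaces ${\mathcal H}_{-n-1/2,\cdot}$ in terms of $\|f\|$; since $f$ is smooth this gives $\op{h_2}u\in C^\infty$, i.e. the wave front set of $u$ at $t>0$ cannot escape the cone swept out from $\Omega$ at finite speed $\nu_0^{-1}$, which is precisely the statement that $WF(u)\cap\{0<t<\delta'\}\subset K^-$ of the data's wave front set. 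Combining this with the hypothesis forces $WF(u)\cap K^-(\rho)\cap\{t=\varepsilon\}=\emptyset$ for small $\varepsilon$, i.e. \eqref{eq:jyubun} holds. Finally, one propagates upward along $\gamma$ from $t=\varepsilon$ to $t=a$: on each smooth bicharacteristic segment $(t_i,t_{i+1})$ the classical propagation of $C^\infty$ singularities for (locally) strictly hyperbolic operators applies, and at each transition point (a double effectively hyperbolic characteristic) one uses \cite[Theorem 2.1]{Ni1:bis}, \cite[Theorem 1.7]{Mel}: $\rho'\notin WF(u)$ provided the (two) incoming bicharacteristic segments are not in $WF(u)$ and $\rho'\notin WF(Pu)=\emptyset$. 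Since $K^-(\rho)\cap\{t=\varepsilon\}$ is a finite set of characteristic points and the generalized bicharacteristics through $\rho$ form, by Lemma~\ref{lem:bich:lim}, a compact family with limits at $t=0$, a finite induction over the (locally finitely many) transition times yields $\rho\notin WF(u)$.

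\textbf{Main obstacle.} The delicate point is the passage across $t=0$, where the characteristic is triple and the operator is only effectively hyperbolic rather than strictly hyperbolic: there the classical propagation-of-singularities theorems do not apply, and one must instead use the finite-speed estimate of Proposition~\ref{pro:yugen:denpa} (which encodes the whole weighted-energy analysis of Sections~\ref{sec:mitibiku:ene}--\ref{sec:pre:sonzai}) in place of a genuine propagation theorem. Care is needed because $\widehat G$ solves $\widehat P u=f$ and $\widehat P$ coincides with $P$ only in the conic neighborhood $W_M$; thus the patching must be arranged so that, for each reference triple point $(0,\bar\xi)$, the cone of influence at finite speed stays inside $W_M$ up to time $\delta'$, and the contributions from outside $W_M$ are absorbed into error terms controlled by $\op{h_2}$ with $h_2$ supported away from the bad directions. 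Once the local pieces near each of the finitely many triple points $(0,\bar\xi_k)$ are glued (using that $k\le[m/2]$ and the supports are separated), the remaining characteristics at $t=0$ are at most double and are handled by the $t>0$ theory already invoked. The bookkeeping of weights (the exponents $-n-1/2$, the loss $\ell$) and of the cutoffs is routine but must be done consistently; I would organize it exactly as in \cite{Ni1}, \cite{Ni:book}.
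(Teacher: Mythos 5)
Your plan follows the same route as the paper's own sketch: the task reduces to establishing \eqref{eq:jyubun} for some $\varep>0$, after which the conclusion $\rho\notin WF(u)$ follows from the double--effectively--hyperbolic propagation results of \cite{Ni1:bis}, \cite{Mel} on the segments of $\gamma$ in $t>0$, exactly as you describe. The one formulational difference is that you package the finite--propagation--speed step as an application of Proposition~\ref{pro:yugen:denpa} to $\hat G\,\op{h_1}f$. But $u$ in the theorem is a \emph{prescribed} solution with \emph{prescribed} Cauchy data $u_j$, not one produced by $\hat G$, so Proposition~\ref{pro:yugen:denpa} (which concerns $\hat G\op{h_1}f$) does not apply literally; the paper instead applies the underlying cutoff energy estimates (Lemma~\ref{lem:arata}, Proposition~\ref{pro:fu:cut}, Lemma~\ref{lem:yugen:denpa}) directly to the given $u$ after the Borel reduction. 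The microlocal smoothness of the $u_j$ on a conic neighborhood of $K_0(\rho)$ enters through the choice of finitely many cutoffs $\Phi_{i,\ep}$ (with apex time $T=2\nu$, $\nu\leq\nu_0$) so that $\op{\Phi_{i,\ep}(0)}u_j$ and $\op{\Phi_{i,\ep}}\tilde f$ are smooth, and the energy inequalities then bootstrap $\op{\Phi_{i,0}}D_t^ju$ to all Sobolev orders on $(0,\delta')$. This is the same analytic machinery you invoke, applied directly to $u$ rather than routed through the solution operator; it automatically sidesteps the $\hat P$--vs.--$P$ subtlety you flag, since only the behaviour of $P$ on the supports of the $\Phi_{i,\ep}$ matters. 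Your remaining steps (compactness of $K_0(\rho)$, finiteness of $K^-(\rho)\cap\{t=\varep\}$, upward propagation across transition points) match the paper.
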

\noindent
We give a sketch of the proof. Since characteristics are at most triple we may assume that $m=3$.  It suffices to show that \eqref{eq:jyubun} holds with some $\varep>0$. In \eqref{eq:f:ep:teigi} we take $T=2\nu$. Since $K_0(\rho)$ is compact one can find $0<\nu<\nu_0$ and finitely many $(y_i, \eta_i)$, $i=1,\ldots, k$  such that $K_0(\rho)\subset \cup_{i=1}^k\{f_0(0, x, \xi, y_i, \eta_i)<0\}$ and $\{f_0(0, x, \xi, y_i, \eta_i)\leq 0\}\cap \big(\cup_{j=0}^2 WF(u_j)\big)=\emptyset$. Let $\Phi_{i,\ep}$  be the symbol defined by \eqref{eq:cut:f} with $f_{\ep}(t, x, \xi, y_i, \eta_i)$ then we have
\begin{equation}
\label{eq:cut:syoki}
\op{\Phi_{i,\ep}(0)}u_j\in H^s,\quad 0\leq j\leq 2,\;\;1\leq i\leq k,\;\;\forall s\in\R
\end{equation}
for enough small $\ep>0$. Using the same reduction as in the proof of Theorem \ref{thm:main:bis} it suffices to study $Pu=f$ where $D_t^ju\in {\mathcal H}_{-n-1/2, l_1-j}(0,\delta)$, $0\leq j\leq 2$  and $f\in  {\mathcal H}_{-n-1/2, l_2}(0,\delta)$ with some $l_j\in\R$ such that 
\[
 \op{\Phi_{i,\ep}}f\in {\mathcal H}_{-n-1/2, s}(0,\delta), \quad 1\leq i\leq k,\;\;\forall s\in\R
 \]
 for enough small $\ep>0$ which follows from \eqref{eq:cut:syoki}. Repeating the same arguments as in Sections \ref{sec:WF:hyoka:1} and \ref{sec:WF:hyoka:2} we conclude that $
 \op{\Phi_{i, 0}}D_t^ju\in L^2((0,\delta'), H^s)$ for any $1\leq i\leq k$, $0\leq j\leq 2$ and any $s\in\R$ with some $\delta'>0$. Finally, using the equation we have
 \[
  \op{\Phi_{i, 0}}D_t^ju\in L^2((0,\delta'), H^s),\quad 1\leq i\leq k, \;\;\forall s\in\R,\;\;\forall j\in\N.
  \]
Since $\Phi_{i,0}(t, y_i, \eta_i)<0$ for $t<\nu$ this proves \eqref{eq:jyubun} with some $\varep>0$.
\qed
 %
 
\section{Proof of Theorem \ref{thm:main}}
\label{sec:syuteiri}

Applying Proposition \ref{pro:yugen:denpa} we prove Theorem \ref{thm:main} following \cite{Ni1}, \cite{Ni:book}. 

\subsection{Solution operator with finite speed of propagation}

Consider
\begin{equation}
\label{eq:P:kata}
P=D_t^m+\sum_{j=1}^{m}a_{j}(t, x, D)\langle{D}\rangle^jD_t^{m-j}
\end{equation}
which is a differential operator in $t$ with coefficients $a_{j}\in S^0$. We say that $G$ is a solution operator  for $P$ with finite propagation speed of wave front set (which we abbreviate to ``solution operator with finite speed of propagation" from now on) with loss of $(n, l)$ derivatives if $G$ satisfies the following conditions:
\begin{enumerate}
\item[(i)] There exists $\delta>0$ such that for any $s\in\R$ there is $C>0$ such that for $f\in {\mathcal H}_{-n+1/2, s}(0,\delta)$ we have  $P G f=f$ and 
\begin{gather*}
\sum_{j=0}^{m-1}\int_0^t{\tau}^{-2n-1}\|D_t^jGf(\tau)\|_{-l+s+m-j}^2d\tau
 \leq C\int_0^t\tau^{-2n+1}\|f(\tau)\|_s^2d\tau.
\end{gather*}
\item[(ii)] For  any $h_j(x,\xi)\in S(1, g_0)$, $j=1,2$ with ${\rm supp}\,h_2\Subset (\R^d\times\R^d)\setminus {\rm supp}\,h_1$ there exists $\delta'>0$ such that for any $r$, $s\in\R$ there is $C>0$ such that 
\begin{gather*}
\sum_{j=0}^{m-1}\int_0^t\tau^{-2n-1}\|\op{h_2}D_t^jG\,\op{h_1}f(\tau)\|_{r-j}^2d\tau
\leq C\int_0^t\tau^{-2n+1}\|f(\tau)\|_s^2d\tau
\end{gather*}
for $f\in {\mathcal H}_{-n+1/2, s}(0,\delta')$ and $0<t\leq \delta'$.
\end{enumerate}
Let $P_1$ and $P_2$ be two operators of the form \eqref{eq:P:kata}. We say
\[
P_1\equiv P_2 \quad\text{at}\;\;({\hat x}, {\hat \xi})
\]
if there exist $\delta'>0$ and a conic neighborhood $W$ of $({\hat x}, {\hat \xi})$ such that 
\begin{equation}
\label{eq:mod:teigi}
P_1-P_2=\sum_{j=1}^{m}R_j(t, x, D)\lrD^jD_t^{m-j}
\end{equation}
with $R_j\in S^0$ which are in $S^{-\infty}(W)$ uniformly in $0\leq t\leq \delta'$.
\begin{them}
\label{thm:sonzai:ippan} Assume that for any $({\hat x},\eta)$, $|\eta|=1$ one can find  $P_{\eta}$ of the form \eqref{eq:P:kata}  for which there is a solution operator with finite speed of propagation with loss of $(n, \ell(\eta))$ derivatives such that $P\equiv P_{\eta}$ at  $({\hat x},\eta)$. Then  there exist $\delta>0$, $\ell=\sup\ell(\eta)$ and a neighborhood $U$ of ${\hat x}$ such that for every $f\in {\mathcal H}_{-n+1/2,s+\ell}(0,\delta)$ there exists $u$ with $D_t^j u\in {\mathcal H}_{-n, s+m-j}(0,\delta)$, $0\leq j\leq m-1$, satisfying $
Pu=f$ in $(0,\delta)\times U$.
\end{them}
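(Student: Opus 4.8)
The statement is a patching result: we have, for each direction $\eta$ on the unit sphere, a model operator $P_\eta$ agreeing with $P$ microlocally near $(\hat x,\eta)$ and equipped with a solution operator $G_\eta$ with finite speed of propagation. I want to glue the $G_\eta$'s into a genuine solution operator for $P$ near $\hat x$. First I would invoke compactness of the unit sphere $S^{d-1}$ in the $\eta$ variable: choose finitely many directions $\eta_1,\dots,\eta_N$ and a partition-of-unity-type collection of symbols $\{\varphi_k\}_{k=1}^N\subset S(1,g_0)$, homogeneous of degree $0$, with $\varphi_k$ supported in a small conic neighborhood $W_k$ of $(\hat x,\eta_k)$ in which $P\equiv P_{\eta_k}$, and such that $\sum_k \varphi_k = 1$ on a conic neighborhood of $\{\hat x\}\times S^{d-1}$; take $\ell=\max_k \ell(\eta_k)$, which is finite (and by Remark \ref{rem:hatP:yugen:n} one may, if convenient, replace each $n'=n(\eta_k)$ by the common $n$). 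Also pick auxiliary symbols $\psi_k \in S(1,g_0)$ with $\psi_k \equiv 1$ on $\operatorname{supp}\varphi_k$ and $\operatorname{supp}\psi_k\Subset W_k$, to be used as "fattened" cutoffs when applying property (ii).

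\textbf{Construction of the parametrix.} The natural first guess is $G_0 f = \sum_k G_{\eta_k}\,\op{\varphi_k} f$. Then $P G_0 f = \sum_k P\, G_{\eta_k}\op{\varphi_k} f$, and on $W_k$ we have $P = P_{\eta_k} + \sum_j R_j^{(k)}\lrD^j D_t^{m-j}$ with $R_j^{(k)}\in S^{-\infty}(W_k)$ uniformly in $t$; since $P_{\eta_k} G_{\eta_k}\op{\varphi_k} f = \op{\varphi_k} f$ by (i), we get
\[
P G_0 f = f + \sum_k \Big(\sum_{j=1}^m R_j^{(k)}(t,x,D)\lrD^j D_t^{m-j}\Big) G_{\eta_k}\op{\varphi_k} f + \sum_k [P,\text{(off-support terms)}]\,G_{\eta_k}\op{\varphi_k} f .
\]
The point is that each error term is microlocally negligible: $R_j^{(k)}$ is smoothing on $W_k$ which contains $\operatorname{supp}\varphi_k$, and outside $W_k$ the solution $G_{\eta_k}\op{\varphi_k} f$ is smoothed by property (ii) of $G_{\eta_k}$ applied with $h_1=\varphi_k$ and a suitable $h_2$ away from $\operatorname{supp}\varphi_k$. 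Hence one obtains $P G_0 = I - K$ where $K$ gains arbitrarily many derivatives in $x$ (in the weighted norms of the problem), i.e. $K: {\mathcal H}_{-n+1/2,s}(0,\delta)\to {\mathcal H}_{-n+1/2, s'}(0,\delta)$ continuously for every $s'$, with norm controlled on a possibly smaller interval $(0,\delta)$ and possibly after shrinking $U$.

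\textbf{Neumann series / iteration.} Now I want to correct $G_0$ to a true right inverse. Because $K$ is smoothing, $\sum_{r\ge 0} K^r$ converges in the relevant scale of weighted spaces — here one must be a little careful: $K$ is not a contraction in a fixed norm, but it gains regularity, so the usual argument is to set $G = G_0 \sum_{r\ge 0} K^r$ and check convergence of $\sum_r \|D_t^j G_0 K^r f(\tau)\|$ in the $L^2((0,\delta),\tau^{-2n-1}d\tau)$-type norms using the estimate in (i) for $G_0$ (summed over $k$) together with the gain in $K$; equivalently, truncate the series and pass to the limit. This yields $PGf = f$ and the energy estimate (i) for $G$ (with loss $(n,\ell)$), since the weighted estimates in (i) for each $G_{\eta_k}$ add up and $\op{\varphi_k}\in S(1,g_0)$ is bounded on every $H^s$. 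Given $G$, set $u = Gf$; then $D_t^j u\in {\mathcal H}_{-n,s+m-j}(0,\delta)$ — note the index $-n$ rather than $-n-1/2$ follows because the time-integrability in (i), $\int_0^t \tau^{-2n-1}\|D_t^j u(\tau)\|^2 d\tau<\infty$, upgrades to $t^{-n}D_t^j u\in L^2$ by the elementary lemma that $\int_0^\delta \tau^{-2n-1}\|v(\tau)\|^2 d\tau<\infty$ with $v$ solving a hyperbolic-type equation forces $\|v(\tau)\|=O(\tau^{n+1/2})$ hence $\tau^{-n}v\in L^2$ (this is exactly the kind of bootstrap already used around Proposition \ref{pro:fu:matome}). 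Finally $Pu=f$ in $(0,\delta)\times U$ after shrinking $U$ so that all the $W_k$'s and the finite-propagation cones cover a full conic neighborhood of $T^*_{\hat x}\setminus 0$.

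\textbf{Main obstacle.} The delicate point is not the algebra of the parametrix but controlling the error operator $K$ uniformly: one must check that the smoothing of $R_j^{(k)}$ on $W_k$ and the finite-speed-of-propagation smoothing from (ii) combine to give a genuine gain in the weighted mixed space-time norms ${\mathcal H}_{-n+1/2,\bullet}(0,\delta)$, on a single interval $(0,\delta)$ independent of the number of iterations, and that the factors $\tau^{-2n+1}$ versus $\tau^{-2n-1}$ appearing on the two sides of (i) and (ii) are reconciled (they differ by $\tau^2$, which is harmless on a bounded interval but must be tracked through the iteration). This bookkeeping — ensuring the series $\sum_r K^r$ converges in the correct scale with the correct powers of $t$ and that $\delta,\delta'$ can be taken uniform — is the real content; everything else is the standard microlocal patching argument of \cite{Ni1}.
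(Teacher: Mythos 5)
Your overall structure matches the paper's: compactness in $\eta$, a conic partition of unity $\{\alpha_i\}$ subordinate to neighborhoods $W_i$ in which $P\equiv P_{\eta_i}$, the parametrix $G_0 f=\sum_i G_{\eta_i}\,\alpha_i f$, the error $R=\sum_i R_iG_{\eta_i}\alpha_i$, and a Neumann series $S=\sum_k R^k$. You also correctly flag that each term $R_iG_{\eta_i}\alpha_if$ is handled by splitting into a piece near $\operatorname{supp}\alpha_i$ (where $R_i\in S^{-\infty}(W_i)$ is smoothing) and a piece away from it (where (ii) makes $G_{\eta_i}\alpha_if$ arbitrarily regular). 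That is exactly the paper's argument for the boundedness of $R$.

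Where you go astray is in identifying \emph{why} the Neumann series converges. You attribute it to the derivative gain of $K$ and call the mismatch between $\tau^{-2n+1}$ and $\tau^{-2n-1}$ a ``harmless'' piece of bookkeeping. In the paper that mismatch \emph{is} the mechanism: the estimate one obtains from the smoothing argument is
\[
\int_0^t\tau^{-2n-1}\|Rf(\tau)\|_{s+\ell}^2\,d\tau\ \le\ C\int_0^t\tau^{-2n+1}\|f(\tau)\|_{s+\ell}^2\,d\tau ,
\]
with the \emph{same} Sobolev index $s+\ell$ on both sides — the derivatives gained from $R_i$ are consumed in absorbing the loss $\ell$ from condition (i), so there is no residual gain of $x$-regularity to exploit. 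What remains is the factor $\tau^2$ between the two weights, and the contraction comes from bounding $\tau^2\le\delta_1^2$ on $(0,\delta_1)$ and choosing $\delta_1$ with $C\delta_1^2\le 1/2$. So $R$ is a genuine contraction on the fixed space ${\mathcal H}_{-n+1/2,s+\ell}(0,\delta_1)$; nothing as delicate as ``summing a smoothing series'' is needed. A smoothing gain alone would not give convergence of $\sum_kR^k$ in a fixed norm, which is precisely why you found yourself hedging. Finally, a small slip: you say the index $-n$ (rather than $-n-1/2$) in the conclusion requires an ``elementary upgrading lemma,'' but the inclusion goes the other way — on a bounded interval ${\mathcal H}_{-n-1/2,\bullet}\subset{\mathcal H}_{-n,\bullet}$, so the estimate $\int_0^t\tau^{-2n-1}\|D_t^ju\|_{s+m-j}^2\,d\tau<\infty$ already gives a stronger statement than what the theorem asserts, and no extra lemma is involved.
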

\begin{proof}By assumption $P_{\eta}$ has a solution operator $G_{\eta}$ with finite speed of propagation with loss of $(n, \ell(\eta))$ derivatives. There are finite open conic neighborhood $W_i$ of $({\hat x}, \eta_i)$ such that $\cup_iW_i\supset \Omega\times (\R^d\setminus \{0\})$, where $\Omega$ is a neighborhood of ${\hat x}$, and $P\equiv P_{\eta_i}$ at $({\hat x}, \eta)$ with $W=W_i$ in \eqref{eq:mod:teigi}. Now take another open conic covering $\{V_i\}$ of $\Omega\times (\R^d\setminus\{0\})$ with $V_i\Subset W_i$, and a partition of unity $\{\alpha_i(x,\xi)\}$ subordinate to $\{V_i\}$ so that $
\sum_i\alpha_i(x,\xi)=\alpha(x)$ 
where $\alpha(x)=1$  in a neighborhood of ${\hat x}$. Define
\[
G=\sum_iG_{\eta_i}\alpha_i
\]
then denoting $P-P_{\eta_i}=R_i$ we have
\[
P G f=\sum_iPG_{\eta_i}\alpha_if=\sum_iP_{\eta_i}G_i\alpha_if
+\sum_iR_iG_i\alpha_if
=\alpha(x)f-Rf
\]
where $R=\sum_i R_iG_{\eta_i}\alpha_i$. Then 
\[
\int_0^t\tau^{-2n-1}\|Rf(\tau)\|_{s+\ell}^2d\tau\leq C\int_0^t\tau^{-2n+1}\|f(\tau)\|_{s+\ell}^2d\tau
\]
for $0\leq t\leq \delta''$ with some $\delta''>0$ by the condition (ii) where $\ell=\max_i\ell(\eta_i)$. Choosing $0<\delta_1\leq \delta''$ such that $\delta_1^2C\leq 1/2$ one has
\[
\int_0^t\tau^{-2n+1}\|Rf(\tau)\|_{s+\ell}^2d\tau\leq \frac{1}{2}\int_0^t\tau^{-2n+1}\|f(\tau)\|_{s+\ell}^2d\tau,\quad 0< t\leq \delta_1
\]
for $f\in {\mathcal H}_{-n+1/2, s+\ell}(0,\delta)$. With $
S=\sum_{k=0}^{\infty}R^k$ 
one has $Sf\in {\mathcal H}_{-n+1/2, s+\ell}(0,\delta_1)$ and $\int_0^t\tau^{-2n+1}\|Sf(\tau)\|_{s+\ell}^2d\tau\leq 2\int_0^t\tau^{-2n+1}\|f(\tau)\|_{s+\ell}^2d\tau$. Let $\gamma(x)\in C_0^{\infty}(\R^d)$ be equal to $1$ near ${\hat x}$ such that ${\rm supp}\,\gamma\Subset \{\alpha=1\}$. Since $\gamma(\alpha-R)S=\gamma(I-R)S=\gamma$ it follows that $
\gamma(x)PGSf=\gamma(x)f$, that is
\[
P\big(GSf\big)=f\quad \text{on}\;\;\{\gamma(x)=1\}.
\]
With $u=G S f$ one has
\begin{gather*}
\sum_{j=0}^{m-1}\int_0^t\tau^{-2n-1}\|D_t^ju(\tau)\|_{s+m-j}^2d\tau\leq C\int_0^t\tau^{-2n+1}\|Sf(\tau)\|_{s+\ell}^2d\tau
\end{gather*}
which proves the assertion.
\end{proof}
We define a solution operator with finite speed of propagation for $P^*$ with obvious modifications.
\begin{them}
\label{thm:sonzai:ippan:ad} Assume that for every $({\hat x},\eta)$, $|\eta|=1$ one can find  $P^*_{\eta}$ of the form \eqref{eq:P:kata}  such that $P^*\equiv P^*_{\eta}$ at $({\hat x},\eta)$ for which solution operator with finite speed of propagation with loss of $(n, \ell(\eta))$ derivatives exists. Then  there exist $\delta>0$, $\ell=\sup\ell(\eta)$ and a neighborhood $U$ of ${\hat x}$ such that for every $f\in {\mathcal H}^{*}_{n+1/2,s+\ell}(0,\delta]$ there exists $u$ with $D_t^j u\in {\mathcal H}^{*}_{n-1/2, s+m-j}(0,\delta]$, $0\leq j\leq m-1$, satisfying $P^*u=f$ in $ (0,\delta)\times U$.
\end{them}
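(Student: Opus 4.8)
\textbf{Proof plan for Theorem \ref{thm:sonzai:ippan:ad}.} The plan is to mirror the proof of Theorem \ref{thm:sonzai:ippan} verbatim, only replacing $P$, $G_\eta$, the spaces $\mathcal H_{-n+1/2,s}$ by $P^*$, $G^*_\eta$, the spaces $\mathcal H^*_{n+1/2,s}$, and reversing the roles of $t=0$ and $t=\delta$ (i.e.\ integrating from $t$ up to $\delta$ with forward weight $t^{2n}$ instead of backward weight $t^{-2n}$). By hypothesis each $P^*_\eta$ carries a solution operator $G^*_\eta$ with finite speed of propagation with loss of $(n,\ell(\eta))$ derivatives, meaning $P^*_\eta G^*_\eta f=f$ together with the two estimates (i) and (ii) (stated with $t^{2n\pm 1}$ weights over $(t,\delta']$), exactly as in Propositions \ref{pro:yugen:denpa:ad} and the $^*$-version of condition (i)/(ii). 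First I would choose, for $({\hat x},\eta_i)$, open conic neighbourhoods $W_i$ covering $\Omega\times(\R^d\setminus\{0\})$ with $P^*\equiv P^*_{\eta_i}$ at $({\hat x},\eta)$ with $W=W_i$, then take a refinement $\{V_i\}$, $V_i\Subset W_i$, and a partition of unity $\{\alpha_i(x,\xi)\}$ subordinate to $\{V_i\}$ with $\sum_i\alpha_i=\alpha(x)$, $\alpha\equiv1$ near ${\hat x}$.

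\textbf{Key steps.} Define $G^*=\sum_i G^*_{\eta_i}\alpha_i$. Writing $P^*-P^*_{\eta_i}=R_i$ (with $R_i$ of the form \eqref{eq:mod:teigi}, hence $R_i\in S^{-\infty}(W_i)$ uniformly in $t$), one computes
\begin{gather*}
P^*G^*f=\sum_i P^*_{\eta_i}G^*_{\eta_i}\alpha_i f+\sum_i R_i G^*_{\eta_i}\alpha_i f=\alpha(x)f-R^*f,\quad R^*=\sum_i R_i G^*_{\eta_i}\alpha_i.
\end{gather*}
The crucial point is that since $R_i$ is smoothing in $W_i\supset V_i\supset\operatorname{supp}\alpha_i$, condition (ii) for $G^*_{\eta_i}$ applies with $h_1=\alpha_i$ and $h_2$ a symbol capturing the cutoff hidden in $R_i$, giving
\begin{gather*}
\int_t^{\delta''}\tau^{2n-1}\|R^*f(\tau)\|_{s+\ell}^2\,d\tau\le C\int_t^{\delta''}\tau^{2n+1}\|f(\tau)\|_{s+\ell}^2\,d\tau,\quad \ell=\max_i\ell(\eta_i),
\end{gather*}
for some $\delta''>0$. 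Shrinking to $0<\delta_1\le\delta''$ with $\delta_1^2C\le 1/2$ makes $R^*$ a contraction in the sense $\int_t^{\delta_1}\tau^{2n+1}\|R^*f\|_{s+\ell}^2\le\tfrac12\int_t^{\delta_1}\tau^{2n+1}\|f\|_{s+\ell}^2$, so $S^*=\sum_{k\ge0}(R^*)^k$ converges on $\mathcal H^*_{n+1/2,s+\ell}(0,\delta_1]$ with $\int_t^{\delta_1}\tau^{2n+1}\|S^*f\|_{s+\ell}^2\le 2\int_t^{\delta_1}\tau^{2n+1}\|f\|_{s+\ell}^2$. Picking $\gamma\in C_0^\infty$ with $\gamma\equiv1$ near ${\hat x}$ and $\operatorname{supp}\gamma\Subset\{\alpha=1\}$, from $\gamma(\alpha-R^*)S^*=\gamma(I-R^*)S^*=\gamma$ one gets $P^*(G^*S^*f)=f$ on $\{\gamma=1\}$; setting $u=G^*S^*f$ and invoking condition (i) for $G^*$ yields $\sum_{j=0}^{m-1}\int_t^{\delta}\tau^{2n-1}\|D_t^j u(\tau)\|_{s+m-j}^2\,d\tau\le C\int_t^\delta\tau^{2n+1}\|f(\tau)\|_{s+\ell}^2\,d\tau$, hence $D_t^ju\in\mathcal H^*_{n-1/2,s+m-j}(0,\delta]$, $0\le j\le m-1$, with $P^*u=f$ in $(0,\delta)\times U$ for a neighbourhood $U$ of ${\hat x}$.

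\textbf{Main obstacle.} The only genuinely delicate point is the same one as in the non-adjoint case: one must be sure that the microlocal estimate (ii) for $G^*_{\eta_i}$ can actually absorb the operator $R_i G^*_{\eta_i}\alpha_i$, i.e.\ that the finite-speed-of-propagation estimate applies with the cutoff pair $(\alpha_i,h_2)$ where $h_2$ reflects the wavefront-set separation built into $R_i$ being $S^{-\infty}$ on $W_i$ while $\operatorname{supp}\alpha_i\subset V_i\Subset W_i$. This is exactly Proposition \ref{pro:yugen:denpa:ad} (together with Theorem \ref{thm:pre:sonzai:ad}), so no new work is required beyond bookkeeping the dual weights; the contraction argument and the Neumann series $S^*$ go through word for word. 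Everything else — the partition of unity, the gluing, the passage from ``$P^*u=f$ where $\gamma=1$'' to ``in $(0,\delta)\times U$'' — is routine and identical to the proof of Theorem \ref{thm:sonzai:ippan}.
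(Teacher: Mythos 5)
Your proposal is correct and is precisely the argument the paper intends: Theorem \ref{thm:sonzai:ippan:ad} is stated without a separate proof because it follows verbatim from the proof of Theorem \ref{thm:sonzai:ippan} under the substitutions $P\mapsto P^*$, $G_\eta\mapsto G^*_\eta$, ${\mathcal H}_{-n\pm 1/2,\cdot}(0,\delta)\mapsto {\mathcal H}^*_{n\pm 1/2,\cdot}(0,\delta]$, and with time integration reversed from $\int_0^t$ with weight $\tau^{-2n\mp 1}$ to $\int_t^\delta$ with weight $\tau^{2n\mp 1}$, using Theorem \ref{thm:pre:sonzai:ad} and Proposition \ref{pro:yugen:denpa:ad} in place of their non-adjoint counterparts. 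Your contraction step (from the $\tau^{2n-1}$/$\tau^{2n+1}$ estimate to the $\tau^{2n+1}$/$\tau^{2n+1}$ estimate by shrinking $\delta_1$) and the Neumann series bookkeeping are exactly the dual of the paper's argument.
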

%

\subsection{Local existence  and uniqueness}

First consider a third order operator $P$ of the form \eqref{eq:takata}. To reduce $P$ to the case $a_1(t, x, D)=0$ we apply a Fourier integral operator, which is actually the solution operator $S(t',t)$ of the Cauchy problem 
\[
D_tu+a_1(t, x, D)u=0,\quad u(t', x)=\phi(x)
\]
such that $S(t', t)\phi=u(t)$ then it is clear that $S(t,0)(D_t+a_1)S(0, t)=D_t$.  Then ${\tilde P}=S(t,0)PS(0,t)$ has the form \eqref{eq:takata} with $a_1=0$ (e.g. \cite{Ego}, \cite{MT}). Assume that ${\tilde P}$ has a solution operator with finite speed of propagation  ${\tilde G}$ with loss of $(n,\ell)$ derivatives. Then one can show that $G=S(0,t){\tilde G} S(t,0)$ is a solution operator for $P$ with finite speed of propagation with loss of $(n,\ell)$ derivatives. 

Let $|\eta|=1$ be given. Assume that $p$ has a triple characteristic root ${\bar\tau}$ at $(0,0,\eta)$ and $(0,0,{\bar\tau},\eta)$ is effectively hyperbolic. Theorem \ref{thm:pre:sonzai:s} and Proposition \ref{pro:yugen:denpa} imply that ${\hat P}$, which coincides with the original $P$ in $W_M$, given by \eqref{eq:conic:nbd},  has a solution operator with finite speed of propagation with loss of $(n, n+2)$ derivatives. 

Next assume that $p$ has a double characteristic root ${\bar\tau}$ at $(0,0,\eta)$ such that $(0,0,{\bar\tau},\eta)$ is effectively hyperbolic characteristic if it is a singular point. Note that  one can write
\[
p(t, x,\tau,\xi)=\big(\tau+b(t, x,\xi)\big)\big(\tau^2+a_1(t, x,\xi)\tau+a_2(t, x,\xi)\big)=p_1p_2
\]
in a conic neighborhood of $(0,0,\eta)$ where $p_1(0,0,{\bar\tau},\eta)\neq 0$. There exist ${\hat P}_i$ such that $
P\equiv {\hat P}_1\cdot {\hat P}_2$ at $(0, \eta)$ 
where  the principal symbol of ${\hat P}_i$ coincides with $p_i$ in a conic neighborhood of $(0,0,\eta)$. If ${\hat P}_i$ has a solution operator with finite speed of propagation $G_i$ with loss of $(n, \ell_i)$ derivatives then one can see that $G_2G_1$ is a solution operator with finite speed of propagation for ${\hat P}_1\cdot{\hat P}_2$ with loss of $(n, \ell_1+\ell_2)$ derivatives. Consider the case that $(0,0,{\bar\tau},\eta)$ is a singular point. Then 
$F_p(0,0,{\bar\tau},\eta)=c\,F_{p_2}(0,0,{\bar\tau},\eta)$ 
with some $c\neq 0$ and $(0,0,{\bar\tau},\eta)$ is effectively hyperbolic characteristic of $p_2$. Write $p_2$ as 
\begin{equation}
\label{eq:niji:tokusei}
p_2(t, x,\tau,\xi)=\tau^2-a(t, x,\xi)|\xi|^2
\end{equation}
such that ${\bar \tau}=0$ is a double characteristic and $(0, 0, 0, \eta)$ is a singular point. To apply earlier results \cite{Ni2, Ni:book} on operators with double effectively hyperbolic characteristics  we need some modifications because $a( t, x,\xi)$ is assumed to be nonnegative only in $t\geq 0$ side in the present case. One can improve \cite[Lemma 1.2.2]{Ni:book} to
\begin{prop}
\label{pro:niji:jiko}Assume that $a(t, x, \xi)$ is smooth in some conic neighborhood of $(0, 0, \eta)$, homogeneous of degree $0$ in $\xi$,  and nonnegative in $t\geq 0$ and $(0, 0, 0, \eta)$ is a effectively hyperbolic singular point of $p_2=0$. Then there exist a smooth function $\psi(x,\xi)$ in a conic neighborhood $V$ of $(0,\eta)$ and constants $0<\kappa<1$, $c>0$ such that
\begin{equation}
\label{eq:seigen:niji}
\{\psi, a\}^2\leq 4\,\kappa\,a,\quad a(t, x, \xi)\geq c\,\min{\big\{t^2, (t-\psi(x,\xi))^2\big\}}
\end{equation}
for $(x,\xi)\in V$, $t\geq 0$ where $\psi(x,\xi)$ satisfies $\big|\dif_x^{\al}\dif_{\xi}^{\be}\psi\big|\precsim \langle{\xi}\rangle^{-|\be|}$.  
\end{prop}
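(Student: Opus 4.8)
\textbf{Plan of proof for Proposition \ref{pro:niji:jiko}.}
The plan is to follow the structure of the triple characteristic case (Sections \ref{subsec:coeff:dis}--\ref{subsec:bound:dis}), which is strictly simpler here because $p_2$ is quadratic and the discriminant of $p_2=0$ is, up to a positive factor, just $a$ itself. The effective hyperbolicity hypothesis says $F_{p_2}(0,0,0,\eta)$ has a nonzero real eigenvalue, which for $p_2=\tau^2-a|\xi|^2$ is equivalent to $\dif_t a(0,0,\eta)\neq 0$. Since $a\geq 0$ for $t\geq 0$ this forces $\dif_t a(0,0,\eta)>0$, and $a(0,0,\eta)=0$, so near $(0,\eta)$ one can factor $a(t,x,\xi)=e(t,x,\xi)(t+\alpha(x,\xi))$ with $e>0$ and $\alpha\geq 0$ (the same normal form as \eqref{eq:outl:b}). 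The Glaeser inequality applied to $\alpha\geq 0$ gives $|\dif_x^\gamma\dif_\xi^\delta\alpha|\precsim \alpha^{1/2}\langle\xi\rangle^{-|\delta|}$ for $|\gamma+\delta|=1$, hence $|\dif_x^\gamma\dif_\xi^\delta\alpha|\precsim \langle\xi\rangle^{-|\delta|}$ for all orders.

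First I would construct $\psi$. The point where $a$ could fail to be comparable to $t^2$ from below is exactly where $\alpha(x,\xi)$ is positive and $t$ sits near $-\alpha$; but $t\geq 0$, so the only relevant competitor is $t$ near $0$ against $t$ near some positive multiple of $\alpha$. This suggests taking $\psi$ to be (a cutoff of) a suitable negative multiple of the root $t=-\alpha$ pushed to the $t\geq 0$ side — more precisely, mimic the definition of $\psi$ in Section \ref{subsec:bound:dis} with $\nu_1=-\alpha$ and $a_1$ replaced by the appropriate lower-order correction, i.e. essentially $\psi=\chi(\ldots)\cdot c_0\alpha$ for a small $c_0>0$ and a cutoff $\chi$ supported where $\alpha$ is small relative to the scale at hand. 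Then the lower bound $a(t,x,\xi)\geq c\min\{t^2,(t-\psi)^2\}$ for $t\geq 0$ follows by the same elementary case analysis as in the proof of Proposition \ref{pro:bound:Dis}: if $\alpha$ is small compared to the ambient constants one uses $a\simeq e(t+\alpha)\geq e\,t$ and compares $(t+\alpha)$ with $(t-\psi)^2$, splitting on whether $t$ is above or below $\psi/2$; since $\psi\precsim\alpha$ and $|t-\psi|\geq t$ or $\geq\psi/2$ on the two pieces, one gets $(t+\alpha)\succsim\min\{t^2,(t-\psi)^2\}/\alpha$ hence the claim after absorbing $e$.

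Next I would establish the derivative bound $|\dif_x^\al\dif_\xi^\be\psi|\precsim\langle\xi\rangle^{-|\be|}$ and the Poisson bracket inequality $\{\psi,a\}^2\leq 4\kappa\,a$ with $\kappa<1$. The derivative bound is immediate from the construction: $\psi$ is a smooth function of $\alpha$ (and $x,\xi$) times a bounded cutoff, and $\alpha$ together with all its derivatives is bounded, so all derivatives of $\psi$ are bounded, with the $\langle\xi\rangle^{-|\be|}$ gain coming as usual from $\xi$-homogeneity of degree $0$. For the bracket inequality, note $\{\psi,a\}$ involves only $x,\xi$-derivatives of $a$ (no $\dif_t$), and near $(0,\eta)$ with $t\geq 0$ small we have $a=e(t+\alpha)$ so $\dif_{x}a,\dif_\xi a$ are $O(|\dif_{x,\xi}\alpha|)+O(t+\alpha)=O(\alpha^{1/2})+O(t+\alpha)$; pairing with $\dif_{x,\xi}\psi=O(\dif_{x,\xi}\alpha)=O(\alpha^{1/2})$ one gets $\{\psi,a\}=O(\alpha)+O((t+\alpha)\alpha^{1/2})$, while $a\succsim e\,\alpha$ on the support of the cutoff defining $\psi$ — so by choosing the cutoff to live only where $\alpha$ (equivalently $a$) is sufficiently small, the ratio $\{\psi,a\}^2/a$ can be made $\leq 4\kappa$ with any prescribed $\kappa<1$. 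The main obstacle, as in the triple case, is not any single estimate but bookkeeping: making the cutoff scales, the smallness of the conic neighborhood $V$, and the constant $c_0$ in $\psi$ mutually consistent so that all three conclusions — the lower bound for $a$, the derivative bounds, and $\kappa<1$ in the bracket inequality — hold simultaneously; this is exactly the kind of argument carried out in Lemmas \ref{lem:nu:1sa:nuj}--\ref{lem:pert:Dis} and Proposition \ref{pro:bound:Dis}, and I would cite those as the template rather than redo them in full.
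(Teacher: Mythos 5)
Your construction is correct, and in fact more explicit than what the paper provides: the paper does not prove Proposition~\ref{pro:niji:jiko} from scratch, but instead asserts that the time function $Y(t,x,\xi)$ of \cite{Ni2:bis} --- built there under the stronger hypothesis that $a\geq 0$ in a full $t$-neighborhood --- continues to satisfy \eqref{eq:seigen:niji}, citing \cite[Theorem 1.1]{Ni2:bis} for the verification. Your route mimics the triple-characteristic construction of Section~\ref{sec:const:psi}, taking $\psi$ to be a cutoff multiple of $\alpha$, and it does go through. Two caveats are worth recording. First, as literally stated the proposition is weaker than your argument suggests: since $a=e(t+\alpha)\geq e_0t\geq (e_0/T)\,t^2$ for $0\leq t\leq T$, the lower bound already holds with $\psi\equiv 0$, and $\{0,a\}=0$ gives the bracket inequality for any $0<\kappa<1$, so the displayed estimates impose no constraint on $\psi$ beyond the derivative bound, which $\psi\equiv 0$ also satisfies. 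In particular your intermediate claim that $(t+\alpha)\succsim\min\{t^2,(t-\psi)^2\}/\alpha$ is both stronger than needed and false once $t\gg\alpha$; only the version without the $1/\alpha$ gain is required, and that one is trivial. Second, the paper invokes the specific $Y$ rather than a trivial or ad hoc $\psi$ because the downstream step --- ``repeating similar arguments as in \cite{Ni2, Ni:book}'' to build a solution operator with finite speed of propagation for ${\hat P}_2$ --- relies on the full weight-function machinery of those references, of which \eqref{eq:seigen:niji} is merely the one estimate that must be re-checked under the relaxed hypothesis $a\geq 0$ in $t\geq 0$ only. If your constructed $\psi$ is to feed into that downstream argument, you would still need to confirm it enjoys the other implicit properties of $Y$ used there; the paper sidesteps this entirely by leaving $Y$ unchanged.
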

Indeed that the same time function given in \cite{Ni2:bis} under the assumption $a(t, x, \xi)\geq 0$ in a full neighborhood in $t$,  denoted by $Y(t, x,\xi)$ there, satisfies \eqref{eq:seigen:niji} can be proved applying \cite[Theorem 1.1]{Ni2:bis}. 
Then repeating similar arguments as in  \cite{Ni2, Ni:book} 
 we conclude that there is a solution operator  with finite speed of propagation for ${\hat P}_2$. Since ${\hat P}_1$ is a first order operator with real principal symbol $p_1$ it is easy to see that ${\hat P}_1$ has a solution operator with finite speed of propagation. Therefore $P$ has a solution operator  with finite speed of propagation. Turn to the case that $(0,0,0,\eta)$ is not a singular point. It is easy to see that $(0,0,0,\eta)$ is not a singular point implies $\dif_t a(0,0,\eta)>0$, which is the case that ${\hat P}_2$ is a hyperbolic operator of principal type and some detailed discussion is found in \cite[Chapter 23.4]{Hobook}. It is easily proved that ${\hat P}_2$ has a solution operator with finite speed of propagation, because it suffices to employ the weight $t^{-n}$ ($\phi^{-n}$ is now absent) in order to obtain weighted energy estimates.

Turn to the general case. Let $|\eta|=1$ be arbitrarily fixed. Write $
p(0,0,\tau,{\eta})=\prod_{j=1}^r\big(\tau-\tau_j)^{m_j}$ 
where $\sum m_j=m$ and $\tau_j$ are real and  distinct from each other, where $m_j\leq 3$ which follows from the assumption. There exist $T>0$ and a conic neighborhood $U$ of $(0,\eta)$ such that one can write
\begin{align*}
&p(t, x,\tau,\xi)=\prod_{j=1}^rp^{(j)}(t, x,\tau,\xi),\\
&p^{(j)}(t, x,\tau,\xi)= \tau^{m_j}+a_{j,1}(t, x,\xi)\tau^{m_j-1}+\cdots+ a_{j,m_j}(t, x,\xi)
\end{align*}
for $(t, x,\xi)\in (-T,T)\times U$ where $a_{j, k}(t, x,\xi)$ are real valued, homogeneous of degree $k$ in $\xi$ and $p^{(j)}(0,0,\tau,\eta)=(\tau-\tau_j)^{m_j}$ and $p^{(j)}(t, x,\tau,\xi)=0$ has only real roots in $\tau$ for $(t, x,\xi)\in [0, T)\times U$.  If $(0,0,\tau_j,\eta)$ is a singular point of $p$, and necessarily $m_j\geq 2$, then $(0,0,\tau_j,\eta)$ is a singular point of $p^{(j)}$ and it is easy to see $
F_p(0,0,\tau_j,\eta)=c_j F_{p^{(j)}}(0,0,\tau_j,\eta)$ 
with some $c_j\neq 0$ and hence $F_{p^{(j)}}(0,0,\tau_j,\eta)$ has non-zero real eigenvalues if $F_{p}(0,0,\tau_j,\eta)$ does and vice versa. It is well known  that one can find $P^{(j)}$ such that
\[
P\equiv P^{(1)}P^{(2)}\cdots P^{(r)}\quad \text{at}\;\;(0, \eta) 
\]
where $P^{(j)}$ are operators of the form \eqref{eq:P:kata} with $m=m_j$ whose principal symbol coincides with $p^{(j)}$  in some conic neighborhood of $(0,0,\eta)$. Since  each $P^{(j)}$ has a solution operator with finite speed of propagation with loss of $(n_j, n_j+2)$ derivatives thanks to Theorem \ref{thm:pre:sonzai:s} and Proposition \ref{pro:yugen:denpa} hence so does $P$ with loss of $(n, r(n+2))$ derivatives with $n=\max_j n_j$ noting Remark \ref{rem:hatP:yugen:n}. Therefore Theorem \ref{thm:main} results from Theorem \ref{thm:sonzai:ippan}. 

\medskip

Repeating a parallel arguments to the existence proof for $P$ above we obtain
\begin{them}
\label{thm:main:ad}  Under the same assumption as in Theorem \ref{thm:main} there exist $\delta>0$, a neighborhood $U$  of the origin and $n> 0$, $\ell>0$ such that  for any $s\in\R$ and any $f\in {\mathcal H}^{*}_{n+1/2, s}(0,\delta]$ there exists $u$ with $D_t^ju\in {\mathcal H}^{*}_{n-1/2, -\ell+s+m-j}(0,\delta]$, $0\leq j\leq m-1$, satisfying $
P^*u=f$ in $(0,\delta)\times U$.
\end{them}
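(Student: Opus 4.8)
\textbf{Proof plan for Theorem \ref{thm:main:ad}.}
The plan is to mirror, line by line, the construction carried out for $P$ in the preceding two subsections, replacing forward-in-time estimates by backward-in-time estimates. First I would set up the adjoint analogue of the notion ``solution operator with finite speed of propagation'': a map $G^*$ defined on the spaces ${\mathcal H}^*_{n+1/2, s}(0,\delta]$ satisfying $P^*G^*f=f$ together with the backward weighted estimate
\begin{gather*}
\sum_{j=0}^{m-1}\int_t^{\delta}\tau^{2n-1}\|D_t^jG^*f(\tau)\|_{-\ell+s+m-j}^2d\tau\leq C\int_t^{\delta}\tau^{2n+1}\|f(\tau)\|_s^2d\tau
\end{gather*}
and the microlocalized analogue with cutoffs $h_1,h_2$ of disjoint support. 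The existence of such $G^*$ for a model third order operator ${\hat P}^*$ is already in hand: Theorem \ref{thm:pre:sonzai:ad} supplies the Cauchy data at $t=\delta$ version with the estimate \eqref{eq:hat:P:ad}, and Proposition \ref{pro:yugen:denpa:ad} supplies the finite speed of propagation of wave front sets, with loss of $(n, n+2)$ derivatives. So the building block at a triple effectively hyperbolic characteristic is done.

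Next I would treat the lower multiplicity factors. For a double effectively hyperbolic characteristic of $P^*$ one factors the principal symbol $p=p_1p_2$ as in the $P$ case; the adjoint operators ${\hat P}_i^*$ compose in the reverse order, and the key point is that Proposition \ref{pro:niji:jiko} (which is stated for $a(t,x,\xi)\geq0$ only in $t\geq0$, exactly the situation relevant to the adjoint) gives the time function and the estimates $\{\psi,a\}^2\leq 4\kappa a$, $a\geq c\min\{t^2,(t-\psi)^2\}$ needed to run the double-characteristic energy argument of \cite{Ni2,Ni:book} backward in time. First order factors with real principal symbol and the principal-type case are handled with the weight $t^n$ (no $\phi^n$), just as before. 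Then, given an arbitrary $|\eta|=1$, I factor $p(0,0,\tau,\eta)=\prod_j(\tau-\tau_j)^{m_j}$, lift to a microlocal factorization $P^*\equiv P^{*(r)}\cdots P^{*(1)}$ at $(0,\eta)$ (again in reverse order relative to the $P$ case), compose the corresponding backward solution operators, and obtain a solution operator with finite speed of propagation for $P^*$ with loss of $(n, r(n+2))$ derivatives, $r\leq[m/2]+$ (number of factors) $\le$ the bound quoted in Theorem \ref{thm:main}, invoking Remark \ref{rem:hatP:yugen:n} to use a common $n$. Finally I would state and prove the adjoint analogue of Theorem \ref{thm:sonzai:ippan} --- the patching argument with a partition of unity $\{\alpha_i\}$ on $\Omega\times(\R^d\setminus0)$, the Neumann series $S=\sum R^k$ absorbing the error term $R=\sum R_i G_{\eta_i}^*\alpha_i$ via condition (ii) on a short interval $(0,\delta_1]$ --- which is formally identical to the proof of Theorem \ref{thm:sonzai:ippan} with all $t$-integrals running over $(t,\delta]$; this yields the global-in-$x$ existence statement asserted.

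The one genuinely new ingredient is bookkeeping the direction of time and the order of composition consistently: the energy identity for $P^*$ is integrated as $-\int_t^\delta \frac{d}{dt}{\mathcal E}^* dt$ (cf. the derivation of Proposition \ref{pro:fu:matome:ad}), the Cauchy data sit at $t=\delta$ rather than $t=0$, and the factors multiply in the opposite order, so the $\equiv$ relation and the composition lemmas must be transcribed with care. I expect this transcription to be entirely routine given the symmetry of the whole construction; the main obstacle, such as it is, is simply checking that Proposition \ref{pro:niji:jiko} really does let the double-characteristic machinery of \cite{Ni2,Ni:book} run with $a\geq0$ assumed only for $t\geq0$ --- but this is precisely what that proposition (via \cite[Theorem 1.1]{Ni2:bis}) was stated to provide, so no new analytic difficulty arises. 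Hence Theorem \ref{thm:main:ad} follows from the adjoint version of Theorem \ref{thm:sonzai:ippan}, i.e. Theorem \ref{thm:sonzai:ippan:ad}.
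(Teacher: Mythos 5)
Your proposal is correct and follows exactly the route the paper intends: the paper's own proof of Theorem \ref{thm:main:ad} consists of the single remark that one repeats the arguments given for $P$, and the building blocks you cite — Theorem \ref{thm:pre:sonzai:ad}, Proposition \ref{pro:yugen:denpa:ad}, Proposition \ref{pro:niji:jiko}, and the patching Theorem \ref{thm:sonzai:ippan:ad} — are precisely the ones the paper has put in place for this purpose. Your attention to the reversed order of composition of the microlocal factors and to the backward-in-time weighted energy is the right bookkeeping, and no new analytic difficulty is overlooked.
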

Now we prove a local uniqueness result for the Cauchy problem for $P$ applying Theorem \ref{thm:main:ad}. From the assumption one can find  a neighborhood $W$ of the origin of $\R^d$ and $T>0$ such that every multiple characteristic of $p$ on $(t, x, \xi)\in (0,T)\times W$ is at most double and double characteristic is effectively hyperbolic.  Thanks to \cite[Main Theorem]{KNW} there exists ${\hat c}>0$ such that  for any solution $v$ to $P^*v=f$ vanishing in $t\geq \delta'$ with $f\in C_0^{\infty}((0,\delta')\times \{|x|< \varep\})$ ($\delta'\leq  T$) one has
\[
{\rm supp}_x v(t, \cdot)\subset \{|x|\leq \varep+{\hat c}\,\delta'\},\quad 0< t\leq \delta'.
\]
Now assume that $u$  satisfies $Pu=0$ in $(0,\delta)\times U$ and $\dif_t^ku(0,x)=0$ for all $k$. Choose $\varep>0$ and $\delta'>0$ such that $\{|x|\leq \varep+{\hat c}\,\delta'\}\subset U$, $\delta'\leq \delta$. Then we see
\begin{align*}
0=\int_0^{\delta'}\big(Pu, v\big)dt=\int_0^{\delta'}\big(u, P^*v\big)dt=\int_0^{\delta'}\big(u, f\big)dt.
\end{align*}
Since $f\in C_0^{\infty}((0,\delta')\times \{|x|< \varep\})$ is arbitrary, we conclude that 
\[
u(t, x)=0,\quad (t, x)\in (0,\delta')\times \{|x|\leq \varep\}.
\]
\begin{them}
\label{thm:local:itii}Assume \eqref{eq:cond:hyp} and  that every  singular point $(0,0,\tau,\xi)$,  $|(\tau,\xi)|\neq0$ of $p=0$ is effectively hyperbolic. 
If $u(t,x)\in C^{\infty}([0,\delta)\times U)$ satisfies $Pu=0$ in $[0,\delta)\times U$ and $\dif_t^k u(0, x)=0$ for all $k$ then $u=0$ in a neighborhood of $(0,0)$.
\end{them}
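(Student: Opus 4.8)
The plan is to reduce the local uniqueness assertion to the existence Theorem \ref{thm:main:ad} for the adjoint operator $P^*$, combined with the finite speed of propagation of supports for $P^*$, which is the content of the theorem of Kajitani--Nishitani--Wakabayashi quoted as \cite{KNW}. First I would observe that since every singular point $(0,0,\tau,\xi)$ of $p$ is effectively hyperbolic, such a point is at most a triple characteristic, and moreover for $t>0$ every multiple characteristic is at most double and effectively hyperbolic; hence by \cite{KNW} there are a neighborhood $W$ of the origin in $\R^d$, a time $T>0$ and a constant ${\hat c}>0$ such that any solution $v$ of $P^*v=f$ with $f\in C_0^{\infty}((0,\delta')\times\{|x|<\varep\})$, $\delta'\le T$, and with $v$ vanishing for $t\ge\delta'$, satisfies ${\rm supp}_x v(t,\cdot)\subset\{|x|\le\varep+{\hat c}\,\delta'\}$ for $0<t\le\delta'$.

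Next, given $u\in C^{\infty}([0,\delta)\times U)$ with $Pu=0$ in $[0,\delta)\times U$ and $\dif_t^k u(0,x)=0$ for all $k$, I would choose $\varep>0$ and $\delta'>0$ small enough that $\{|x|\le\varep+{\hat c}\,\delta'\}\subset U$ and $\delta'\le\min(\delta,T)$. For an arbitrary $f\in C_0^{\infty}((0,\delta')\times\{|x|<\varep\})$, Theorem \ref{thm:main:ad} (applied to $P^*$) produces a solution $v$ with $D_t^j v\in\mathcal H^{*}_{n-1/2,-\ell+s+m-j}(0,\delta']$, $0\le j\le m-1$, of $P^*v=f$ on $(0,\delta')\times U$ with $v$ vanishing for $t\ge\delta'$; by the finite-propagation statement above, $v$ is supported in $\{|x|\le\varep+{\hat c}\,\delta'\}\subset U$ for all $t$, so the pairing $\int_0^{\delta'}(Pu,v)\,dt$ and the integration by parts are legitimate despite $u$ being defined only on $U$. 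Because all derivatives $\dif_t^k u(0,x)$ vanish, there are no boundary terms at $t=0$, and the boundary terms at $t=\delta'$ vanish since $v$ and its relevant $t$-derivatives vanish there; hence
\[
0=\int_0^{\delta'}\big(Pu,v\big)\,dt=\int_0^{\delta'}\big(u,P^*v\big)\,dt=\int_0^{\delta'}\big(u,f\big)\,dt.
\]
Since $f$ ranges over all of $C_0^{\infty}((0,\delta')\times\{|x|<\varep\})$, this forces $u(t,x)=0$ for $(t,x)\in(0,\delta')\times\{|x|\le\varep\}$, and by continuity $u=0$ on a full neighborhood of $(0,0)$.

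The main obstacle is the bookkeeping around the integration by parts: one must check that $u$, which is only assumed smooth on the (possibly small) set $[0,\delta)\times U$, can be paired against $v$, which a priori is only known to lie in weighted $L^2$ spaces $\mathcal H^{*}_{n-1/2,\cdot}$ with a negative power of $t$ at $t=0$. This is handled precisely by the support localization from \cite{KNW} (so $v$ lives in $U$) together with the fact that $\dif_t^k u(0,x)=0$ for \emph{all} $k$: the latter kills every boundary contribution at $t=0$ regardless of the mild $t^{-n+1/2}$ growth of $v$, since $u$ and all its $t$-derivatives vanish to infinite order there, and one may insert a cutoff in $t$ near $0$ and pass to the limit. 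The remaining technical points --- that the solution $v$ from Theorem \ref{thm:main:ad} indeed vanishes in $t\ge\delta'$ (built into the definition of $\mathcal H^{*}$ spaces) and that $P$ can be brought to the reduced form \eqref{eq:takata}/\eqref{eq:sokyoku} used throughout --- are routine given the machinery already assembled in the paper.
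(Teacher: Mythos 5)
Your proposal reproduces the paper's own argument: invoke Theorem \ref{thm:main:ad} to solve $P^*v=f$ for arbitrary test data $f$, use the Kajitani--Nishitani--Wakabayashi support estimate from \cite{KNW} to keep $v$ inside $U$, and then integrate by parts against $u$ using the infinite-order vanishing at $t=0$ to kill all boundary terms. This is essentially identical to the paper's proof at the end of Section \ref{sec:syuteiri}, with somewhat more explicit commentary on the bookkeeping.
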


\bigskip
\noindent
{\bf Acknowledgment.} This work was supported by 
JSPS KAKENHI Grant Number JP20K03679.

%

\end{document}